 %&pdflatex

%\documentclass[10pt,a4paper,reqno]{amsart}
%\usepackage{amsfonts,amsthm,latexsym,amsmath,amssymb,amscd,amsmath, epsf}
%\usepackage{showlabels}
%\usepackage{graphicx}

\documentclass[10pt,twoside,a4paper,reqno]{amsart}

\usepackage{amsfonts,amsthm,latexsym,mathtools,amsmath,amssymb,enumitem,appendix,color,hyperref,leftindex}
\usepackage[capitalize]{cleveref}
\usepackage{float}
\usepackage{comment}

\usepackage{todonotes}
\presetkeys%
    {todonotes}%
    {inline,backgroundcolor=yellow}{}

\usepackage{graphicx}
\DeclareGraphicsExtensions{.pdf,.png}
%\pdfsuppresswarningpagegroup=1 %To supress annoying warning

\makeatletter
\@namedef{subjclassname@2020}{\textup{2020} Mathematics Subject Classification}
\makeatother

\theoremstyle{plain}
\newtheorem{theo+}{Theorem}
\numberwithin{theo+}{section}
\newtheorem{prop+}[theo+]{Proposition}
\newtheorem{coro+}[theo+]{Corollary}
\newtheorem{lemm+}[theo+]{Lemma}

\theoremstyle{definition}
\newtheorem{defi+}[theo+]{Definition}
\newtheorem{problem}[theo+]{Problem}

\newtheorem*{pb-prime}{Problem 1$\mathbb{'}$}

\newtheorem{not+}[theo+]{Notation}

\theoremstyle{remark}
\newtheorem{rema+}[theo+]{Remark}

\newenvironment{theorem}{\begin{theo+}}{\end{theo+}}
\newenvironment{proposition}{\begin{prop+}}{\end{prop+}}
\newenvironment{corollary}{\begin{coro+}}{\end{coro+}}
\newenvironment{lemma}{\begin{lemm+}}{\end{lemm+}}
\newenvironment{remark}{\begin{rema+}}{\end{rema+}}
\newenvironment{definition}{\begin{defi+}}{\end{defi+}}
\newenvironment{notation}{\begin{not+}}{\end{not+}}

% For theorems and props in intro, using letters as numbering.

%For defining stuff
\newcommand{\defin}[1]{%
\relax\ifmmode%
\textcolor{blue}{#1}%
\else \textcolor{blue}{\emph{#1}}%
\fi%
}

%Sets of stuff
\newcommand{\setC}{\mathbb{C}}
\newcommand{\setR}{\mathbb{R}}

\newcommand{\setRS}{\bar{\mathbb{C}}} %Riemann Sphere
 %Minimal invariant set containing some set

\newcommand{\bC}{\setC} 
\newcommand{\bR}{\setR}

 %Inflection points
 %Separatrix

 %Vector field, at t=0.
 %Vector field, at t=0.

\renewcommand{\Re}{\operatorname{Re}}
\renewcommand{\Im}{\operatorname{Im}}

\newcommand{\minvset}[1]{{{\mathrm{M}}_{#1}^T}}

\newcommand{\ttt}{\mathfrak{tr}}
\newcommand{\zeros}{\mathcal{Z}}

\newcommand{\horn}[3]{{\leftindex_{{#1}}{\triangle}_{#3}^{#2}}}
\newcommand{\horninf}[1]{{\leftindex_{{#1}}{\angle}}}

\numberwithin{equation}{section}

\newcommand{\bZ}{\mathbb Z}

\newcommand{\Z}{\mathcal Z}

\def \R{{\mathbb R}}

\newcounter{margnotes}

\begin{document}

\title[On boundary points of minimal sets]
{On boundary points of minimal continuously Hutchinson invariant sets}
          
\author[P.~Alexandersson]{Per Alexandersson}
\address[Per Alexandersson]{Department of Mathematics, Stockholm University, SE-106 91 Stockholm, Sweden}
\email{per.w.alexandersson@gmail.com}

\author[N.~Hemmingsson]{Nils Hemmingsson}
\address[Nils Hemmingsson]{Institute for Mathematical Sciences, Stony Brook University, Stony Brook, NY, USA}
\email{nils.hemmingsson@stonybrook.edu}
          
\author[D.~Novikov]{Dmitry Novikov}
\address[Dmitry Novikov]{Faculty of Mathematics and Computer Science, Weizmann Institute of Science, Rehovot, 7610001 Israel}
\email{dmitry.novikov@weizmann.ac.il}

\author[B.~Shapiro]{Boris Shapiro}
\address[Boris Shapiro]{Department of Mathematics, Stockholm University, SE-106 91
Stockholm, Sweden}
\email{shapiro@math.su.se}

\author[G.~Tahar]{Guillaume Tahar}
\address[Guillaume Tahar]{Beijing Institute of Mathematical Sciences and Applications, Huairou District, Beijing, China}
\email{guillaume.tahar@bimsa.cn}

\date{\today}
\keywords{Action of linear differential operators, 
$T_{CH}$-invariant subsets, minimal $T_{CH}$-invariant subset, rational vector fields}
\subjclass[2020]{Primary 37F10, 37E35; Secondary 34C05}

\begin{abstract} 
A linear differential operator $T=Q(z)\frac{d}{dz}+P(z)$ with polynomial coefficients defines a continuous family of Hutchinson operators when acting on the space of positive powers of linear forms. In this context, $T$ has a unique minimal Hutchinson-invariant set $M_{CH}^{T}$ in the complex plane. Using a geometric interpretation of its boundary  in terms of envelopes of certain families of rays, we subdivide this boundary into local and global arcs (the former being portions of integral curves of the rational vector field $\frac{Q(z)}{P(z)}\partial_{z}$), and singular points of different types which we classify below.
\par
The latter decomposition of the boundary of $M_{CH}^{T}$ is largely determined by its intersection with the plane algebraic curve formed by the inflection points of trajectories of the  field $\frac{Q(z)}{P(z)}\partial_{z}$. We provide an upper bound for the number of local arcs in terms of degrees of $P$ and $Q$. As an application of our classification, we obtain a number of global geometric properties of minimal Hutchinson-invariant sets.
\end{abstract}

\maketitle
%\newpage

\tableofcontents

\section{Introduction}
\smallskip 
Given a linear differential operator 
\begin{equation}\label{eq:1stN}
T=Q(z)\frac{d}{dz}+P(z)
\end{equation}
where $P,Q$ are polynomials that are not identically vanishing, we say that a closed subset $S\subset \bC$ is \textcolor{blue}{\textit{continuously Hutchinson invariant}} for $T$ ($T_{CH}$-invariant set for short) if for any $u \in S$ and any arbitrary non-negative
number $t$, the image $T(f)$ of the function $f(z) = (z-u)^t$ either has all roots in $S$ or vanishes identically. In \cite{AHN+24}, we have initiated the study of general topological properties of $T_{CH}$-invariant sets. 

The main motivation for the present study is that it covers an interesting and manageable special case of a more general inverse P\'olya-Schur problem introduced in \cite{ABS}. For the convenience of our readers, let us briefly recall what the  P\'olya--Schur problem/theory and its inverse are, see \cite{CsCr, ABS}.  

The main question of the classical P\'olya--Schur theory 
can be formulated as follows.

\begin{problem}\label{prob1} 
Given a subset $S\subset \bC$ of the complex plane, describe the
semigroup of all linear operators $T:\bC[z]\to\bC[z]$
sending any polynomial with roots in $S$ to a
polynomial with roots in $S$ (or to $0$).
\end{problem}

\begin{definition}\label{def0}
If an operator $T$ has the latter property, then we say that
\defin{$S$ is a $T$-invariant set}, or that \defin{$T$ preserves $S$}.
\end{definition}

So far, Problem~\ref{prob1} has only been solved for the circular domains (i.e., images of
the unit disk under M\"obius transformations), their boundaries \cite{BB}, 
and  more recently for strips \cite{BCh}. Even a very similar case of the unit interval is still open at present. It seems that for a somewhat general class of subsets $S\subset \bC$, Problem~\ref{prob1} is currently out of reach of all existing methods.  

\medskip
In \cite{ABS}, the following inverse problem in the P\'olya--Schur theory which seems both natural and more accessible than Problem~\ref{prob1} has been proposed.

\begin{problem}\label{prob:main}
Given a linear operator $T:\bC[x]\to\bC[x]$, characterize all closed $T$-invariant subsets of the complex plane. Alternatively, find a sufficiently large class of 
$T$-invariant sets.
\end{problem} 

Paper~\cite{ABS} concentrates on the fundamental case when $T$ is a linear finite order differential operator with polynomial coefficients and shows that under some weak assumptions on these coefficients, there exists a unique minimal $T$-invariant set (and its analogs when $T$ acts on polynomials of degree greater than or equal to a given positive integer $n$). Many basic properties of $T$-invariant sets such as their convexity, compactness etc are discussed in \cite{ABS} as well as the delicate connection of Problem~\ref{prob:main} to the classical complex dynamics. 

However effective criteria characterizing $T$-invariant sets and explicit description of the minimal $T$-invariant set in somewhat interesting cases are currently missing which motivated the consideration in \cite{AHN+24} of the action of $T$ on integer and positive powers of linear forms. This situation is still quite interesting and appears to be more tractable. 

\medskip
In particular, the following results have been obtained in \cite{AHN+24}:
\begin{itemize}
    \item provided that either $P$ or $Q$ is not a constant polynomial, there is a unique \textit{minimal} continuously Hutchinson invariant set $\minvset{CH}$ for a given operator $T$ (in what follows we will always assume that this condition is satisfied);
    \item the only $T_{CH}$-invariant set is the whole $\mathbb{C}$ unless $|\deg Q -\deg P| \leq 1$;
    \item a complete characterization of operators $T$ for which $\minvset{CH}$ has an empty interior has been obtained (see Section~\ref{sub:regularity} for details).
\end{itemize}

In this paper, we will focus on operators whose minimal set $\minvset{CH}$ has a nonempty interior.

\begin{definition}
For an operator $T$ given by \eqref{eq:1stN} with $P$ and $Q$ not vanishing identically, at each point $z$ such that $PQ(z) \neq 0$, we define the {\em associated ray $r(z)$} as the half-line $\lbrace{z+t\frac{Q(z)}{P(z)}~\vert~t \in \mathbb{R}^{+} \rbrace}$. 
\end{definition}

Remarkably, $T_{CH}$-invariant sets (and, in particular, the minimal one) can be characterized in terms of associated rays.

\begin{theorem}[Theorem 3.18 in \cite{AHN+24}]\label{thm:AR}
A closed subset $S \subseteq \mathbb{C}$ is $T_{CH}$-invariant if and only if it satisfies the following two conditions:
\begin{enumerate}
    \item $S$ contains the roots of the polynomials $P$ and $Q$;
    \item for any point $z \notin S$, the associated ray $r(z)$ is disjoint from $S$.
\end{enumerate}
\end{theorem}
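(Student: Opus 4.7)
The plan is to reduce the hypothesis of $T_{CH}$-invariance to a statement about zeros of a one-parameter family of polynomials, and then translate that statement into the geometric language of associated rays. The central computation is
\[
T\bigl((z-u)^{t}\bigr) = tQ(z)(z-u)^{t-1} + P(z)(z-u)^{t} = (z-u)^{t-1}\bigl[tQ(z) + (z-u)P(z)\bigr].
\]
The first factor has a (possibly branched) vanishing only at $u$, which lies in $S$ by assumption. Writing $g_{t,u}(z) := tQ(z) + (z-u)P(z)$, the $T_{CH}$-invariance of a nonempty closed $S$ thus becomes equivalent to the conjunction of: (a) with $t=0$, the roots of $P = T(1)$ lie in $S$; and (b) for every $u \in S$ and every $t > 0$, either $g_{t,u} \equiv 0$ or every root of $g_{t,u}$ lies in $S$.

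For necessity of (1), condition (a) directly gives the roots of $P$, while the roots of $Q$ arise by a limiting argument: $\tfrac{1}{t} g_{t,u}(z) \to Q(z)$ uniformly on compact subsets as $t \to \infty$, so continuity of roots places every root of $Q$ in the closed set $S$ as a limit of roots of $g_{t,u}$. For necessity of (2), suppose $z \notin S$ yet some $u \in r(z) \cap S$ exists; writing $u = z + tQ(z)/P(z)$ with $t > 0$ (well defined since $P(z) \neq 0$ by (1)) and substituting, one finds $g_{t,u}(z) = tQ(z) - tQ(z) = 0$, so $z$ is a root of the polynomial $g_{t,u}$, and invariance forces $z \in S$, a contradiction.

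For sufficiency, fix $u \in S$ and $t > 0$ and take any root $w$ of $g_{t,u}$. If $w \in S$ we are done; otherwise condition (1) forces $P(w)Q(w) \neq 0$, so $r(w)$ is well-defined, and rearranging $g_{t,u}(w) = 0$ yields $u = w + tQ(w)/P(w) \in r(w)$ with $t > 0$. Hence $u \in r(w) \cap S$ contradicts (2), and every root of $g_{t,u}$ must lie in $S$. The proof is essentially a direct unwinding of the definitions, and I do not anticipate a major conceptual obstacle. The main subtlety I foresee is bookkeeping around the degenerate parameter values where $g_{t,u} \equiv 0$ (which forces $Q(u) = 0$ and a specific resonance between $P$ and $Q$): this is absorbed by the ``vanishes identically'' clause on the sufficiency side, but on the necessity side one must argue separately --- for instance by perturbing $t$ slightly and invoking continuity of roots together with the closedness of $S$ --- to ensure the conclusion $z \in S$ still forces a contradiction at such exceptional $(t,u)$.
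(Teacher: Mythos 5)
Your reduction is the right one and is essentially the only natural proof of this statement: writing $T\bigl((z-u)^{t}\bigr)=(z-u)^{t-1}\bigl[tQ(z)+(z-u)P(z)\bigr]$ and observing that $g_{t,u}(w)=0$ with $P(w)Q(w)\neq 0$ is literally the condition $u=w+t\,Q(w)/P(w)\in r(w)$, $t>0$. The sufficiency direction, the $t=0$ specialization giving the roots of $P$, and the $t\to\infty$ Hurwitz argument giving the roots of $Q$ are all correct as written. (Note the paper does not reprove this theorem; it imports it from \cite{AHN+24}, so there is no in-text proof to compare against, but your route is the expected one.)

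The one genuine gap is exactly the degenerate case you flagged, and your proposed repair does not close it. If $g_{t_0,u}\equiv 0$, then $Q(z)=-t_0^{-1}(z-u)P(z)$, so for $t\neq t_0$ one has $g_{t,u}=(t-t_0)Q$, whose roots are just the roots of $Q$; perturbing $t$ at fixed $u$ therefore never recovers the point $z\notin S$ you are trying to force into $S$, so ``continuity of roots plus closedness'' gives nothing here. Worse, the necessity of condition (2) genuinely fails for such operators: take $T=-z\frac{d}{dz}+1$, so $R(z)=-z$. Then $T(z^{t})=(1-t)z^{t}$, so $S=\{0\}$ is $T_{CH}$-invariant (the $t=1$ case is absorbed by the ``vanishes identically'' clause), it contains all roots of $P$ and $Q$, and yet every associated ray $r(z)=\{(1-s)z:s>0\}$ passes through $0\in S$, violating (2). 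So the equivalence as stated requires excluding the family $R(z)=\mu(z-u)$ with $\mu\in\mathbb{R}_{<0}$ (equivalently, the existence of $(t_0,u)$ with $g_{t_0,u}\equiv 0$); this exclusion is implicit in the standing conventions of \cite{AHN+24} rather than something a perturbation argument can supply. Outside that family your necessity argument for (2) is airtight, since $g_{t,u}\not\equiv 0$ then holds for every admissible pair $(t,u)$.
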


\subsection{Main results}

In the present paper, using Theorem~\ref{thm:AR}, we provide a qualitative description of the boundary of minimal continuously Hutchinson invariant sets, including an exhaustive typology of its singular points. Our classification mainly depends on the intersection of the boundary $\partial \minvset{CH}$ with the curve of inflections $\mathfrak{I}_{R}$ of the field $R(z)\partial_{z}=\frac{Q(z)}{P(z)}\partial_{z}$.

\begin{definition}
The \textit{curve of inflections} $\mathfrak{I}_{R}$ of the vector field $R(z)\partial_z$ is defined as the closure of the set of points satisfying $\Im(R')=0$, see \cite{AHN+24}. It is a real plane algebraic curve of degree at most $d= 3\deg P + \deg Q -1$ (in this paper, we will always have $d\geq 3$).
\par
The curve of inflections splits the complex plane into \textit{inflection domains} where the sign of $\Im(R')$ remains the same.
\end{definition}
Points of $\partial \minvset{CH}$ outside its intersection with $\mathfrak{I}_{R}$ are classified with the help of two correspondences $\Gamma$ and $\Delta$ sending the boundary $\partial \minvset{CH}$ to itself and defined as follows:
\par
For a given point $z$ of the boundary $\partial \minvset{CH}$, $\Gamma(z)$ is essentially the intersection of $\minvset{CH}$ with the integral curve of the rational field $R(z)\partial_{z}$ starting at $z$, where $R(z)=Q(z)/P(z)$. In contrast, $\Delta$ is the intersection of the associated ray $r(z)$ with the closure of $\partial \minvset{CH}$ in the compactification $\mathbb{C} \cup \mathbb{S}^{1}$ of the complex plane (see Section~\ref{sub:extended}). Formal definitions of $\Gamma$ and $\Delta$ are given in Definition~\ref{def:DeltaGamma}. Qualitatively, the boundary $\partial \minvset{CH}$ is made of two kinds of arcs:
\begin{itemize}
    \item \textit{local arcs} which are integral curves of the field $R(z)\partial_{z}$ (i.e. $\Delta(z) = \emptyset$ and $\Gamma(z) \neq \emptyset$);
    \item \textit{global arcs} at each point $z$ of which the associated ray $r(z)$ is tangent to $\partial \minvset{CH}$ elsewhere (i.e. $\Gamma(z)=\emptyset$ and $\Delta(z) \neq \emptyset$).
\end{itemize}

Local arcs are locally strictly convex real-analytic arcs (see Proposition~\ref{prop:localconvex}). In contrast, global arcs (formed by points of global type) can fail to be $C^{1}$.
\par
Local arcs inherit an obvious orientation from the vector field $R(z)\partial_{z}$. Global arcs also have canonical orientation, but its definition requires some work (see Section~\ref{sub:orientationglobal}).
\par
Local and global arcs connect special singular points of $\partial \minvset{CH}$ which in most of the cases belong to the curve of inflections. The latter decomposes into three loci (singular, tangent and transverse), each determining its own variety of singular points.

\begin{definition}\label{defn:transverse}
The \textit{curve of inflections} $\mathfrak{I}_{R}$ of the field $R(z)\partial_z$ decomposes into:
\begin{itemize}
    \item the \textit{singular locus} $\mathfrak{S}_{R}$ formed by the points where several branches of $\mathfrak{I}_{R}$ intersect;
    \item the \textit{tangency locus} $\mathfrak{T}_{R}$ formed by the non-singular points where the field $R(z)\partial_{z}$ is tangent to $\mathfrak{I}_{R}$;
    \item \textit{transverse locus} $\mathfrak{I}_{R}^{\ast}$ formed by the non-singular points of $\mathfrak{I}_{R}$ where the field $R(z)\partial_{z}$ is transverse to $\mathfrak{I}_{R}$.
\end{itemize}
\end{definition}

The singular and the tangency loci are given by algebraic conditions.  Therefore their intersection with  $\partial \minvset{CH}$ is controlled in terms of $\deg P$ and $\deg Q$. On the contrary, many points of the boundary can belong to the transverse locus $\mathfrak{I}_{R}^{\ast}$. We refine the definition of the correspondence $\Delta$ according to the value of $\frac{R(z)}{u-z}$ (which, by definition, is a positive number).

\begin{definition}\label{def:-+0}
We define $\Delta(z)=\big(\overline{r(z)}\setminus{\{z\}}\big)\cap\overline{\minvset{CH}}$, where $\overline{r(z)}, \overline{\minvset{CH}}$ are closures of ${r(z)},{\minvset{CH}}$ in the compactification $\bC\cup\mathbb{S}^1$ of $\bC$, respectively.

For any $z \in \mathfrak{I}_{R} \setminus \mathcal{Z}(PQ)$, we have $\Delta(z)=\Delta^{-}(z)  \cup \Delta^0(z)\cup\Delta^{+}(z)$ where $u \in \Delta(z)\cap\bC$ belongs to:
\begin{itemize}
    \item $\Delta^{-}(z)$ if $R'(z) \leq -\frac{R(z)}{u-z}$;
    \item $\Delta^{0}(z)$ if $R'(z)= -\frac{R(z)}{u-z}$;
    \item $\Delta^{+}(z)$ if $R'(z) \geq -\frac{R(z)}{u-z}$,
\end{itemize}
and $u \in \Delta(z)\cap\mathbb{S}^1$ belongs to 
\begin{itemize}
    \item $\Delta^{-}(z)$ if $R'(z) \leq 0$;
    \item $\Delta^{0}(z)$ if $R'(z)= 0$;
    \item $\Delta^{+}(z)$ if $R'(z) \geq 0$.
\end{itemize}
In particular, if $R'(z)>0$, then $\Delta^{-}(z) = \emptyset$.

\end{definition}

The main result of the present paper is a classification of boundary points of minimal continuously Hutchinson sets.

\begin{theorem}\label{thm:MAINClassification}
For any linear differential operator $T$ given by \eqref{eq:1stN}, any point $z$ of the boundary $\partial \minvset{CH}$ of its minimal $T_{CH}$-invariant set belongs to one of the following types:
\begin{itemize}
    \item \emph{roots of polynomials} $P$ and $Q$ (at most $\deg P + \deg Q$ of them);
    
    \item \emph{singular points of the curve of inflections} (at most $2d$ of them);
    
    \item \emph{tangency points between the curve of inflections and the field $R(z)\partial_{z}$}:
    \begin{itemize}
        \item straight segments, half-lines and lines (contained in at most $\deg P + \deg Q + 1$ lines);
        \item at most $2d^{2}$ isolated points;
    \end{itemize}
    
    \item \emph{points of the transverse locus $\mathfrak{I}_{R}^{\ast}$} belonging to one of the four subclasses:
    \begin{itemize}
        \item \emph{bouncing type}: $\Delta^{+}(z) \neq \emptyset$ and $\Gamma \cup \Delta^{-}(z) \neq \emptyset$;

        \item \emph{switch type}: $\Delta^{+}(z) \neq \emptyset$ and $\Gamma(z) \cup \Delta^{-}(z) = \emptyset$;   

        \item \emph{$C^{1}$-inflection type}: $\Delta^{+}(z) = \emptyset$, $\Delta^{-}(z) \neq \emptyset$ and $\Gamma(z) = \emptyset$;
        
        \item \emph{$C^{2}$-inflection type}: $\Delta^{+}(z) = \emptyset$ and either $\Delta^{-}(z) = \emptyset$ or $\Gamma(z) \neq \emptyset$.
    \end{itemize}

    \item \emph{points not on the curve of inflections} belonging to one of the three subclasses:    
    \begin{itemize}
        \item \emph{local type}: $\Gamma(z) \neq \emptyset$ and $\Delta(z)=\emptyset$;
        \item \emph{global type}: $\Gamma(z) = \emptyset$ and $\Delta(z) \neq \emptyset$;
        \item \emph{extruding type}: $\Gamma(z) \neq \emptyset$ and $\Delta(z) \neq \emptyset$.
    \end{itemize}
\end{itemize}
Here, $d = 3\deg P + \deg Q -1$.
\end{theorem}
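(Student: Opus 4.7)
The plan is to prove the theorem by a systematic case analysis on an arbitrary point $z \in \partial\minvset{CH}$, partitioning the boundary via three successive dichotomies. First, I set aside the roots of $PQ$, which contribute at most $\deg P + \deg Q$ points. For the remaining boundary points, I decide whether $z$ lies on the curve of inflections $\mathfrak{I}_{R}$; if so, I invoke the stratification $\mathfrak{I}_{R} = \mathfrak{S}_{R} \sqcup \mathfrak{T}_{R} \sqcup \mathfrak{I}_{R}^{\ast}$ of \cref{defn:transverse} and handle each stratum in turn; if not, I classify $z$ according to the emptiness data of the pair $(\Gamma(z), \Delta(z))$. On the transverse locus $\mathfrak{I}_{R}^{\ast}$, the refinement $\Delta(z) = \Delta^{-}(z) \cup \Delta^{0}(z) \cup \Delta^{+}(z)$ of \cref{def:-+0} produces the stated four-way split. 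The work then separates into (i) establishing the counting estimates for the algebraic strata, and (ii) verifying logical exhaustivity of each finer classification.

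For the algebraic bounds, I use the fact that $\mathfrak{I}_{R}$ is the zero locus of a single real polynomial of degree at most $d = 3\deg P + \deg Q - 1$ in the real coordinates on $\bC$. The singular locus $\mathfrak{S}_{R}$ is then the common zero set of this polynomial together with its two partial derivatives, and a B\'ezout-type estimate yields the bound $2d$. For the tangency locus $\mathfrak{T}_{R}$, the condition that $R(z)\partial_{z}$ be tangent to $\mathfrak{I}_{R}$ is polynomial in $z$ and $\bar z$. A one-dimensional component of $\mathfrak{T}_{R}$ must simultaneously be a trajectory of the rational vector field and a real algebraic curve contained in $\mathfrak{I}_{R}$, forcing it to be an invariant straight line of the field; such invariant lines are counted in the standard way to be at most $\deg P + \deg Q + 1$. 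Isolated tangency points are then controlled by B\'ezout applied to the full polynomial system, giving the stated $2d^{2}$.

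The subclassification inside $\mathfrak{I}_{R}^{\ast}$ is essentially tautological once the options of \cref{def:-+0} are compared: if $\Delta^{+}(z) \neq \emptyset$, then $z$ is of bouncing or switch type according to whether $\Gamma(z) \cup \Delta^{-}(z)$ is non-empty; if $\Delta^{+}(z) = \emptyset$, then $z$ is of $C^{1}$- or $C^{2}$-inflection type according to the precise conjunction stated. Logical exhaustivity is immediate. The real content here is that these labels reflect the local regularity of $\partial \minvset{CH}$ at $z$, which follows from the behavior of the family of associated rays near a transverse inflection point combined with \cref{thm:AR}; for the classification statement itself only the disjunction is needed.

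The main obstacle lies in the case $z \in \partial\minvset{CH} \setminus (\mathfrak{I}_{R} \cup \mathcal{Z}(PQ))$, where I must exclude the a priori fourth possibility $\Gamma(z) = \emptyset = \Delta(z)$ that is not on the list. My approach is to argue by contradiction using the minimality of $\minvset{CH}$ together with \cref{thm:AR}. Under this hypothesis, no short arc of the integral curve of $R(z)\partial_{z}$ through $z$ lies in $\minvset{CH}$, and the associated ray $r(z)$ does not meet $\overline{\minvset{CH}}$ past $z$. Exploiting transversality of the ray family (valid precisely because $z \notin \mathfrak{I}_{R}$) and continuity of $r(w)$ in $w$, one constructs a sufficiently small closed neighborhood of $z$ whose removal from $\minvset{CH}$ preserves the containment of $\mathcal{Z}(PQ)$ and the ray-avoidance condition of \cref{thm:AR}, contradicting minimality. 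Making this local excision precise — in particular ensuring closedness of the complement and checking that no surviving point of $\minvset{CH}$ sends an associated ray into the removed region — is the most delicate geometric step of the proof.
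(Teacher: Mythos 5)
Your overall decomposition is the same as the paper's, and two of your three blocks go through essentially as you describe: the tangency locus is indeed handled by B\'ezout applied to $\{\Im R'=0\}$ and $\{\Im(R''R)=0\}$ together with the observation that one-dimensional components must be straight trajectories (Proposition~\ref{prop:tangencybound}), and the four-way split on $\mathfrak{I}_{R}^{\ast}$ is tautological. However, the bound of $2d$ on singular points of $\mathfrak{I}_{R}$ does not follow from a B\'ezout estimate on the defining polynomial and its two partial derivatives: those partials have degree $d-1$, so that route only yields a bound of order $d^{2}$. The linear bound uses the special structure of $\mathfrak{I}_{R}=\overline{\{\Im R'=0\}}$: by Lemma~\ref{lem:branches and curvature}, a point with several local branches must be a pole of $R$ or a zero of $R''$ (Lemma~\ref{lem:InflectionBranches}), and counting these gives $4\deg P+\deg Q-2\le 2d$ (Corollary~\ref{cor:inflectionmultiple}).

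The genuine gap is in the step you yourself single out as delicate: excluding $\Gamma(z)=\Delta(z)=\emptyset$ for $z\notin\mathfrak{I}_{R}\cup\mathcal{Z}(PQ)$. Excision via minimality is the right framework (this is Lemma~\ref{lem:excision}), but ``transversality of the ray family plus continuity of $r(w)$ in $w$'' does not justify it. The obstruction is at points $w\in\minvset{CH}\cap U$ accumulating at $z$: since $z\in\partial\minvset{CH}$, the set $\minvset{CH}$ clusters at $z$ on an a priori unknown side of $r(z)\cup\gamma_{z}^{+}$, and nothing in a soft continuity argument prevents $r(w)$ from re-entering $\minvset{CH}\setminus U$ arbitrarily close to $z$ after leaving $U$. (Note also that the condition to verify is that the rays of the \emph{removed} points avoid the surviving set, not that surviving points send no rays into the removed region; Theorem~\ref{thm:AR} only constrains rays of points outside the set.) The paper supplies the missing control through the small-horn machinery of Section~3.3: inside a small horn the argument of $R$ is monotone along associated rays, which forces $r(u)\subset\horn{p}{p'}{p''}\cup\horninf{p''}$ for every $u$ in the horn (Lemma~\ref{lem:horn+min}), and the existence of small horns at every point off $\mathcal{Z}(PQ)$ is itself a nontrivial Taylor-expansion argument (Proposition~\ref{prop:small horn exists}). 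These feed into Proposition~\ref{prop:localarc}, which proves the stronger fact that $\Delta(z)=\emptyset$ forces the trajectory germ through $z$ onto $\partial\minvset{CH}$, whence $\Gamma(z)\neq\emptyset$. Without a substitute for this monotonicity control, your excision step does not close.
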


There can be many singular points of bouncing, extruding, $C^{1}$-inflection, $C^{2}$-inflection and switch types (we do not have a polynomial bound of their number in terms of $\deg P$ and $\deg Q$). An extensive description of their geometric features is given below:
\begin{itemize}
    \item at \textit{points of extruding type}, the boundary of $\partial \minvset{CH}$ is not convex and it switches from a global  to a local arc (see Section~\ref{sub:extruding} and Figure~\ref{fig:pointtypes});
    
    \item at \textit{points of bouncing type}, $\partial \minvset{CH}$ hits the curve of inflections, but does not cross it. In a neighborhood of such a point, the boundary $\partial \minvset{CH}$ remains in the closure of the same inflection domain (see Section~\ref{sub:bouncing} and Figure~\ref{fig:pointtypes});
    
    \item at \textit{points of switch type}, $\partial \minvset{CH}$ is strictly convex, crosses the curve of inflections and the boundary switches from a local  to a global arc (see Section~\ref{sub:Switch} and Figure~\ref{fig:pointtypes});    
    \item at points of $C^{1}$\textit{-inflection type}, $\partial \minvset{CH}$ crosses the curve of inflections and it switches from a global  to another global arc having the opposite orientation. At such a point the curvature of $\partial \minvset{CH}$ is discontinuous  (see Section~\ref{sub:C1} and Figure~\ref{fig:pointtypes});
    
    \item at points of $C^{2}$\textit{-inflection type}, $\partial \minvset{CH}$ crosses the curve of inflections and the boundary switches from a global arc to a local arc. Besides, the curvature of $\partial \minvset{CH}$ is continuous at such a point (see Section~\ref{sub:InflectionTYPE} and Figure~\ref{fig:pointtypes}).\newline
\end{itemize}

\begin{figure}[!ht]
\begin{center}
\includegraphics[width=0.45\textwidth,page=1]{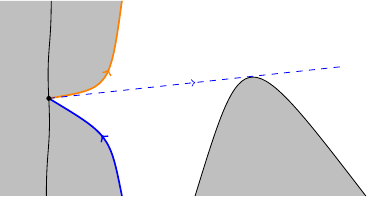}
\hfill
\includegraphics[width=0.45\textwidth,page=2]{per-tikz-figures.pdf} \\
\vspace{1.0cm}
\includegraphics[width=0.3\textwidth,page=3]{per-tikz-figures.pdf}%
\hfill
\includegraphics[width=0.3\textwidth,page=4]{per-tikz-figures.pdf}%
\hfill
\includegraphics[width=0.3\textwidth,page=5]{per-tikz-figures.pdf}
\end{center}
\caption{Top row from the left: Extruding type, Switch type. Bottom row from the left: $C^1$-inflection type, $C^2$-inflection type, Bouncing type. In the pictures, blue arcs are global arcs, red arcs are local arcs and the black arc is a germ of the curve of inflections. The pointed arrow is the associated ray indicating the support points, when applicable.}\label{fig:pointtypes}
\end{figure}

Our second main result is an upper bound on the number of points of $C^{1}$-inflection, $C^{2}$-inflection and switch type in terms of $d=3 \deg P + \deg Q -1$.

\begin{theorem}\label{thm:MainBound}
For any operator $T$ given by \eqref{eq:1stN}, the numbers of points of switch, $C^{1}$-inflection and $C^{2}$-inflection type (respectively $|\mathcal{S}|$, $|\mathcal{I}_{1}|$ and $|\mathcal{I}_{2}|$) in $\partial \minvset{CH}$
satisfy the following bounds:
$$ |\mathcal{S}| \leq e^{16d\ln(d)}+46d^{3};$$
$$ 2|\mathcal{I}_{1}| + |\mathcal{I}_{2}| \leq 2e^{16d\ln(d)}+92d^{3}.$$
\end{theorem}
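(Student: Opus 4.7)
The plan is to express each point of switch, $C^1$-inflection, or $C^2$-inflection type as an isolated solution of an explicit system combining the real algebraic curve $\mathfrak{I}_R$ (of degree at most $d$) with a support-point condition, and then invoke a counting bound of Khovanskii--Rolle type. By Theorem~\ref{thm:MAINClassification}, all three types lie on the transverse locus $\mathfrak{I}_R^* \subseteq \mathfrak{I}_R$, so the counting takes place on an algebraic curve of controlled degree, and the job is to identify the additional constraints coming from the correspondence $\Delta$.

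Next I would split the count according to the nature of the support point $w$ that Definition~\ref{def:-+0} produces on the associated ray $r(z)$. If $w$ lies on a local arc, then by Proposition~\ref{prop:localconvex} the tangency of $r(z)$ to $\partial \minvset{CH}$ at $w$ reduces to the polynomial conditions $z \in \mathfrak{I}_R$, $w-z \parallel R(z)$, $R(w) \parallel R(z)$, together with sign constraints. The number of real solutions has bidegree polynomial in $d$, and Bezout-type counting, combined with the finite loci already listed in Theorem~\ref{thm:MAINClassification} (tangency points, singular points of $\mathfrak{I}_R$, zeros of $P$ and $Q$), is absorbed in the additive term $46d^3$. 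If instead $w$ lies on a global arc, $w$ itself admits a support point, iterating the ray correspondence and producing a chain of support points constrained by the ODE $\dot z = R(z)$ along local portions and by the ray condition along global portions. This mixed system defines a Pfaffian variety in Khovanskii's sense, whose complexity is polynomial in $d$, so the Khovanskii--Rolle/Gabrielov--Vorobjov bounds on the number of isolated real solutions give the double-exponential estimate $e^{16d\ln d}$; the explicit constant $16$ is extracted from a careful bookkeeping of the Pfaffian chain length and the degrees of its defining polynomials.

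The factor~$2$ in front of $|\mathcal{I}_1|$ reflects that a $C^1$-inflection point is the meeting point of two global arcs of opposite orientation and therefore contributes \emph{two} pairs $(z,w)$ to the support correspondence, whereas a switch or $C^2$-inflection point contributes only one (one global arc meeting a local arc, or a single support direction). Summing the pair-counts yields $|\mathcal{S}|+2|\mathcal{I}_1|+|\mathcal{I}_2|\leq e^{16d\ln d}+46d^3$, from which both stated inequalities follow since each left-hand side is nonnegative. The main obstacle will be verifying that the iterated support-point correspondence, which a priori depends on the transcendental object $\partial \minvset{CH}$, admits a genuine Pfaffian description with polynomial-in-$d$ complexity; the delicate work is to make the degrees of every ingredient of the Pfaffian chain explicit enough that Khovanskii's bound produces exactly the constant $16$ rather than an unspecified $O(d\ln d)$.
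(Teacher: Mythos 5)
Your proposal diverges fundamentally from the paper's argument, and it contains a gap that you yourself flag but do not close, and which I do not believe can be closed along the lines you sketch. The obstruction is that the support-point correspondence is not an algebraic or Pfaffian system of controlled complexity. First, even the ``one-step'' system you write down for a support point $w$ on a local arc is not polynomial: the condition that $w$ lies on the forward integral curve $\gamma_z^+$ of $R(z)\partial_z$ is a level-set condition for $\Im\int dz/R$, which involves logarithms and is genuinely transcendental; the algebraic relations $w-z\parallel R(z)$ and $R(w)\parallel R(z)$ cut out a real curve in $(z,w)$-space, not a finite set, and the extra constraint selecting boundary configurations is exactly the transcendental part. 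Second, and more seriously, when $w$ lies on a global arc you must iterate the correspondence, and the length of the resulting chain of support points is a priori unbounded --- bounding the number of arcs is precisely what the theorem is trying to establish, so any argument that needs a chain of length polynomial in $d$ as an \emph{input} to a Khovanskii--Rolle bound is circular. There is no known Pfaffian chain of polynomial-in-$d$ length describing $\partial\minvset{CH}$, and without one the constant $16$ (or any explicit constant) cannot be extracted. Your final bookkeeping also proves the single combined inequality $|\mathcal{S}|+2|\mathcal{I}_1|+|\mathcal{I}_2|\leq\cdots$, which is stronger than what is claimed and would need separate justification.

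The paper breaks the circularity by a purely combinatorial device. It glues local and global arcs across extruding and bouncing points into \emph{long arcs} (Definition~\ref{defn:longarcs}), shows in Lemma~\ref{lem:LongArcs} that $|\mathcal{S}|$ and $2|\mathcal{I}_1|+|\mathcal{I}_2|$ are each at most half the number of long arcs up to an $O(d^2)$ error, assigns to each admissible long arc a combinatorial symbol recording the connected components of $\mathfrak{I}_R^{\ast}$ at its endpoints (Definition~\ref{defn:symbol}), and then proves the key finiteness statement, Lemma~\ref{lem:LONGintersection}: there cannot exist $2d+2$ disjoint chains of $2d$ admissible long arcs realizing the same pattern, because such chains would bound a strip forcing some associated ray to cross the degree-$d$ curve $\mathfrak{I}_R$ transversally in more than $d$ points. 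The quantity $e^{16d\ln d}=d^{16d}$ is then simply (number of patterns) $\times$ (chains per pattern) $\times$ (arcs per chain), where the number of patterns is at most $2^{2d}(2d^2+6d+2)^{4d}$ by Corollary~\ref{cor:transverseCOMPONENT}. The only ``algebraic geometry'' used is the trivial fact that a line meets a degree-$d$ real algebraic curve transversally in at most $d$ points --- a much weaker input than a Pfaffian bound, which is why the argument goes through without ever needing $\partial\minvset{CH}$ to be tame in any o-minimal or Pfaffian sense. If you want to pursue your route, the missing ingredient is a proof that $\partial\minvset{CH}$ belongs to some effectively bounded Pfaffian (or o-minimal) class; absent that, the proposal does not constitute a proof.
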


\begin{corollary}\label{cor:MainLocal}
For any operator $T$ given by \eqref{eq:1stN}, the boundary $\partial \minvset{CH}$ of the minimal set contains at most $d^{16d}+46d^{3}+d(2d+1)$ local arcs.    
\end{corollary}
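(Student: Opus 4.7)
The plan is to bound the number of local arcs by enumerating the possible types of their forward endpoints along the oriented flow of $R(z)\partial_{z}$.

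First, I would observe that each local arc is, by definition, a maximal connected subset of $\partial \minvset{CH}$ made of points of local type, and by Proposition~\ref{prop:localconvex} such an arc is a smooth integral curve segment of $R(z)\partial_{z}$. It therefore inherits a canonical orientation from the flow and admits a well-defined forward endpoint in the compactification $\bC \cup \mathbb{S}^{1}$; the exceptional case of closed integral curves would be handled separately via algebraic constraints (each such loop must enclose a singular point of the field).

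Next, I would determine which point types from Theorem~\ref{thm:MAINClassification} can serve as a forward endpoint of a local arc. Using the geometric descriptions preceding Figure~\ref{fig:pointtypes}, I would rule out: extruding and $C^{2}$-inflection types (where the boundary switches \emph{from} global \emph{to} local, so they are backward endpoints); $C^{1}$-inflection types (which connect two global arcs); bouncing types (where the boundary continues in the same inflection domain without transitioning away from being local); and the interior global or local types. This leaves the following possible forward endpoints of a local arc: roots of $P$ or $Q$, singular points of the curve of inflections $\mathfrak{I}_{R}$, tangency points of $\mathfrak{I}_{R}$ with $R(z)\partial_{z}$, and switch-type points.

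Then I would count each contribution. From Theorem~\ref{thm:MAINClassification}: at most $\deg P + \deg Q$ roots of $PQ$, at most $2d$ singular points of $\mathfrak{I}_{R}$, at most $2d^{2}$ isolated tangency points, and tangency lines contained in at most $\deg P + \deg Q + 1$ lines. From Theorem~\ref{thm:MainBound}: $|\mathcal{S}| \leq d^{16d} + 46d^{3}$. Since at each non-switch endpoint only boundedly many local arcs can meet (controlled by the algebraic multiplicity of the point), a careful summation that avoids double-counting between the different algebraic singularity types, and that absorbs the polynomial $46d^{3}$ contribution of Theorem~\ref{thm:MainBound} together with the remaining $\deg P + \deg Q$, $2d$, and $2d^{2}$ terms into the expression $d(2d+1)$ plus the dominant $d^{16d}$, yields the desired bound $d^{16d} + d(2d+1)$.

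The hard part will be the tangency-line case: a line contained in $\mathfrak{I}_{R}$ along which $R(z)\partial_{z}$ is tangent could a priori coincide with a boundary segment containing many local-arc endpoints. To handle this, I would argue that the restriction of $\partial \minvset{CH}$ to such a line consists of boundedly many arcs whose endpoints are algebraic points already counted above. A secondary difficulty is justifying that the endpoint of a local arc at a singular or tangency point of $\mathfrak{I}_{R}$ can be charged uniquely, so that multiple arcs meeting at the same algebraic point are still controlled by a uniform multiplicity bound; here one uses that the local geometry at such points is governed by algebraic data of degree bounded in terms of $\deg P$ and $\deg Q$.
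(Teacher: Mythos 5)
Your proposal is correct and follows essentially the same route as the paper: it reproves Lemma~\ref{lem:EstimateLocal} by enumerating the possible forward endpoints of a local arc (ruling out extruding, bouncing and inflection types, and charging the remaining endpoints either to switch points or to the algebraically bounded sets $\mathcal{Z}(PQ)$, the singular/tangency loci of $\mathfrak{I}_{R}$, and the $R$-invariant lines, where the paper's specific trick is that $\arg R$ is constant on each such line so it carries at most one endpoint), and then combines the resulting inequality $|\mathcal{L}|\leq|\mathcal{S}|+d(2d+1)$ with Theorem~\ref{thm:MainBound}. The only caveats are minor and shared with the paper: non-periodicity of local arcs is settled by the minimality argument of Proposition~\ref{prop:localEND} rather than by index considerations, and the $46d^{3}$ term from Theorem~\ref{thm:MainBound} must be absorbed into the dominant $d^{16d}$ term.
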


In the last section of the paper, we deduce many results about the global geometry of minimal sets from the classification of boundary points. In several cases, an exact description can be given in terms of local and global arcs. In particular, we can prove that in generic case, the minimal $T_{CH}$-invariant set is connected in $\mathbb{C}$.

\begin{theorem}\label{thm:CC}
For any linear differential operator $T$ given by \eqref{eq:1stN}, the minimal continuously Hutchinson invariant set $\minvset{CH}$ is a connected subset of $\mathbb{C}$ with the  possible exception of the case when $R(z)$ is of the form $\lambda+\frac{\mu}{z}+o(z^{-1})$ with $\lambda \in \mathbb{C}^{\ast}$ and $\mu/\lambda \in \mathbb{R}$.
\par
In this latter case,  (unless both $P$ and $Q$ are constants and then there is no reasonable notion of a minimal set), $\minvset{CH}$ is formed by at most $\frac{1}{2}\deg P + \frac{1}{2}\deg Q$ connected components.
\end{theorem}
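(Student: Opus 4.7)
The plan combines the ray characterization of Theorem~\ref{thm:AR} with an asymptotic analysis of $R(z)=Q(z)/P(z)$ at infinity, organised by the Laurent degree. When $|\deg Q-\deg P|>1$ the only invariant set is $\mathbb{C}$ itself, so I restrict to $|\deg Q-\deg P|\leq 1$. Suppose for contradiction $\minvset{CH}=C_{1}\sqcup\cdots\sqcup C_{k}$ with $k\geq 2$. By minimality and Theorem~\ref{thm:AR}, for every proper non-empty union $U$ of components $\minvset{CH}\setminus U$ fails to be $T_{CH}$-invariant, which forces $U$ either to contain a root of $PQ$, or to be met by an associated ray $r(u)$ based at some $u\in\minvset{CH}\setminus U$.

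If $\deg Q-\deg P=\pm 1$, then $\arg R(z)$ rotates by a nonzero multiple of $\arg z$ as $|z|\to\infty$, so the set of ray directions at infinity covers all of $\mathbb{S}^{1}$. Working in the compactification $\mathbb{C}\cup\mathbb{S}^{1}$ of Section~\ref{sub:extended}, I would show that each component has non-empty closure in $\mathbb{S}^{1}$ and that these closures necessarily overlap, forcing two components to share a common limit direction. A ray emanating from a point just outside $\minvset{CH}$ can then be arranged to traverse both components, contradicting Theorem~\ref{thm:AR}.

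The delicate case is $\deg P=\deg Q$, with $R(z)=\lambda+\mu/z+o(1/z)$ and $\lambda\in\mathbb{C}^{*}$, so all rays at infinity become nearly parallel to $\lambda$. Writing $\mu/\lambda=a+ib$ with $a,b\in\mathbb{R}$, the component of $R(z)-\lambda$ perpendicular to $\lambda$ at $z=re^{i\theta}$ is proportional to $b\sin(\theta-\arg\lambda)/r$. When $b\neq 0$, rays drift to opposite sides of the direction $\lambda$ depending on the half-plane containing $z$, and a sweeping argument analogous to the previous case forces components to merge. When $\mu/\lambda\in\mathbb{R}$ the drift is purely along $\pm\lambda$ and rays stay genuinely parallel at infinity, leaving room for half-strips parallel to $\lambda$ to separate components, which is exactly the exceptional configuration of the theorem. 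For the bound, I would use the classification of Theorem~\ref{thm:MAINClassification} together with the canonical orientations of global arcs to show that each component's boundary is a topological circle whose traversal contains at least two roots of $PQ$ as orientation-reversing singular points; since the total number of roots is $\deg P+\deg Q$, this yields at most $\frac{1}{2}(\deg P+\deg Q)$ components.

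The principal obstacle is the asymptotic ray analysis needed to force a ray from outside $\minvset{CH}$ to pass through two distinct components in the non-exceptional cases. This requires careful control of $\overline{\minvset{CH}}\cap\mathbb{S}^{1}$ in the compactification, combined with the classification of boundary point types to track how global arcs of distinct components extend to infinity. The parity argument in the exceptional case is likewise delicate: one must rule out hypothetical components whose boundary would close up using fewer than two roots of $PQ$, showing that any such configuration is incompatible with the consistent asymptotic orientation of global arcs in direction $\lambda$.
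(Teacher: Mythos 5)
Your plan has genuine gaps at the two places where the theorem is actually hard. First, your mechanism for $\deg Q-\deg P=\pm1$ cannot work as stated in the case $\deg Q-\deg P=1$: there (for $\Re\lambda\geq 0$, the only non-trivial subcase) $\minvset{CH}$ is \emph{compact} by Theorem~\ref{thm:main}, so no component has non-empty closure in $\mathbb{S}^{1}$ and the ``overlapping limit directions'' argument has nothing to grab onto. The paper disposes of this case by citing the connectedness and contractibility results of \cite{AHN+24}, and handles $\deg Q-\deg P=-1$ by the separate Proposition~\ref{prop:NOFINGER}. Second, and more seriously, in the case $\deg Q-\deg P=0$, $\kappa=1$, $\Im(\mu/\lambda)\neq 0$, merely showing that two components share a limit direction is not a contradiction: all components approach the single direction $\arg\lambda+\pi$, and in the exceptional case $\mu/\lambda\in\mathbb{R}$ they really do coexist while sharing it. What actually distinguishes $b\neq 0$ from $b=0$ is that the perpendicular drift of order $b/r$ \emph{integrates to a divergent (logarithmic) quantity} along trajectories, which is the content of Lemma~\ref{lem:qp0kappa1noline} (no integral curve has a horizontal asymptote). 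The paper's Corollary~\ref{cor:qp0ONEFINGER} then needs Lemma~\ref{lem:qp0CONE} (a cone where $\Im R>0$ and $\Im R'>0$), Corollary~\ref{cor:qp0ASYMPTOLINE} (horizontal support lines produced by sequences of global points), the linear ordering of the components at the end $\pi\in\mathbb{S}^{1}$, and the direct/indirect support-point dichotomy of Lemma~\ref{lem:directsupport} to rule out any component other than the top one. ``A sweeping argument analogous to the previous case'' supplies none of this, and the previous case is itself broken.

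Your counting argument for the exceptional case also rests on a false premise: when $\deg P=\deg Q$ the components of $\minvset{CH}$ are unbounded (they all run off to infinity in the direction $\arg\lambda+\pi$ by Proposition~\ref{prop:qp0}), so their boundaries are not topological circles, and in the constant-$R$ subcase each component is a half-line carrying a single point of $\mathcal{Z}(PQ)$. The paper's bound comes instead from Proposition~\ref{prop:CCrawBOUND}: using the boundedness of the zero set of $tQ(z)+P(z)(z-u)$ and a degree count of the root trail's endpoints, every connected component must contain at least one root of $P$ \emph{and} at least one root of $Q$, whence there are at most $\min(\deg P,\deg Q)\leq\tfrac12(\deg P+\deg Q)$ components. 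If you want to salvage your outline, you should replace the circle-traversal count by this root-trail argument and replace the ``shared limit direction'' contradiction by the quantitative asymptotic analysis above.
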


\subsection{Organization of the paper}

\begin{itemize}
    \item In Section~\ref{sec:preliminary}, we provide the basic background information on Hutchinson invariant sets developed in \cite{AHN+24}, including the results about their asymptotic geometry.
    \item In Section~\ref{sec:localanalysis}, we describe the local geometry around singular points of the vector field $R(z)\partial_{z}$ in terms of their local degree and principal value. We also describe the main properties of the curve of inflections defined by the equation $Im(R')=0$ and we also introduce the notion of horns.
    \item In Section~\ref{sec:Boundaryarcs}, we describe boundary points in the complement to the curve of inflections, introducing $\Gamma$ and $\Delta$ correspondences.
    \item In Section~\ref{sec:SingInflection}, we classify boundary points in the generic locus of the curve of inflections, proving Theorems~\ref{thm:MAINClassification} and ~\ref{thm:MainBound} (in Sections~\ref{sub:classification} and~\ref{sub:bound} respectively). Corollary~\ref{cor:MainLocal} is also proved in Section~\ref{sub:bound}.
    \item In Section~\ref{sec:GlobalGeometry}, we apply the latter results to get precise descriptions of minimal sets in several cases. Theorem~\ref{thm:CC} is proven in Section~\ref{sub:CC}.
\end{itemize}
\medskip 
\noindent 
{\it Acknowledgments.} The third author is supported by the Israel Science Foundation (grants No. 1167/17 and 1347/23), by funding received from the MINERVA Stiftung with the funds from the BMBF of the Federal Republic of Germany, and from the European Research Council (ERC) under the European Union Horizon 2020 research and innovation program (grant agreement No. 802107). The fourth author wants to acknowledge the financial support of his research provided by the Swedish Research Council grant 2021-04900 and the hospitality of the Weizmann institute of Science in April 2021 and January 2024 when substantial part of the project was carried out. He is sincerely grateful to Beijing Institute of Mathematical Sciences and Applications for the support of his sabbatical leave in the Fall of 2023. The fifth author would like to thank France Lerner for valuable remarks about the terminology related to singular points.\newline

\section{Preliminary results and basic properties of \texorpdfstring{$\minvset{CH}$}{MCH}}\label{sec:preliminary}

The following notation will be important throughout this text. 

\begin{notation} Given an operator $T$ as in \eqref{eq:1stN}, we define $p_{\infty},q_{\infty} \in \mathbb{C}^{\ast}$, and $p,q \in \mathbb{N}$ so that
$$P(z)=p_{\infty}z^{p}+o(z^{p});$$
$$Q(z)=q_{\infty}z^{q}+o(z^{q}).
$$

\smallskip
Furthermore, we set  $\lambda = \frac{q_{\infty}}{p_{\infty}} \in \mathbb{C}^{\ast}$ and $\phi_{\infty}=\arg(\lambda)$.

\smallskip
Similarly, for any point $\alpha \in \mathbb{C}$, we have $R(z)=r_{\alpha}(z-\alpha)^{m_{\alpha}}+o(|z-\alpha|^{m_{\alpha}})$ with $r_{\alpha} \neq 0$ and $m_{\alpha} \in \mathbb{Z}$. We denote by $\phi_{\alpha}$ the argument of $r_{\alpha}$.
\end{notation}

\begin{remark}
Observe that  frequently used  affine changes of the variable $z$ are applied to the vector field $R(z)\partial_{z}$ and not to the rational function $R(z)$ itself.
\end{remark}

\subsection{Regularity of the minimal set}\label{sub:regularity}

For an operator $T$ as in \eqref{eq:1stN}, its minimal set $\minvset{CH}$ can be of three possible types:
\begin{itemize}
    \item \textit{regular} if $\minvset{CH}$ coincides with the closure of its interior;
    \item \textit{fully irregular} if $\minvset{CH}$ has empty interior;
    \item \textit{partially irregular} if $\minvset{CH}$ has nonempty interior but is not regular.
\end{itemize}

Actually, irregularity is related to specific reality conditions. The characterization of operators for which $\minvset{CH}$ is fully irregular is contained in Theorem~1.15 of \cite{AHN+24}.

\begin{theorem}\label{thm:irregular}
For an operator $T$ as in \eqref{eq:1stN}, the minimal set $\minvset{CH}$ is fully irregular in the following cases:
\begin{itemize}
    \item $R(z)=\lambda$ for some $\lambda \in \mathbb{C}^{\ast}$;
    \item $R(z)=\lambda(z-\alpha)$ for some $\lambda \notin \mathbb{R}_{<0}$, $\alpha \in \mathbb{C}$ and $\deg Q =1$;
    \item $R(z)=\lambda(z-\alpha)$ for some $\lambda \in \mathbb{R}_{>0}$, $\alpha \in \mathbb{C}$ and $\deg Q \geq 2$;
    \item operators satisfying the following conditions (up to an affine change of variable):
    \begin{itemize}
        \item $R(z)$ is real on $\mathbb{R}$;
        \item roots of $P$ and $Q$ are real, simple and interlacing (i.e. the roots of $P$ and $Q$ alternate along the real axis);
        \item $|\deg Q - \deg P| \leq 1$;
        \item if $\deg Q - \deg P = 1$, then $\lambda \in \mathbb{R}_{>0}$.
    \end{itemize}
\end{itemize}
In any other case, $\minvset{CH}$ has a nonempty interior.
\end{theorem}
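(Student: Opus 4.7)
The plan is to apply the ray characterization of Theorem~\ref{thm:AR} in both directions. For the sufficiency part — that each of the four listed cases forces empty interior — the strategy in each case is to exhibit an explicit closed $S \subset \mathbb{C}$ of real dimension at most one that (i) contains the zeros of $P$ and $Q$ and (ii) satisfies $r(z) \cap S = \emptyset$ for every $z \notin S$. By Theorem~\ref{thm:AR} such an $S$ is $T_{CH}$-invariant, so $\minvset{CH} \subseteq S$ has empty interior. For the necessity part one must, conversely, construct for any operator outside the four listed families a two-dimensional region contained in every $T_{CH}$-invariant set.

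The candidates in cases 1--3 are dictated by the elementary geometry of the rays. In Case~1, $R \equiv \lambda$ forces $Q = \lambda P$, and the candidate is the finite union of lines through the common zeros $\alpha_i$ of $P$ in direction $\lambda$; every ray $r(z) = z + \mathbb{R}_{\geq 0}\lambda$ starting outside $S$ is parallel to these lines and on a different parallel, hence misses $S$. In Cases~2 and~3, $R(z) = \lambda(z - \alpha)$, so every ray lies on the line through $z$ and $\alpha$; solving $z + t \lambda (z - \alpha) = \alpha$ gives $t = -1/\lambda$, which is a valid $t \geq 0$ \emph{only} when $\lambda \in \mathbb{R}_{<0}$, so the hypothesis $\lambda \notin \mathbb{R}_{<0}$ (resp. $\lambda \in \mathbb{R}_{>0}$) is precisely what forces external rays to point away from $\alpha$. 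The candidate is then $\{\alpha\}$ in Case~2 (where $P$ is constant and has no zeros) and the union of the half-lines from $\alpha$ through each zero of $P$ in Case~3. For Case~4, since $R$ is real on $\mathbb{R}$, the Schwarz reflection principle gives $\overline{R(\bar z)} = R(z)$, so $\Im R$ has a definite sign on each open half-plane determined by the interlacing pattern; a sign analysis shows that the ray from any non-real $z$ drifts monotonically away from $\mathbb{R}$. The candidate is a finite union of real segments bounded by consecutive zeros of $PQ$, and the extra hypothesis $\lambda > 0$ when $\deg Q - \deg P = 1$ is what pins down the direction at infinity so that rays from very distant points do not curl back onto $S$.

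The main obstacle is the necessity direction: showing that every operator outside these four families has $\minvset{CH}$ of nonempty interior. The approach is to use the local analysis of $R(z)\partial_z$ at its zeros, poles and at infinity to locate a compact region whose piecewise smooth boundary is swept by associated rays each pointing \emph{inward}; such a region must be contained in every $T_{CH}$-invariant set by Theorem~\ref{thm:AR} and hence in $\minvset{CH}$. The delicate part is checking that the failure of the four listed degeneracies — constancy of $R$, linearity of $R$ with the wrong sign of $\lambda$, or the real interlacing configuration — is exactly what guarantees the existence of such an inward-pointing boundary. This amounts to a finite combinatorial dichotomy on the arguments of the leading coefficients and residues of $R\partial_z$, and is the step I would defer in detail to \cite{AHN+24}, since the present paper treats this classification as input.
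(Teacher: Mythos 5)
First, note that the paper does not prove this statement at all: it is imported verbatim from Theorem~1.15 of \cite{AHN+24}, so there is no internal proof to match your proposal against. Judged on its own terms, your explicit invariant sets for Cases~1--3 are correct and are exactly the right constructions (parallel lines in direction $\lambda$ through the common zeros; the singleton $\{\alpha\}$; the radial half-lines from $\alpha$ through $\mathcal{Z}(P)$, where $1+t\lambda\geq 1$ for $\lambda>0$ keeps the ray on the outward part of its radius). The two remaining pieces, however, each contain a genuine error. In Case~4 your claim that ``the ray from any non-real $z$ drifts monotonically away from $\mathbb{R}$'' is false in general, and with it the whole strategy of taking a candidate $S\subset\mathbb{R}$. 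Take $Q(z)=z$, $P(z)=z^2-1$: the roots are real, simple and interlacing and $\deg Q-\deg P=-1$, so this operator belongs to the fourth family. Here $R(i)=-i/2$, so the associated ray $r(i)=\{i(1-t/2):t\geq 0\}$ passes through $0\in\mathcal{Z}(Q)$; by Theorem~\ref{thm:AR} no closed subset of $\mathbb{R}$ containing $\mathcal{Z}(PQ)$ can be $T_{CH}$-invariant, and indeed $T[(z-0)^2]=z(z^2+1)$ puts $\pm i$ into $\minvset{CH}$. The sign of $\operatorname{Im}R$ on the upper half-plane is controlled by the signs of the residues of $R$ at its real poles, and the theorem's hypotheses only pin this down when $\deg Q-\deg P=1$. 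So in Case~4 the fully irregular minimal set is in general a union of curved root trails leaving the real axis, and establishing that this union has empty interior requires the root-trail machinery of \cite{AHN+24}, not a one-dimensional candidate inside $\mathbb{R}$.

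The necessity direction is not merely deferred; the strategy you sketch for it is logically inverted. A compact region whose boundary rays all point \emph{inward} is (once it contains $\mathcal{Z}(PQ)$) precisely a $T_{CH}$-invariant set by Theorem~\ref{thm:AR}, hence an \emph{upper} bound for $\minvset{CH}$ --- it shows $\minvset{CH}$ is contained in that region, not that the region is contained in $\minvset{CH}$. To force a two-dimensional set inside every invariant set one must argue in the forward direction: starting from $\mathcal{Z}(PQ)$, every invariant set must contain the root trails $\mathfrak{tr}_{u}$ of its points and the backward trajectories of $R(z)\partial_z$ (Proposition~\ref{prop:IntegralCurve}), and one shows that outside the four listed families these swept-out sets accumulate on an open set. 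As written, your argument for ``in any other case $\minvset{CH}$ has a nonempty interior'' would prove nothing even if the local analysis were carried out.
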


In this paper, we will always assume that $\minvset{CH}$ has a nonempty interior.

\begin{remark}\label{rem:degree}
If $\deg P + \deg Q \leq 1$, then $\minvset{CH}$ is either fully irregular or coincides with $\mathbb{C}$ (see Theorem 1.15 of \cite{AHN+24}). Therefore, our operators will always satisfy $\deg P + \deg Q \geq 2$. This implies in particular that $d = 3\deg P + \deg Q -1$ satisfies $d \geq 3$.
\end{remark}

Referring to the closure of the interior of $\minvset{CH}$ as the \textit{regular locus} and its complement in $\minvset{CH}$ as the \textit{irregular locus}, we observe that the latter is contained in very specific lines of the plane.

\begin{definition}\label{def:Rinvariant}
For a given rational function $R(z)$, a line $\Lambda$ is called $R$-invariant if for any $z \in \Lambda$ such that $R(z)$ is defined, we have $z+R(z) \in \Lambda$.

\medskip
In particular, up to an affine change of variable, we can assume $\Lambda = \mathbb{R}$ and thus $R(z)$ is a real rational function. Besides, a $R$-invariant line is automatically an irreducible component of the curve of inflections $\mathfrak{I}_{R}$.
\end{definition}

\begin{definition}\label{defn:tail}
For an operator $T$ whose minimal set $\minvset{CH}$ is not fully irregular, a \textit{tail} is a semi-open straight segment $[\alpha,\beta[$ in $\minvset{CH}$ satisfying the following conditions:
\begin{itemize}
    \item the segment $]\alpha,\beta]$ belongs to an $R$-invariant line;
    \item for any $z \in ]\alpha,\beta]$, $\frac{\beta-\alpha}{R(z)} \in \mathbb{R}_{>0}$;
    \item for any $z \in ]\alpha,\beta]$, $z$ is disjoint from the regular locus of $\minvset{CH}$;
    \item $\alpha$ belongs to the regular locus of $\minvset{CH}$;
    \item $\beta \in \mathcal{Z}(PQ)$;
    \item $\beta$ is a root of the same multiplicity for both $P$ and $Q$.
\end{itemize}
In particular, every tail belongs to a $R$-invariant line.
\end{definition}

The following fact has been proven in Corollary~7.8 of \cite{AHN+24}.

\begin{theorem}\label{thm:TAIL}
For an operator $T$ whose minimal set $\minvset{CH}$ is not fully irregular, the irregular locus of $\minvset{CH}$ is a (possibly empty) finite union of tails.
\end{theorem}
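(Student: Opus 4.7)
The plan is to study the irregular locus $\mathcal{I}_{\mathrm{ir}}:=\minvset{CH}\setminus \overline{\mathrm{int}(\minvset{CH})}$ via the ray characterization in Theorem~\ref{thm:AR}. I would first pick an arbitrary $z_0 \in \mathcal{I}_{\mathrm{ir}}\setminus \mathcal{Z}(PQ)$; by definition, some neighborhood $U$ of $z_0$ contains no interior points of $\minvset{CH}$, so $U \cap \minvset{CH}$ is at most one-dimensional. At any smooth point $z_1$ of this local piece, the tangent direction must be proportional to $R(z_1)$: otherwise, one can find $z' \notin \minvset{CH}$ arbitrarily close to $z_1$ whose associated ray $r(z')$ crosses the arc transversally, contradicting condition~(2) of Theorem~\ref{thm:AR}. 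Therefore $U\cap \minvset{CH}$ is a union of integral curve arcs of $R(z)\partial_z$.

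Next I would sharpen this to show each such integral curve arc is in fact a straight segment. If it had nonzero curvature at some point $z_1$, I would take $z'$ slightly off the curve on its convex side with ray $r(z')$ nearly tangent to the curve; a short geometric estimate comparing the linear deviation of the straight ray against the quadratic bending of the curve shows $r(z')$ must cross the curve at a nearby point, again violating Theorem~\ref{thm:AR}. Hence $U\cap \minvset{CH}$ lies on a straight line which, being an integral curve of $R(z)\partial_z$, is $R$-invariant in the sense of Definition~\ref{def:Rinvariant}.

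Finally, I would take a maximal connected component $C$ of $\mathcal{I}_{\mathrm{ir}}$ along such an $R$-invariant line. One endpoint $\alpha$ must lie in $\overline{\mathrm{int}(\minvset{CH})}$, producing the regular endpoint. The opposite endpoint $\beta$ cannot be a generic point of $\mathbb{C}\setminus \mathcal{Z}(PQ)$: any such terminal point could be excluded from $\minvset{CH}$ without violating $T_{CH}$-invariance, contradicting minimality. Hence $\beta \in \mathcal{Z}(PQ)$. The orientation condition $(\beta-\alpha)/R(z)\in \mathbb{R}_{>0}$ comes from the requirement that the ray from each $z\in \, ]\alpha,\beta]$ points toward the regular interior reached through $\alpha$. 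For the equal-multiplicity condition at $\beta$, I would appeal to the local analysis of Section~\ref{sec:localanalysis}: if the multiplicities of $P$ and $Q$ at $\beta$ differed, the vector field $R(z)\partial_z$ would have a genuine zero or pole at $\beta$, and the horn/separatrix structure forced at such a singularity would oblige $\minvset{CH}$ to contain additional material near $\beta$ beyond a single straight segment, contradicting the geometry of~$C$. Finiteness of the decomposition is immediate since each tail terminates at one of the finitely many points of $\mathcal{Z}(PQ)$.

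The main obstacle is the curvature-rigidity step: turning the heuristic ``straight ray vs.\ quadratically bending integral curve'' into a rigorous, quantitative comparison in the presence of a smoothly varying field $R(z)\partial_z$. Closely related is the analysis at the terminal root $\beta$, where the relative multiplicities in $P$ and $Q$ control whether the field is regular or singular and hence what endpoint behaviors are compatible with the one-dimensional structure of $C$. Both steps ultimately rest on the ray characterization of Theorem~\ref{thm:AR} and the local structural results of Section~\ref{sec:localanalysis}.
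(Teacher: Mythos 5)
First, a point of reference: this paper does not prove Theorem~\ref{thm:TAIL} at all --- it is imported verbatim from Corollary~7.8 of \cite{AHN+24}. So there is no in-paper proof to compare against; I can only assess your argument on its own terms, and as it stands it has real gaps.

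The most serious one is the very first step. From ``$z_0$ is not in the closure of the interior'' you conclude that $U\cap \minvset{CH}$ is ``at most one-dimensional'' and then immediately treat it as a finite union of smooth arcs with well-defined tangents and curvature. A closed set with empty interior need not be a curve; ruling out Cantor-like or otherwise pathological thin pieces is precisely the hard structural content here, and your smoothness and curvature arguments cannot even be stated until that is done. The mechanism that actually forces one-dimensional \emph{analytic} structure is that every point of $\minvset{CH}$ drags along its backward trajectory (Proposition~\ref{prop:IntegralCurve}) and its root trails, which are real-analytic arcs contained in $\minvset{CH}$; through every non-singular point of the irregular locus there is therefore a genuine integral-curve arc inside $\minvset{CH}$, and only then does your (correct) ``linear ray versus quadratic bending'' argument apply to force zero curvature and hence an $R$-invariant line. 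Your transversality argument at ``smooth points'' is a substitute for this, but it presupposes what needs to be proved.

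The endpoint analysis is also asserted rather than established. (i) The orientation condition is justified backwards: Definition~\ref{defn:tail} requires $\tfrac{\beta-\alpha}{R(z)}\in\mathbb{R}_{>0}$, i.e.\ the field points \emph{away} from the regular locus toward the root $\beta$; the reason is that the tail is exactly the backward trajectory of $\beta\in\mathcal{Z}(PQ)$, forced into $\minvset{CH}$ by Proposition~\ref{prop:IntegralCurve} --- not that rays point ``toward the regular interior through $\alpha$.'' (ii) You give no argument that the non-root endpoint $\alpha$ must lie in the regular locus rather than, say, the segment being a whole $R$-invariant line or attaching to nothing (the hypothesis only says $\minvset{CH}$ is not fully irregular globally, not near this component). (iii) The equal-multiplicity condition at $\beta$ is waved at via ``horn/separatrix structure''; one actually has to show that a zero or pole of $R$ at $\beta$ is incompatible with $\minvset{CH}$ being a single straight segment near $\beta$, which requires the local analysis of $\mathcal{K}_\alpha,\mathcal{L}_\alpha$ in the spirit of Corollary~\ref{cor:KLalpha}. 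Finally, the minimality/trimming step needs an excision argument (as in Lemma~\ref{lem:excision}): one must check that the rays of the trimmed points stay inside the trimmed piece or the complement. The skeleton of your argument is the natural one, but without these inputs it does not constitute a proof.
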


In particular, if $P$ and $Q$ have no common roots, then the minimal set of the corresponding operator is either regular or fully irregular.

\subsection{Extended complex plane}\label{sub:extended}

Following Theorem~\ref{thm:AR}, $T_{CH}$-invariant sets are characterized by the position of the associated rays starting in their complements. Let us introduce a certain compactification\footnote{Notice that the most frequently used compactification of $\mathbb{C}$ is $\setRS=\bC P^1$.} of $\bC$ which comes very handy in our considerations. We baptise it the \textit{extended complex plane} $\mathbb{C} \cup \mathbb{S}^{1}\supset \mathbb{C}$.\newline

\emph{The extended complex plane} $\mathbb{C} \cup \mathbb{S}^{1}$ is set-theoretically the disjoint union of $\mathbb{C}$ and $\mathbb{S}^{1}$ endowed with the topology defined by the following basis of neighborhoods:
\begin{itemize}
    \item for a point  $x\in \mathbb{C}$, we choose the usual open neighborhoods of $x$ in $\mathbb{C}$;
    \item for a direction $\theta \in \mathbb{S}^{1}$, we choose open neighborhoods of the form $I \cup C(z,I)$ where $I$ is an open interval of $\mathbb{S}^{1}$ containing $\theta$ and $C(z,I)$ is an open cone with apex $z \in \mathbb{C}$ whose opening (i.e. the interval of directions) is $I$.
\end{itemize}

\begin{definition} Given $R(z)$ as above, let $p\in \bC$ be a non-singular point of $R(z)$. We define $\sigma(p)$ as the argument of $R(p)$. We think of $\sigma(p)$ as a point of the circle at infinity. 
\end{definition}

One can easily see that $\mathbb{S}^{1}$ of the extended plane $\mathbb{C} \cup \mathbb{S}^{1}$ can be identified with the above circle at infinity. The extended plane is compact and homeomorphic to a closed disk. In particular, usual straight lines in $\mathbb{C}$ have compact closures in $\mathbb{C} \cup \mathbb{S}^{1}$. (Below we will make no distinction between a real line in $\mathbb{C}$ and its closure in $\mathbb{C} \cup \mathbb{S}^{1}$). \textit{Open half-planes} in $\mathbb{C} \cup \mathbb{S}^{1}$ are, by definition, connected components of the complement to a line.\newline

Given a $T_{CH}$-invariant set $S \subset \mathbb{C}$, we denote by $\overline{S}$ its closure in the extended plane $\mathbb{C}\cup \mathbb{S}^{1}$.\newline

The following result, but with a slightly different formulation, has been proved in Lemma 4.4 of \cite{AHN+24}.

\begin{lemma}\label{lem:ArcCone}
Given an $T_{CH}$-invariant set $S\subset \mathbb{C}$,  let $\alpha:[0,1]\rightarrow \mathbb{C}$ be such that:
\begin{itemize}
    \item $\forall t \in (0,1)$, $\alpha_{t} \in S^{c}$;
    \item $\sigma(\alpha_{0}) \neq \sigma(\alpha_{1})$;
    \item $\sigma(\alpha)$ is homotopic to the positive arc from $\sigma(\alpha_{0})$ to $\sigma(\alpha_{1})$ in the circle at infinity via a homotopy $H(t,x):[0,1]\times [0,1]$ such that $H(0,x_0)=\sigma(\alpha(0)), $ $H(1,x_0)=\sigma(\alpha(1))$ for all $x_0\in [0,1]$. 
\end{itemize}
If $X$ denotes the connected component of in $\mathbb C \cup \mathbb S^1$ containing the interval $]\sigma(\alpha_{0}),\sigma(\alpha_{1})[$ in the complement of $r(\alpha_{0}) \cup \alpha \cup r(\alpha_{1})$, then $X \subset S^{c}$.
\end{lemma}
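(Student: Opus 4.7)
The plan is to use Theorem~\ref{thm:AR} together with a winding-number (degree) argument: every point of $X\cap\mathbb{C}$ should lie on some ray $r(\alpha_t)$ with $t\in(0,1)$, which by Theorem~\ref{thm:AR} forces it into $S^c$.

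Concretely, I would introduce the continuous sweep map
$$\bar\Phi : [0,1]\times[0,\infty] \longrightarrow \mathbb{C}\cup\mathbb{S}^1,\qquad \bar\Phi(t,s) = \alpha_t + s\,R(\alpha_t)\ \text{for } s<\infty,\ \ \bar\Phi(t,\infty) = \sigma(\alpha_t).$$
Continuity at $s=\infty$ is immediate from the definition of the extended topology. By Theorem~\ref{thm:AR} applied at each $\alpha_t\in S^c$ ($t\in(0,1)$), together with $\alpha_t\in S^c$ itself, the image of the open strip $(0,1)\times[0,\infty)$ is disjoint from $S$. Hence it suffices to verify that every $z_0\in X\cap\mathbb{C}$ equals $\bar\Phi(t,s)$ for some $(t,s)$ with $t\in(0,1)$ and $s\in[0,\infty)$.

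Fix $z_0\in X\cap\mathbb{C}$ and examine the restriction of $\bar\Phi$ to the boundary of the rectangle (topologically a circle): the four sides trace $\alpha$, $\overline{r(\alpha_0)}$, $\overline{r(\alpha_1)}$ and $\sigma\circ\alpha$. By the homotopy hypothesis on $\sigma\circ\alpha$, this loop in $\mathbb{C}\cup\mathbb{S}^1$ is homotopic to the loop $\gamma$ formed by concatenating $\alpha$, $\overline{r(\alpha_1)}$, the reverse of the positive arc, and the reverse of $\overline{r(\alpha_0)}$. Since $X$ is by definition the connected component of $(\mathbb{C}\cup\mathbb{S}^1)\setminus(r(\alpha_0)\cup\alpha\cup r(\alpha_1))$ that contains the positive arc, the loop $\gamma$ has winding number $\pm 1$ around $z_0$, and the same therefore holds for $\bar\Phi|_{\partial\mathrm{rect}}$. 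Now $(\mathbb{C}\cup\mathbb{S}^1)\setminus\{z_0\}$ deformation retracts onto $\mathbb{S}^1$, so a map of a closed disk into this punctured disk whose boundary has nonzero degree cannot exist; hence $\bar\Phi$ must attain the value $z_0$. Ruling out a boundary preimage is straightforward: the four sides of the rectangle are sent into $\alpha\cup\overline{r(\alpha_0)}\cup\overline{r(\alpha_1)}\cup\mathbb{S}^1$, which does not meet $X\cap\mathbb{C}$. Thus $z_0\in r(\alpha_t)\cup\{\alpha_t\}\subset S^c$ for some $t\in(0,1)$, which yields $X\cap\mathbb{C}\subset S^c$ and, since $S\subset\mathbb{C}$, the desired inclusion $X\subset S^c$.

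The main obstacle I anticipate is making the winding-number step rigorous when $\alpha$, $r(\alpha_0)$ and $r(\alpha_1)$ are not in general position, so that $\partial X$ need not be a Jordan curve and the sweep $\bar\Phi$ may fold along itself. I would handle this either by a small generic perturbation of $\alpha$ followed by a limiting argument using that $S$ is closed, or by computing the degree of $\bar\Phi|_{\partial\mathrm{rect}}$ at $z_0$ directly as the algebraic intersection number with a generic ray from $z_0$ to $\mathbb{S}^1$, carefully accounting for the oriented contributions of $\alpha$, $r(\alpha_0)$ and $r(\alpha_1)$.
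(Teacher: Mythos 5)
The paper does not actually prove this lemma: it is quoted from Lemma~4.4 of \cite{AHN+24} ``with a slightly different formulation,'' so there is no in-paper proof to compare against. Judged on its own, your sweep-map argument is a valid route and almost certainly close in spirit to the original: the key facts you use --- that $r(\alpha_t)\cup\{\alpha_t\}\subset S^c$ for $t\in(0,1)$ by Theorem~\ref{thm:AR}, that $\bar\Phi$ extends continuously to $s=\infty$ in the topology of $\mathbb{C}\cup\mathbb{S}^1$, that a disk mapping into $(\mathbb{C}\cup\mathbb{S}^1)\setminus\{z_0\}\simeq\mathbb{S}^1$ has null-homotopic boundary, and that the boundary image $\alpha\cup\overline{r(\alpha_0)}\cup\overline{r(\alpha_1)}\cup\sigma(\alpha([0,1]))$ misses $X\cap\mathbb{C}$ --- are all correct, and the endpoint-fixed homotopy $H$ lives in $\mathbb{S}^1$, hence avoids $z_0$, so the replacement of $\sigma\circ\alpha$ by the positive arc is legitimate.

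The one step that is asserted rather than proved is exactly the one you flag: ``since $X$ contains the positive arc, $\gamma$ has winding number $\pm1$ around $z_0$.'' As written this is a non sequitur, and of your two proposed repairs the perturbation/general-position one is more delicate than necessary. The clean fix is in two parts. First, the winding number of $\gamma$ is constant on $X\cap\mathbb{C}$: this set is disjoint from the image of $\gamma$ (it avoids $r(\alpha_0)\cup\alpha\cup r(\alpha_1)$ by definition of $X$ and avoids $\mathbb{S}^1$), and it is connected, because an open connected subset of $\mathbb{C}\cup\mathbb{S}^1$ remains connected after removing its trace on $\mathbb{S}^1$ (every boundary direction in $X$ has a cone neighborhood inside $X$ whose finite part is connected). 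Second, evaluate at one convenient point: pick a direction $\theta_0$ in the open positive arc, distinct from $\sigma(\alpha_0),\sigma(\alpha_1)$; for $M$ large the point $w=Me^{i\theta_0}$ lies in $X\cap\mathbb{C}$ (a thin far-out cone around $\theta_0$ avoids the compact arc $\alpha$ and eventually avoids both rays, whose directions differ from $\theta_0$), and the outward ray from $w$ in direction $\theta_0$ meets $\gamma$ in exactly one transversal point (on the arc at infinity, after truncating the loop at radius $2M$), giving winding number $\pm1$. With this inserted, your proof is complete; the final remark that points of $X\cap\mathbb{S}^1$ are trivially outside $S\subset\mathbb{C}$ disposes of the rest of $X$.
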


\subsection{Integral curves}

Another result has been proved in Proposition A.2 of \cite{AHN+24}.

\begin{proposition}\label{prop:IntegralCurve}
Given a $T_{CH}$-invariant set $S\subset \mathbb{C}$ and some point $z_{0} \in S$, if there is a positively oriented integral curve $\gamma:[0,\epsilon[ \rightarrow \mathbb{C}$ of the vector field $R(z)\partial_{z}$ such that $\lim\limits_{t \rightarrow \epsilon} \gamma(t)= z_{0}$, then for any $t \in [0,\epsilon]$, $\gamma(t) \in S$.
\end{proposition}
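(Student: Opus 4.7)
The plan is to apply Theorem~\ref{thm:AR} iteratively. In its contrapositive form this theorem says: if $z\in\mathbb{C}$ satisfies $P(z)Q(z)\neq 0$ and $z+tR(z)\in S$ for some $t\geq 0$, then $z\in S$ (otherwise the ray $r(z)$ would meet $S$). Equivalently, $S$ is closed under taking ``ray-preimages''. I will chain such preimages along the curve $\gamma$ by running an implicit Euler scheme backward from $z_0$.

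First I would reparametrize by $\tilde\gamma(\tau):=\gamma(\epsilon-\tau)$ for $\tau\in[0,\epsilon]$, so that $\tilde\gamma$ solves the backward ODE $\tilde\gamma'=-R(\tilde\gamma)$ with $\tilde\gamma(0)=z_0$. For any target time $\tau^{\ast}\in(0,\epsilon)$, the compact arc $\tilde\gamma([0,\tau^{\ast}])$ is contained in a compact neighborhood $K$ disjoint from the (finitely many) zeros and poles of $R$. For $\delta>0$ small enough, the holomorphic map $\phi_\delta(z):=z+\delta R(z)$ has derivative $1+\delta R'(z)$ uniformly close to $1$ on $K$ and is therefore a local diffeomorphism there. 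I define recursively $z^{(\delta)}_0:=z_0$ and $z^{(\delta)}_k:=\phi_\delta^{-1}(z^{(\delta)}_{k-1})$ for $k=1,\dots,\lfloor\tau^{\ast}/\delta\rfloor$, always taking the preimage branch close to $z^{(\delta)}_{k-1}$. An immediate induction shows every $z^{(\delta)}_k$ belongs to $S$: the identity $z^{(\delta)}_{k-1}=z^{(\delta)}_k+\delta R(z^{(\delta)}_k)$ places $z^{(\delta)}_{k-1}\in r(z^{(\delta)}_k)\cap S$, and Theorem~\ref{thm:AR} forces $z^{(\delta)}_k\in S$.

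The recursion is precisely the implicit Euler scheme for $\tilde\gamma'=-R(\tilde\gamma)$, so a standard convergence result (for a Lipschitz vector field on a compact set) gives $z^{(\delta)}_k\to\tilde\gamma(\tau)$ uniformly on $[0,\tau^{\ast}]$ as $\delta\to 0$ with $k\delta\to\tau$. Closedness of $S$ then yields $\tilde\gamma(\tau)\in S$ for every $\tau\in[0,\tau^{\ast}]$. Letting $\tau^{\ast}\nearrow\epsilon$ and using closedness of $S$ once more at the endpoint (together with Theorem~\ref{thm:AR}(1) in the degenerate case $\epsilon=\infty$, where $\gamma(0)$ would be a zero of $Q$) we obtain $\gamma(t)\in S$ for every $t\in[0,\epsilon]$.

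The delicate point is to ensure that the local preimages $z^{(\delta)}_k$ genuinely shadow $\tilde\gamma$ instead of escaping to a far-away branch of $\phi_\delta^{-1}$. This is the classical consistency-plus-stability argument for implicit Euler, which works as soon as $\delta$ is small compared to the Lipschitz constant of $R$ on a fixed tubular neighborhood of $\tilde\gamma([0,\tau^{\ast}])$. Since this neighborhood is bounded away from the poles of $R$ by construction, the fact that $R$ is merely a rational function introduces no additional difficulty.
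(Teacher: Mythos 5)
Your strategy is sound, and it is essentially the mirror image of the argument this paper itself runs in Lemma~\ref{lem:horn out}: there an \emph{explicit} forward Euler broken line $p_i=p_{i-1}+\delta R(p_{i-1})$ propagates membership in $S^{c}$ along forward trajectories, while you use the \emph{implicit} scheme $z_{k-1}^{(\delta)}=z_k^{(\delta)}+\delta R(z_k^{(\delta)})$ to propagate membership in $S$ along backward trajectories via the contrapositive of condition (2) in Theorem~\ref{thm:AR}. (Note that the paper does not prove this proposition at all — it imports it as Proposition~A.2 of \cite{AHN+24} — so I am comparing with the natural argument and with Lemma~\ref{lem:horn out}.) One imprecision is harmless: for $r(z_k^{(\delta)})$ to be defined you need the iterates to avoid $\mathcal{Z}(PQ)$, not merely the zeros and poles of $R$ (these differ when $P$ and $Q$ share a root); but if some iterate lands in $\mathcal{Z}(PQ)$ it lies in $S$ by condition (1) of Theorem~\ref{thm:AR}, so the induction closes either way.

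The genuine gap is at the anchor point $z_0$. A positively oriented trajectory can reach a \emph{pole} of $R$ in finite time (for a simple pole $R(z)\sim c/(z-z_0)$ one has $(\gamma(t)-z_0)^2\sim 2c(t-\epsilon)$), and this is not an exotic case for the paper: Proposition~\ref{prop:localEND} explicitly allows simple poles as endpoints of local arcs, and the separatrices in the examples with $R(z)=-1/z$ end at the pole. In that situation no compact neighborhood of $\tilde\gamma([0,\tau^{\ast}])$ avoids the poles of $R$, $R$ is not Lipschitz near the anchor, and the first implicit step degenerates: the equation $z_0=z_1+\delta R(z_1)$ has its relevant solutions at distance of order $\sqrt{\delta}$ from $z_0$ rather than $O(\delta)$, there are several of them equidistant from $z_0$ (one for each incoming separatrix direction), and your selection rule ``take the preimage branch close to $z_{k-1}$'' does not single out the one lying on $\gamma$. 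So the advertised consistency-plus-stability argument does not apply to the first step, and the proof as written only covers the case where $z_0$ is a regular point of $R$. The hole is local and fixable — for instance, observe that the root trail $\mathfrak{tr}_{z_0}\setminus\{z_0\}$ lies in $S$ by a single application of Theorem~\ref{thm:AR}, apply your (already proved) regular-anchor case to backward trajectories starting at trail points $w_n\to z_0$, and conclude by continuous dependence on initial conditions together with closedness of $S$ — but some such supplementary argument must be supplied.
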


When referring to the proposition above, we say that the bounded \emph{backward trajectories} of $R(z)\partial_{z}$ of points in any invariant set $S$ belongs to $S$.

\subsection{Root trails}\label{sub:roottrail}

For any point $u \in \mathbb{C}$, the \textit{root trail} $\mathfrak{tr}_{u}$ of $u$ is the closure of the set of points $z$ such that the associated ray $r(z)$ contains $u$. Except for the trivial cases described in Section~3 of \cite{AHN+24}, root trails are plane real-analytic curves. By definition, the root trail of any point of $\minvset{CH}$ is also contained in $\minvset{CH}$. Furthermore, for any fixed $u\in \bC$, we defined a \emph{$t$-trace} (corresponding to $u$) as any continuous function $\gamma(t)$ such that 
$$tQ(\gamma(t))+(\gamma(t)-u)P(\gamma(t))=0$$
for all $t\geq 0$. That is, any $t$-trace $\gamma(t)$ is a concatenation of parts of $\mathfrak{tr}_u$ such that the resulting curve is continuous for any $t\geq 0$.

\begin{lemma}\label{lem:RootTrailSlope}
Consider a linear differential operator $T$ given by \eqref{eq:1stN} and some point $u \in \mathbb{C}$. Assuming that $R(z)$ is not of the form $\lambda(z-u)$,
then 

(i) for any point $u \in \mathbb{C}$ and any point $z_{0} \notin \mathcal{Z}(PQ)$ such that $z_{0} \in \mathfrak{tr}_{u}$ and $R(z_{0})+(u-z_{0})R'(z_{0}) \neq 0$,  the root trail $\mathfrak{tr}_{u}$ has a unique branch passing through $z_{0}$ and its tangent slope is the argument of $\frac{R^{2}(z_{0})}{R(z_{0})+(u-z_{0})R'(z_{0})}$ (mod $\pi$).
\par

(ii) If $R(z_{0})+(u-z_{0})R'(z_{0}) = 0$ and $m\ge 2 $ is the smallest integer such that $R^{(m)}(z_{0}) \neq 0$, then $\mathfrak{tr}_{u}$ has $m$ intersecting branches at $z_{0}$. Their tangent slopes are:
$$
\frac{\theta_{0}}{m} +\frac{k\pi}{m},
$$
where $\theta_{0}$ is the argument of $\frac{R(z_{0})}{R^{(m)}(z_{0})}$ and $k \in \mathbb{Z}/m\mathbb{Z}$.
\end{lemma}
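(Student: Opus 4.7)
The plan is to realize the root trail as a level set of the single meromorphic function $F(z) := (u-z)/R(z)$, and then to perform the local analysis at $z_0$ via Taylor expansion of $F$. The starting observation is that $z \in \mathfrak{tr}_{u}$ is equivalent to $F(z) \in \mathbb{R}_{\geq 0}$ (i.e., the parameter $t$ for which $u = z + tR(z)$ is nonnegative real). The hypothesis $R(z) \not\equiv \lambda(z-u)$ ensures that $F$ is nonconstant, while $z_0 \notin \mathcal{Z}(PQ)$ makes $F$ holomorphic at $z_0$ with $F(z_0) \in \mathbb{R}_{\geq 0}$. A direct computation gives $F'(z_0) = -(R(z_0) + (u-z_0)R'(z_0))/R(z_0)^2$, which is the workhorse in both parts.

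For part (i), the hypothesis $R(z_0) + (u-z_0)R'(z_0) \neq 0$ is exactly $F'(z_0) \neq 0$, so by the implicit function theorem $\{\Im F = 0\}$ is a single real-analytic arc near $z_0$. Differentiating $\Im(F \circ \gamma) = 0$ yields $\Im(F'(z_0)\gamma'(0)) = 0$, so the tangent direction lies in $\mathbb{R} \cdot 1/F'(z_0)$. Taking arguments gives modulo $\pi$ the stated slope $\arg(R(z_0)^2/(R(z_0) + (u-z_0)R'(z_0)))$.

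For part (ii), where $F'(z_0)=0$, the linchpin is a realness observation: combined with $R(z_0) \neq 0$, the vanishing of $F'(z_0)$ forces $u \neq z_0$ and $R'(z_0) = -R(z_0)/(u-z_0) = -1/F(z_0) \in \mathbb{R}_{<0}$, so $\arg R'(z_0) \equiv 0 \pmod \pi$. Next, I would differentiate the identity $u - z = F(z) R(z)$ repeatedly. Matching Taylor coefficients at $z_0$ and using $R^{(j)}(z_0) = 0$ for $2 \leq j \leq m-1$, a short induction yields $F^{(j)}(z_0) = 0$ for $1 \leq j \leq m-1$ together with
$$F^{(m)}(z_0) = -\frac{F(z_0)\, R^{(m)}(z_0)}{R(z_0)} = \frac{R^{(m)}(z_0)}{R(z_0)\, R'(z_0)}.$$
Therefore $F(z) - F(z_0) \sim c(z-z_0)^m$ near $z_0$ with $c = F^{(m)}(z_0)/m!$, so the real curve $\{\Im F = 0\}$ splits into $m$ analytic branches tangent to the directions $\phi$ solving $m\phi \equiv -\arg c \pmod \pi$. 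Using $\arg R'(z_0) \equiv 0 \pmod \pi$, one obtains $-\arg c \equiv \arg R(z_0) - \arg R^{(m)}(z_0) = \theta_0 \pmod \pi$, producing the asserted slopes $\theta_0/m + k\pi/m$ for $k \in \mathbb{Z}/m\mathbb{Z}$. All $m$ branches lie in $\mathfrak{tr}_u$ (not merely in $\{\Im F = 0\}$) because $F(z_0) > 0$ propagates $\Re F > 0$ to a neighborhood of $z_0$.

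The main conceptual hurdle is recognizing the negative-reality of $R'(z_0)$ at a singular point of the trail; without this, the argument of $F^{(m)}(z_0)$ would carry a spurious $\arg R'(z_0)$ term and the formula would fail to match $\theta_0 = \arg(R(z_0)/R^{(m)}(z_0))$. The remaining ingredients, namely the Taylor-coefficient recursion and the standard tangent-cone description of $\{\Im F = 0\}$ at an order-$m$ zero of $F - F(z_0)$, are essentially routine.
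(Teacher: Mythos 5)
Your proof is correct and follows essentially the same route as the paper: both realize $\mathfrak{tr}_u$ as a level set $\{\Im F=0\}$ of a meromorphic function and read off the branches and tangent slopes from the order and argument of the leading Taylor coefficient of $F-F(z_0)$, using the key realness observation $\arg R'(z_0)\equiv 0\pmod\pi$ in the degenerate case. The only (immaterial) difference is that you take $F=(u-z)/R(z)$ where the paper takes its reciprocal $R(z)/(u-z)$ and cites its Lemma on branches of $\{\Im F=0\}$ rather than rederiving it.
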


Before proving  Lemma~\ref{lem:RootTrailSlope} we prove the next two Lemmas.
\begin{lemma}\label{lem:trajcurvature}
        If $\gamma(t)$ is smooth planar curve, $\gamma(0)=z_0$, and $\dot\gamma(t)=G(\gamma(t))$ for some function $G$ holomorphic and non-vanishing at $z_0$ then the sign of the curvature of $\gamma(t)$ at $z_0$ coincides with the sign of $\Im G'(z_{0})$.
\end{lemma}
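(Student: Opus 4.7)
The plan is to use the standard complex-analytic formula for the signed curvature of a smooth planar curve: if $\gamma(t)$ is regular, then at $t=0$
$$\kappa(0) = \frac{\Im\bigl(\overline{\dot\gamma(0)}\,\ddot\gamma(0)\bigr)}{|\dot\gamma(0)|^{3}}.$$
Since the denominator is strictly positive, the sign of $\kappa(0)$ equals the sign of the numerator's imaginary part, so the whole problem reduces to evaluating $\overline{\dot\gamma(0)}\,\ddot\gamma(0)$.

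The second step is to differentiate the ODE once more. Because $G$ is holomorphic at $z_{0}$, the composition $G\circ\gamma$ has a derivative obtained by the complex chain rule:
$$\ddot\gamma(t) \;=\; \frac{d}{dt}G(\gamma(t)) \;=\; G'(\gamma(t))\,\dot\gamma(t) \;=\; G'(\gamma(t))\,G(\gamma(t)).$$
Evaluating at $t=0$ with $\gamma(0)=z_{0}$ gives $\dot\gamma(0)=G(z_{0})$ and $\ddot\gamma(0)=G(z_{0})\,G'(z_{0})$, so
$$\overline{\dot\gamma(0)}\,\ddot\gamma(0) \;=\; \overline{G(z_{0})}\,G(z_{0})\,G'(z_{0}) \;=\; |G(z_{0})|^{2}\,G'(z_{0}).$$
Taking imaginary parts and using $|G(z_{0})|^{2}>0$ (by the non-vanishing hypothesis), I get $\operatorname{sgn}\kappa(0) = \operatorname{sgn}\Im G'(z_{0})$, which is exactly the claim (reading the symbol $G'(0)$ in the statement as $G'(z_{0})$, the derivative of $G$ at the base point).

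I do not foresee a genuine obstacle: both ingredients, the curvature formula written in complex notation and the complex chain rule for a holomorphic $G$ composed with a real-parameter curve, are entirely standard. The only subtle point worth flagging is that one really needs $G$ to be holomorphic, not merely real-differentiable, at $z_{0}$ in order to write $\ddot\gamma=G'(\gamma)\,\dot\gamma$ as a single complex product rather than a full real Jacobian acting on $\dot\gamma$; this is precisely why the hypothesis is stated as it is.
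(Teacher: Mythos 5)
Your proof is correct and follows essentially the same route as the paper: differentiate the ODE via the complex chain rule to get $\ddot\gamma=G'(\gamma)\,\dot\gamma$, then read off the sign of the curvature from $\Im\bigl(\overline{\dot\gamma}\,\ddot\gamma\bigr)$, which up to the positive factor $|\dot\gamma|^{2}$ is the paper's $\Im\bigl(\ddot\gamma/\dot\gamma\bigr)=\Im G'(z_{0})$. Your reading of $G'(0)$ as $G'(z_{0})$ is also the intended one.
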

    
    Indeed, then $
    \ddot\gamma(t)=G'(\gamma(t))\cdot \dot\gamma(t)
    $. By definition,  the sign of the curvature of $\gamma(t)$ at $z_0$ coincides with the sign of 
    $\Im\frac{\ddot\gamma(t)}{\dot\gamma(t)}|_{t=0}=\Im G'(z_{0})$.

    \begin{lemma}\label{lem:branches and curvature}
        Let $F$ be a  function holomorphic at $z_0$ with $F(z_0)\in\R$ and let  $m=\operatorname{ord}_{z_0}\left(F-F(z_0)\right)$.   Then the germ of $I_F=\{\Im F=0\}$ at $z_0$ consists of $m$ smooth branches with tangent slopes $\frac{\theta_{0}}{m} +\frac{k\pi}{m}$, $k\in \mathbb{Z}/m\mathbb{Z}$, where $\theta_0=-\arg F^{(m)}(z_0)$.

        If $m=1$ and  $\gamma(t)$ is a parameterization of $I_F$ such that $F(\gamma(t))\equiv F(z_0)+t$ then the sign of the curvature of $\gamma(t)$ coincides with the sign of  $-\Im \left[\frac{F''(z_0)}{(F')^2(z_0)}\right]$.
    \end{lemma}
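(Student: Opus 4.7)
For the first part, I would begin with the Taylor expansion
\[
F(z)-F(z_0)=\frac{F^{(m)}(z_0)}{m!}(z-z_0)^m+O\bigl((z-z_0)^{m+1}\bigr),
\]
and use the fact that since the leading coefficient $c:=F^{(m)}(z_0)/m!$ is nonzero, there exists a local biholomorphism $\phi$ with $\phi(z_0)=0$ and $\phi'(z_0)\ne 0$ such that $F(z)-F(z_0)=\phi(z)^{m}$; this is the standard holomorphic $m$-th root trick, and $\phi$ is determined up to multiplication by an $m$-th root of unity. Since $F(z_0)\in\mathbb{R}$, the equation $\Im F=0$ becomes $\Im(\phi^{m})=0$, which in the $\phi$-coordinate cuts out exactly the $m$ straight lines $\{\arg\phi \equiv k\pi/m \pmod{\pi}\}$ through the origin. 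Pulling back by the biholomorphism $\phi^{-1}$ yields $m$ real-analytic smooth branches of $I_F$ through $z_0$.

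To read off the tangent slopes in the $z$-coordinate, I would observe that $\phi'(z_0)$ is an $m$-th root of $c$, so $\phi(z)=c^{1/m}(z-z_0)+O((z-z_0)^2)$. The branch $\{\arg\phi=k\pi/m\}$ therefore has tangent direction $\arg(z-z_0)=k\pi/m-\arg(c^{1/m})$, and since $\arg c=\arg F^{(m)}(z_0)$ (because $m!>0$), this is $\frac{\theta_0}{m}+\frac{k\pi}{m}$ with $\theta_0=-\arg F^{(m)}(z_0)$, as claimed. An alternative direct route, which avoids invoking the $m$-th root biholomorphism, is to write $w=re^{i\alpha}$ and solve $\Im(c w^m)=0$ to leading order, obtaining the same angles; the smoothness of each branch then follows from the implicit function theorem applied after factoring out the leading $m$-th power.

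For the second statement, with $m=1$, the defining relation $F(\gamma(t))\equiv F(z_0)+t$ gives by differentiation $F'(\gamma(t))\dot\gamma(t)=1$, so $\dot\gamma(t)=G(\gamma(t))$ where $G(z):=1/F'(z)$ is holomorphic and nonzero near $z_0$. I would then invoke the preceding Lemma~\ref{lem:trajcurvature}, which identifies the sign of the curvature of $\gamma$ at $z_0$ with the sign of $\Im G'(z_0)$. A direct computation gives $G'(z)=-F''(z)/(F'(z))^{2}$, whence
\[
\Im G'(z_0)=-\Im\!\left[\frac{F''(z_0)}{(F'(z_0))^{2}}\right],
\]
which is exactly the stated sign. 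No serious obstacle is expected here; the only place requiring a bit of care is the first part, where one must justify both the existence of the holomorphic $m$-th root and the fact that the $m$ pulled-back branches together exhaust the germ of $\{\Im F=0\}$ at $z_0$ (rather than forming only part of it). This is handled by noting that the leading-order equation $\Im(c w^{m})=0$ already accounts for all $2m$ real rays in the tangent cone, grouped into $m$ smooth branches, so no branch is missed.
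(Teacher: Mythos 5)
Your proof is correct and follows essentially the same route as the paper: the tangent slopes are read off from the leading term $\Im\bigl(c(z-z_0)^m\bigr)=0$ of the Taylor expansion, and the curvature claim reduces, via $\dot\gamma=1/F'(\gamma)$, to Lemma~\ref{lem:trajcurvature} applied to $G=1/F'$ with $G'=-F''/(F')^2$. Your use of the holomorphic $m$-th root biholomorphism to justify that the germ of $\{\Im F=0\}$ consists of exactly $m$ smooth branches is a welcome elaboration of a point the paper leaves implicit, but it is not a different method.
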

\begin{proof}
    Indeed, we have $F(z)=a_0+a_m(z-z_0)^m+...$, $a_0\in\R$, so the branches of $I_F$ are tangent to the $m$ lines satisfying equation $\Im a_m(z-z_0)^m=0$, which have slopes as stated.

    For the second claim, note that $\dot\gamma(t)=\frac{1}{F'(\gamma(t))}$, so the  claim follows from Lemma~\ref{lem:trajcurvature}.
 \end{proof}    
 \begin{remark}
    Similar results hold for  $F$ having a pole at $z_0$ by considering $\tfrac 1 {F}$.
 \end{remark}
\begin{proof}[Proof of Lemma~\ref{lem:RootTrailSlope}]
Note that by definition 
    $$
    \mathfrak{tr}_{u}=\left\{z\in\bC \text{ s.t. } \frac{R(z)}{u-z}\in \bR_+\right\}\subset\left\{\Im\frac{R(z)}{u-z}=0\right\},
    $$ 
    and  Lemma~\ref{lem:RootTrailSlope} follows from the Lemma~\ref{lem:branches and curvature}  with $F(z)=\frac{R(z)}{u-z}$ and the fact that $\arg R(z_0)=\arg(u-z_0)$.
\end{proof}

\begin{remark}
    The condition $R(z_{0})+(u-z_{0})R'(z_{0}) = 0$ means that the point $u=z_0-\frac{R(z_0)}{R'(z_0)}$ is obtained as the first iteration of   Newton's method of approximating roots of $R(z)$ with the starting point $z_0$. 
\end{remark}
When $u$ is a point at infinity in the extended plane $\mathbb{C} \cup \mathbb{S}^{1}$, the root trail $\mathfrak{tr}_{u}$ of $u$ is the closure of the points $z$ where the argument of $R(z)$ coincides with $u$.

\begin{lemma}\label{lem:RTSlopeINFINITY}
Consider a linear differential operator $T$ given by \eqref{eq:1stN} such that $R(z)$ is not constant. For any point $u$ at infinity  and any point $z_{0} \notin \mathcal{Z}(PQ)$ such that $z_{0} \in \mathfrak{tr}_{u}$, provided $R'(z_{0}) \neq 0$, the root trail $\mathfrak{tr}_{u}$ has a unique branch passing through $z_{0}$ and its tangent slope is the argument of $\frac{R(z_{0})}{R'(z_{0})}$ (mod $\pi$).
\par
If $R'(z_{0}) = 0$ and $m\ge 2$ is the smallest integer  such that $R^{(m)}(z_{0}) \neq 0$, then $\mathfrak{tr}_{u}$ has $m$ intersecting branches at $z_{0}$. Their tangent slopes are:
$$
\frac{\theta_{0}}{m} +\frac{k\pi}{m},
$$
where $\theta_{0}$ is the argument of $\frac{R(z_{0})}{R^{(m)}(z_{0})}$ and $k \in \mathbb{Z}/m\mathbb{Z}$.
\end{lemma}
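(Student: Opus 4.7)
The plan is to proceed by direct analogy with the proof of Lemma~\ref{lem:RootTrailSlope}, simply replacing the auxiliary holomorphic function by the one appropriate to the case $u\in\mathbb{S}^{1}$. A point $u$ at infinity is a direction $\theta$, and the defining condition of $\mathfrak{tr}_{u}$ reads $\arg R(z)=\theta$; this suggests introducing
$$
F(z)=e^{-i\theta}R(z).
$$
By hypothesis $z_{0}\notin\mathcal{Z}(PQ)$, so $R(z_{0})\neq 0$; combined with $z_{0}\in\mathfrak{tr}_{u}$ this gives $F(z_{0})=|R(z_{0})|>0$. By continuity, $\Re F>0$ on a small neighborhood $U$ of $z_{0}$, so on $U$ the conditions $F\in\mathbb{R}_{>0}$ and $\Im F=0$ coincide. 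Hence $\mathfrak{tr}_{u}\cap U=\{\Im F=0\}\cap U$, which reduces the statement to a local analysis of the zero set of $\Im F$.

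Next I would compute the order of vanishing of $F-F(z_{0})$ at $z_{0}$. Since $F^{(k)}(z_{0})=e^{-i\theta}R^{(k)}(z_{0})$, this order equals the smallest $m\geq 1$ with $R^{(m)}(z_{0})\neq 0$, which matches precisely the two cases of the statement. Applying Lemma~\ref{lem:branches and curvature} to $F$, the germ of $\{\Im F=0\}$ at $z_{0}$ consists of $m$ smooth branches whose tangent slopes are $\theta_{0}'/m+k\pi/m$ for $k\in\mathbb{Z}/m\mathbb{Z}$, where
$$
\theta_{0}'=-\arg F^{(m)}(z_{0})=-\arg\bigl(e^{-i\theta}R^{(m)}(z_{0})\bigr)=\theta-\arg R^{(m)}(z_{0}).
$$
Using $\theta=\arg R(z_{0})$ (which is the defining condition of $z_{0}\in\mathfrak{tr}_{u}$), one gets $\theta_{0}'=\arg\bigl(R(z_{0})/R^{(m)}(z_{0})\bigr)$, precisely the $\theta_{0}$ of the statement. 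Specializing to $m=1$ yields the first assertion (with a unique branch, slope $\arg\bigl(R(z_{0})/R'(z_{0})\bigr)\pmod{\pi}$), and the general case gives the second.

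No substantive obstacle is anticipated: once the correct $F$ is chosen, everything reduces to the already-established Lemma~\ref{lem:branches and curvature}. The only minor point requiring a line of justification is the local identification $\mathfrak{tr}_{u}\cap U=\{\Im F=0\}\cap U$, ensuring that one recovers the whole zero set and not merely a union of half-branches through $z_{0}$; this is handled by the sign observation $\Re F(z_{0})>0$ together with continuity.
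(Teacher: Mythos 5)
Your proof is correct and follows essentially the same route as the paper: the paper sets $F(z)=R(z)/R(z_{0})$, which differs from your $e^{-i\theta}R(z)$ only by the positive factor $|R(z_{0})|$, and then likewise invokes Lemma~\ref{lem:branches and curvature}. Your extra remark that $\Re F>0$ near $z_{0}$ (so the positivity condition defining $\mathfrak{tr}_{u}$ locally coincides with $\Im F=0$) is a small point the paper leaves implicit.
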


\begin{proof}
    In this case $\mathfrak{tr}_{u}\subset\left\{\Im \left(R(z)/R(z_0)\right)=0\right\}$ and the claim follows again from Lemma~\ref{lem:branches and curvature} 
\end{proof}

\begin{remark}\label{rem:ROOTTRAIL}
From Lemmas~\ref{lem:RootTrailSlope} and~\ref{lem:RTSlopeINFINITY} it immediately follows   that if a root trail $\mathfrak{tr}_{u}$ can have $m \geq 2$ branches at some point $z_{0}$, then $z_{0}$ belongs to the curve of inflections $\mathfrak{I}_{R}$ (because $R(z_{0})$ and $u-z_{0}$ are real colinear).
\par
Besides, if $m \geq 3$, then $R^{(k)}(z_{0})=0$ for $2 \leq k \leq m-1$ and $z_{0}$ is a singular point of $\mathfrak{I}_{R}$.
\end{remark}

\subsubsection{Concavity of root trails}

\begin{proposition}\label{prop:RTconcave}
Let $u$ be a point of the extended plane $\mathbb{C} \cup \mathbb{S}^{1}$ and $z_{0}$ be a point of $\mathfrak{tr}_{u}$ such that $z_{0} \notin \mathcal{Z}(PQ) \cup \mathfrak{I}_{R}$ and $z_{0} \neq u$. We denote by $L$ the tangent line to  $\mathfrak{tr}_{u}$ at $z_{0}$. We define $f(z,u)$ to be:
\begin{itemize}
    \item $\frac{\left(R''(z_0)(u-z_0)^2+2R'(z_0)(u-z_0)+2R(z_0)\right](u-z_0)}{\left(R'(z_0)(u-z_0)+R(z_0)\right)^2}$ if $u \in \mathbb{C}$;
    \item $\frac{R''(z_{0})R(z_{0})}{R'(z_{0})^{2}}$ if $u$ is a point at infinity.
\end{itemize}
Then the germ of $\mathfrak{tr}_{u}$ at $z_{0}$ belongs to 

(i) the same half-plane bounded by $L$ as the associated ray $r(z_{0})$ if $\Im(f)$ and $\Im(R'(z_{0}))$ have opposite signs.

(ii) 
They belong to distinct half-planes bounded by $L$ if $\Im(f)$ and $\Im(R'(z_{0}))$ have the same sign.\newline

Finally, $\mathfrak{tr}_{u}$ has an inflection point at  $z_{0}$ if $\Im(f) = 0$.
\end{proposition}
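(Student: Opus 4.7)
The plan is to apply the general principle of Lemma~\ref{lem:branches and curvature} to a holomorphic function $F$ whose vanishing imaginary part cuts out (the relevant branch of) the root trail near $z_0$. For finite $u \in \mathbb{C}$, I take $F(z) = \frac{R(z)}{u-z}$, so that $\mathfrak{tr}_u \subset \{\Im F = 0\}$; for $u$ at infinity, I take $F(z) = R(z)/R(z_0)$. The assumption $z_0 \notin \mathfrak{I}_R \cup \mathcal{Z}(PQ)$ rules out $F'(z_0)=0$: in the finite case $F'(z_0)=0$ would force $R'(z_0)=-1/s$ with $s = (u-z_0)/R(z_0)\in\mathbb{R}_+$, hence $z_0\in\mathfrak{I}_R$. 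So $\mathfrak{tr}_u$ is a smooth branch at $z_0$ and admits the natural parametrization $\gamma(t)$ with $F(\gamma(t)) = F(z_0)+t$, giving $\dot\gamma(0) = 1/F'(z_0)=:\tau$.

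Next I compute $f(z_0,u) = F''(z_0)/F'(z_0)^2$. Differentiating $F(z)=R(z)(u-z)^{-1}$ twice produces $F'(z)=\frac{R'(z)(u-z)+R(z)}{(u-z)^2}$ and $F''(z)=\frac{R''(z)(u-z)^2+2R'(z)(u-z)+2R(z)}{(u-z)^3}$, and dividing yields exactly the expression for $f(z_0,u)$ displayed in the statement; the infinite-$u$ case gives $F''/F'^2 = R''R/R'^2$ immediately. By Lemma~\ref{lem:branches and curvature} (second part) the curvature of $\gamma$ at $t=0$ has the sign of $-\Im f(z_0,u)$, so the germ of $\mathfrak{tr}_u$ lies in the counterclockwise half-plane of $\tau$ exactly when $\Im f(z_0,u)<0$ (and has an inflection exactly when $\Im f(z_0,u)=0$).

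I then determine on which side of the tangent line the ray $r(z_0)$ lies, by computing $\Im[R(z_0)/\tau]$. Writing $u-z_0=sR(z_0)$ with $s>0$ (the defining relation of $\mathfrak{tr}_u$), a short manipulation gives $\tau = s^2 R(z_0)/(1+sR'(z_0))$, hence $R(z_0)/\tau = (1+sR'(z_0))/s^2$ and $\Im[R(z_0)/\tau] = \Im R'(z_0)/s$. Therefore $r(z_0)\setminus\{z_0\}$ lies in the counterclockwise half-plane of $\tau$ precisely when $\Im R'(z_0)>0$. For the infinite case one gets $\tau = R(z_0)/R'(z_0)$ and $R(z_0)/\tau = R'(z_0)$, leading to the same criterion.

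To conclude, the germ of $\mathfrak{tr}_u$ and the ray $r(z_0)$ lie in the same open half-plane bounded by $L$ iff the conditions ``$\Im f<0$'' and ``$\Im R'(z_0)>0$'' agree, i.e.\ iff $\Im f(z_0,u)$ and $\Im R'(z_0)$ have opposite signs, proving (i); they lie in opposite half-planes iff the signs agree, proving (ii); and $z_0$ is an inflection point of $\mathfrak{tr}_u$ iff the curvature vanishes, i.e.\ $\Im f(z_0,u)=0$. The main bookkeeping obstacle is matching the two orientations consistently: the counterclockwise sense relative to $\tau = 1/F'(z_0)$ (inherited from the canonical parametrization of $\mathfrak{tr}_u$ by $F$) versus the direction of $R(z_0)$ that defines $r(z_0)$; once $u-z_0 = sR(z_0)$ with $s>0$ is inserted into both quantities, the factor $s$ cancels cleanly and the comparison reduces to a single sign on $\Im R'(z_0)$.
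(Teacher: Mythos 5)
Your proof is correct and follows essentially the same route as the paper: the same auxiliary function $F_u(z)=\frac{R(z)}{u-z}$ (resp.\ a scalar multiple of $R$ for $u$ at infinity), the same curvature lemma (Lemma~\ref{lem:branches and curvature}), and the same comparison of $\Im\bigl(F_u''(z_0)/F_u'(z_0)^2\bigr)$ against $\Im\bigl(R(z_0)F_u'(z_0)\bigr)=\Im R'(z_0)/s$ after substituting $u-z_0=sR(z_0)$. Your computation of $F_u''$ (with no overall minus sign) is the correct one — the paper's displayed formula for $F_u''$ carries a spurious minus that propagates into its expression for $\Im\bigl(F_u''/(F_u')^2\bigr)$ — and your explicit matching of the two half-plane criteria completes the sign bookkeeping that the paper leaves implicit, landing exactly on the stated conclusion that the germ and the ray share a half-plane precisely when $\Im f$ and $\Im R'(z_0)$ have opposite signs.
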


\begin{proof}
    Let $F_u(z)=\frac{R(z)}{u-z}$ for $u\in \mathbb{C}$ and $F_u(z)=u^{-1}R(z)$ for $u\in\mathbb{S}^1$ so that $\mathfrak{tr}_u=\{\Im F_u(z)=0\}$. Let  $c=F_u'(z_0)$. 
    We have  
    $$
    L=\left\{    z_0+c^{-1}\R\right\}=\left\{z|\Im \left(c(z-z_0)\right)=0\right\}.
    $$
    If $\gamma(t)$ is a local parameterization of $\mathfrak(tr)_u$ at $z_0$ such that $F_u(\gamma(t))=F_u(z_0)+t$ then $\dot\gamma(0)=c^{-1}$. Moreover, $\mathfrak(tr)_u\subset L_+=\left\{\Im c(z-z_0)>0\right\}$ if the curvature of $\gamma(t)$ is positive and $\mathfrak(tr)_u\subset L_-=\left\{\Im c(z-z_0)<0\right\}$ otherwise. 

    The tangent ray  $r(z_0)=\{z_0+R(z_0)\R_+\}$ lies in $L_+$ if $\Im R(z_0)c>0$ and in $L_-$ otherwise.

    By Lemma~\ref{lem:branches and curvature} the sign of curvature of $\gamma(t)$ is opposite to the sign of $\Im \left[\frac{F_u''(z_0)}{\left(F_u'\right)^2(z_0)}\right]$.
    
    For $u\in \mathbb{C}$ we have $c= F_u'(z_0)=\frac{R'(z_0)(u-z_0)+R(z_0)}{(u-z_0)^2}$ and 
$$
F_u''(z_0)=\frac{R''(z_0)(u-z_0)^2+2R'(z_0)(u-z_0)+2R(z_0)}{(u-z_0)^3},
$$
so we are interested in signs of 
$$
\Im R(z_0)c=\Im R(z_0)\frac{R'(z_0)(u-z_0)+R(z_0)}{(u-z_0)^2}=\Im R'(z_0)
$$ (recall that $\frac{R(z_0)}{u-z_0}>0$) and 
\begin{equation}\label{eq:curvature of tr}
\Im \frac{F_u''(z_0)}{\left(F_u'\right)^2(z_0)}=\Im\frac{\left(R''(z_0)(u-z_0)^2+2R'(z_0)(u-z_0)+2R(z_0)\right](u-z_0)}{\left(R'(z_0)(u-z_0)+R(z_0)\right)^2}.
\end{equation}

 For $u\in\mathbb{S}^1$ we have $c=u^{-1}R'(z_0)$ and we are interested in the signs of
 $\Im R'(z_0)$ and $\Im \frac{R''(z_0)R(z_0)}{\left(R'\right)^2(z_0)}$.
 
\end{proof}

In the transverse locus $\mathfrak{I}_{R}^{\ast}$ of the curve of inflections, the concavity of root trails with respect to the line containing the associated ray depends on the sign of some geometrically meaningful real function.

\begin{proposition}\label{prop:RTInflecCONCAVE}
Consider a point $z_{0} \in\mathfrak{I}_{R}^{\ast} \setminus \mathcal{Z}(PQ)$ and some point $u \in \mathbb{C} \cup \mathbb{S}^{1}$. Assume that $R(z_{0})+R'(z_{0})(u-z_{0}) \neq 0$ (or $R'(z_{0}) \neq 0$ if $u$ is a point at infinity). Let $L$ be the line containing the associated ray $r(z_{0})$.
\par
The germ of $\mathfrak{tr}_{u}$ at $z_{0}$ and the positive germ $\gamma^{+}_{z_{0}}$ of the integral curve of the field $R(z)\partial_{z}$ starting at $z_{0}$ belong to the same open half-plane bounded by $L$   if $R'(z_{0})+R(z_{0})/(u-z_{0})$ is negative ($R'(z_{0})<0$ if $u$ is a point at infinity).
\par
The germ of $\mathfrak{tr}_{u}$ at $z_{0}$ and $\gamma^{+}_{z_{0}}$ belong to opposite open half-planes bounded by $L$ if $R'(z_{0})+R(z_{0})/(u-z_{0})$ is positive ($R'(z_{0})>0$ if $u$ is a point at infinity).
\end{proposition}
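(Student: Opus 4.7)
The basic strategy is that at a transverse inflection point $z_{0}$ both the positive trajectory $\gamma^{+}_{z_{0}}$ and the root trail $\mathfrak{tr}_{u}$ are tangent to the common line $L$, so determining which half-plane bounded by $L$ each curve enters reduces to computing the sign of the leading non-vanishing Taylor deviation from $L$.

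First I verify the common tangency. The trajectory has tangent vector $R(z_{0})$ by definition, while for the root trail Lemma~\ref{lem:RootTrailSlope} gives the tangent slope as the argument (mod $\pi$) of $R(z_{0})^{2}/[R(z_{0})+(u-z_{0})R'(z_{0})]$. Setting $t_{0}:=(u-z_{0})/R(z_{0})\in\mathbb{R}_{>0}$ (positive because $z_{0}\in\mathfrak{tr}_{u}$) and using $R'(z_{0})\in\mathbb{R}$ (since $z_{0}\in\mathfrak{I}_{R}$), this collapses to $\arg R(z_{0})\pmod{\pi}$, matching $L$. Next I Taylor-expand the trajectory: $\gamma(t)-z_{0}=R(z_{0})t+\tfrac12 R'(z_{0})R(z_{0})t^{2}+\tfrac16[R''(z_{0})R(z_{0})+R'(z_{0})^{2}]R(z_{0})t^{3}+O(t^{4})$. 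Dividing by $R(z_{0})$, the linear and quadratic imaginary parts vanish (as $R'(z_{0})\in\mathbb{R}$), leaving $\Im[(\gamma(t)-z_{0})/R(z_{0})]=\tfrac{t^{3}}{6}\Im(R''(z_{0})R(z_{0}))+O(t^{4})$. The leading coefficient is nonzero because transversality in the definition of $\mathfrak{I}_{R}^{\ast}$ amounts to non-vanishing of the directional derivative of $\Im R'$ along $R(z_{0})$, which is precisely $\Im(R''(z_{0})R(z_{0}))$. Hence for small $t>0$, $\gamma^{+}_{z_{0}}$ lies in the half-plane of sign $\Im(R''(z_{0})R(z_{0}))$.

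For the root trail I parametrize $\gamma_{u}(s)$ by $F_{u}(\gamma_{u}(s))=F_{u}(z_{0})+s$, where $F_{u}(z)=R(z)/(u-z)$, so that $\dot{\gamma}_{u}(0)=1/F_{u}'(z_{0})=t_{0}^{2}R(z_{0})/(1+t_{0}R'(z_{0}))$, a real multiple of $R(z_{0})$, well defined and nonzero by the hypothesis $R(z_{0})+R'(z_{0})(u-z_{0})\neq 0$. Setting $G=1/F_{u}'$, a direct calculation (analogous to the one in Proposition~\ref{prop:RTconcave}) yields $\Im(G'(z_{0}))=-t_{0}^{3}\Im(R''(z_{0})R(z_{0}))/(1+t_{0}R'(z_{0}))^{2}$, so that $\Im[(\gamma_{u}(s)-z_{0})/\dot{\gamma}_{u}(0)]=\tfrac{s^{2}}{2}\Im(G'(z_{0}))+O(s^{3})$ has leading sign $-\Im(R''(z_{0})R(z_{0}))$. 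Converting the normalization from $\dot{\gamma}_{u}(0)$ to $R(z_{0})$ multiplies this sign by $\operatorname{sign}(1+t_{0}R'(z_{0}))$, so $\mathfrak{tr}_{u}$ lies near $z_{0}$ on the side of $L$ with sign $-\operatorname{sign}(1+t_{0}R'(z_{0}))\cdot\Im(R''(z_{0})R(z_{0}))$. Comparing with $\gamma^{+}_{z_{0}}$, the two curves coincide in the same open half-plane iff $-\operatorname{sign}(1+t_{0}R'(z_{0}))=+1$, i.e.\ $1+t_{0}R'(z_{0})<0$, which since $t_{0}>0$ rewrites exactly as $R'(z_{0})+R(z_{0})/(u-z_{0})<0$.

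For $u$ at infinity the argument is parallel, using $F_{u}(z)=R(z)/u$ with $|u|=1$: then $\dot{\gamma}_{u}(0)=u/R'(z_{0})$ is a real multiple of $R(z_{0})$ with sign $\operatorname{sign}(R'(z_{0}))$, and the curvature calculation gives $\Im(G'(z_{0}))=-\Im(R''(z_{0})R(z_{0}))/(|R(z_{0})|R'(z_{0})^{2})$, yielding the sharpened criterion $R'(z_{0})<0$ versus $R'(z_{0})>0$ as claimed. The main obstacle is the careful sign book-keeping in the chain of reductions, in particular tracking the real scalar $1+t_{0}R'(z_{0})$ which can take either sign and is exactly the quantity distinguishing the two regimes of the statement.
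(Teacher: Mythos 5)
Your proposal is correct and follows essentially the same route as the paper: both parametrize $\mathfrak{tr}_{u}=\{\Im F_{u}=0\}$ by $F_{u}(\gamma_{u}(s))=F_{u}(z_{0})+s$, read off the sign of the real initial velocity $\dot{\gamma}_{u}(0)=1/F_{u}'(z_{0})$ (which is governed by $R'(z_{0})+R(z_{0})/(u-z_{0})$), combine it with the curvature sign coming from $\Im\bigl[F_{u}''/(F_{u}')^{2}\bigr]$ as in Lemma~\ref{lem:trajcurvature}/Lemma~\ref{lem:branches and curvature}, and compare with the cubic deviation $\tfrac{t^{3}}{6}\Im(R''R)$ of $\gamma^{+}_{z_{0}}$ from $L$. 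Your sign bookkeeping (including the identification of $\Im(R''(z_{0})R(z_{0}))\neq 0$ with transversality to $\mathfrak{I}_{R}$) checks out, and the slightly more invariant normalization via $\Im\bigl((\cdot-z_{0})/R(z_{0})\bigr)$ is only a cosmetic difference from the paper's affine normalization.
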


\begin{proof}
Without loss of generality, we assume that $z_{0}=0$ and $R(z)= 1 + R'(0) z + (a+bi)z^{2} +o(z^{2})$ with $R'(0) \in \mathbb{R}$, $a \in \mathbb{R}$ and $b \in \mathbb{R}_{>0}$ ($b \neq 0$ because $z_{0}=0$ belongs to the transverse locus of the curve of inflections). Necessarily $u>0$. Since $b>0$, $\gamma^{+}_{0}$ belongs to the upper half-plane.

By Lemma~\ref{lem:RootTrailSlope} (if $u \in \mathbb{C}$) and Lemma~\ref{lem:RTSlopeINFINITY} (if $u \in \mathbb{S}^{1}$), $\mathfrak{tr}_{u}$ has a unique branch at $0$ tangent to $\R$.
Let $F_u(z)=\frac{R(z)}{u-z}$ for $u\in r(z_0)$ and $F_u(z)=R(z)$ for $u\in\mathbb{S}^1$, so $\mathfrak{tr}_{u}=\{\Im F_u(z)=0\}$. Choose a parameterization $\gamma(t)$ of this branch in such a way that $F_u(\gamma(t))=F_u(z_0) +t$.   Then 
$$
\dot\gamma(0)=\frac1{F_u'(0)}=\frac{{u-z_0}}{R'(0)+\tfrac{R(0)}{u-z_0}}  \qquad\text{ or }    \qquad \dot\gamma(0)=\frac1{R'(0)}
$$ for $ u\in \bC$  or  $u\in\mathbb{S}^1$ being  a point at infinity, respectively. 
Therefore  $\dot\gamma(0)>0$ if $R'(0)+R(0)/({u-z_0})>0$  (resp. $R'(0)>0$) and $\dot\gamma(0)<0$ otherwise.

By \eqref{eq:curvature of tr} the sign of the curvature of $\gamma(t)$ at $0$ is opposite to the sign of $\Im R''(0)=\Im b>0$, i.e. is negative. Thus $\gamma(t)$ lies in the lower half-plane (i.e. not in the same half-plane as $\gamma^{+}_{0}$) if $R'(0)+R(0)/({u-z_0})>0$ is positive and in the same half-plane as $\gamma^{+}_{0}$ if $R'(0)+R(0)/({u-z_0})<0$ ($R'(0)>0$ and $R'(0)<0$ resp. for $u\in\mathbb{S}^1$). Since $R'(z_0)\in \R$, the number $R'(z_0)+R(z_0)/({u-z_0})$ is invariant under the maps $z\mapsto az+b$ and $z\mapsto \bar z$ used for normalization, and the claim  follows.
\end{proof}

\subsubsection{Root trails and connected components of the minimal set}

When $\deg Q - \deg P =0$, root trails provide a bound on the number of connected components of the minimal set (in all other cases, it is known that $\minvset{CH}$ is connected).

\begin{proposition}\label{prop:CCrawBOUND}
Consider a linear differential operator $T$ given by \eqref{eq:1stN} and satisfying $\deg Q - \deg P =0$. Any connected component $C$ of $\minvset{CH}$ satisfies the following conditions:
\begin{itemize}
    \item $C$ contains at least one root of $P$;
    \item $C$ contains at least one root of $Q$;
    \item the sum of orders of zeros and poles  of $R(z)$ in $C$ vanishes.
\end{itemize}
\end{proposition}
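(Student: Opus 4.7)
My plan is to prove Claim~(3) first via the argument principle, then obtain Claims~(1) and~(2) together as consequences via a dynamical contradiction using Proposition~\ref{prop:IntegralCurve}. I will assume throughout that $C$ has nonempty interior; the degenerate case where $C$ is reduced to a tail (Theorem~\ref{thm:TAIL}) is handled directly using Definition~\ref{defn:tail}, since the tail endpoint is a common root of $P$ and $Q$ of equal multiplicity, making $\operatorname{ord}_{\beta}(R) = 0$ and the presence of a root of $P$ and a root of $Q$ automatic.

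For Claim~(3), I will apply the argument principle to the meromorphic function $R = Q/P$ and the boundary $\partial C$ oriented so that $C$ lies on its left:
\[
\frac{1}{2\pi i}\oint_{\partial C}\frac{R'(z)}{R(z)}\,dz \;=\; \sum_{\alpha \in C \cap \mathcal{Z}(PQ)} \operatorname{ord}_{\alpha}(R).
\]
To show this winding number vanishes, I will deform $\partial C$ outward along the associated rays $r(z)$ to a large circle $|z| = R_{0}$. Since $\deg Q - \deg P = 0$, we have $R(z) \to \lambda \neq 0$ at infinity, so $R$ is approximately constant on the large circle and hence has zero winding there. The homotopy should stay away from the zeros and poles of $R$ because these all lie in $\minvset{CH}$ (by Theorem~\ref{thm:AR}), whereas the rays $r(z)$ starting from $z \in \partial C \subset \partial \minvset{CH}$ go out into the complement of $\minvset{CH}$. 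The main obstacle I anticipate is that such rays can tangentially touch other components of $\minvset{CH}$; I expect to bypass such tangencies by a small perturbation and by using Lemma~\ref{lem:ArcCone} to interpolate the deformed curve with circular arcs at infinity.

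For Claims~(1) and~(2), I will argue by contradiction. Suppose $C$ contains no root of $P$; then combining with Claim~(3) and noting that without poles the sum $\sum_{\alpha\in C}\operatorname{ord}_{\alpha}(R)$ is non-negative, $C$ must also contain no root of $Q$. Thus $R(z)\partial_{z}$ becomes a nonvanishing holomorphic vector field on a neighborhood of $C$. The key observation is that such a field admits no periodic orbit, since its multivalued primitive $F(z) = \int dz/R(z)$ satisfies $\dot F \equiv 1$ along real trajectories and is therefore strictly monotone. Now I pick an interior point $z_{0} \in C$ and consider its complete backward trajectory $\gamma^{-}$ under $R(z)\partial_{z}$; by Proposition~\ref{prop:IntegralCurve} combined with the connectedness of $C$, every point of $\gamma^{-}$ stays in $C$, a compact set. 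Its $\alpha$-limit set is then a nonempty compact invariant subset of $C$, which by the Poincar\'e--Bendixson theorem applied to the smooth planar flow of $R(z)\partial_{z}$ must contain either a fixed point or a periodic orbit. Both are excluded, giving a contradiction. The symmetric argument, starting from the assumption that $C$ contains no root of $Q$, yields Claim~(2).
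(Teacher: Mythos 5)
Your reduction of Claims~(1) and~(2) to Poincar\'e--Bendixson breaks down at the words ``a compact set.'' By Theorem~\ref{thm:main}, in the case $\deg Q-\deg P=0$ the minimal set $\minvset{CH}$ is \emph{noncompact}: by Proposition~\ref{prop:qp0} it lives in a cone of directions arbitrarily close to $\phi_{\infty}+\pi$, and its components reach out to infinity in exactly that direction. Since $R(z)\to\lambda$ at infinity, the backward flow of $R(z)\partial_{z}$ moves asymptotically in the direction $-\lambda$, i.e.\ in the direction $\phi_{\infty}+\pi$ along which $C$ is unbounded. So the backward orbit of your interior point $z_{0}$ can escape to infinity while remaining inside $C$; its $\alpha$-limit set is then empty, Poincar\'e--Bendixson does not apply, and no contradiction is obtained. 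You cannot assume $C$ is compact a priori (and in fact it never is), so Claims~(1) and~(2) are not established by your argument.

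The argument-principle route to Claim~(3) has a related structural problem: $\partial C$ is a noncompact curve with two ends at infinity, not a closed contour, so the identity $\frac{1}{2\pi i}\oint_{\partial C}\frac{R'}{R}\,dz=\sum_{\alpha}\operatorname{ord}_{\alpha}(R)$ does not make sense as written and the closing-up at infinity is precisely where the content lies. Moreover the proposed homotopy ``outward along the associated rays'' is not obviously a homotopy at all (rays issued from different boundary points cross), and a ray $r(z)$ for $z\in\partial C$ may pass through boundary points of \emph{other} components of $\minvset{CH}$, which can be zeros or poles of $R$; a ``small perturbation'' is not enough to certify that the sweep avoids $\mathcal{Z}(PQ)$. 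Finally, even writing the contour integral presupposes regularity of $\partial C$ that the paper only obtains after considerable work on local connectedness. For comparison, the paper's proof avoids all of this by tracking the roots of $tQ(z)+(z-u)P(z)$ (the root trail of a point $u$): for $u\in C$ these roots start, at $t=0$, at $u$ together with the roots of $P$, and as $t\to\infty$ they converge to the roots of $Q$ with at most one root escaping to infinity; since the trail is connected and contained in $\minvset{CH}$, it cannot leave the component $C$, and counting its endpoints yields all three claims at once. That continuity-of-roots argument is what handles the noncompactness that defeats both halves of your proposal.
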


\begin{proof}
We assume that a connected component $C$ of $\minvset{CH}$ is disjoint from $\mathcal{Z}(P)$. Note that $\deg Q-\deg P=0$ implies that the union of the zeros of $tQ(z)+P(z)(z-u)$ for any $u \in \mathbb{C}$, $T>0$ and $t\in [0,T]$ is bounded.

Hence, for any $u \in \minvset{CH}\setminus C$, the root trail of $u$ is disjoint from $C$, as otherwise there would be points in the complement of $\minvset{CH}$ belonging to the root trail of $u$. Since $\minvset{CH}$ coincides with the $T_{CH}$-extension of any point in $\minvset{CH}$ (see Lemma 2.2 of \cite{AHN+24}), it follows that $C$ cannot belong to the minimal invariant set.
\par
Suppose now that there is a component for which the sums of orders of the zeros of $Q$ does not equal  the sums of orders of the zeros of $P$. Then there is a component $C$ such that the sums of the orders of the zeros of $P$, say $d_0$ is strictly greater than the sums of the orders of the zeros of $Q$, say $d_1$. Taking $u\in C$ we have that for all $t$, the zeros of $tQ(z)+P(z)(z-u)$ belonging to $C$ have total degree $d_0+1$. However, when sending $t\to \infty$, $d_1$ of these zeros tend to the zeros of $Q$ belonging to $C$ and at most one tend to $\infty$. This implies that at least $d_0-d_1>0$ of the end points of the root trail of $u$ does not belong to $C$, a contradiction.
\end{proof}

We prove now that the interior $(\minvset{CH})^{\circ}$ of the minimal set satisfies (outside zeroes and poles of $R(z)$) a weak property of local connectedness.

\begin{lemma}\label{lem:ImKleinen}
For any linear differential operator $T$ given by \eqref{eq:1stN} we consider a point $\alpha$ of the boundary $\partial \minvset{CH}$ that is neither a zero nor a pole of $R(z)$. For any sufficiently small neighborhood $V$ of $\alpha$, the connected component $M_V$ of $V\cap \minvset{CH}$ containing $\alpha$ has connected interior.
\end{lemma}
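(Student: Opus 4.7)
The plan is to locally rectify the vector field $R(z)\partial_z$ near $\alpha$ and exploit backward invariance (Proposition~\ref{prop:IntegralCurve}) to describe $M_V$ as a leftward-closed planar region. Since $\alpha$ is a regular point of $R$, the flow-box theorem produces a biholomorphism $\phi$ conjugating $R(z)\partial_z$ to $\partial_w$ with $\phi(\alpha)=0$. I would take $V$ to be a small square $(-\epsilon,\epsilon)^2$ in the rectified coordinate $w=x+iy$, so that forward flow is rightward horizontal translation and the associated ray at each point is the rightward horizontal ray. By Proposition~\ref{prop:IntegralCurve}, for any $z\in M_V$, the horizontal segment joining $z$ to the left boundary of $V$ lies in $\minvset{CH}$; being connected and meeting $M_V$, it lies in $M_V$. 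Applied to a small $2$-disk around a point of $M_V^\circ$, the same argument shows that the leftward translate of the disk stays in $M_V^\circ$.

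Next, I would describe $M_V$ as a subgraph. Define $f(y):=\sup\{x:(x,y)\in M_V\}$ (with the convention $f(y)=-\epsilon$ when the slice is empty). Leftward closure gives $M_V=\{(x,y)\in V:x\leq f(y)\}$, and since $M_V$ is closed in $V$, $f$ is upper semi-continuous. Setting $\tilde f(y):=\liminf_{y'\to y}f(y')$ (lsc with $\tilde f\leq f$), the criterion that a small $2$-disk around $(x,y)$ lies in $M_V$ iff $f(y')>x$ on a neighborhood of $y$ yields $M_V^\circ=\{(x,y)\in V:x<\tilde f(y)\}$.

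The core of the proof is then showing that $Y^\circ:=\{y:\tilde f(y)>-\epsilon\}$ is a connected interval. The projection $Y:=\pi(M_V)=\{y:f(y)>-\epsilon\}$ is the continuous image of the connected set $M_V$, hence a connected interval containing $0$. For $V$ taken sufficiently small, $\partial\minvset{CH}\cap V$ consists of finitely many real-analytic arcs meeting at $\alpha$ (the only possible singular boundary point in $V$); consequently $f$ is continuous at every $y\in Y\setminus\{0\}$, whence $\tilde f=f>-\epsilon$ there and $Y^\circ\supset Y\setminus\{0\}$. Moreover, each arc through $\alpha$ approaches $0$ in the rectified coordinate, so $\lim_{y\to 0^\pm}f(y)=0>-\epsilon$ whenever $Y$ extends on that side of $0$. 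Hence either $\tilde f(0)>-\epsilon$ and $Y^\circ=Y$, or $Y$ has $0$ as a boundary endpoint and $Y^\circ=Y\setminus\{0\}$; in both cases $Y^\circ$ is a connected interval. Finally, $M_V^\circ$ is the open subgraph of the lsc function $\tilde f$ over $Y^\circ$, and is connected because any two points $(x_1,y_1),(x_2,y_2)\in M_V^\circ$ can be joined through the rectangle $[y_1,y_2]\times(-\epsilon,m)$, where $m:=\min_{[y_1,y_2]}\tilde f>-\epsilon$ exists by lower semi-continuity on a compact interval.

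The main obstacle will be justifying that, for sufficiently small $V$, $\partial\minvset{CH}\cap V$ reduces to a finite union of real-analytic arcs through $\alpha$, so that $f$ is continuous away from $0$ and the limit $f(y)\to 0$ goes through. A rigorous treatment will need to extract the relevant analytic structure of $\minvset{CH}$ near regular points of $R$ from Theorem~\ref{thm:TAIL} and elementary properties of the curve of inflections, without a circular reference to the main classification Theorem~\ref{thm:MAINClassification}.
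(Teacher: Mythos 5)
Your reduction to a one-dimensional statement about $f(y)=\sup\{x:(x,y)\in M_V\}$ in flow-box coordinates is a clean reformulation, but the proof has a genuine gap exactly where you flag it, and the gap is not a removable technicality. The claim that $\partial \minvset{CH}\cap V$ consists of finitely many real-analytic arcs through $\alpha$ is not available at this stage of the paper: it is essentially the content of the boundary classification and of the local-connectedness results, whose proofs rely on the present lemma (through Corollary~\ref{cor:BOUNDINTERIOR} and Lemma~\ref{lem:BoundInterior}), so invoking it here is circular; and Theorem~\ref{thm:TAIL} only describes the irregular locus and cannot substitute for it. Worse, once that claim is removed, your argument uses only backward flow-invariance (Proposition~\ref{prop:IntegralCurve}) together with closedness and connectedness of $M_V$, and this is provably insufficient. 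For instance, the comb-like set $E=\{(x,y)\in(-1,1)^2: x\le f(y)\}$ with $f$ upper semicontinuous, equal to $0$ on $\{0\}\cup\bigcup_n[1/n-\delta_n,1/n]$ and to $-1+|y|$ elsewhere, is closed in the square, leftward-closed, connected, has connected projection, and yet has disconnected interior (the slice $y=0$ carries no interior points because $\liminf_{y\to 0}f(y)=-1$). So the crucial ``no pinching'' statement $\tilde f>-\epsilon$ on the interior of $Y$ cannot follow from the hypotheses you actually use; it must come from the forward half of invariance, i.e.\ from the fact that associated rays of points in $(\minvset{CH})^c$ miss $\minvset{CH}$. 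That is precisely what the paper's (admittedly terse) proof invokes: since $R(z)=r_\alpha+o(1)$ near $\alpha$, all associated rays in $V$ point in nearly the same direction, and any finger or pinching configuration forces some complement point arbitrarily close to the pinch to have its associated ray cross the interior of $M_V$.

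Two secondary points. First, after rectifying $R(z)\partial_z$ by a biholomorphism the integral curves become horizontal, but the associated rays do not: $r(z)$ is a straight half-line in the original coordinate and is not preserved by a non-affine conformal map. This slip is harmless to what you wrote only because you never use the ray condition afterwards, which is exactly the structural weakness identified above; if you repair the proof by bringing in the ray condition, you must either work in the original coordinate or track how rays distort under $\phi$. Second, the paper first disposes of the case where $\alpha$ lies in the irregular locus: there $M_V$ is a segment of a tail (Theorem~\ref{thm:TAIL}) and has empty interior, so the claim is vacuous; you should include that case, trivial as it is.
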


\begin{proof}
If $\alpha$ does not belong to the regular locus of $\minvset{CH}$, then $M_V$ is a line segment and hence has empty interior.
\par
We have $R(z)=r_{\alpha}+o(z-\alpha)$ for some $r_{\alpha} \in \mathbb{C}^{\ast}$. Then, a continuity argument immediately shows that $M_V$ has connected interior, as otherwise points in the complement of $\minvset{CH}$ would have associated rays intersecting $\minvset{CH}$.
\par

\end{proof}

In the following, we prove that the closure of a connected component of the interior of $\minvset{CH}$ cannot be disjoint from $\mathcal{Z}(P)$.

\begin{lemma}\label{lem:BoundInterior}
For any linear differential operator $T$ given by \eqref{eq:1stN}, one of the following statements holds:
\begin{enumerate}
    \item $\minvset{CH}$ is fully irregular;
    \item $\minvset{CH}=\mathbb{C}$;
    \item the closure of any connected component of the interior $(\minvset{CH})^{\circ}$ of the minimal set contains a root of $P(z)$;
    \item the closure of any connected component of the interior $(\minvset{CH})^{\circ}$ of the minimal set contains an endpoint of a tail.
\end{enumerate} 
\end{lemma}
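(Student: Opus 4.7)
The plan is to argue by contradiction: assume $\minvset{CH}$ is neither fully irregular nor equal to $\mathbb{C}$ and that there exists a connected component $U$ of $(\minvset{CH})^{\circ}$ whose closure $\overline{U}$ contains no root of $P$ and no endpoint of any tail. I will exhibit a proper closed $T_{CH}$-invariant subset of $\minvset{CH}$, contradicting the minimality of $\minvset{CH}$.

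The natural candidate is $S' := \minvset{CH} \setminus U$, which is closed since $U$ is open in $\mathbb{C}$ and strictly smaller than $\minvset{CH}$ since $U$ is nonempty. To apply the characterization of Theorem~\ref{thm:AR}, I would verify two conditions: (i) $S'$ contains $\mathcal{Z}(P) \cup \mathcal{Z}(Q)$, and (ii) for every $z \in U$, the associated ray $r(z)$ is disjoint from $S'$. The containment $\mathcal{Z}(P) \subseteq S'$ is immediate since $\overline{U} \cap \mathcal{Z}(P) = \emptyset$. Because $\overline{U}$ misses the poles of $R$, any zeros of $Q$ located in $U$ are isolated zeros of $R$; I would add these finitely many points back to $S'$, preserving closedness and imposing no additional ray constraint, since at such a $\beta$ we have $R(\beta)=0$ and hence $r(\beta)=\{\beta\}$.

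The central step is to show that $r(z) \cap \minvset{CH} \subseteq \overline{U}$ for every $z \in U$. First, $r(z) \cap \minvset{CH}$ is an initial closed sub-segment of $r(z)$: if the ray ever leaves $\minvset{CH}$ at some $w$, then any point $w'$ slightly past $w$ lies in $\minvset{CH}^{c}$, and Theorem~\ref{thm:AR} applied at $w'$ forces the sub-ray $r(w') \subseteq r(z)$ to stay in the complement of $\minvset{CH}$. Second, Lemma~\ref{lem:ImKleinen} implies that two distinct connected components of $(\minvset{CH})^{\circ}$ can only meet at zeros or poles of $R$; since $\overline{U}$ excludes $\mathcal{Z}(P)$, the only potential bridge points from $U$ to another component are zeros of $Q$, and a local analysis of the vector field $R(z)\partial_{z}$ near such a singular point rules out a straight ray routing from one component to another through $\minvset{CH}$.

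The hardest part will be tangential contacts between $r(z)$ and $\partial U$, which could contribute points to $r(z) \cap S'$ and break invariance of the candidate. To handle this I would use the boundary classification of Theorem~\ref{thm:MAINClassification}: the hypothesis that no tail endpoint lies in $\overline{U}$ precludes one-dimensional irregular pieces along $\partial U$, and the absence of $P$-zeros excludes the pole singularities of $R$ where a ray could be routed pathologically. A type-by-type inspection of the remaining boundary types (local, global, extruding, and the transverse inflection subclasses) then shows that every tangential contact forces the ray to exit $\minvset{CH}$ cleanly at that point, so $r(z) \cap S' = \emptyset$. This completes the verification that $S'$ is $T_{CH}$-invariant, contradicting minimality and establishing the lemma.
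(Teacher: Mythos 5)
Your overall strategy---removing the offending component $U$ and checking that $S'=\minvset{CH}\setminus U$ is $T_{CH}$-invariant via Theorem~\ref{thm:AR}---cannot work, and the problem is not in the "hardest part" you flag but one step earlier. Since $U$ is a connected component of the open set $(\minvset{CH})^{\circ}$, its boundary $\partial U$ is disjoint from the interior of $\minvset{CH}$, hence $\partial U\subset\partial\minvset{CH}\subset\minvset{CH}\setminus U=S'$. Now take any $z\in U$ whose ray eventually leaves $\minvset{CH}$ (such points always exist: e.g.\ $\minvset{CH}$ is compact when $\deg Q-\deg P=1$, and in the other cases one can take $z$ just inside a local arc of $\partial U$, whose ray is a near-tangent to the bounding integral curve and crosses it). The set $\{t\ge 0: z+tR(z)\in\minvset{CH}\}$ is closed and bounded, so it contains its supremum $t^{*}$, and the exit point $w=z+t^{*}R(z)$ lies in $\partial\minvset{CH}\cap\overline{U}\subset\partial U\subset S'$. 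Thus $r(z)\cap S'\neq\emptyset$ for such $z$, and $S'$ is never $T_{CH}$-invariant. Your "central step" $r(z)\cap\minvset{CH}\subset\overline{U}$, even if proved, does not help, because $\overline{U}\setminus U=\partial U$ sits inside $S'$; the claim you would actually need is $r(z)\cap\minvset{CH}\subset U$, which is false at the exit point. No type-by-type inspection of boundary points can repair this, since the obstruction is purely set-theoretic.

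The paper's argument avoids constructing a smaller invariant set altogether. It instead uses that $\minvset{CH}$ coincides with the $T_{CH}$-extension of any of its points: minimality forces some $u\in\overline{\minvset{CH}\setminus C}$ to have a root trail meeting $C$, and one follows a $t$-trace, i.e.\ a continuous branch $\gamma(t)$ of solutions of $tQ(\gamma)+P(\gamma)(\gamma-u)=0$, which at $t=0$ starts from $\mathcal{Z}(P)\cup\{u\}$ and must travel into $C$. The only gateways into $C$ are tail endpoints or points of $A$, the common boundary points of $C$ with other interior components; by Lemma~\ref{lem:ImKleinen} the latter are zeros of $R$, i.e.\ of $Q$ but not of $P$, and substituting $Q(\alpha)=0$ into the trace equation forces $P(\alpha)=0$, a contradiction. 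So either a root of $P$ or a tail endpoint lies in $\overline{C}$. If you want to salvage a "remove something" argument, you would have to excise a set whose boundary is already known not to obstruct rays, which essentially amounts to re-proving this trace/trail analysis; I recommend following the paper's route.
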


\begin{proof}
We suppose that we are not in the case (1), (2). Besides, we assume the existence of a connected component $C$ of the interior $(\minvset{CH})^{\circ}$ of the minimal set whose closure is disjoint from $\mathcal{Z}(P)$, contradicting statement (3).
\par
We first prove that $C$ cannot be the only connected component of $(\minvset{CH})^{\circ}$. Indeed, roots of $P(z)$ that do not belong to the regular locus of $\minvset{CH}$ (the closure of the interior) belong to tails (see Theorem~\ref{thm:TAIL}) and they are not zeros or poles of $R(z)$. Besides, $\minvset{CH}$ is assumed to be distinct from $\bC$. Consequently, we have $|\deg Q - \deg P| \leq 1$. The only case where the regular locus of $\minvset{CH}$ can be disjoint from $\mathcal{Z}(P)$ is when $R(z)$ is of the form $\lambda$ or $\lambda(z-\alpha)$. In the first case, $\minvset{CH}$ is known to be fully irregular. In the second case, either $\lambda \in \mathbb{R}_{>0}$ (and $\minvset{CH}$ is fully irregular, see Theorem~\ref{thm:irregular}) or $\lambda \notin \mathbb{R}_{>0}$ and $\minvset{CH}$ has no tails (and $P(z)$ has no root at all). We assume therefore that the interior $\minvset{CH}$ has several connected components.
\par
We denote by $A$ the set of points of $C$ that belong to the closure of another connected component of $(\minvset{CH})^{\circ}$. By assumption, these are not these points are not roots of $P(z)$ and Lemma~\ref{lem:ImKleinen} shows that each of them is a zero of $R(z)$.
\par
Since $\minvset{CH}$ is minimal, there is a point $u\in \overline{\minvset{CH}\setminus C}$ and a point $z_{0}\in \mathfrak{tr}_{u} \cap C$. As the root trail $\mathfrak{tr}_{u}$ changes continuously in $u$, $u$ may be chosen outside $A$. Since $|\deg Q-\deg P|\leq 1$, the zeros of $tQ(z)+P(z)(z-u)$ as $t \rightarrow 0$ tends to $\mathcal{Z}(P) \cup \lbrace{ u \rbrace}$. Further, we can assume that $\gamma(t)$ does not equal $\infty$ for some finite $t$, as this would imply $\deg Q-\deg P=1$ and $\lambda<0$, in which case $\minvset{CH}$ is equal to $\bC$. Hence, the minimal set $\minvset{CH}$ therefore contains  a continuous path $\gamma(t)$ from an element of $\mathcal{Z}(P) \cup \lbrace{ u \rbrace}$ to $z_{0}$ such that $\gamma(t)$ solves $tQ(\gamma(t))+P(\gamma(t))(\gamma(t)-u)=0$.
\par
The path $\gamma$ has to enter the component $C$ and can do so either through a tail or an element of $A$. The path $\gamma$ cannot contain any element $\alpha \in A$ because the equations $Q(\alpha)=0$ and $tQ(\alpha)+P(\alpha)(\alpha-u)=0$ (for some $t>0$) imply $P(\alpha)=0$, contradicting our assumption. Our assumption that neither (1), (2) nor (3) was satisfied thus implies (4).
\end{proof}

\subsection{Asymptotic geometry of Hutchinson invariant sets}\label{sec:asymptotic}

Let us recall the results of \cite{AHN+24} concerning  minimal Hutchinson invariant sets (see Theorems~1.11 and 1.12 of \cite{AHN+24}).

\begin{theorem}\label{thm:main} 
For any operator $T$ as in \eqref{eq:1stN} with a minimal set $\minvset{CH}$ having a nonempty interior, $\minvset{CH}$ is:
\begin{itemize}
    \item a compact contractible subset of $\mathbb{C}$ if $\deg Q -\deg P= 1$, and $Re(\lambda) \geq 0$;
    \item a noncompact non-trivial subset of $\mathbb{C}$ if $\deg Q -\deg P= 0$ or $-1$;
    \item trivial, i.e. equal to $\bC$ otherwise.
\end{itemize}
Besides, the closure $\overline{\minvset{CH}}$ in the extended plane $\mathbb{C} \cup \mathbb{S}^{1}$ is contractible, connected and compact.
\end{theorem}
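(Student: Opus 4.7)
The plan is to distinguish the three regimes by the asymptotic behaviour of $R(z)=Q(z)/P(z)$ at infinity and to apply Theorem~\ref{thm:AR} in each. Recall that $R(z)\sim\lambda z^{q-p}$ as $|z|\to\infty$, so the geometry of associated rays $r(z)=\{z+tR(z):t\ge 0\}$ at large scales is governed entirely by $\deg Q-\deg P$ and by $\lambda$.

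For the compact case, where $\deg Q-\deg P=1$ and $\operatorname{Re}(\lambda)\ge 0$, I would construct an explicit invariant disk. Indeed, for $|z|$ large one has $r(z)\approx\{(1+t\lambda)z:t\ge 0\}$, and $|1+t\lambda|^{2}=1+2t\operatorname{Re}(\lambda)+t^{2}|\lambda|^{2}\ge 1$ for all $t\ge 0$. Picking $M$ large enough so that $\mathcal{Z}(PQ)\subset\{|z|\le M\}$ and so that the perturbation from the lower-order terms of $R$ is controlled, the ray $r(z)$ stays outside $\{|z|\le M\}$ for every $z$ outside; Theorem~\ref{thm:AR} then makes this disk $T_{CH}$-invariant, and minimality yields compactness. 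For the noncompact cases $\deg Q-\deg P\in\{0,-1\}$, I would show that root trails of points of $\minvset{CH}$ reach infinity. Fix $u\in\minvset{CH}$ and look at the zeros of $Q(z)+s(z-u)P(z)$ as $s\to 0^{+}$; a degree count shows that at least one branch tends to $\infty$ in the direction $\arg(-\lambda)$ (when $q=p$) or $\arg(-\lambda)$ rescaled (when $q=p-1$, where the total degree drops by one as $s\to 0$). Since root trails of points of $\minvset{CH}$ lie in $\minvset{CH}$, unboundedness follows. For the remaining ranges (i.e.\ $|\deg Q-\deg P|\ge 2$ or $\deg Q-\deg P=1$ with $\operatorname{Re}(\lambda)<0$), I would use the winding of $\arg R(z)/(u-z)$ along a large circle to exhibit associated rays hitting any prescribed $u$: applying Lemma~\ref{lem:ArcCone} to arcs at infinity would force the complement of $\minvset{CH}$ to be empty, recovering the trivial case.

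For the second assertion, I would pass to the extended plane $\bC\cup\mathbb{S}^{1}$. The three analyses above show that $\overline{\minvset{CH}}$ is always compact (in the compact case by the previous paragraph, in the noncompact cases because the circle at infinity is attached). Connectedness would follow by observing that any putative decomposition $\overline{\minvset{CH}}=A\sqcup B$ into closed pieces could be separated by a simple arc $\alpha$ in the complement whose direction endpoints $\sigma(\alpha_{0}),\sigma(\alpha_{1})$ trap an arc of $\mathbb{S}^{1}$; Lemma~\ref{lem:ArcCone} would then place a half-plane region into the complement, contradicting minimality of $\minvset{CH}$ (which must include all points forced by the ray condition from its interior). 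For contractibility, the natural retraction to build is a flow along associated rays from the complement toward infinity: since every point outside $\minvset{CH}$ has its associated ray in the complement, pushing outward along these rays until they reach $\mathbb{S}^{1}$ gives a deformation retract of $(\bC\cup\mathbb{S}^{1})\setminus\overline{\minvset{CH}}$ onto an open arc of $\mathbb{S}^{1}$, whence $\overline{\minvset{CH}}$ itself is contractible.

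The main obstacle is the noncompactness argument for $\deg Q-\deg P=-1$. Here $R(z)\to 0$ at infinity, so individual rays shrink and the direct ``escaping root'' argument is less clean than in the case $\deg Q-\deg P=0$; one must track the degree drop in $Q(z)+s(z-u)P(z)$ as $s\to 0$ and carefully identify the branch of the root trail that escapes. A secondary difficulty is ensuring connectedness of $\overline{\minvset{CH}}$ in the noncompact cases, since Proposition~\ref{prop:CCrawBOUND} permits several components of $\minvset{CH}$ in $\bC$ when $\deg Q-\deg P=0$; the adjunction of the circle at infinity is precisely what glues them, so the proof must check that every component of $\minvset{CH}$ limits to a common nonempty subarc of $\mathbb{S}^{1}$, which amounts to identifying the asymptotic directions of the unbounded branches produced in paragraph two.
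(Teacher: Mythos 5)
First, note that the paper does not prove this statement at all: Theorem~\ref{thm:main} is recalled verbatim from Theorems~1.11 and~1.12 of \cite{AHN+24}, so there is no in-paper proof to compare against. Your proposal is therefore an independent reconstruction, and it contains at least one genuine gap and several steps that are too sketchy to count as a proof.

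The most concrete gap is the invariant-disk construction in the compact case when $\Re(\lambda)=0$. Writing $R(z)=\lambda z+A+O(1/z)$, the outward derivative of $|z+tR(z)|^{2}$ at $t=0$ is $2\Re\bigl(\bar z R(z)\bigr)=2|z|^{2}\Re(\lambda)+2\Re(\bar z A)+O(1)$. When $\Re(\lambda)=0$ the leading term vanishes and the remaining $O(|z|)$ and $O(1)$ contributions can be negative (recentering at $-A/\lambda$ kills the $O(|z|)$ term but still leaves a possibly negative $O(1)$ term coming from $\bar z\cdot O(1/z)$), so the associated ray initially enters the disk $\{|z|\le M\}$ no matter how large $M$ is. Your bound $|1+t\lambda|\ge 1$ controls the unperturbed ray but not the perturbation for small $t$; the correct replacement for the circle is a closed \emph{integral curve} of $R(z)\partial_{z}$ near infinity, which is convex outside the (compact) curve of inflections and whose associated rays are tangent to it — this is exactly the mechanism of Theorem~\ref{thm:periodic}. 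A second, smaller inaccuracy: in the case $\deg Q-\deg P=-1$ the degree of $tQ(z)+(z-u)P(z)$ drops by two (not one) as $t\to\infty$, so two branches of the root trail escape, in the directions $\tfrac{\arg(-\lambda)}{2}$ and $\tfrac{\arg(-\lambda)}{2}+\pi$; the unboundedness conclusion survives, but your degree count should be corrected.

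Beyond that, three assertions are left essentially unproved. You never show non-triviality ($\minvset{CH}\neq\bC$) in the cases $\deg Q-\deg P\in\{0,-1\}$; this requires exhibiting half-planes or cones disjoint from $\minvset{CH}$ as in Propositions~\ref{prop:qp-1} and~\ref{prop:qp0}, and does not follow from unboundedness. The triviality claim for $|\deg Q-\deg P|\ge 2$ is a winding-number heuristic without an argument that actually forces the complement to be empty. Finally, the proposed deformation retraction of the complement ``along associated rays'' is not well defined, because associated rays from distinct points cross each other, and contractibility of $\overline{\minvset{CH}}$ does not follow formally from contractibility of its complement inside the closed disk $\bC\cup\mathbb{S}^{1}$; the connectedness and contractibility statements need the arc-and-cone machinery of Lemma~\ref{lem:ArcCone} applied with more care than a one-line appeal.
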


Thus, the only interesting cases for the description of $\partial \minvset{CH}$ are those for which the values of $\deg Q -\deg P$ are $1$, $0$ or $-1$. In the latter two cases, we have more precise results given below.

\subsubsection{$\deg Q -\deg P=-1$}

The following statement has been proved in Corollary~6.2 of \cite{AHN+24}.

\begin{proposition}\label{prop:qp-1}
For an operator $T$ as in \eqref{eq:1stN} such that $\deg Q -\deg P=-1$. Then the complement of its minimal Hutchinson invariant set $\minvset{CH}$ in $\mathbb{C}$ has exactly two connected components $X_{1},X_{2}$.  
Each $X_{i}$ contains infinite cones whose intervals of directions are arbitrarily close to $\left(\frac{\phi_{\infty}-\pi}{2},\frac{\phi_{\infty}+\pi}{2}\right)$ and $\left(\frac{\phi_{\infty}+\pi}{2},\frac{\phi_{\infty}+3\pi}{2}\right)$ respectively.
\end{proposition}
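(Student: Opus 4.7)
The argument combines the asymptotic expansion $R(z)=\lambda z^{-1}+O(z^{-2})$ at infinity (which holds since $\deg Q-\deg P=-1$), Theorem~\ref{thm:AR}, and the contractibility of $\overline{\minvset{CH}}\subset\mathbb{C}\cup\mathbb{S}^1$ supplied by Theorem~\ref{thm:main}. Write $\theta_\pm:=(\phi_\infty\pm\pi)/2$ for the two common endpoints of the intervals of directions in the statement. For $c$ sufficiently large, the plan is to show that the closed strip $S_c:=\{z\in\mathbb{C}:|\Re(e^{-i\phi_\infty/2}z)|\leq c\}$ is $T_{CH}$-invariant; its complement then consists of two open half-planes $H_c^{\pm}:=\{\pm\Re(e^{-i\phi_\infty/2}z)>c\}$ lying in $\mathbb{C}\setminus\minvset{CH}$. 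Condition~(1) of Theorem~\ref{thm:AR} holds as soon as $c$ exceeds the moduli of all roots of $PQ$. For condition~(2), a point $z\in H_c^+$ satisfies $|z|>c$ and $\cos(\arg z-\phi_\infty/2)>c/|z|$, and the asymptotic yields $\Re(e^{-i\phi_\infty/2}R(z))=|\lambda||z|^{-1}\cos(\arg z-\phi_\infty/2)+O(|z|^{-2})>0$ once $c$ absorbs the implicit constant in the error. Consequently $\Re(e^{-i\phi_\infty/2}(z+tR(z)))>c$ for every $t\geq 0$, so the entire ray stays in $H_c^+$ and misses $S_c$; the symmetric argument handles $H_c^-$. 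Inside each $H_c^{\pm}$ one then inscribes sectors of the form $\{\arg z\in I^\delta,\,|z|>c/\sin\delta\}$ whose intervals $I^\delta$ approach the two target intervals as $\delta\to 0$.

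To identify the trace of $\overline{\minvset{CH}}$ on $\mathbb{S}^1$, pick any $u\in\minvset{CH}$. By Theorem~\ref{thm:AR} the root trail $\mathfrak{tr}_u\subseteq\minvset{CH}$ is the locus of solutions of $tQ(z)+(z-u)P(z)=0$ for $t\geq 0$, a polynomial of degree $p+1$ in $z$ (where $p=\deg P$). As $t\to\infty$, $\deg Q=p-1$ of the roots converge to zeros of $Q$ while the remaining two must escape to infinity, with the leading balance $tq_\infty z^{p-1}\sim-p_\infty z^{p+1}$ giving $z^2\sim-\lambda t$. These two branches therefore tend to infinity along $\arg z\to\theta_+$ and $\arg z\to\theta_-$ respectively, so $\overline{\minvset{CH}}\cap\mathbb{S}^1\supseteq\{\theta_+,\theta_-\}$; combined with the previous paragraph this pins down $A_\infty:=\overline{\minvset{CH}}\cap\mathbb{S}^1=\{\theta_+,\theta_-\}$.

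Finally, Theorem~\ref{thm:main} makes $A:=\overline{\minvset{CH}}$ a compact connected contractible subset of the closed disk $D=\mathbb{C}\cup\mathbb{S}^1$ meeting $\partial D$ only at the two isolated points $\theta_\pm$. Gluing a second disk $D'$ to $D$ along $\mathbb{S}^1$ forms $S^2$, in which Alexander duality gives $\tilde H_{\ast}(S^2\setminus A)=0$; the Mayer--Vietoris sequence for the cover $S^2\setminus A=(D\setminus A)\cup(D'\setminus A_\infty)$ with intersection $\mathbb{S}^1\setminus A_\infty$ then collapses to $\tilde H_0(D\setminus A)\cong\tilde H_0(\mathbb{S}^1\setminus\{\theta_+,\theta_-\})\cong\mathbb{Z}$. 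So $D\setminus A$, and hence $\mathbb{C}\setminus\minvset{CH}$, has exactly two components $X_1,X_2$, containing $H_c^+$ and $H_c^-$ respectively. The main obstacle is the quantitative calibration in the first paragraph: the positive leading term $|\lambda|c/|z|^2$ of $\Re(e^{-i\phi_\infty/2}R(z))$ must dominate the $O(|z|^{-2})$ error uniformly on $H_c^+$, so the threshold on $c$ depends on the implicit constant in the asymptotic expansion of $R$.
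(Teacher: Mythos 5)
You should note at the outset that the paper does not actually prove this proposition: it is imported verbatim as Corollary~6.2 of \cite{AHN+24}, so your argument is a self-contained replacement rather than a variant of an in-paper proof. As such it is essentially correct, and all three main steps check out. The strip $S_c$ is indeed $T_{CH}$-invariant for large $c$: one computes $\Re(e^{-i\phi_\infty/2}R(z)) = |\lambda|\,\Re(e^{-i\phi_\infty/2}z)/|z|^2 + O(|z|^{-2})$, whose sign on $H_c^{\pm}$ is $\pm$ once $|\lambda|c$ dominates the error constant, so the (straight) ray $r(z)$ stays in the half-plane it starts in and Theorem~\ref{thm:AR} applies. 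The two escaping branches of the root trail satisfy $z^2\sim-\lambda t$, which places both $\theta_+$ and $\theta_-$ in $\overline{\minvset{CH}}\cap\mathbb{S}^1$; this is exactly what is needed for $A=\overline{\minvset{CH}}$ to separate the two boundary arcs (if $A$ met $\mathbb{S}^1$ in at most one point, $D\setminus A$ would be connected and the whole argument would collapse). Finally, the duality and Mayer--Vietoris computation correctly gives that $D\setminus A$ has exactly two components, the inclusion $\mathbb{S}^1\setminus\{\theta_+,\theta_-\}\hookrightarrow D\setminus A$ being a $\pi_0$-bijection, so each component contains one arc and hence one of $H_c^{\pm}$, together with the desired cones.

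Two points deserve attention. First, you pass silently from ``$D\setminus A$ has two components'' to ``$\mathbb{C}\setminus\minvset{CH}$ has two components''. A priori a single component of $D\setminus A$ could meet $\mathbb{C}$ in several components of $\mathbb{C}\setminus\minvset{CH}$ joined only through the circle at infinity. This is easy to exclude and should be said: either observe that $D\setminus A$ is an open subset of the manifold-with-boundary $D$, so each of its components has connected interior; or argue directly that any component of $\mathbb{C}\setminus\minvset{CH}$ whose closure in $D$ contains an interior point of one of the two arcs must contain points with $|z|$ large and $\arg z$ bounded away from $\theta_\pm$, hence points of $H_c^{+}$ or $H_c^{-}$, and therefore coincides with the component of that half-plane. (The same observation rules out bounded components, consistently with your homology count.) Second, your proof leans on the contractibility of $\overline{\minvset{CH}}$ supplied by Theorem~\ref{thm:main}, which is likewise quoted from \cite{AHN+24}; within the logical packaging of the present paper this is legitimate, but be aware that in the source the connectivity of the complement and the contractibility of the closure may well be established together, so your derivation is non-circular only relative to how this paper treats those imports as black boxes.
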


\subsubsection{$\deg Q -\deg P=0$}

The following statement has been proven in Corollary~6.4 of \cite{AHN+24}.

\begin{proposition}\label{prop:qp0}
Take any operator $T$ as in \eqref{eq:1stN} such that $\deg Q -\deg P=0$. Then for any $\epsilon>0$, there exists an open cone $\mathcal{C}$ whose interval of directions is $(\phi_{\infty}+\pi-\epsilon,\phi_{\infty}+\pi+\epsilon
)$ and such that $\minvset{CH}$ is contained in $\mathcal{C}$.
\end{proposition}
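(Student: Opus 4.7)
The plan is to find, for any $\epsilon>0$, two closed half-planes $H_{\psi_{+}}$ and $H_{\psi_{-}}$ that are $T_{CH}$-invariant and whose intersection is a wedge of opening angle $\epsilon$ centered on the direction $\phi_{\infty}+\pi$; the conclusion then follows from the minimality of $\minvset{CH}$. After applying an affine rotation to the variable $z$, I may assume $\phi_{\infty}=0$, so that $\lambda\in\mathbb{R}_{>0}$, and the assumption $\deg Q=\deg P$ gives $R(z)\to \lambda$ uniformly as $z\to\infty$.

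For each angle $\psi\in(-\pi/2,\pi/2)$ I would define $H_{\psi}=\{z\in\mathbb{C}:\Re(e^{-i\psi}z)\leq c_{\psi}\}$ and select $c_{\psi}>0$ large enough that (i) $\mathcal{Z}(P)\cup\mathcal{Z}(Q)\subseteq H_{\psi}$, and (ii) $\Re(e^{-i\psi}R(z))>0$ for every $z\notin H_{\psi}$. Condition (ii) can be arranged because $\Re(e^{-i\psi}\lambda)=\lambda\cos\psi>0$ and, by the uniform convergence $R(z)\to\lambda$, the inequality $\Re(e^{-i\psi}R(z))>0$ holds outside a large enough disk, which in turn lies in $H_{\psi}^{c}$ once $c_{\psi}$ exceeds its radius. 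Then, for any $z\notin H_{\psi}$ and any $t\geq 0$, the identity $\Re(e^{-i\psi}(z+tR(z)))=\Re(e^{-i\psi}z)+t\Re(e^{-i\psi}R(z))>c_{\psi}$ shows that the associated ray $r(z)$ lies entirely in $H_{\psi}^{c}$. Theorem~\ref{thm:AR} then implies that $H_{\psi}$ is $T_{CH}$-invariant, and by minimality $\minvset{CH}\subseteq H_{\psi}$.

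Given $\epsilon>0$, set $\psi_{\pm}=\pm(\pi/2-\epsilon/2)$. A direct computation in Cartesian coordinates shows that $H_{\psi_{+}}\cap H_{\psi_{-}}$ is a closed wedge whose apex is a finite point and whose interval of directions is exactly $[\pi-\epsilon/2,\pi+\epsilon/2]$ (in the rotated frame). Choosing $\mathcal{C}$ to be any open cone containing this wedge and having interval of directions $(\phi_{\infty}+\pi-\epsilon,\phi_{\infty}+\pi+\epsilon)$, I obtain $\minvset{CH}\subseteq H_{\psi_{+}}\cap H_{\psi_{-}}\subseteq \mathcal{C}$, which is the desired statement since $\epsilon>0$ is arbitrary. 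The main technical point is the uniform verification of condition (ii): as $\psi\to\pm\pi/2$, $\cos\psi\to 0$ and the required threshold $c_{\psi}$ may be very large, but $\psi_{\pm}$ remain strictly inside $(-\pi/2,\pi/2)$ for each fixed $\epsilon>0$, so a finite choice of $c_{\psi_{\pm}}$ always suffices.
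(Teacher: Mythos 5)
Your argument is correct. The paper does not prove this statement itself but imports it from Corollary~6.4 of \cite{AHN+24}; your construction of $T_{CH}$-invariant closed half-planes $H_{\psi}$ via Theorem~\ref{thm:AR} (checking that the roots of $P$ and $Q$ lie in $H_{\psi}$ and that $\Re(e^{-i\psi}R(z))>0$ outside $H_{\psi}$, so associated rays never re-enter) is exactly the standard mechanism used throughout this framework — compare the half-plane argument in Proposition~\ref{prop:qp-1Convex} — and intersecting $H_{\psi_{+}}$ with $H_{\psi_{-}}$ does yield a wedge of opening $\epsilon$ in the direction $\phi_{\infty}+\pi$, which sits inside an open cone of slightly larger opening after shifting the apex. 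The only step worth writing out explicitly is the inequality $|z|\geq\Re(e^{-i\psi}z)>c_{\psi}$ guaranteeing that points outside $H_{\psi}$ lie outside the disk where $R$ is close to $\lambda$, which you have implicitly and which closes the argument.
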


\section{Local analysis of the boundary of \texorpdfstring{$\minvset{CH}$}{MCH}}\label{sec:localanalysis}

We consider an operator $T$ as in \eqref{eq:1stN} whose minimal set $\minvset{CH}$ has a nonempty interior.

\begin{notation}\label{notationlocal}
For any point $\alpha \in \partial \minvset{CH}$, we define $r_{\alpha} \in \mathbb{C}^{\ast}$, $m_{\alpha} \in \mathbb{Z}$ so that
\begin{equation}\label{eq:m_alpha}
    R(z)=\frac{Q(z)}{P(z)}=r_{\alpha}(z-\alpha)^{m_{\alpha}}+o(|z-\alpha|^{m_{\alpha}}).
\end{equation}
We also define $\phi_{\alpha}=arg(r_{\alpha})$ and $d_{\alpha}: \mathbb{S}^{1} \longrightarrow \mathbb{S}^{1}$ where $d_{\alpha}(\theta)=\phi_{\alpha}+m_{\alpha}\theta$.
\end{notation}

\subsection{Description of a tangent cone}

\begin{definition}\label{def:calKL}
For any $\alpha \in \partial \minvset{CH}$, we define $\mathcal{K}_{\alpha}$ as the subset of $\mathbb{S}^{1}$ formed by directions $\theta$ such that there is a sequence $(z_{n})_{n \in \mathbb{N}}$ satisfying the following conditions:
\begin{itemize}
    \item for any $n \in \mathbb{N}$, $z_{n} \in (\minvset{CH})^{c}$;
    \item $z_{n} \longrightarrow \alpha$;
    \item $arg(z_{n}-\alpha) \longrightarrow \theta$.
\end{itemize}
We also define $\mathcal{L}_{\alpha}$ as the subset of $\mathbb{S}^{1}$ formed by directions $\theta$ such that the half-line $\alpha+e^{i\theta}\mathbb{R}^{+}$ does not intersect the interior of $\minvset{CH}$.
\end{definition}

\begin{lemma}\label{lem:KLalpha}
For any $\alpha \in \partial \minvset{CH}$, the following statements hold:
\begin{enumerate}
    \item $\mathcal{K}_{\alpha}$ and $\mathcal{L}_{\alpha}$ are nonempty closed subsets of $\mathbb{S}^{1}$;
    \item $\mathcal{L}_{\alpha} \subset \mathcal{K}_{\alpha}$;
    \item for any $\theta \in \mathcal{K}_{\alpha}$, $d(\theta) \in \mathcal{L}_{\alpha}$. In particular, 
    $K_\alpha$ is invariant under $d_\alpha$;
    \item for any $\theta \in \mathcal{K}_{\alpha}$, there exists a closed interval $J \subset \mathcal{K}_{\alpha}$ of length at most $\pi$ containing both $\theta$ and $d_{\alpha}(\theta)$;
    \item $\mathcal{L}_{\alpha} \neq \mathbb{S}^{1}$.
\end{enumerate}
\end{lemma}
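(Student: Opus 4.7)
The plan is to address the five items in the order (1)–(3)–(2)–(4)–(5), deferring nonemptiness of $\mathcal{L}_{\alpha}$ until after (3), with item (4) as the main substantive step. The key tool throughout is Theorem~\ref{thm:AR} together with the local expansion \eqref{eq:m_alpha}.

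For the routine items, I would argue as follows. Nonemptiness of $\mathcal{K}_{\alpha}$ comes directly from $\alpha \in \partial\minvset{CH}$: any sequence $z_n \in (\minvset{CH})^c$ with $z_n \to \alpha$ admits, by compactness of $\mathbb{S}^1$, a subsequence along which $\arg(z_n-\alpha)$ converges. Closedness of $\mathcal{K}_{\alpha}$ follows by a standard diagonal argument, and closedness of $\mathcal{L}_{\alpha}$ is immediate because the condition ``$\alpha + e^{i\theta}\mathbb{R}^+$ meets the open set $(\minvset{CH})^{\circ}$'' is open in $\theta$. For (3), given $\theta \in \mathcal{K}_{\alpha}$ and a witness sequence, \eqref{eq:m_alpha} yields $\arg R(z_n) = \phi_{\alpha} + m_{\alpha}\arg(z_n-\alpha) + o(1) \to d_{\alpha}(\theta)$; Theorem~\ref{thm:AR} forbids $r(z_n)$ from meeting $\minvset{CH}$, so any interior point of $\minvset{CH}$ on $\alpha + e^{i d_{\alpha}(\theta)}\mathbb{R}^+$ would be hit by $r(z_n)$ for large $n$, a contradiction, giving $d_{\alpha}(\theta) \in \mathcal{L}_{\alpha}$ and, in particular, $\mathcal{L}_{\alpha} \neq \emptyset$. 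For (2), if $\theta \in \mathcal{L}_{\alpha}$, points on $\alpha + e^{i\theta}\mathbb{R}^+$ near $\alpha$ lie in the closed set $\mathbb{C}\setminus(\minvset{CH})^{\circ}$; those in $(\minvset{CH})^c$ witness $\theta \in \mathcal{K}_{\alpha}$ directly, and those on $\partial\minvset{CH}$ are limits of complement points by definition, which a diagonal argument assembles into the required witness. Item (5) is immediate: the nonempty open set $(\minvset{CH})^{\circ}$ subtends an open set of directions from $\alpha$, so $\mathcal{L}_{\alpha} \neq \mathbb{S}^1$.

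For the main step (4), fix $\theta \in \mathcal{K}_{\alpha}$ with witness $z_n$. The entire ray $r(z_n)$ lies in $(\minvset{CH})^c$, and viewed from $\alpha$ the arguments $\arg(z_n+tR(z_n)-\alpha)$ for $t \in [0,\infty)$ sweep a closed arc $J_n \subset \mathbb{S}^1$ with endpoints $\arg(z_n-\alpha) \to \theta$ and $\arg R(z_n) \to d_{\alpha}(\theta)$. The standard fact that a half-line viewed from an external point subtends an arc of length strictly less than $\pi$ gives $|J_n| \leq \pi$, and passing to a subsequence the $J_n$ Hausdorff-converge to a closed arc $J$ of length at most $\pi$ between $\theta$ and $d_{\alpha}(\theta)$. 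For each $\theta'$ strictly interior to $J$, for large $n$ there is a unique $w_n \in r(z_n)$ with $\arg(w_n - \alpha) = \theta'$; applying the law of sines to the triangle $\alpha z_n w_n$, whose angles at $\alpha$ and $z_n$ converge to nonzero limits depending only on $\theta$, $\theta'$ and $d_{\alpha}(\theta)$, I obtain $|w_n - \alpha| = O(|z_n-\alpha|) \to 0$. Since $w_n \in (\minvset{CH})^c$, this places $\theta' \in \mathcal{K}_{\alpha}$. Combined with $\theta, d_{\alpha}(\theta) \in \mathcal{K}_{\alpha}$ (via (3) and (2)) and closedness of $\mathcal{K}_{\alpha}$, the full closed arc $J$ lies in $\mathcal{K}_{\alpha}$.

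The main obstacle I anticipate is the degenerate configuration $d_{\alpha}(\theta) \equiv \theta + \pi \pmod{2\pi}$, in which the supporting line of $r(z_n)$ tends to a line through $\alpha$; then $J_n$ can collapse to a pair of antipodal points rather than converge to the full semicircle, and the triangles $\alpha z_n w_n$ threaten to degenerate. I plan to handle this by perturbing $z_n$ inside $(\minvset{CH})^c$ off the exceptional locus to force $r(z_n)$ to miss $\alpha$ (so $|J_n|$ approaches $\pi$), and by observing that the statement permits either of the two closed semicircles between $\theta$ and $d_{\alpha}(\theta)$ as the witness $J$, so it suffices to populate the open interior of one semicircle via the sine-rule estimate and close up at the endpoints. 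Verifying that the angle bounds in the law of sines are uniform in $n$ after this perturbation is the only delicate bookkeeping.
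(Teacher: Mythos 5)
Your proof is correct and follows essentially the same route as the paper: the witness-sequence argument for nonemptiness of $\mathcal{K}_{\alpha}$, accumulation of the rays $r(z_n)$ onto $\alpha+e^{id_{\alpha}(\theta)}\mathbb{R}^{+}$ for item (3), the arc of directions swept by $r(z_n)$ (the paper's points $P_{\eta,n}$ are your $w_n$) for item (4), and nonemptiness of the interior for item (5). You supply two details the paper leaves implicit — the deduction $\mathcal{L}_{\alpha}\subset\mathcal{K}_{\alpha}$ and the law-of-sines estimate showing the intersection points actually converge to $\alpha$ — and both are sound; note also that your worry about $J_n$ collapsing in the antipodal case is moot, since a connected arc with nearly antipodal endpoints and length at most $\pi$ automatically has length tending to $\pi$.
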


\begin{proof}
 From Definition~\ref{def:calKL} it  immediately follows that $\mathcal{K}_{\alpha}$ and $\mathcal{L}_{\alpha}$ are closed subsets of $\mathbb{S}^{1}$.
\par
If $\alpha \in \partial \minvset{CH}$, then we can find a sequence of points in the complement of $\minvset{CH}$ approaching $\alpha$. By compactness of $\mathbb{S}^{1}$, we can choose a subsequence for which the arguments converge to some limit. Thus $\mathcal{K}_{\alpha}$ is nonempty.
\par
Then, for any $\theta \in \mathcal{K}_{\alpha}$, we have a sequence $(z_{n})_{n \in \mathbb{N}}$ in the complement of $\minvset{CH}$ accumulating to  $\alpha$ with the limit slope $\theta$. The associated rays $r(z_{n})$ accumulate to $\alpha+e^{id_{\alpha}(\theta)}\mathbb{R}^{+}$. Since none of them intersects the interior of $\minvset{CH}$, the half-line $\alpha+e^{id_{\alpha}(\theta)}\mathbb{R}^{+}$ does not intersect it either and $d_{\alpha}(\theta) \in \mathcal{L}_{\alpha}$.
\par
Besides, in the case where $\theta \neq d_{\alpha}(\theta)$, (up to taking a subsequence of $(z_{n})_{n \in \mathbb{N}}$, there is a closed interval $J \subset \mathbb{S}^{1}$ such that:
\begin{itemize}
    \item the endpoints of $J$ are $\theta$ and $d_{\alpha}(\theta)$;
    \item the length of $J$ is at most $\pi$;
    \item for any $\eta$ in the interior of $J$, there is a bound $N(\eta)$ such that for any $n \geq N(\eta)$, the associated ray $r(z_{n})$ intersects the half-line $\alpha + e^{i\eta}\mathbb{R}^{+}$ at  some point $P_{\eta,n}$.
\end{itemize}
Existence of sequences $(P_{\eta,n})_{n \geq N(\eta)}$ proves that for any $\eta \in J$, one has  $\eta \in \mathcal{K}_{\alpha}$.
\par
Finally, $\mathcal{L}_{\alpha} \neq \mathbb{S}^{1}$ because in this case, $\minvset{CH}$ would have empty interior.
\end{proof}
\begin{remark}
    Note that in the case $\theta=d_\alpha(\theta)$, the interval $J$ is a singleton $\{\theta\}.$
\end{remark}

Let us deduce local description of $\mathcal{K}_{\alpha}$ and $\mathcal{L}_{\alpha}$ depending on the local invariants of $\alpha$.

\begin{corollary}\label{cor:KLalpha}
For any $\alpha \in \partial \minvset{CH}$, the following statements hold:
\begin{itemize}
    \item if $|m_{\alpha}| \geq 2$, then $\mathcal{K}_{\alpha}=\mathcal{L}_{\alpha}$ and they are contained in the finite set of arguments satisfying $\theta \equiv \frac{\phi_{\alpha}}{1-m_{\alpha}}~[\frac{2\pi}{1-m_{\alpha}}]$;
    \item if $m_{\alpha}=1$, then $\phi_{\alpha}=0$ and $\mathcal{K}_{\alpha}=\mathcal{L}_{\alpha}$;
    \item if $m_{\alpha}=0$, then $\phi_{\alpha} \in \mathcal{L}_{\alpha}$;
    \item if $m_{\alpha}=-1$, then $\mathcal{K}_{\alpha}=\mathcal{L}_{\alpha}$ and these sets are formed by at most two intervals, each of length at most $\pi$ and having their  midpoints at $\frac{\phi_{\alpha}}{2}$ and $\frac{\phi_{\alpha}}{2}+\pi$.
\end{itemize}

\end{corollary}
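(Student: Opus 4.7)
The plan is a case analysis on the value of $m_\alpha$. In each case I will combine three ingredients: the $d_\alpha$-invariance of $\mathcal{K}_\alpha$ (which follows from Lemma~\ref{lem:KLalpha}(3) together with $\mathcal{L}_\alpha \subset \mathcal{K}_\alpha$), the arc property of Lemma~\ref{lem:KLalpha}(4), and the non-fullness $\mathcal{L}_\alpha \neq \mathbb{S}^1$ of Lemma~\ref{lem:KLalpha}(5).

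When $|m_\alpha|\geq 2$, the map $d_\alpha$ multiplies arc length by $|m_\alpha|\geq 2$. If $\mathcal{K}_\alpha$ contained any arc $I$ of positive length, invariance would give $d_\alpha^{n}(I)\subset \mathcal{K}_\alpha$ for all $n$, and these arcs would eventually cover $\mathbb{S}^1$; but then $\mathcal{L}_\alpha \supset d_\alpha(\mathcal{K}_\alpha)=\mathbb{S}^1$ would contradict~(5). Hence $\mathcal{K}_\alpha$ has empty interior, so by~(4) the arc $J \subset \mathcal{K}_\alpha$ joining $\theta$ and $d_\alpha(\theta)$ must collapse to a point, forcing every $\theta \in \mathcal{K}_\alpha$ to be fixed by $d_\alpha$; solving $\theta = \phi_\alpha + m_\alpha\theta$ produces the claimed set of $|1-m_\alpha|$ candidates, and since $d_\alpha$ then acts as the identity on $\mathcal{K}_\alpha$, the chain $\mathcal{K}_\alpha = d_\alpha(\mathcal{K}_\alpha) \subset \mathcal{L}_\alpha \subset \mathcal{K}_\alpha$ upgrades to equality. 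The same strategy treats $m_\alpha=1$ with $\phi_\alpha \neq 0$: the arc $J \subset \mathcal{K}_\alpha$ provided by~(4) has length at least the short-arc distance between $\theta$ and $\theta + \phi_\alpha$, and iterating the rotation $d_\alpha$ produces overlapping translates $J+n\phi_\alpha \subset \mathcal{K}_\alpha$ that cover $\mathbb{S}^1$, again contradicting~(5). Hence $\phi_\alpha=0$, $d_\alpha=\mathrm{id}$, and invariance gives $\mathcal{K}_\alpha = \mathcal{L}_\alpha$. For $m_\alpha=0$, $d_\alpha$ is the constant map with value $\phi_\alpha$, and Lemma~\ref{lem:KLalpha}(3) immediately puts $\phi_\alpha \in \mathcal{L}_\alpha$.

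The most delicate case is $m_\alpha=-1$, where $d_\alpha$ is the involution reflecting $\mathbb{S}^1$ through the axis joining the fixed points $\phi_\alpha/2$ and $\phi_\alpha/2+\pi$. Since $d_\alpha$ is a bijection, the inclusion $d_\alpha(\mathcal{K}_\alpha) \subset \mathcal{K}_\alpha$ applied twice upgrades to an equality, and together with $\mathcal{L}_\alpha \subset \mathcal{K}_\alpha$ this already yields $\mathcal{K}_\alpha = \mathcal{L}_\alpha$ together with reflection-symmetry of $\mathcal{K}_\alpha$. After normalising to $\phi_\alpha=0$, I will use the observation that for any non-fixed $\theta \in \mathcal{K}_\alpha$ exactly one of the two arcs of $\mathbb{S}^1$ joining $\theta$ and $-\theta$ has length at most $\pi$, and that this short arc contains exactly one of the fixed points $0,\pi$ (namely $0$ if $|\theta|<\pi/2$ and $\pi$ if $|\theta|>\pi/2$, with either possible choice at $\theta=\pm\pi/2$). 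Property~(4) then forces the short arc into $\mathcal{K}_\alpha$, and by nesting these arcs over all such $\theta$ one concludes that $\mathcal{K}_\alpha \cap [-\pi/2,\pi/2]$ is a closed sub-arc symmetric about $0$ of length at most $\pi$, and similarly $\mathcal{K}_\alpha \cap [\pi/2,3\pi/2]$ is a closed sub-arc symmetric about $\pi$ of length at most $\pi$. Their union is all of $\mathcal{K}_\alpha$, giving the two claimed intervals. The main obstacle I anticipate is the bookkeeping at the endpoint arguments $\pm\pi/2$, where the arc $J$ is not uniquely determined; but either choice of $J$ still fits the decomposition since it contributes to one of the two components.
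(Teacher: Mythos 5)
Your proof is correct and follows essentially the same route as the paper: a case analysis on $m_{\alpha}$ driven by the expansion/rotation/reflection dynamics of $d_{\alpha}$ on $\mathbb{S}^{1}$, combined with the invariance, arc, and non-fullness properties of Lemma~\ref{lem:KLalpha}. The only difference is presentational — the paper argues via a single maximal interval $J\subset\mathcal{K}_{\alpha}$, while you work pointwise with the forced short arcs, which makes the equalities $\mathcal{K}_{\alpha}=\mathcal{L}_{\alpha}$ and the two-interval structure in the case $m_{\alpha}=-1$ somewhat more explicit.
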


\begin{proof}
We consider  maximal interval $J$ in  $\mathcal{K}_{\alpha}$ (which is non-empty by Lemma~\ref{lem:KLalpha}). The images of $J$ under the iterated action of $d_{\alpha}$ belong to $\mathcal{L}_{\alpha}$.

\par
If $|m_{\alpha}| \geq 2$, then $J$ is a singleton since otherwise the union of its iterates would coincide with $\mathbb{S}^{1}$ (contradicting Lemma~\ref{lem:KLalpha}). Thus $J$ has to be a fixed point of the map $d_{\alpha}$.
\par
If $m_{\alpha}=1$ and $\phi_{\alpha} \neq 0$, then $J$ coincides with $\mathbb{S}^{1}$ because no other connected subset of the circle is preserved under the action of  nontrivial rotation. Therefore $d_{\alpha}$ is the identity map.
\par
If $m_{\alpha}=0$, then for any $\theta \in \mathcal{K}_{\alpha}$, $d_{\alpha}(\theta)=\phi_{\alpha}$. Therefore $\phi_{\alpha} \in \mathcal{L}_{\alpha}$.
\par
If $m_{\alpha}=-1$, then $J$ is invariant under the action of $\theta \mapsto \phi_{\alpha} - \theta$. Thus, either $\frac{\phi_{\alpha}}{2}$ or $\frac{\phi_{\alpha}}{2}+\pi$ is the bisector of $J$. If $J$ is of length strictly bigger than $\pi$, then Lemma~\ref{lem:KLalpha} shows that its complement (of length strictly smaller than $\pi$) is also contained in $\mathcal{L}_{\alpha}$. Therefore $L_{\alpha} = \mathbb{S}^{1}$ which is a contradiction.
\end{proof}

We obtain a bound on the number of petals of $\minvset{CH}$ that can be attached to a boundary point.

\begin{corollary}\label{cor:BOUNDINTERIOR}
For any linear differential operator $T$ given by \eqref{eq:1stN} we consider a point $\alpha$ of the boundary $\partial \minvset{CH}$. Then for any sufficiently small open subset $V \subset \mathbb{C}$ the interior of the connected component  $M_V$ of $V\cap \minvset{CH}$ containing $\alpha$ has at most:
\begin{itemize}
    \item $|1-m_{\alpha}|$ connected components if $m_{\alpha} \neq 1$;
    \item $\deg P$ connected components if $m_{\alpha} = 1$
\end{itemize}
where $R(z)=\lambda (z-\alpha)^{m_{\alpha}}+o((z-\alpha)^{m_{\alpha}})$ with $\lambda \in \mathbb{C}^{\ast}$ and $m_{\alpha} \in \mathbb{Z}$.
\end{corollary}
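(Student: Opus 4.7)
The plan is to identify the connected components of the interior of $M_V$ with directional sectors at $\alpha$ and bound them via the tangent cone structure from Definition~\ref{def:calKL}. For sufficiently small $V$, each connected component $U$ of the interior of $M_V$ accumulates to $\alpha$ along a nonempty closed arc $A_U \subset \mathbb{S}^1$ of directions; since interior points of $\minvset{CH}$ have neighborhoods entirely inside $\minvset{CH}$, this arc cannot meet $\mathcal{K}_\alpha$, i.e., $A_U \subset \mathbb{S}^1 \setminus \mathcal{K}_\alpha$. Distinct components yield disjoint arcs, so the number of components is bounded above by the number of connected components of $\mathbb{S}^1 \setminus \mathcal{K}_\alpha$.

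For $m_\alpha \neq 1$, the bound then follows directly from Corollary~\ref{cor:KLalpha}. When $|m_\alpha| \geq 2$, $\mathcal{K}_\alpha$ is a set of at most $|1 - m_\alpha|$ points fixed by $d_\alpha$, so its complement has at most $|1 - m_\alpha|$ arcs. When $m_\alpha = -1$, $\mathcal{K}_\alpha$ consists of at most two intervals (each of length at most $\pi$), yielding at most $2 = |1 - (-1)|$ complementary arcs. When $m_\alpha = 0$, Lemma~\ref{lem:ImKleinen} provides the stronger conclusion that the interior of $M_V$ is connected, matching $|1-0| = 1$.

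The remaining case $m_\alpha = 1$ is delicate, because $\phi_\alpha = 0$ means $d_\alpha$ is the identity and leaves $\mathcal{K}_\alpha$ a priori unconstrained, so the tangent cone alone gives no finite bound. The plan here is to invoke the root trail of $\alpha$. Since $m_\alpha = 1$ forces $Q(\alpha) = 0$, and $\alpha$ is a zero of $F(0, z) := (z-\alpha)P(z)$ of multiplicity $\mathrm{ord}_\alpha P + 1$, the polynomial equation $F(t, z) = tQ(z) + (z-\alpha)P(z) = 0$ admits that many local branches through $(0, \alpha)$. One of them is the trivial solution $z \equiv \alpha$, leaving at most $\mathrm{ord}_\alpha P \leq \deg P$ non-trivial branches, each a real-analytic curve emanating from $\alpha$ and contained in $\minvset{CH}$ by $T_{CH}$-invariance applied to $\alpha$. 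The main obstacle is to convert this count into a bound on petals: one must show, using the radial/source structure of $R(z)\partial_z$ near $\alpha$ (where $\lambda = r_\alpha > 0$ makes $\alpha$ a source whose associated rays point approximately outward) together with the associated-ray criterion of Theorem~\ref{thm:AR}, that the complement of $\minvset{CH}$ near $\alpha$ has an essentially radial structure in which petals and exterior sectors alternate and are separated by the non-trivial root trail branches, so their number is at most $\mathrm{ord}_\alpha P \leq \deg P$.
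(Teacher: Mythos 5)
Your treatment of the cases $m_{\alpha}\neq 1$ follows the paper's own route: reduce to the structure of the tangent-cone sets of \cref{def:calKL}, apply \cref{cor:KLalpha} to bound the number of complementary arcs of $\mathcal{K}_{\alpha}$ (and use \cref{lem:ImKleinen} when $m_{\alpha}=0$). That part is essentially the intended argument and is fine, modulo the small point that the endpoints of the arc of accumulation directions of a petal do lie in $\mathcal{K}_{\alpha}$, so the correct statement is that distinct petals determine distinct complementary arcs rather than that $A_U$ avoids $\mathcal{K}_{\alpha}$ entirely.

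The case $m_{\alpha}=1$ is where the proposal has a genuine gap, and it is not merely the step you flag as "the main obstacle": the route itself cannot work. You propose to bound the number of petals by the number $\operatorname{ord}_{\alpha}P$ of non-trivial local branches of $tQ(z)+(z-\alpha)P(z)=0$ through $(0,\alpha)$, on the grounds that these branches separate (or anchor) the petals. But consider $T=z(z^{k}-1)\frac{d}{dz}+(z^{k}+1)$ and $\alpha=1$: here $m_{\alpha}=1$, $P(1)=2\neq 0$, so $\operatorname{ord}_{\alpha}P=0$ and there are no non-trivial branches at all, yet $\minvset{CH}$ is the closed unit disk and $\alpha$ carries exactly one petal. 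So any inequality of the form (number of petals) $\leq\operatorname{ord}_{\alpha}P$ is false, and the local data at $\alpha$ that you are counting simply does not see the petals. The underlying reason is that the bound $\deg P$ in the statement is global, not local: what controls the petals at a simple zero is which roots of $P$ (anywhere in the plane) each petal's closure captures, not how many branches of a root trail pass through $\alpha$.

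The paper's actual argument for $m_{\alpha}=1$ is the following. First cancel the common factor $(z-\alpha)^{d}$ from $P$ and $Q$ (this leaves $\minvset{CH}$ unchanged since $\mathcal{Z}(PQ)$ is preserved), so one may assume $P(\alpha)\neq 0$. Then \cref{lem:BoundInterior} guarantees that the closure of every connected component of $(\minvset{CH})^{\circ}$ contains either a root of $P$ or the endpoint of a tail (which itself contains a root of $P$), and contractibility of $\overline{\minvset{CH}}$ forbids two distinct components attached to $\alpha$ from sharing such a root. This yields at most $\deg P$ components. If you want to salvage your approach, you would need to replace the local branch count by an argument of this global type; as written, the $m_{\alpha}=1$ case is both incomplete and headed in a direction that the example above rules out.
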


\begin{proof}
If $\alpha$ is not a zero or a pole of $R(z)$, then Lemma~\ref{lem:ImKleinen} proves the statement. Besides, if $m_{\alpha} \notin \lbrace{ 0,1 \rbrace}$, Corollary~\ref{cor:KLalpha} proves that $\alpha$ is in the closure of at most $1-m_{\alpha}$ components.
\par
In the remaining cases, $\alpha$ is a simple zero of $R(z)$. If $\alpha$ is also a root of degree $d$ of $P$, then it is a root of degree $d+1$ of $Q$.
    
We can divide $P$ and $Q$ by $(z-\alpha)^{d}$ while keeping the same minimal set $\minvset{CH}$ (because in this case $\mathcal{Z}(PQ)$ remains unchanged). Consequently, we can assume that $\alpha$ is not a root of $P$. Lemma~\ref{lem:BoundInterior} proves that for any connected component $C$ of $(\minvset{CH})^{\circ}$ such that $\alpha$ is in the closure of $C$, either some root of $P(z)$ belongs to the closure of $C$ or some tail is attached to $C$. If $\alpha$ is in the closure of several connected components of $(\minvset{CH})^{\circ}$, then a same root of $P$ cannot be in the closure of two of them because $\overline{\minvset{CH}}$ would fail to be contractible. Similarly a given tail is attached to only one connected component of $(\minvset{CH})^{\circ}$ (and contains at least one root of $P$). Therefore, $\alpha$ is in the closure of at most $\deg P$ components.
\end{proof}

\subsection{Curve of inflections}\label{sub:inflection}

In \S A.3 of \cite{AHN+24} we introduced the curve of inflections $\mathfrak{I}_{R}$ of an analytic vector field $R(z)\partial_{z}$. By definition, it is the closure in $\bC$ of the subset of $\bC\setminus \zeros(PQ)$ at each point of which the integral curve of the vector field $R(z)\partial_{z}$ passing through this point has zero curvature. Here and throughout, $\zeros(F)$ denotes the set of zeros of the function $F$. Below we provide some additional information about $\mathfrak{I}_{R}$. 

For an operator $T$ for which $R(z)$ is not of the form $\lambda$ or $\lambda(z-\alpha)$ for some $\lambda \in \mathbb{C}^{\ast}$ and $\alpha \in \mathbb{C}$, the function $R'(z)$ is a non-constant rational function. Therefore the curve of inflections  $\mathfrak{I}_{R}$ of $R(z)\partial_z$ (which is defined as the closure of the set of points for which $Im(R'(z))=0$) is a real plane algebraic curve. 

We first characterize the points at which several local branches of the curve of inflections intersect.

\begin{lemma}\label{lem:InflectionBranches}
A point $z_{0} \in \mathfrak{I}_{R}$ belongs to exactly $m \geq 2$ local branches of $\mathfrak{I}_{R}$ in the following cases:
\begin{enumerate}
    \item $z_{0}$ is a critical point of $R'(z)$ of order $m-1$  (including zeroes of order $m$ of $R(z)$);
    \item $z_{0}$ is a pole  of $R(z)$ of order $m-1$.
\end{enumerate}
The $2m$ limit slopes of the local branches at $z_0$ form a regular $2m$-gon in $\mathbb{S}^{1}$.
\end{lemma}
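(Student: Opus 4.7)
The plan is to reduce both cases to direct applications of Lemma~\ref{lem:branches and curvature} with a suitable holomorphic auxiliary function $F$. In Case~1, where $R$ (and hence $R'$) is holomorphic at $z_0$, I would simply take $F=R'$; in Case~2, where $R'$ has a pole at $z_0$, I would take $F=1/R'$ and check that replacing $R'$ by $1/R'$ does not alter the local structure of $\mathfrak{I}_R$ on a punctured neighborhood of $z_0$.

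For Case~1, continuity of $R'$ at $z_0$ together with $z_0\in\mathfrak{I}_R$ forces $R'(z_0)\in\R$, so the hypotheses of Lemma~\ref{lem:branches and curvature} are satisfied with $F = R'$. The lemma then produces exactly $k:=\operatorname{ord}_{z_0}(R'-R'(z_0))$ smooth branches of $\mathfrak{I}_R$ through $z_0$, with tangent slopes $\theta_0/k+j\pi/k$ for $j=0,\dots,k-1$ and $\theta_0=-\arg (R')^{(k)}(z_0)$. A straightforward differentiation check shows that $k=m$ is exactly the condition that $z_0$ be a critical point of $R'$ of order $m-1$, which also covers the parenthetical subcase of a zero of $R$ of order $m$ at $z_0$ (where $R'(z_0)=0$ holds automatically).

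For Case~2, write $R(z) = c(z-z_0)^{-(m-1)}+\cdots$ with $c\neq 0$, so that $R'$ has a pole of exact order $m$ and $F := 1/R'$ is holomorphic near $z_0$ with $F(z_0) = 0 \in \R$ and $\operatorname{ord}_{z_0} F = m$. On any sufficiently small punctured neighborhood of $z_0$ the elementary identity $\Im(1/w) = -\Im(w)/|w|^2$ for $w\neq 0$ gives
\[
\Im R'(z) = 0 \;\Longleftrightarrow\; \Im F(z) = 0,
\]
so the germs of $\mathfrak{I}_R$ and $\{\Im F=0\}$ at $z_0$ coincide away from the single point $z_0$. Applying Lemma~\ref{lem:branches and curvature} to $F$ then yields exactly $m$ smooth branches through $z_0$, with the same angular structure.

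In both cases the $m$ tangent directions are $\theta_0/m + j\pi/m$ for $j=0,\dots,m-1$, so the corresponding $2m$ half-tangent rays are equally spaced on $\mathbb{S}^1$ and form a regular $2m$-gon as claimed. The only slightly delicate step is the reduction in Case~2: one must verify both that $\operatorname{ord}_{z_0}(1/R')=m$ is equivalent to the hypothesis ``pole of $R$ of order $m-1$'' and that no spurious branches appear or are lost at $z_0$ when passing from $R'$ to $1/R'$. Both are immediate from the identity $\Im(1/w)=-\Im(w)/|w|^2$ once the punctured neighborhood is chosen small enough so that $R'$ has no zero on it.
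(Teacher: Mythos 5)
Your proposal is correct and follows exactly the route the paper takes: the paper's proof is the one-line appeal to Lemma~\ref{lem:branches and curvature} with $F=R'$, together with the remark following that lemma which handles the pole case by passing to $1/F$ (your identity $\Im(1/w)=-\Im(w)/|w|^2$ is precisely the justification left implicit there). Your write-up simply makes explicit the order-counting and the harmless reduction at the pole, so there is nothing further to add.
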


\begin{proof}
This follows immediately from Lemma~\ref{lem:branches and curvature}.
\end{proof}

\begin{corollary}\label{cor:inflectionmultiple}
The curve of inflections $\mathfrak{I}_{R}$ has at most $4 \deg P +  \deg Q -2$ singular points.
\end{corollary}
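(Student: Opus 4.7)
The plan is to apply \cref{lem:InflectionBranches} directly and then perform two simple degree counts. By that lemma, every singular point of $\mathfrak{I}_{R}$ belongs to one of two disjoint classes: (i) a critical point of $R'(z)$ in $\mathbb{C}$, i.e.\ a zero of $R''$ away from the poles of $R$; or (ii) a pole of $R(z)$ in $\mathbb{C}$. The two classes are disjoint because $R''$ diverges at every pole of $R$, so no zero of $R''$ can be a pole of $R$.

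The class (ii) count is immediate: the poles of $R=Q/P$ lie among the zeros of $P$, so there are at most $\deg P$ of them. For class (i) I would differentiate $R=Q/P$ twice using the quotient rule and obtain
\[
R''(z)=\frac{Q''P^{2}-QPP''-2Q'PP'+2Q(P')^{2}}{P^{3}}.
\]
A term-by-term inspection shows that every monomial in the numerator has degree at most $\deg Q+2\deg P-2$, so the numerator vanishes at most $\deg Q+2\deg P-2$ times in $\mathbb{C}$. Summing the two contributions yields at most $3\deg P+\deg Q-2$ singular points, which is already sharper than the asserted bound $4\deg P+\deg Q-2$ and hence implies it.

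The only subtlety worth watching is that a zero of the numerator of $R''$ that happens to coincide with a zero of $P$ corresponds to a pole of $R$ (already counted in class (ii)) rather than to a genuine critical point of $R'$; but this overlap can only \emph{reduce} the class (i) contribution, leaving the upper bound intact. In short, no idea beyond \cref{lem:InflectionBranches} is required: the main ``obstacle'' is simply being honest with the degree bookkeeping and keeping the two cases properly disentangled.
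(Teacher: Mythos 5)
Your proof is correct and follows essentially the same route as the paper's (count poles of $R$ plus zeros of $R''$ via \cref{lem:InflectionBranches}); the paper's own justification is just the one-line remark that there are at most $\deg P$ poles and that the critical points of $R'$ are the zeros of $R''$. Your more careful degree count of the numerator of $R''$ in fact yields the sharper bound $3\deg P+\deg Q-2$, which of course implies the stated one.
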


\begin{proof}
There are at most $\deg P$ poles of $R(z)$ and the critical points of $R'(z)$ are the zeroes of $R''(z)$.
\end{proof}

\begin{lemma}\label{lem:IRnumber of conncomp}
    Let $F(z):\bC\to\bC P^1$ be  a non-constant rational function of degree $d$. Then the real algebraic curve  $\Gamma=\overline{\left\{z\in\bC|\Im F(z)=0\right\}}$ is non-empty, has at most $d$ connected components and has exactly $d$ connected components for generic $F$.
\end{lemma}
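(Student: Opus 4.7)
The plan is to view $F$ as a degree-$d$ branched covering $\bC P^1 \to \bC P^1$ and introduce $\tilde\Gamma := F^{-1}(\overline{\mathbb{R}}) \subset \bC P^1$, where $\overline{\mathbb{R}} = \mathbb{R}\cup\{\infty\}$. Then $\Gamma = \tilde\Gamma \cap \bC$, and $\infty \in \tilde\Gamma$ precisely when $F(\infty) \in \overline{\mathbb{R}}$. Non-emptiness is then immediate: since $F$ is non-constant it is surjective onto $\bC P^1$, so $F^{-1}(0)$ is a non-empty subset of $\bC$ contained in $\Gamma$.

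For the upper bound I would decompose $\bC P^1 \setminus \tilde\Gamma$ into open regions $U_j^{\pm}$ according to the sign of $\Im F$. Each restriction $F|_{U_j^+}\colon U_j^+ \to H^+$ onto the upper half-plane is a branched covering of some degree $d_j^+$, and $\sum_j d_j^+ = d$. Since $\partial U_j^+$ maps to $\partial H^+ = \overline{\mathbb{R}}$ with total degree $d_j^+$ and each of its connected components maps with positive degree, the number of boundary components of $U_j^+$ is at most $d_j^+$. Every connected component of $\tilde\Gamma$ is a boundary component of at least one $U_j^+$, because $\Im F$ changes sign across $\tilde\Gamma$ at every smooth point. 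Double-counting incidences then yields
\[
\#\{\text{components of }\tilde\Gamma\} \;\leq\; \sum_j \#\{\text{boundary components of } U_j^+\} \;\leq\; \sum_j d_j^+ \;=\; d.
\]
If $F(\infty) \notin \overline{\mathbb{R}}$ then $\Gamma = \tilde\Gamma$ in $\bC$ and the same bound applies; if $F(\infty) \in \overline{\mathbb{R}}$, then removing $\infty$ may split the component of $\tilde\Gamma$ containing $\infty$, and one reruns the same double counting directly in $\bC$ (using the regions $U_j^\pm \cap \bC$ and the components of $\Gamma$) to confirm the bound persists.

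For genericity I would exhibit an explicit example achieving equality, for instance $F_\epsilon(z) = z^d + i\epsilon$ with small $\epsilon > 0$. The equation $\Im F_\epsilon(z) = 0$ reduces to $r^d \sin(d\theta) = \epsilon$ in polar coordinates, producing exactly one smooth unbounded arc in each of the $d$ sectors $\theta \in ((2k-1)\pi/d,\, 2k\pi/d)$, so $\Gamma$ has exactly $d$ components. Since ``having exactly $d$ components'' is an open condition in the space of degree-$d$ rational functions (as long as the critical values of $F$ avoid $\overline{\mathbb{R}}$ and $F(\infty) \notin \overline{\mathbb{R}}$), this holds for generic $F$. The main obstacle I anticipate is the descent from the $\bC P^1$-bound to the $\bC$-bound when $\infty \in \tilde\Gamma$: deleting a single point can increase the component count, so one has to verify—by tracking the $e_\infty$ local branches of $\tilde\Gamma$ through $\infty$ and how they reconnect globally—that the resulting splittings are already absorbed by the degree count.
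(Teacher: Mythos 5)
Your treatment of non-emptiness and of the upper bound is essentially correct, but it takes a more roundabout route than the paper's. The paper works directly with the connected components of $\Gamma$: by the open mapping theorem, the $F$-image of the closure in $\mathbb{C}P^1$ of each component is open and closed in $\mathbb{R}P^1$, hence equals $\mathbb{R}P^1$; choosing a regular value $y\in\mathbb{R}\setminus\{F(\infty)\}$, every component must therefore contain one of the $d$ points of $F^{-1}(y)$, all of which lie in $\bC$. This counts components of $\Gamma$ itself and so dissolves the descent problem you flag at the end: in your formulation the bound is first proved for $\tilde\Gamma\subset\mathbb{C}P^1$, and deleting $\infty$ really can increase the component count (for $F(z)=z^{2}-ic$ the set $\Gamma=\{2xy=c\}$ has two branches in $\bC$ that meet at $\infty$), so the ``rerun the double counting in $\bC$'' step you defer is a genuine obligation rather than a formality. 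Your counting over the regions $U_j^{+}$ can be made to work --- each boundary component surjects onto $\mathbb{R}P^1$ by the same open-plus-closed argument and hence contains a point of the $d_j^{+}$-element boundary fibre over a regular real value --- but the paper's version is shorter and counts the right objects from the start. A small slip: $F^{-1}(0)$ need not meet $\bC$ (take $F=1/P$ with $\deg P=d$); pick any real value other than $F(\infty)$.

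The genericity clause is where your argument has a genuine gap, and the gap is not cosmetic. Openness of the condition ``exactly $d$ components'' near one example does not yield genericity: the locus of $F$ with no critical value on $\mathbb{R}P^1$ is open and dense but disconnected, the component count is only locally constant on it, and it really does take different values in different chambers. For instance $F(z)=i\frac{1-z^2}{1+z^2}$ has degree $2$ and critical values $\pm i\notin\mathbb{R}P^1$, yet $\Gamma=\{|z|=1\}$ is a single circle double-covering $\mathbb{R}P^1$, and this persists under perturbation; so ``exactly $d$ components'' fails on a nonempty open set of degree-$d$ maps. (The same monodromy phenomenon is glossed over in the paper's own proof: an unramified degree-$d$ cover of $\mathbb{R}P^1$ splits into $d$ circles only when the monodromy is trivial, e.g.\ when all critical values lie in one open half-plane --- that hypothesis does cut out a nonempty open set of $F$ attaining the bound.) Note also that your witness $F_\epsilon=z^{d}+i\epsilon$ violates the very hypotheses under which you claim openness, since $\infty$ is a critical point with critical value $\infty\in\overline{\mathbb{R}}$. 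What your computation does establish --- and what the rest of the paper actually uses --- is that the bound $d$ is attained; the stronger ``generic'' phrasing should either be dropped or restricted to an explicitly described open chamber.
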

\begin{proof}
Clearly, as $F^{-1}(x)\not=\emptyset$ for any $x\in \mathbb{R}\setminus\{F(\infty)\}$, $\Gamma\not=\emptyset$ as well. 

By the open mapping theorem, the map $F: \bar{\Gamma}\to \mathbb{R}P^1 $,  where $\bar{\Gamma}$ is the closure of $\Gamma$ in $\mathbb{C}P^1$, is onto on each connected component of $\bar{\Gamma}$. Since $F$ has degree $d$ this means that $\Gamma$ has at most $d$ components. 

Note that the ramification  points of  $F: \bar{\Gamma}\to \mathbb{R}P^1 $ coincide with the ramification points of  $F:\mathbb{C}P^1\to \mathbb{C}P^1$ lying on $\bar{\Gamma}$. Thus if the ramification values of $F$ are not in $\mathbb{R}P^1$ then the former map is an unramified cover of degree $d$, so has exactly $d$ connected components. This means that the bound is sharp.
\end{proof}

\subsubsection{Inflection domains}

\begin{definition}
The curve of inflections $\mathfrak{I}_{R}$ subdivides $\mathbb{C}$ into two open (not necessarily connected) domains: 
$\mathfrak{I}^{+}$ given by $Im(R'(z))>0$ and $\mathfrak{I}^{-}$ given by $Im(R'(z))<0$.
\end{definition}

Observe that in $\mathfrak{I}^{+}$ (resp. $\mathfrak{I}^{-}$), the integral curves of the vector field $R(z)\partial_{z}$ are turning counterclockwise (resp. clockwise).

\subsubsection{Circle at infinity}

Consider the closure of the curve of inflections $\mathfrak{I}_{R}$ in the extended complex plane $\mathbb{C} \cup \mathbb{S}^{1}$.

\begin{lemma}\label{lem:InflecInfinity}
The intersection $\mathfrak{I}_{R} \cap \mathbb{S}^{1}$ is:
\begin{itemize}
    \item is empty if $\deg Q - \deg P =1$ and $\lambda \notin \mathbb{R}$;
    \item coincides with the set $\lbrace{\frac{\phi_{\infty}}{2},\frac{\phi_{\infty}}{2}+\frac{\pi}{2},\frac{\phi_{\infty}}{2}+\pi,
    \frac{\phi_{\infty}}{2}+\frac{3\pi}{2}\rbrace}$ if $\deg Q - \deg P =-1$.
\end{itemize} 
 In the remaining cases:
\begin{itemize}
    \item $\deg Q - \deg P =1$ and $\lambda \in \mathbb{R}$; or
    \item $\deg Q - \deg P =0$; or
    \item $\deg Q-\deg P\not\in\{-1,0,1\}$
\end{itemize}
the set $\mathfrak{I}_{R} \cap \mathbb{S}^{1}$ consists of $2k$ points forming a regular $2k$-gon for some $k$ satisfying $k\leq  \max\{\deg P, \deg Q\}+1$.
\end{lemma}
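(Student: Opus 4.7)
The plan is to reduce the analysis at infinity in the $z$-plane to a local analysis at $w=0$ via the coordinate $w=1/z$. Setting $G(w):=R'(1/w)$, a direction $\theta\in\mathbb{S}^{1}$ lies in $\mathfrak{I}_{R}\cap\mathbb{S}^{1}$ if and only if there is a sequence $w_n\to 0$ on the real analytic curve $\{\Im G(w)=0\}$ with $\arg w_n\to -\theta$; such sequences must follow one of the finitely many local branches of $\{\Im G=0\}$ at $w=0$, so $\mathfrak{I}_{R}\cap\mathbb{S}^{1}$ is precisely the image under $\theta\mapsto-\theta$ of the tangent directions at $0$ of these branches. The map $\theta\mapsto-\theta$ is an isometry of $\mathbb{S}^{1}$ and preserves the regular-polygon structure I will extract from Lemma~\ref{lem:branches and curvature}.

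Next I split into cases according to $\kappa:=\deg Q-\deg P$. When $\kappa=1$, one has $G(0)=\lambda$: if $\lambda\notin\mathbb{R}$, then $\{\Im G=0\}$ does not reach $w=0$ and $\mathfrak{I}_{R}\cap\mathbb{S}^{1}=\emptyset$, while if $\lambda\in\mathbb{R}$ I apply Lemma~\ref{lem:branches and curvature} to $G$ with $m:=\operatorname{ord}_0(G-\lambda)$. When $\kappa\leq 0$, one has $G(0)=0\in\mathbb{R}$, and I apply Lemma~\ref{lem:branches and curvature} to $G$ with $m:=\operatorname{ord}_0 G$. When $\kappa\geq 2$, $G$ has a pole of order $\kappa-1$ at $0$ and the Remark following Lemma~\ref{lem:branches and curvature} lets me apply the lemma to $1/G$ with $m=\kappa-1$. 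In every non-empty case, Lemma~\ref{lem:branches and curvature} produces $m$ tangent slopes at $w=0$ spaced by $\pi/m$, which lift to $2m$ antipodal directions on $\mathbb{S}^{1}$ forming a regular $2m$-gon; negation preserves this polygon, yielding the claimed structure on the circle at infinity.

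The remaining tasks are the explicit $\kappa=-1$ identification and the bound on $m$. For $\kappa=-1$, direct expansion gives $G(w)=-\lambda w^{2}+O(w^{3})$, so $m=2$ and $\theta_{0}=-\arg(-2\lambda)=-\phi_{\infty}-\pi$; the slopes $\theta_{0}/2$ and $\theta_{0}/2+\pi/2$ lift to four directions in the $w$-plane whose negations are exactly $\{\phi_{\infty}/2+j\pi/2:j=0,1,2,3\}$. For the general bound $m\leq\max\{\deg P,\deg Q\}+1$, I would perform polynomial division $R(z)=L(z)+S(z)/P(z)$ with $\deg S\leq\deg P-1$: the cases $\kappa\leq 0$ and $\kappa=1,\lambda\in\mathbb{R}$ reduce to bounding the order of vanishing at infinity of the Laurent remainder by $\deg P$, giving $m\leq\deg P+1$, while $\kappa\geq 2$ gives $m=\kappa-1\leq\deg Q-1$ directly from $\deg R'\leq\kappa-1$.

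The hardest step will be the setup in the first paragraph: the compactification $\mathbb{C}\cup\mathbb{S}^{1}$ is defined via conic neighborhoods rather than a one-point compactification, so one must verify carefully that each smooth branch of $\{\Im G=0\}$ through $w=0$ contributes exactly its two antipodal tangent directions to the limit set in $\mathbb{S}^{1}$, and no further limit directions can arise. Once this local-to-asymptotic correspondence is established, the remainder is a routine case analysis together with elementary degree counting.
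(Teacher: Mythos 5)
Your proposal is correct and follows essentially the same route as the paper: both reduce to applying Lemma~\ref{lem:branches and curvature} to $R'$ (or to its reciprocal, when $\deg Q-\deg P\geq 2$) at infinity and reading off the $2m$ equally spaced tangent directions, and your polynomial-division bound is just a rephrasing of the paper's bound via the ramification index of $R$ (resp.\ $R-\lambda z$) at infinity. The only substantive addition is that you make explicit the chart $w=1/z$ and the identification of limit directions in $\mathbb{C}\cup\mathbb{S}^{1}$ with tangent directions of the local branches at $w=0$, a step the paper leaves implicit.
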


\begin{proof}
If $k=\deg Q - \deg P \in\bZ\setminus\{0,1\}$, then  $R'(z)$ has an expansion  of the form ${k\lambda_kz^{k-1}}+o(z^{k-1})$ near $\infty$ from which the characterization of the infinite branches of the real locus of $R'(z)$ follows by Lemma~\ref{lem:branches and curvature} applied to either $R'(z)$ or to $\tfrac{1} {R'(z)}$ depending on whether $k>0$ or $k<0$ (clearly both have the same real locus outside their poles).

If $\deg Q - \deg P =0$, then $R(z)$ has an expansion  $\lambda+\frac{A}{z^{k}}+o(z^{-k})$ for some $A \in \mathbb{C}^{\ast}$ and $k \in \mathbb{N}^{\ast}$ near $\infty$. (The case when $R(z)$ is constant is ruled out by the genericity assumptions). Therefore $R'(z)$ has an expansion $-\frac{Ak}{z^{k+1}}+o(z^{-k-1})$. We conclude that $\mathfrak{I}_{R}$ has $2k$ infinite branches whose limit directions form a regular $2k$-gon.

If $\deg Q - \deg P =1$, then  $R(z)$ has an expansion $\lambda z + A + Bz^{-k}+o(z^{-k})$ for some $A \in \mathbb{C}$, $B \in \mathbb{C}^{\ast}$, and $k \in \mathbb{N}^{\ast}$. (The case when $R(z)$ is a linear function is ruled out by the genericity assumptions). We obtain that $R'(z)$ is of the form $\lambda-\frac{Bk}{z^{k+1}}+o(z^{-k-1})$. Consequently, unless $\lambda$ is real, the curve of inflections $\mathfrak{I}_{R}$ is compact in $\bC$. If $\lambda$ is real, the infinite branches of $\mathfrak{I}_{R}$ are asymptotically the same as that of the real locus of $-\frac{kB}{z^{k+1}}$. Therefore  $\mathfrak{I}_{R}$ has $2k$ infinite branches whose limit directions form a regular $2k$-gon.

In these last two cases, we have $R'(z)=\frac{M}{z^{k+1}}+o(z^{-k-1})$ for some $M \in \mathbb{C}^{\ast}$ and $k \geq 1$. 
The number $k$ is the ramification index of either $R-\lambda z$ (for $\deg Q-\deg P=1$) or $R$ (for $\deg Q-\deg P=0$) at infinity, thus $k$ cannot be bigger than the degree $\max\{\deg P, \deg Q\}$ of $R$. Therefore $k\leq \max\{\deg P, \deg Q\}+1$.

\end{proof}

\subsubsection{Singularities of the vector field}

Next we deduce from Corollary~\ref{cor:KLalpha} a proof of the statement that any root of $P(z)$ or $Q(z)$ belonging to $\partial \minvset{CH}$ automatically belongs to the curve of inflections.

\begin{corollary}\label{cor:SingInflec}
Consider an operator $T$ as in \eqref{eq:1stN} such that $\minvset{CH}$ does not coincide with $\mathbb{C}$ and has a nonempty interior. Let $\alpha$ be a zero or a pole of $R(z)$ such that $\alpha \in \partial \minvset{CH}$. Then $\alpha$ also belongs to the curve of inflections $\mathfrak{I}_{R}$. Additionally, the number of local branches of $\mathfrak{I}_{R}$ at $\alpha$ equals:
\begin{itemize}
    \item $a+1$ if $\alpha$ is a pole of order $a \geq 1$;
    \item $a-1$ if $\alpha$ is a zero of order $a \geq 2$;
    \item some integer  $b \geq 1$ if $\alpha$ is a simple zero.
\end{itemize}
\end{corollary}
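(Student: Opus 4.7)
The plan is to split the analysis into three cases according to the local behavior of $R$ at $\alpha$: a pole of order $a \geq 1$, a zero of order $a \geq 2$, and a simple zero. In each case I need to establish both $\alpha \in \mathfrak{I}_R$ and the claimed count of local branches. For the first two cases everything will follow from the Laurent expansion of $R'$ fed into Lemma~\ref{lem:branches and curvature}; only the simple-zero case will genuinely use the boundary hypothesis.

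For a pole of $R$ of order $a \geq 1$, the expansion $R(z) = r_\alpha(z-\alpha)^{-a}+o(|z-\alpha|^{-a})$ gives $R'(z) \sim -a r_\alpha (z-\alpha)^{-a-1}$, so $\alpha$ is a pole of $R'$ of order $a+1$ and automatically lies in $\mathfrak{I}_R$. Applying Lemma~\ref{lem:branches and curvature} to $1/R'$ (which has a zero of order $a+1$ at $\alpha$), whose imaginary zero locus coincides with that of $R'$ away from the pole, yields exactly $a+1$ smooth branches. For a zero of $R$ of order $a \geq 2$, the value $R'(\alpha) = 0$ is real, so $\alpha \in \mathfrak{I}_R$ immediately; since $R' - R'(\alpha) = R'$ vanishes to order $a-1$ at $\alpha$, Lemma~\ref{lem:branches and curvature} applied to $F = R'$ with $m = a-1$ yields $a-1$ branches.

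The subtle case is the simple zero, where $R'(\alpha) = r_\alpha$ is a priori an arbitrary nonzero complex number and its reality is not forced by the analytic expansion alone; this is also the only subcase that uses the boundary hypothesis on $\alpha$. Here I would invoke Corollary~\ref{cor:KLalpha}: the hypothesis $\alpha \in \partial \minvset{CH}$ combined with $m_\alpha = 1$ forces $\phi_\alpha = \arg r_\alpha = 0$, so $R'(\alpha) = r_\alpha \in \mathbb{R}_{>0}$ and hence $\alpha \in \mathfrak{I}_R$. To obtain the lower bound $b \geq 1$, I would note that $\Im R'$ is a nonconstant harmonic function, because $R'$ is nonconstant by our standing assumption that $\minvset{CH}$ is not fully irregular (the degenerate alternatives $R(z) = \lambda$ or $R(z) = \lambda(z-\alpha)$ being excluded by Theorem~\ref{thm:irregular}). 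A nonconstant harmonic function has no isolated zeros, so $\alpha$ must lie on at least one local analytic arc of $\mathfrak{I}_R$. The main obstacle, as emphasized, is exactly this last case: bridging the purely local branch count of the first two cases with the global boundary hypothesis requires channeling Corollary~\ref{cor:KLalpha} to pin down the argument of the leading coefficient.
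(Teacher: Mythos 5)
Your proof is correct and follows essentially the same route as the paper: a direct Laurent/Taylor computation of $R'$ combined with Lemma~\ref{lem:branches and curvature} handles poles and zeros of order $\geq 2$, while the simple-zero case is settled by Corollary~\ref{cor:KLalpha} forcing $\phi_{\alpha}=0$. The only cosmetic caveat is your parenthetical claim that $R(z)=\lambda(z-\alpha)$ is excluded by Theorem~\ref{thm:irregular} alone — for $\Im\lambda\neq 0$ that case does have nonempty interior — but since you first pin down $\lambda=R'(\alpha)\in\mathbb{R}_{>0}$ via Corollary~\ref{cor:KLalpha}, the exclusion goes through exactly as in the paper's own final paragraph.
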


\begin{proof}
The statement is proved by direct computation of $Im(R')$ in  case of a pole or a zero of order $a \geq 2$. If $\alpha$ is a simple zero of $R(z)$, then we have $R(\alpha+\epsilon)= R'(\alpha)\epsilon+o(\epsilon)$. If $\alpha \in \partial \minvset{CH}$, then $\phi_{\alpha}=arg(R'(\alpha))=0$ (see Corollary~\ref{cor:KLalpha}). Thus $\alpha \in \mathfrak{I}_{R}$.
\par
Unless $R(z)$ is linear, $R(z)$ is of the form $R'(\alpha)(z-\alpha)+M(z-\alpha)^{d}+o(|z-\alpha|^{d})$ for some $d \geq 2$ and $M \in \mathbb{C}^{\ast}$. Thus $R'(z)=R'(\alpha)+Md(z-\alpha)^{d-1}+o(|z-\alpha|^{d-1})$. Consequently, the number of local branches of the equation $Im(R')=0$ equals $d-1$.
\par
If $R(z)=\lambda(z-\alpha)$, then $Re(\lambda) \geq 0$ (otherwise $\minvset{CH}=\mathbb{C}$) and $Im(\lambda) \neq 0$ (otherwise $\minvset{CH}$ is fully irregular). It follows that $Im(R'(z))$ is a non-vanishing constant and the curve of inflections is empty. In this case, $\mathfrak{I}_{R}$ does not contain any zero or pole of $R(z)$ on the boundary of $\minvset{CH}$.
\end{proof}

\subsubsection{Tangency locus}\label{sub:tangency} 

\begin{definition}\label{defn:tangencylocus}
For the rational vector field $R(z)\partial_{z}$, the \textit{tangency locus} $\mathfrak{T}_{R}$ is the subset of the curve of inflections $\mathfrak{I}_{R}$ where $R(z)\partial_{z}$ is tangent to some branch of $\mathfrak{I}_{R}$.
\end{definition}

\begin{proposition}\label{prop:tangencybound}
For an operator $T$ as in \eqref{eq:1stN}, the \textit{tangency locus} $\mathfrak{T}_{R}$ is the union of:
\begin{itemize}
    \item at most $\max\{\deg Q,\deg P\}+1$ lines; and
    \item at most $2(3\deg P + \deg Q -1)^{2}$ points.
\end{itemize}
\end{proposition}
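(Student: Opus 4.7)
The plan is to write the tangency condition as a pair of real algebraic equations in $(z,\bar z)$ and then to analyze separately the $1$-dimensional part of $\mathfrak{T}_R$ (which I expect to consist of $R$-invariant lines) and the isolated part (to be bounded by Bézout's theorem).

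First I would derive the local equations. By the Cauchy-Riemann equations, $\nabla\Im R'(z_0)=i\,\overline{R''(z_0)}$ at any smooth point $z_0$ of $\mathfrak{I}_R=\{\Im R'=0\}$, so the tangent line to $\mathfrak{I}_R$ at $z_0$ is the real line spanned by $\overline{R''(z_0)}$. Consequently $R(z)\partial_z$ is tangent to $\mathfrak{I}_R$ at $z_0$ exactly when $R(z_0)$ is a real multiple of $\overline{R''(z_0)}$, equivalently $\Im\bigl(R(z_0)R''(z_0)\bigr)=0$ (the same conclusion follows from differentiating $\kappa=\Im R'/|R|$ along the flow, cf.\ Lemma~\ref{lem:trajcurvature}). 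Away from $\mathcal{Z}(PQ)\cup\mathfrak{S}_R$, the tangency locus is therefore cut out by the system $\Im R'=0,\ \Im(RR'')=0$.

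Second, I would show that each $1$-dimensional component of $\mathfrak{T}_R$ is an $R$-invariant line. Such a component is a real analytic arc of $\mathfrak{I}_R$ on which $R(z)\partial_z$ is everywhere tangent, so it coincides locally with a trajectory of the field. Since the trajectory is contained in $\mathfrak{I}_R$, its curvature vanishes identically, and a planar curve of zero curvature is a straight line. Because $\mathfrak{I}_R$ is algebraic, this segment extends to a full line which is $R$-invariant in the sense of Definition~\ref{def:Rinvariant}. To bound the number of such lines I would first rule out parallel $R$-invariant lines: after a real affine change putting two parallel candidates as $\mathbb{R}$ and $\mathbb{R}+c$ with $c\in i\mathbb{R}^{*}$, Schwarz reflection across each line forces $R(w)=R(w+2c)$, which is impossible for a non-constant rational function. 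Hence distinct $R$-invariant lines correspond to distinct antipodal pairs of points of $\mathfrak{I}_R\cap\mathbb{S}^{1}$, and Lemma~\ref{lem:InflecInfinity} bounds this set by $2(\max\{\deg P,\deg Q\}+1)$ points, giving the stated bound on the number of lines.

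Third, to bound the isolated part of $\mathfrak{T}_R$ I would apply Bézout in $\mathbb{CP}^{2}$. With $A:=Q'P-QP'$ and $B:=A'P-2AP'$, clearing denominators turns the system into
\[
F_1(z,\bar z):=A(z)\,\overline{P(z)}^{\,2}-\overline{A(\bar z)}\,P(z)^{2}=0,
\]
and $F_2(z,\bar z):=Q(z)B(z)\,\overline{P(z)}^{\,4}-\overline{Q(\bar z)B(\bar z)}\,P(z)^{4}=0$. A direct degree count gives $\deg F_1\leq 3p+q-1=d$ and $\deg F_2\leq 6p+2q-2=2d$ (with $p=\deg P$, $q=\deg Q$). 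Bézout's theorem then bounds the isolated points of $\{F_1=F_2=0\}$ by $d\cdot 2d=2d^{2}$, which covers the isolated part of $\mathfrak{T}_R$ once the $O(d)$ contributions from $\mathcal{Z}(PQ)$ and the singular locus $\mathfrak{S}_R$ (controlled by Corollary~\ref{cor:inflectionmultiple}) are absorbed. The main technical obstacle is common factors: each $R$-invariant line from the second step produces a real irreducible factor shared by $F_1$ and $F_2$, so one has to pass to the quotients $F_i/\gcd(F_1,F_2)$ before applying Bézout; this only decreases the degrees, so the $2d^{2}$ bound on isolated intersections is preserved.
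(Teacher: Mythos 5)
Your proof follows essentially the same route as the paper's: the tangency locus is cut out by $\Im R'=0$ together with $\Im(RR'')=0$, whose degrees are $d$ and $2d$, Bézout bounds the isolated intersections by $2d^{2}$ after removing common factors, and the one-dimensional components are zero-curvature trajectories, hence straight lines, counted via $\mathfrak{I}_R\cap\mathbb{S}^1$ using Lemma~\ref{lem:InflecInfinity}. Your Schwarz-reflection argument ruling out parallel $R$-invariant lines is a small extra precision that the paper leaves implicit, but it does not alter the method.
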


\begin{proof}
For any point $z \in \mathcal{T}_{R}$, an immediate computation involving the Taylor expansion of $R'(z)$ proves that $z$ belongs to the intersection of the curve of inflections (given by $\Im(R')=0$) with a real plane algebraic curve given by the equation $\Im(R''R)=0$. Indeed, the tangent line to $\mathfrak{I}_R$ at some $z_0\in\mathfrak{I}_R$ is given by the equation $R''(z_{0})\cdot(z-z_0)\in\mathbb{R}$, and the associated ray direction is $R(z_0)$. The degrees of these two curves are respectively $\deg Q + 3\deg P -1$ and $2\deg Q + 6\deg P -2$. Therefore, Bézout's theorem implies  that $\mathfrak{T}_{R}$ contains at most $2(\deg Q + 3\deg P -1)^{2}$ such points and some irreducible components corresponding to the common factors of the two equations.
\par
By definition of the tangency locus these irreducible components are the integral curves of $R(z)\partial_{z}$ contained in the curve of inflections. Such integral curves have identically vanishing  curvature and therefore  they are segments of straight lines. Therefore the relevant irreducible components are straight lines. 
 But $\mathfrak{I}_R$ intersects $\mathbb{S}^1$ at most $2\max\{\deg Q,\deg P\}+2$ points by Lemma~\ref{lem:InflecInfinity}. Thus the number of the lines is at most $\max\{\deg Q,\deg P\}+1$.
\end{proof}

We deduce an estimate on the number of connected components of the transverse locus $\mathfrak{I}_{R}^{\ast}$ of the curve of inflections. Denote $d=3\deg P+\deg Q-1=\deg \mathfrak{I}_R$.

\begin{corollary}\label{cor:transverseCOMPONENT}
For an operator $T$ as in \eqref{eq:1stN}, the \textit{transverse locus} $\mathfrak{I}_{R}^{\ast}$ of the curve of inflections is formed by at most $2d^{2}+6d+2$ connected components.

\end{corollary}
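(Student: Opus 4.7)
The plan is to bound the number of connected components of $\mathfrak{I}_R^{\ast}$ by progressively removing from $\mathfrak{I}_R$ its singular locus $\mathfrak{S}_R$ and its tangency locus $\mathfrak{T}_R$, tracking the change in the component count at each step. Since $\mathfrak{I}_R=\overline{\{z:\Im R'(z)=0\}}$ with $R'$ a non-constant rational function of degree at most $d$ as a map $\bC P^1\to\bC P^1$, Lemma~\ref{lem:IRnumber of conncomp} applied to $F=R'$ gives at most $d$ connected components of $\mathfrak{I}_R$ itself.

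Next, by Corollary~\ref{cor:inflectionmultiple} we have $|\mathfrak{S}_R|\leq 4\deg P+\deg Q-2$, and Lemma~\ref{lem:InflectionBranches} describes the germ of $\mathfrak{I}_R$ at each $z_0\in\mathfrak{S}_R$ as $m_{z_0}$ smooth branches meeting transversally, with $m_{z_0}-1$ equal either to the vanishing order of $R''$ at $z_0$ or to the pole order of $R$ there. Locally, removing $z_0$ splits the $m_{z_0}$ crossing branches into $2m_{z_0}$ rays and so creates at most $2m_{z_0}-1$ new connected components. The sum $\sum_{z_0\in\mathfrak{S}_R}(m_{z_0}-1)$ is bounded by the total multiplicity of zeros of $R''$ in $\bC$ plus the total order of poles of $R$; a direct degree count shows that the numerator of $R''$ has degree at most $d-1$ and the total pole order of $R$ is at most $\deg P$, so $\sum_{z_0\in\mathfrak{S}_R}(2m_{z_0}-1)$ is bounded by a linear function of $d$.

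Finally, the tangency locus is handled in two parts. Each of the at most $2d^2$ isolated tangency points (Proposition~\ref{prop:tangencybound}) lies in the interior of a smooth arc of $\mathfrak{I}_R\setminus\mathfrak{S}_R$, so its removal contributes at most one new component and produces the $2d^2$ term in the bound. Each tangency line is an irreducible linear component of $\mathfrak{I}_R$ (by the argument in the proof of Proposition~\ref{prop:tangencybound}); its intersections with the remaining components of $\mathfrak{I}_R$ all lie in $\mathfrak{S}_R$ and have already been removed, so after the singular locus is deleted these lines are disjoint from the rest of $\mathfrak{I}_R\setminus\mathfrak{S}_R$ and erasing them only decreases the component count. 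Collecting the three contributions then yields the announced bound $2d^2+6d+2$.

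The main obstacle is the fine bookkeeping in the singular-locus step: one has to sum $2m_{z_0}-1$ via Lemma~\ref{lem:InflectionBranches} sharply enough to arrive at the precise linear constant $6d+2$, and to avoid over-counting in the cases where a singular point simultaneously lies on a tangency line, or corresponds to both a zero of $R''$ and a pole or a high-order zero of $R$. Care is also needed at points at infinity in the extended plane, where Lemma~\ref{lem:InflecInfinity} governs the regular $2k$-gon structure of $\mathfrak{I}_R\cap\mathbb{S}^1$ and may contribute further branch-ends to the counting.
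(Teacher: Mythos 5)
Your argument is correct and reaches the bound by a genuinely different route. The paper observes that each connected component of $\mathfrak{I}_{R}^{\ast}$ is either a closed loop of $\mathfrak{I}_{R}$ (at most $d$, by Lemma~\ref{lem:IRnumber of conncomp}) or an open arc whose two ends lie at infinity, at a singular point of $\mathfrak{I}_{R}$, or at an isolated tangency point; it then counts arc-ends ($2\cdot 2d^{2}$ at isolated tangency points, at most $4$ per singular point via Lemma~\ref{lem:InflectionBranches} and Corollary~\ref{cor:inflectionmultiple}, and $2\max\{\deg P,\deg Q\}+2$ at infinity via Lemma~\ref{lem:InflecInfinity}) and divides by two. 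You instead run an excision count: start from the number of components of $\mathfrak{I}_{R}$ and track the increase under deletion of the singular points ($2m_{z_{0}}-1$ each), the isolated tangency points ($+1$ each), and the tangency lines (which, once $\mathfrak{S}_{R}$ is removed, are unions of entire components and only lower the count). The two counts are dual, and yours dispenses with the analysis at infinity provided Lemma~\ref{lem:IRnumber of conncomp} is read as bounding the components of $\Gamma$ inside $\mathbb{C}$; since its proof really controls the closure in $\mathbb{C}P^{1}$, the safe version of your first step adds at most $2k-1$ extra components coming from the splitting at $\infty$, with $k\leq\max\{\deg P,\deg Q\}+1$ by Lemma~\ref{lem:InflecInfinity} --- which is exactly the caveat you raise. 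The bookkeeping you flag as the main obstacle does close: by Lemma~\ref{lem:InflectionBranches} together with the degree count $\deg\bigl(\text{numerator of }R''\bigr)\leq 2\deg P+\deg Q-2$ and total pole order $\leq\deg P$, one gets $\sum_{z_{0}\in\mathfrak{S}_{R}}(m_{z_{0}}-1)\leq d-1$, hence $\sum(2m_{z_{0}}-1)\leq 2(d-1)+|\mathfrak{S}_{R}|\leq 4d-2$, and the running total $d+(4d-2)+2d^{2}=2d^{2}+5d-2$ sits below $2d^{2}+6d+2$ with enough slack to absorb the linear correction at infinity.
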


\begin{proof}
A connected component of $\mathfrak{I}_{R}^{\ast}$ is either a smooth closed loop (so a connected component of $\mathfrak{I}_{R}$) or an arc joining points at infinity, singular points of $\mathfrak{I}_{R}$ or isolated points of the tangency locus.
\par
Following Proposition~\ref{prop:tangencybound}, the tangent locus contains at most $2d^{2}$ isolated points. Each of them is the endpoint of two arcs of the transverse locus.
\par
Lemma~\ref{lem:InflecInfinity} proves that at most $2\max\{\deg P,\deg Q\}+2$ arcs of the transverse locus go to infinity.
\par
Lemma~\ref{lem:InflectionBranches} provides the analog result for the multiple points of the curve of inflections. In the "worst" case, poles of $R(z)$ and critical points of $R'(z)$ are simple. At most four arcs of the transverse locus are incident to such points. There are at most $4 \deg P +  \deg Q -2 \leq 2d$ such points (see Corollary~\ref{cor:inflectionmultiple}) so they are incident to at most $4d$ arcs.
\par
Adding these bounds, we obtain an upper bound $4d^{2}+10d+4$ on the number of ends of non-compact connected components of the transverse locus, i.e. there are at most $2d^{2}+5d+2$ non-compact connected components. By Lemma ~\ref{lem:IRnumber of conncomp} the number of the compact connected components (loops) of  $\mathfrak{I}_{R}$ is at most $d$, which gives the required upper bound. 
\end{proof}

\begin{corollary}\label{cor:R''RInflection}
On each connected component of the transverse locus $\mathfrak{I}_{R}^{\ast}$, the sign of $\Im(R''R)$ remains constant. If $\Im(R''R)$ is positive (resp. negative), then for any point $z$ of the component, the associated ray $r(z)$ points towards $\mathfrak{I}^{+}$ (resp. $\mathfrak{I}^{-}$).
\end{corollary}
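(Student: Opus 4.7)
The plan is to reduce both claims of the corollary to a single Taylor expansion along the associated ray, combined with the characterization of tangency points already used in the proof of Proposition~\ref{prop:tangencybound}.

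For the first statement, I would invoke the algebraic characterization extracted from the proof of Proposition~\ref{prop:tangencybound}: a point $z \in \mathfrak{I}_R\setminus\mathcal{Z}(PQ)$ lies in $\mathfrak{T}_R$ precisely when $\Im(R''(z)R(z))=0$, since the tangent line to $\mathfrak{I}_R$ at $z$ is characterized by $R''(z)\cdot(\cdot)\in \mathbb{R}$ while the field direction is $R(z)$. Consequently, on the transverse locus $\mathfrak{I}_{R}^{\ast}$, the real-analytic function $\Im(R''R)$ is nowhere zero. Being continuous on each connected component of $\mathfrak{I}_{R}^{\ast}$, it must have constant sign there.

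For the second statement, fix $z\in\mathfrak{I}_{R}^{\ast}$ and parametrize the associated ray by $\gamma(t)=z+tR(z)$, $t\in\mathbb{R}_{\ge 0}$. I would consider the real-analytic function
\[
\varphi(t) \;=\; \Im\bigl(R'(\gamma(t))\bigr),
\]
whose sign determines whether $\gamma(t)$ lies in $\mathfrak{I}^{+}$ or $\mathfrak{I}^{-}$. Since $z\in \mathfrak{I}_R$ we have $\varphi(0)=0$, and differentiating once,
\[
\varphi'(0) \;=\; \Im\bigl(R''(z)\cdot R(z)\bigr).
\]
Thus for all sufficiently small $t>0$, $\varphi(t)$ has the same sign as $\Im(R''(z)R(z))$. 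This shows that the initial germ of the ray $r(z)$ enters $\mathfrak{I}^{+}$ when $\Im(R''R)>0$ and enters $\mathfrak{I}^{-}$ when $\Im(R''R)<0$, which is exactly what "points towards $\mathfrak{I}^{\pm}$" should mean at a transverse point.

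There is essentially no obstacle here; the only small point worth making explicit is why $\varphi'(0)\ne 0$ on the transverse locus, which is exactly the transversality condition $\Im(R''R)\ne 0$ established in the first paragraph. Combining the two paragraphs gives the corollary. The statement is really a one-line Taylor expansion dressed up with the algebraic characterization of $\mathfrak{T}_R$; no further input about the minimal set $\minvset{CH}$ is needed.
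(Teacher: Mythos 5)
Your proof is correct and follows the same route as the paper's (very terse) argument: the paper likewise notes that $\Im(R''R)=0$ characterizes the tangency locus via the proof of Proposition~\ref{prop:tangencybound}, and relegates the directional claim to "a direct computation," which is exactly your Taylor expansion $\varphi'(0)=\Im(R''(z)R(z))$ for $\varphi(t)=\Im(R'(z+tR(z)))$. Nothing is missing.
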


\begin{proof}
Any regular point $z$ of the curve of inflections satisfying $\Im(R''(z)R(z))=0$ belongs to the tangency locus (see the proof of Proposition~\ref{prop:tangencybound}). A direct computation proves the rest of the claim.
\end{proof}

\subsection{Horns}\label{sub:horns}

In this section, we introduce some curvilinear triangles called \emph{horns} and find conditions under which we can conclude that they do not belong to the minimal set $\minvset{CH}$. Our aim is to prove that some parts of the boundary of the minimal sets are portions of integral curves of the vector field $R(z)\partial_{z}$.

\subsubsection{Definitions}	

Recall that $\sigma(q)$ is the argument of $R(q)$, i.e. $\sigma(q)=\Im \log R(q)$ and   $r(q)=q+R(q)\R_+$ is the  associated ray.
 
\begin{definition}\label{def:horn}
Assume that a segment  $\gamma_p^{p'}$ of the positive trajectory of  $R(z)\partial_z$ starting at $p\notin\Z(PQ)$ and ending at $p'$ doesn't intersect the  curve of inflections except possibly at $p$. Assume that the total variation of $\sigma$ along $\gamma_p^{p'}$ is less than  $\pi/2$.
\par     
We define the \emph{horn} $\horn{p}{p'}{p''}$ at $p$ as an open curvilinear triangle formed by $\gamma_p^{p'}$ and tangents to this trajectory at $p$ and $p'$ intersecting at a point $p''$. 
\end{definition} 

%To simplify notations we further assume that the ray $r(p)$ coincides with  $\R_+$.
\begin{definition}\label{def:small horn}
    A horn $\horn{p}{p'}{p''}$ is called \emph{small positive} (resp. \emph{small negative}) if 
\begin{enumerate}
    \item for any point $u\in \horn{p}{p'}{p''}$, the argument $\sigma(u+tR(u))$  is monotone increasing (resp. decreasing) in the variable $t$ as long as  $t\ge 0$ and $u+tR(u)\in \horn{p}{p'}{p''}$
    \item for any two points $u,v\in \horn{p}{p'}{p''}$, the scalar product  $\left(R(u),R(v)\right)$ is positive.
\end{enumerate} 
A horn $\horn{p}{p'}{p''}$ is called $\emph{small}$ if it is either small positive or small negative.
\end{definition}

\begin{remark}
A small positive horn becomes a small negative one after conjugation, i.e. after replacing $R(z)$ with $\overline{R(\bar{z})}$. Indeed,   
$$
\left(R(u),R(v)\right) = \Re R(u)\overline{R(v)}
$$ remains the same after the conjugation, and  
$$
\frac{d \sigma(u+tR(u))}{dt}(t)=\Im \frac{R'(u+tR(u))}{R(u+tR(u))}R(u)
$$ changes sign.
\end{remark}

\begin{lemma}\label{lem:intersecthornCI}
The  curve of inflections (given by $\Im R'=0$) does not intersect small horns.
\end{lemma}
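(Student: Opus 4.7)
The plan is to argue by contradiction. Suppose some point $z_0 \in \horn{p}{p'}{p''}$ lies in the curve of inflections, i.e.\ $\Im R'(z_0) = 0$. Up to replacing $R(z)$ by its conjugate (which swaps small positive and small negative horns, as noted after Definition~\ref{def:small horn}), I may assume the horn is small positive. The starting point is the elementary identity
\[
\frac{d}{dt}\sigma(u+tR(u))\Big|_{t=0}=\Im\left(\frac{R'(u)\,R(u)}{R(u)}\right)=\Im R'(u),
\]
valid whenever $R(u)\neq 0$.

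First I would show that $R$ has no zeros or poles inside the horn: by condition~(2) of Definition~\ref{def:small horn}, the scalar product $\Re\bigl(R(u)\overline{R(v)}\bigr)$ is positive for all $u,v$ in the (connected) horn, so the map $u\mapsto\arg R(u)$ takes values in an arc of length strictly less than $\pi/2$, which rules out the full $2\pi k$-winding of $\arg R$ around any zero or pole of order $k\ge 1$. Hence $R'$ is holomorphic on the horn and $\Im R'$ is harmonic there. Next I would use condition~(1): for any interior $u$, openness of the horn guarantees that the ray $u+tR(u)$ stays inside the horn for all sufficiently small $t>0$, and condition~(1) forces $t\mapsto\sigma(u+tR(u))$ to be monotone non-decreasing on such an interval; the right derivative at $t=0$ is therefore $\geq 0$, which by the identity above gives $\Im R'(u)\geq 0$. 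Combined with the hypothesis $\Im R'(z_0)=0$ at an interior point, the minimum principle on the open connected horn forces $\Im R'\equiv 0$ on it. Analytic continuation then makes $R'$ equal to a real constant $c$ on all of $\bC$, so $R(z)=cz+d$ with $c\in\R$ and $\mathfrak{I}_R=\bC$; this contradicts the defining property of Definition~\ref{def:horn} that the trajectory $\gamma_p^{p'}$ (which has positive length) meets $\mathfrak{I}_R$ only at $p$. The small negative case follows symmetrically by conjugation.

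The main technical obstacle I anticipate is the passage from condition~(1) to the pointwise inequality $\Im R'(u)\geq 0$ at \emph{every} interior $u$: one must check carefully that openness of the horn actually entails $u+tR(u)\in\horn{p}{p'}{p''}$ for all sufficiently small $t>0$, so that condition~(1) is non-vacuous at $u$ and yields the one-sided derivative estimate at $t=0$. Once this has been verified, the minimum principle and analytic continuation conclude the argument with no further subtlety.
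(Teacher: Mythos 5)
Your proof is correct and follows essentially the same route as the paper's: both derive $\Im R'(u)\ge 0$ throughout the horn from condition~(1) via the identity $\left.\tfrac{d}{dt}\sigma(u+tR(u))\right|_{t=0}=\Im R'(u)$, and then rule out an interior zero of $\Im R'$ using the local behaviour of a nonnegative harmonic function (the paper invokes openness of the map $R'$, you invoke the minimum principle --- the same fact in different clothing). Your explicit handling of the degenerate case where $R'$ is a real constant (which the paper's appeal to ``$R'$ is an open map'' silently excludes, and which you correctly dispatch because then $\mathfrak{I}_R=\bC$ and no horn can exist) and your verification via condition~(2) that the horn contains no poles of $R$ are welcome extra checks, but they do not change the nature of the argument.
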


\begin{proof}
We have that $\frac{d\sigma(u+tR(u)}{dt}\vert_{t=0}=\Im R'(u)\ge 0$. Assume that we have the equality at some $u\in \horn{p}{p'}{p''}$. Since $\horn{p}{p'}{p''}$ is open and $R'$ is an open map, this assumption will imply that $\frac{d\sigma(u+tR(u)}{dt}\vert_{t=0}$ changes sign in $\horn{p}{p'}{p''}$, which contradicts  the smallness assumptions.
\end{proof}

We define the cone complementary to $\horn{p}{p'}{p''}$ (in short, the \emph{complementary cone}) to be the open cone $\horninf{p''}$ with the apex $p''$  bounded by part of the  ray $r(p)$ starting at $p''$ and by the ray extending the segment $p'p''$.

\begin{lemma}\label{lem:horn out}
Consider a point $p$ which neither   belongs to $\mathcal{Z}(PQ)$ nor to  the interior of $\minvset{CH}$. Assume that the integral curve $\gamma$ of the vector  field $R(z)\partial_{z}$ containing $p$ is not a straight line. Then there exists a horn $\horn{p}{p'}{p''}$ such that both $\horn{p}{p'}{p''}$ and its complementary cone $\horninf{p''}$ do not intersect $\minvset{CH}$. 
\end{lemma}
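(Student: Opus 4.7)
The plan has three main steps: construct a small horn by choosing $p'$ close to $p$, apply the Arc Cone Lemma~\ref{lem:ArcCone} to show the open horn is contained in $\minvset{CH}^{c}$, and then use Proposition~\ref{prop:IntegralCurve} (backward-trajectory invariance) together with the previous step to force the complementary cone into $\minvset{CH}^{c}$ by contradiction.

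First I would set $p' := \gamma(\tau)$ for $\tau > 0$ small enough that $\gamma_p^{p'} \setminus \{p\}$ avoids both $\mathcal{Z}(PQ)$ and $\mathfrak{I}_R$ — possible since $\gamma$ is not a straight line, so $\Im R'$ has only isolated zeros along $\gamma$ near $p$ — the total variation of $\sigma$ along $\gamma_p^{p'}$ is less than $\pi/2$, and the resulting horn is small in the sense of Definition~\ref{def:small horn}. The positivity $(R(u),R(v)) > 0$ is automatic because $R$ stays close to $R(p) \neq 0$ throughout the horn, and the monotonicity of $\sigma(u+tR(u))$ follows since $\Im R'$ has constant sign on the horn (a tubular neighborhood of $\gamma_p^{p'}$ disjoint from $\mathfrak{I}_R$).

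For the horn itself, since $p \notin \mathrm{int}(\minvset{CH})$ and $m_p = 0$, Corollary~\ref{cor:KLalpha} yields $\phi_p \in \mathcal{L}_p \subset \mathcal{K}_p$, producing sequences in $\minvset{CH}^{c}$ converging to $p$ (and analogously at $p'$). I would pick $\alpha_0 \in \minvset{CH}^{c}$ close to $p$, $\alpha_1 \in \minvset{CH}^{c}$ close to $p'$, and connect them by a small perturbation $\alpha \subset \minvset{CH}^{c}$ of $\gamma_p^{p'}$. Since $\sigma(\alpha)$ has variation less than $\pi/2$, it is homotopic to the positive arc from $\sigma(\alpha_0)$ to $\sigma(\alpha_1)$, so Lemma~\ref{lem:ArcCone} yields a region $X \subset \minvset{CH}^{c}$; shrinking the perturbation, the family of such regions exhausts the open horn, proving $\horn{p}{p'}{p''} \subset \minvset{CH}^{c}$.

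For the complementary cone, I argue by contradiction: suppose $v \in \horninf{p''} \cap \minvset{CH}$. By Proposition~\ref{prop:IntegralCurve}, any bounded backward trajectory of $v$ under $R(z)\partial_z$ lies in $\minvset{CH}$. A Taylor computation along this backward trajectory, exploiting that $\Im R'(p) \neq 0$ (since $\gamma$ is not a straight line), shows that the trajectory curves upward across the line containing $r(p)$ in the segment $[p,p'']$ and enters the open horn in finite time, at least for $v$ sufficiently close to $p''$. Since the open horn is in $\minvset{CH}^{c}$ by the previous step, this contradicts backward invariance, so $\horninf{p''} \cap \minvset{CH}$ is empty near $p''$. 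The main obstacle is extending this contradiction to arbitrary $v \in \horninf{p''}$ (not merely those close to $p''$); this requires combining the local analysis near $p''$ with a connectedness argument inside $\minvset{CH}$, and possibly further shrinking $\tau$ so that the asymptotic results (Propositions~\ref{prop:qp-1} and~\ref{prop:qp0}) rule out intersections with $\minvset{CH}$ in the unbounded part of the cone.
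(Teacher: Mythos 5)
Your approach diverges from the paper's in a way that creates a genuine gap, concentrated in the treatment of the complementary cone. The backward-trajectory contradiction does not work: for a point $v\in\horninf{p''}$ the backward trajectory generically does \emph{not} enter the horn. Normalizing $p=0$, $R(0)=1$, $\Im R''$ irrelevant and $\Im R'(0)=b>0$, the trajectories near $0$ are approximately the parabolas $y=c+\tfrac b2 x^2$; the horn lies in $\{0<y<\tfrac b2x^2\}$ while the cone lies below the tangent chord through $p''$ and $p'$. Flowing backward from a point $v$ of the cone, $y$ decreases, and unless $v$ sits in a thin wedge along the upper edge of the cone near $p''$, the trajectory crosses the ray $[p'',+\infty)\subset r(p)$ and escapes into the opposite half-plane without ever meeting the horn. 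So the contradiction you reach ``for $v$ sufficiently close to $p''$'' fails even for most such $v$, and the extension to the whole unbounded cone that you flag as the remaining obstacle is not a technicality --- the method itself does not see the cone. A secondary gap is the unexplained existence of the perturbed path $\alpha\subset\left(\minvset{CH}\right)^{c}$ joining $\alpha_0$ to $\alpha_1$ near $\gamma_p^{p'}$: a priori $\minvset{CH}$ could accumulate on every point of the trajectory arc and block any nearby path, so this needs an argument, not an assertion.

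The missing idea, which is how the paper proceeds, is that the horn and the complementary cone are not two separate regions to be excluded by two separate mechanisms: together they are exactly the interior of $\bigcup_{s\in\gamma_p^{p'}}r(s)$, the union of the associated rays of points of the trajectory segment. One first shows $\gamma_p^{p'}\subset\left(\minvset{CH}\right)^{c}$ when $p\notin\minvset{CH}$ by the Euler-polygon argument: $p_i=p_{i-1}+\delta R(p_{i-1})$ lies on $r(p_{i-1})$, hence outside $\minvset{CH}$, and the broken line converges to the trajectory; this also supplies, for free, the admissible path in the complement that your arc-cone step needed. Since each $r(s)$ then avoids $\minvset{CH}$, the open swept region $\horn{p}{p'}{p''}\cup\horninf{p''}$ avoids it too, and the case $p\in\partial\minvset{CH}$ follows by applying this to nearby points $p_i\notin\minvset{CH}$ and passing to the limit in $C^1$. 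I would rebuild your proof around this single sweeping argument rather than treating the horn and the cone by different tools.
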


\begin{proof}
Let  $D(p)=\{|z-p|<\delta_{\mathcal{Z}}(p)=\frac 1 2 \operatorname{dist}(p,\mathcal{Z}(PQ))\}$.

First, assume that $p\notin\minvset{CH}$. Then by definition, $r(p)\subset{\minvset{CH}^c}$. 

Choose some  $\delta>0$ and define $p_0=p$ and 
$$
p_{i}=p_{i-1}+\delta R(p_{i-1})\in r(p_i)\subset(\minvset{CH})^c\cap D(p), \quad i=1,...,N=N(\delta)=O\left(\frac{\delta_{\mathcal{Z}}(p)}{\delta}\right),
$$  
(we stop when $p_{N+1}\notin D(p)$).

The broken line $\hat{\gamma}_p^{p_N}=\cup_{i=1}^{N}[p_{i-1}, p_{i}]\subset(\minvset{CH})^c\cap D(p)$ is the Euler approximation to the positive trajectory  $\gamma_p^+$ of $R(z)\partial_z$ starting from $p$ and converges to it (more exact, to the connected component $\gamma_p^{p'}\subset D(p)$ of $\gamma_p^+\cap D(p)$ containing $p$)  as $\delta\to0$. Thus $
    \gamma_p^{p'}\subset\overline{(\minvset{CH})^c}.
$
Repeating this argument for all $\tilde{p}\notin\minvset{CH}$ sufficiently close to $p$ we see that 
\begin{equation}\label{eq:lem horn out}
    \gamma_p^{p'}\subset\left(\overline{(\minvset{CH})^c}\right)^o=(\minvset{CH})^c.
\end{equation}

If $\gamma_p^{p'}$ is a subset of the curve of inflections then it is a part of a straight line, which is excluded by our assumption.  Thus we can assume that for $p'$ sufficiently close to $p$ the curve $\gamma_p^{p'}$ intersects the curve of inflections only at $p$. Therefore  $\gamma_p^{p'}$ is convex and, choosing $p'$ closer to $p$  if needed, we can assume that $\gamma_p^{p'}$ is of angle smaller than $\pi$. Therefore
\begin{equation}\label{eq:lem horn out 2}
\left(\bigcup\nolimits_{s\in\gamma_p^{p'}}r(s)\right)^\circ=\horninf{p''}\,{\textstyle \bigcup }\,\horn{p}{p'}{p''}\subset(\minvset{CH})^c.
\end{equation}

Second, assume that $p\in\partial\minvset{CH}$ and let $\gamma_{p}^{p'}$ be a part of the connected piece of $\gamma_p^+\cap D(p)$ containing $p$ such that $\gamma_p^{p'}$ is convex and  of angle smaller than $\pi/2$. Let $p_i\notin\minvset{CH}$ be a sequence of points tending to $p$ and take $p_i'\in \gamma_{p_i}^+$ such that  $\gamma_{p_i}^{p_i'}$ converges to $\gamma_{p}^{p'}$. By analyticity  this convergence is uniform in $C^1$ sense as well. Therefore
\begin{equation}\label{eq:lem horn out 3}
  \left(\bigcup\nolimits_{s\in\gamma_p^{p'}}r(s)\right)^\circ \subset
  \bigcup_i\left(\bigcup\nolimits_{s\in\gamma_{p_i}^{p_i'}}r(s)\right)^\circ \subset
  (\minvset{CH})^c,
\end{equation}
which finishes the proof.

\end{proof}

\subsubsection{Small horns exist}

\begin{proposition}\label{prop:small horn exists}
    For any point $p\notin \Z(PQ)$ such that the trajectory $\gamma(p)$ of $R$ starting at $p$ is not a straight line,  there exists a small horn $\horn{p}{p'}{p''}$. 
\end{proposition}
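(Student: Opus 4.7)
The plan is to take $p'$ on the positive trajectory $\gamma(p)$ of $R(z)\partial_{z}$ sufficiently close to $p$ and verify the defining conditions of Definitions~\ref{def:horn} and~\ref{def:small horn} by continuity arguments, after possibly replacing $R(z)$ with $\overline{R(\bar z)}$ (which swaps small positive and small negative horns).

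First, I would construct the horn. Since $\gamma(p)$ is not a straight line, it is not contained in $\mathfrak{I}_R$, and by analyticity the intersection $\gamma(p) \cap \mathfrak{I}_R$ is discrete near $p$; hence for $p'$ sufficiently close to $p$ along the trajectory, $\gamma_p^{p'} \cap \mathfrak{I}_R \subseteq \{p\}$. Along $\gamma_p^{p'}$, the total variation of $\sigma = \arg R$ equals $\int_0^T |\Im R'(\gamma(t))|\,dt$, which tends to $0$ as the length $T$ of the arc shrinks; thus for $p'$ close enough this variation is less than $\pi/2$, and $\horn{p}{p'}{p''}$ is a well-defined horn. After possibly conjugating, I may assume $\gamma_p^{p'} \setminus \{p\} \subset \mathfrak{I}^+$ and aim for a small positive horn.

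For smallness, condition (2) is immediate from continuity: since $p \notin \mathcal{Z}(PQ)$ implies $R(p) \neq 0$, one can shrink the horn so that $\arg R(u)$ remains within $\pi/4$ of $\arg R(p)$ uniformly for $u \in \horn{p}{p'}{p''}$, yielding $(R(u),R(v)) = \Re(R(u)\overline{R(v)}) > 0$ for all $u,v$ in the horn. For condition (1), a direct computation gives
\begin{equation*}
\frac{d\sigma(u + tR(u))}{dt} = \Im\!\left( \frac{R'(w)\,R(u)}{R(w)} \right), \qquad w = u + tR(u).
\end{equation*}
Writing $R(u)/R(w) = 1 + O(|u-w|)$, the right-hand side equals $\Im R'(w) + O(|R'(w)| \cdot |u-w|)$, so it suffices to arrange that $\Im R'(w) > 0$ on $\horn{p}{p'}{p''}$ and strictly dominates the error term.

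The main obstacle lies in establishing this positivity when $p \in \mathfrak{I}_R$, since then $\Im R'(p) = 0$ and $\Im R'(w)$ vanishes as $w \to p$. I would handle this via a Taylor expansion of $R$ around $p$ in normalizing coordinates ($p = 0$, $R(p) = 1$, $R'(p) \in \mathbb{R}$), together with the geometric fact that $\horn{p}{p'}{p''}$ is a thin curvilinear triangle concentrated along the direction $R(p)$. When $p \notin \mathfrak{I}_R$, positivity is clear by continuity. When $p$ lies in the transverse locus $\mathfrak{I}_R^{\ast}$, the dominant term of $\Im R'(z)$ near $p$ is linear in $z - p$ with nonzero coefficient $\Im R''(p)$, and the horn sits on the side where this term is positive (determined by $\gamma_p^{p'} \subset \mathfrak{I}^+$). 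For tangency and singular points of $\mathfrak{I}_R$, higher-order Taylor terms take over: the first nonvanishing coefficient of the expansion of $\Im R'$ around $p$ is nonzero (otherwise $\gamma(p)$ would lie inside $\mathfrak{I}_R$ as a straight line, contradicting the hypothesis), and its sign on the wedge containing the horn is positive. In each case, the vanishing rate of $\Im R'(w)$ on the horn is at least comparable to the rate at which the error $|u - w|$ vanishes, so by sufficiently shrinking the horn one achieves the inequality $\Im R'(w) > |R'(w)| \cdot |u - w|$ uniformly, completing the verification.
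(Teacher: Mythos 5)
Your overall strategy mirrors the paper's: normalize at $p$, split off the easy case $p\notin\mathfrak{I}_R$ by continuity, and for $p\in\mathfrak{I}_R$ use the Taylor expansion $R(u)=1+\rho(u)+ibu^m+O(u^{m+1})$ together with the fact that the horn is a thin wedge of height $O(x^{m+1})$ over the segment $[0,\epsilon]$ of the ray direction. The difficulty is concentrated exactly where you locate it, in verifying condition (1) of Definition~\ref{def:small horn}. But your quantitative claim there does not hold.

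You bound the error in $\Im\bigl(R'(w)R(u)/R(w)\bigr)=\Im R'(w)+\text{error}$ by $O(|R'(w)|\cdot|u-w|)$ and assert that shrinking the horn gives $\Im R'(w)>|R'(w)|\cdot|u-w|$ uniformly. On the horn one has $\Im R'(w)\sim mb\,x_2^{m-1}$ where $x_2=\Re w$, while $|R'(w)|\sim|\rho'(0)|$ is generically bounded away from $0$ and $|u-w|$ can be comparable to $x_2$ (take $u$ near the apex and $w$ near the far end of the horn; shrinking the horn rescales both sides the same way). So the claimed inequality reads $mb\,x_2^{m-1}\gtrsim C x_2$, which fails outright for every $m\ge 3$ and for $m=2$ reduces to an unjustified comparison of the constants $2b$ and $|R'(0)|$. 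The estimate that actually works exploits a cancellation your modulus bound throws away: because both $u$ and $w$ lie within $O(x^{m+1})$ of the real axis and $\rho$ is real, one has $\Im R(u)=bx_1^m+O(x_1^{m+1})$ and $\Im\bigl(R(u)/R(w)-1\bigr)=O(x_2^m)$, so the error term is $O(x_2^m)$ rather than $O(x_2)$, and only then is it dominated by the main term $mb\,x_2^{m-1}$. This is precisely the computation the paper performs (its quantity $\Phi=mbx_2^{m-1}+O(x_2^m)$, using $x_1\le x_2$ and the containment $y_i=O(x_i^{m+1})$ from the trajectory asymptotics $\gamma_0(x)=\tfrac{b}{m+1}x^{m+1}+O(x^{m+2})$). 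Your write-up gestures at the thinness of the horn but never feeds it into the error estimate, so as stated the final step is a genuine gap.
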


\begin{proof}

Using an affine change of variables we can assume that $p=0$ and  $R(0)=1$. By assumption $R(z)$ is not a real rational function.
Let 
\begin{equation}\label{eq:R Taylor}
    R(u)=1+\rho(u) + ibu^m + O(u^{m+1}),\quad b>0, \, \rho\in\R[u],\, m\ge 1
\end{equation}
be the Taylor expansion  of $R(z)$ at $0$ (the case $m=1$ is covered by Lemma~\ref{lem:unif horns}). Here we can assume that $b>0$ by replacing $R(z)$ by $\overline{R(\bar{z})}$, if necessary.

First, we consider the case $m=1$, i.e. $p\notin \mathfrak{I}_{R}$. 

\begin{lemma}\label{lem:unif horns}
For every compact set $K$ not intersecting the  curve of inflections $\mathfrak{I}_{R}$, there is a $\delta=\delta(K)>0$ such that for every $p\in K$,  there is a small horn $\horn{p}{p'}{p''}$ of  diameter greater than $\delta$.
\end{lemma}

\begin{proof} Indeed, for any $p\in K$ the function $\Re R(u)\overline{R(v)}$ is positive and $\Im \frac{R'(u)}{R(u)}R(v)$ is non-zero at $(p,p)\in \mathbb{C}^2$, so this remains true for all $(u,v)\in \mathbb{C}^2$ such that  $\operatorname{dist}((p,p), (u,v))<\delta=\delta(p)$ by continuity.
This means that any $\horn{p}{p'}{p''}\subset U_{\delta(p)}(p)$  is a small horn. The uniform lower bound follows from the continuity of  $\delta(p)$. \end{proof}

From now on we assume that $m\ge 2$. Our next goal is to find the asymptotics  of $\gamma_0$ near $0$ and the $\horn{0}{p'}{p''}$. We abuse notation by writing the germ of $\gamma_0$ as $\gamma_0=\{x+i\gamma_0(x), x>0\}$.
\begin{lemma}\label{lem:asympt of traj at z in I_R}
\begin{equation}\label{eq:gamma  in m}
\gamma_0(x)=\tfrac{b}{m+1}x^{m+1}+O(x^{m+2})
\end{equation}
    and 
\begin{equation}\label{eq:horn at 0 asymptotics}
    \horn{0}{p'}{p''}\subset\left\{0<x<\epsilon, 0<y<\gamma_0(x)\right\}.
\end{equation}

\end{lemma}

\begin{proof}

Note that 
\begin{equation}\label{eq:1st integral of R}
    \gamma_0\subset\{\Im F=0\},\text{   where   } F'=\frac 1 R,F(0)=0.
\end{equation}
Indeed, 
$$\tfrac{d}{dt}\Im F(\gamma_0(t))=\Im \tfrac{d}{dt}F(\gamma_0(t))=\Im \left(F'\cdot\dot{\gamma_0}(t)\right)=0.$$
Now, 
\begin{equation}\label{eq:1 over R}
\frac{1}{R}=\frac{1}{1+\rho(u)}-\frac{ibu^m}{(1+\rho(u))^2}+O(u^{m+1}), 
\end{equation}

so 
$$
F(u)=u+\tilde{\rho}(u)-i\tfrac{b}{m+1}u^{m+1}+O(u^{m+2}),\quad \tilde{\rho}\in \R[u].
$$
For $u=x+iy$ we get 
$$
\Im F(u)=y(1+o(1))-\tfrac{b}{m+1}x^{m+1}+O(u^{m+2}).
$$ 
Recalling that $\gamma_0$ is tangent to the real axis, we have $y=o(x)$. 
Therefore 
\begin{equation*}
    \{\Im F=0\}=\left\{x+iy:y=\tfrac{b}{m+1}x^{m+1}+O(x^{m+2})\right\}\subset\{y\ge 0\}
\end{equation*} 
near the origin, and the claim of the Lemma follows since $r(0)=\R_+$.
\end{proof}

 Next, we have to check the two conditions in  Definition~\ref{def:small horn} for $\horn{0}{p'}{p''}$ with $p'$ sufficiently close to 0. The second condition is easy: 
since $R(0)=1$ then the scalar product $\left(R(u), R(v)\right)$ is positive for all $u,v\in \horn{0}{p'}{p''}$ by continuity.

To check the first condition set $u=x_1+iy_1, v=x_2+iy_2=u+tR(u)\in\horn{0}{p'}{p''}$ with  $t>0$. By the second property of the small horns, we have $x_2>x_1$. By \eqref{eq:horn at 0 asymptotics} we have  $y_i=O\left(x_i^{m+1}\right)$. Combining  \eqref{eq:1 over R} and 
\begin{equation}\label{eq:R prime}
     R'(v) =\rho'(v) + imbv^{m-1}+O(v^{m}), 
      \end{equation}  
    we get 
    \begin{align*}
        \frac{R'(v)}{R(v)}&=\left(\rho'(x_2)+imbx_2^{m-1}+O(x_2^{m})\right)\frac{1+\rho(x_2)-ibx_2^m+O(x_2^{m+1})}{\left(1+\rho(x_1)\right)^2}\\
                            &=\frac{\rho'(x_2)\left(1+\rho(x_2)\right) + imbx_2^{m-1}+O(x_2^{m})}{\left(1+\rho(x_2)\right)^2}.
    \end{align*}
     
 Thus, using \eqref{eq:R Taylor}, we get for $\Phi(u,v)=\left(1+\rho(x_2)\right)^2\Im R(u)\frac{R'(v)}{R(v)}$ the equation 
\begin{align} \label{eq:small horn exists bound}
\Phi&=\Im\left(\left[1+\rho(x_1)+ibx_1^m+O(x_1^{m+1})\right] \cdot\left[\rho'(x_2)\left(1+\rho(x_2)\right) + imbx_2^{m-1}+O(x_2^{m})\right]\right)\nonumber\\
& =mbx_2^{m-1}+O(x_2^{m})>0,
\end{align}
where we use $x_1\le x_2$. This proves the first requirement of Definition~\ref{def:small horn}.
\end{proof}

\begin{corollary}\label{cor:IR not between traj and r(p)}
    The germ of $\mathfrak{I}_R$ at $p$ cannot lie  between $\gamma_p^+$ and $r(p)$.
\end{corollary}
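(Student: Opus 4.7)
The plan is to invoke Proposition~\ref{prop:small horn exists} and Lemma~\ref{lem:intersecthornCI} directly. First I would dispose of two trivial cases: if $\gamma_p^+$ is a straight line then it coincides with $r(p)$ and there is no open region between them; and if $p\notin\mathfrak{I}_R$ the germ of the curve of inflections at $p$ is empty. Hence I may assume $p\in\mathfrak{I}_R$ and that the trajectory through $p$ is not a straight line.

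Next I would adopt the normalization used in the proof of Proposition~\ref{prop:small horn exists}: an affine change of variables places $p$ at the origin with $R(0)=1$, and possibly after replacing $R(z)$ by $\overline{R(\bar z)}$ the Taylor expansion \eqref{eq:R Taylor} holds with $m\geq 2$ and $b>0$. In these coordinates $r(p)=\mathbb{R}_+$, and Lemma~\ref{lem:asympt of traj at z in I_R} locates the trajectory via $\gamma_0(x)=\tfrac{b}{m+1}x^{m+1}+O(x^{m+2})$. The open region lying strictly between $\gamma_p^+$ and $r(p)$ near $p$ is therefore the wedge $W_\epsilon=\{0<x<\epsilon,\ 0<y<\gamma_0(x)\}$.

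By Proposition~\ref{prop:small horn exists}, for each sufficiently small $\epsilon$ there is a point $p'\in\gamma_p^+$ with $\operatorname{Re} p'=\epsilon$ for which $\horn{p}{p'}{p''}$ is a small horn, and by the inclusion \eqref{eq:horn at 0 asymptotics} this horn is contained in $W_\epsilon$. Lemma~\ref{lem:intersecthornCI} then guarantees that $\mathfrak{I}_R$ is disjoint from every small horn. Letting $\epsilon\to 0$, the family of these small horns covers every germ-level neighborhood of $p$ contained in the wedge, so no branch of $\mathfrak{I}_R$ emanating from $p$ can enter the region between $\gamma_p^+$ and $r(p)$.

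The main potential obstacle is confirming that the family of small horns truly exhausts the germ of $W_\epsilon$ at the origin. The difference between $\horn{p}{p'}{p''}$ and $W_\epsilon$ is the triangular cap cut off by the tangent line to $\gamma_p^+$ at $p'$; a leading-order computation using \eqref{eq:gamma  in m} shows that this cap has vertical extent of the same order as the wedge but with strictly smaller leading coefficient, so every point of the wedge sufficiently close to $p$ lies inside $\horn{p}{p'}{p''}$ for all sufficiently small $\epsilon$, completing the argument.
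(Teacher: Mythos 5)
Your argument is correct and is essentially the paper's own proof: the paper also deduces the corollary directly from Proposition~\ref{prop:small horn exists} and Lemma~\ref{lem:intersecthornCI}, observing that a germ lying between $\gamma_p^+$ and $r(p)$ would have to sit inside a small horn. Your extra verification that the wedge and the horn agree near the apex (the cap cut off by the tangent at $p'$ lies at distance comparable to $\epsilon$ from $p$) just makes explicit a containment the paper leaves implicit.
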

\begin{proof}
    This would mean that this germ lies inside $\horn{p}{p'}{p''}$ which is impossible by  Proposition~\ref{prop:small horn exists} and Lemma~\ref{lem:intersecthornCI}.
\end{proof}

\subsubsection{Removing small horns}

We will use the following general Lemma
\begin{lemma}\label{lem:excision}
Assume that for some open set $U\subset\mathbb{C}\setminus\Z(PQ)$ and  every point $u\in U$,    the associated ray $r(u)$ lies in the union $U\cup\left(\minvset{CH}\right)^c$. Then $\minvset{CH}\cap U=\emptyset$.  
\end{lemma}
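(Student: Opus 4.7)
My strategy is to show that the set $S := \minvset{CH} \setminus U$ is itself $T_{CH}$-invariant; by minimality of $\minvset{CH}$ this forces $\minvset{CH} \subseteq S$, which is exactly the assertion $\minvset{CH}\cap U = \emptyset$. I will verify the two conditions of Theorem~\ref{thm:AR} for $S$.

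First, the set-theoretic setup. Since $\minvset{CH}$ is closed in $\mathbb{C}$ and $U$ is open, $S = \minvset{CH} \cap U^{c}$ is closed. Next, the assumption $U \subset \mathbb{C} \setminus \mathcal{Z}(PQ)$ means $\mathcal{Z}(PQ) \cap U = \emptyset$; because $\minvset{CH}$ itself is $T_{CH}$-invariant, it contains $\mathcal{Z}(PQ)$, and therefore so does $S$. This already verifies the first condition of Theorem~\ref{thm:AR}.

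It remains to check: for every $z \notin S$, the associated ray $r(z)$ is disjoint from $S$. Split the complement of $S$ into two pieces, $(\minvset{CH})^{c}$ and $U$, and handle each in turn.
\begin{itemize}
    \item If $z \in (\minvset{CH})^{c}$, then the $T_{CH}$-invariance of $\minvset{CH}$ (via Theorem~\ref{thm:AR}) gives $r(z) \cap \minvset{CH} = \emptyset$, so \emph{a fortiori} $r(z) \cap S = \emptyset$.
    \item If $z \in U$, the hypothesis of the lemma gives $r(z) \subset U \cup (\minvset{CH})^{c}$. Since $S = \minvset{CH} \setminus U$ is disjoint from both $U$ and $(\minvset{CH})^{c}$, we again get $r(z) \cap S = \emptyset$.
\end{itemize}

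Thus $S$ satisfies both conditions of Theorem~\ref{thm:AR} and is $T_{CH}$-invariant. The uniqueness and minimality of $\minvset{CH}$ (guaranteed by our standing assumption that $P$ or $Q$ is non-constant) forces $\minvset{CH} \subseteq S = \minvset{CH}\setminus U$, i.e.\ $\minvset{CH} \cap U = \emptyset$. There is no substantial technical obstacle: the proof is essentially just the observation that the hypothesis on $U$ was crafted precisely so that excising $U$ from $\minvset{CH}$ preserves the ray-exclusion property of Theorem~\ref{thm:AR}.
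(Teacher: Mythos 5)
Your proof is correct and is essentially the paper's argument: the paper's one-line proof simply asserts that $\minvset{CH}\setminus U$ is again $T_{CH}$-invariant and invokes minimality, and you have filled in exactly that verification via the two conditions of Theorem~\ref{thm:AR}. No gap; the extra detail (closedness of $S$, the two-case check on rays) is the routine content the paper leaves implicit.
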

\begin{proof}
    Indeed, if not then $\minvset{CH}\setminus U\subsetneq\minvset{CH}$ will be again invariant, which  contradicts minimality of $\minvset{CH}$.  
\end{proof}

The crucial property of small horns is the following Lemma.
		\begin{lemma}\label{lem:horn+min}
			For any $v\in\horn{p}{p'}{p''}$, one has $r(v)\subset\horn{p}{p'}{p''}\cup\horninf{p''}$. 
		\end{lemma}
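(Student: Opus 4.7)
I will show that for $v \in \horn{p}{p'}{p''}$ the ray $r(v) = v + R(v)\mathbb{R}^+$ stays inside the closed region $H = \overline{\horn{p}{p'}{p''}} \cup \overline{\horninf{p''}}$. The boundary $\partial H$ consists of three pieces: the arc $\gamma_p^{p'}$, the full tangent ray $r(p)$, and the half-line $\ell^+ = \{p' + s(p' - p'') : s \ge 0\}$ extending the segment $[p', p'']$ past $p'$; note that the shared edge $[p', p'']$ of the horn and the complementary cone lies in the interior of $H$, so it suffices to prove that $r(v)$ cannot cross any of the three pieces above. I work in the normalized coordinates of the proof of Proposition~\ref{prop:small horn exists}, with $p = 0$, $R(0) = 1$, $\gamma_0(x) = \frac{b}{m+1}x^{m+1} + O(x^{m+2})$, $b > 0$, $m \ge 1$, and $p''$ on the positive real axis. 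In these coordinates the Taylor expansion $R(u) = 1 + \rho(u) + ibu^m + O(u^{m+1})$ is available.

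Suppose for contradiction that $r(v)$ exits $H$ at $w = v + t_0 R(v)$ for some $t_0 > 0$; this requires $R(v)$ to have a strictly positive component in the outward normal to $\partial H$ at $w$. If $w \in r(p)$ (the positive real axis), then the outward normal is $-i$, so exit demands $\Im R(v) < 0$; but condition~(2) of Definition~\ref{def:small horn} applied with $u = p$ gives $\Re R(v) > 0$, and the Taylor expansion above yields $\Im R(v) = bv_x^m + O > 0$ throughout the horn, a contradiction. If $w \in \gamma_p^{p'}$, then the outward normal to the arc at $w$ is $iR(w)/|R(w)|$, so exit demands $\Im\bigl(R(v)\overline{R(w)}\bigr) = |R(v)||R(w)|\sin(\sigma(v) - \sigma(w)) > 0$; but condition~(1) of Definition~\ref{def:small horn} forces $\sigma$ to be strictly monotone along $r(v)$ and, for a small positive horn, $\sigma(w) > \sigma(v)$, making the sine strictly negative (the small negative case is handled identically, since the orientation of the outward normal and the direction of monotonicity of $\sigma$ reverse simultaneously). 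Finally, if $w \in \ell^+$, then the outward normal is $i(p' - p'')/|p' - p''|$ and exit demands $\sin\bigl(\sigma(v) - \arg(p' - p'')\bigr) > 0$; in canonical coordinates $\arg(p' - p'') = bx_0^m + O(x_0^{m+1})$ while $\sigma(v) = bv_x^m + O$ with $v_x < x_0$, so $\sin\bigl(\sigma(v) - \arg(p' - p'')\bigr) < 0$, a final contradiction.

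The main technical obstacle lies in the careful bookkeeping of orientations: the sign of the outward normal along each boundary piece must be made precise, and the direction of monotonicity in condition~(1) must be chosen compatibly with the sign of the curvature of $\gamma_p^{p'}$. Once this is done consistently, conditions~(1) and~(2) of Definition~\ref{def:small horn} deliver matching signs in all three cases: condition~(2) secures the positive real part of $R(v)$ via the scalar product with $R(p)$ and $R(p')$, and condition~(1) secures the appropriate sign of $\sin(\sigma(v) - \sigma(\cdot))$ that rules out exit at the arc and at $\ell^+$.
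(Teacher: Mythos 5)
Your overall strategy --- ruling out a first exit of $r(v)$ through each boundary piece of $\horn{p}{p'}{p''}\cup\horninf{p''}$ by comparing slopes --- is the same as the paper's, and your treatment of the arc $\gamma_p^{p'}$ (monotonicity of $\sigma$ along $r(v)$ forces $\sigma(w)\ge\sigma(v)$, while an exit would need $\sigma(w)<\sigma(v)$) is exactly the paper's argument. The paper, however, derives every sign it needs from conditions (1)--(2) of Definition~\ref{def:small horn} alone: writing $v\in r(u)$ with $u\in\gamma_p^{p'}$, condition (1) gives $\sigma(p)<\sigma(u)<\sigma(v)$, which already yields $\Im R(v)>0$ (your exclusion of the side on $r(p)$) without invoking the Taylor expansion. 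This matters because the lemma is applied to horns that are only assumed to satisfy (1)--(2) --- e.g.\ the uniform-size horns of Lemma~\ref{lem:unif horns} --- and not only to horns small enough for the leading term $bv_x^{m}$ to dominate the error terms.

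The genuine gap is in your third case. To exclude an exit through $\ell^{+}=r(p')$ you claim $\sigma(v)<\sigma(p')$ by comparing $\sigma(v)=bv_x^{m}+O(\cdot)$ with $\sigma(p')=bx_0^{m}+O(\cdot)$ using $v_x<x_0$. When $v_x$ is close to $x_0$ the difference of the leading terms is of order $(x_0-v_x)x_0^{m-1}$, which can be smaller than the $O(x_0^{m+1})$ error terms, so the comparison of leading terms does not establish the inequality; nor is $\sigma(v)<\sigma(p')$ extractable from Definition~\ref{def:small horn} as a hypothesis. The paper sidesteps this entirely: once exits through $\gamma_p^{p'}$ and through $r(p)$ are excluded, the ray must leave the horn through the open segment $(p'',p')$, and the mere fact that it crosses the line through $p''$ and $p'$ from the horn side to the cone side forces $\sin\bigl(\sigma(v)-\sigma(p')\bigr)<0$, i.e.\ $\sigma(v)<\sigma(p')$, for free. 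Since a straight ray meets a straight line at most once, $r(v)$ can never return to that line and in particular never reaches $\ell^{+}$, while $\sigma(v)>0$ prevents it from reaching the other edge of $\horninf{p''}$ lying on the real axis. Replacing your asymptotic comparison by this crossing argument closes the proof.
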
 
  \begin{proof}
		We prove the statement  assuming that the small horn $\horn{p}{p'}{p''}$ is positive, 
		the negative case will follow by conjugation.
 \begin{figure}
	 	\centering
	 	\includegraphics[width=0.7\linewidth]{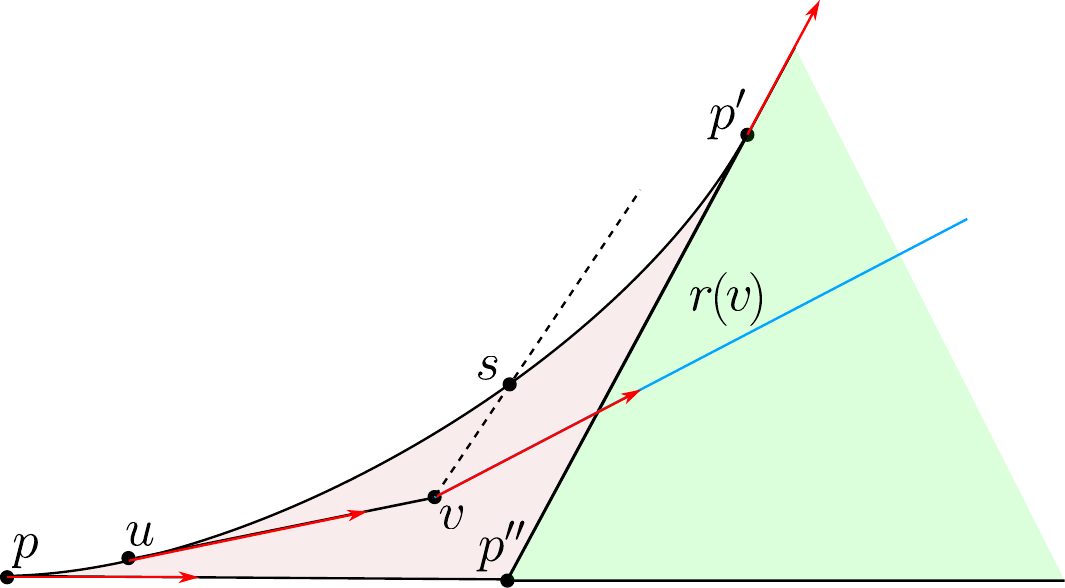}
	 	\caption{Removing small horns.}
	 	\label{fig:hornsfig0}
	 \end{figure}		
		
  Let  $u\in\gamma_p^{p'}$ be a point such that $v\in r(u)$. By definition of small horns, we have $\sigma(p)<\sigma(u)<\sigma(v)$, see Fig.~\ref{fig:hornsfig0}.

  The ray $r_{v}$  does not intersect  $\gamma_p^{p'}$. Indeed, assume that the ray $r(v)$  intersects  $\gamma_p^{p'}$ at a point $s$. Then  at the intersection point   the slope of $\gamma_p^{p'}$ should be smaller than the slope of $r(v)$, i.e. $\sigma(s)<\sigma(v)$ which contradicts the requirement that the slope is monotone increasing along the segment joining  $v$ and $s$.  
  
  Also $r(v)$ cannot intersect $pp''$ since $\sigma(v)>\sigma(u)$ and $\sigma(u)>\sigma(p)$, where $u\in\gamma_p^{p'}$ such that $v\in r(u)$.
		
		Thus $r(v)$  leaves $\horn{p}{p'}{p''}$ and enters $\horninf{p''}$ at some point of  $p''p'$ with the slope $\sigma(p)<\sigma(v)<\sigma(p')$. Thus $r(v)$ never leaves $\horninf{p''}$.
	\end{proof}
 
\begin{proposition}\label{prop:horn+min}
		Assume that  $\horn{p}{p'}{p''}$  is a small horn and  $\horninf{p''}\subset(\minvset{CH})^c$.
		Then $p$ is not in the interior of $\minvset{CH}$.
  \end{proposition}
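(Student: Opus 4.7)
The plan is to combine Lemma~\ref{lem:horn+min} with the excision Lemma~\ref{lem:excision} to conclude that the entire horn lies in $\minvset{CH}^c$, and then observe that $p$ is a vertex of the horn.

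First, I would verify that the horn $\horn{p}{p'}{p''}$ is a legitimate choice of $U$ for Lemma~\ref{lem:excision}. It is open by Definition~\ref{def:horn}, and since $p \notin \mathcal{Z}(PQ)$ and $\gamma_p^{p'}$ is compact, up to shrinking $p'$ we may assume the whole curvilinear triangle is disjoint from $\mathcal{Z}(PQ)$. Next, pick any $v \in \horn{p}{p'}{p''}$. By Lemma~\ref{lem:horn+min}, the associated ray satisfies
\[
r(v) \subset \horn{p}{p'}{p''} \cup \horninf{p''}.
\]
Combined with the hypothesis $\horninf{p''} \subset \minvset{CH}^c$ we obtain $r(v) \subset \horn{p}{p'}{p''} \cup \minvset{CH}^c$. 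Thus the hypothesis of Lemma~\ref{lem:excision} is satisfied with $U = \horn{p}{p'}{p''}$, which yields $\minvset{CH} \cap \horn{p}{p'}{p''} = \emptyset$.

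To finish, note that $p$ is a vertex of the open curvilinear triangle $\horn{p}{p'}{p''}$, so every neighborhood of $p$ in $\mathbb{C}$ intersects the horn and hence contains points of $\minvset{CH}^c$. Therefore $p$ cannot lie in the interior of $\minvset{CH}$.

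I expect no real obstacle here: both key ingredients (the containment of $r(v)$ inside the horn together with its complementary cone, and the excision principle) have already been established. The only minor point to mind is making sure the small horn can be chosen to avoid $\mathcal{Z}(PQ)$, which is automatic by shrinking $p'$ toward $p$ and using $p \notin \mathcal{Z}(PQ)$ from the definition of horn.
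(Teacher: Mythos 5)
Your proof is correct and follows exactly the route the paper takes: the paper's own proof consists of the single line ``Follows from Lemmas~\ref{lem:excision} and~\ref{lem:horn+min},'' and you have simply spelled out that combination, including the (reasonable) final observation that $p$ lies on the boundary of the excised horn. The only point worth a second glance is your shrinking remark: a smaller horn has a different complementary cone, so one should check it still lies in $\minvset{CH}^c$ before re-applying the hypothesis, but this technicality does not affect the argument as you have written it.
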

 
	\begin{proof}
		
		Follows from Lemmas~\ref{lem:excision} and \ref{lem:horn+min}. 
  \end{proof}

\section{Boundary arcs}\label{sec:Boundaryarcs}

Recall that we consider an operator $T$ whose minimal set $\minvset{CH}$ is different from $\mathbb{C}$ and  has a nonempty interior. We want to describe its boundary in combinatorial and dynamical terms. To do this, we introduce two set-valued functions.
\par
Recall that in our terminology, $\overline{\minvset{CH}}$ is the closure of $\minvset{CH}$ in the extended plane $\mathbb{C} \cup \mathbb{S}^{1}$.

\subsection{The correspondences \texorpdfstring{$\Gamma$}{gamma} and \texorpdfstring{$\Delta$}{Delta}}\label{sub:correspondences}

\begin{definition}\label{def:DeltaGamma}
For any $x \in \partial\minvset{CH} \setminus \mathcal{Z}(PQ)$, we define:
\begin{itemize}
    \item $\Gamma(x) = \lbrace{ y \in \gamma^{+}_{x}~\vert~ y \neq x \rbrace} \cap  \overline{\minvset{CH}}$ where $\gamma^{+}_{x}$ is the positive trajectory of the vector field $R(z)\partial_{z}$ starting at $x$;
    \item $\Delta(x) = \lbrace{ y \in r(x)~\vert~ y \neq x \rbrace} \cap  \overline{\minvset{CH}}$.
\end{itemize}
\end{definition}
Note that  if $y\in \Gamma(x)$ or $y\in \Delta(x)$, and $x\in \partial \minvset{CH},$ then $y\in \partial \minvset{CH}$ as well.

Using correspondences $\Gamma$ and $\Delta$, we split the set of boundary points of $\minvset{CH}$ disjoint from the curve of inflections into the following three types.

\begin{definition}\label{defn:ArcClassification}
A point of $\partial\minvset{CH} \setminus (\mathcal{Z}(PQ) \cup \mathfrak{I}_{R})$ is a point of:
\begin{itemize}
    \item \emph{local type} if $\Gamma(z) \neq \emptyset$ and $\Delta(z)=\emptyset$;
    \item \emph{global type} if $\Gamma(z) = \emptyset$ and $\Delta(z) \neq \emptyset$;
    \item \emph{extruding type} if $\Gamma(z) \neq \emptyset$ and $\Delta(z) \neq \emptyset$.    
\end{itemize}
\end{definition}

By Proposition~\ref{prop:localarc}  these are the only possibilities for points in $\partial\minvset{CH}\setminus \mathfrak{I}_{R}$.

\subsection{Support lines}\label{sub:support}

In this section, we prove that for a given point $z$, the condition $\Delta(z) \neq \emptyset$ means that the associated ray $r(z)$ is a \textit{support line} of $\overline{\minvset{CH}}$.

For any oriented support line of $\overline{\minvset{CH}}$, we define the \textit{co-orientation} of its support  in the following way. The support point $x$ is:
\begin{itemize}
    \item a \textit{direct support point} if the standard orientation of $\partial \minvset{CH}$ and the orientation of the support line agree at $x$;
    \item an \textit{indirect support point} otherwise.
\end{itemize}

In particular, if the support line is the positively oriented real axis, a support point $x$ is called \textit{direct} if the intersection of $\minvset{CH}$ with a neighborhood of $x$ is contained in the upper half-plane (see Figure~\ref{fig:supportline} for examples of indirect support points).

\begin{definition}\label{def:Epartition}
Consider $z \in \mathbb{C}$ such that:
\begin{itemize}
    \item $z$ does not belong to the tangency locus $\mathcal{T}_{R}$ of the curve of inflections $\mathfrak{I}_{R}$;
    \item $z$ is not a root of $P$ or $Q$.
\end{itemize}
Then we say that $z \in \mathfrak{E}^{+}$ (resp. $\mathfrak{E}^{-}$) if the associated ray $r(z)$ is pointing inside the inflection domain $\mathfrak{I}^{+}$ (resp. $\mathfrak{I}^{-}$). This includes   $z\in \mathfrak{I}^{+}$ (resp. $\mathfrak{I}^{-}$).
\end{definition}
\begin{figure}[!ht]
    \centering
    \includegraphics[width=0.6\textwidth,page=2]{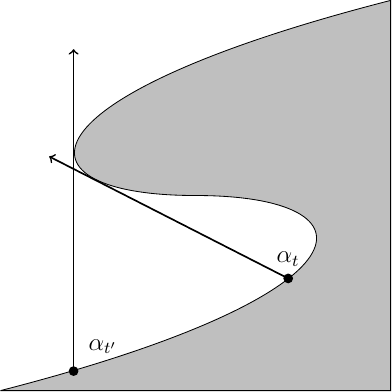}
    \caption{The point where the red arrow is tangent to $\minvset{CH}$ is an indirect support point. The circular arrow indicates that the black point belongs to $\mathfrak{E}^+$.}
    \label{fig:supportline}
\end{figure}

\begin{lemma}\label{lem:directsupport}
Consider $z \in \partial \minvset{CH} \setminus \mathcal{Z}(PQ)$ such that $z \in \mathfrak{E}^{+}$ (resp. $\mathfrak{E}^{-}$). If $y \in \Delta(z)$, then $y$ is an indirect support point (resp. a direct support point).
\end{lemma}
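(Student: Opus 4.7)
The plan is to transport the local geometry of $\minvset{CH}$ from $z$ to $y$ via the holomorphic map $\Phi_t\colon w \mapsto w + tR(w)$, evaluated at the unique $t_y > 0$ with $\Phi_{t_y}(z) = y$. After an affine change of variable we may assume $y = 0$, $R(z) = 1$, and $z = -t_y$, so that $r(z)$ is the positively-oriented positive real axis (extended back to $z$). In this normalization $y$ is a direct support point iff $\minvset{CH}$ lies locally in the upper half-plane near $0$, and indirect iff it lies in the lower half-plane. That $y \in \partial \minvset{CH}$ and $r(z)$ is a support line at $y$ follows by choosing $z_n \in \minvset{CH}^c$ with $z_n \to z$: each $r(z_n) \subset \minvset{CH}^c$ by Theorem~\ref{thm:AR}, so $r(z) \subset \overline{\minvset{CH}^c}$ in the limit.

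I first treat the generic case $z \in \mathfrak{I}^+$, so $\Im R'(z) > 0$ and $\Phi'_{t_y}(z) = 1 + t_yR'(z) \neq 0$. Then $\Phi_{t_y}$ is a local biholomorphism from a neighborhood $U$ of $z$ onto a neighborhood $V$ of $y$, and by Theorem~\ref{thm:AR} it sends $\minvset{CH}^c \cap U$ into $\minvset{CH}^c \cap V$, because $\Phi_{t_y}(z') \in r(z') \subset \minvset{CH}^c$ whenever $z' \in \minvset{CH}^c$. Consequently, if a sequence $z_n \in \minvset{CH}^c$ approaches $z$ with $\arg(z_n - z) \to \theta$, then $\Phi_{t_y}(z_n) \in \minvset{CH}^c$ approaches $y$ with $\arg(\Phi_{t_y}(z_n) - y) \to \theta + \arg\Phi'_{t_y}(z)$. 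In terms of the tangent cones of Definition~\ref{def:calKL}, this yields $\mathcal{K}_y \supset \mathcal{K}_z + \arg\Phi'_{t_y}(z) \pmod{2\pi}$. On the other hand, $y \in \minvset{CH}$ and Hutchinson invariance give $\mathfrak{tr}_y \subset \minvset{CH}$, and by Lemma~\ref{lem:RootTrailSlope} $\mathfrak{tr}_y$ passes through $z$ with tangent direction $v := 1/(1 + t_yR'(z))$, so the boundary of $\minvset{CH}$ at $z$ is tangent to the $v$-line.

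The key computation,
\[
\Im(R(z)/v) = \Im(1 + t_yR'(z)) = t_y\Im R'(z) > 0,
\]
shows that the ray direction $R(z) = 1$ lies strictly on the counterclockwise side of $v$ at $z$. Since $r(z) \subset \overline{\minvset{CH}^c}$ and $\partial\minvset{CH}$ at $z$ is tangent to the $v$-line, $\mathcal{K}_z$ fills the counterclockwise open half-plane $\{\theta : \Im(e^{i\theta}/v) > 0\}$. Adding $\arg\Phi'_{t_y}(z) = -\arg v$ shifts this half-plane of angles onto the upper half-plane of angles, so the upper half-plane at $y$ lies in $\mathcal{K}_y$, forcing $\minvset{CH}$ to lie locally in the closed lower half-plane at $y$. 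This is precisely the definition of $y$ being an indirect support point of the positively-oriented ray $r(z)$, proving the $\mathfrak{E}^+$ case. The case $z \in \mathfrak{E}^-$ follows by the conjugation $R(z) \rightsquigarrow \overline{R(\bar z)}$, flipping the inflection orientation and exchanging indirect with direct. The residual cases $z \in \mathfrak{I}_R \cap \mathfrak{E}^{\pm}$ with $\Im R'(z) = 0$ are handled by pushing $z$ slightly along $r(z)$ into $\mathfrak{I}^{\pm}$ using the sign of $\Im(R''R)$ from Corollary~\ref{cor:R''RInflection} and invoking continuity of the co-orientation. The main technical subtlety is confirming that $\mathcal{K}_z$ is genuinely the full CCW half-plane (rather than a strictly smaller cone), which at generic boundary points is immediate but at singular boundary points requires the finer boundary description from Section~\ref{sec:preliminary}.
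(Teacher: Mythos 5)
There is a genuine gap, and it sits exactly where you flagged the ``technical subtlety.'' Your argument rests on the claim that $\mathcal{K}_z$ fills the entire counterclockwise open half-plane of directions bounded by the $v$-line. This is unjustified and in fact false in general: at a point of extruding type, $\minvset{CH}$ contains the whole region above the forward trajectory $\gamma_z^{+}$ (Lemma~\ref{lem:localarc step 1}), so every direction in the open interval $(0,\pi)$ (in the normalization $R(z)=1$) is \emph{excluded} from $\mathcal{K}_z$, while your half-plane $(\arg v,\arg v+\pi)$ contains the nonempty subinterval $(0,\arg v+\pi)$. More generally, knowing that $\mathfrak{tr}_y\subset\minvset{CH}$ passes through $z$ with tangent direction $v$ tells you nothing about which side(s) of the $v$-line the complement occupies. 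There is a second, independent gap at the end: even if one grants $\mathcal{K}_y\supset\{\Im>0\}$, this does not force $\minvset{CH}$ to lie locally in the closed lower half-plane at $y$ --- $\mathcal{K}_y$ only records directions along which the \emph{complement} accumulates, and a priori $\minvset{CH}$ could still accumulate at $y$ from above through gaps in the complement. The definition of ``indirect support point'' requires the genuine containment $\minvset{CH}\cap V\subset\{\Im\le 0\}$, which a statement about accumulation directions cannot deliver. Transporting the one open set you do control near $z$ (the horn, which is a cusp of zero angular width) by the fixed-time map $\Phi_{t_y}$ only produces another cusp at $y$, not a one-sided neighborhood.

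The paper's proof avoids both problems by using the horn machinery directly at $z$ rather than transporting cones: by Lemma~\ref{lem:horn out}, the \emph{open} union $\bigl(\bigcup_{s\in\gamma_z^{z'}}r(s)\bigr)^{\circ}=\horn{z}{z'}{z''}\cup\horninf{z''}$ of associated rays emanating from a short arc of the forward trajectory is disjoint from $\minvset{CH}$. Because $z\in\mathfrak{E}^{+}$, these rays rotate counterclockwise off $r(z)$, so their union contains a genuine two-dimensional set of the form $V\cap\{\Im w>0\}$ for a neighborhood $V$ of $y$ (the sweeping in the time parameter $t$ along each ray, which your fixed-$t_y$ map discards, is what opens up the cusp into the cone $\horninf{z''}$). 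Hence $\minvset{CH}\cap V\subset\{\Im w\le 0\}$ and $y$ is an indirect support point. If you want to keep the conformal-transport idea, you would need to transport the full family $\{\Phi_t\}_{t>0}$, i.e.\ the union of rays --- at which point you have reproduced the paper's argument.
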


\begin{proof}
Without loss of generality, we can assume that  $z \in \mathfrak{E}^{+}$, $z=0$, $r(z)=\mathbb{R}_{>0}$ and $y=1$. 
This implies that $\gamma_0$ lies in the upper half-plane. By Lemma~\ref{lem:horn out} there is a neighborhood $V$ of $y$ such that $V \cap \left(\overline{\minvset{CH}}\right)^{\circ}$ is contained in the lower half-plane. Therefore $y$ is an indirect support point.
\end{proof}

\begin{lemma}\label{lem:intersectingrays}
Take $x,y \in \partial \minvset{CH}$ such that:
\begin{itemize}
    \item $x,y \in \mathfrak{E}^{-} \cup \mathfrak{E}^{+}$
    \item the associated rays $r(x)$ and $r(y)$ intersect at some point $m \in \mathbb{C}$;
    \item $\sigma(y) \in ]\sigma(x)-\pi,\sigma(x)[$.
\end{itemize}
Then the open cone $\Gamma$ with apex $m$ and the interval of directions $]\sigma(y),\sigma(x)[$ is disjoint from $\left(\minvset{CH}\right)^{\circ}$ and there are the following subcases:
\begin{itemize}
    \item either $y \in \mathfrak{E}^{+}$ or $\Delta(y) \subset [y,m]$;
    \item either $x \in \mathfrak{E}^{-}$ or $\Delta(x) \subset [x,m]$.    
\end{itemize}
\end{lemma}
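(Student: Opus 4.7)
The plan is to establish the cone claim first and then deduce both dichotomy conditions from it together with Lemma~\ref{lem:directsupport}.

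For the cone claim, I pick sequences $\tilde{x}_n \to x$ and $\tilde{y}_n \to y$ lying in $\minvset{CH}^c$, which exist since $x,y \in \partial \minvset{CH}$. By Theorem~\ref{thm:AR}, the rays $r(\tilde{x}_n)$ and $r(\tilde{y}_n)$ are entirely disjoint from $\minvset{CH}$, and by continuity they meet at points $\tilde{m}_n \to m$ for $n$ sufficiently large. I then invoke Lemma~\ref{lem:ArcCone} with a continuous path $\alpha_n$ connecting $\tilde{y}_n$ to $\tilde{x}_n$ through $\minvset{CH}^c$, routed on the side \emph{opposite} to the cone (i.e., through the ``back region'' behind $x$ and $y$). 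The conclusion of Lemma~\ref{lem:ArcCone} identifies the connected component of $(\mathbb{C} \cup \mathbb{S}^1) \setminus \bigl(r(\tilde{y}_n) \cup \alpha_n \cup r(\tilde{x}_n)\bigr)$ containing the positive arc $]\sigma(\tilde{y}_n),\sigma(\tilde{x}_n)[$ at infinity as a subset of $\minvset{CH}^c$, and this component is precisely the ``approximate cone'' with apex $\tilde{m}_n$ bounded by the two rays on that side. Passing to the limit $n \to \infty$, this region exhausts $\Gamma$, which yields $\Gamma \subset \overline{\minvset{CH}^c}$ and hence $\Gamma \cap (\minvset{CH})^\circ = \emptyset$.

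The main technical obstacle is the construction of $\alpha_n$ with the correct homotopy type for its $\sigma$-image. The key input is the asymptotic control from Section~\ref{sec:asymptotic}: depending on whether $\deg Q - \deg P$ equals $1$, $0$, or $-1$, one exploits respectively the compactness of $\minvset{CH}$, its containment in a narrow cone (Proposition~\ref{prop:qp0}), or the two-component structure of its complement (Proposition~\ref{prop:qp-1}) to route $\alpha_n$ through a region of $\minvset{CH}^c$ extending to infinity in directions opposite to $\Gamma$. The path is then steered to avoid critical points of $\arg R$, so that its $\sigma$-image has the prescribed winding.

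With the cone claim in hand, both dichotomies follow by contradiction. Assume first $x \notin \mathfrak{E}^-$, so $x \in \mathfrak{E}^+$, and suppose some $u \in \Delta(x)$ lies on $r(x)$ beyond $m$. By Lemma~\ref{lem:directsupport}, $u$ is an indirect support point for the support line extending $r(x)$: in coordinates placing $r(x)$ on the positively oriented real axis, the interior $(\minvset{CH})^\circ$ meets every sufficiently small neighborhood of $u$ only within the lower open half-plane. In these coordinates $\sigma(x) = 0$ and $\sigma(y) \in (-\pi, 0)$, so the cone $\Gamma$ lies in the lower half-plane; and around the point $u$, which is on $r(x)$ past $m$, the intersection of $\Gamma$ with a small disk around $u$ is exactly the lower half-disk. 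The cone claim places this half-disk in $\minvset{CH}^c$, contradicting the indirect support condition at $u$. Hence $\Delta(x) \subset [x,m]$. The argument for $y$ is symmetric: if $y \in \mathfrak{E}^-$ then any $u' \in \Delta(y)$ beyond $m$ would be a direct support point, forcing $\minvset{CH}$ locally into the half-plane on the left of $r(y)$; but in coordinates adapted to $r(y)$, the cone $\Gamma$ occupies exactly that left half-disk around $u'$, giving the same contradiction.
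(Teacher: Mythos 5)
Your overall strategy -- Lemma~\ref{lem:ArcCone} for the cone statement, then Lemma~\ref{lem:directsupport} by contradiction for the two dichotomies -- is the same as the paper's, and your second half is essentially the paper's argument and is fine. The gap is in how you verify the hypotheses of Lemma~\ref{lem:ArcCone}. The paper takes the path to be (a small perturbation of) the concatenation $[\tilde y_n,\tilde m_n]\cup[\tilde m_n,\tilde x_n]$ of the two ray segments -- a path you already have in hand, since $r(\tilde x_n)$ and $r(\tilde y_n)$ lie in $(\minvset{CH})^{c}$ and cross at $\tilde m_n$. You discard it and instead route $\alpha_n$ through a far-away ``back region''.

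That detour is where the proof breaks. Your justification of the homotopy hypothesis -- that the path can be ``steered to avoid critical points of $\arg R$ so that its $\sigma$-image has the prescribed winding'' -- is not a valid mechanism. Since $\sigma=\Im\log R$, the homotopy class rel endpoints of $\sigma\circ\alpha_n$ in $\mathbb{S}^1$ is determined by the homotopy class of $\alpha_n$ in $\mathbb{C}\setminus\mathcal{Z}(PQ)$; it cannot be adjusted by local steering, and the constraint $\alpha_n\subset(\minvset{CH})^{c}$ essentially fixes that homotopy class because $\mathcal{Z}(PQ)\subset\minvset{CH}$. For a detour around $\minvset{CH}$ the winding of $\sigma$ is then forced by the degree data and is generically the wrong one: e.g.\ for $\deg Q-\deg P=1$ one has $\sigma(z)\approx\arg\lambda+\arg z$ near infinity, so going the long way around traces the \emph{long} arc from $\sigma(\tilde y_n)$ to $\sigma(\tilde x_n)$ rather than the positive arc of length $<\pi$, and Lemma~\ref{lem:ArcCone} then yields a conclusion about a component other than $\Gamma$. (There is also the preliminary issue that a back route inside $(\minvset{CH})^{c}$ need not exist at all, e.g.\ when $\deg Q-\deg P=-1$ and the complement has two components.) The repair is simply to use the direct path through $\tilde m_n$: the component of the complement of $r(\tilde y_n)\cup\alpha_n\cup r(\tilde x_n)$ containing $]\sigma(\tilde y_n),\sigma(\tilde x_n)[$ is still the forward cone at $\tilde m_n$, and the homotopy condition is the one that can actually be checked (for one of the two orderings of the endpoints), which is exactly what the paper does.
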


\begin{proof}
The path formed by the concatenation of segments $[x,m]$ and $[m,y]$ can be approached by a family paths joining $x$ and $y$ or a family of paths joining $y$ and $x$ whose interior points are disjoint from $\minvset{CH}$. Lemma~\ref{lem:ArcCone} applies to one of these families of paths so $\Gamma$ is disjoint from $\left(\minvset{CH}\right)^{\circ}$.
\par
Then, we assume by contradiction that $y \in \mathfrak{E}^{-}$ and some point $z \in \Delta(y)$ does not belong to $[y,m]$. Since $\Gamma$ is disjoint from $\left(\minvset{CH}\right)^{\circ}$, it follows that $z$ is an indirect support point of the line containing $r(y)$ which  contradicts Lemma~\ref{lem:directsupport}. Consequently either $\Delta(y) \subset [y,m]$ or $y \notin \mathfrak{E}^{-}$.  
\par
An analogous argument proves that either $x \in \mathfrak{E}^{-}$ or $\Delta(x) \subset [x,m]$.
\end{proof}

\subsection{Local arcs}\label{sub:local}

In this section, we prove that local points (see Proposition~\ref{prop:localarc}) 
form \textit{local arcs}.

\begin{definition}\label{def:localarc}
 A \textit{local arc} of  $\partial \minvset{CH}$ is a maximal open arc of an integral curve of vector field $R(z)\partial_{z}$ that contains only local points. In particular, it is disjoint from $\mathcal{Z}(PQ)$ and $\mathfrak{I}_{R}$.
\par
Local arcs are oriented by the vector field $R(z)\partial_{z}$.
\end{definition}

Using the geometry of horns (see Section~\ref{sub:horns}), we can show that every local point actually belongs to a local arc of  $\partial \minvset{CH}$.

\begin{proposition}\label{prop:localarc}
Consider a point $p\in \partial \minvset{CH}$ and such that $\Delta(p) = \emptyset$ and $p \notin \mathcal{Z}(PQ) \cup \mathfrak{I}_{R}$. Then, the germ of the integral curve $\gamma_p$ of $R(z)\partial_{z}$ passing through $p$ belongs to  $\partial \minvset{CH}$. 
\end{proposition}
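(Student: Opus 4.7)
The plan is to show that a two-sided germ of $\gamma_p$ at $p$ lies inside $\partial\minvset{CH}$, treating the backward and forward germs separately. Since $p\notin\mathfrak{I}_{R}$ the curvature of $\gamma_p$ at $p$ is non-zero, so $\gamma_p$ is locally not a straight line; combined with $p\in\partial\minvset{CH}$ (hence $p$ is not in the interior) and $p\notin\mathcal{Z}(PQ)$, this lets me invoke Lemma~\ref{lem:horn out} to obtain a horn $\horn{p}{p'}{p''}$ whose open interior and complementary cone $\horninf{p''}$ are both disjoint from $\minvset{CH}$. The arc $\gamma_p^{p'}$, being the curved boundary of the horn, then sits in $\overline{\minvset{CH}^c}$ and in particular avoids $(\minvset{CH})^\circ$.

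For the backward germ, Proposition~\ref{prop:IntegralCurve} places the backward trajectory of $p$ inside $\minvset{CH}$. The key point is that the assumption $\Delta(p)=\emptyset$ is upper semi-continuous at $p$: since both $\overline{r(z)}$ and $\overline{\minvset{CH}}$ are compact in the extended plane $\bC\cup\mathbb{S}^{1}$ and the former depends continuously on $z$, the identity $\overline{r(p)}\cap\overline{\minvset{CH}}=\{p\}$ forces $\Delta(z)=\emptyset$ for every $z$ in a small enough neighborhood of $p$. Applying this to $q=\phi_{-s}(p)$ for $s>0$ small and combining with $q\in\minvset{CH}$: if $q$ were in $(\minvset{CH})^\circ$ then a neighborhood of $q$ would lie inside $\minvset{CH}$, so an initial segment of $r(q)$ would meet $\minvset{CH}$, contradicting $\Delta(q)=\emptyset$. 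Therefore $q\in\partial\minvset{CH}$.

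For the forward germ, $\Delta(q)=\emptyset$ holds for every $q\in\gamma_p^{p'}$ after possibly shrinking $p'$, by the same semi-continuity. It remains to show $\gamma_p^{p'}\subseteq\minvset{CH}$, which combined with Step~1 gives $\gamma_p^{p'}\subseteq\partial\minvset{CH}$. The plan is to verify that the closed set $\minvset{CH}\cup\gamma_p^{p'}$ is $T_{CH}$-invariant and conclude by minimality of $\minvset{CH}$. The inclusion $\mathcal{Z}(PQ)\subseteq\minvset{CH}$ handles one invariance requirement; for the other, I must check that for every $z\notin\minvset{CH}\cup\gamma_p^{p'}$ the ray $r(z)$ misses $\gamma_p^{p'}$. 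Rays from $z$ inside the open horn are confined to $\horn{p}{p'}{p''}\cup\horninf{p''}$ by Lemma~\ref{lem:horn+min} and hence cannot reach $\gamma_p^{p'}$; for $z$ outside the horn, I plan to use the strict convexity of $\gamma_p^{p'}$ (from $p\notin\mathfrak{I}_{R}$) together with the tangential analysis of root trails in Lemmas~\ref{lem:RootTrailSlope}--\ref{lem:RTSlopeINFINITY} to preclude any transversal crossing of $r(z)$ with the short convex arc.

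The main obstacle I anticipate is precisely this last invariance check: controlling rays originating at external points $z\in\minvset{CH}^c$ outside the horn region so that none can tangentially hit $\gamma_p^{p'}$. The strategy will be to shrink $p'$ enough that the root-trail foliation $\{\mathfrak{tr}_{q}\}_{q\in\gamma_p^{p'}}$ remains confined inside $\minvset{CH}\cup\gamma_p^{p'}$, leveraging the continuity of root trails in their base point together with the standing inclusion $\mathfrak{tr}_{p}\subseteq\minvset{CH}$ inherited from $p\in\minvset{CH}$.
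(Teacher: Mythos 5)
Your opening move (invoking Lemma~\ref{lem:horn out} to place the horn $\horn{p}{p'}{p''}$ and its complementary cone in $(\minvset{CH})^{c}$, hence $\gamma_p^{p'}\subset\overline{\minvset{CH}^c}$) matches the paper's first step, but each of your two subsequent steps contains a genuine gap.

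First, the claimed upper semi-continuity of the condition $\Delta(\cdot)=\emptyset$ is false. Since $p\notin\mathfrak{I}_{R}$, the point $p$ cannot lie on a tail, so it belongs to the closure of $(\minvset{CH})^{\circ}$; any interior point $z$ of $\minvset{CH}$ near $p$ satisfies $\Delta(z)\neq\emptyset$ because $r(z)$ immediately re-enters $\minvset{CH}$, so no neighborhood of $p$ consists of points with empty $\Delta$. Even restricting to points of $\overline{\minvset{CH}^c}$, the compactness argument does not close: for $z_n\to p$ with $y_n\in\Delta(z_n)$, the limit of the $y_n$ may be $p$ itself, which is excluded from $\Delta(p)$ by definition, so no contradiction with $\Delta(p)=\emptyset$ arises. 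Ruling out precisely this scenario --- points of $\minvset{CH}$ accumulating on $r(p)$ arbitrarily close to $p$ --- is what the quantitative horn estimates (the uniform lower bound on horn size in Lemma~\ref{lem:unif horns}) are for; soft semi-continuity cannot replace them. Your backward-germ conclusion is nevertheless recoverable by a different route: Lemma~\ref{lem:localarc step 1} shows $\minvset{CH}$ lies entirely on one side of $\gamma_p$ near $p$, and together with Proposition~\ref{prop:IntegralCurve} this places the backward germ on $\partial\minvset{CH}$ without any appeal to $\Delta(q)=\emptyset$.

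Second, the minimality argument for the forward germ runs in the wrong direction. Minimality of $\minvset{CH}$ says it is contained in every closed $T_{CH}$-invariant set; verifying that $\minvset{CH}\cup\gamma_p^{p'}$ is invariant therefore yields only the trivial inclusion $\minvset{CH}\subseteq\minvset{CH}\cup\gamma_p^{p'}$ and gives no information about whether $\gamma_p^{p'}\subseteq\minvset{CH}$. To force the arc into $\minvset{CH}$ one must argue the other way around: either exhibit it inside backward trajectories or root trails of points already known to lie in $\minvset{CH}$, or, as the paper does in Lemma~\ref{lem:localarc step 2}, assume a point $q\notin\minvset{CH}$ lies just above $\gamma_p$, propagate small horns backwards along the trajectory through $q$ (using Proposition~\ref{prop:horn+min}) to conclude that $p$ itself would then sit inside a horn disjoint from $\minvset{CH}$, contradicting $p\in\partial\minvset{CH}$. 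Without one of these mechanisms, your final invariance check, even if completed, proves nothing.
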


Without loss of generality, we assume that $p=0$, $r(p)=\R_+$ and $p\in\mathfrak{E}^{+}$, so $\gamma_p^{p'}$ lies in the upper half-plane. The proof consists of two steps illustrated by Figure~\ref{fig:hornsfig1} and Figure~\ref{fig:hornsfig2} respectively.

\begin{lemma}\label{lem:localarc step 1}
    $\minvset{CH}$ lies above the integral curve $\gamma_p$ of $R(z)\partial_z$ passing through $p$.
\end{lemma}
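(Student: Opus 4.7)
The plan is to invoke the small horn construction of Section~\ref{sub:horns}. Since $p \in \partial\minvset{CH}$ (hence not in the interior), $p \notin \mathcal{Z}(PQ)$, and $p \notin \mathfrak{I}_{R}$, the trajectory $\gamma_p$ through $p$ is not a straight line, because any line trajectory would force $\gamma_p \subset \mathfrak{I}_{R}$. Proposition~\ref{prop:small horn exists} therefore produces a small horn $\horn{p}{p'}{p''}$ along the positive trajectory from $p$, and Lemma~\ref{lem:horn out} guarantees that both $\horn{p}{p'}{p''}$ and its complementary cone $\horninf{p''}$ are disjoint from $\minvset{CH}$.

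Next I would translate this into the geometry of the chosen normalization. With $p=0$, $R(0)=1$, $r(p)=\R_{+}$, and $p \in \mathfrak{E}^{+}$, the Taylor expansion $R(z)=1+\rho(z)+ibz^{m}+O(z^{m+1})$ with $b>0$ (as in Lemma~\ref{lem:asympt of traj at z in I_R}) shows that $\gamma_p^{p'}$ lies strictly in the open upper half-plane for $p'$ close enough to $p$ and is tangent to $\R_{+}$ at the origin. Consequently $\horn{p}{p'}{p''}$ is the curvilinear triangle bounded above by $\gamma_p^{p'}$ and below by the two tangent segments $[p,p'']$ and $[p'',p']$; in particular it occupies exactly the one-sided neighborhood immediately below $\gamma_p^{p'}$.

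The conclusion then follows quickly: for any $q \in \gamma_p^{p'}$, a sufficiently small neighborhood of $q$ restricted to the lower side of $\gamma_p^{p'}$ is contained in $\horn{p}{p'}{p''}$, and hence is disjoint from $\minvset{CH}$. Therefore $\minvset{CH}$ cannot accumulate at $\gamma_p^{p'}$ from below, which is precisely the statement that $\minvset{CH}$ lies above $\gamma_p$ in a neighborhood of $p$. The one technical point I anticipate is the identification of ``below $\gamma_p^{p'}$'' with the interior of $\horn{p}{p'}{p''}$; this relies on the asymptotic description of $\gamma_p^{p'}$ furnished by Lemma~\ref{lem:asympt of traj at z in I_R} in the case $m \geq 2$, while for $m=1$ it is already ensured uniformly by Lemma~\ref{lem:unif horns}.
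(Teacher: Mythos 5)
There is a genuine gap. Your argument establishes only that $\minvset{CH}$ is disjoint from the small horn $\horn{p}{p'}{p''}$ and from its complementary cone $\horninf{p''}$, and then identifies ``below $\gamma_p^{p'}$'' with the interior of the horn. That identification holds at interior points $q$ of the arc $\gamma_p^{p'}$, but fails at $p$ itself: near its apex the horn is a cusp pinched between the forward trajectory and its tangent ray $r(p)=\R_+$, so it contains no points with negative imaginary part and nothing below the backward trajectory $\gamma_p^-$. In the chosen normalization, ``below $\gamma_p$'' near $p=0$ is essentially the whole lower half-plane together with that cusp, whereas the horn and the cone cover only the cusp and a region far from $p$. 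Hence you have not excluded $\minvset{CH}$ from a one-sided neighborhood of $p$, which is what the lemma (and the subsequent Lemma~\ref{lem:localarc step 2}) actually requires.

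The tell-tale sign is that your proof never uses the hypothesis $\Delta(p)=\emptyset$, which is essential: at a point of extruding type ($\Gamma(p)\neq\emptyset$ and $\Delta(p)\neq\emptyset$) a branch of a root trail $\mathfrak{tr}_{u}$ with $u\in\Delta(p)$ lies in $\minvset{CH}$ and crosses transversely to $\gamma_p$, so $\minvset{CH}$ does \emph{not} lie above $\gamma_p$ there. The paper's proof handles an arbitrary point $\tilde p$ below $\gamma_p$ by constructing a small horn $\horn{\tilde p}{\tilde q'}{\tilde q''}$ based \emph{at $\tilde p$}; the hypothesis $\Delta(p)=\emptyset$ (in particular $+\infty\notin\Delta(p)$) supplies a sector $S\supset[q'',+\infty)$ disjoint from $\minvset{CH}$, which together with Lemma~\ref{lem:horn out} shows that the complementary cone $\horninf{\tilde q''}$ avoids $\minvset{CH}$, and Proposition~\ref{prop:horn+min} then yields $\tilde p\notin(\minvset{CH})^{\circ}$. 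You would need to add this second layer of horns based at the points below $\gamma_p$ to close the argument.
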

\begin{proof}[Proof: see Fig.~\ref{fig:hornsfig1}]
By Lemma~\ref{lem:horn out} there exists $p'\in\gamma_p^+$ such that the union $ \horn{p}{p'}{p''}\cup\horninf{p''}$ is outside of $\minvset{CH}$. 
Let $q'\in\gamma_p^{p'}$, $q'\neq p,p'$, and let $q''\in\R_+$ be the  intersection point of $\R_+$ with the line tangent to $\gamma_p^{p'}$ at $q'$. By Proposition~\ref{prop:small horn exists}  we can assume that 
 $\horn{p}{q'}{q''}$ is a \emph{small} horn at $p$, $\horn{p}{q'}{q''}\subset\horn{p}{p'}{p''}$. Clearly, $\sigma(q')<\sigma(p')$.

\par
The condition $\Delta(p) = \emptyset$ implies that $q''\in (\minvset{CH})^c$. Moreover, as $+\infty\notin\Delta(0)$, there is an open sector $S$ with vertex on $\R$, containing $[q'', +\infty)$ and  disjoint from $\minvset{CH}$.

For a point $\tilde{p}$ sufficiently close  to $p$ and lying below $\gamma_p$ consider a horn  $\horn{\tilde{p}}{\tilde{q}'}{\tilde{q}''}$ with vertices $\tilde{q}'$ and $\tilde{q}''$ close to $q'$ and $q''$, respectively. By continuity, the part of $r(\tilde{p})$ starting from $\tilde{q}''$ lies in $S$. Also, $\tilde{q}'$ lies in the horn $\horn{p}{p'}{p''}$, so $r(\tilde{q}')\cap\minvset{CH}=\emptyset$  by Lemma~\ref{lem:horn out}.
\par
Thus the complementary cone $\horninf{\tilde{q}''}$ of $\tilde{p}$ with vertex $\tilde{q}''$ lies outside of $ \minvset{CH}$. Therefore by Proposition~\ref{prop:horn+min} $\tilde{p}\notin\minvset{CH}^\circ$, so $\tilde{p}\in \overline{(\minvset{CH})^c}$. As this remains true for any point in a sufficiently small neighborhood of $\tilde{p}$, we conclude $\tilde{p}\in \left(\overline{(\minvset{CH})^c}\right)^\circ=(\minvset{CH})^c$. Thus near $p$ the set $ \minvset{CH}$ lies above  $\gamma_p$.
\par
\begin{figure}
	 	\centering
	 	\includegraphics[width=1\linewidth]{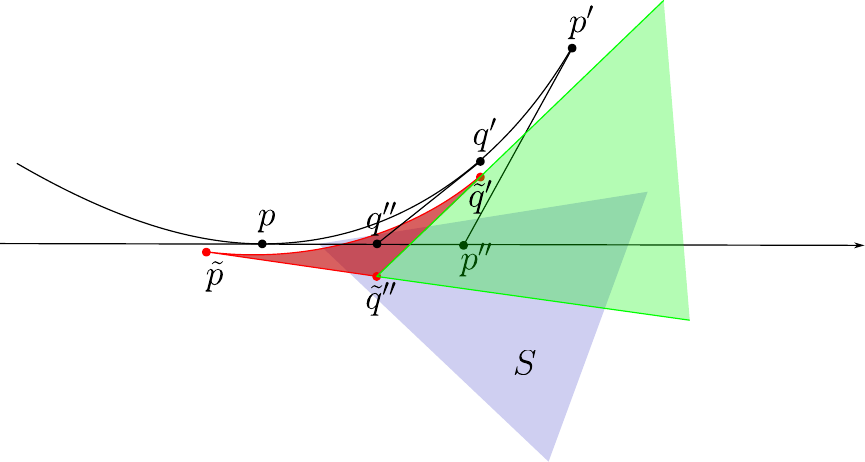}
	 	\caption{$\minvset{CH}$ lies above the trajectory $\gamma^{p'}_p$. }
	 	\label{fig:hornsfig1}
	 \end{figure}
\end{proof}

\begin{lemma}\label{lem:localarc step 2}
    The boundary $\partial \minvset{CH}$ coincides with the integral curve $\gamma_{p}$ in a neighborhood of $p$.
\end{lemma}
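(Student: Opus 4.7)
The goal is to show that $\partial\minvset{CH}$ coincides with $\gamma_{p}$ in some neighbourhood $V$ of $p$, i.e.\ both $\gamma_{p}\cap V\subset\partial\minvset{CH}$ and $\partial\minvset{CH}\cap V\subset\gamma_{p}$. By Lemma~\ref{lem:localarc step 1} we already know $\minvset{CH}\cap V$ lies in the closed region above $\gamma_{p}$, so points strictly below $\gamma_{p}$ are in $\minvset{CH}^{c}$.

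The backward branch $\gamma_{p}^{-}$ is handled at once: Proposition~\ref{prop:IntegralCurve} applied to the positive trajectory ending at $p\in\minvset{CH}$ yields $\gamma_{p}^{-}\subset\minvset{CH}$, and combined with Lemma~\ref{lem:localarc step 1} this places $\gamma_{p}^{-}$ in $\partial\minvset{CH}$. The crux is the forward branch $\gamma_{p}^{+}$. The key preliminary fact is that $\minvset{CH}^{c}$ is forward-invariant under the flow of $R(z)\partial_{z}$: if $z\in\minvset{CH}^{c}$ and $\phi_{t}(z)\in\minvset{CH}$ for some $t>0$, then Proposition~\ref{prop:IntegralCurve} applied to the positive trajectory from $z$ to $\phi_{t}(z)$ would force $z\in\minvset{CH}$, a contradiction. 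Hence $\{t\geq 0:\gamma_{p}^{+}(t)\in\minvset{CH}\}$ is a closed interval $[0,t^{\ast}]$ and the task reduces to proving $t^{\ast}>0$. Assuming for contradiction $t^{\ast}=0$, openness supplies an open tubular neighbourhood of $\gamma_{p}^{+}$ near $p$ contained in $\minvset{CH}^{c}$, so $\partial\minvset{CH}$ near $p$ would have to contain, besides $\gamma_{p}^{-}$, an analytic arc $\sigma$ emanating from $p$ into the upper half-plane on the interior side of $\minvset{CH}$. The plan is to rule this out by applying Lemma~\ref{lem:horn out} at a point $q\in\sigma$ close to $p$: the resulting horn $\horn{q}{q'}{q''}$ and its complementary cone lie in $\minvset{CH}^{c}$, and combining this with Proposition~\ref{prop:IntegralCurve} applied to $\gamma_{q}^{-}\subset\minvset{CH}$, the quantitative expansion of nearby trajectories from Lemma~\ref{lem:asympt of traj at z in I_R}, and the hypothesis $\Delta(p)=\emptyset$ (which excludes $r(p)$ from supporting $\minvset{CH}$ at infinity), one forces the tangent direction of $\sigma$ at $p$ to coincide with that of $\gamma_{p}$; analyticity of $\sigma$ then propagates this matching through all orders and yields $\sigma=\gamma_{p}^{+}$ locally, contradicting $\gamma_{p}^{+}\setminus\{p\}\subset\minvset{CH}^{c}$. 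Therefore $t^{\ast}>0$, and after shrinking $V$ we obtain $\gamma_{p}^{+}\cap V\subset\partial\minvset{CH}$ by combining with Lemma~\ref{lem:localarc step 1}.

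For the reverse inclusion $\partial\minvset{CH}\cap V\subset\gamma_{p}$, any hypothetical $q\in\partial\minvset{CH}\cap V$ strictly above $\gamma_{p}$ would, via Lemma~\ref{lem:horn out} applied at $q$ together with Proposition~\ref{prop:IntegralCurve} applied to $\gamma_{q}^{-}\subset\minvset{CH}$ and the now-established $\gamma_{p}\cap V\subset\minvset{CH}$, produce an open region bounded by $\gamma_{p}$ and $\gamma_{q}^{-}$ that the local connectedness of the interior near $p$ (Lemma~\ref{lem:ImKleinen}) forces to lie in $\minvset{CH}^{\circ}$; this places $q$ itself in $\minvset{CH}^{\circ}$, contradicting $q\in\partial\minvset{CH}$. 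The main technical obstacle is the rigidity argument in the forward-branch step: extracting from the asymptotic expansion of Lemma~\ref{lem:asympt of traj at z in I_R} enough information to pin down both the tangent direction and the higher-order terms of the hypothetical arc $\sigma$ at $p$, and then using analyticity of $\sigma$ together with the horn constraints propagated along it to conclude $\sigma=\gamma_{p}^{+}$ locally.
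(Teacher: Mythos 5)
Your overall strategy diverges from the paper's, and the two places where the real work has to happen are exactly the places you leave as sketches, with mechanisms that are not actually available. First, in the forward-branch step you assume that (under the hypothesis $t^{\ast}=0$) the boundary $\partial\minvset{CH}$ near $p$ consists of $\gamma_{p}^{-}$ together with an \emph{analytic arc} $\sigma$, and you propose to use analyticity of $\sigma$ to match jets with $\gamma_{p}$ and conclude $\sigma=\gamma_{p}^{+}$. But nothing at this stage of the paper guarantees that $\partial\minvset{CH}$ is an arc near $p$, let alone an analytic one --- establishing that the boundary is locally a nice curve is precisely what this lemma (and the surrounding machinery) is for, so the argument is circular. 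You yourself flag this ``rigidity argument'' as the main technical obstacle; it is not carried out, and I do not see how to carry it out along the lines you indicate. Second, in the reverse inclusion you claim that a hypothetical boundary point $q$ strictly above $\gamma_{p}$ would sit in a region bounded by $\gamma_{p}$ and $\gamma_{q}^{-}$ which Lemma~\ref{lem:ImKleinen} forces into $\minvset{CH}^{\circ}$. Lemma~\ref{lem:ImKleinen} says only that the component of $V\cap\minvset{CH}$ containing a boundary point has connected \emph{interior}; it does not fill in the strip between two trajectory arcs of $\minvset{CH}$ (that strip is bounded in part by $\partial V$, not by $\minvset{CH}$, so no enclosure argument applies). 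A priori, points of $\minvset{CH}^{c}$ could accumulate at $p$ from above $\gamma_{p}$, interleaved with points of $\minvset{CH}$; ruling this out is the entire content of the lemma.

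The paper's proof handles both inclusions at once by a single contradiction: assume some $q\notin\minvset{CH}$ lies above $\gamma_{p}$ and close to it. Using the uniform lower bound $\delta$ on the size of small horns near $p$ (Lemma~\ref{lem:unif horns}), one takes the horn of $q$ of size $\delta/2$ (outside $\minvset{CH}$ by Lemma~\ref{lem:horn out}) and propagates backward along $\gamma_{q}$: for each $\tilde{p}\in\gamma_{q}^{-}$ the truncated horn $\horn{\tilde{p}}{q'}{\tilde{p}''}$ is still small, its complementary cone stays in $\minvset{CH}^{c}$ (using the sector $S$ produced in the proof of Lemma~\ref{lem:localarc step 1} and the condition $\Delta(p)=\emptyset$), so Proposition~\ref{prop:horn+min} gives $\tilde{p}\notin\minvset{CH}^{\circ}$. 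This works down to points slightly above $\gamma_{p}^{-}$, and $p$ then lies inside the $\delta$-horn of such a point, whence $p\notin\minvset{CH}$ by Lemma~\ref{lem:horn out} --- contradicting $p\in\partial\minvset{CH}$. This backward horn propagation is the idea missing from your proposal; without it, neither of your two steps closes.
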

\begin{proof}[Proof: See Figure~\ref{fig:hornsfig2}]
Lemma~\ref{lem:localarc step 1} and its proof implies that $p$ lies on the boundary of a sector $S$ with a vertex $s\in \bR$, $s\not=p$, and disjoint from $\minvset{CH}$. 

Recall that by Lemma~\ref{lem:unif horns} there is a lower bound $\delta$ on the size of small horns for all points close to $p$.

Assume that a point $q\notin\minvset{CH}$ close to $p$ lies above $\gamma_p$ on a distance much smaller than $\delta$ and let $\horn{q}{q'}{q''}$ be its horn (necessarily small) of size $\delta/2$. Both $\horn{q}{q'}{q''}$ and $\horninf{q''}$ lie outside of $\minvset{CH}$.

\par
Let $\tilde{p}$ be a point on $\gamma_q$ close to $q$ and in the negative direction from $q$,   let $\tilde{p}''$ be the intersection of $r(\tilde{p})$ and the line $q'q''$. The horn $\horn{\tilde{p}}{q'}{\tilde{p}''}$ is smaller than $\delta$, so is small. 

The ray $\tilde{p}''q'$ lies outside of $\minvset{CH}^\circ$. Moreover, as long as the ray $p''+R(\tilde{p})\bR_+\subset r(\tilde{p})$ lies outside $\minvset{CH}^\circ$ we have  $\horninf{\tilde{p}''}\subset(\minvset{CH})^c$, so  $\tilde{p}\notin\minvset{CH}^\circ$ by Proposition~\ref{prop:horn+min}.

These arguments work for all points $\tilde{p}$ sufficiently close to $p$ and with slope $\sigma(\tilde{p})$ exceeding some negative number (namely the slope of the second side of $S$), in particular,  for points slightly above $\gamma_p^-$, the negative trajectory of $\gamma_p$. But $p$ lies in the horn of size $\delta$ of such a point,  which means that $p\notin\minvset{CH}$ by Lemma~\ref{lem:horn out}, a contradiction.
	
	\begin{figure}
		\centering
		\includegraphics[width=0.7\linewidth]{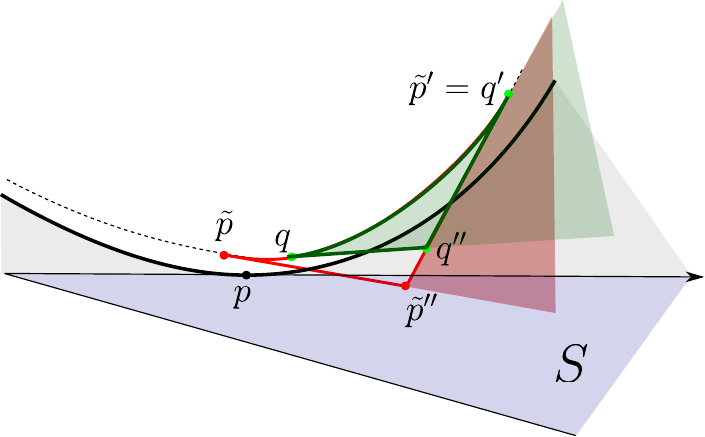}
		\caption{$\gamma_p^{p'}$ is boundary of $\minvset{CH}$}
		\label{fig:hornsfig2}
	\end{figure}
    
\end{proof}

Local analysis of horns (see Section~\ref{sub:horns}) leads to the following results about the correspondence $\Gamma$.

\begin{corollary}\label{cor:gammabasic}
Consider $z \in \partial \minvset{CH}$ such that $z \notin \mathcal{Z}(PQ) \cup \mathfrak{I}_{R}$. If $\Gamma(z) \neq \emptyset$ then $z$ is either the starting point or a point of a local arc.
\end{corollary}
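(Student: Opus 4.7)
The plan is to split on whether $\Delta(z)$ is empty. If $\Delta(z) = \emptyset$, then $z$ is a local point by Definition~\ref{defn:ArcClassification}, and Proposition~\ref{prop:localarc} directly gives that the germ of the integral curve through $z$ lies in $\partial \minvset{CH}$, so $z$ lies in the interior of a local arc.

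In the substantive case $\Delta(z) \neq \emptyset$ (so $z$ is extruding), I would first build a segment of $\partial \minvset{CH}$ along the forward trajectory $\gamma^+_z$. Pick any $y \in \Gamma(z)$; by Proposition~\ref{prop:IntegralCurve}, the closed trajectory segment from $z$ up to $y$ lies in $\minvset{CH}$. Since $z \notin \mathfrak{I}_R$, the trajectory is not a straight line at $z$, so Lemma~\ref{lem:horn out} produces a horn $\horn{z}{z'}{z''}$ (with $z'$ chosen before $y$) whose interior and complementary cone are disjoint from $\minvset{CH}$. As this horn sits on one side of the trajectory while the trajectory itself is in $\minvset{CH}$, the forward segment from $z$ to $z'$ must belong to $\partial \minvset{CH}$.

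Next I would show that for sufficiently small $t > 0$, the point $w_t := \gamma^+_z(t)$ satisfies $\Delta(w_t) = \emptyset$. Suppose for contradiction that along some sequence $t_n \to 0^+$ there exist $u_n \in \Delta(w_{t_n})$, chosen as the closest intersection of $r(w_{t_n})$ with $\overline{\minvset{CH}}$ in the extended plane. Compactness of $\overline{\minvset{CH}}$ extracts a limit $u \in \overline{r(z)} \cap \overline{\minvset{CH}}$. If $u = z$, the strict convexity of $\gamma^+_z$ at $z$ (guaranteed by $\Im R'(z) \neq 0$) forces $u_n$, which lies on the tangent to $\gamma^+_z$ at $w_n$ very close to $w_n$, into the open horn at $z$, contradicting $u_n \in \overline{\minvset{CH}}$. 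If $u \neq z$, then $u \in \Delta(z)$ is an indirect support point by Lemma~\ref{lem:directsupport}, and combined with the definite sign of rotation of tangent lines as one advances along $\gamma^+_z$, one checks that $r(w_{t_n})$ must peel away from the tangency at $u$, excluding this subcase as well.

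Having $\Delta(w_t) = \emptyset$, Proposition~\ref{prop:localarc} applied at $w_t$ shows $w_t$ belongs to a local arc. Extending this local arc backward along $\gamma^+_z$, its backward extremity must coincide with $z$, since the horn at $z$ forbids any continuation of the arc past $z$ within $\partial \minvset{CH}$. The main obstacle is the second subcase of the contradiction argument above: one must exploit the indirect-tangency geometry together with the hypothesis $z \notin \mathfrak{I}_R$ (which ensures that tangent lines genuinely rotate rather than pivoting at an inflection) to rule out a persistent family of support points clustering near $\Delta(z)$.
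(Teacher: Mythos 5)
Your proof is correct and rests on the same two ingredients as the paper's (much terser) argument: Proposition~\ref{prop:IntegralCurve} places the forward trajectory segment from $z$ to a point of $\Gamma(z)$ inside $\minvset{CH}$, and Lemma~\ref{lem:horn out} places a horn on one side of that segment outside $\minvset{CH}$, so the segment lies on $\partial \minvset{CH}$. The extra layer you add --- verifying that $\Delta(w_t)=\emptyset$ for $w_t=\gamma_z^+(t)$ with small $t>0$, so that these points are genuinely of local type and $z$ is the starting point of the resulting local arc --- is a legitimate refinement that the paper glosses over, but your compactness-and-limit argument is heavier than needed and its second subcase (the ``peeling away'' from an indirect support point $u\neq z$) is only sketched. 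Both subcases can be bypassed: by \eqref{eq:lem horn out 2}, for $w_t$ interior to $\gamma_z^{z'}$ the punctured ray $r(w_t)\setminus\{w_t\}$ lies in the open swept region $\horn{z}{z'}{z''}\cup\horninf{z''}\subset(\minvset{CH})^c$ (the map $(s,\tau)\mapsto \gamma(s)+\tau R(\gamma(s))$ is a local diffeomorphism for $\tau>0$ since $\Im R'\neq 0$ along the arc), and its direction at infinity lies in the open interval of directions of $\horninf{z''}$, hence outside $\overline{\minvset{CH}}$; so $\Delta(w_t)=\emptyset$ outright. Finally, the cleanest reason the arc cannot extend past $z$ is not the forward horn but simply that $z$, being extruding, is not a local point, so the maximal open arc of local points through the $w_t$ has $z$ as its starting point.
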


\begin{proof}
We just have to prove that for some $y \in \Gamma(z)$ such that $y$ is close enough to $z$, the arc $\alpha$ of integral curve between $z$ and $y$ belongs to $\partial\minvset{CH}$. This follows from Lemma~\ref{lem:horn out}.
\end{proof}

\begin{proposition}\label{prop:localconvex}
Any local arc is a locally strictly convex real-analytic curve. Its orientation coincides with the standard topological orientation of $\partial\minvset{CH}$ if it is contained in $\mathfrak{I}^{+}$ (and with the opposite orientation otherwise).
\end{proposition}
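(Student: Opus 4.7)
The plan is to derive both assertions from already-established results: the curvature formula of Lemma~\ref{lem:trajcurvature} handles the analytic and convexity part, while the ``$\minvset{CH}$ lies on the concave side'' conclusion extracted from the proof of Proposition~\ref{prop:localarc} handles the orientation part.

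First I would note that, by Definition~\ref{def:localarc}, a local arc $\ell$ is an open subarc of an integral curve of the holomorphic field $R(z)\partial_{z}$ lying in $\partial\minvset{CH}\setminus(\mathcal{Z}(PQ)\cup\mathfrak{I}_{R})$. Since $R$ is holomorphic and nonvanishing on a neighbourhood of $\ell$, the standard existence/uniqueness theorem for ODEs with real-analytic right-hand side immediately gives that $\ell$ is a real-analytic curve.

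For local strict convexity I would apply Lemma~\ref{lem:trajcurvature} with $G=R$: at every point $z\in\ell$ the signed curvature of $\ell$ has the same sign as $\Im R'(z)$. Because $\ell$ is connected and disjoint from $\mathfrak{I}_{R}=\{\Im R'=0\}$, it lies entirely in one of the two inflection domains $\mathfrak{I}^{+}$ or $\mathfrak{I}^{-}$. Hence $\Im R'$, and therefore the curvature of $\ell$, has constant nonzero sign, which gives local strict convexity. Moreover the concave side of $\ell$ is the left side of $R(z)\partial_{z}$ when $\ell\subset\mathfrak{I}^{+}$ (positive curvature, counterclockwise turning) and the right side when $\ell\subset\mathfrak{I}^{-}$.

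For the orientation claim I would invoke the side-placement already obtained during the proof of Proposition~\ref{prop:localarc}. In Lemma~\ref{lem:localarc step 1}, after the normalization $p=0$, $r(p)=\mathbb{R}_{+}$, $p\in\mathfrak{E}^{+}$, it was shown that $\minvset{CH}$ sits above $\gamma_{p}$, i.e.\ on its concave side; the symmetric normalization for $p\in\mathfrak{E}^{-}$, obtained by the conjugation $z\mapsto\bar z$ (which is exactly the symmetry used elsewhere in Section~\ref{sub:horns}), yields the analogous statement on the other side. Thus in both cases $\minvset{CH}$ lies locally on the concave side of $\ell$. If $\ell\subset\mathfrak{I}^{+}$, the concave side is to the left of $R(z)\partial_{z}$, which coincides with the side of $\partial\minvset{CH}$ where $\minvset{CH}$ sits in the standard boundary orientation, so the two orientations agree. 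If $\ell\subset\mathfrak{I}^{-}$, the concave side is to the right of $R(z)\partial_{z}$, and the two orientations are opposite.

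The only step requiring care is the orientation part, since it depends on knowing a priori on which side of $\ell$ the set $\minvset{CH}$ lies; but this has been done by the horn argument in the proof of Proposition~\ref{prop:localarc}, so no additional work beyond invoking it and tracking the sign of $\Im R'$ is needed.
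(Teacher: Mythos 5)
Your proof is correct and follows essentially the same route as the paper: real-analyticity from the ODE, strict convexity from the curvature formula of Lemma~\ref{lem:trajcurvature} together with the fact that a local arc avoids $\mathfrak{I}_{R}$ (the paper additionally remarks that otherwise the associated ray, being tangent, would cross the interior of $\minvset{CH}$, but your curvature-sign argument already suffices), and the orientation from the side-placement established in Lemma~\ref{lem:localarc step 1} plus the conjugation symmetry. If anything, your write-up is slightly more complete, since the paper's own proof leaves the orientation claim implicit in the preceding horn lemmas rather than spelling out, as you do, that $\minvset{CH}$ sits on the concave side and hence on the left (resp.\ right) of the field in $\mathfrak{I}^{+}$ (resp.\ $\mathfrak{I}^{-}$).
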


\begin{proof}
As any integral curve of a real-analytic vector field, a local arc is a real-analytic curve in  $\mathbb{R}^{2}$. The arc has to be locally convex because otherwise, the associated ray (which is contained in the tangent line) at some point would cross the interior of $\minvset{CH}$. Besides,  direct computation shows that the curvature  of an integral curve becomes zero only at points belonging to $\mathfrak{I}_{R}$.
\end{proof}

Let us check that a local arc of $\partial\minvset{CH}$ cannot end inside an inflection domain. It cannot be periodic either. 

\begin{proposition}\label{prop:localEND}
Every local arc has an endpoint that belongs to $\mathcal{Z}(PQ) \cup \mathfrak{I}_{R}$.
\par
Besides, if such an endpoint belongs to $\mathcal{Z}(PQ)$, it is either a regular point or a simple pole of $R(z)$.
\end{proposition}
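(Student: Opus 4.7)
The plan is to establish the two assertions of the proposition by separate arguments: the first by a global contradiction using the asymptotic geometry of $\minvset{CH}$ and the classification of boundary points, and the second by a local phase-portrait analysis at the various types of singular points of $R$.

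For the first assertion, I would argue by contradiction: suppose both endpoints of the local arc $\gamma$ lie outside $\mathcal{Z}(PQ) \cup \mathfrak{I}_{R}$. By maximality of $\gamma$ and Proposition~\ref{prop:localarc}, each such endpoint must be either a point at infinity or a non-local boundary point (extruding or global type). If one endpoint is at infinity, I would appeal to the asymptotic geometry of $\minvset{CH}$ from Section~\ref{sec:asymptotic}: in the compact case $\deg Q - \deg P = 1$ with $\Re \lambda \geq 0$, Theorem~\ref{thm:main} forbids escape to infinity; in the remaining cases, Propositions~\ref{prop:qp-1} and~\ref{prop:qp0} constrain the asymptotic cones of $\minvset{CH}$, which I would compare with the asymptotic directions of the integral curves of $R(z)\partial_{z}$ at infinity provided by Lemma~\ref{lem:InflecInfinity} to derive a contradiction. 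If both endpoints are extruding or global boundary points $z_{\pm}$, I would exploit the strict monotonicity of $\sigma(z) = \arg R(z)$ along $\gamma$ (since $\gamma$ lies in a single inflection domain) together with the existence of support points $u_{\pm} \in \Delta(z_{\pm})$, applying Lemma~\ref{lem:intersectingrays} to the pair $(z_{-}, z_{+})$. The convexity of $\gamma$ from Proposition~\ref{prop:localconvex} would then force the appearance of an interior point $z \in \gamma$ with $\Delta(z) \neq \emptyset$, contradicting locality.

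For the second assertion, suppose the endpoint $\alpha \in \mathcal{Z}(PQ)$ is neither a regular point of $R$ nor a simple pole, so that $\alpha$ is a zero of $R$ of order $m \geq 1$ or a pole of order $m \geq 2$. Corollary~\ref{cor:SingInflec} shows $\alpha \in \mathfrak{I}_{R}$ in every such case. When $|m_{\alpha}| \geq 2$, Lemma~\ref{lem:InflectionBranches} exhibits multiple local branches of $\mathfrak{I}_{R}$ through $\alpha$, while Corollary~\ref{cor:KLalpha} confines the admissible approach directions of $\partial\minvset{CH}$ at $\alpha$ to a finite set determined by $\phi_{\alpha}$ and $m_{\alpha}$. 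Using the normal form $R(z) = r_{\alpha}(z-\alpha)^{m_{\alpha}} + o(|z-\alpha|^{m_{\alpha}})$, I would check that any integral curve of $R(z)\partial_{z}$ approaching $\alpha$ through one of these admissible directions must either cross a branch of $\mathfrak{I}_{R}$ arbitrarily close to $\alpha$ or be tangent to one, contradicting $\gamma \cap \mathfrak{I}_{R} = \emptyset$. For the remaining case of a simple zero ($m_{\alpha} = 1$), Corollary~\ref{cor:KLalpha} forces $\phi_{\alpha} = 0$, so $r_{\alpha}$ is real positive and integral curves near $\alpha$ are asymptotically radial straight rays; a curvature computation for these trajectories (comparing with the local equation of $\mathfrak{I}_{R}$ coming from $\Im R'(z)$) shows they accumulate points on $\mathfrak{I}_{R}$ near $\alpha$, once again contradicting $\gamma \cap \mathfrak{I}_{R} = \emptyset$.

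The main obstacle is the fine local phase-portrait analysis at singular points of $R$, particularly the delicate distinction between simple poles (admissible endpoints, where trajectories reach $\alpha$ in finite time along well-behaved spirals or rays) and simple zeros (forbidden, where trajectories take infinite time and fatally accumulate on $\mathfrak{I}_{R}$). This requires carefully matching the combinatorics of the branches of $\mathfrak{I}_{R}$ at $\alpha$ described by Lemma~\ref{lem:InflectionBranches} and Corollary~\ref{cor:SingInflec} with the local dynamics of $R(z)\partial_{z}$, as encoded in the normal form and Corollary~\ref{cor:KLalpha}.
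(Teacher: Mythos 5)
Your overall shape (rule out escape to infinity, then analyze finite endpoints via Corollary~\ref{cor:KLalpha}) matches the paper, but there are genuine gaps in both halves. For the first assertion, you never rule out the possibility that the local arc is a \emph{closed} integral curve with no endpoints at all; the paper's proof spends its first paragraph excluding periodicity via the excision Lemma~\ref{lem:excision} and minimality, and then needs Poincar\'e--Bendixson just to guarantee that a limit endpoint exists. More seriously, your mechanism for excluding an endpoint of global or extruding type --- applying Lemma~\ref{lem:intersectingrays} symmetrically to the two endpoints $z_{\pm}$ and claiming convexity then ``forces an interior point with $\Delta\neq\emptyset$'' --- cannot work, because the situation is not symmetric: the \emph{starting} point of a local arc can legitimately be a point of extruding type with $\Delta\neq\emptyset$ (the paper says so explicitly after the proposition, and Proposition~\ref{prop:lambdaMinimal} exhibits such an arc), so no argument treating the two ends alike can produce a contradiction. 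The correct step is one-sided and local at the \emph{ending} point $y$: the associated rays of arc points approaching $y$, together with the horn construction (Lemmas~\ref{lem:ArcCone} and~\ref{lem:horn out}), sweep out a region covering $r(y)\setminus\{y\}$, forcing $\Delta(y)=\emptyset$; Proposition~\ref{prop:localarc} then continues the arc past $y$, contradicting maximality.

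For the second assertion, your proposed contradiction ``the trajectory must cross or be tangent to a branch of $\mathfrak{I}_{R}$ near $\alpha$'' does not hold: for $R(z)=z^{2}$ the curve $\mathfrak{I}_{R}$ near $0$ is the real axis, while the trajectories are circles through $0$ tangent to $\mathbb{R}$, so an approaching trajectory meets $\mathfrak{I}_{R}$ only at the endpoint $\alpha$ itself --- and tangency \emph{at the endpoint} contradicts nothing, since the open arc $\gamma$ excludes $\alpha$ and the proposition explicitly permits endpoints on $\mathfrak{I}_{R}$. The paper's argument is different and avoids this: since the arc lies in $\partial\minvset{CH}$ and approaches $\alpha$, the set $\mathcal{L}_{\alpha}$ must contain an interval of length at least $\pi$, which Corollary~\ref{cor:KLalpha} rules out when $|m_{\alpha}|\geq 2$ (where $\mathcal{L}_{\alpha}$ is finite) and which for $m_{\alpha}=1$ forces $\phi_{\alpha}=0$; in that last case $\alpha$ is a repelling singular point of $R(z)\partial_{z}$, so it cannot be the forward limit of a trajectory --- a purely dynamical exclusion, with no curvature computation needed. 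You should replace your local phase-portrait analysis by this interval-versus-finiteness argument and the source/repeller observation.
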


\begin{proof}
Assume that the local arc $\gamma$ is periodic and doesn't intersect $\mathcal{Z}(PQ) \cup \mathfrak{I}_{R}$.
Then following Proposition~\ref{prop:localconvex}, $\gamma$ is a strictly convex closed loop disjoint from $\mathcal{Z}(PQ)$ and $\minvset{CH}$ is a strictly convex compact domain bounded by $\gamma$ (in particular $\gamma$ encompasses every point of $\mathcal{Z}(PQ)$). A neighborhood of $\gamma$ is foliated by periodic integral curves $\gamma_{t}$ of the vector field $R(z)\partial_{z}$ that are also disjoint from $\mathcal{Z}(PQ)$ and $\mathfrak{I}_{R}$, so strictly convex as well. Each of them cuts out a strictly convex compact domain $\mathcal{D}_{t}$. For each point $z$ in the complement of some $\mathcal{D}_{t}$, $r(z)$ remains disjoint from $\mathcal{D}_{t}$, which by Lemma~\ref{lem:excision} contradicts the minimality of $\minvset{CH}$.

\par
Now, we show that a local arc cannot go to infinity. When $|\deg Q - \deg P| \leq 1$, integral curves going to infinity enter the cones disjoint from $\minvset{CH}$ and never leave them (see Section~\ref{sec:asymptotic}) and otherwise $\minvset{CH}$ is trivial.
\par
In the remaining cases, Poincaré-Bendixson theorem proves that a local arc $\gamma$ has an ending point $y \in \partial\minvset{CH}$. We assume by contradiction that $y \notin \mathcal{Z}(PQ) \cup \mathfrak{I}_{R}$. We consider an arc $\beta$ formed by a portion of the integral curve ending at $y$ and a portion of the associated ray $r(y)$. Provided that $\beta$ remains in the same inflection domain as $y$, the family of associated rays starting at the  points of the arc $\beta$ sweeps out a domain containing a cone (see Lemmas~\ref{lem:ArcCone} and \ref{lem:horn out}). Therefore, we have $\Delta(y) = \emptyset$. Proposition~\ref{prop:localarc} then proves that the local arc can be continued in a neighborhood of $y$.
\par
If $y \in \mathcal{Z}(PQ)$ and is a zero or a pole of $R(z)$, then $\mathcal{L}_{y}$ contains an interval of length at least $\pi$ (see Definition~\ref{def:calKL}).  Corollary~\ref{cor:KLalpha} proves that $y$ is either a simple pole or a simple zero satisfying $\phi_{y}=0$. In the latter case, $y$ is a repelling singular point of $R(z)\partial_{z}$ and therefore cannot be the endpoint of a local arc.
\end{proof}

As we will see in Section~\ref{sub:extruding}, in contrast  with the case of ending points, a local arc can start inside an inflection domain at a point of extruding type.

\subsection{Local connectedness of $\minvset{CH}$}
    
Here we show that $\minvset{CH}$ is locally connected, away from the part of the tangency locus that is formed by straight lines.

\begin{lemma}\label{lem:locallyconnected not IR}
$\minvset{CH}$ is locally connected outside of $\mathfrak I_R \cup \zeros(PQ)$.
\end{lemma}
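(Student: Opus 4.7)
I will show that for every sufficiently small open disk $V \subset \bC$ around $p$, the component $M_V$ of $V \cap \minvset{CH}$ containing $p$ is both connected and a neighborhood of $p$ in $\minvset{CH}$; this produces a connected neighborhood basis of $p$, hence local connectedness at $p$. If $p \in (\minvset{CH})^{\circ}$ this is immediate, so assume $p \in \partial \minvset{CH}$.

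The key preliminary observation is that $p$ lies in the regular locus $\overline{(\minvset{CH})^{\circ}}$. Indeed, by Theorem~\ref{thm:TAIL} the irregular locus of $\minvset{CH}$ is a finite union of tails; by Definitions~\ref{defn:tail} and~\ref{def:Rinvariant}, the interior of each tail lies on an $R$-invariant line (contained in $\mathfrak{I}_R$) while the endpoint belongs to $\mathcal{Z}(PQ) \subset \zeros(PQ)$. Hence no tail meets $\minvset{CH} \setminus (\mathfrak{I}_R \cup \zeros(PQ))$. Since $\mathfrak{I}_R$ is closed in $\bC$ and $\zeros(PQ)$ is finite, I fix a disk $V$ with $\overline{V} \subset \bC \setminus (\mathfrak{I}_R \cup \zeros(PQ))$. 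Applying the same argument to every $q \in V \cap \minvset{CH}$, each such $q$ is a limit of interior points, which for $q$ close to the interior of $V$ can be chosen inside $V$; this yields the identity
\[
V \cap \minvset{CH} \;=\; \overline{V \cap (\minvset{CH})^{\circ}}^{\,V}.
\]

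Next, shrinking $V$ as permitted by Lemma~\ref{lem:ImKleinen}, the component $M_V$ of $V \cap \minvset{CH}$ containing $p$ has connected interior $(M_V)^{\circ}$. A short topological argument identifies $(M_V)^{\circ}$ as the unique component of $V \cap (\minvset{CH})^{\circ}$ whose closure in $V$ meets $M_V$: indeed, any other such component $U$ would yield a connected set $\overline{U}^{\,V} \subset V \cap \minvset{CH}$ meeting $M_V$, hence $U \subset M_V$; since $U$ is open in $\bC$, $U \subset (M_V)^{\circ}$, which is itself connected, forcing $U = (M_V)^{\circ}$. Combining this with the identity above restricted to $M_V$ gives $M_V = \overline{(M_V)^{\circ}}^{\,V}$, so $M_V$ is connected as the closure of a connected set.

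The main obstacle is to verify that no other component of $V \cap \minvset{CH}$ accumulates at $p$. I would handle this through the horn machinery of Section~\ref{sub:horns}: by Proposition~\ref{prop:small horn exists} and Lemma~\ref{lem:horn out}, there is a small horn $\horn{p}{p'}{p''}$ whose interior together with its complementary cone $\horninf{p''}$ lies in $(\minvset{CH})^{c}$, and Lemma~\ref{lem:unif horns} supplies a uniform size bound $\delta$ for small horns on the compact set $\overline{V} \subset \bC \setminus \mathfrak{I}_R$. A hypothetical sequence $q_n \to p$ in components of $V \cap \minvset{CH}$ distinct from $M_V$ must approach $p$ from the side of the integral curve $\gamma_p$ opposite to this cone; applying the uniform horn construction at points of $\partial \minvset{CH}$ near $q_n$ produces a second family of complementary cones accumulating to a sector based at $p$ also disjoint from $\minvset{CH}$. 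Together with the first sector, these would cover a punctured neighborhood of $p$ in $\bC \setminus \minvset{CH}$, contradicting $p \in \partial \minvset{CH} \subset \minvset{CH}$. Hence for $V$ small enough $V \cap \minvset{CH} = M_V$, and $M_V$ is the desired connected open neighborhood of $p$ in $\minvset{CH}$.
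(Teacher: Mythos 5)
Your setup (reduction to the regular locus, the identity $M_V=\overline{(M_V)^{\circ}}$ inside $V$ via Lemma~\ref{lem:ImKleinen}, hence connectedness of $M_V$) is fine, and you correctly isolate the real difficulty: showing that no component of $V\cap\minvset{CH}$ other than $M_V$ accumulates at $p$. But the argument you sketch for that step does not work. First, Lemma~\ref{lem:horn out} only removes the horn and its complementary cone, i.e.\ the region swept by the associated rays of the arc $\gamma_p^{p'}$; it says nothing about the half-plane on the far side of $r(p)$. When $\Delta(p)\neq\emptyset$, the root trail $\mathfrak{tr}_u$ of a point $u\in\Delta(p)$ passes through $p$ transversally to $\gamma_p$ (Lemma~\ref{lem:RootTrailSlope}) and lies in $\minvset{CH}$, so $\minvset{CH}$ genuinely contains points on that side; your claim that stray components must approach $p$ from the side of $\gamma_p$ opposite to the cone is therefore unjustified. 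Second, and more decisively, your intended contradiction --- complementary cones disjoint from $\minvset{CH}$ covering a punctured neighborhood of $p$ --- is impossible for the reason you yourself established: $p$ lies in the regular locus, so interior points of $M_V$ accumulate at $p$, and no punctured neighborhood of $p$ can be covered by sets disjoint from $\minvset{CH}$. The contradiction you reach refutes your own construction rather than the existence of extra components.

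What is missing is the dichotomy of Proposition~\ref{prop:localarc}. The paper's proof argues: if $p$ is of local type, Lemma~\ref{lem:localarc step 2} shows that $\partial\minvset{CH}$ coincides with $\gamma_p$ near $p$, so $V\cap\minvset{CH}$ is one side of $\gamma_p$ and there is nothing to exclude. If instead $\Delta(p)\neq\emptyset$, pick $y\in\Delta(p)$; the trail $\mathfrak{tr}_y$ has a unique germ at $p$, transverse to the flow, connected, and contained in $\minvset{CH}$. Working in small curvilinear quadrilateral neighborhoods with two sides along trajectories of $R(z)\partial_z$, every point of $\minvset{CH}$ in such a neighborhood has its backward trajectory (which lies in $\minvset{CH}$ by Proposition~\ref{prop:IntegralCurve}) meeting $\mathfrak{tr}_y$ inside the neighborhood, hence is joined to $p$ there. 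It is this transverse spine inside $\minvset{CH}$, not an exterior covering by cones, that rules out extra components; you would need to supply this (or an equivalent) argument to close the gap.
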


\begin{proof}
If $z\in\partial \minvset{CH}$ is a point of local type, then the boundary locally coincides with the integral curve passing through $z$ by Lemma~\ref{lem:localarc step 2}. Next, let $z\in\partial \minvset{CH}\setminus (\zeros(PQ)\cup \mathfrak I_R)$ with $\Delta(z)\neq \emptyset$. Let $y\in \Delta(z)$. As $z\notin \zeros(PQ)\cup \mathfrak I_R$, by Lemma~\ref{lem:RootTrailSlope} or Lemma~\ref{lem:RTSlopeINFINITY} there is a unique germ of $\ttt_y$ passing through $z$, and it does so transversely to the integral curve of $R(z)\partial_{z}$ passing through $z$. Then taking the backward trajectories of $R(z)\partial_{z}$ of points in $\ttt_y$, all points on one side of $\ttt_y$ near $z$ belong to $\minvset{CH}$. Taking now as a neighborhood basis a family of decreasing curvilinear quadrilaterals with two of the sides being trajectories of $R(z)\partial_{z}$ and two sides being smooth curves on either side of $\ttt_y$, it follows that all points on the other side of $\ttt_y$ have backward trajectories of $R(z)\partial_{z}$ intersecting $\ttt_y$ inside these neighborhoods, provided they are sufficiently small. As for any $x\in \minvset{CH}$, its backward trajectory belongs to $\minvset{CH}$, it follows that $\minvset{CH}$ is locally connected at $z$.
\end{proof}

\begin{lemma}\label{lem:locallyconnectedZeroesANDPoles}
$\minvset{CH}$ is locally connected at zeros and poles of $R$.
\end{lemma}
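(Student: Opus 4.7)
My plan is to show that for any sufficiently small open disk $B$ centered at a zero or pole $\alpha\in\minvset{CH}$ of $R(z)$, the intersection $B\cap\minvset{CH}$ is already connected; since such disks form a neighborhood basis of $\alpha$ in $\minvset{CH}$, local connectedness at $\alpha$ will follow immediately.

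First, I would enumerate the pieces of $\minvset{CH}$ that touch $\alpha$. By Corollary~\ref{cor:BOUNDINTERIOR}, only finitely many connected components $C_1,\dots,C_k$ of $(\minvset{CH})^{\circ}$ have $\alpha$ in their closure (at most $|1-m_{\alpha}|$, or at most $\deg P$ when $m_\alpha=1$). By Theorem~\ref{thm:TAIL}, the irregular locus of $\minvset{CH}$ is a finite union of tails, so only finitely many tails $\tau_1,\dots,\tau_\ell$ are attached at $\alpha$. Each $C_i\cap B$ and each $\tau_j\cap B$ is path-connected and has $\alpha$ in its closure, hence the union
$$
W \;:=\; \{\alpha\}\,\cup\,\bigcup_{i=1}^{k}(C_i\cap B)\,\cup\,\bigcup_{j=1}^{\ell}(\tau_j\cap B)
$$
is automatically path-connected. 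So I only need to show $W = B\cap\minvset{CH}$.

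The crux is ruling out stray components. Given any sequence $z_n\in\minvset{CH}$ with $z_n\to\alpha$, points in $(\minvset{CH})^{\circ}$ eventually lie in some fixed $C_i$ by pigeonholing on the finite list from Corollary~\ref{cor:BOUNDINTERIOR}, and points on tails belong to one of the $\tau_j$ once $|z_n-\alpha|$ is small enough. The main obstacle is the case of a sequence on $\partial\minvset{CH}$ accumulating at $\alpha$ along a local arc, a global arc, or at isolated singular boundary points. For local arcs, I would invoke Proposition~\ref{prop:localEND}: a local arc accumulating at $\alpha\in\mathcal{Z}(PQ)$ must terminate at $\alpha$, since its endpoint in $\mathcal{Z}(PQ)$ is forced to be a regular point or a simple pole of $R$, and so such an arc already lies on $\partial C_i$ for some $i$.

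For global arcs and singular boundary points, the hard part will be excluding limit directions at $\alpha$ outside the admissible tangent cone. Here I would use Corollary~\ref{cor:KLalpha}, which constrains $\mathcal{L}_\alpha$ to either a finite set of directions (when $|m_\alpha|\ge 2$ or $m_\alpha=0$) or at most two arcs of length at most $\pi$ (when $m_\alpha=-1$), together with Lemma~\ref{lem:ArcCone} to rule out points of $\minvset{CH}$ near $\alpha$ with limit directions outside $\mathcal{L}_\alpha$. This confines all boundary points of $\minvset{CH}$ accumulating at $\alpha$ to finitely many ``sectors'' around the admissible directions, each of which lies on the boundary of one of the $C_i$ (or coincides with a tail $\tau_j$). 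Having exhausted all cases, we obtain $W=B\cap\minvset{CH}$, hence local connectedness of $\minvset{CH}$ at $\alpha$.
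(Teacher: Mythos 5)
There is a genuine gap, and it sits at the very first step of your plan. You assert that each $C_i\cap B$ is path-connected, where $C_i$ is a connected component of $(\minvset{CH})^{\circ}$ whose closure contains $\alpha$. This is precisely the kind of statement that local connectedness is about, and it is not automatic: a single connected component of the interior can meet an arbitrarily small disk centered at $\alpha$ in infinitely many pieces (comb-like or spiraling accumulation), in which case $W$ would not equal $B\cap\minvset{CH}$ in any connected way. So the claim begs the question. Relatedly, your appeal to Corollary~\ref{cor:BOUNDINTERIOR} to ``pigeonhole'' interior points near $\alpha$ misreads that corollary: it bounds the number of connected components of the interior of $M_V$, where $M_V$ is the connected component of $V\cap\minvset{CH}$ \emph{containing} $\alpha$. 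It says nothing about other components of $V\cap\minvset{CH}$ accumulating at $\alpha$, nor about a fixed global component $C_i$ meeting $B$ in infinitely many pieces — which are exactly the scenarios that would destroy local connectedness. Also, a small point: by Definition~\ref{defn:tail} a tail ends at a common root of $P$ and $Q$ of equal multiplicity, i.e.\ at a point that is neither a zero nor a pole of $R$, so tails attached at $\alpha$ do not actually arise here.

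What is missing is the dynamical input that the paper uses to glue everything together. For a simple zero with $\phi_{\alpha}=0$ (the only kind that can lie on $\partial\minvset{CH}$ by Corollary~\ref{cor:KLalpha}), the point $\alpha$ is a repelling node of $R(z)\partial_z$, so the \emph{backward} trajectory of every nearby point converges to $\alpha$; since backward trajectories of points of $\minvset{CH}$ remain in $\minvset{CH}$ (Proposition~\ref{prop:IntegralCurve}), every point of $\minvset{CH}$ in a small flow-adapted neighborhood is joined to $\alpha$ inside $\minvset{CH}\cap B$, which gives connectedness directly. For poles and zeros of order $\ge 2$ one combines Corollary~\ref{cor:KLalpha} — which confines the accumulation directions of the complement to a finite set — with the sectorial local portrait of $R(z)\partial_z$ at such a singularity. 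Your sketch gestures at the second ingredient via Lemma~\ref{lem:ArcCone} and Corollary~\ref{cor:KLalpha}, but without the trajectory argument (or some substitute) the case of a simple zero, and the connectedness of the individual ``sectors'' you produce, remain unproved.
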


\begin{proof}
If $z_0$ is a pole of $R$ one can show using Proposition 3.12 in \cite{AHN+24} and Corollary~\ref{cor:KLalpha} that $\minvset{CH}$ is locally connected at $z_0$. Next, for $z_0$ a zero of $R$ it follows from the same corollary and by using the local portrait of $R(z)\partial_{z}$ near $z_0$.
\end{proof}

\begin{lemma}\label{prop:locallyconnected not flat IR}
     $\minvset{CH}$ is locally connected at all $z$, such that $\gamma_z$ is not a straight line.
\end{lemma}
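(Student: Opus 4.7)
By Lemma~\ref{lem:locallyconnected not IR} and Lemma~\ref{lem:locallyconnectedZeroesANDPoles}, together with the observation that interior points of $\minvset{CH}$ are trivially locally connected, it remains to establish local connectedness at each point $z\in(\mathfrak I_R\setminus\zeros(PQ))\cap\partial\minvset{CH}$ whose trajectory $\gamma_z$ is not a straight line. The plan is to exploit the small horn structure that is available precisely under this non-straightness hypothesis.

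First, by Proposition~\ref{prop:small horn exists} there is a small horn $\horn{z}{p'}{p''}$ at $z$, and by Lemma~\ref{lem:intersecthornCI} this horn is disjoint from $\mathfrak I_R$. Applying Lemma~\ref{lem:horn out}, the small horn and its complementary cone are disjoint from $\minvset{CH}$; arguing as in Lemma~\ref{lem:localarc step 1}, one concludes that $\minvset{CH}$ lies locally on a single ``interior'' side of the trajectory arc $\gamma_z^{p'}$. Hence for every sufficiently small open $U\ni z$ the intersection $U\cap\minvset{CH}$ sits in the closed half of $U$ on that interior side.

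Next, mirroring the strategy of Lemma~\ref{lem:locallyconnected not IR}, I would split the analysis according to whether $\Delta(z)=\emptyset$ or not. If $\Delta(z)=\emptyset$, the argument of Proposition~\ref{prop:localarc} and Lemma~\ref{lem:localarc step 2} should adapt to show that $\partial\minvset{CH}$ coincides with $\gamma_z$ in a neighborhood of $z$; then $U\cap\minvset{CH}$ is a manifest one-sided neighborhood of the arc $\gamma_z^{p'}$ and is connected. If $\Delta(z)\neq\emptyset$, pick $y\in\Delta(z)$; combining the root trail $\ttt_y$ (which lies in $\minvset{CH}$ by invariance) with the backward trajectories of $R(z)\partial_z$ guaranteed by Proposition~\ref{prop:IntegralCurve}, one obtains curvilinear quadrilateral neighborhoods of $z$ bounded by two trajectory arcs and two curves flanking $\ttt_y$, all sitting inside $\minvset{CH}$, which serve as a connected neighborhood basis of $z$ in $\minvset{CH}$.

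The main obstacle lies in the geometry of root trails on the curve of inflections. Since $z\in\mathfrak I_R$, one has $R'(z)\in\mathbb R$, and, for $y\in\Delta(z)$, the vector $y-z$ is collinear with $R(z)$; a short computation from Lemma~\ref{lem:RootTrailSlope} then forces the tangent direction to $\ttt_y$ at $z$ to agree with $\arg R(z)$ modulo $\pi$. Thus $\ttt_y$ is \emph{tangent} to $\gamma_z$ at $z$ rather than transverse, and Remark~\ref{rem:ROOTTRAIL} even allows several branches. The transversality shortcut used in the proof of Lemma~\ref{lem:locallyconnected not IR} is therefore no longer available; in its place I would invoke the concavity comparison of Proposition~\ref{prop:RTInflecCONCAVE} to certify that each branch of $\ttt_y$ lies on the same side of $\gamma_z^{p'}$ as $\minvset{CH}$ and, together with the small horn confinement, stays inside the intended quadrilateral neighborhoods.
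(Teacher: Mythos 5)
Your strategy---a direct local analysis at the point $z\in\mathfrak{I}_{R}$ itself, split according to whether $\Delta(z)$ is empty---is genuinely different from the paper's, and it has gaps I do not think are patched by what you write. In the subcase $\Delta(z)=\emptyset$ you claim that the arguments of Proposition~\ref{prop:localarc} and Lemma~\ref{lem:localarc step 2} adapt to show $\partial\minvset{CH}$ coincides with $\gamma_z$ near $z$; this is false at points of $C^{2}$-inflection type, where by Proposition~\ref{prop:INFLglobal} the boundary consists of the positive trajectory glued to a \emph{global} arc $\tilde{\gamma}_z$ on the other side, while the negative trajectory $\gamma_z^-$ lies in the interior of $\minvset{CH}$ (local connectedness still holds there, but not by your stated route). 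In the subcase $\Delta(z)\neq\emptyset$ you correctly note that the root trail $\ttt_y$ is tangent to $r(z)$, hence to $\gamma_z$, at $z$, so the transversal quadrilateral construction of Lemma~\ref{lem:locallyconnected not IR} breaks down; but the proposed repair via Proposition~\ref{prop:RTInflecCONCAVE} only yields the sign of concavity of $\ttt_y$ relative to $r(z)$. It does not produce a neighborhood basis whose intersection with $\minvset{CH}$ is connected: in the tangential situation the ``quadrilateral'' degenerates into a cuspidal region between $\gamma_z$ and $\ttt_y$, and you have not shown that backward trajectories of points of $\minvset{CH}$ in a small ball reach $\ttt_y$ or $\gamma_z^-$ \emph{before leaving the ball}, which is exactly what connectedness requires. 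Your analysis also implicitly assumes $z\in\mathfrak{I}_{R}^{\ast}$; isolated tangency points and singular points of $\mathfrak{I}_{R}$ whose trajectory is not a line are not addressed.

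The paper sidesteps the entire case analysis with a short flow argument. Since $\gamma_z$ is not a straight line, its curvature is not identically zero, so a germ of the negative trajectory $\gamma_z^-$ lies outside $\mathfrak{I}_{R}$ (an arc of trajectory inside $\mathfrak{I}_{R}$ has vanishing curvature, hence is a segment, hence by analyticity the whole trajectory is a line). Put $y=g_R^{-\epsilon}(z)$, where $g_R^t$ is the flow of $R(z)\partial_z$; Lemma~\ref{lem:locallyconnected not IR} gives a small neighborhood $V_y$ of $y$ with $V_y\cap\minvset{CH}$ connected. Then $U_z=\bigcup_{0\le t\le\epsilon}g_R^t(V_y)$ is a neighborhood of $z$, and any $w\in U_z\cap\minvset{CH}$ equals $g_R^t(w')$ for some $w'\in V_y$; by Proposition~\ref{prop:IntegralCurve} the backward trajectory joining $w$ to $w'$ lies in $\minvset{CH}$ and in $U_z$, so $U_z\cap\minvset{CH}$ is connected. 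No knowledge of the boundary type at $z$ is needed, and your small-horn observations, while correct, play no role. I recommend adopting this route.
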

\begin{proof}
  
Since the integral curve $\gamma_z$ of vector field $R(z)\partial_{z}$ containing $z$ is not a straight line, some germ of the negative part $\gamma_z^-$ lies outside of $\mathfrak{I}_{R}$. Recall that if $y\in\minvset{CH}$ then the negative part $\gamma_y^-$ of $\gamma_y$ necessarily lies in $\minvset{CH}$. 

Let $\gamma_z^-(\epsilon)\subset \mathfrak{I}_{R}^c$ be the  part of the negative trajectory $\gamma_z^-$ lying strictly between $z$ and $y=g_R^{-\epsilon}(z)$
where $g_R^t$ is the flow of $R(z)\partial_z$. By Lemma~\ref{lem:locallyconnected not IR} there is a neighborhood $V_y$ of $y$ of size smaller than $\epsilon$ such that $V_y\cap\minvset{CH}$ is connected.
Now, let $U_z=\cup_{0\le t \leq \epsilon} g_R^t(V_y)$. Clearly $U_z$ is a neighborhood of $z$. We claim that $U_z\cap\minvset{CH}$ is connected. Indeed, if $w\in U_z\cap\minvset{CH}$ then 
$w=g_R^t(w')$ for some $w'\in V_y\cap\minvset{CH}$, $0\le t \le \epsilon$. Since $w'$ lies in the same connected component of $U_z\cap\minvset{CH}$ as $y$ and $w$ and $w'$ are jointed by $\gamma_w^-\subset U_z\cap\minvset{CH}$ this means that $U_z\cap\minvset{CH}$ is connected. As $\epsilon$ can be chosen arbitrarily small, the statement follows.
\end{proof}

We denote by $\mathcal{L}$ the union of all $R$-invariant lines.
\begin{corollary}\label{cor:para}
$\partial \minvset{CH}\setminus \mathcal L_R$ is parametrizable.
\end{corollary}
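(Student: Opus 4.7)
The plan is to combine the three preceding lemmas to establish local connectedness of $\minvset{CH}$ at every point of $\partial \minvset{CH} \setminus \mathcal{L}_R$, and then invoke a standard topological parametrization result.

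First I would verify the local connectedness. Given any $z \in \partial \minvset{CH} \setminus \mathcal{L}_R$, there are three (non-exclusive) cases. If $z \in \zeros(PQ)$, then Lemma~\ref{lem:locallyconnectedZeroesANDPoles} directly yields local connectedness. If $z \notin \mathfrak{I}_R \cup \zeros(PQ)$, this is exactly the situation of Lemma~\ref{lem:locallyconnected not IR}. The only remaining case is $z \in \mathfrak{I}_R \setminus \zeros(PQ)$, and here I would use Lemma~\ref{prop:locallyconnected not flat IR}: its hypothesis is that the integral curve $\gamma_z$ of $R(z)\partial_z$ through $z$ is not a straight line. Indeed, if $\gamma_z$ were a straight line, then the line supporting it would be $R$-invariant in the sense of Definition~\ref{def:Rinvariant}, and hence $z \in \mathcal{L}_R$, contradicting our assumption. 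So the lemma applies and $\minvset{CH}$ is locally connected at $z$ in all cases.

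Next I would exploit the finiteness of $\mathcal{L}_R$. By Proposition~\ref{prop:tangencybound}, there are at most $\max\{\deg P, \deg Q\}+1$ lines in $\mathcal{L}_R$. Removing them from $\partial \minvset{CH}$ splits it into at most countably many connected pieces. For each such piece $C$, either its closure $\overline{C}$ in $\bC$ is compact (which is automatic when $\deg Q - \deg P = 1$ by Theorem~\ref{thm:main}), or else we pass to its closure $\overline{C}$ in the extended plane $\bC \cup \mathbb{S}^1$, which is always compact (again by Theorem~\ref{thm:main}, combined with Propositions~\ref{prop:qp-1} and~\ref{prop:qp0} describing the asymptotic geometry in the remaining cases). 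In either case $\overline{C}$ is compact, connected, and (since the added $\mathcal{L}_R$-endpoints and circle-at-infinity points are finitely many and controlled) locally connected; that is, a Peano continuum.

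By the Hahn–Mazurkiewicz theorem, each Peano continuum is a continuous image of $[0,1]$, which gives a parametrization of each piece; concatenating (or indexing by $\mathbb{N}$) these parametrizations yields a global parametrization of $\partial \minvset{CH} \setminus \mathcal{L}_R$. The main delicate point I expect is the step of ensuring local connectedness of the closure at the finitely many adjoined endpoints on $\mathcal{L}_R$ and at any points at infinity: this requires either a direct neighborhood-basis argument along a tail of $\mathcal{L}_R$ or invoking the asymptotic cone descriptions from Section~\ref{sec:asymptotic} to see that only finitely many infinite ends occur in a controlled manner. Once that is dispatched, the result is immediate.
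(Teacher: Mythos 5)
Your first step --- establishing local connectedness of $\minvset{CH}$ at every point of $\partial \minvset{CH}\setminus\mathcal{L}_R$ by combining Lemma~\ref{lem:locallyconnected not IR}, Lemma~\ref{lem:locallyconnectedZeroesANDPoles} and Lemma~\ref{prop:locallyconnected not flat IR}, and by noting that a straight-line trajectory forces $z\in\mathcal{L}_R$ --- is exactly what the paper does, and is fine. The gap is in the assembly step. You propose to take the connected pieces of $\partial\minvset{CH}\setminus\mathcal{L}_R$, pass to their \emph{closures} (which adjoin points of $\mathcal{L}_R$ and of the circle at infinity), argue these closures are Peano continua, and apply Hahn--Mazurkiewicz. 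But the Peano-continuum property requires local connectedness of the closure precisely at the adjoined points on $\mathcal{L}_R$, and that is exactly where none of the three lemmas applies --- indeed it is the reason $\mathcal{L}_R$ is excised from the statement. A single boundary piece can accumulate on an $R$-invariant line in a topologist's-sine-curve fashion (compare the accumulation sets $\omega_{\pm}(\alpha)$ in Definition~\ref{def:globalarc}, which the paper explicitly allows to be non-singleton subsets of $R$-invariant lines), in which case its closure is connected but not locally connected and Hahn--Mazurkiewicz does not apply. Your remark that the adjoined endpoints are ``finitely many and controlled'' is asserted, not proved, and is not true in general; deferring this with ``once that is dispatched'' leaves the essential difficulty unresolved.

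The paper circumvents this by never forming closures that touch $\mathcal{L}_R$: each point $z$ is given a neighborhood $U(z)\subset B\bigl(z,\tfrac12 d(z,\mathcal{L}_R)\bigr)$ on which $\minvset{CH}$ is connected, a countable subcover is extracted, Carath\'eodory's theorem is applied to each compact piece $\overline{U(z_n)\cap\minvset{CH}}$ (whose boundary is a Jordan curve away from $\zeros(PQ)$ by Lemma~\ref{lem:ImKleinen} and the fact that the irregular locus sits inside $\mathcal{L}_R$), and the resulting parametrizations are glued inductively along overlaps. This produces a parametrization of the open set $\partial\minvset{CH}\setminus\mathcal{L}_R$ itself without ever needing any regularity at $\mathcal{L}_R$ or at infinity. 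To repair your argument you would either have to prove local connectedness of the closures at the adjoined points (which may be false), or switch to an exhaustion by compact subpieces at positive distance from $\mathcal{L}_R$ as the paper does.
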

By Carathéodory's theorem, the boundary of an open set is parametrizable if its boundary is locally connected. For each $z \in \partial \minvset{CH} \setminus \mathcal{L}_R$ we have by Lemma~\ref{lem:locallyconnectedZeroesANDPoles} and Proposition~\ref{prop:locallyconnected not flat IR} a neighborhood basis $\mathcal{N}(z)$ consisting of sets such that $U \cap \minvset{CH}$ is connected. Define $U(z)\in \mathcal{N}(z)$ to be a set of the form $U(z) \subset B(z,\frac{1}{2}d(z,\mathcal{L}_R))$. The union $$\bigcup_{z\in \partial \minvset{CH} \setminus \mathcal{L}_{R}} U(z)$$ is an open cover of $\partial \minvset{CH} \setminus \mathcal{L}_{R}$ and being a subset of $\mathbb{C}$, it has a countable subcover $$\bigcup_{n\in \mathbb N} U(z_{n}).$$
For each $z_n, \overline{U(z_n)\cap \minvset{CH}}$ is locally connected and by Lemma~\ref{lem:ImKleinen} and the fact that all irregular points are contained in $\mathcal{L}_R$, its boundary is a Jordan curve away from the poles and zeros of $R(z)$. Hence $\partial \overline{U(z_n)\cap \partial\minvset{CH}}$ is parametrizable by Carathéodery's theorem, injectively away from the zeros and poles of $R(z)$. We start with a $z_0$ and use this parametrization of $\partial \overline{U(z_n)\cap \partial\minvset{CH}}$. Then for the smallest $n=n_1$ such that $U(z_n)\cap U(z_0)\neq \emptyset, \partial \minvset{CH}\cap U(z_n)\not\subset\partial \minvset{CH}\cap U(z_0)$, we glue together the parametrizations of $\partial \overline{(U(z_{n_1})\setminus U(z_0))\cap\partial \minvset{CH}}$ with that of $\partial \overline{U(z_0)\cap \partial\minvset{CH}}$ along the end points of the parametrizations. We then have a parametrization of $(U(z_0)\cup U(z_{n_1}))\cap \partial \minvset{CH}\coloneqq\mathcal{B}_1$. We then take the smallest $n=n_2$ such that $U(z_{n_2})\cap (U(z_0)\cup U(z_{n_1}))\neq \emptyset, \partial \minvset{CH}\cap U(z_n)\not\subset\mathcal{B}_1$ and in the same way find a parametrization of $\mathcal{B}_2\coloneqq(U(z_0)\cup U(z_{n_1})\cup U(z_{n_2}))\cap \partial \minvset{CH}.$ We proceed in this way inductively to get a parametrization of $\partial \minvset{CH}\setminus \mathcal L_R$, potentially pinched at the zeros and poles of $R(z)$ (but not anywhere else).

\subsection{Global arcs}\label{sub:global}

\subsubsection{Additional results about correspondence $\Delta$}

\begin{lemma}\label{lem:EchangeInflectionDelta}
Consider $z \in \partial \minvset{CH} \setminus \mathcal{Z}(PQ)$ such that $z \in \mathfrak{E}^{+}$ (resp. $\mathfrak{E}^{-}$). If $y \in \partial \minvset{CH}$ and $y\in \Delta(z)$, then one of the following statements holds:
\begin{itemize}
    \item $y \in \mathcal{Z}(PQ) \cup \mathfrak{I}_{R}$;
    \item $y \in \mathfrak{I}^{-}$ (resp. $\mathfrak{I}^{+}$).
\end{itemize}
\end{lemma}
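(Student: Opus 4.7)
My plan is a proof by contradiction. I would take $z\in\mathfrak{E}^{+}$ (the $\mathfrak{E}^{-}$ case follows by the conjugation $R(z)\mapsto\overline{R(\bar z)}$) and suppose that $y\in\Delta(z)\cap\partial\minvset{CH}$ satisfies $y\notin\mathcal{Z}(PQ)\cup\mathfrak{I}_{R}\cup\mathfrak{I}^{-}$, so that $y\in\mathfrak{I}^{+}$. I would then normalize the coordinates so that $y=0$, $z=-a$ for some $a>0$, $R(z)=1$, and $r(z)=[-a,+\infty)\subset\mathbb{R}$. By Lemma~\ref{lem:directsupport}, $\minvset{CH}$ lies locally in the closed lower half-plane near $y$. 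Write $\mu=R(y)\neq 0$ and $\beta=R'(y)$; the assumption $y\in\mathfrak{I}^{+}$ gives $\Im\beta>0$.

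The first step is to restrict the possible values of $\mu$. The backward trajectory $\gamma_{y}(-s)$ lies in $\minvset{CH}$ by Proposition~\ref{prop:IntegralCurve}, and its Taylor expansion
\[\Im\gamma_{y}(-s)=-s\,\Im\mu+\tfrac{s^{2}}{2}\,\Im(\beta\mu)+O(s^{3})\]
must be non-positive for all small $s>0$. This immediately rules out $\Im\mu<0$ and $\mu\in\mathbb{R}_{>0}$. The surviving cases are (A) $\mu\in\mathbb{R}_{<0}$ and (B) $\Im\mu>0$.

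For case (B), I would apply Lemma~\ref{lem:intersectingrays} with the two points swapped, taking $(x,y,m)=(y,z,y)$; the hypothesis $\sigma(z)=0\in(\arg\mu-\pi,\arg\mu)$ is satisfied, and since $y\in\mathfrak{E}^{+}$ is not in $\mathfrak{E}^{-}$, the lemma's alternative forces $\Delta(y)\subset\{y\}\setminus\{y\}=\emptyset$. Proposition~\ref{prop:localarc} then identifies $y$ as a local point, and Proposition~\ref{prop:localconvex} requires the orientation $R(y)=\mu$ of this local arc to match the standard counterclockwise orientation of $\partial\minvset{CH}$ at $y$, which in these coordinates is the negative real direction. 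This forces $\mu\in\mathbb{R}_{<0}$, contradicting $\Im\mu>0$.

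Case (A) will be the hard part, because $r(z)$ and $r(y)$ overlap along $\mathbb{R}$ and Lemma~\ref{lem:intersectingrays} degenerates. To attack it I would instead use root trails: since $z=-a\in r(y)=(-\infty,0]$, the point $y$ lies on $\mathfrak{tr}_{z}\subset\minvset{CH}$. Because $\Im(\mu-a\beta)=-a\,\Im\beta<0$ is nonzero, Lemma~\ref{lem:RootTrailSlope} yields a unique tangent to $\mathfrak{tr}_{z}$ at $y$ whose direction equals $-\arg(\mu-a\beta)$ modulo $\pi$, a value in $(0,\pi)$; this tangent line is transverse to the real axis, so the germ of $\mathfrak{tr}_{z}$ at $y$ enters the open upper half-plane. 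But $\minvset{CH}$ is locally contained in the closed lower half-plane at $y$ (the only conceivable upward escape would be an irregular tail along an $R$-invariant line through $y$, but since $\mu\in\mathbb{R}$ the only candidate line is $\mathbb{R}$ itself, which cannot host a transverse germ), so $\mathfrak{tr}_{z}$ must leave $\minvset{CH}$ immediately at $y$, contradicting $\mathfrak{tr}_{z}\subset\minvset{CH}$.
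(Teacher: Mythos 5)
Your proof is correct, and although it shares one step with the paper's argument, the overall route is genuinely different. The paper first splits on whether $\Delta(y)$ is empty or not: in the empty case it uses Proposition~\ref{prop:localarc} together with a ray-sweeping argument, and in the nonempty case it splits on the sign of $\Im(R(y))$, invoking Lemma~\ref{lem:intersectingrays} when $\Im(R(y))>0$, the sets $\mathcal{L}_{y}$ and Corollary~\ref{cor:KLalpha} when $\Im(R(y))<0$, and sweeping arguments for the two real subcases. You instead begin by constraining $R(y)$ directly through the Taylor expansion of the backward trajectory, which must stay on the $\minvset{CH}$-side of the support line; this disposes in one stroke of the cases $\Im(R(y))<0$ and $R(y)\in\mathbb{R}_{>0}$ that the paper treats via $\mathcal{L}_{y}$ and sweeping. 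Your case (B) then coincides with the paper's application of Lemma~\ref{lem:intersectingrays} in the degenerate configuration $m=y$ (the paper uses exactly this degenerate intersection, so that is not an objection), but you push it further to $\Delta(y)=\emptyset$ and a local-arc contradiction rather than stopping at $\Delta(y)\neq\emptyset$. Your case (A) replaces the paper's sweeping argument for $r(y)$ pointing back along $r(z)$ by the transversality of the root trail $\mathfrak{tr}_{z}$ at $y$ computed from Lemma~\ref{lem:RootTrailSlope}; this is a clean quantitative shortcut. What your approach buys is uniformity: every forbidden configuration is reduced to the statement that an explicit analytic arc contained in $\minvset{CH}$ immediately leaves the closed half-plane that contains $\minvset{CH}$ near $y$.

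Two small points to tighten. Lemma~\ref{lem:directsupport} literally controls only the interior of $\minvset{CH}$ near $y$; to upgrade this to $\minvset{CH}\cap V$ being in the closed lower half-plane you must also exclude the irregular locus, which consists of tails lying on $R$-invariant lines. You address this in case (A), but the same caveat applies to the backward-trajectory step: if $\gamma_{y}(-s)$ escaped upward through a tail, the germ of $\gamma_{y}$ at $y$ would lie on an $R$-invariant line, hence in $\mathfrak{I}_{R}$, contradicting $y\notin\mathfrak{I}_{R}$ --- worth one sentence. Second, in case (B) the phrase about the standard orientation being the negative real direction presupposes that $\partial\minvset{CH}$ is tangent to $\mathbb{R}$ at $y$, which is part of what is being derived; the cleaner formulation is that Proposition~\ref{prop:localarc} puts the forward germ $\gamma_{y}(s)=s\mu+O(s^{2})$ on $\partial\minvset{CH}$, and $\Im(s\mu)>0$ places it outside the half-plane containing $\minvset{CH}$ near $y$.
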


\begin{proof}
Without loss of generality, we assume that $z=0$, $r(z)=\mathbb{R}^{+}$ and $z \in \mathfrak{E}^{+}$. 
\par
We consider $y \in \Delta(z)$ such that $y \notin \mathcal{Z}(PQ) \cup \mathfrak{I}_{R}$. If $\Delta(y) = \emptyset$, then Proposition~\ref{prop:localarc} shows that $y$ belongs to a local arc. Besides, $y$ is an indirect  support point of the associated ray $r(z)$ (see Lemma~\ref{lem:directsupport}). If $y \in \mathfrak{I}^{+}$, then the associated rays starting from a germ of the local arc at $y$ sweep out a neighborhood of $z$ and we get a contradiction. Therefore $y \in \mathfrak{I}^{-}$.
\par
Now we consider the case where $\Delta(y) \neq \emptyset$ and assume by contradiction that $y \in \mathfrak{I}^{+}$. If $Im(R(y))>0$, then Lemma~\ref{lem:intersectingrays} provides an immediate contradiction. If $Im(R(y))<0$, then $\mathcal{L}_{y}$ (see Definition~\ref{def:calKL}) contains an interval of length strictly larger than $\pi$ such that $\sigma(y)$ is one of the ends. It follows  from Corollary~\ref{cor:KLalpha} that $y$ is a simple zero of $R(z)$ (and therefore $y \in \mathcal{Z}(PQ)$).
\par
If $r(y)=y+\mathbb{R}^{-}$, then for some small $\epsilon >0$, points of the interval $]-\epsilon,\epsilon[$ are disjoint from the interior of $\minvset{CH}$. Associated rays starting from the points of  $]-\epsilon,\epsilon[$ sweep out an open cone containing a neighborhood of $y$. This contradicts the assumption $y \in \partial \minvset{CH}$. Therefore, $r(y)=y+\mathbb{R}^{+}$ and $r(y) \subset r(z)$. In this case, for some small $\epsilon' >0$, points of  $]y-\epsilon',y[$ are disjoint from the interior of $\minvset{CH}$ and their associated rays will sweep out a neighborhood of $y$ if $y \in \mathfrak{I}^{+}$. Therefore, in that case we get that $y \in \mathfrak{I}^{-}$. Similar result holds for $z \in \mathfrak{E}^{-}$.
\end{proof}

\begin{definition}\label{def:Deltaminmax}
For any point $z \in \partial \minvset{CH}$ such that $z \notin \mathcal{Z}(PQ) \cup \mathfrak{I}_{R}$ and $\Delta(z) \neq \emptyset$, we define $\Delta^{min}(z)$ (resp. $\Delta^{max}(z)$) as the infimum (resp. the supremum) in $\Delta(z)$ of the order induced by the orientation of the associated ray $r(z)$.
\par
Besides, we define $L_{z}=|\Delta^{min}(z)-z|$.
\end{definition}

\begin{lemma}\label{lem:gap}
For any $z \in \partial \minvset{CH}$ such that $z \notin \mathcal{Z}(PQ) \cup \mathfrak{I}_{R}$ and $\Delta(z) \neq \emptyset$, we have $\Delta^{min}(z) \neq z$ and $L_{z} \neq 0$.
\end{lemma}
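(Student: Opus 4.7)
The plan is to argue by contradiction. Assume $L_z=0$; since $\overline{\minvset{CH}}\cap\bC=\minvset{CH}$, this yields a sequence $y_n\in\minvset{CH}\cap r(z)$ with $y_n\neq z$ and $y_n\to z$. By Theorem~\ref{thm:AR} the entire root trail $\ttt_{y_n}$ is then forced to lie in $\minvset{CH}$. On the other hand, because $z\notin \mathfrak{I}_R\cup\mathcal{Z}(PQ)$, the trajectory $\gamma_z$ through $z$ is not a straight line, so Lemma~\ref{lem:horn out} applied at $z\in\partial\minvset{CH}$ supplies a horn $\horn{z}{p'}{p''}$ with $\horn{z}{p'}{p''}\cup\horninf{p''}\subset\minvset{CH}^c$. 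The strategy is to show that for $n$ large enough a short arc of $\ttt_{y_n}$ falls inside the open triangle $\horn{z}{p'}{p''}$, which contradicts $\ttt_{y_n}\subset\minvset{CH}$.

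An affine change of variable normalizes $z=0$ and $R(0)=1$, so $r(z)=\bR_+$; after conjugating $R(z)\mapsto \overline{R(\bar z)}$ if necessary we may assume $b:=\Im R'(0)>0$ (i.e.\ $z\in\mathfrak{E}^+$). Lemma~\ref{lem:asympt of traj at z in I_R} then gives $\gamma_z:\,v=\tfrac{b}{2}x^2+O(x^3)$. Write $y_n=s_n$ with $s_n\searrow 0$. Expanding $\Im\bigl[R(w)/(s_n-w)\bigr]=0$ with $w=x+iv$ through quadratic order, keeping both leading terms in $s_n$ and in $x$, one obtains the local parametrization
\[
v(x)\;=\;b\,x(x-s_n)+O(x^3)\qquad\text{on }\ttt_{y_n}.
\]
Hence $\ttt_{y_n}$ meets $\bR$ exactly at $0$ and $s_n$, dips into the lower half-plane for $x\in(0,s_n)$, and for $x\in(s_n,2s_n)$ satisfies $0<v(x)<\tfrac{b}{2}x^2=\gamma_z(x)+O(x^3)$, meeting $\gamma_z$ at $x=2s_n$.

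For $n$ sufficiently large, $2s_n$ is smaller than the size of the fixed open horn $\horn{z}{p'}{p''}$ (whose local shape is exactly the region between $r(z)$ and $\gamma_z$ near $z$), so the arc $\{(x,v(x)):s_n<x<2s_n\}$ lies strictly inside $\horn{z}{p'}{p''}$. Since this arc belongs to $\ttt_{y_n}\subset\minvset{CH}$ while $\horn{z}{p'}{p''}\subset\minvset{CH}^c$, we reach the desired contradiction, giving $L_z>0$ and $\Delta^{\min}(z)\neq z$. The main obstacle is the quadratic Taylor computation: the linear term $-bs_nx$ alone would only reproduce the known fact that the tangent of $\ttt_{y_n}$ at $z$ converges to $r(z)$, and what really drives the contradiction is the coefficient comparison $v''(0)=2b$ versus $\gamma_z''(0)=b$, which forces $\ttt_{y_n}$ to climb back above $\bR_+$ strictly under $\gamma_z$ on the window $(s_n,2s_n)$ and thus to pierce the horn.
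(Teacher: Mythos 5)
Your normalization, the use of Lemma~\ref{lem:horn out}, and the Taylor expansion of $\{\Im[R(w)/(y_n-w)]=0\}$ are all fine, but the step that produces the contradiction has a genuine gap: the arc you place inside the horn is not part of the root trail. Writing $y_n=s_n$ and $w=x+iv$, the curve $v(x)=bx(x-s_n)+O(x^3)$ is the zero locus of $\Im[R(w)/(s_n-w)]$, and the root trail $\ttt_{s_n}$ is only the sub-locus where $R(w)/(s_n-w)$ is \emph{positive} (equivalently, where $s_n$ lies on the \emph{forward} ray $r(w)=w+R(w)\bR_+$). On your window $x\in(s_n,2s_n)$ one has $\Re\bigl[R(w)\overline{(s_n-w)}\bigr]\approx (s_n-x)-bxv<0$, so there the quotient is negative: these are points whose \emph{backward} ray hits $s_n$, and Theorem~\ref{thm:AR} does not force them into $\minvset{CH}$. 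The genuine root trail near $z$ consists of the piece $x\in[0,s_n]$, which dips into the \emph{lower} half-plane ($v=bx(x-s_n)<0$ there), together with the piece $x\le 0$ lying above the backward trajectory; neither enters $\horn{z}{p'}{p''}$, which for small $x$ is exactly the region $\{0<v<\gamma_z(x),\,x>0\}$ between $r(z)=\bR_+$ and the forward trajectory. So no contradiction with Lemma~\ref{lem:horn out} is obtained, and the coefficient comparison $2b$ versus $b$, while correct as a computation about the zero locus, carries no information about $\minvset{CH}$.

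For comparison, the paper's proof is a two-line reduction to Lemma~\ref{lem:EchangeInflectionDelta}: with $z=0$, $r(z)=\bR_{>0}$ and $z\in\mathfrak{I}^{+}$, any sufficiently small $\epsilon>0$ on $r(z)$ satisfies $\epsilon\in\mathfrak{I}^{+}$ and $\epsilon\notin\mathcal{Z}(PQ)\cup\mathfrak{I}_{R}$, whereas Lemma~\ref{lem:EchangeInflectionDelta} forces every finite point of $\Delta(z)$ to lie in $\mathcal{Z}(PQ)\cup\mathfrak{I}_{R}\cup\mathfrak{I}^{-}$; hence no point of $\Delta(z)$ can be arbitrarily close to $z$. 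If you want to keep a horn-based argument, you would need to work with the part of $r(z)$ near $z$ directly (as Lemma~\ref{lem:EchangeInflectionDelta} effectively does via Lemma~\ref{lem:directsupport}), not with the analytic continuation of the trail beyond the support point.
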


\begin{proof}
Without loss of generality, we assume that $z=0$, $z \in \mathfrak{I}^{+}$ and $r(x)=\mathbb{R}_{>0}$. For any small enough real positive $\epsilon$, we have $Re(R(\epsilon)),Im(R(\epsilon))>0$ and $\epsilon \in \mathfrak{I}^{+}$. If such an $\epsilon$ belongs to $\Delta(z)$, then it contradicts Lemma~\ref{lem:EchangeInflectionDelta}.
\end{proof}

Since $\overline{\minvset{CH}}$ is compact in $\mathbb{C} \cup \mathbb{S}^{1}$, it follows immediately that for any $z$, $\Delta^{min}(z)$ is actually a point of $\partial \overline{\minvset{CH}}$.

\begin{definition}\label{def:calUz}
For any point $z \in \partial \minvset{CH}$ such that $z \notin \mathcal{Z}(PQ) \cup \mathfrak{I}_{R}$ and $\Delta(z) \neq \emptyset$, we define $\mathcal{U}(z)$ as the connected component of $(\minvset{CH})^{c} \setminus [z,\Delta^{min}(z)]$  incident to:
\begin{itemize}
    \item the right side of $[z,\Delta^{min}(z)]$ if $z \in \mathfrak{I}^{+}$;
    \item the left side of $[z,\Delta^{min}(z)]$ if $z \in \mathfrak{I}^{-}$.
\end{itemize}
i.e. in the half-plane bounded by $r(z)$ different from that containing  the germ of the trajectory of $R(z)\partial_z$ starting at $z$.
\end{definition}

\begin{lemma}\label{lem:boundaryUz}
Consider $z \in \partial \minvset{CH}$ such that $z \notin \mathcal{Z}(PQ) \cup \mathfrak{I}_{R}$, $\Delta(z) \neq \emptyset$ and $z\in \mathfrak{I}^{+}$ (resp. $\mathfrak{I}^{-}$). For any $y \in \partial \minvset{CH} \cap \partial{\mathcal{U}(z)}$ such that $y \in \mathfrak{I}^{+}$ (resp. $\mathfrak{I}^{-}$) and $\Delta(y) \neq \emptyset$, we have $\mathcal{U}(y) \subset \mathcal{U}(z)$.
\par
Besides, if $y \neq z$, we have $\mathcal{U}(y) \subsetneq \mathcal{U}(z)$.
\end{lemma}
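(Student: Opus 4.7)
The inclusion $\mathcal{U}(y) \subset \mathcal{U}(z)$ is immediate when $y = z$ directly from the definitions, so assume $y \neq z$ and aim for strict inclusion. The hypothesis $y \in \partial \mathcal{U}(z) \cap \partial \minvset{CH}$ combined with the fact that the open segment $(z, \Delta^{min}(z))$ is contained in $\minvset{CH}^c$ forces $y \in \{z, \Delta^{min}(z)\} \cup (\partial \minvset{CH} \setminus [z, \Delta^{min}(z)])$, and in either situation the two cut segments originating at $y$ and $z$ will interact in a controlled way.

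The key geometric step is to show that the open segments $(y, \Delta^{min}(y))$ and $(z, \Delta^{min}(z))$ are disjoint. Since both $y, z$ lie in $\mathfrak{I}^{+} \subset \mathfrak{E}^{+}$, if the rays $r(y), r(z)$ meet at some $m \in \mathbb{C}$, then applying Lemma~\ref{lem:intersectingrays} with the roles of $x, y$ arranged by the angular ordering of $\sigma(y), \sigma(z)$ forces at least one of $\Delta^{min}(y), \Delta^{min}(z)$ to lie no further out than $m$ on its respective ray, so the open segments cannot cross; the collinear and parallel sub-cases are dealt with directly using Lemma~\ref{lem:EchangeInflectionDelta} to constrain where the first hit on $\overline{\minvset{CH}}$ can occur. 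After normalizing coordinates so that $y = 0$ and $r(y) = \mathbb{R}_{\geq 0}$, the assumption $y \in \mathfrak{I}^{+}$ gives $\Im R'(0) > 0$, so $\gamma_y$ lies tangentially to $\mathbb{R}$ in the upper half-plane near the origin; since $\gamma_y^{-} \subset \minvset{CH}$ by Proposition~\ref{prop:IntegralCurve}, and since Lemma~\ref{lem:EchangeInflectionDelta} prevents $\minvset{CH}$ from ``coming around'' into the lower half-plane near $y$, a sufficiently small half-disk $D^{-}$ below $y$ lies in $\minvset{CH}^{c}$ and hence in $\mathcal{U}(y)$ by the right-side definition.

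To produce a common point $w_0 \in \mathcal{U}(y) \cap \mathcal{U}(z)$, I use that $y \in \partial \mathcal{U}(z)$ yields a sequence $w_n \in \mathcal{U}(z)$ with $w_n \to y$; the disjointness established above lets me connect some $w_n$ to $D^{-}$ by a short arc staying in $\minvset{CH}^{c} \setminus [z, \Delta^{min}(z)]$, giving the required witness. The connected set $\mathcal{U}(y) \subset \minvset{CH}^{c}$ is moreover disjoint from $[z, \Delta^{min}(z)]$: this follows because the open segment $(z, \Delta^{min}(z))$ avoids $[y, \Delta^{min}(y)]$ by Step~1, so it lies in a single component of $\minvset{CH}^c \setminus [y, \Delta^{min}(y)]$, and a local picture at $z$ identifies that component as the left-side complement of $\mathcal{U}(y)$ rather than $\mathcal{U}(y)$ itself. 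Together these force $\mathcal{U}(y) \subset \mathcal{U}(z)$. For strict inclusion, the same left-side component of $\minvset{CH}^c \setminus [y, \Delta^{min}(y)]$ is nonempty and, by the same path-connection argument across the open segment $(y,\Delta^{min}(y))$, sits inside $\mathcal{U}(z)$, producing a point of $\mathcal{U}(z) \setminus \mathcal{U}(y)$. The step I anticipate as the main obstacle is verifying that $\mathcal{U}(z)$ actually approaches $y$ through the lower half-plane rather than through a narrow corridor of $\minvset{CH}^{c}$ trapped in the upper half-plane between the boundary arcs meeting at $y$; resolving this requires carefully combining the global/extruding/endpoint classification of $y$ with the convexity consequences of $y \in \mathfrak{I}^{+}$ established in the previous sections.
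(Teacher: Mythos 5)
Your proposal does not close. You explicitly defer the decisive step --- ``verifying that $\mathcal{U}(z)$ actually approaches $y$ through the lower half-plane rather than through a narrow corridor of $\minvset{CH}^{c}$ trapped in the upper half-plane'' --- and this is not a technicality but the heart of the matter: your whole strategy of exhibiting a common point of $\mathcal{U}(y)$ and $\mathcal{U}(z)$ and then invoking connectedness hinges on it. Worse, the intermediate claim you use to set up that connection is false: after normalizing $y=0$, $r(y)=\mathbb{R}_{\geq 0}$, no half-disk $D^{-}$ below $y$ lies in $\minvset{CH}^{c}$. Indeed, for $u=\Delta^{min}(y)$ the root trail $\mathfrak{tr}_{u}$ is contained in $\minvset{CH}$, and by Lemma~\ref{lem:RootTrailSlope} its tangent slope at $y$ is $\arg\bigl(R^{2}(y)/(R(y)+(u-y)R'(y))\bigr)=-\arg(1+sR'(y))$ with $s=u-y>0$; since $\Im R'(y)\neq 0$ this is transverse to $\mathbb{R}$, so $\minvset{CH}$ contains points with negative imaginary part arbitrarily close to $y$. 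Thus $\minvset{CH}^{c}$ near $y$ is cut into several pieces by the root trail and the boundary arcs, and deciding which piece is $\mathcal{U}(z)$ and which is $\mathcal{U}(y)$ is exactly the analysis you have not done. (The correct statement is only that $\mathcal{U}(y)$ is incident to the \emph{open} segment $(y,\Delta^{min}(y))$ from the right, not to a half-disk at $y$ itself.)

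The paper's route avoids this local surgery entirely. One first observes (via connectedness of $\minvset{CH}$ when $\deg Q-\deg P=\pm 1$, and the asymptotic cone of Proposition~\ref{prop:qp0} when $\deg Q-\deg P=0$) that $r(y)$ and $r(z)$ must intersect at some point $m$; Lemma~\ref{lem:intersectingrays}, applied with the angular ordering forced by $y,z\in\mathfrak{E}^{+}$, then yields directly that $\Delta(y)\subset[y,m]\subset\partial\mathcal{U}(z)$. Since $y\in\partial\mathcal{U}(z)$ and the entire segment $[y,\Delta^{min}(y)]$ lies on $\partial\mathcal{U}(z)$, the component $\mathcal{U}(y)$ attached to its right side sits inside $\mathcal{U}(z)$ --- no witness point in the lower half-plane at $y$ is needed. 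Strictness is then immediate: $\mathcal{U}(y)=\mathcal{U}(z)$ would force the oriented rays $r(y)$ and $r(z)$ to coincide, whence $y=z$. If you want to salvage your argument, you should replace the half-disk at $y$ by a half-neighborhood of the midpoint of $(y,\Delta^{min}(y))$ and then still supply the omitted identification of components, at which point you will essentially be reproving the paper's ray-based argument.
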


\begin{proof}
By connectedness of $\minvset{CH}$ in the case $\deg Q-\deg P=\pm 1$ and the asymptotic geometry of $\minvset{CH}$ in the case $\deg Q-\deg P=0$, it follows that the associated ray $r(y)$ intersects the associated ray $r(z)$. Applying Lemma~\ref{lem:intersectingrays} to $r(z)$ and $r(y)$, we see that $\Delta(y) \subset \partial \mathcal{U}(z)$. Thus  $\mathcal{U}(y) \subset \mathcal{U}(z)$.

When $\mathcal{U}(y) = \mathcal{U}(z)$, the associated ray $r(y)$ has to coincide with $r(z)$ (with the same orientation since $y$ and $z$ belong to the same inflection domain). It follows that $y=z$.
\end{proof}

\subsubsection{Orientation of global arcs}\label{sub:orientationglobal}
By Corollary~\ref{cor:para}, the following notion is well-defined.
\begin{definition}\label{def:globalarc}
\emph{A global arc} in $\partial \minvset{CH}$ is a maximal open connected arc formed by points of global type.
\end{definition}
Furthermore, for a global arc $\alpha$ defined on $(t_{min},t_{max})$, its end point is defined as long as $$\omega_+(\alpha)\coloneqq\bigcap_{t_0\in (t_{min},t_{max})}\overline{\{\alpha(t):t>t_0\}}$$ is a singleton (and equals this element). The starting point is analogously defined if $$\omega_-(\alpha)\coloneqq\bigcap_{t_0\in (t_{min},t_{max})}\overline{\{\alpha(t):t<t_0\}}$$ is a singleton. If $\omega_+(\alpha)$ is not a singleton, then it can only contain points contained in $R$-invariant lines, again by Corollary~\ref{cor:para} and similarly for $\omega_-(\alpha)$. Regardless if they are singletons or not, we call the sets $\omega_\pm(\alpha)$ \emph{end accumulation} and the \emph{start accumulation}. In case they are in fact singletons, we will also call them end and starting points respectively.
We have a geometrically meaningful way to define orientation on global arcs.

\begin{lemma}\label{lem:globalorientation}
Any global arc $(\alpha_{t})_{t \in I}$ can be oriented in such a way that for $t'>t$, we have:
\begin{itemize}
    \item $\alpha_{t} \in \partial\minvset{CH} \cap \partial \mathcal{U}(\alpha_{t'})$;
    \item $\mathcal{U}(\alpha_{t'}) \supset \mathcal{U}(\alpha_{t})$.
\end{itemize}
In particular, in $\mathfrak{I}^{+}$, the orientation of global arcs coincides with the standard topological orientation of $\partial \minvset{CH}$ (it coincides with the opposite orientation in $\mathfrak{I}^{-}$).
\par
In particular, a global arc is an interval, i.e. it cannot be a closed loop.
\end{lemma}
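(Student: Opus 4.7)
The plan is to use the strict monotonicity of the regions $\mathcal{U}$ established in Lemma~\ref{lem:boundaryUz} as the defining feature of the desired orientation. Since every point of a global arc satisfies $z \notin \mathfrak{I}_{R}$ by definition, continuity forces the whole arc to lie in a single inflection domain; I will assume without loss of generality that it lies in $\mathfrak{I}^{+}$ (the other case follows by applying the argument to $\overline{R(\bar z)}$, which swaps the two inflection domains).

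The central step is to define an order $\prec$ on the arc by $y \prec z$ if and only if $\mathcal{U}(y) \subsetneq \mathcal{U}(z)$ and to show that this is a strict total order. I would first check this locally: for two sufficiently close points $\alpha_{t}, \alpha_{t'}$ on the arc, a short connecting subarc of $\partial \minvset{CH}$ together with the nearby associated rays $r(\alpha_{t}), r(\alpha_{t'})$ forms (by Lemma~\ref{lem:ArcCone} applied to the slight perturbation of rays) the boundary of a small region contained in one of $\mathcal{U}(\alpha_t), \mathcal{U}(\alpha_{t'})$ and not in the other. This forces one of $\alpha_t, \alpha_{t'}$ to lie on the boundary of the $\mathcal{U}$-region of the other, and Lemma~\ref{lem:boundaryUz} then yields the strict inclusion. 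Once local comparability and the fact that the direction of inclusion depends continuously on $t$ are established, this direction cannot switch along the connected arc, so any two points on the arc are $\prec$-comparable. I then parameterize the arc so that $t < t'$ if and only if $\alpha_t \prec \alpha_{t'}$; the two bulleted claims follow immediately from this together with Lemma~\ref{lem:boundaryUz}.

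For compatibility with the topological orientation in $\mathfrak{I}^{+}$, I would argue that since $\Im R' > 0$ there, integral curves of $R(z)\partial_z$ turn counterclockwise, placing the interior of $\minvset{CH}$ on the left when one travels along a local arc in the direction of the vector field; by Proposition~\ref{prop:localconvex} this direction coincides with the topological orientation of $\partial \minvset{CH}$ on local arcs. Whenever a global arc adjoins a local arc, the two must be traversed in matching directions since $\partial \minvset{CH}$ has a single continuous topological orientation at such meeting points, and when a global arc is isolated between two global arcs or special points, the same conclusion follows from the geometry of $\mathcal{U}$: moving in the $\prec$-increasing direction enlarges the exterior region on the concave side, which is the counterclockwise sense. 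Finally, a global arc cannot be a closed loop: if $\alpha_{t_0} = \alpha_{t_1}$ for $t_0 < t_1$, then the strict inclusion $\mathcal{U}(\alpha_{t_0}) \subsetneq \mathcal{U}(\alpha_{t_1})$ contradicts the equality $\mathcal{U}(\alpha_{t_0}) = \mathcal{U}(\alpha_{t_1})$ forced by $\alpha_{t_0} = \alpha_{t_1}$.

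The main obstacle will be justifying the continuity step: the endpoint $\Delta^{min}(\alpha_t)$ may fail to vary continuously precisely when the support point of $r(\alpha_t)$ jumps (for instance when the associated ray becomes newly tangent to $\partial \minvset{CH}$ at a farther point), so the local comparability argument must be made robust against such discontinuities. I expect this can be handled by observing that even where $\Delta^{min}(\alpha_t)$ jumps, the open region $\mathcal{U}(\alpha_t)$ itself still varies semicontinuously (it can only shrink at a jump), and combining this with the Hausdorff semicontinuity of $\partial \mathcal{U}(\alpha_t) \cap \partial \minvset{CH}$ should suffice to preserve the monotonicity of the $\prec$-relation.
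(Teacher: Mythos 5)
Your proof is correct and rests on the same key fact as the paper's: Lemma~\ref{lem:boundaryUz} applied to points of the arc lying on $\partial\mathcal{U}$ of other points, which yields the strictly nested family $\mathcal{U}(\alpha_t)$ and hence both the orientation and the impossibility of a closed loop. The paper gets the comparability of any two points in one stroke (removing $\alpha_t$ cuts the arc into two pieces, one of which lies in $\partial\mathcal{U}(\alpha_t)$) where you build it locally and propagate by connectedness, but this is a presentational difference rather than a different argument.
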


\begin{figure}[!ht]
    \centering
    \includegraphics[width=0.6\textwidth]{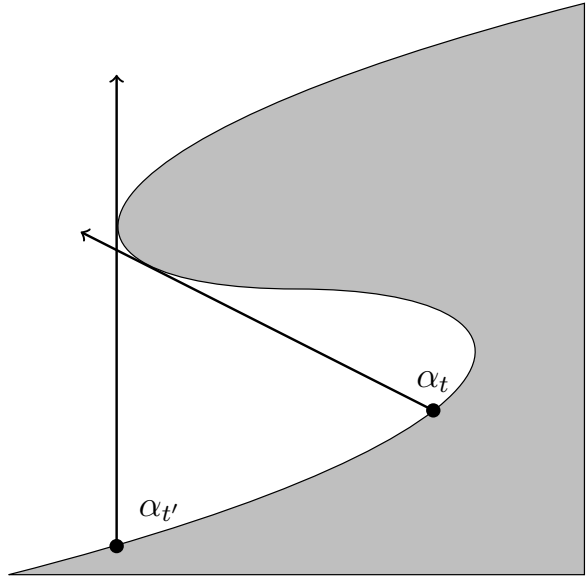}
    \caption{Two associated rays from the same global arc.}
    \label{fig:Lemma422}
\end{figure}

\begin{proof}
Removal of $\alpha_{t}$ from  $\alpha$ cuts the arc into two pieces, one of which is contained in $\partial \mathcal{U}(\alpha_{t})$ (see Figure~\ref{fig:Lemma422}). Lemma~\ref{lem:boundaryUz} then proves the inclusion of the sets of the form $\mathcal{U}(\alpha_{t})$ as $t$ sweeps out the interval $I$ which provides a meaningful orientation on the global arc.
\end{proof}

\begin{lemma}\label{lem:globalparameter}
Along a global arc $\alpha$, the function $\sigma(z)= \arg(R(z))$ is a monotone mapping of $\alpha$ to an interval in $\mathbb S^1$ with length at most $\pi$.
\par
Besides, if  $\sigma(\alpha_{t})=\sigma(\alpha_{t'})$ for some $t>t'$, then $\Delta(\alpha_{t})$ coincides with the point at infinity $\sigma(\alpha_{t})=\sigma(\alpha_{t'})$ that also belongs to $\Delta(\alpha_{t'})$.
\end{lemma}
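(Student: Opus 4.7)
My plan is to combine the strict nesting $\mathcal U(\alpha_t)\subsetneq\mathcal U(\alpha_{t'})$ for $t<t'$ from Lemma~\ref{lem:globalorientation} with the intersection analysis of Lemma~\ref{lem:intersectingrays}. Without loss of generality, assume the global arc $\alpha$ lies in $\mathfrak I^+$ (the case $\mathfrak I^-$ is symmetric); since the ray $r(\alpha_s)$ initially enters $\mathfrak I^+$, one has $\alpha_s\in\mathfrak E^+$ throughout.

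For the monotonicity, I would argue by contradiction. Suppose $\sigma(\alpha_t)>\sigma(\alpha_{t'})$ strictly, with angular difference in $(0,\pi)$; then $r(\alpha_t)$ and $r(\alpha_{t'})$ meet at a finite point $m$. Lemma~\ref{lem:intersectingrays} applied with $x=\alpha_t$ and $y=\alpha_{t'}$ gives $\Delta^{min}(\alpha_t)\in[\alpha_t,m]$, $\Delta^{min}(\alpha_{t'})\in[\alpha_{t'},m]$, and the open cone at $m$ with opening $(\sigma(\alpha_{t'}),\sigma(\alpha_t))$ disjoint from $\minvset{CH}$. Choosing coordinates with $\alpha_{t'}=0$ and $r(\alpha_{t'})$ on the positive real axis, the region $\mathcal U(\alpha_{t'})$ lies in the lower half-plane (opposite to the counter-clockwise turn of trajectories in $\mathfrak I^+$), while $\mathcal U(\alpha_t)$ lies on the right of $r(\alpha_t)$ and therefore contains points in the upper half-plane near $m$ --- exactly those in the cone region just above the real axis but below $r(\alpha_t)$. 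These points cannot belong to $\mathcal U(\alpha_{t'})$, contradicting the nesting. For the length bound, a total variation of $\sigma$ exceeding $\pi$ would, via monotonicity and continuity, produce $t_1<t_2$ with $\sigma(\alpha_{t_2})-\sigma(\alpha_{t_1})=\pi$ and anti-parallel rays; the compactified form of Lemma~\ref{lem:intersectingrays} (using Definition~\ref{def:-+0}) then violates the strict nesting analogously.

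For the equality case, suppose $\sigma(\alpha_t)=\sigma(\alpha_{t'})=\theta$ with $t>t'$. Monotonicity forces $\sigma\equiv\theta$ on $[t',t]$, so all rays $r(\alpha_s)$ point in the common direction $\theta$. If some $\Delta^{min}(\alpha_s)$ were a point of $\mathbb C$, then the nesting $\mathcal U(\alpha_{s_1})\subsetneq\mathcal U(\alpha_{s_2})$ for $s_1<s_2$ in $[t',t]$, bounded by parallel rays, would force $\alpha_{s_1}\in r(\alpha_{s_2})$ and hence $\alpha_{s_1}\in\Delta(\alpha_{s_2})$; this contradicts $\alpha_{s_1}$ being an interior point of the global arc. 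Thus $\Delta^{min}(\alpha_s)\in\mathbb S^1$ is the point at infinity in direction $\theta$ for every $s\in[t',t]$, giving the claimed description.

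The main obstacle is the careful geometric bookkeeping in the monotonicity step --- identifying the correct side of $r(\alpha_t)$ on which $\mathcal U(\alpha_t)$ sits and verifying that the contradictory cone region indeed lies in $\mathcal U(\alpha_t)\setminus\mathcal U(\alpha_{t'})$. Handling the configurations with anti-parallel rays and with supports at infinity requires the compactified form of Lemma~\ref{lem:intersectingrays}, which is the most delicate part of the argument.
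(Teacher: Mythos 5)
Your overall strategy -- exploit the nesting of the regions $\mathcal{U}(\alpha_s)$ from Lemma~\ref{lem:globalorientation} together with Lemma~\ref{lem:intersectingrays} -- is in the right spirit (the paper's proof also rests on $\alpha_{t'}\in\partial\mathcal{U}(\alpha_t)$), but the monotonicity step as written does not close, for two reasons. First, you assert that in the contradictory configuration the rays $r(\alpha_t)$ and $r(\alpha_{t'})$ "meet at a finite point $m$". This is a hypothesis of Lemma~\ref{lem:intersectingrays}, not a consequence of anything you have established, and in the very configuration you are trying to exclude it typically fails: normalizing $\alpha_t=0$, $r(\alpha_t)=\mathbb{R}_+$ with the arc in $\mathfrak{I}^+$, the region $\mathcal{U}(\alpha_t)$ is entered from the lower side of $[\alpha_t,\Delta^{min}(\alpha_t)]$, and a ray issued from a point of $\overline{\mathcal{U}(\alpha_t)}$ in a direction $\sigma(\alpha_{t'})\in\,]-\pi,0[$ points away from $\mathbb{R}_+$ and need never cross it. (Where the paper does need such an intersection, in Lemma~\ref{lem:boundaryUz}, it invokes connectedness and the asymptotic geometry of $\minvset{CH}$ to justify it.) Second, the contradiction you extract points the wrong way: for $t>t'$ the nesting reads $\mathcal{U}(\alpha_{t'})\subsetneq\mathcal{U}(\alpha_t)$, so exhibiting points of $\mathcal{U}(\alpha_t)$ that are not in $\mathcal{U}(\alpha_{t'})$ contradicts nothing -- strictness of the inclusion guarantees such points exist. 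To contradict the nesting you would need a point of $\mathcal{U}(\alpha_{t'})$ outside $\mathcal{U}(\alpha_t)$; if instead you meant $t<t'$, then the hypothesis you are refuting ($\sigma(\alpha_t)>\sigma(\alpha_{t'})$ for $t<t'$) is precisely the monotone behaviour that actually holds, so a "contradiction" there signals an error in the geometric bookkeeping. Note also that $\mathcal{U}(z)$ is only \emph{incident to} one side of the segment $[z,\Delta^{min}(z)]$; it is not contained in a half-plane, so "lies in the lower half-plane" is not available. The paper avoids all of this: after normalizing $\sigma(\alpha_t)=0$ it rules out $\sigma(\alpha_{t'})\in[-\pi,0[$ directly by observing that associated rays from points of $(\minvset{CH})^c$ near $\alpha_{t'}\in\partial\mathcal{U}(\alpha_t)$ would then have to cross $\minvset{CH}$, with no intersection point $m$ needed.

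In the equality case there is a further gap: the claim that parallel rays plus nesting "force $\alpha_{s_1}\in r(\alpha_{s_2})$" is not justified. The boundary $\partial\mathcal{U}(\alpha_{s_2})$ consists of the segment $[\alpha_{s_2},\Delta^{min}(\alpha_{s_2})]$ together with portions of $\partial\minvset{CH}$, and $\alpha_{s_1}$ lies on the latter part; nothing places it on the ray. The paper instead argues that the strip bounded by $r(\alpha_t)$, $r(\alpha_{t'})$ and the intervening portion of the arc has interior disjoint from $\minvset{CH}$, whence the only possible support of $r(\alpha_t)$ is the common direction at infinity. Your conclusion is correct, but you need an argument of this strip type (or some substitute) rather than the unsupported forcing step.
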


\begin{proof}
Consider two points $\alpha_{t}$ and $\alpha_{t'}$ of a global arc satisfying $t>t'$ for the canonical orientation. By Lemma~\ref{lem:globalorientation}, $\alpha_{t'} \in \partial \mathcal{U}(\alpha_{t})$.
\par
Without loss of generality, we assume that $\alpha$ is contained in $\mathfrak{I}^{+}$, $\alpha_{t}=0$ and $r(\alpha_{t})=\mathbb{R}^{+}$. If $\sigma (\alpha_{t'}) \in [-\pi,0[$, any associated ray starting in a small enough neighborhood of $\alpha_{t'}$ will cross
$\minvset{CH}$. If $\sigma(\alpha_{t'})=0$, then the interior of the strip bounded by $r(\alpha_{t}),r(\alpha_{t'})$ and the portion of global arc between $\alpha_{t}$ and $\alpha_{t'}$ is disjoint from $\minvset{CH}$. It follows that $\Delta(\alpha_{t})$ contains only the point at infinity. In the remaining case, we have $\sigma (\alpha_{t'}) \in ]0,\pi[$.
\end{proof}

\begin{proposition}\label{prop:globalarc}
Consider $z \in \partial \minvset{CH}$ such that $z \notin \mathcal{Z}(PQ) \cup \mathfrak{I}_{R}$ and $\Delta(z) \neq \emptyset$. Then, $z$ is either the endpoint or a point of a global arc.
\end{proposition}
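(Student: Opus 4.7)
\emph{Plan.} By Proposition~\ref{prop:localarc} the combination $\Delta=\Gamma=\emptyset$ is excluded on $\partial\minvset{CH}\setminus(\mathcal{Z}(PQ)\cup\mathfrak{I}_{R})$, so Definition~\ref{defn:ArcClassification} is exhaustive there, and with $\Delta(z)\neq\emptyset$ the point $z$ is of global or extruding type. The strategy is to produce a one-sided open arc of global points in $\partial\minvset{CH}$ adjacent to $z$: $z$ then belongs to the corresponding global arc or is its endpoint. Assume without loss of generality that $z\in\mathfrak{I}^{+}\subset\mathfrak{E}^{+}$, and via Corollary~\ref{cor:para} (applicable since $z\notin\mathfrak{I}_{R}\supset\mathcal{L}_{R}$) fix an injective local parametrization $\alpha:(-\eta,\eta)\to\partial\minvset{CH}$ with $\alpha(0)=z$.

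\emph{Persistence of $\Delta\neq\emptyset$ on one side.} Set $y=\Delta^{\min}(z)\neq z$ (Lemma~\ref{lem:gap}) and form the region $\mathcal{U}(z)$ of Definition~\ref{def:calUz}. Precisely one of the two local components of $\partial\minvset{CH}\setminus\{z\}$ at $z$ borders $\mathcal{U}(z)$; after possibly reversing $\alpha$, label this component $\alpha((0,\eta))$. Continuity places $\alpha(t)\in\mathfrak{I}^{+}\cap\partial\mathcal{U}(z)\cap\partial\minvset{CH}$ for all sufficiently small $t>0$, whence Lemma~\ref{lem:boundaryUz} yields $\mathcal{U}(\alpha(t))\subsetneq\mathcal{U}(z)$ and, in particular, $\Delta(\alpha(t))\neq\emptyset$.

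\emph{Absence of $\Gamma\neq\emptyset$ on a sub-interval.} By Corollary~\ref{cor:gammabasic} combined with $\Delta(\alpha(t))\neq\emptyset$, any $\alpha(t)$ with $\Gamma(\alpha(t))\neq\emptyset$ must be extruding, i.e.\ the starting point of a local arc $\ell_{t}\subset\partial\minvset{CH}$. Local arcs are leaves of the smooth non-singular foliation of a neighborhood of $z$ by integral curves of $R(z)\partial_{z}$ (valid since $R(z)\neq 0$), and Proposition~\ref{prop:localEND} forces each such $\ell_{t}$ to reach the closed set $\mathcal{Z}(PQ)\cup\mathfrak{I}_{R}$, which sits at positive distance $\rho>0$ from $z$. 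Distinct $\ell_{t},\ell_{t'}$ are pairwise disjoint in their interiors—otherwise an interior local point would satisfy $\Delta=\emptyset$—and they cannot share a common endpoint, since uniqueness of integral curves through a non-singular point would merge them into a single local arc whose putative extruding endpoint would become an interior local point. These rigidity constraints, combined with the injectivity of $\alpha$ and the uniform lower bound $\rho$ on the Euclidean diameter of each $\ell_{t}$ (valid for $t$ close to $0$), yield local finiteness of extruding points near $z$ in $\alpha((0,\eta))$. Consequently, for $t\in(0,\delta)$ with $\delta$ smaller than the smallest extruding parameter, $\alpha(t)$ is of global type, producing the sought open arc of global points.

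The principal obstacle is the local-finiteness step: ruling out accumulation of local arcs at $z$ by combining Proposition~\ref{prop:localEND} with the smooth foliation structure of $R(z)\partial_{z}$, the injectivity of $\alpha$, and the positivity of the distance from $z$ to the singular set $\mathcal{Z}(PQ)\cup\mathfrak{I}_{R}$.
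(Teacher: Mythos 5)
There is a genuine gap, in two places, and both trace back to the same missing idea: you never use the orientation comparison between local arcs and the topological orientation of $\partial \minvset{CH}$, which is the crux of the paper's argument. First, your ``persistence'' step is circular. Lemma~\ref{lem:boundaryUz} takes $\Delta(y)\neq\emptyset$ as a \emph{hypothesis} and concludes $\mathcal{U}(y)\subset\mathcal{U}(z)$; it cannot be invoked to \emph{produce} $\Delta(\alpha(t))\neq\emptyset$ for nearby boundary points. Without that, a point $\alpha(t)$ with $\Gamma(\alpha(t))\neq\emptyset$ need not be extruding --- it could be an interior point of a local arc with $\Delta=\emptyset$ --- so your subsequent reduction to ``extruding points only'' does not get off the ground. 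Second, even granting that reduction, the local-finiteness claim is asserted rather than proved: pairwise disjoint integral-curve arcs, each of diameter bounded below by $\rho$, can perfectly well accumulate at $z$ (think of infinitely many nearby leaves of the foliation each running out to $\mathfrak{I}_R$); disjointness plus a diameter bound plus injectivity of $\alpha$ does not exclude this.

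The paper closes both gaps at once with a single observation. Choose the arc on the $\partial\mathcal{U}(z)$ side, as you did; by Lemma~\ref{lem:globalorientation} these are the points \emph{preceding} $z$ for the topological orientation, and by Proposition~\ref{prop:localconvex} a local arc in $\mathfrak{I}^{+}$ is oriented \emph{with} that topological orientation. Hence if any $y$ on this side satisfies $\Gamma(y)\neq\emptyset$, the local arc through (or starting at) $y$ flows forward along $\partial\minvset{CH}$ toward $z$, and by Proposition~\ref{prop:localEND} it cannot terminate before reaching $\mathcal{Z}(PQ)\cup\mathfrak{I}_R$, which lies at positive distance from the chosen neighborhood; so the local arc would contain $z$, forcing $\Delta(z)=\emptyset$ and contradicting the hypothesis. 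This shows there are \emph{no} points with $\Gamma\neq\emptyset$ on that side (not merely finitely many extruding ones), after which Proposition~\ref{prop:localarc} gives $\Delta\neq\emptyset$ there and hence an adjacent arc of global points. I recommend you replace your two steps with this orientation argument; the rest of your setup (choice of side via $\mathcal{U}(z)$, use of Lemma~\ref{lem:gap}, the exhaustiveness of the trichotomy via Proposition~\ref{prop:localarc}) is fine.
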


\begin{proof}
We consider an arbitrarily small open arc $\alpha$ of $\partial \minvset{CH} \cap \mathcal{U}(z)$ ending at  $z$. By assumptions, $\alpha$ is disjoint from $\mathcal{Z}(PQ) \cup \mathfrak{I}_{R}$. If some point $y\in\alpha$ satisfies $\Gamma(y) \neq \emptyset$, then $\alpha$ partially coincides with a local arc. Since the ending point of any local arc belongs to $\mathcal{Z}(PQ) \cup \mathfrak{I}_{R}$ (see Proposition~\ref{prop:localEND}), comparison of the orientation of local arcs and the orientation of $\partial \minvset{CH}$ in a given inflection domain (see Lemma~\ref{prop:localconvex}) proves that $z$ also belongs to this local arc. This is a contradiction. Therefore, any point $y$ in the arc $\alpha$ satisfies $\Gamma(y) = \emptyset$.
Proposition~\ref{prop:localarc} then implies that each point of the arc $\alpha$  satisfies $\Delta(y) \neq \emptyset$ and is thus a point of global type. Therefore, $z$ is either the endpoint or a point of a global arc containing $\alpha$.
\end{proof}

\begin{proposition}\label{prop:globalpoint}
If a point $z \in \partial \minvset{CH}$ satisfies:
\begin{itemize}
    \item $z \notin \mathcal{Z}(PQ) \cup \mathfrak{I}_{R}$;
    \item $\Delta(z) \neq \emptyset$;
    \item $\Gamma(z) = \emptyset$;
\end{itemize}
then $z$ belongs to a global arc.
\end{proposition}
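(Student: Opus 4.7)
The plan is to combine the trichotomy of boundary points (local, global, or extruding type) from Definition~\ref{defn:ArcClassification} with a flow-continuity argument that rules out local and extruding points in a full Euclidean neighborhood of $z$. Since $z\notin \mathcal{Z}(PQ)\cup\mathfrak{I}_R$ and the latter set is closed, there is a ball $V$ around $z$ disjoint from it, and in particular $z$ does not lie on any $R$-invariant line (those are irreducible components of $\mathfrak{I}_R$ by Definition~\ref{def:Rinvariant}). Consequently Corollary~\ref{cor:para} applies, and after shrinking $V$ the intersection $\partial\minvset{CH}\cap V$ is a parametrized topological arc through $z$. My goal will be to show that every point of $\partial\minvset{CH}\cap V$ is of global type; then this open arc will lie in a global arc by the maximality built into Definition~\ref{def:globalarc}, placing $z$ inside that global arc.

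The central step is to prove, by contradiction, that no point of $\partial\minvset{CH}\cap V$ other than possibly $z$ is of local or extruding type. Suppose there is a sequence $y_n\in\partial\minvset{CH}$ with $y_n\to z$ and $\Gamma(y_n)\neq\emptyset$. By Corollary~\ref{cor:gammabasic} each $y_n$ is a starting or interior point of a local arc, so the positive trajectory $g_R^t(y_n)$ of the field $R(z)\partial_z$ lies in $\partial\minvset{CH}$ for all $t$ in an interval $[0,t_n]$, extended until the arc terminates. By Proposition~\ref{prop:localEND} such a termination forces $g_R^{t_n}(y_n)\in \mathcal{Z}(PQ)\cup\mathfrak{I}_R$. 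Since $\delta:=d(z,\mathcal{Z}(PQ)\cup\mathfrak{I}_R)>0$ and $|R|$ is bounded on a compact neighborhood of $z$, one gets a uniform lower bound $t_n\geq T>0$ for all sufficiently large $n$. Continuity of the flow and closedness of $\partial\minvset{CH}$ then upgrade the inclusions $g_R^{[0,T]}(y_n)\subset\partial\minvset{CH}$ to $g_R^{[0,T]}(z)\subset\partial\minvset{CH}$, i.e.\ $\Gamma(z)\neq\emptyset$, contradicting the hypothesis.

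Hence, after shrinking $V$ once more, every point of $\partial\minvset{CH}\cap V$ satisfies $\Gamma=\emptyset$. Combined with Proposition~\ref{prop:localarc}, which forbids $\Gamma$ and $\Delta$ to vanish simultaneously on $\partial\minvset{CH}\setminus(\mathcal{Z}(PQ)\cup\mathfrak{I}_R)$, this forces $\Delta\neq\emptyset$ everywhere on $\partial\minvset{CH}\cap V$, so every such point is of global type, and the desired conclusion follows by maximality. The main obstacle, and the only nonroutine ingredient, is extracting the uniform flow-time bound $T$ from Proposition~\ref{prop:localEND}: the proposition is stated qualitatively (endpoints belong to $\mathcal{Z}(PQ)\cup\mathfrak{I}_R$), and one must translate this into a quantitative estimate using the positive distance of $z$ from this closed set together with the boundedness of $R$ nearby. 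Once this estimate is in place, the closure and continuity arguments run smoothly.
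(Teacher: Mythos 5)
Your proof is correct, and it reaches the conclusion by a genuinely different mechanism than the paper's. The paper first invokes Proposition~\ref{prop:globalarc} to place $z$ as a point or an endpoint of a global arc, and then splits a boundary neighborhood of $z$ into the two sides determined by the topological orientation: on the incoming side, any point $y$ with $\Gamma(y)\neq\emptyset$ would lie on a local arc whose forward orientation (Proposition~\ref{prop:localconvex}) drives it through $z$ before it can terminate in $\mathcal{Z}(PQ)\cup\mathfrak{I}_{R}$ (Proposition~\ref{prop:localEND}), contradicting $\Gamma(z)=\emptyset$; on the outgoing side the only possible obstruction is a single local arc starting at a point of extruding type, which is excluded at $z$ itself precisely because $\Gamma(z)=\emptyset$. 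You instead prove the stronger local statement that $\Gamma$ vanishes identically on $\partial\minvset{CH}\cap V$ for $V$ small enough, by converting Proposition~\ref{prop:localEND} into a uniform lower bound $T>0$ on the time any local arc near $z$ persists inside $\partial\minvset{CH}$, and then passing to the limit using continuity of the flow and closedness of the boundary. This handles both sides of $z$ symmetrically, dispenses with the orientation bookkeeping and with Proposition~\ref{prop:globalarc} altogether, and yields as a by-product that there are no points of extruding type in a whole neighborhood of $z$. The quantitative step is sound: the forward endpoint of a local arc lies in the closed set $\mathcal{Z}(PQ)\cup\mathfrak{I}_{R}$, hence at distance at least $\delta>0$ from $z$, while $|R|$ is bounded on a compact ball around $z$ avoiding the poles, so the time needed to exit that ball is bounded below; combined with $\Gamma(z)=\{y\in\gamma^{+}_{z}\,\vert\,y\neq z\}\cap\overline{\minvset{CH}}$ this gives the contradiction. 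The one place where you are at the same implicit level of rigor as the paper is the assertion that $\partial\minvset{CH}\cap V$ is a topological arc with $z$ in its interior; both arguments lean on Corollary~\ref{cor:para} and the local connectedness results for this, so no additional gap is introduced.
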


\begin{proof}
Following Proposition~\ref{prop:globalarc}, $z$ is either the ending point or a point of a global arc.  We consider a connected neighborhood $V$ of $z$ in $\partial \minvset{CH}$ that is disjoint from $\mathcal{Z}(PQ) \cup \mathfrak{I}_{R}$. Without loss of generality, we assume that $V$ belongs to $\mathfrak{I}^{+}$.
\par
We consider a point $y \in V$ such that the oriented arc from $y$ to $z$ in $\partial \minvset{CH}$ has the same orientation as the standard topological orientation of the boundary. If $\Gamma(y) \neq \emptyset$, then $y$ is either a point or the starting point of a local arc (Corollary~\ref{cor:gammabasic}) that can be continued til $z$ (see Proposition~\ref{prop:localEND}) since $V$ is disjoint from $\mathcal{Z}(PQ) \cup \mathfrak{I}_{R}$. Therefore, $\Gamma(y) = \emptyset$ and it follows then from Proposition~\ref{prop:localarc} that $\Delta(y)\neq \emptyset$. Thus, any such point $y$ is a global point belonging to global arc $\alpha$.
\par
Then, we consider points $y \in V$ such that the oriented arc from $y$ to $z$ in $\partial \minvset{CH}$ has the opposite orientation as the standard topological orientation of the boundary. If such point $y$ satisfies $\Gamma(y) \neq \emptyset$, then it is a point or the starting point of a local arc. Since $V$ is connected and disjoint from $\mathcal{Z}(PQ) \cup \mathfrak{I}_{R}$, it contains at most one local arc starting at a point of extruding type. The complement of the closure of this local arc in $V$ coincides with global arc $\beta$. By hypothesis, $z$ is not a point of extruding type so it belongs to a global arc $\beta$.
\end{proof}

\begin{proposition}\label{prop:globalEndNONTRIVIAL}
If $z_{0} \in \mathbb{C}$ is the endpoint of a global arc $\alpha$ and  is neither a zero nor a pole of $R(z)$, then $\Delta(z_{0}) \neq \emptyset$.
\end{proposition}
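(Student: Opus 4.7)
The plan is a proof by contradiction. Suppose $\Delta(z_0)=\emptyset$; I aim to contradict this using the local structure of $\partial \minvset{CH}$ at $z_0$, together with Proposition~\ref{prop:localarc} and continuity of $R$ at a regular point.

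First I would fix a sequence $(z_n)_n\subset\alpha$ with $z_n\to z_0$. Since each $z_n$ is a global point, $\Delta(z_n)\ne\emptyset$, so one can choose a ``farthest'' witness $u_n\in\Delta(z_n)$ by taking $u_n=\sigma(z_n)$ when $\sigma(z_n)\in\overline{\minvset{CH}}$ and $u_n=\Delta^{max}(z_n)\in\mathbb{C}$ otherwise. Since $z_0\notin\mathcal{Z}(PQ)$, the function $R$ is continuous and non-vanishing at $z_0$, so the compactified rays $\overline{r(z_n)}\subset\mathbb{C}\cup\mathbb{S}^1$ converge in Hausdorff distance to $\overline{r(z_0)}$. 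Compactness of $\overline{\minvset{CH}}$ then provides a subsequence with $u_n\to u^\ast\in\overline{r(z_0)}\cap\overline{\minvset{CH}}$. If $u^\ast\ne z_0$, then $u^\ast\in\Delta(z_0)$ by definition and the result follows.

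The main obstacle is ruling out the degenerate case $u^\ast=z_0$. In this case $u_n\in\mathbb{C}$ for $n$ large (else $u_n=\sigma(z_n)\to\sigma(z_0)\ne z_0$), and the chords $[z_n,u_n]\subset r(z_n)$ shrink to $\{z_0\}$ with direction converging to $R(z_0)/|R(z_0)|$. Both $z_n,u_n\in\partial\minvset{CH}$, so the secant argument forces $\partial\minvset{CH}$ to be tangent to the direction of $R(z_0)$ at $z_0$. To derive a contradiction in the subcase $z_0\notin\mathfrak{I}_R$, I would then apply Proposition~\ref{prop:localarc}: combined with $\Delta(z_0)=\emptyset$ and $z_0\notin\mathcal{Z}(PQ)\cup\mathfrak{I}_R$, it forces the germ of the integral curve $\gamma_{z_0}$ of $R(z)\partial_z$ at $z_0$ to lie inside $\partial\minvset{CH}$. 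Local connectedness of $\minvset{CH}$ at $z_0$ (Lemma~\ref{lem:locallyconnected not IR}) together with parametrizability of $\partial\minvset{CH}$ (Corollary~\ref{cor:para}), valid since $z_0\notin\mathcal{L}_R\subset\mathfrak{I}_R$, then guarantees that $\partial\minvset{CH}$ is a single Jordan arc near $z_0$, and hence coincides with $\gamma_{z_0}$. The global arc $\alpha$ is part of $\partial\minvset{CH}$ and accumulates at $z_0$, so it must coincide with $\gamma_{z_0}$ near $z_0$. This is absurd: for any point $y$ on $\alpha$ sufficiently close to $z_0$ one has $\Gamma(y)=\emptyset$ by globality of $y$, while the forward flow of $R(z)\partial_z$ starting at $y$ stays on $\gamma_{z_0}\subset\partial\minvset{CH}$, whence $\Gamma(y)\ne\emptyset$.

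The most delicate case is $z_0\in\mathfrak{I}_R$, for which Proposition~\ref{prop:localarc} is not available. Its treatment requires a refined case analysis based on the transverse/tangent/singular decomposition of $\mathfrak{I}_R$ and the signed pieces $\Delta^{\pm}(z),\Delta^{0}(z)$ of Definition~\ref{def:-+0}, tracking how these vary continuously as one approaches $z_0$ along $\alpha$; the hard part is again to prevent the witnesses $u_n$ from all collapsing to $z_0$ precisely at the point where the curvature of $\partial\minvset{CH}$ may change sign.
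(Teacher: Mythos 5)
Your compactness setup is the same as the paper's: extract support points $u_n\in\Delta(z_n)$ along the arc, pass to a convergent subsequence in the compact space $\mathbb{C}\cup\mathbb{S}^1$, and observe that any limit $u^\ast\neq z_0$ lies on $\overline{r(z_0)}$ (by continuity of $\arg R$ at the regular point $z_0$) and hence in $\Delta(z_0)$. The problem is that you never actually close the argument: you yourself flag that the case $z_0\in\mathfrak{I}_R$ is left open, and that is not a peripheral case --- it is the \emph{generic} one. By Theorem~\ref{thm:MAINClassification} and Propositions~\ref{prop:bouncing}, \ref{prop:C1INFLECTION}, \ref{prop:SWITCHREGULAR}, the finite endpoints of global arcs are typically points of switch, bouncing, $C^1$- or $C^2$-inflection type, all of which lie on $\mathfrak{I}_R$; a proof that only covers $z_0\notin\mathfrak{I}_R$ covers almost none of the situations the proposition is used for. (Your treatment of the off-$\mathfrak{I}_R$ case is essentially fine, and in fact simpler than you make it: if $\Delta(z_0)=\emptyset$ and $z_0\notin\mathcal{Z}(PQ)\cup\mathfrak{I}_R$, Proposition~\ref{prop:localarc} already puts $z_0$ on a local arc, which is incompatible with global points accumulating at $z_0$; no secant/tangency discussion is needed.)

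The missing idea is that the collapse $u_n\to z_0$ should be excluded not by local analysis of $\partial\minvset{CH}$ at $z_0$ but by the global monotonicity along the arc established in Lemma~\ref{lem:globalorientation}: for $t<t'$ one has $\alpha_t\in\partial\mathcal{U}(\alpha_{t'})$ and $\mathcal{U}(\alpha_t)\subsetneq\mathcal{U}(\alpha_{t'})$, where $\mathcal{U}(z)$ is the component of $(\minvset{CH})^c\setminus[z,\Delta^{\min}(z)]$ cut off by the segment $[z,\Delta^{\min}(z)]$. The nested family $\mathcal{U}(\alpha_{t'})$ contains a fixed nonempty open set (say $\mathcal{U}(\alpha_{1/2})$) and keeps all earlier points $\alpha_t$ on its boundary, while its ``gate'' is the segment $[\alpha_{t'},\Delta^{\min}(\alpha_{t'})]$; if the support points accumulated at $z_0$ these gates would degenerate to the single point $z_0$ while still having to separate a fixed open region from the rest of $(\minvset{CH})^c$ and carry the whole earlier arc on their closures, which is impossible. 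This argument makes no reference to whether $z_0$ lies on $\mathfrak{I}_R$, which is exactly why the paper's proof is short and uniform where yours splinters into cases. As written, your proposal proves a strictly weaker statement and therefore has a genuine gap.
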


\begin{proof}
We assume that $\alpha$ is parameterized by the interval $]0,1[$ (with the correct orientation) and $\alpha(t) \rightarrow z_{0}$ as $t \rightarrow 1$. For any $n \geq 2$, we pick a point $\beta_{n} \in \Delta(\alpha(1-1/n))$. Since $\mathbb{C} \cup \mathbb{S}^{1}$ is compact, the sequence $(\beta_{n})_{n \geq 2}$ has an accumulation point $\beta$. Since $z_{0}$ is the endpoint of  $\alpha$, the point $\beta$ cannot coincide with $z_{0}$ (see Lemma~\ref{lem:globalorientation}). It follows that a family of associated rays accumulates on a half-line starting at $z_{0}$ and containing $\beta$. Since $\arg(R(z))$ is continuous in a neighborhood of $z_{0}$, we get that  this half-line coincides with the associated ray $r(z_{0})$.
\end{proof}

%\subsubsection{Tangent directions of global arcs}

%\begin{definition}
  %On any global arc, define the \textit{normalized radius} $\rho$ as a positive function given  by $\rho(z)=\frac{\Delta^{min}(z)-z}{R(z)}$.
%\end{definition}

%\begin{conjecture}
%The normalized radius $\rho$ is an increasing positive function (relatively to the orientation defined in Lemma~\ref{lem:globalorientation}) along any global arc.
%\end{conjecture}

\begin{definition}
    A point $z\in \partial \minvset{CH}$ is a \emph{non-convexity point} if there is a cone $\mathcal{C}$ at $z$ of angle strictly bigger than $\pi$ and a neighborhood $V$ of $z$ such that $\mathcal{C} \cap V \subset \minvset{CH}$.
\end{definition}

For a point $z_{0}$ for which  $\Delta(z_{0})$ consists of   a single point $u$ satisfying the condition $R(z_0)+(u-z_0)R'(z_0)\neq0$, Lemma~\ref{lem:RootTrailSlope} proves that: (i) the root trail $\mathfrak{tr}_{u}$ has a unique branch at $z_{0}$, (ii) it is contained in $\minvset{CH}$,  and (iii) its tangent slope is the argument of $\frac{R^{2}(z_{0})}{R(z_{0})+(u-z_{0})R'(z_{0})}$ (mod $\pi$). These lemma yields that if at some point $z_{0}$, $\Delta(z_{0})$ contains more than one point, then $\partial \minvset{CH}$ cannot be smooth at $z_{0}$:

\begin{proposition}\label{prop:globalmultiple}
At a point $z \notin \mathfrak{I}_{R} \cup \mathcal{Z}(PQ)$ such that $\Delta(z)$ contains at least two points, the boundary $\partial\minvset{CH}$ has a non-convexity point.
\end{proposition}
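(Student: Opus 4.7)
The plan is to prove the stronger statement that $z$ itself is a non-convexity point of $\partial\minvset{CH}$. I would begin by picking $u_{1},u_{2}\in\Delta(z)$ with $u_{1}\neq u_{2}$ and applying Lemma~\ref{lem:RootTrailSlope} (or Lemma~\ref{lem:RTSlopeINFINITY} in the case one of the $u_{i}$ is the point at infinity of $r(z)$) to obtain a unique smooth branch of each root trail $\mathfrak{tr}_{u_{i}}$ through $z$, with tangent direction $\theta_{i}$. The non-degeneracy $R(z)+(u_{i}-z)R'(z)\neq 0$ required for uniqueness holds because $z\notin\mathfrak{I}_{R}$ forces $\Im R'(z)\neq 0$, in particular $R'(z)\neq 0$. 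Using the explicit formula from the lemma, the equality $\theta_{1}=\theta_{2}\pmod{\pi}$ rearranges, after normalizing to $z=0$ and $R(z)=1$ (so $u_{i}\in\mathbb{R}_{>0}$), to the condition $(u_{1}-u_{2})\,\Im R'(z)=0$; since both factors are nonzero, I conclude $\theta_{1}\neq\theta_{2}\pmod{\pi}$.

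Next I would invoke the standard property $\mathfrak{tr}_{u_{i}}\subset\minvset{CH}$ from Section~\ref{sub:roottrail} together with Proposition~\ref{prop:IntegralCurve}, which ensures that the negative $R$-trajectory $\gamma_{w}^{-}$ through any $w\in\minvset{CH}$ is again contained in $\minvset{CH}$. Sweeping these negative trajectories from the germ of $\mathfrak{tr}_{u_{i}}$ at $z$ produces a two-dimensional set $A_{i}\subset\minvset{CH}$. Keeping the normalization $z=0$, $R(z)=1$, the flow of $R(z)\partial_{z}$ is horizontal to leading order, each $\mathfrak{tr}_{u_{i}}$ is transverse to this flow (by $z\notin\mathfrak{I}_{R}$), and to first order $A_{i}$ coincides with the open half-plane bounded by the tangent line $L_{i}$ to $\mathfrak{tr}_{u_{i}}$ at $z$ from which the rightward horizontal ray reaches $L_{i}$. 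Two distinct lines $L_{1}\neq L_{2}$ through $z$ split a small disk into four sectors; each $A_{i}$ is a union of two adjacent sectors to leading order, so $A_{1}\cup A_{2}$ covers three of the four and its complement is a single sector of angle strictly less than $\pi$. Consequently $\minvset{CH}\supset A_{1}\cup A_{2}$ contains an open cone at $z$ of angle strictly greater than $\pi$, showing that $z$ is a non-convexity point.

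The main technical point to verify carefully is that the first-order half-plane description of each $A_{i}$ produces an honest open cone of angle $>\pi$ inside $A_{1}\cup A_{2}$ in some actual neighborhood of $z$, not merely asymptotically. This follows from real-analyticity of $R$ and of the root trails, the fact that the two tangent directions $\theta_{1},\theta_{2}$ are separated by a fixed positive angle (so the complementary sector has angle bounded uniformly below $\pi$), and continuous dependence of the flow of the regular non-vanishing vector field $R(z)\partial_{z}$ on initial data at $z$.
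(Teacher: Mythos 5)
Your proof follows essentially the same route as the paper's: both identify the two transverse root-trail branches through $z$ via Lemma~\ref{lem:RootTrailSlope} (resp.\ Lemma~\ref{lem:RTSlopeINFINITY}), note that the transversality is equivalent to $\Im R'(z)\neq 0$, and then sweep the backward trajectories of $R(z)\partial_z$ from these branches to fill a cone of angle greater than $\pi$ inside $\minvset{CH}$ at $z$. Your additional observation that the degeneracy $R(z)+(u-z)R'(z)=0$ cannot occur for $u\in r(z)\setminus\{z\}$ when $z\notin\mathfrak{I}_{R}$ is correct (since $u-z\in\mathbb{R}_{>0}R(z)$ forces $\Im\bigl(R(z)+(u-z)R'(z)\bigr)/R(z)\neq 0$), and it renders the paper's extra case distinction vacuous.
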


\begin{proof}
First assume that $\Delta(z)$ contains two points $u,v$ both of which are not points at infinity. Lemma~\ref{lem:RootTrailSlope} proves that $z$ belongs to two distinct root trails. Assuming that $R(z)+(u-z)R'(z)$ and $R(z)+(v-z)R'(z)$ are nonzero, the tangent slopes of these root trails at $z$ are determined by the argument of $\frac{R^{2}(z)}{R(z)+(u-z)R'(z)}$ and $\frac{R^{2}(z)}{R(z)+(v-z)R'(z)}$. By hypothesis, we have $Im(R'(z)) \neq 0$ and these two branches intersect transversely at $z$ and the claim follows, taking the backward trajectories of the two root-trails. If $R(z)+(u-z)R'(z)=0$, then two branches of the root trail intersect transversely.
\par
In the remaining case, $\Delta(z)$ contains exactly one point $u$ satisfying the condition $R(z)+(u-z)R'(z)\neq0$ and a point $\sigma(z)$ at infinity. Then the root trail of $u$ at $z$ has a slope given by the argument of $\frac{R^{2}(z)}{R(z)+(u-z)R'(z)}$ (or $\frac{R(z)}{R'(z)}$ if $u$ is at infinity, see Lemma~\ref{lem:RTSlopeINFINITY}). Similarly, $R'(z) \notin \mathbb{R}$ so these curves intersect transversely at $z$. Summarizing we see that in all possible cases, the boundary $\partial\minvset{CH}$ has a non-convexity point.
\end{proof}

\subsection{Points of extruding type}\label{sub:extruding}

Outside the local and the global arcs, the only singular boundary points in the complement of $\mathcal{Z}(PQ) \cup \mathfrak{I}_{R}$ which can occur are points of extruding type.

\begin{proposition}\label{prop:extruding}
Let $z$ be a point of extruding type in $\partial \minvset{CH}$. Then $z$ is both the endpoint of a global arc and the starting point of a local arc.
\par
The boundary $\partial \minvset{CH}$ is not $C^{1}$ at $z$ and $z$ is a non-convexity point.
\end{proposition}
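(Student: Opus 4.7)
\medskip
\textbf{Proof proposal.} The first two assertions follow from the dichotomies already in place. By \cref{cor:gammabasic}, $\Gamma(z)\neq\emptyset$ forces $z$ to be either the starting point of a local arc or an interior point of one; but interior local-arc points satisfy $\Delta=\emptyset$ by \cref{defn:ArcClassification}, contradicting $\Delta(z)\neq\emptyset$. Analogously, since interior global-arc points satisfy $\Gamma=\emptyset$, \cref{prop:globalarc} identifies $z$ as the endpoint of a global arc.

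For the failure of $C^{1}$-regularity, the case $|\Delta(z)|\geq 2$ is handled directly by \cref{prop:globalmultiple}, which exhibits a non-convexity point (and so precludes $C^1$-smoothness). Assume now $\Delta(z)=\{u\}$. Writing $u=z+tR(z)$ with $t>0$ (or invoking \cref{lem:RTSlopeINFINITY} if $u$ is at infinity), \cref{rem:ROOTTRAIL} ensures $R(z)+(u-z)R'(z)\neq 0$ since $z\notin\mathfrak{I}_R$, so \cref{lem:RootTrailSlope} gives the global-arc tangent slope at $z$ as $\arg\frac{R(z)}{1+tR'(z)} \pmod \pi$, whereas the local-arc tangent is $\arg R(z)$. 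Their discrepancy $\arg(1+tR'(z))$ vanishes modulo $\pi$ if and only if $R'(z)\in\mathbb{R}$, that is, iff $z\in\mathfrak{I}_R$; since this is excluded, the two tangent lines disagree and $\partial\minvset{CH}$ has a genuine corner at $z$.

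For non-convexity (still in the case $\Delta(z)=\{u\}$), I normalize by an affine change of variable so that $z=0$, $R(z)=1$, and WLOG $z\in\mathfrak{I}^+$ (the case $\mathfrak{I}^-$ follows by complex conjugation). The local arc leaves $0$ tangent to $\mathbb{R}^+$ with positive curvature (since $\Im R'(0)>0$), curving into the upper half-plane; at any local point $y'\neq 0$ on this arc, $r(y')\subset\minvset{CH}^c$ lies below the arc, forcing $\minvset{CH}$ to sit locally \emph{above} the local arc. The position of the global arc is then pinned down by combining \cref{def:calUz} (which places $\mathcal{U}(z)$ on the right side of $[0,\Delta^{\min}(z)]\subset\mathbb{R}^+$, namely the lower half-plane in these coordinates) with \cref{lem:globalorientation} (which puts the germ of the global arc at $z$ on $\partial\mathcal{U}(z)$): the global arc must emerge from $0$ into the lower half-plane, with tangent direction $e^{-i\phi}$ where $\phi\in(0,\pi)$ because $\Im(1+tR'(0))=t\,\Im R'(0)>0$. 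The two tangent half-lines at $0$ thus split a punctured neighborhood into a narrow wedge of angle $\phi$ in the lower half-plane and a wide wedge of angle $2\pi-\phi>\pi$; since $\minvset{CH}$ already contains an upper-half-plane neighborhood of $0$, it must occupy the wide wedge, producing the required cone of angle strictly greater than $\pi$ contained in $\minvset{CH}$.

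The main obstacle is the last step: deciding on which side of the global-arc tangent $\minvset{CH}$ actually lies. The tangent-slope computation is formal and the failure of $C^1$-smoothness is immediate once the two slopes differ; the convex/non-convex dichotomy at the corner is settled only by combining the canonical orientation on global arcs (\cref{lem:globalorientation}) with the definition of $\mathcal{U}(z)$, which together force the global arc into the half-plane opposite to where the local arc pushes $\minvset{CH}$, yielding the non-convex configuration rather than the convex one.
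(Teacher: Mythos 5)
Your first paragraph (that $z$ is the starting point of a local arc and the endpoint of a global arc) is correct and matches the paper. The gap is in the second and third steps: you write that Lemma~\ref{lem:RootTrailSlope} ``gives the global-arc tangent slope at $z$ as $\arg\frac{R(z)}{1+tR'(z)}$.'' That lemma computes the tangent slope of the \emph{root trail} $\mathfrak{tr}_{u}$, which is a curve contained in $\minvset{CH}$ but is in general not the global arc and not tangent to it. The global arc is an envelope-type curve whose support point $\Delta(w)$ varies with $w$; only when the support point is constant does the global arc locally coincide with a root trail. Since your entire non-$C^{1}$ argument and the strict inequality $2\pi-\phi>\pi$ in your wedge argument rest on this tangent direction, both conclusions are left unproved: if the global arc were allowed to approach $z$ tangentially to the line through $\gamma_z$ (which your argument does not exclude), the corner and the cone of angle $>\pi$ would both disappear.

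The paper's proof uses the same slope computation but applies it to the right object: it observes that $\mathfrak{tr}_{u}\subset\minvset{CH}$ (the root trail of any point of $\overline{\minvset{CH}}$ lies in $\minvset{CH}$), that this trail crosses the integral curve $\gamma_z$ \emph{transversally} at $z$ because $R(z)+(u-z)R'(z)=R(z)(1+tR'(z))\neq 0$ and $\Im R'(z)\neq 0$, and that the associated rays of points on the negative trajectory $\gamma_z^{-}$ meet $\mathfrak{tr}_{u}\subset\minvset{CH}$, so that $\gamma_z^{-}$ lies in the interior of $\minvset{CH}$. The two transversal curves $\gamma_z^{+}$ and $\mathfrak{tr}_{u}$ inside $\minvset{CH}$, together with $\minvset{CH}$ lying above the local arc and $\gamma_z^{-}$ being interior, are what force the corner and the cone of angle strictly greater than $\pi$. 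You can repair your argument by replacing every appeal to ``the global-arc tangent'' with the branch of $\mathfrak{tr}_{u}$ entering the lower half-plane at the definite angle $-\phi$ with $\phi\in(0,\pi)$: since that branch is contained in $\minvset{CH}$, the boundary must pass strictly below it, which yields both conclusions without ever needing to know the global arc's tangent.
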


\begin{proof} See Figure~\ref{fig:ExtrudingProof} below.
By definition of the correspondence $\Gamma$, and local considerations of $R$,  $z$ is the starting point of a local arc. Propositions~\ref{prop:globalarc} and ~\ref{prop:globalpoint} show that $z$ is the ending point of a global arc.
\par
For any point $u \in \Delta(z)$, the root trail $\mathfrak{tr}_{u}$ has a unique local branch at $z$ and its tangent direction is the argument of $\frac{R^{2}(z)}{R(z)+(u-z)R'(z)}$ (mod $\pi$), see Lemma~\ref{lem:RootTrailSlope}. Indeed, $R(z)+(u-z)R'(z) \neq 0$ because $u-z$ is real collinear to $R(z)$ while $\Im(R'(z)) \neq 0$. Since $z \notin \mathfrak{I}_{R}$, this branch transversely intersects the integral curve of $R(z)\partial_{z}$ containing $z$. Both of these branches are (semi)analytic curves  contained in $\minvset{CH}$ and the associated rays of the points lying on the negative part of $\gamma_z$ intersect $\mathfrak{tr}_{u}\subset\minvset{CH}$. Thus the negative part of $\gamma_z$ is disjoint from  $\partial\minvset{CH}$.
\end{proof}
\begin{figure}
    \centering
    \includegraphics[width=0.5\linewidth]{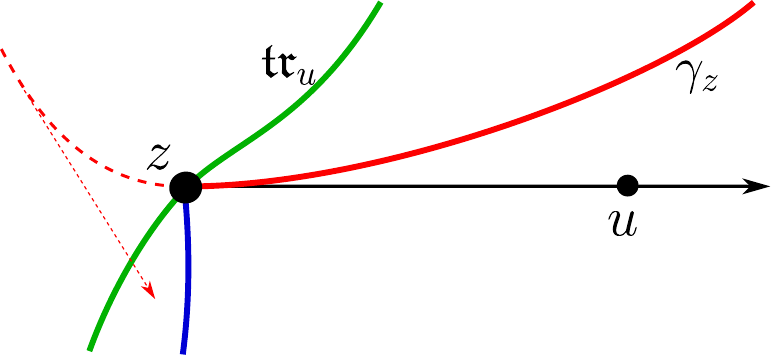}
    \caption{Negative part of $\gamma_z$ cannot be on the boundary as $\mathfrak{tr}_{u}\subset\minvset{CH}$.}\label{fig:ExtrudingProof}
\end{figure}

\subsection{Boundary arcs in inflection domains}

\begin{proposition}\label{prop:boundaryarc}
For any connected component $\mathcal{D}$ of the complement of the curve of inflections  $\mathfrak{I}_{R}$, $\partial \minvset{CH} \cap \mathcal{D}$ is a union of disjoint topological arcs. In each of them, local and global arcs have the same orientation. If $\Im(R')$ is positive (resp. negative) in $\mathcal{D}$ then the latter orientation coincides with (is opposite to) the topological orientation of $\partial \minvset{CH}$.
\end{proposition}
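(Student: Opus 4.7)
The plan is to show that every point of $\partial \minvset{CH} \cap \mathcal{D}$ lies on a unique maximal arc obtained by concatenating local arcs, global arcs, and extruding points, and then to exhibit a common orientation determined by the sign of $\Im(R')$ on $\mathcal{D}$.

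First I would use the classification of boundary points. Since $\mathcal{D}$ is disjoint from $\mathfrak{I}_R$ and Corollary~\ref{cor:SingInflec} places any boundary zero or pole of $R(z)$ inside $\mathfrak{I}_R$, each $z \in \partial \minvset{CH} \cap \mathcal{D}$ falls into exactly one of the three classes of Definition~\ref{defn:ArcClassification}. Propositions~\ref{prop:localarc}, \ref{prop:globalarc}, \ref{prop:globalpoint}, and \ref{prop:extruding} then imply that $z$ either lies in the interior of a local or global arc, or is an extruding point at which an incoming global arc meets an outgoing local arc.

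Next, I would align the orientations. Proposition~\ref{prop:localconvex} says the $R(z)\partial_z$-orientation of any local arc contained in $\mathcal{D}$ agrees with the standard topological orientation of $\partial \minvset{CH}$ precisely when $\mathcal{D} \subset \mathfrak{I}^{+}$, and Lemma~\ref{lem:globalorientation} yields the same dichotomy for the canonical orientation of each global arc. At an extruding point $z$, Proposition~\ref{prop:extruding} identifies $z$ as the endpoint of a global arc and the starting point of a local arc; by the orientation conventions just recalled, these two pieces carry the same orientation, so their concatenation is consistently oriented.

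Third, I would construct maximal arcs and argue disjointness. A local arc contained in $\mathcal{D}$ cannot terminate strictly inside $\mathcal{D}$ by Proposition~\ref{prop:localEND}, while a global arc in $\mathcal{D}$ can only end on $\partial \mathcal{D} \subset \mathfrak{I}_R$ or at an extruding point (where Proposition~\ref{prop:extruding} provides a continuing local arc). Iterating forward and backward from any $z \in \partial \minvset{CH} \cap \mathcal{D}$ therefore produces a maximal oriented concatenation whose two endpoints lie on $\mathfrak{I}_R$ or escape to infinity, containing at most one extruding junction. Uniqueness of the integral curve through a local point, monotonicity of $\sigma$ along a global arc (Lemma~\ref{lem:globalparameter}), and the single-junction structure at extruding points force each $z$ to belong to exactly one such concatenation, yielding disjointness. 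That each concatenation is genuinely a topological arc follows from the parametrization of $\partial \minvset{CH} \setminus \mathcal{L}_R$ furnished by Corollary~\ref{cor:para}, since every $R$-invariant line lies inside $\mathfrak{I}_R$ and hence outside $\mathcal{D}$. The step I expect to be the main technical obstacle is the splicing at extruding junctions, where $\partial \minvset{CH}$ fails to be $C^1$ by Proposition~\ref{prop:extruding}; verifying that the two local parametrizations glue into a single embedded arc should however reduce to the explicit local picture given by Proposition~\ref{prop:extruding} combined with the injectivity from Corollary~\ref{cor:para}.
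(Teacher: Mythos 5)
Your orientation argument and the splicing at extruding points match the paper's proof (Proposition~\ref{prop:localconvex}, Lemma~\ref{lem:globalorientation}, Proposition~\ref{prop:extruding}), but there is a genuine gap at the very first step. You claim that every $z\in\partial\minvset{CH}\cap\mathcal{D}$ falls into one of the three classes of Definition~\ref{defn:ArcClassification}, citing Corollary~\ref{cor:SingInflec}. That corollary only forces \emph{zeros and poles of $R(z)$} onto $\mathfrak{I}_{R}$; it says nothing about points of $\mathcal{Z}(PQ)$ which are common roots of $P$ and $Q$ of the same multiplicity. At such a point $R$ is regular ($m_{\alpha}=0$), so it need not lie on the curve of inflections and can perfectly well sit inside $\mathcal{D}$ (the family $T_{\alpha}$ with $Q=(z-\alpha)^{k}$, $P=z(\alpha-z)^{k}$ gives a concrete boundary corner of this kind). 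Definition~\ref{defn:ArcClassification} and the correspondences $\Gamma,\Delta$ explicitly exclude $\mathcal{Z}(PQ)$, so your trichotomy does not apply there, and a priori several arcs could meet at such a point, which would destroy the ``union of disjoint topological arcs'' conclusion. The same oversight surfaces again when you assert that a local arc cannot terminate strictly inside $\mathcal{D}$: Proposition~\ref{prop:localEND} allows its endpoint to lie in $\mathcal{Z}(PQ)$ at a regular point of $R$, which can be interior to $\mathcal{D}$.

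The paper closes exactly this case: for $z_{0}\in\mathcal{Z}(PQ)\cap\mathcal{D}$, Corollary~\ref{cor:SingInflec} shows $z_{0}$ is neither a zero nor a pole of $R$, and then Corollary~\ref{cor:BOUNDINTERIOR} (with $m_{\alpha}=0$, giving a single local connected component of the interior) together with the fact that the irregular locus is contained in $\mathfrak{I}_{R}$ shows that $z_{0}$ is incident to at most two boundary arcs, so the arcs concatenate rather than branch. You need to add this step (or an equivalent local-connectedness argument at common roots) before your maximal concatenations can be asserted to be disjoint embedded arcs.
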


\begin{proof}
The statement about orientation follows from Proposition~\ref{prop:localconvex} and Lemma~\ref{lem:globalorientation}. Proposition~\ref{prop:extruding} shows that a point of extruding type is incident to a local and a global arcs. It remains to prove that any point of $\mathcal{Z}(PQ) \cap \mathcal{D}$ is incident to at most two arcs.
\par
Such a point $z_{0}$ is neither a zero nor a pole of $R(z)$, see Corollary~\ref{cor:SingInflec}. Therefore, Corollary~\ref{cor:BOUNDINTERIOR} together with the fact that all irregular points are contained in $\mathcal I_R$ proves the statement.
\end{proof}

\section{Singular boundary points on the curve of inflections}\label{sec:SingInflection}

At points belonging to the curve of inflections the boundary $\partial\minvset{CH}$ can display more complicated behaviours. In this section, we classify  boundary points that belong to the transverse locus $\mathfrak{I}_{R}^{\ast}$ of the curve of inflections (see Definition~\ref{defn:transverse}). For the following definition, recall the Definition~\ref{def:-+0}.

\begin{definition}\label{defn:TRANSVERSEClassification}
A point of $\partial\minvset{CH} \setminus \mathcal{Z}(PQ)$ belonging to the transverse locus $\mathfrak{I}_{R}^{\ast}$ is a point of:
\begin{itemize}
    \item \emph{bouncing type} if $\Delta^{+} \neq \emptyset$ and $\Gamma \cup \Delta^{-} \neq \emptyset$;

    \item \emph{switch type} if $\Delta^{+} \neq \emptyset$ and $\Gamma \cup \Delta^{-} = \emptyset$;   

    \item \emph{$C^{1}$-inflection type} if $\Delta^{+} = \emptyset$, $\Delta^{-} \neq \emptyset$ and $\Gamma = \emptyset$;
        
    \item \emph{$C^{2}$-inflection type} if $\Delta^{+} = \emptyset$ and either $\Delta^{-} = \emptyset$ or $\Gamma \neq \emptyset$.
    \end{itemize}
\end{definition}

\subsection{Horns at points of the transverse locus}\label{sub:HornTransverse}

At a point $p \in \mathfrak{I}_{R}^{\ast}$, the  curve of inflections  is smooth and the vector field $R(z)\partial_z$ is transversal to it. This means that by \eqref{eq:R Taylor} we can assume that
   \begin{equation}\label{eq:R at generic IC}
       R(u)=1+\rho u +(a+ib)u^2+...
   \end{equation}
where we assumed that $p=0$. The condition $\Im R'(0)=0$ means that $\rho\in \R$, and the transversality condition is equivalent to $b\not=0$. Without loss of generality we can assume that  $b>0$. In other words,   $m=2$ in \eqref{eq:R Taylor} which implies that the integral curves  locally look like  cubic curves with inflections at these points.

We define the \emph{diameter} of a horn $\horn{p}{p'}{p''}$ to be the least upper bound  $t_0>0$ of all $t'>0$ such that  there is $u\in \horn{p}{p'}{p''}$ such that $u+tR(u)\in \horn{p}{p'}{p''}$ for all $t\in(0,t')$.

\begin{lemma}\label{lem:unif horns at I_R^*}
For $p\in\mathfrak{I}_{R}^{\ast}$,  there exists a neighborhood $\Omega$ of $p$ and $\epsilon>0$ such that for all points $u\in\overline{\Omega_+}$, where  $\Omega_+=\mathfrak{I}^+\cap \Omega$,  there exists a small horn of diameter greater than $\epsilon$.
\end{lemma}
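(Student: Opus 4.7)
The plan is to carry out the construction of Proposition~\ref{prop:small horn exists} in a manner that is uniform over base points in $\overline{\Omega_+}$, using the transversality of $R(z)\partial_z$ to $\mathfrak{I}_R$ at $p$ to prevent the horn size from degenerating near $\mathfrak{I}_R$.

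First, I would normalize by an affine change of variable so that $p=0$, $R(0)=1$, and (conjugating $z\mapsto\bar z$ if needed) so that $R(u)=1+\rho u+(a+ib)u^{2}+\cdots$ with $\rho\in\mathbb{R}$ and $b>0$, as in \eqref{eq:R at generic IC}. In this local picture, $\mathfrak{I}_R$ is a smooth arc through $0$ transverse to the real axis and $\mathfrak{I}^{+}$ is the half-neighborhood $\{bx+ay>0\}+O(|u|^{2})$. Thus $\overline{\Omega_{+}}$ is a compact cap, adjacent to $\mathfrak{I}_R$ along a short arc through $p$.

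For every $u\in\overline{\Omega_{+}}$, I would then build the horn $\horn{u}{u'}{u''}$ by following $\gamma_{u}^{+}$ for a uniform arc length $\epsilon_{0}$ to reach $u'$, and taking $u''$ as the intersection of the tangents to $\gamma_{u}$ at $u$ and $u'$. The two small-horn axioms of Definition~\ref{def:small horn} reduce to: positivity of the scalar product $(R(\cdot),R(\cdot))$ inside the horn, which follows from $R\approx 1$ on $\Omega$ by continuity once $\Omega$ is small enough; and strict monotonicity of $t\mapsto\sigma(w+tR(w))$, whose infinitesimal version at $t=0$ is $\Im R'(w)\geq 0$, with higher-order control coming from \eqref{eq:small horn exists bound}. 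The latter condition is equivalent to the geometric assertion that the horn lies in $\overline{\mathfrak{I}^{+}}$, and this is the central verification: the trajectory side $\gamma_{u}^{[0,\epsilon_{0}]}$ is contained in $\overline{\mathfrak{I}^{+}}$ by transversality (with $b>0$ giving a uniform bound on how long forward trajectories remain on the positive side of $\mathfrak{I}_R$); and the two tangent segments lie on the $\mathfrak{I}^{+}$-side because $R(u)$ and $R(u')$ both approximate $1$ and hence point into the half-plane $\{bx+ay>0\}$ in leading order. The horn is a convex curvilinear triangle bounded by three such pieces, so its interior is then also contained in $\overline{\mathfrak{I}^{+}}$.

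The main obstacle, and the reason the lemma is not a direct corollary of Lemma~\ref{lem:unif horns}, is that at points $u\in\mathfrak{I}_R\cap\overline{\Omega_{+}}$ the first-order quantity $\Im R'(u)$ vanishes, so Lemma~\ref{lem:unif horns} yields horns whose size tends to zero as $u$ approaches $\mathfrak{I}_R$. The uniform bound $\epsilon$ instead has to come from the \emph{second-order} invariant $b$, which is bounded away from zero on a sufficiently small compact $\overline\Omega$. A standard continuity/compactness argument on $\overline{\Omega_{+}}$, combined with uniform Taylor bounds on $R$ and the quantitative form of \eqref{eq:small horn exists bound}, then produces a common $\epsilon>0$ bounding the diameter of the constructed horns from below for every $u\in\overline{\Omega_{+}}$, as required.
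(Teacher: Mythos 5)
Your overall strategy matches the paper's: normalize so that $R(u)=1+\rho u+(a+ib)u^{2}+\cdots$ with $\rho\in\mathbb{R}$ and $b>0$, observe that Lemma~\ref{lem:unif horns} degenerates as the base point approaches $\mathfrak{I}_{R}$ because the first-order quantity $\Im R'$ vanishes there, and recover a uniform horn size from the second-order coefficient $b$, which is bounded away from zero on a small compact neighborhood. This is exactly the mechanism the paper exploits, packaged there as the two-variable function $T(u,t)=\Im\bigl[\tfrac{R'(u+tR(u))}{R(u+tR(u))}R(u)\bigr]$ together with the bound $\partial_{t}T>b>0$ near $(p,0)$.

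There is, however, a step in your reduction that does not hold as stated: the first small-horn axiom of Definition~\ref{def:small horn} (monotonicity of $t\mapsto\sigma(w+tR(w))$ inside the horn) is \emph{not} equivalent to the containment of the horn in $\overline{\mathfrak{I}^{+}}$, and containment does not by itself imply it. Containment only gives $T(w,0)=\Im R'(w)\ge 0$ at each point $w$ of the horn, i.e.\ the sign of the derivative at the single instant $t=0$. For $t>0$ the relevant derivative is $T(w,t)=\Im\bigl[\tfrac{R'(v)}{R(v)}R(w)\bigr]$ with $v=w+tR(w)$, and when $\Re R'\approx\rho\neq 0$ the term $\Im\bigl[\bigl(\tfrac{R(w)}{R(v)}-1\bigr)R'(v)\bigr]$ is not controlled by $\Im R'(v)\ge 0$, which can be arbitrarily small compared with $|R'(v)|$ near $\mathfrak{I}_{R}$. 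What closes the argument is the uniform estimate $\partial_{t}T(w,t)>b>0$ on a neighborhood of $(p,0)$ (this is where $\Im R''(p)=2b>0$, i.e.\ transversality, actually enters), which integrates to $T(w,t)\ge T(w,0)+bt>0$ for $t>0$ whenever $T(w,0)\ge 0$; this yields monotonicity simultaneously for all base points of $\overline{\Omega_{+}}$, including those on $\mathfrak{I}_{R}$. Your appeal to \eqref{eq:small horn exists bound} gestures at this, but that computation is carried out in coordinates adapted to a single base point of $\mathfrak{I}_{R}$ and does not by itself give the bound uniformly over pairs $(w,t)$ with $w$ ranging over $\overline{\Omega_{+}}$. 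A secondary, fixable point: convexity of the horn plus boundary contained in $\overline{\mathfrak{I}^{+}}$ does not formally force the interior into $\overline{\mathfrak{I}^{+}}$; one must also use that $\mathfrak{I}_{R}\cap\Omega$ is a single arc transversal to $r(p)$, so it cannot be trapped inside the thin cusp-shaped horn (cf.\ Corollary~\ref{cor:IR not between traj and r(p)}).
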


\begin{proof}
Without loss of generality we assume $p=0$.
Consider the function  
$$
T(u,t)=\frac{d \sigma(u+tR(u))}{dt}(t)=\Im \left[\frac{R'(u+tR(u))}{R(u+tR(u))}R(u)\right]
$$
defined in $\bC_u\times\R_t$. Note that by definition $\mathfrak{I}^+=\{T(u,0)>0\}$. Since $T(0,t)= 2bt+O(t^2)$, we have $\frac{\partial T}{\partial t}(0,0)=2b>0$. Therefore  $\tfrac{\partial}{\partial t}T(u,t)>b>0$ in some sufficiently small neighborhood $\widetilde{U}\times(-\tilde{\epsilon},\tilde{\epsilon})$ of $(0,0)$. Let $\widetilde{U}_+=\mathfrak{I}^+\cap \widetilde{U}$. By definition of $\mathfrak{I}^+$,  
we have $\widetilde{U}_+\times \{0\}\subset \{T>0\}$. Taken together, this implies that  $ \widetilde{U}_+\times[0,\tilde{\epsilon}]\subset \{T>0\}$ for some $\tilde{\epsilon}>0$. This means that the argument of  $R(u+tR(u))$ is monotone increasing for  $0<t<\tilde{\epsilon}$ and for every $u\in\widetilde{U}_+$. 

By transversality of $\mathfrak{I}_R$ and $R$ at $0$ we have \begin{equation*}
    \frac{\partial}{\partial t} T\big(g_R^t(0)+sR(g_R^t(0)\big), 0)|_{s=t=0}=\frac{\partial}{\partial s} T\big(g_R^t(0)+sR(g_R^t(0)), 0\big)|_{s=t=0}>0,
\end{equation*}
 where $g_R^t$ is the flow of $R(z)\partial_z$. 
 
Thus there is a neighborhood  $\Omega\subset\widetilde{U}$ of $0$ and $\epsilon<\tilde{\epsilon}$ such that $T(g_R^t(u)+sR(g_R^t(u)), 0)>0$ as soon as $u\in  \Omega_+=\mathfrak{I}^+\cap \Omega$ and $t,s\in[0,{\epsilon}]$. Decreasing $\Omega,\epsilon$ if needed, we can assume that
\begin{equation*}
g_R^t(u)+sR(g_R^t(u))\subset \widetilde{U}_+ \quad\text{for all  }u\in \Omega_+ \,\text{ and}\,  t,s\in[0,{\epsilon}].
\end{equation*}

Thus for every $u \in \Omega_{+}$, any horn of diameter at most $\epsilon$ lies in $\widetilde{U}_{+}$ and is therefore a small horn.

If $u \in \overline{\Omega_+}$ then the horn $\horn{u}{u'}{u''}$ of diameter $\epsilon$ lies in a union of small horns $\horn{\tilde u}{\tilde u'}{\tilde u''}$ of   diameter $\epsilon$, where $\tilde u \in \horn{u}{u'}{u''}$. Thus $\horn{u}{u'}{u''}$ is a small horn as well. 

\end{proof}

\subsection{Points of bouncing type}\label{sub:bouncing}

\begin{proposition}\label{prop:bouncing}
Let $z$ be a point of bouncing type in $\partial \minvset{CH}$: 
$$\Delta^{+}(z) \neq \emptyset\quad\text{and}\quad \Gamma(z) \cup \Delta^{-}(z) \neq \emptyset.
$$ 
Then $z$ is the ending point of a global arc and also the starting point of another arc which can  either be local or global.
\par 
Further, 
$z$ is a point of nonconvexity and for small enough neighborhoods $V$ of $z$, one has $V \cap \partial \minvset{CH} \cap \mathfrak{I}_{R} = \lbrace{ z \rbrace}$.
\end{proposition}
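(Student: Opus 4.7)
The plan is to analyze the geometry near the point $z \in \mathfrak{I}_{R}^{\ast}$ using Proposition~\ref{prop:RTInflecCONCAVE} to locate $\minvset{CH}$-material on both sides of the tangent line $L$ containing $r(z)$, identify the two boundary branches emanating from $z$, and then verify the local intersection with $\mathfrak{I}_{R}$.

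First I would normalize so that $z = 0$, $R(0) = 1$ (so $L = \mathbb{R}$, $r(z) = \mathbb{R}_{+}$), with Taylor expansion $R(u) = 1 + \rho u + (a+ib) u^{2} + O(u^{3})$, $\rho, a \in \mathbb{R}$, $b > 0$ (after conjugation if needed). As in Section~\ref{sub:HornTransverse}, $\mathfrak{I}_{R}$ is smooth and transverse to $L$ at $0$, and $\gamma_{z}^{+}$ lies on one definite side of $L$. Fixing $u_{+} \in \Delta^{+}(z)$, Proposition~\ref{prop:RTInflecCONCAVE} places the germ of the root trail $\mathfrak{tr}_{u_{+}} \subset \minvset{CH}$ in the open half-plane opposite $\gamma_{z}^{+}$. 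The assumption $\Gamma(z) \cup \Delta^{-}(z) \neq \emptyset$ then provides $\minvset{CH}$-material on the same side of $L$ as $\gamma_{z}^{+}$, either via a germ of $\gamma_{z}^{+}$ itself (case $\Gamma(z) \neq \emptyset$) or via the root trail $\mathfrak{tr}_{u_{-}}$ of some $u_{-} \in \Delta^{-}(z)$ (again by Proposition~\ref{prop:RTInflecCONCAVE}). Thus $\minvset{CH}$ meets both open half-planes of $L \setminus \{z\}$ arbitrarily close to $z$; combined with $z \in \partial \minvset{CH}$ this yields an interior angle strictly greater than $\pi$ at $z$, so $z$ is a non-convexity point.

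Next I would identify the two boundary branches at $z$. On the side of $L$ opposite $\gamma_{z}^{+}$, the cubic-like local picture of the integral curves of $R(z)\partial_{z}$ places $\gamma_{z}^{-}$ on the other side of $\mathfrak{I}_{R}$, so no local integral curve through $z$ is available as a boundary branch on this side; hence for nearby boundary points $w$ on this side one has $\Gamma(w) = \emptyset$ by Corollary~\ref{cor:gammabasic}, then $\Delta(w) \neq \emptyset$ by Proposition~\ref{prop:localarc}, and $w$ belongs to a global arc by Proposition~\ref{prop:globalpoint}. The orientation of Lemma~\ref{lem:globalorientation} together with continuity of $\sigma$ forces this global arc to end at $z$. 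On the side of $\gamma_{z}^{+}$: if $\Gamma(z) \neq \emptyset$ then $\gamma_{z}^{+}$ is a local arc starting at $z$ (Corollary~\ref{cor:gammabasic} and Proposition~\ref{prop:localarc}); otherwise $\Delta^{-}(z) \neq \emptyset$ and a symmetric argument using $\mathfrak{tr}_{u_{-}}$ produces a global arc starting at $z$.

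Finally, for $V$ small enough that $\mathfrak{I}_{R} \cap V$ is a single smooth arc transverse to $R(z)\partial_{z}$, the identity $V \cap \partial \minvset{CH} \cap \mathfrak{I}_{R} = \{z\}$ will follow because each of the two branches above meets $\mathfrak{I}_{R}$ only at $z$ locally: local arcs are integral curves, transverse to $\mathfrak{I}_{R}$ at $z$, while a global arc has at each of its points the associated ray as tangent support line (a direction transverse to $\mathfrak{I}_{R}$ at $z$), so cannot stay on $\mathfrak{I}_{R}$. The hardest step I expect is the rigorous wedge/half-plane bookkeeping in the previous paragraph: the claim that the branch opposite $\gamma_{z}^{+}$ consists of global points hinges on the specific cubic-like local shape of $R(z)\partial_{z}$ near an inflection point, and a careful case analysis (not a soft topological argument) is needed to rule out $\Gamma(w) \neq \emptyset$ for boundary points $w$ on that side.
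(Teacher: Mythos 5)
Your overall strategy (normalize at $z=0$ with $R(0)=1$, use Proposition~\ref{prop:RTInflecCONCAVE} to place the root trails of points of $\Delta^{\pm}(z)$ on the appropriate sides of $L=\mathbb{R}$, and read off two boundary branches) matches the paper's, but the proof has a genuine gap at its central step. Having two arcs $\alpha_{+}\subset\mathfrak{tr}_{u_{+}}$ and $\alpha_{-}$ (a germ of $\gamma_{z}^{+}$ or of $\mathfrak{tr}_{u_{-}}$) emanating from $z$ into the two open half-planes of $L$ does \emph{not} by itself yield a non-convexity point: the definition requires a solid cone of angle $>\pi$ contained in $\minvset{CH}$, and "boundary material on both sides of $L$" gives no such cone. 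The paper's key device, which you are missing, is a filling argument: both arcs lie (roughly) in the right half-plane, tangent to $\mathbb{R}_{+}$, so their union $\alpha=\alpha_{-}\cup\alpha_{+}$ cuts a small neighborhood $V$ into two pieces, and the piece $V_{-}$ to the \emph{left} of $\alpha$ is entirely contained in $\minvset{CH}$ because the associated ray $r(u)$ of every $u\in V_{-}$ (whose direction is close to $R(0)=1$) must cross $\alpha\subset\minvset{CH}$. This one observation simultaneously gives the cone of angle $>\pi$ inside $\minvset{CH}$ \emph{and} the claim $V\cap\partial\minvset{CH}\cap\mathfrak{I}_{R}=\{z\}$, since by transversality $V\cap\mathfrak{I}_{R}\setminus\{z\}$ sits inside $V_{-}$ and hence in the interior of $\minvset{CH}$. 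Your alternative route to the $\mathfrak{I}_{R}$ claim (showing the two branches individually avoid $\mathfrak{I}_{R}$) is circular: it presupposes that the boundary near $z$ consists only of those two branches, which again needs $V_{-}\subset\minvset{CH}$.

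The second gap is the one you flag yourself: deducing $\Gamma(w)=\emptyset$ for nearby boundary points $w$ on the side opposite $\gamma_{z}^{+}$ from Corollary~\ref{cor:gammabasic} does not work, since each such $w$ lies on its own integral curve and nothing a priori prevents a short local arc through $w$. The paper does not argue this way; instead it \emph{constructs} the global arc ending at $z$ directly: for $q$ in the horn $\horn{z}{z'}{z''}\subset\minvset{CH}^{c}$ one takes the first point $\tilde{p}(q)$ on the backward trajectory $\gamma_{q}^{-}$ whose associated ray meets $\minvset{CH}$ (it exists because $\gamma_{q}^{-}$ must cross $\alpha$), shows $\gamma_{\tilde{p}}^{q}\cap\minvset{CH}=\{\tilde{p}\}$ using Proposition~\ref{prop:localarc}, and observes that the points $\tilde{p}(q)$ sweep out a global arc terminating at $z$; the same construction (Lemma~\ref{lem:Delta- not empty Gamma is empty bouncing}) handles the upper branch when $\Gamma(z)=\emptyset$ and $\Delta^{-}(z)\neq\emptyset$. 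You should also treat the degenerate case $u_{+}\in\Delta^{0}(z)$, where $\mathfrak{tr}_{u_{+}}$ has two transversal branches at $z$ rather than a smooth tangent one; the choice of $\alpha_{+}$ as the branch in the lower-right quadrant still works, but the case must be mentioned.
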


\begin{proof}
Without loss of generality, we can assume that $z=0$,  $R(0)=1$,  and $R'(0) \in \mathbb{R}\setminus\{0\}$. 
%We have $R'(\epsilon)=R'(0) + R''(0)\epsilon + o(\epsilon)$.
Since $z$ belongs to $\mathfrak{I}_{R}^{\ast}$, we get  $R''(0)=a+bi$ with $a \in \mathbb{R}$ and $b \in \mathbb{R}^{\ast}$. Without loss of generality, we can  assume that $b>0$ and therefore $\gamma_z$ lies in the upper half-plane.

%Recall that by Lemma~\ref{lem:horn out} a germ of the domain bounded by the integral curve of $R(z)\partial_z$ starting at $z$ and $r(z)$ lies in the complement of $\minvset{CH}$.
\par
%Let us first consider the case when $\Delta^{0}(z)= \emptyset$. 
Let  $u_+\in\Delta^{+}(0) \neq \emptyset$.  If $u_+\notin\Delta^0(0)$ then the germ of $\mathfrak{tr}_{u_+}$ at $0$ is   smooth and  tangent to $\R$ at $0$ and contained in the 
{lower}
half-plane, see Lemma~\ref{lem:RootTrailSlope} and Proposition~\ref{prop:RTInflecCONCAVE}). Otherwise it   consists of two branches transversal to $\R$ and orthogonal one to another. Denote by $\alpha_+\subset\mathfrak{tr}_{u_+}\subset\minvset{CH}$ the arc starting at $0$ and lying in the lower  right quadrant.

Similarly, since  $\Gamma(0) \cup \Delta^{-}(0)$ is nonempty there exists an arc $\alpha_-$ (either portion of  an integral curve $\gamma_0$ or a root trail $\mathfrak{tr}_{u_-}$ of a $u_-\in\Delta^{-}(0)$) starting at $0$,
%with the horizontal tangent, 
 contained in the upper right quadrant, and belonging to $\minvset{CH}$. 

Denote by $\alpha=\alpha_-\cup\alpha_+$, and let $V$ be a small neighborhood  of $0$ such that $\alpha$ cuts  $V$ into two parts. 
The part $V_-$ of $V$ to the left of $\alpha$ is entirely contained in $\minvset{CH}$ (as $r(u)$  intersects $\alpha\subset\minvset{CH}$ for any $u\in V_-$). This domain contains the  intersection of $V$ with a  cone with vertex at $0$ and of angle strictly larger than $\pi$. It also contains all the intersection of $V$ with $\mathfrak{I}_{R}$ excepted for the point $0$, see Lemmas~\ref{lem:RootTrailSlope},~\ref{lem:RTSlopeINFINITY} and Remark~\ref{rem:ROOTTRAIL}.
\par
%If $\Delta^{0}(z)$ is nonempty, then it contains the unique point $u=-\frac{1}{R'(0)}$. In such a case, the root trail $\mathfrak{tr}_{u}$ has exactly two branches intersecting at $z$ (see Lemmas~\ref{lem:RootTrailSlope},~\ref{lem:RTSlopeINFINITY} and Remark~\ref{rem:ROOTTRAIL}). The tangent slopes of these branches (which intersect orthogonally) are $\frac{\theta_{0}}{2}$ (mod $\pi/2$) where $\theta_{0}$ is the argument of $\frac{1}{R''(0)}$. In contrast, the tangent slope of $\mathfrak{I}_{R}$ at $z$ is $\theta_{0}$ (mod $\pi$). Since $R''(0) \notin \mathbb{R}$, we have that $\theta_{0} \notin \pi\mathbb{Z}$ and these branches intersect transversely. Consequently, there is a neighborhood $V$ of $z$ such that the intersection of $V$ with $\minvset{CH}$ contains the intersection of $V$ with a cone of angle arbitrarily close to $\frac{3\pi}{2}$. In particular, it contains the intersection of $V$ with $\mathfrak{I}_{R}$ except for $z$ itself.
%\par
It remains to prove that in a neighborhood of $z$, $\partial \minvset{CH}$ is formed by exactly two arcs. 
\begin{lemma}\label{lem:Delta- not empty Gamma is empty bouncing}
    Assume that $\Gamma(0)=\emptyset$ and $\Delta^-(0)\not=\emptyset$. Then $0$ is a starting point of a global arc. 
\end{lemma}
\begin{proof}

    By Lemma~\ref{lem:horn out} the points lying below the forward trajectory $\gamma^+_0$ of $0$ are not in $\minvset{CH}$. Let $q$ be a point lying slightly above $\gamma_0$ and near $0$ such that $q\notin\minvset{CH}$ (it exists since otherwise $\gamma^+_0\subset \partial\minvset{CH}$). Denote by 
    $\tilde{p}=\tilde{p}(q)$ the first point on the backward trajectory $\gamma^-_q$ starting from $q$ such that $r(\tilde{p})\cap \minvset{CH}\neq\emptyset$, in particular $\tilde{p}\in\minvset{CH}$ and therefore $\gamma^-_{\tilde{p}}\subset\minvset{CH}$. 
     This point exists since the point of intersection of $\gamma^-_q$ with $\alpha$ has this property by definition of $\alpha$. 
     
     Let $\gamma_{\tilde{p}}^q$ be the closed piece of trajectory of $R$ joining $\tilde p$ and $q$. By definition of $\tilde p$, all point of the (evidently closed) set $\gamma_{\tilde{p}}^q\cap  \minvset{CH}$ except $\tilde p$ are necessarily in $\partial\minvset{CH}$ and of local type.  Proposition~\ref{prop:localarc} then implies  $\gamma_{\tilde{p}}^q\cap \partial \minvset{CH}=\{\tilde p\}$: indeed, otherwise   the set $\gamma_{\tilde{p}}^q\cap  \minvset{CH}$  cannot be closed.

    The trajectory $\gamma^+_0$ is convex, so taking $q$ sufficiently close to $\gamma^+_0$ we can assume that  $r(q)\subset(\minvset{CH})^c$ intersect $\gamma^+_0$ and therefore all trajectories of $R\partial_z$ close to and above $\gamma^+_0$.  The points $\tilde{p}(q')$,
    where $q'\in r(q)$ lies above $\gamma^+_0$, form a global arc of $\partial\minvset{CH}$ starting at $0$ and lying between $\alpha_-$ and $\gamma_0$ in the upper right quadrant.

\end{proof}

If $\Gamma(0) \neq \emptyset$, then a local arc whose germ is contained in the upper half-plane starts at $0$.

In both cases the portion of the boundary  $\partial \minvset{CH}$ in a neighborhood of $0$ in the lower right quadrant  is a global arc ending at $z$. Indeed, let  $q\in\horn{z}{z'}{z''}\subset(\minvset{CH})^c$  and denote again by $\tilde{p}=\tilde{p}(q)$ the first point on $\gamma_q$ such that $r(\tilde{p})\cap \minvset{CH}\neq\emptyset$, in particular $\tilde{p}\in\minvset{CH}$. As before, it  lies on $\gamma^-_q$ between $q$  and $\gamma^-_q\cap\alpha$.  Repeating the arguments of Lemma~\ref{lem:Delta- not empty Gamma is empty bouncing} we see that $\gamma_{\tilde{p}}^q\cap\minvset{CH}=\{\tilde{p}\}$ and the  points $\tilde{p}$ form a global arc of $\partial\minvset{CH}$ lying in the lower half-plane and ending at $z$. 

\begin{figure}\label{fig:BouncingProof}
    \centering
    \includegraphics[width=0.8\linewidth]{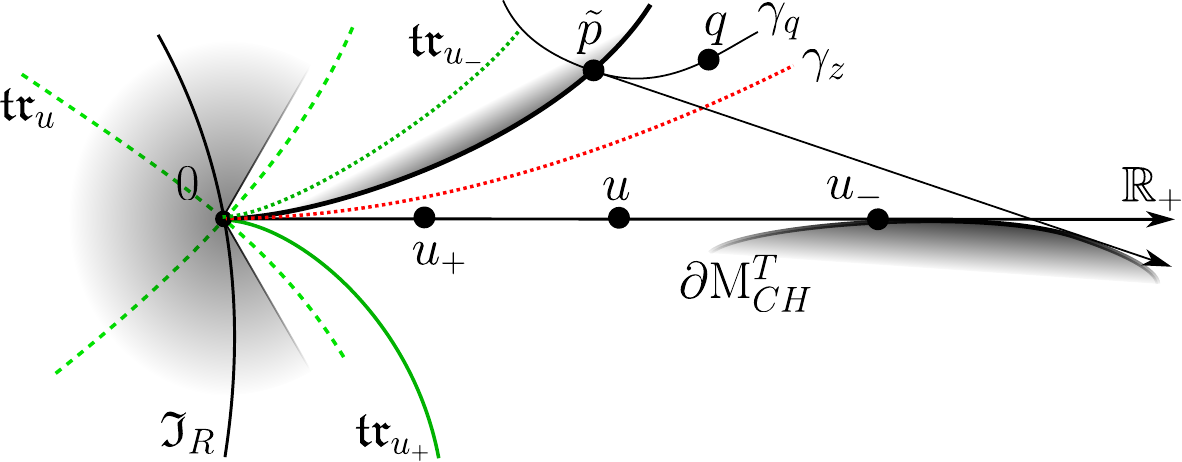}
    \caption{Bouncing type}
\end{figure}
\end{proof}

\subsection{Points of \texorpdfstring{$C^{2}$}{C2}-inflection type}\label{sub:InflectionTYPE}

Recall that a point $z\in \partial \minvset{CH}$ is called a point of \texorpdfstring{$C^{2}$}{C2}-inflection type if  $\Delta^{+}(z) = \emptyset$ and either $\Delta^{-}(z) = \emptyset$ or $\Gamma(z) \neq \emptyset$.

\begin{proposition}\label{prop:INFLECTIONTYPE}
Consider a point $p$ of $\partial\minvset{CH} \setminus \mathcal{Z}(PQ)$ belonging to the transverse locus $\mathfrak{I}_{R}^{\ast}$. If $\Delta(p) = \emptyset$, then $\Gamma(p) \neq \emptyset$ and $p$ is the starting point of a local arc.
\end{proposition}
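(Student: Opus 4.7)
The plan is to mimic the proof of Proposition~\ref{prop:localarc} (that is, Lemmas~\ref{lem:localarc step 1} and \ref{lem:localarc step 2}), substituting the uniform horn estimate of Lemma~\ref{lem:unif horns at I_R^*} for Lemma~\ref{lem:unif horns}, which is unavailable because $p\in\mathfrak{I}_R$.

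First I would normalize the situation. After an affine change of variable we can assume $p=0$, $R(0)=1$ and $R'(0)\in\mathbb R$ (the latter because $p\in\mathfrak I_R$). Writing $R(z)=1+\rho z+(a+ib)z^{2}+O(z^{3})$ with $\rho,a\in\mathbb R$ and $b\neq 0$ (since $p\in\mathfrak I_R^{\ast}$, the transversality of $R(z)\partial_z$ and $\mathfrak I_R$ at $0$ forces $b\neq 0$), and conjugating by $z\mapsto\bar z$ if necessary, we can assume $b>0$. Then $r(0)=\mathbb R_{\geq 0}$ and, by the proof of Lemma~\ref{lem:asympt of traj at z in I_R}, the positive trajectory $\gamma_0^{+}$ is tangent to $\mathbb R$ at $0$ and lies in $\mathfrak I^{+}$ (the upper half-plane, locally). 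The hypothesis $\Delta(0)=\emptyset$ means that every point of $r(0)\setminus\{0\}$, including the point at infinity in the direction of $\mathbb R_{>0}$, is outside $\overline{\minvset{CH}}$; by compactness of $\overline{\minvset{CH}}$ in $\mathbb C\cup\mathbb S^{1}$, there exists an open sector $S$ with apex on $\mathbb R$ containing a tail of $\mathbb R_{>0}$ and disjoint from $\minvset{CH}$.

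Next I would apply Lemma~\ref{lem:unif horns at I_R^*} to obtain a neighborhood $\Omega$ of $0$ and a uniform lower bound $\epsilon>0$ on horn diameters at every point of $\overline{\Omega\cap\mathfrak I^{+}}$. Note that this neighborhood includes points on and slightly above $\gamma_0^{+}$, which is exactly where we need small horns. With this tool in hand, the argument of Lemma~\ref{lem:localarc step 1} goes through verbatim to show that no point strictly below $\gamma_0^{+}$ in a small neighborhood of $0$ can belong to $\minvset{CH}$: choose $p'\in\gamma_0^{+}$ and, given a candidate point $\tilde p$ slightly below $\gamma_0^{+}$, build a small horn $\horn{\tilde p}{\tilde q'}{\tilde q''}$ at $\tilde p$ with $\tilde q''\in S$; since both $\horn{\tilde p}{\tilde q'}{\tilde q''}$ and its complementary cone $\horninf{\tilde q''}$ (which lies in $S$) are outside $\minvset{CH}$, Proposition~\ref{prop:horn+min} forces $\tilde p\notin(\minvset{CH})^{\circ}$, and passing to a neighborhood yields $\tilde p\in\minvset{CH}^{c}$. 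Thus $\minvset{CH}$ lies (locally) weakly above $\gamma_0^{+}$.

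Then I would run the analog of Lemma~\ref{lem:localarc step 2} to conclude that $\gamma_0^{+}$ itself is contained in $\partial\minvset{CH}$ near $0$. Assume for contradiction that there is a point $q\notin\minvset{CH}$ strictly above $\gamma_0^{+}$ and arbitrarily close to $0$. Using Lemma~\ref{lem:unif horns at I_R^*}, every point of $\overline{\Omega\cap\mathfrak I^{+}}$ admits a small horn of diameter at least $\epsilon$; then the argument of Lemma~\ref{lem:localarc step 2}, sliding the horn along the negative trajectory $\gamma_q^{-}$ and applying Proposition~\ref{prop:horn+min} iteratively, shows that every point sufficiently close to $\gamma_q^{-}$ (including points arbitrarily close to $0$ above $\gamma_0^{-}$) lies outside $(\minvset{CH})^{\circ}$. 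Combined with the step above, this forces $0$ itself to lie in $\minvset{CH}^{c}$, contradicting $0\in\partial\minvset{CH}$. Therefore $\gamma_0^{+}$ coincides with $\partial\minvset{CH}$ in a neighborhood of $0$, which shows $\Gamma(0)\neq\emptyset$ and identifies $0$ as the starting point of a local arc.

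The main obstacle is verifying that the horn arguments of Section~\ref{sub:horns} transfer to this setting even though $p$ itself lies on the curve of inflections; this is handled precisely by Lemma~\ref{lem:unif horns at I_R^*}, which guarantees uniform-size small horns on the closure of the inflection domain $\mathfrak I^{+}$ containing $\gamma_0^{+}$, not merely in its interior.
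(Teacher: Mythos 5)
Your strategy is the one the paper itself follows: rerun the proofs of Lemmas~\ref{lem:localarc step 1} and~\ref{lem:localarc step 2} with Lemma~\ref{lem:unif horns at I_R^*} supplying the uniform small horns, use $\Delta(0)=\emptyset$ to produce the sector $S$ around $\mathbb{R}_{>0}$, and exclude the region of $\mathfrak{I}^{+}$ lying below $\gamma_0^{+}$ (the set $\Omega_{+-}$ in the paper's notation). Up to that point your argument agrees with the paper's.

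The gap is in your closing contradiction. Lemma~\ref{lem:unif horns at I_R^*} provides small horns only for base points in $\overline{\mathfrak{I}^{+}\cap\Omega}$, and small horns genuinely fail to exist at points of $\mathfrak{I}^{-}$ arbitrarily close to $0$: there $\frac{d}{dt}\sigma(u+tR(u))\big|_{t=0}=\Im R'(u)<0$, so the monotonicity required by Definition~\ref{def:small horn} breaks, and a horn based at such a point would in any case have its trajectory segment crossing $\mathfrak{I}_{R}$, which Definition~\ref{def:horn} forbids. Hence sliding horns along $\gamma_q^{-}$ for $q\in\Omega_{++}$ works only while the moving base point remains in $\mathfrak{I}^{+}$; it yields $\Omega_{++}\cap\minvset{CH}=\emptyset$, i.e.\ that a full neighborhood of $0$ inside $\mathfrak{I}^{+}$ is excluded, but it cannot reach ``points arbitrarily close to $0$ above $\gamma_0^{-}$'', since those lie in $\mathfrak{I}^{-}$. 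At this stage nothing yet contradicts $0\in\minvset{CH}$: the point $0$ sits on $\partial\mathfrak{I}^{+}$, and $\minvset{CH}$ could a priori still accumulate at $0$ from the $\mathfrak{I}^{-}$ side (the same caveat applies to your step-one claim that $\minvset{CH}$ lies ``weakly above $\gamma_0^{+}$'', which is only established within $\overline{\mathfrak{I}^{+}}$ together with the thin sector $S_{-}$ below $\mathbb{R}_{+}$). The paper closes this with an additional excision step: having shown that $U_{+}=\mathfrak{I}^{+}\cap D$ and a full sector $\{-\epsilon<\arg z<\epsilon\}$ are disjoint from $\minvset{CH}$, it takes a small neighborhood $U_{-}$ of $0$ in $\mathfrak{I}^{-}$ bounded by a convex curve transversal to $R$; for every $u\in U_{-}$ the associated ray $r(u)$ is close to $\mathbb{R}_{+}$ and therefore lies in $U_{-}\cup U_{+}\cup\{-\epsilon<\arg z<\epsilon\}\subset U_{-}\cup\minvset{CH}^{c}$, so Lemma~\ref{lem:excision} removes $U_{-}$ and forces $0\notin\minvset{CH}$, the desired contradiction. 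You need this (or an equivalent) argument to finish; the horn-sliding alone does not reach it.
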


\begin{proof}
We keep the previous normalization $p=0$, $R(0)=1$, $\Im R''(0)=b>0$.
The positive trajectory $\gamma^+_0$ cuts $\mathfrak{I}^+$ into two parts, one containing the convex hull of $\gamma_0$ (denoted by $\Omega_{++}$) and another one denoted by $\Omega_{+-}$. 
    
The arguments of Lemma~\ref{lem:localarc step 1} (using  Lemma~\ref{lem:unif horns at I_R^*}  instead of Lemma~\ref{lem:unif horns}) can be repeated word by word as long as $\tilde{p}\in\mathfrak{I}_{R}^{\ast}\cap\partial\Omega_{++}$, see Figure~\ref{fig:hornsfig1}. This proves that   $\minvset{CH}$ does not intersect $\Omega_{+-}$. Together with $\Delta(0)=\emptyset$ this implies  that some small sector $S_-=\{-\epsilon<\arg z<0\}$ does not intersect $\minvset{CH}$.
\par
Now, assume by contradiction, that there exists  $q \in \Omega_{++}\setminus \minvset{CH}$. 
 Shrinking $\Omega_{++}$ if needed we can repeat  the arguments of Lemma~\ref{lem:localarc step 2} as long as  $\tilde{p}\in\mathfrak{I^+}$, see Figure~\ref{fig:hornsfig2}, to conclude that $\Omega_{++}\cap\minvset{CH}=\emptyset$.

Therefore there is a neighbourhood $U_+$ of $0$ in $\mathfrak{I}^+$ which is disjoint from $\minvset{CH}$. We can assume that $U_+$ is the  intersection of $\mathfrak{I}^+$ with a small disk centered at $0$.
\par
For sufficiently small $\epsilon>0$, the set  $U=U_+\cup\{-\epsilon<\arg z<\epsilon\}$ is disjoint from $\minvset{CH}$;  the part lying in the lower half-plane is in $U_+\cup S_-$ and the part lying in the upper half-plane is in $U_+\cup\horn{0}{p'}{p''}\cup\horninf{p''}$.  

Now, take  a small neighborhood $U_-$ of $0$ in $\Omega_-$ bounded by a convex curve transversal to $R$. For any $u\in U_-$,  the ray $r(u)$, being close to $R_+$, lies inside $U\cup U_-$. By Lemma~\ref{lem:excision}, this implies that $U_-\subset(\minvset{CH})^c$, and therefore  $0\notin\minvset{CH}$,  a contradiction. Thus $\Omega_{++}\subset\minvset{CH}$ and $\gamma_0\subset\partial\minvset{CH}$.
%\begin{figure}
%    \centering
%    \includegraphics[width=0.75\linewidth]{HornsFigICgen.v2.pdf}
%    \caption{A point $p\in\partial\minvset{CH} \cap \mathfrak{I}_{R}^{\ast}$ with $\Delta(p)=\emptyset$  is a starting point of a local arc.}
 %   \label{fig:ICgenDelta=0}
%\end{figure}
\end{proof}

\begin{proposition}\label{prop:INFLglobal}
Consider a point $p$ of $C^{2}$-inflection type. Then there is a neighborhood $V$ of $p$ in which $\partial \minvset{CH}$ is formed by:
\begin{itemize}
    \item a portion of a local arc $\gamma_p$ parameterized by an interval $[0,\epsilon[$,  $\epsilon>0$ with $\gamma_p(0)=p$;
    \item a portion of a global arc $\tilde{\gamma}_p$ parameterized by $[0,\epsilon[$ and such that $\tilde{\gamma}_p(0)=p$ and $\Delta(\tilde{\gamma}_p(t))= \lbrace{ \gamma_p(t) \rbrace}$.
\end{itemize}
In particular, $p$ is simultaneously the starting point of a local arc and the starting point of a global arc.
\end{proposition}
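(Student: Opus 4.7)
The plan is to build the two arcs separately and then show they together exhaust $\partial\minvset{CH}$ locally. For the local arc, if $\Gamma(p)=\emptyset$ then by hypothesis $\Delta(p)=\emptyset$, so Proposition~\ref{prop:INFLECTIONTYPE} immediately gives $\Gamma(p)\neq\emptyset$; in either case Proposition~\ref{prop:localarc} produces a local arc $\gamma_p$ starting at $p$, and after normalizing $p=0$, $R(0)=1$, $\Im R''(0)=b>0$, Lemma~\ref{lem:asympt of traj at z in I_R} shows $\gamma_p$ lies in $\mathfrak{I}^+$ (it looks like $y\approx \tfrac{b}{3}x^3$).

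To construct the global arc, I would implicitly define, for each small $t>0$ and $q_t=\gamma_p(t)$, the point $\tilde{q}(t)$ as the unique solution in $\mathfrak{I}^-$ near $0$ of the equation $\tilde{q}+s R(\tilde{q})=q_t$ with $s>0$. Since $R(0)=1$ is nonzero and transverse to the curve of inflections at $0$, the implicit function theorem applied to the map $(\tilde{q},s)\mapsto \tilde{q}+sR(\tilde{q})$ yields a smooth one-parameter family with $\tilde{q}(t)\to 0$ as $t\to 0$. Geometrically, $\tilde{q}(t)$ is the point on the extension of the tangent ray $r(q_t)$ past $q_t$ whose own associated ray lands exactly on $q_t$. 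By construction $q_t\in\Delta(\tilde{q}(t))$; moreover $\tilde{q}(t)\in\mathfrak{tr}_{q_t}\subset\minvset{CH}$ since $q_t\in\minvset{CH}$ (see Section~\ref{sub:roottrail}).

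The hard part is confirming that $\tilde{q}(t)$ is actually a global boundary point with $\Delta(\tilde{q}(t))=\{q_t\}$. For membership in $\partial\minvset{CH}$ I would apply the small-horn argument of Lemma~\ref{lem:horn out} and Lemma~\ref{lem:unif horns at I_R^*} on the side of $\tilde{q}(t)$ away from $q_t$: in $\mathfrak{I}^-$ the field $R(z)\partial_z$ has concave trajectories, and the associated rays of points slightly beyond the line segment $[\tilde{q}(t),q_t]$ (on the side opposite to $\gamma_p$) all pass through the open horn complementary region of $q_t$, which is outside $\minvset{CH}$ by Proposition~\ref{prop:horn+min}. To rule out $\Gamma(\tilde{q}(t))\neq\emptyset$, I would observe that the forward trajectory of $R(z)\partial_z$ at $\tilde{q}(t)$ enters the interior of $\minvset{CH}$: indeed, Proposition~\ref{prop:RTInflecCONCAVE} (applied to the root trail at the associated inflection point) shows that the root trail of $q_t$ at $\tilde{q}(t)$ lies in $\minvset{CH}$ and bounds a region together with $\gamma_p$ whose interior is filled by backward trajectories of boundary points. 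For uniqueness of $\Delta(\tilde{q}(t))$, a second support point $u\neq q_t$ would produce, via Lemma~\ref{lem:RootTrailSlope}, a root trail transverse to $\mathfrak{tr}_{q_t}$ at $\tilde{q}(t)$, creating a non-convexity point as in Proposition~\ref{prop:globalmultiple} and forcing $\partial\minvset{CH}$ to have an extra arc near $\tilde{q}(t)$, which contradicts the smoothness of our parametrization.

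Finally, to close the local picture, any point $z\in V\cap\partial\minvset{CH}\setminus\{p\}$ lies in exactly one of $\mathfrak{I}^+$ or $\mathfrak{I}^-$ (Proposition~\ref{prop:bouncing} has excluded extra intersections of $\partial\minvset{CH}$ with $\mathfrak{I}_R$ near a point of transverse type where both sides are accessible, and points of switch or $C^1$-inflection type carry different correspondence data, so they are ruled out by the hypothesis $\Delta^+(p)=\emptyset$, $\Gamma(p)\neq\emptyset$). Applying Propositions~\ref{prop:localarc} and~\ref{prop:globalpoint} on each side identifies these pieces with $\gamma_p$ and $\tilde{\gamma}_p$ respectively, giving the claimed description. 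The main obstacle I expect is step three: showing that $\tilde{q}(t)$ cannot be interior to $\minvset{CH}$ requires carefully comparing the horn geometry at $\tilde{q}(t)\in\mathfrak{I}^-$ with the already-constructed local arc across the inflection curve, which in turn depends on the $C^2$-contact coming from the fact that $p$ is an inflection of both candidate arcs simultaneously.
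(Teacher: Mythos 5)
Your overall geometric picture agrees with the paper's: build the local arc $\gamma_p$ from Proposition~\ref{prop:INFLECTIONTYPE}, then take as the candidate global arc the curve $\tilde{\gamma}_p$ of points in $\mathfrak{I}^-$ whose associated rays are tangent to $\gamma_p$ (your implicit-function setup, once you add the collinearity constraint, parametrizes exactly this curve). However, the central step is missing. Knowing that $\tilde{q}(t)$ lies on the root trail of $q_t$ only puts $\tilde{q}(t)$ \emph{inside} $\minvset{CH}$; to put it on the \emph{boundary} you must excise an entire open region on the other side, and this is where your argument breaks down. The ray $r(u)$ of a point $u$ slightly below $\tilde{\gamma}_p$ crosses $\mathfrak{I}_R$, traverses the component $\Omega_{+-}$ of $\mathfrak{I}^+$ lying beyond $\gamma_p$, and only then reaches the tangent ray $r(z(u))$; the complementary cone of the single horn at $q_t$ does not cover $\Omega_{+-}$. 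The paper needs two lemmas you do not supply: first, that $\Omega_{+-}\cap\minvset{CH}=\emptyset$ (Lemma~\ref{lem:Omega+- in C2 is out}), whose proof in the case $R'(0)<0$, $\Delta^-(0)\neq\emptyset$ is genuinely delicate (it requires trapping $\minvset{CH}$ away from an interval $(0,\rho')$ of the real axis and building a box $\Pi$ in the complement so that the complementary cones $\horninf{u''}$ of points on $\partial\Omega_{+-}\cap\mathfrak{I}_R$ avoid $\minvset{CH}$); and second, a quantitative comparison of the two cubic germs $y\sim\tfrac{b}{3}x^3$ and $y\sim\tfrac{5b}{3}x^3$ (Lemmas~\ref{lem:INFRatio} and~\ref{lem:ralpha is outside C2}, via Rolle's theorem) showing that $r(u)$ meets $\tilde{\gamma}_p\cup\gamma_p$ only at $u$ and the tangency point, so that Lemma~\ref{lem:excision} applies to $\Omega_{--}$. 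Without these, the assertion that $\tilde{q}(t)\in\partial\minvset{CH}$ is unsupported.

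Two further points. Your argument for $\Gamma(\tilde{q}(t))=\emptyset$ is stated backwards: if the forward trajectory of $\tilde{q}(t)$ entered the interior of $\minvset{CH}$, that would establish $\Gamma(\tilde{q}(t))\neq\emptyset$ (Definition~\ref{def:DeltaGamma}), not rule it out. What is actually needed is that the forward trajectory stays in $\Omega_{--}$ and then $\Omega_{+-}$, both disjoint from $\minvset{CH}$ — again resting on the missing excision. Finally, your closing paragraph cites Proposition~\ref{prop:bouncing} to exclude further intersections of $\partial\minvset{CH}$ with $\mathfrak{I}_R$ near $p$, but that proposition concerns points of bouncing type and does not apply here; in the paper the local exhaustion of $\partial\minvset{CH}$ by the two arcs is a consequence of having shown $\Omega_{+-}\cup\Omega_{--}\subset\minvset{CH}^c$ and $\Omega_{-+}\subset\minvset{CH}$, not of a separate classification step.
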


\begin{proof}
We again assume that $p=0$ and $R(0)=1$. Following the definition of a point of $C^{2}$-inflection type (see Theorem~\ref{thm:MAINClassification}), we have that $\Delta^{+}(0) = \emptyset$ and either $\Delta^{-}(0)=\emptyset$ or $\Gamma(0) \neq \emptyset$. By  Proposition~\ref{prop:INFLECTIONTYPE}, if $\Delta^{-}(0)=\emptyset$, then $\Gamma(0)$ is also nonempty. Therefore, in both cases $0$ is the starting point of a local arc $\gamma_0$ and  we deduce the shape of the boundary close to $0$ from the assumption  $\Delta^{+}(0)=\emptyset$. 
\par

The curve $\gamma_0$ divides the domain $\mathfrak{I}^+$ into two parts, and as before we denote the one containing a horn of $0$ by  $\Omega_{+-}$. 

\begin{lemma}\label{lem:Omega+- in C2 is out}
 $\Omega_{+-}\cap \minvset{CH}=\emptyset$.
\end{lemma}
\begin{proof} 
At first, consider the case  $R'(0)<0$.   
Set $I_-\coloneqq\partial\Omega_{+-}\cap\mathfrak{I}_R\setminus\{0\}$.  We claim that  $r(u)\cap \minvset{CH}=\emptyset$ for all $u\in I_-$ sufficiently close to $0$.
   
   Set $\rho\coloneqq-\left(R'(0)\right)^{-1}$. By assumption $\minvset{CH}\cap \bR_+=\Delta^-(0)$ is a compact subset of $(\rho, +\infty]$, so $\minvset{CH}\cap \bR_+\subset[\rho', +\infty]$, $\rho'>\rho$. Again, by closedness of $\minvset{CH}$ and by Lemma~\ref{lem:horn out} we can assume that   for any $\epsilon>0$ and all  sufficiently small $0<\delta'<\delta'(\epsilon)$
  \begin{equation}\label{eq:c2 omega+- 2nd eq}
      \minvset{CH}\cap \{|\Im z|<\delta'\}\subset \{\Re z>\rho'-\epsilon, \Im z\le 0\}\cup \{\Re z<\epsilon\}.
  \end{equation}
This means that $(\minvset{CH})^c$ contains not only the horn $\horn{0}{p'}{p''}$ but also the box 
$$
\Pi=\{-\delta'<\Im z\le 0, \epsilon<\Re z<\rho+\epsilon\}
$$ 
(we take $\epsilon <\frac{\rho'-\rho}{2}$).

   For all  $u\in I_-$, the slope $\sigma(u)$ is positive: 
\begin{equation}\label{eq:C2 slope on I-}
    \Im R(u)=R'(0)\Im u+O(u^2)>0,
\end{equation}
   as $\Im u<0$ and $\Re u=O(\Im u)$ by transversality of $\mathfrak{I}_R$ and $r(0)$. Moreover,  by \eqref{eq:C2 slope on I-} we have $\Im (u+tR(u))=0$ for $t=\rho+O(u)$,  thus   $r(u)\cap\bR_+=\rho+O(u)$. Therefore for any $\epsilon>0$ and for any point  $u\in I_-$ sufficiently close to $0$,  the ray $r(u)$ has an  arbitrarily small slope and  $r(u)\cap\bR_+\in (\rho-\epsilon, \rho+\epsilon)$. Therefore
   \begin{equation}\label{eq:c2 omega+- 1st eq}
   r(u)\cap \{\Im z>0\}\subset \horn{0}{p'}{p''}\cup\horninf{p''}\subset (\minvset{CH})^c
   \end{equation}
   for all $u\in I_-$ sufficiently close to $0$.

   Let $u\in I_-$, $|\Im u|<\delta'$, and take $u''\in r(u)$ with $\Re u''=\epsilon$. 
   If $\epsilon$ is sufficiently small then by Lemma~\ref{lem:unif horns at I_R^*} we can assume that the horn $\horn{u}{u'}{u''}$ is small.

   The complementary cone $\horninf{u''}$ lies above $r(u)$ and to the right of $\{\Re z>\epsilon\}$. Thus
   $$
   \horninf{u''}\cap\{\Im z<0\}\subset\Pi
   $$
   and therefore $\horninf{u''}\cap\{\Im z\le0\}\subset (\minvset{CH})^c$. Choosing $\epsilon$ sufficiently small we can assume that the angle of the sector $\horninf{u''}$ is small enough to conclude from \eqref{eq:c2 omega+- 1st eq} that $\horninf{u''}\cap\{\Im z>0\}\subset (\minvset{CH})^c$ as well. This implies that  $u\in (\minvset{CH})^c$ and therefore  $\Omega_{+-}\subset (\minvset{CH})^c$ as well.

\begin{figure}
    \centering
    \includegraphics[width=1\linewidth]{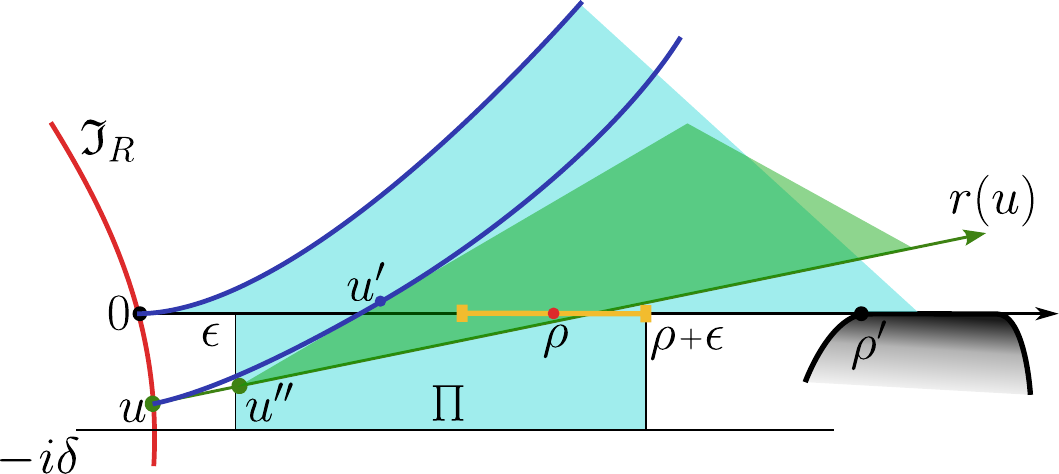}
    \caption{$\horninf{u''}$ lies in the complement to $\minvset{CH}$}
    \label{fig:C2Omega+-}
\end{figure}

If  $R'(0)\ge 0$ then $\Delta^-(0)\subset\Delta^+(0)=\emptyset$, so $\Delta(0)=\emptyset$.    Then the arguments of Lemma~\ref{lem:localarc step 1} are applicable for all $\tilde{p}\in\Omega_{+-}$, which proves  the required claim in this case as well.

\end{proof}

Let $\tilde{\gamma}_0$ be the set of points in $\mathfrak{I}^-$ whose associated rays are tangent to the positive trajectory $\gamma_0$ of $R(z)\partial_z$ starting at $0$.

\begin{lemma}\label{lem:INFRatio}
For $R$ as in \eqref{eq:R at generic IC} the local arc $\gamma_0$ is described by the relation $y(x) = \tfrac b 3 x^{3} + o(x^{3})$, $x \geq 0$ and the curve $\tilde{\gamma}_0$ is described by $y(x)=5\frac b 3 x^{3}+o(x^{3})$, $x \leq 0$.
\end{lemma}

\begin{proof}
Using \eqref{eq:gamma  in m} for \eqref{eq:R at generic IC} we see that 
$$
\gamma_0(t)=t+o(t)+i\left(\frac b 3 t^3+O(t^4)\right)
$$ 
with $a \in \mathbb{R}$ and therefore the slope $\sigma(\gamma_0(t))=bt^2+O(t^3)$.

The point $u\in\tilde{\gamma}_0$ whose associated ray $r(u)$ is tangent to $\gamma_0$ at $\gamma_0(t)$ has the form 
$$
u=\gamma_0(t)-sR(\gamma_0(t))=t-s+o(t)+i\left(-sbt^2+\frac b 3 t^3+O(t^4)\right)
$$ 
with the condition $\sigma(u)=\sigma(\gamma_0(t))=bt^2+O(t^3)$. The latter condition means that $s=2t+o(t)$ and therefore 
$$
u=-t+o(t)-i\left(\frac 5 3 bt^3+O(t^4)\right).
$$ 
\end{proof}

\begin{lemma}\label{lem:ralpha is outside C2}
    For any $u\in\tilde{\gamma}_0$, the ray $r(u)$ does not intersect $\tilde{\gamma}_0\cup\gamma_0$ between $u$ and  the point of tangency $z=z(u)$ of $r(u)$ and $\gamma_0$.
\end{lemma}
\begin{proof}
 By Lemma~\ref{lem:INFRatio}, there is function $\gamma(x)$ such that $\gamma_0\cup\tilde{\gamma}_0=\{y=\gamma(x)\}$, with $\gamma(x)=\frac b 3 x^3+o(x^3)$ for $x>0$  and $\gamma(x)=\frac {5b} 3 x^3+o(x^3)$ for $x<0$. Both expressions, being power series, can be differentiated and produce asymptotic formulae for $\gamma'(x), \gamma''(x)$ as well.
 In particular, $\gamma''(x)$ is continuous and monotone  on the interval $[\Re\alpha, \Re z]$. 
 
Assume  $r(u)\subset\{y=kx+b\}$. By construction, $\hat{\gamma}=\gamma(x)-kx-b$ vanishes at $\Re u$ and has a double zero at $\Re z$.
Any other point of intersection of $r(u)$ and $\tilde{\gamma}_0\cup\gamma_0$ will imply the  existence of another zero  of $\hat{\gamma}$ on $[\Re u,\Re z]$, thus $\hat{\gamma}$ will have four zeros on this interval counting multiplicities. By Rolle's Theorem this will imply the existence of two zeros of $\hat{\gamma}''=\gamma''$ on $[\Re u,\Re z]$, which contradicts monotonicity of $\gamma''$.
\end{proof}
By Lemma~\ref{lem:INFRatio} the curve $\tilde{\gamma}_0$ is tangent to $\bR$, thus transversal to $\mathfrak{I}_R$ and divides $\Omega_-$ into two parts. Denote by $\Omega_{--}$ the closed part consisting of points whose  associated rays do not intersect $\gamma_0$ and let $\Omega_{-+}$ denotes the second part. Clearly $\Omega_{-+}\subset\minvset{CH}$.

\begin{corollary}\label{cor:r_+(alpha) out C2}
    Let $u\in\tilde{\gamma}_0$ and set $u_+\coloneqq r(u)\cap \mathfrak{I}_R$ and  $r_+(u)\coloneqq r(u)\setminus\Omega_{--}=u_+ +R(u)\bR_+$.  Then   $r_+(u)\cap\left(\minvset{CH}\right)^\circ=\emptyset$.
\end{corollary}

\begin{proof}
    Indeed, the piece of $r_+(u)$ between $u_+$ and the point of tangency $z=z(u)$ of $r(u)$ and $\gamma_0$ lies in $\Omega_{+-}$ by Lemma~\ref{lem:ralpha is outside C2}, and the remaining piece coincides with $r(z)$. Thus the claim follows from Lemmas~\ref{lem:Omega+- in C2 is out} and ~\ref{lem:horn out}.
\end{proof}

\begin{lemma}\label{lem:ru C2}
For any $u\in\Omega_{--}\setminus\tilde{\gamma}_0$, one has 
\begin{enumerate}
    \item $r(u)\cap \tilde{\gamma}_0=\emptyset$,
    \item  $r(u)\setminus\Omega_{--}\subset (\minvset{CH})^c$.
\end{enumerate}
  
\end{lemma}
%We prove that $r(u)$ is  contained in $\Omega_{--}\cup\Omega_{+-}\cup S$ assuming that  both $\sigma(u)$ and $u$ are small (this can be achieved by taking smaller $\Omega_{--}$). 

\begin{proof}
   Let  $\gamma_\alpha^\beta$ be the piece of trajectory of $R(z)\partial_z$ containing $u$ and with endpoints  $\alpha=\alpha(u)\in \tilde{\gamma}_0$ and $\beta=\beta(u)\in\mathfrak I_R$: by Lemma~\ref{lem:INFRatio}  $\tilde{\gamma}_0$
is transversal to the trajectories of $R(z)\partial_z$ near $0$ except $\gamma_0$.
Let $D(u)=\cup_{z\in\gamma_\alpha^\beta}r(z)$ be the domain sweeped by the rays  associated to the points of  $\gamma_\alpha^\beta$. Since  $\gamma_\alpha^\beta$ is convex, $\partial D(u)=r(\alpha)\cup\gamma_\alpha^\beta\cup r(\beta)$. By Lemmas~\ref{lem:ralpha is outside C2}, ~\ref{lem:INFRatio}, and ~\ref{lem:Omega+- in C2 is out},  $\partial D(u)\cap \tilde{\gamma}_0=\{\alpha\}$, so $r(u)\cap \tilde{\gamma}_0=\emptyset$. 

The boundary of $D_+(u)=D(u)\setminus\Omega_{--}$ consists of $r(\beta)$, the piece of $\mathfrak{I}_R$ lying between $\beta$ and the point $\alpha_+ $ and $r_+(\alpha)$ which  
do not intersect $\minvset{CH}^\circ$ by Lemma~\ref{lem:Omega+- in C2 is out} and Corollary~\ref{cor:r_+(alpha) out C2}. This implies the second claim of the Lemma.
\end{proof}

\begin{proof}[Proof of Proposition~\ref{prop:INFLglobal}]
    Take some $u\in\Omega_{--}^\circ$ and let $\Omega_{--}(u)$ be the curvilinear triangle bounded by $\gamma_\alpha^\beta$, $\tilde{\gamma}_0$ and $\mathfrak{I}$. The ray $r(u)$ does not cross  $\gamma_\alpha^\beta$ by convexity and does not cross $\tilde{\gamma}_0$ by Lemma~\ref{lem:ru C2}.  Therefore it leaves the domain $\Omega_{--}(u)$ through $\mathfrak{I}$, with $r(u)\setminus \Omega_{--}(u)\subset (\minvset{CH})^c$, see Lemma~\ref{lem:ru C2}. Thus $r(u)\subset \Omega_{--}(u)\cup (\minvset{CH})^c $.
    
    As $\Omega_{--}(u')\subset\Omega_{--}(u)$ for any $u'\in \Omega_{--}(u)$, this means that $r(u')\subset \Omega_{--}(u)\cup (\minvset{CH})^c $ for all $u'\in \Omega_{--}(u)$, and therefore $\Omega_{--}\cap\minvset{CH}=\emptyset$ by Lemma~\ref{lem:excision}.

    As $\Omega_{-+}\subset\minvset{CH}$, we see that $\tilde{\gamma}_0\subset\partial\minvset{CH}$.
    
\end{proof}

 \begin{figure}
    \centering
    \includegraphics[width=0.75\linewidth]{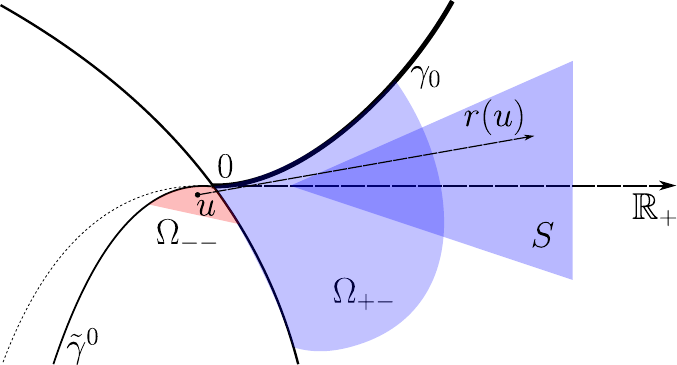}
    \caption{A point in the transverse locus of $\partial\minvset{CH} \cap \mathfrak{I}_{R}$ with empty $\Delta$ correspondence is the starting point of a global arc.}
    \label{fig:ICgenDelta=1}
\end{figure}
\end{proof}

\subsection{Points of \texorpdfstring{$C^{1}$}{C1}-inflection type}\label{sub:C1}
Recall that a point $p\in \partial\minvset{CH} \setminus \mathcal{Z}(PQ)$ belonging to the transverse locus $\mathfrak{I}_{R}^{\ast}$ is called a \emph{point of \texorpdfstring{$C^{1}$}{C1}-inflection type} if  $\Delta^{+}(p) =\Gamma(p) = \emptyset$ and $\Delta^{-}(p) \neq \emptyset$.  

\begin{proposition}\label{prop:C1INFLECTION}
A point $p\in\partial\minvset{CH}\cap\mathfrak{I}_R^{\ast}$ of $C^{1}-$inflection type is the starting point of two global arcs (one in each of the incident inflection domains).
\end{proposition}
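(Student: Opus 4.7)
The plan is to normalize coordinates at $p$ and then construct one global arc in each inflection domain incident to $p$ by combining a root-trail/horn argument on the $\mathfrak{I}^+$ side with an adaptation of the tangency construction of Proposition~\ref{prop:INFLglobal} on the $\mathfrak{I}^-$ side.

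First I would normalize $p=0$, $R(0)=1$, so that $r(0)=\mathbb{R}_+$. Transversality of $p\in\mathfrak{I}_R^{\ast}$ gives $R'(0)\in\mathbb{R}$ and $b:=\Im R''(0)\neq 0$; assume WLOG $b>0$, so $\gamma_0^+$ enters the upper half-plane. Consulting Definition~\ref{def:-+0}, the combined hypotheses $\Delta^+(0)=\emptyset$, $\Delta^-(0)\neq\emptyset$ force $R'(0)<0$: otherwise $\sigma(0)$ would lie in $\Delta^+$ whenever it lies in $\Delta^-$, and any finite $u\in\overline{\minvset{CH}}\cap r(0)$ would satisfy $R'(0)\geq -1/u$ and so lie in $\Delta^+$. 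Hence $\overline{\minvset{CH}}\cap r(0)\subseteq(1/|R'(0)|,+\infty]$. Fix a finite $u\in\Delta^-(0)$ (the case where $\Delta^-(0)$ is contained in $\mathbb{S}^1$ being handled analogously using Lemma~\ref{lem:RTSlopeINFINITY}); by the invariance criterion Theorem~\ref{thm:AR}, $\mathfrak{tr}_u\subseteq\minvset{CH}$. A direct Taylor computation with $F(z)=R(z)/(u-z)$ shows $\mathfrak{tr}_u$ has the parabolic profile $y=\frac{bu}{2(|R'(0)|u-1)}x^2+o(x^2)$, tangent to $\mathbb{R}$ at $0$ and lying strictly above $\gamma_0^+\colon y=\frac{b}{6}x^3+o(x^3)$ for small $x>0$.

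Next I would construct the global arc $\beta^+$ in $\mathfrak{I}^+$. Since $\Gamma(0)=\emptyset$, the trajectory $\gamma_0^+$ is not part of $\partial\minvset{CH}$, and Lemma~\ref{lem:horn out} places the horn of $0$ in $\minvset{CH}^c$. Between $\gamma_0^+\subseteq\minvset{CH}^c$ and $\mathfrak{tr}_u\subseteq\minvset{CH}$ the boundary $\partial\minvset{CH}$ must then contain a curve $\beta^+$ emanating from $0$. Since the only integral curve of $R(z)\partial_z$ through $0$ is $\gamma_0$ itself, no local arc of $\partial\minvset{CH}$ can start at $0$ in $\mathfrak{I}^+$ (it would have to coincide with $\gamma_0^+$, which is in $\minvset{CH}^c$); hence every $z\in\beta^+$ close to $0$ has $\Gamma(z)=\emptyset$, so Proposition~\ref{prop:localarc} forces $\Delta(z)\neq\emptyset$, and Proposition~\ref{prop:globalpoint} identifies $\beta^+$ as a global arc. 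Lemma~\ref{lem:globalorientation} makes $p$ its starting point.

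For the global arc in $\mathfrak{I}^-$ I would mimic Proposition~\ref{prop:INFLglobal}: define $\tilde\gamma_0$ as the set of points in $\mathfrak{I}^-$ whose associated rays are tangent to $\gamma_0$. The asymptotic expansion of Lemma~\ref{lem:INFRatio} is purely local and carries over, and the $R'(0)<0$ branch of Lemma~\ref{lem:Omega+- in C2 is out} applies verbatim in our setting. The single step needing adaptation is the inclusion $\Omega_{-+}\subseteq\minvset{CH}$: in the $C^2$ case this followed from $\gamma_0^+\subseteq\partial\minvset{CH}$, which is now unavailable, and I would replace it by combining $\gamma_0^-\subseteq\minvset{CH}$ (the backward trajectory of $p\in\partial\minvset{CH}$, available by Proposition~\ref{prop:IntegralCurve}) with $\mathfrak{tr}_u\subseteq\minvset{CH}$, using Theorem~\ref{thm:AR} to propagate the inclusion: every ray issuing from $\Omega_{-+}$ hits one of these two curves and is therefore forced into $\minvset{CH}$. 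The remaining Lemmas~\ref{lem:ralpha is outside C2}--\ref{lem:ru C2} and the concluding sweep argument from Proposition~\ref{prop:INFLglobal} then transfer, yielding $\tilde\gamma_0\subseteq\partial\minvset{CH}$; Proposition~\ref{prop:globalpoint} again identifies it as a global arc, and Lemma~\ref{lem:globalorientation} makes $p$ its starting point (its canonical orientation in $\mathfrak{I}^-$ being opposite to the standard topological orientation of $\partial\minvset{CH}$, exactly matching the informal statement preceding Theorem~\ref{thm:MAINClassification}).

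The hard part will be the replacement argument for $\Omega_{-+}\subseteq\minvset{CH}$, since without $\gamma_0$ itself being a boundary segment I must verify ray-by-ray that every point of $\Omega_{-+}$ has an associated ray hitting $\gamma_0^-\cup\mathfrak{tr}_u$ rather than escaping to infinity or passing entirely through $\minvset{CH}^c$. This is a geometric bookkeeping step that requires careful control on the direction of $R(z)$ over $\Omega_{-+}$ and on the relative positions of $\gamma_0^-$ and $\mathfrak{tr}_u$; everything else in the proof reduces to a transcription of arguments already carried out in Propositions~\ref{prop:bouncing}, \ref{prop:INFLECTIONTYPE} and \ref{prop:INFLglobal}.
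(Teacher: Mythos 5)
Your first half (normalization, $R'(0)<0$, and the construction of the global arc in $\mathfrak{I}^{+}$ squeezed between $\gamma_0^{+}\subset\minvset{CH}^{c}$ and the root trail $\mathfrak{tr}_{u}\subset\minvset{CH}$) is essentially the paper's argument, which runs it through Lemma~\ref{lem:Delta- not empty Gamma is empty bouncing}. The second half contains a genuine error: you build the arc in $\mathfrak{I}^{-}$ as $\tilde\gamma_0$, the envelope of associated rays tangent to the \emph{trajectory} $\gamma_0$, transplanting Lemma~\ref{lem:INFRatio}. But at a $C^{1}$-inflection point $\Gamma(p)=\emptyset$, so $\gamma_0^{+}$ is not contained in $\minvset{CH}$ near $p$; the actual boundary on the $\mathfrak{I}^{+}$ side is the global arc $\eta$ you constructed first, which has \emph{second-order} tangency with $r(p)$ (profile $\sim cx^{2}$, $c>0$, squeezed between the quadratic trails $\mathfrak{tr}_{u_\infty}$ and $\mathfrak{tr}_{+\infty}$), whereas $\gamma_0^{+}\sim\tfrac{b}{3}x^{3}$. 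The paper therefore takes the envelope with respect to $\eta$, not $\gamma_0$: it defines $\tilde\eta$ as the curve bounding the set of points of $\mathfrak{I}^{-}$ whose rays miss $\eta$, and proves $\tilde\eta\subset\partial\minvset{CH}$. These two envelopes are genuinely different curves (a leading-order computation gives $\tilde\eta\sim\tfrac{b}{2}x^{3}$ versus $\tilde\gamma_0\sim\tfrac{5b}{3}x^{3}$ for $x<0$), and $\tilde\gamma_0$ lies strictly on the far side of $\tilde\eta$, i.e.\ inside the region $\Omega_{--}$ of points whose rays miss $\eta$ — a region the paper shows is disjoint from $\minvset{CH}$. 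So your proposed second global arc lies in the complement of the minimal set and is not a boundary arc at all.

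This is also why the step you flag as "the hard part" cannot be repaired as stated: your $\Omega_{-+}$ (points of $\mathfrak{I}^{-}$ whose rays meet $\gamma_0$) contains the strip between $\tilde\eta$ and $\tilde\gamma_0$, whose points have rays that cross $\gamma_0^{+}$ only inside the excised region between $\gamma_0^{+}$ and $\eta$ and never reach $\eta$, $\gamma_0^{-}$ or $\mathfrak{tr}_{u}$; such points are not in $\minvset{CH}$, so the inclusion $\Omega_{-+}\subset\minvset{CH}$ is false for your decomposition. The fix is to run the entire $C^{2}$-type argument with $\gamma_0$ replaced by $\eta$ throughout (as the paper does), which also requires a substitute for Lemma~\ref{lem:ralpha is outside C2}: since $\eta$ is only known to be a $C^{0}$ graph with $\lim_{x\to0^{+}}\xi(x)/x^{2}>0$, the paper rules out a second tangency of $r(u)$ with $\tilde\eta$ by a parity-of-intersections argument forcing a tangency of order $\geq 3$ of $R\partial_z$ with $\mathbb{R}$ at $0$, contradicting $\Im R''(0)\neq0$.
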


\begin{proof}
We assume that $p=0$, $R(0)=1$ and $\gamma_0\subset\mathfrak{I}^+$ lies in the upper half-plane. We use $z=x+iy$ notations.
   
Exactly as in the case of bouncing type, the conditions $\Delta^-\not=\emptyset$ and $\Gamma=\emptyset$ imply that $0$ is a starting point of a global arc, see Lemma~\ref{lem:Delta- not empty Gamma is empty bouncing}. Denote this arc by $\eta=\{y={\xi}(x)\}$.
    The arguments of Lemma~\ref{lem:Delta- not empty Gamma is empty bouncing} show that $\eta$ lies between $\mathfrak{tr}_{+\infty}$ and  $\mathfrak{tr}_{u_\infty}$, where $u_\infty=\sup \Delta^-$. Thus $\eta$ has second order tangency with $\bR_+$. 
    Let $k(x)$ be the point of intersection of $r\left((x, \xi(x))\right)$ with $\bR_+$. One can easily see that $k(x)$ is an increasing function, and $k(x)\to u_{\infty}$ as $x\to 0$. This means that $\lim_{x\to0+}\xi(x)/x^2$ exists and is positive.

We repeat the arguments of the $C^2$-inflection case above one by one, with $\gamma_0$ replaced by $\eta$. 
The Lemma~\ref{lem:Omega+- in C2 is out} can be repeated  verbatim (necessarily $R'(0)<0$),  and we get $\Omega_{+-}\subset\minvset{CH}$.

Let $\tilde{\eta}\subset\mathfrak{I}^-$ be the curve of points bounding (the germ at $0$ of) the domain $\Omega_{--}$ consisting of points of $\mathfrak{I}^-$ whose associated rays do not intersect  $\eta$. In particular, $r(u)$ is a supporting line to $\eta$ for all $u\in\tilde{\eta}$.
If $\gamma_u^\beta$ is a trajectory of $R(z)\partial_z$ starting at $u\in\tilde{\eta}$ and ending at $\beta\in \mathfrak{I}_R$ then $\gamma_u^\beta\subset\Omega_{--}$ by convexity of  $\gamma_u^\beta$, as in Lemma~\ref{lem:ru C2}.

\begin{lemma}
For any $u\in\tilde{\eta}$ the ray $r(u)$ does not intersect $\tilde{\eta}$ except at $u$ itself.
\end{lemma}

\begin{proof}
Note first that $\eta$ lies above $r(u)$ for any $\alpha\in\tilde{\eta}$.
Assume the opposite to the claim of the lemma, and let $u'\in r(u)\cap\tilde{\eta}$ be such a point of intersection.
    If $\sigma(u')<\sigma(u)$ then the ray $r(u')$ lies below $r(u)$ and therefore cannot intersect $\eta$. Similarly, if $\sigma(u')>\sigma(u)$
    then $r(u)$ cannot intersect $\eta$. Therefore $\sigma(u')=\sigma(u)$, so $u'$ is a point of tangency of $r(u)$ and $R(z)\partial_{z}$ just like $u$.

Now, recall that $0\in\eta$ lies above $r(u)$. Moreover, as $u\in\mathfrak{I}^-$, the germ of $r(u)$ at $u$ lies in $\Omega_{--}$: for $v\in r(u)$ close to $u$ we have $\sigma(v)<\sigma(u)$, so $r(v)$ doesn't intersect $\eta$. Thus the germ of $\tilde{\eta}$ at $u$ lies above $r(u)$ as well.  Thus both ends of the part $\tilde\eta_u^0$ of   $\tilde\eta$ between $u$ and $0$ lie above $r(u)$, so $\tilde\eta_u^0$ intersects  $r(u)$ in even number of points not including $u$, i.e. at least at three points including $u$, all of them the points of tangency of $r(\alpha)$ and $R(z)\partial_z$.

As $u\to 0$ the line $r(u)$ converges to $\bR$ and the  points of tangency  necessarily converge to $0$ as well. This  means that $R\partial_z$ has point of tangency of order at least $3$ with $\bR$ at $0$. However, $p\in \mathfrak{I}_R^{\ast}$ means that $\Im R''(0)\neq 0$, so the order of tangency of $R\partial_z$  with $\bR$ at $0$ is  exactly $2$ (as $\Im R(0)=\Im R'(0)=0$). This contradiction proves the Lemma.
    
\end{proof}

The same arguments as in Lemma~\ref{lem:ru C2} and in the proof of Proposition~\ref{prop:INFLglobal} now show that $\Omega_{--}$ satisfies the conditions of Lemma~\ref{lem:excision} and is therefore disjoint from $\minvset{CH}$.
\end{proof}

\subsection{Points of switch type}\label{sub:Switch}

Recall that a point $p\in \partial\minvset{CH} \setminus \mathcal{Z}(PQ)$ belonging to the transverse locus $\mathfrak{I}_{R}^{\ast}$ is called a \emph{point of switch type} if  $\Delta^{+}(p) \neq \emptyset$ and $\Gamma(p) \cup \Delta^{-}(p) = \emptyset$.

\begin{proposition}\label{prop:SWITCH-LOCAL}
 The negative part $\gamma_0^-(t)$ of the integral curve $\gamma_0(t)$ is a part of the boundary  $\partial \minvset{CH}$.
\end{proposition}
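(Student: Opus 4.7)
The plan is to normalize so that $p=0$, $R(0)=1$, $R'(0)=\rho\in\bR$, and $b>0$ in the expansion $R(u)=1+\rho u+(a+ib)u^2+O(u^3)$ (replacing $R(z)$ by $\overline{R(\bar z)}$ if necessary), which places $\gamma_0^-\subset\mathfrak{I}^-$ tangent to $\bR$ at $0$ with leading-order shape $y=\tfrac{b}{3}x^3$. The inclusion $\gamma_0^-\subset\minvset{CH}$ is immediate: $\minvset{CH}$ is closed, so $0\in\minvset{CH}$, and $\gamma_0^-$ is the positively-oriented integral curve of $R(z)\partial_z$ ending at $0$, hence lies in $\minvset{CH}$ by Proposition~\ref{prop:IntegralCurve}.

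To establish $\gamma_0^-\subset\partial\minvset{CH}$, I will adapt the propagation argument of Lemmas~\ref{lem:localarc step 1}--\ref{lem:localarc step 2}. Two configurations enter. From $\Gamma(0)=\emptyset$ one has $\gamma_0^+\cap\overline{\minvset{CH}}=\{0\}$, and Lemma~\ref{lem:horn out} produces a horn $\horn{0}{p'}{p''}$ and complementary cone $\horninf{p''}$ lying in $\minvset{CH}^c$; openness of $\minvset{CH}^c$ then yields an additional tube around $\gamma_0^+\setminus\{0\}$ in $\minvset{CH}^c$. From $\Delta^+(0)\neq\emptyset$ together with $\Delta^0(0)\subset\Delta^-(0)=\emptyset$, pick $u\in\Delta^+(0)$ making the defining inequality strict; Lemma~\ref{lem:RootTrailSlope} combined with Proposition~\ref{prop:RTInflecCONCAVE} then gives a smooth branch of $\mathfrak{tr}_u\subset\minvset{CH}$ tangent to $\bR$ at $0$ in the lower half-plane, on the opposite side from $\gamma_0^+$, with leading-order shape $y=-\tfrac{ub}{1+u\rho}x^2$.

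I will then excise the open set $U$, the connected component of $\mathfrak{I}^-\setminus\gamma_0^-$ near $0$ adjacent to $\bR$ (locally $\{x<0,\ \tfrac{b}{3}x^3<y<0\}$). For $w\in U$ close to $\gamma_0^-$ and to $0$, Lemma~\ref{lem:unif horns} produces a small horn $\horn{w}{w'}{w''}$ of uniform size, and I will argue that its complementary cone $\horninf{w''}$ is contained in $\horn{0}{p'}{p''}\cup\horninf{p''}\cup(\text{tube around }\gamma_0^+\setminus\{0\})\subset\minvset{CH}^c$ by a continuity comparison: as $w\to 0$ along $U$, $w''\to 0$ and the ray direction $\arg R(w)=bx_w^2+O(|x_w|^3)$ (from the Taylor expansion of $R$) converges to the direction of $r(0)=\bR_+$, so $\horninf{w''}$ deforms into $\horninf{p''}$. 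Proposition~\ref{prop:horn+min} then yields $w\notin\minvset{CH}^\circ$. Since this conclusion holds throughout $U$ near $\gamma_0^-$, Lemma~\ref{lem:excision} gives $U\cap\minvset{CH}=\emptyset$, so $\gamma_0^-\subset\partial\minvset{CH}$.

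The hardest step will be the cone-absorption $\horninf{w''}\subset\horn{0}{p'}{p''}\cup\horninf{p''}\cup(\text{tube around }\gamma_0^+)$. The precise geometry, which I will track via cubic bookkeeping, is that $r(w)$ stays strictly above $\gamma_0^-$ and $\mathfrak{tr}_u$ in $\mathfrak{I}^-$ (direct cubic/quadratic comparison), crosses $\bR$ near $(\tfrac{2}{3}x_w,0)$, and in $\mathfrak{I}^+$ has a bounded excursion above $\gamma_0^+$ of height $O(|x_w|^3)$ before re-crossing $\gamma_0^+$ at $x=-2x_w$ (via the tangent identity $(x-x_w)^2(x+2x_w)=0$); matching this excursion against the horn/cone/tube configuration at $0$, uniformly in $w$, is the technical crux.
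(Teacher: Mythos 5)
Your overall strategy---normalize, obtain $\gamma_0^-\subset\minvset{CH}$ from Proposition~\ref{prop:IntegralCurve}, and excise a one-sided neighborhood of $\gamma_0^-$ on the $\mathfrak{I}^-$ side via Lemma~\ref{lem:excision}---is the same as the paper's, and your cubic bookkeeping (the tangent identity $(x-x_w)^2(x+2x_w)=0$, the crossing of $\bR$ near $\tfrac23x_w$) is correct. But the step you yourself flag as the crux is a genuine gap, and the fix is not a sharper continuity estimate but a missing lemma. Any ray $r(w)$ with $w$ strictly above $\gamma_0^-$ lies strictly above the tangent ray at the point of $\gamma_0^-$ beneath it, and that tangent ray already touches $\gamma_0^+$ from above at $x=-2x_w$; so $r(w)$ unavoidably makes an excursion of height of order $|x_w|^3$ into $\Omega_{++}$, the component of $\mathfrak{I}^+$ lying above $\gamma_0^+$. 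To invoke Lemma~\ref{lem:excision} you must know this excursion lies in $\minvset{CH}^c$. ``Openness of $\minvset{CH}^c$ yields a tube around $\gamma_0^+\setminus\{0\}$'' gives no lower bound on the tube's width near the pinch at $0$, while the excursion height at horizontal position $x\sim|x_w|$ is a definite multiple of $|x_w|^3$: a priori $\minvset{CH}$ could accumulate on $\gamma_0^+$ from above faster than any prescribed rate. The paper therefore first proves a separate lemma, $\Omega_{++}\subset\minvset{CH}^c$, by seeding the propagation argument of Lemma~\ref{lem:localarc step 2} with a point $q\in\Omega_{++}\setminus\minvset{CH}$ (available since $\Gamma(0)=\emptyset$) and with a sector $S$ around the tail $[\rho',+\infty]$ of $r(0)$ disjoint from $\minvset{CH}$---and that sector exists precisely because $\Delta^-(0)=\emptyset$, a hypothesis your proposal never uses. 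Without this lemma the excision cannot close.

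Two further problems. First, Lemma~\ref{lem:unif horns} does not give uniform-size small horns at your points $w\in U$: it requires a compact set disjoint from $\mathfrak{I}_R$, whereas $\overline U\ni 0\in\mathfrak{I}_R$; and Lemma~\ref{lem:unif horns at I_R^*} applies to $\overline{\Omega_+}$, the side from which forward trajectories move away from $\mathfrak{I}_R$. Your $w$ lie in $\mathfrak{I}^-$, where the forward trajectory crosses $\mathfrak{I}_R$ after time $O(|x_w|)$, so by Definition~\ref{def:horn} the horns at $w$ have diameter tending to $0$; their complementary cones then contain essentially the whole of $r(w)$ beyond $x\approx 0$, including the $\Omega_{++}$ excursion, returning you to the same missing lemma. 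Second, the region $\{x<0,\ \tfrac b3x^3<y<0\}$ is not ray-closed modulo the sets you control: $r(w)$ exits it through $\bR$ at $x\approx\tfrac23x_w<0$ into $\{x<0,\ y>0\}\subset\mathfrak{I}^-$, which is likewise not yet known to avoid $\minvset{CH}$; the paper's triangle $H(p)$ includes that upper part, and there the slopes can be negative, which is exactly where the sector $S$ (hence $\Delta^-(0)=\emptyset$) is needed again. Finally, the root-trail branch $\mathfrak{tr}_u$, $u\in\Delta^+(0)$, lies below $\gamma_0^-$ and does no work in your argument; in the paper it serves Proposition~\ref{prop:SWITCHREGULAR} (the global arc ending at $0$), not the present statement.
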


    As before, we assume that $p=0$, $R(0)=1$ and $\Im R''(0)>0$. 
    The trajectory $\gamma_0(t)$ and the curve $\mathfrak{I}_R$ divide $U$ into four domains $\Omega_{\pm,\pm}$, with $\Omega_{+-}$ intersecting the $\horn{0}{p'}{p''}$.

 \begin{lemma} In the above notation, 
      $\Omega_{++}\subset(\minvset{CH})^c$.
 \end{lemma}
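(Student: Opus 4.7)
\emph{Plan.} The proof will mirror the step in the proof of Proposition~\ref{prop:INFLECTIONTYPE} that established $\Omega_{++}\cap\minvset{CH}=\emptyset$ in the $C^{2}$-inflection case, with the positive trajectory $\gamma_{0}^{+}$ playing the role played there by the associated ray $r(0)$: it provides the open $\minvset{CH}^{c}$-shield needed to run the horn-excision argument.

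\emph{Step 1 — seed in $\Omega_{++}$.} The switch-type hypothesis asserts $\Gamma(0)=\emptyset$, so $\gamma_{0}^{+}\setminus\{0\}$ is contained in the open set $\minvset{CH}^{c}$. Since $\gamma_{0}^{+}$ is part of the common boundary of $\Omega_{++}$ and $\Omega_{+-}$, an open tubular neighborhood of $\gamma_{0}^{+}\setminus\{0\}$ lies in $\minvset{CH}^{c}$, and its intersection with $\Omega_{++}$ is a nonempty open set. This produces a seed point $q\in \Omega_{++}\cap \minvset{CH}^{c}$.

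\emph{Step 2 — spreading by horns.} Take any $\tilde p\in \Omega_{++}\subset \overline{\Omega_{+}}$ close to $0$. Lemma~\ref{lem:unif horns at I_R^*} supplies a small horn $\horn{\tilde p}{\tilde p'}{\tilde p''}$ of uniformly bounded diameter at $\tilde p$. With the seed provided by Step 1, the argument of Lemma~\ref{lem:localarc step 2} now applies essentially verbatim: flowing backward along $R(z)\partial_{z}$ from $\tilde p$, one constructs a chain of small horns whose bodies and complementary cones lie in $\minvset{CH}^{c}$, the complementary cones being routed to exit $\Omega_{++}$ across $\gamma_{0}^{+}$ into the $\minvset{CH}^{c}$-tube of Step 1. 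Proposition~\ref{prop:horn+min} then forces $\tilde p\notin (\minvset{CH})^{\circ}$. Since the construction is stable under perturbation of $\tilde p$, one obtains $\tilde p\in \overline{\minvset{CH}^{c}}$ in an open neighborhood of $\tilde p$, and passing to the interior gives $\tilde p\in \minvset{CH}^{c}$. Hence $\Omega_{++}\cap \minvset{CH}=\emptyset$.

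\emph{Main obstacle.} The principal novelty compared to the $C^{2}$-inflection case is that the switch-type assumption allows $\Delta^{+}(0)\neq \emptyset$: the associated ray $r(0)$ may now hit $\minvset{CH}$ and is therefore not available to shield the complementary cones of horns. The shield must instead be built out of the (possibly narrow) $\minvset{CH}^{c}$-neighborhood of $\gamma_{0}^{+}$ supplied by $\Gamma(0)=\emptyset$. The technical heart of the proof is thus verifying that, for $\tilde p\in \Omega_{++}$ sufficiently close to $\gamma_{0}^{+}$, the complementary cones of the iterated small horns really do meet this tube — and do not escape into the region of $\mathfrak I^{+}$ where $\minvset{CH}$ reaches $r(0)$. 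This is feasible because $\gamma_{0}^{+}$ bounds $\Omega_{++}$ and trajectories of $R(z)\partial_{z}$ inside $\Omega_{++}$ cannot cross $\gamma_{0}^{+}$.
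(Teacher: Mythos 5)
Your skeleton is the same as the paper's — a seed point in $\Omega_{++}\cap\minvset{CH}^c$ obtained from $\Gamma(0)=\emptyset$, followed by backward propagation along trajectories using small horns and Proposition~\ref{prop:horn+min} — but there is a genuine gap exactly at the step you yourself flag as ``the technical heart'', and the fix you sketch does not work. Proposition~\ref{prop:horn+min} requires the \emph{entire} complementary cone $\horninf{\tilde p''}$ to lie in $\minvset{CH}^c$, and this cone is an unbounded sector whose rays eventually run roughly parallel to $r(0)=\R_+$ (when $R'(0)<0$, the rays of points of $\Omega_{++}$ near $0$ descend, cross $\R_+$ near $\rho=-1/R'(0)$, and continue into the lower half-plane). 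A tubular neighborhood of $\gamma_0^+\setminus\{0\}$ is a bounded set hugging the cubic $y\approx\tfrac b3 x^3$; it cannot contain these cones, and your remark that trajectories inside $\Omega_{++}$ cannot cross $\gamma_0^+$ is beside the point, since the cones are bounded by straight rays, not trajectories, and they do cross $\gamma_0^+$ and go far beyond it. What is actually needed — and what the paper constructs — is an \emph{unbounded} shield: the set $H(0)=\horn{0}{p'}{p''}\cup\horninf{p''}$ together with an open sector $S$ around the tail $[\rho',+\infty]$ of $r(0)$ for some $\rho'<\rho$, after which one verifies that every ray $r(z)$, $z\in\Omega_{++}$, meets $\partial\bigl(H(0)\cup S\bigr)$ only once, at a point of $\gamma_0^+$.

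The existence of $S$ is precisely where the hypothesis $\Delta^{-}(0)=\emptyset$ enters: it forces $\Delta(0)=\Delta^{+}(0)$ to be a compact subset of $(0,\rho)$ containing neither $\rho$ nor the point at infinity, so the closed tail $[\rho',+\infty]$ is disjoint from $\overline{\minvset{CH}}$ and can be thickened to a sector. Your argument never invokes $\Delta^{-}(0)=\emptyset$, and the statement is false without it: at a bouncing point with $\Gamma=\emptyset$ and $\Delta^{-}\neq\emptyset$, the root trail of a point of $\Delta^{-}(0)$ is an arc of $\minvset{CH}$ entering the upper-right quadrant, i.e.\ $\Omega_{++}\cap\minvset{CH}\neq\emptyset$ (compare the proof of Proposition~\ref{prop:bouncing}). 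Any correct proof must therefore use that hypothesis; yours is missing both the hypothesis and the unbounded shield it provides.
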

\begin{proof}
    
Recall that by Lemma~\ref{lem:horn out} the union $H(0)=\horn{0}{p'}{p''}\cup\horninf{p''}$ does not intersect $\minvset{CH}$.

 Assume first that $R'(0)\le0$ and set $\rho\coloneqq-(R'(0))^{-1}\in \R_+\cup\{+\infty\}$.
The set $\Delta(0)$ is a relatively closed subset of $(0,\rho]$ not containing $\rho$ (as $\Delta^-(0)=\emptyset$). Thus there exists a $\rho'<\rho$ such that the ray  $[\rho', +\infty]$ is disjoint from  $\minvset{CH}$. As both $[\rho', +\infty]$ and $\overline{\minvset{CH}}$ are closed in $\bC\cup\mathbb{S}^1$ there is some neighborhood $\widetilde{U}$ of $[\rho', +\infty]$ in $\bC\cup\mathbb{S}^1$ disjoint from  $\overline{\minvset{CH}}$.
Choose  an  open sector $S\subset\tilde U$ with  the ray  $(\rho', +\infty)$ as bisector,  necessarily disjoint from $\minvset{CH}$.

    By shrinking  $\Omega_{\pm\pm} $ we can assume that  $|\sigma(z)|< |\sigma(S)|$ for all $z\in \Omega_{\pm\pm} $, where $\pm\sigma(S) $ are the slopes of the sides of $S$. Moreover, we restrict ourselves to a neighborhood $U$ of $0$ so small that  rays $r(z)$ do not intersect the interval $[0, \rho']$ for $z\in U\cap\{ \Re z \ge0\}$ and $z\neq 0$: this is possible since   the trails of these points  lie in the lower half-plane.
    In particular,    for any  $z\in \Omega_{++}\subset U\cap\{ \Re z \ge0\}$  the ray $r(z)$ intersects the boundary of $H(0)\cup S$ only once at $\gamma_0^+(t)$.

    The same conclusion holds in the case $R'(0)>0$. In this case $\sigma(z)>0$ for all $z\in U$, $\Im z>0$. 
    
    Since $\Gamma(0)=\emptyset$ there exists a point $q\in \Omega_{++}\setminus\minvset{CH}$. We can now repeat  the  arguments of Lemma~\ref{lem:localarc step 2} for $U$: the crucial fact used in Lemma~\ref{lem:localarc step 2} was that $r(\tilde p)$ doesn't intersect $\minvset{CH}$, and this holds for points of $\Omega_{++}$. Therefore  $\Omega_{++}\cap\minvset{CH}=\emptyset$.

\end{proof}

    \begin{proposition}\label{prop:H(p) is outside} 
      Take $p=\gamma_0(t)$ for some $t<0$ sufficiently close to $0$. The  open curvilinear triangle  $H(p)\subset\Omega_{-+}$ bounded by $r(p)$, the curve $\gamma^-_0(t)$ and the inflection curve lies outside $\minvset{CH}$.
    \end{proposition}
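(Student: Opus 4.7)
The plan is to apply the excision Lemma~\ref{lem:excision} to the open set $U = H(p)$: it suffices to verify that for every $u \in H(p)$, the associated ray $r(u)$ is contained in $H(p) \cup \minvset{CH}^c$. I keep the normalization $p \mapsto 0$, $R(0)=1$, $\Im R''(0)=b>0$ from the beginning of this subsection, so that $\gamma_0$ is locally described by $y=\tfrac{b}{3}x^3+O(x^4)$ and $\mathfrak{I}_R$ crosses $\mathbb{R}$ transversely at $0$. The triangle $H(p)$ is then a thin curvilinear triangle with vertices $p=\gamma_0(t)$, $0$, and $q=r(p)\cap\mathfrak{I}_R$, all $O(|t|)$-close to the origin.

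The first step is to show that for $u \in H(p)$, the ray $r(u)$ leaves $H(p)$ through the inflection arc, rather than through $r(p)$ or $\gamma_0^-(t)$. The slope of $r(u)$ equals $\arg R(u) \approx bx_u^2$, which is strictly less than the slope $bx_p^2$ of $r(p)$ whenever $|x_u|<|x_p|$, so $r(u)$ cannot cross $r(p)$ going east. On the other hand $\gamma_0^-(t)$ is strictly concave downward on $[x_p,0]$ (since $y''=2bx<0$), so a parallel-tangent comparison (using that $r(u)$ is a line of slope $\approx bx_u^2$ lying weakly above the tangent to $\gamma_0$ at abscissa $x_u$) forces $r(u)$ to stay above $\gamma_0^-(t)$ as well. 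The only boundary piece of $H(p)$ left for $r(u)$ to exit through is the segment of $\mathfrak{I}_R$ from $0$ to $q$.

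The second step is to track $r(u)$ beyond its exit point $q'\in\mathfrak{I}_R$, showing that the remainder lies in regions already known to be disjoint from $\minvset{CH}$. Solving $r(u)(x)=\tfrac{b}{3}x^3$ shows $r(u)$ crosses $\gamma_0^+$ at $x=2|x_u|$. On the interval $(q'_x,\,2|x_u|)$, $r(u)$ lies in $\mathfrak{I}^+$ above $\gamma_0$, hence in $\Omega_{++}\subset\minvset{CH}^c$ by Proposition~\ref{prop:SWITCH-LOCAL}. For $x\geq 2|x_u|$, I fix (independently of $t$) an auxiliary point $p'\in\gamma_0^+$ with $x_{p'}$ so large that $|x_u|\ll x_{p'}$ for every $u\in H(p)$; elementary slope/intercept comparisons then show $r(u)$ stays above $\mathbb{R}_+$ and below the concatenation of $\gamma_0^+$ (near the origin) and the tangent $r(p')$ (further east), so that $r(u)\subset \horn{0}{p'}{p''}\cup\horninf{p''}\subset\minvset{CH}^c$ by Lemma~\ref{lem:horn out}.

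The main obstacle is ensuring uniformity of the two steps in $u\in H(p)$: all the slope and position comparisons must hold simultaneously for every point of the triangle, not just along $\gamma_0^-$ or $r(p)$. This is handled by the hypothesis that $|t|$ be small, which forces $H(p)$ into a neighborhood of $0$ where the second-order truncation of $R$ controls the geometry and where the auxiliary horn parameters $p',p''$ can be chosen uniformly. Once this is in place, $r(u)\setminus H(p)\subset\Omega_{++}\cup H(0)\subset\minvset{CH}^c$ for every $u\in H(p)$, and Lemma~\ref{lem:excision} yields $H(p)\cap\minvset{CH}=\emptyset$.
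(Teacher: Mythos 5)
Your overall architecture is the same as the paper's: apply the excision Lemma~\ref{lem:excision} to $H(p)$ after showing that every ray $r(u)$, $u\in H(p)$, exits the triangle only through the inflection-curve side and then lands in $\Omega_{++}\cup\horn{0}{p'}{p''}\cup\horninf{p''}\cup S\subset\minvset{CH}^c$. The gaps are in how you justify the two non-crossing claims and in the case analysis of the exit slope.

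First, your slope comparisons rest on the approximation $\sigma(u)\approx b\,x_u^2$ holding uniformly on $H(p)$, and this is not true when $R'(0)=\rho\neq 0$. Writing $\sigma(u)=\rho\,\Im u+b(\Re u)^2+O(|u|^3)$, note that the upper edge $r(p)$ of the triangle crosses the imaginary axis at height $\sim -\tfrac{2b}{3}t^{3}>0$, so points $u$ near the $\mathfrak{I}_R$-side have $\Im u\sim |t|^{3}$ while $|\Re u|$ can be arbitrarily small; there the term $\rho\,\Im u$ dominates $b(\Re u)^2$ and your claimed inequality $\sigma(u)<\sigma(p)$ is not established (near the vertex $p$ it also degenerates, since $\sigma(u)-\sigma(p)=b(x_u^2-t^2)+O(|t|^3)$ can have either sign when $x_u$ is within $O(t^2)$ of $t$). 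The same non-uniformity undermines the ``parallel-tangent'' comparison with $\gamma_0^-(t)$, which you only assert for points off $\gamma_0$. The paper circumvents exactly this: Lemma~\ref{lem:mono in t} gives the \emph{signed derivative} $\partial_x\sigma<0$ in a sector around $\mathbb{R}_-$ (used only where $\Im z<0$, i.e.\ where $|\Im u|\lesssim|\Re u|^3$), and the crossing with $r(p)$ is excluded by the backward-trajectory argument of Lemma~\ref{lem:H(p)2}: if $r(z)$ met $r(p)$ then $\sigma$ would fail to be monotone along $\gamma_z^-$, forcing an inflection point inside $H(p)\subset\Omega_{-+}$, which is disjoint from $\mathfrak{I}_R$. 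That argument needs no Taylor estimates at all.

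Second, your Step 2 assumes the ray exits into $\mathfrak{I}^+$ above $\gamma_0$ and in particular that its slope is positive (your formula ``crosses $\gamma_0^+$ at $x=2|x_u|$'' is moreover only the computation for the tangent line at a point \emph{of} $\gamma_0$, not for a general $u\in H(p)$). When $R'(0)<0$, points in the upper part of $H(p)$ can have $\sigma(u)<0$, and then $r(u)$ dips below $\mathbb{R}_+$ rather than staying in $\Omega_{++}\cup H(0)$. The paper's proof treats this case separately, using that the neighborhood $U$ was shrunk so that $r(z)\cap\mathbb{R}\subset(\rho',+\infty)$ for $z\in U$ with $\Im z>0$, together with the sector $S$ around $[\rho',+\infty)$ already known to avoid $\minvset{CH}$ from the switch-type setup ($\Delta^-(0)=\emptyset$). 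You would need to import that preliminary reduction to close this case.
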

    Denote $x=\Re z$, $y=\Im z$.
    \begin{lemma}\label{lem:mono in t} One has 
       $\frac{\partial}{\partial x}\sigma(x+iy)<0$ as long as $z=x+iy$ lies in a sufficiently small sector $\{|z|<\delta, x<0, |y|<-\epsilon x\}$ for some $\epsilon,\delta>0$ depending on the rational function $R(z)$ only.
    \end{lemma}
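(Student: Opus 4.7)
Since $\log R(z)$ is holomorphic where $R$ doesn't vanish, and $\sigma(z) = \Im \log R(z)$, we have the Cauchy--Riemann identity
\[
\frac{\partial}{\partial x}\sigma(x+iy) \;=\; \Im\!\left(\frac{R'(z)}{R(z)}\right).
\]
The plan is therefore to compute the Taylor expansion of $R'/R$ at $0$ and show that the imaginary part is strictly negative throughout the sector. This is a direct calculation rather than a geometric argument.

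Using the normalization $R(z) = 1 + \rho z + (a+ib)z^2 + O(z^3)$ with $\rho, a \in \mathbb{R}$ and $b>0$ (as in \eqref{eq:R at generic IC} for a point of the transverse locus), I would multiply out
\[
\frac{R'(z)}{R(z)} \;=\; \bigl(\rho + 2(a+ib)z + O(z^2)\bigr)\bigl(1 - \rho z + O(z^2)\bigr)
\;=\; \rho + (2a - \rho^2)z + 2ib\,z + O(z^2).
\]
Writing $z = x + iy$ and taking the imaginary part yields
\[
\Im\frac{R'(z)}{R(z)} \;=\; 2b\,x \;+\; (2a-\rho^2)\,y \;+\; O(|z|^2).
\]

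Now I would use the sector constraints: since $x < 0$ and $b > 0$, the leading term $2bx$ is strictly negative with $|2bx| = 2b|x|$. The cross term is bounded by $|2a-\rho^2|\cdot|y| \leq |2a-\rho^2|\cdot \epsilon |x|$, which is strictly smaller than $b|x|$ as soon as $\epsilon < b/|2a-\rho^2|$ (or for arbitrary $\epsilon$ if $2a = \rho^2$). The remainder is $O(|z|^2) = O(x^2)$ on the sector, so it is bounded by $C(1+\epsilon^2)x^2$ for some $C$ depending only on $R$; for $|z| < \delta$ with $\delta$ small enough, this is dominated by $b|x|$ as well. Putting the three bounds together gives $\Im(R'/R) \leq -b|x|/2 < 0$ on the whole sector, with $\epsilon$ and $\delta$ depending only on the coefficients $\rho, a, b$ of $R$.

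\textbf{Expected difficulty.} There is no real obstacle here --- the lemma is a quantitative version of the transversality condition $\Im R''(0) = 2b > 0$, and the proof is a one-line Taylor expansion followed by a triangle-inequality estimate. The only thing to be careful about is making sure the $\epsilon$ and $\delta$ are chosen in the correct order (first $\epsilon$ to kill the linear $y$-term, then $\delta$ to kill the quadratic remainder), so that they indeed depend on $R$ alone and not on each other in a circular way.
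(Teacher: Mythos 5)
Your proof is correct and follows essentially the same route as the paper: both compute $\partial_x\sigma=\Im(\log R)'=\Im(R'/R)$, Taylor-expand to first order to find that the coefficient of $z$ has imaginary part $2b>0$, and conclude negativity on the sector from $x<0$. Your version is just slightly more explicit about the order in which $\epsilon$ and $\delta$ are chosen, which is a harmless refinement of the paper's condition $|\arg z-\pi|<\pi-\arg(2b-a^2)$.
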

    \begin{proof}
        Recall that $R(z)=1+az+bz^2+..$, with  $\Im b>0$. Then $$\log R=az+\left(b-\frac{a^2}{2}\right)z^2+...$$ and $$\left(\log R\right)'=a+(2b-a^2)z+...$$
        Therefore 
        $$
        \frac{\partial}{\partial x}\sigma(x+iy)= \frac{\partial}{\partial x}\Im\left(\log R\right)=\Im\left(\log R\right)'=\Im  (2b-a^2)z+ o(z) <0
        $$
         if  $|\arg z-\pi|< \pi-\arg(2b-a^2)$ and $|z|$ is sufficiently small (note that  $0<\arg(2b-a^2)<\pi$ as $\Im b>0$ and $a\in\bR$).
    \end{proof}
    \begin{lemma}\label{lem:H(p)1} The associated ray 
        $r(z)$ does not intersect $\gamma_0^-(t)$ for any $z\in H(p)$.
    \end{lemma}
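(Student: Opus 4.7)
My plan is to argue by contradiction: I will suppose there exists $z \in H(p)$ with $q := \gamma_0(s_0) \in r(z)$ for some $s_0 \in (t, 0)$, and derive that $z$ must lie strictly below $\gamma_0^-$, contradicting $z \in H(p) \subset \Omega_{-+}$.

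First, I would observe that for $|t|$ sufficiently small, $H(p)$ is contained in the sector described by Lemma~\ref{lem:mono in t}, and since $R(z)$ is close to $R(0) = 1$ throughout $H(p)$, the ray $r(z)$ initially points essentially rightward. This forces $\Re q > \Re z$, i.e., writing $z = x + iy$, we have $x < s_0 < 0$. Next, I would use the Taylor expansion \eqref{eq:R at generic IC} to obtain $\sigma(z) = bx^2 + \rho y + O(|z|^3)$ and the chord-slope approximation $\arg(q - z) = (y_q - y)/(s_0 - x) + O(|z|^3)$, where $y_q = \tfrac{b}{3}s_0^3 + O(|s_0|^4)$ by Lemma~\ref{lem:asympt of traj at z in I_R}. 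The equation $\sigma(z) = \arg(q - z)$ is then linear in $y$ and yields
\[
y = \frac{b\,(s_0^3 - 3x^2 s_0 + 3x^3)/3}{1 + \rho(s_0 - x)} + O(|t|^4).
\]

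Using the factorization $s_0^3 - 3x^2 s_0 + 2x^3 = (s_0 - x)^2(s_0 + 2x)$ together with $\gamma_0^-(x) = \tfrac{b}{3}x^3 + O(x^4)$, a short computation gives
\[
y - \gamma_0^-(x) = \frac{b\,(s_0 - x)^2(s_0 + 2x)}{3\,[1 + \rho(s_0 - x)]} + O(|t|^4).
\]
Since $x < s_0 < 0$ implies $s_0 + 2x < 3s_0 < 0$, while $(s_0 - x)^2 > 0$ and $1 + \rho(s_0 - x) > 0$ for $|t|$ small, the leading-order contribution is strictly negative. Hence $y < \gamma_0^-(x)$, placing $z$ below $\gamma_0^-$, which is the desired contradiction.

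The main obstacle will be ensuring that the leading-order terms truly dominate the remainders uniformly over $(x, y, s_0)$ in the admissible range. Because the leading contribution to $y - \gamma_0^-(x)$ is of order $|t|^3$ while the remainder is $O(|t|^4)$, it will suffice to take $|t|$ sufficiently small; here Lemma~\ref{lem:mono in t} is precisely what guarantees the relevant sector in which the Taylor expansions of $\sigma$ and $\arg(q-z)$ are uniformly valid across $H(p)$, thereby legitimizing the argument above.
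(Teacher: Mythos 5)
Your leading-order computation is correct as far as it goes — the identity $s_0^3-3x^2s_0+2x^3=(s_0-x)^2(s_0+2x)$ and the sign analysis of the main term are right — but the step ``the leading contribution is of order $|t|^3$ while the remainder is $O(|t|^4)$, so it suffices to take $|t|$ small'' does not hold uniformly, and this is where the proof breaks. The main term $\tfrac{b}{3}(s_0-x)^2(s_0+2x)/[1+\rho(s_0-x)]$ is \emph{not} bounded below in modulus by $c|t|^3$: it degenerates both when $s_0\to x$ (the ray meeting the trajectory nearly tangentially) and when $z$ lies near the vertex of $H(p)$ on the inflection curve, where $|x|,|s_0|\ll|t|$. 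In the latter regime the situation is worse than inconclusive: for the whole sub-triangle of $H(p)$ with $y\ge 0$ (which is nonempty, since the vertex of $H(p)$ on $\mathfrak{I}_{R}$ sits at height $\approx -\tfrac{2b}{3}t^3>0$), your own formula forces $\tfrac{b}{3}|x|^3\le O(|t|^4)$, hence $|x|,|s_0|=O(|t|^{4/3})$ and $(s_0-x)^2|s_0+2x|=O(|t|^4)$ — the ``leading'' term has the same size as the discarded remainder, and no sign conclusion, hence no contradiction, can be drawn. Relatedly, your appeal to Lemma~\ref{lem:mono in t} for uniformity is not available: its sector is $\{|y|<-\epsilon x\}$, and the points of $H(p)$ with $x\to 0^-$ and $y$ near $-\tfrac{2b}{3}t^3$ violate this.

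This is exactly why the paper's proof splits into two cases. For $\Im z<0$ the relevant part of $H(p)$ is squeezed between $\gamma_0$ and $\mathbb{R}$, so it \emph{does} lie in the sector of Lemma~\ref{lem:mono in t}; there one gets $\sigma(z)>\sigma(z_+)$ for the horizontal projection $z_+$ of $z$ onto $\gamma_0^-$, and the claim follows from concavity of $\gamma_0^-$ with no error terms to control. For $\Im z\ge 0$ the obstruction is not local at all: the ray re-enters the lower half-plane only at $\Re z>\rho'>0$ (a scale of order $1$, fixed by the choice of $U$ in the preceding construction), which is disjoint from $\gamma_0^-\subset\{\Re z\le 0\}$, and a Taylor expansion at the origin cannot detect this. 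To repair your argument you would need either a uniform lower bound on $(s_0-x)^2|s_0+2x|$ under the intersection hypothesis (which, as shown above, fails) or a separate treatment of the two degenerate regimes — at which point you essentially recover the paper's case split.
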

    \begin{proof}
        First consider  the case $\Im z<0$. As $\gamma_0^-(t)$ is tangent to $\bR$, we can assume that this part of $H(p)$ satisfies the conditions of Lemma~\ref{lem:mono in t}, so $\sigma(z)>\sigma(z_+)>0$, where $z_+=z+t\in\gamma_0^-(t)$. Thus  the ray $r(z)$ lies in the half-plane bounded by the line tangent to $\gamma_0^-(t)$ at $z_+$ and containing $z$. Therefore $r(z)$ does not intersect $\gamma_0^-(t)$: by convexity of $\gamma_0^-(t)$it lies in the other half-plane bounded by the line tangent to $\gamma_0^-(t)$ at $z_+$.

        Now, assume that $\Im z\ge0$. Recall that we have chosen $U$ so small that for any $z\in U$, $\Im z>0$, the intersection $r(z)\cap \R\subset (\rho',+\infty)\subset\R_+$.  Thus $r(z)\cap \{\Im w<0\}\subset \{\Re z>\rho'>0\}$ which is disjoint from $\gamma^-$.
    \end{proof}
 \begin{lemma}\label{lem:H(p)2} The associated ray 
        $r(z)$ does not intersect $r(p)$ for any $z\in H(p)$.
    \end{lemma}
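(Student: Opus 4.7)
The plan is to exhibit an affine function $\nu$ that vanishes on the line $\ell_p$ through $r(p)$, is strictly negative on $H(p)$, and strictly decreases along each associated ray $r(z)$ with $z\in H(p)$. This will confine every such $r(z)$ to the open half-plane $\{\nu<0\}$, which is disjoint from $r(p)\subset\ell_p$, giving the claim.

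Concretely, I would set $\nu(w):=\Im\bigl((w-p)/R(p)\bigr)$, so that $\ell_p=\{\nu=0\}$ and the sign of $\nu$ records which side of $\ell_p$ a point lies on. Since $H(p)$ is a curvilinear triangle bounded on one side by a segment of $r(p)$, it lies in one of the two open half-planes determined by $\ell_p$; a direct check in the normalized coordinates $R(z)=1+\rho z+(a+ib)z^2+O(z^3)$ with $p=\gamma_0(t)$, $t<0$, establishes that $H(p)\subset\{\nu<0\}$, hence $\nu(z-p)<0$ on $H(p)$.

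The heart of the argument is the inequality $\Im\bigl(R(z)\overline{R(p)}\bigr)<0$ for every $z\in H(p)$, which I would prove by Taylor expansion of $R$ about $p$. Writing $z-p=\xi+i\eta$, the contributions of $R'(p)(z-p)$ and $\tfrac12 R''(p)(z-p)^2$ combine, modulo subdominant terms, into $b\,\xi(\xi+2t)$. Since $H(p)$ lies in the region $\xi\in(0,-t)$ with $|\eta|\leq bt^2\xi$, the factor $\xi+2t$ is strictly negative, hence so is $\nu(R(z))$. (This reflects $\Im R'(p)\approx 2bt<0$, which is precisely the condition $p\in\mathfrak{I}^-$.) By linearity, $\nu(r(z)-p)=\nu(z-p)+s\,\nu(R(z))<0$ for every $s\geq 0$, so $r(z)\subset\{\nu<0\}$, which is disjoint from $r(p)$.

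The hard part will be controlling the Taylor remainder uniformly throughout $H(p)$: the linear and quadratic contributions to $\Im(R(z)\overline{R(p)})$ are both of order $t^2$ and must be combined, while $H(p)$ is cuspidal at $p$ and degenerates at its two other vertices on $\mathfrak{I}_R$. I expect that the rescaling $z=t\zeta$ maps $H(p)$ onto a compact region of the $\zeta$-plane independent of small $|t|$, after which $b\,\xi(\xi+2t)$ becomes $bt^2\,\tilde\xi(\tilde\xi+2)$ with a uniformly non-vanishing negative sign, and the higher-order corrections pick up an additional factor of $|t|$ and are thereby absorbed.
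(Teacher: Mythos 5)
Your proof is correct, but it takes a genuinely different route from the paper's. The paper argues dynamically: the backward trajectory $\gamma_z^-$ of $z\in H(p)$ cannot leave $H(p)$ through the trajectory side (uniqueness of solutions of the ODE) nor through $\mathfrak{I}_R$, so it enters $H(p)$ through $r(p)$ at a point $z'$ with $\sigma(z')\le\sigma(p)$; since the piece of trajectory from $z'$ to $z$ stays in one inflection domain, $\sigma$ is monotone along it, so $\sigma(z)\le\sigma(p)$, while an intersection $r(z)\cap r(p)\neq\emptyset$ would force $\sigma(z)>\sigma(p)$ because $z$ lies below $r(p)$. You reach the same key inequality $\sigma(z)<\sigma(p)$ (equivalently $\Im\bigl(R(z)\overline{R(p)}\bigr)<0$) by a direct Taylor expansion at $p$, and then conclude with the affine functional $\nu$; this in fact yields the slightly stronger statement that $r(z)$ misses the whole line through $r(p)$, not only the ray. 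The trade-off is that the paper's argument is soft and coordinate-free but leans on the qualitative behaviour of trajectories near the switch point, whereas yours is quantitative and self-contained but must control the remainder uniformly on the cuspidal triangle; your rescaling does handle this, because the leading term $b\,\xi(\xi+2t)$ and all corrections share the common factor $\xi$, so the corrections are $O(|t|)$ times the leading term uniformly on $H(p)$. Two small points of care: with $\xi\in(0,-t)$ the factor $\xi+2t$ is bounded away from $0$ by $|t|$, which is exactly what makes the comparison work, and the rescaled expression $\tilde\xi(\tilde\xi+2)$ does vanish at the vertex $p$ itself --- harmless since $H(p)$ is open, but the correct statement is that the \emph{sign} is uniformly determined after factoring out $\tilde\xi$, not that the expression is uniformly bounded away from zero.
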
 
\begin{proof}
    Let $\gamma^-_z$ be a part of the integral curve of $R \partial_z$ ending at $z$ and let $z'\in\gamma^-_z\cap r(p)$ be the first (from $z$) point where $\gamma^-_z$ enters $H(p)$. This point must necessarily lie on $r(p)$ because  $\gamma^-_z$ intersects neither the curve of inflection points $\mathfrak{I}_R$ nor $\gamma^-$ by uniqueness of solutions of ODE.  Thus  $\sigma(z')\le\sigma(p)$.

     Assume now that the ray $r(z)$ intersects $r(p)$. Then $\sigma(z)>\sigma(p)\ge \sigma (z')$ as $z$ lies below $r(p)$. Therefore the slope $\sigma(w)$, $w\in\gamma^-_z$, is not monotone, implying that  $\gamma^-_z$ has an inflection point. But this is impossible since $\gamma^-_z$ does not intersect the  curve of inflections.
\end{proof}

\begin{proof}[Proof of Proposition~\ref{prop:H(p) is outside}]
       By minimality, it is enough to prove that $r(z)\subset H(p)\cup (\minvset{CH})^c$ for any $z\in H(p)$. 
       By Lemmas~\ref{lem:H(p)1},~\ref{lem:H(p)2}  the ray $r(z)$ does not intersect $r(p)$ and $\gamma_0^-(t)$. Thus $r(z)$ leaves $H(p)$ through the curve of inflections $\mathfrak{I}_R$ with a small slope. 
       
       If the slope is positive then $r(z)\setminus H(p)\subset U_{++}\cup H(0)$. In particular, this is the case  for all $z\in H(p)$, $\Im z<0$ by Lemma~\ref{lem:mono in t}.

       Assume now that $\sigma(z)<0$ (and therefore $\Im z>0$). Recall that we have chosen  $U$ so small that for any $z\in U$, $\Im z>0$, the intersection $r(z)\cap \R\subset (s',+\infty)\subset\R_+$. Thus $r(z)\setminus H(p) \subset \Omega_{++}\cup H(0)\cup S$. 

       Taken together, $r(z)\subset H(p)\cup (\minvset{CH})^c$ for all $z\in H(p)$. Therefore  $ H(p)\subset (\minvset{CH})^c$ by Lemma~\ref{lem:excision}.
        
    \end{proof}

\begin{proposition}\label{prop:SWITCHREGULAR}
Consider a point $p\in \partial\minvset{CH}$ of switch type. Then there is a neighborhood $V$ of $p$ such that $\minvset{CH} \cap V$ is contained in a half-disk centered at $p$. Besides, no neighborhood of $p$ in $\minvset{CH}$ can be contained in a cone centered at $p$ with an angle strictly smaller than $\pi$. A point of switch type is the ending point of both a local  and a global arcs.
\end{proposition}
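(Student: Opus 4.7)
The plan is to leverage the two preceding propositions: Proposition~\ref{prop:SWITCH-LOCAL} already gives that $\gamma_0^-$ is a local arc of $\partial\minvset{CH}$ ending at $p$, while Proposition~\ref{prop:H(p) is outside} gives that $H(p)\subset\Omega_{-+}$ is outside $\minvset{CH}$. After the normalization $p=0$, $R(0)=1$, $\Im R''(0)=2b>0$, so that $\gamma_0^-$ lies in the lower half-plane tangent to $\R$ at $p$, the ``ending point of a local arc'' half of the statement is immediate. What remains is to produce a global arc ending at $p$, to show that $\minvset{CH}\cap V$ is contained in the closed lower half-disk centered at $p$, and to derive the cone-angle lower bound.

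For the global arc, I would set $u_0:=\Delta^{min}(p)>0$; by Lemma~\ref{lem:gap}, $u_0\in\partial\minvset{CH}$. Choose a disk $V$ around $p$ small enough to be disjoint from $\mathcal{Z}(PQ)\cup\{u_0\}$ and from every $R$-invariant line (possible since $p\in\mathfrak{I}_R^{\ast}$ is disjoint from $\mathcal{L}_R$). For any $q\in V\cap\partial\minvset{CH}\cap\mathfrak{I}^+$ with $\Im q<0$ sufficiently close to $p$, the ray $r(q)$ is a continuous perturbation of $r(p)=\R_+$, so it meets $\minvset{CH}$ near $u_0$ and $\Delta(q)\neq\emptyset$. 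Proposition~\ref{prop:globalarc} then places $q$ on a global arc, and the continuous parameterization of $\partial\minvset{CH}$ at $p$ from Corollary~\ref{cor:para} (available because $V\cap\mathcal{L}_R=\emptyset$) lets me conclude that all such $q$ lie on a single global arc $\alpha$ accumulating at $p$, hence $p$ is its endpoint. Using Lemma~\ref{lem:RootTrailSlope} together with Proposition~\ref{prop:RTInflecCONCAVE} applied to $\mathfrak{tr}_{u_0}$, I would verify that $\alpha$ is tangent to $\R$ at $p$ from below.

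For the half-disk claim, I would combine three exterior facts already at hand: from the proof of Proposition~\ref{prop:SWITCH-LOCAL}, $\Omega_{++}\subset\minvset{CH}^c$; from Lemma~\ref{lem:horn out}, the horn $\horn{0}{p'}{p''}\cup\horninf{p''}$ lies in $\minvset{CH}^c$; and by Proposition~\ref{prop:locallyconnected not flat IR} (applicable because $\gamma_p=\gamma_0$ is not a straight line since $b>0$), $\minvset{CH}$ is locally connected at $p$. The first two exhaust the upper-right quadrant of $V$; shrinking $V$ so that $V\cap\minvset{CH}$ is connected, its boundary in $V$ must then coincide with $\gamma_0^-\cup\alpha$, a simple topological curve through $p$ lying in $\{\Im z\le 0\}$. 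This forces $V\cap\minvset{CH}$ into the closed lower half-disk bounded by $\R$ through $p$. For the cone-angle claim, I would exhibit points of $\minvset{CH}$ whose argument from $p$ is arbitrarily close to $\pi$ (from $\Omega_{--}\cap V$ along $\gamma_0^-$) and arbitrarily close to $0$ (from the interior just below $\alpha$); these antipodal limit directions force any enclosing cone at $p$ to have opening at least $\pi$.

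The main obstacle will be the half-disk step: the horn and $\Omega_{++}$ arguments do not directly cover the upper-left quadrant $\{\Re z<0,\,\Im z>0\}\cap V$, and naively adapting Lemma~\ref{lem:horn out} to $\gamma_0^-$ only yields a thin strip along $\gamma_0^-$ rather than the full half-disk. Local connectedness of $\minvset{CH}$ at $p$ is what closes this gap topologically: once $V\cap\minvset{CH}$ is a single component containing $\Omega_{--}\cap V$ whose boundary in $V$ has been identified as $\gamma_0^-\cup\alpha$ lying in the lower half-plane, there is no topological room left for a stray piece of $\minvset{CH}$ in the upper half. A minor technical point is that $\gamma_0^-$ and $\alpha$ touch $\R$ at $p$ with different orders (cubic vanishing $y\sim\tfrac{b}{3}x^3$ for $\gamma_0^-$ versus finite curvature for $\alpha$ coming from $\mathfrak{tr}_{u_0}$); nevertheless they concatenate into a $C^0$ simple curve cleanly separating $V$, which is all the topological argument requires.
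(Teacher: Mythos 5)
Your overall picture --- the local arc $\gamma_0^-$ supplied by Proposition~\ref{prop:SWITCH-LOCAL} entering $p$ from the left, a global arc entering from the lower right, both tangent to $\mathbb{R}$ at $p$ from below, whence the half-disk containment and the cone bound --- agrees with the paper's. The gap is in how you produce the global arc. You claim that for boundary points $q$ near $p$ with $\Im q<0$ the ray $r(q)$ ``is a continuous perturbation of $r(p)$, so it meets $\minvset{CH}$ near $u_0$''. But $r(p)$ is a \emph{support line}, tangent to $\minvset{CH}$ at $u_0$ (this is precisely what $\Delta^{+}(p)\neq\emptyset$, $\Delta^{-}(p)=\emptyset$ encodes), and an arbitrarily small perturbation of a tangent ray can miss the set entirely: the rays issued from points just outside $\minvset{CH}$ on the far side of the putative global arc are also arbitrarily close to $p$, in the same quadrant and the same inflection domain, and they miss $\minvset{CH}$ by definition. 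So continuity does not give $\Delta(q)\neq\emptyset$; you would have to exclude that such $q$ are of local type (lying on some other integral curve in $\partial\minvset{CH}$). Moreover you never establish that boundary points accumulate at $p$ from the lower right at all, which is needed before one can speak of ``the'' arc they form; your appeal to Corollary~\ref{cor:para} organizes points whose existence has not been shown.

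The paper closes both holes simultaneously with a different, constructive device. Take $u\in\Delta(0)$ and the branch $\mathfrak{tr}_u^{+}$ of the root trail lying in the lower-right quadrant: this is a concrete curve \emph{inside} $\minvset{CH}$ emanating from $p$, certifying that $\minvset{CH}$ is nonempty there. For each $q\in\mathfrak{tr}_u^{+}$ the forward trajectory $\gamma_q^{+}$ eventually enters the horn $\horn{0}{p'}{p''}\subset\minvset{CH}^{c}$, so there is a well-defined \emph{last} point $z(q)\in\gamma_q^{+}$ whose associated ray still meets $\minvset{CH}$; as in Lemma~\ref{lem:Delta- not empty Gamma is empty bouncing}, $z(q)\in\partial\minvset{CH}$ with $\Delta(z(q))\neq\emptyset$ by construction, and $\Im z(q)<0$ since otherwise $z(q)$ would lie in the horn. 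The family $\{z(q)\}$ is the desired global arc ending at $p$. Your tangency computation for $\mathfrak{tr}_{u_0}$ and the resulting cone estimate are fine once that arc exists; for the half-disk claim, note that the exclusions already proved ($\Omega_{++}$, the horn with its complementary cone, and the triangles $H(\gamma_0(t))$, $t<0$, which sweep out the remaining portion of the upper half-plane) do the job directly, whereas your local-connectedness patch presupposes that the boundary of $V\cap\minvset{CH}$ is exactly $\gamma_0^{-}\cup\alpha$, which is part of what is to be proved.
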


\begin{proof}
We essentially repeat the arguments of Lemma~\ref{lem:localarc step 1}. Take $u\in\Delta(0)\neq\emptyset$ and let $\ttt_u^+$ be a germ of the branch of $\ttt_u\subset\minvset{CH}$ lying in the lower-right quadrant.  For any 
$q\in\ttt_u^+$   let $z=z(q)\in \gamma_{q}^+$ be the last point such that $r(z)\cap \minvset{CH}\neq\emptyset$. This point exists since $\gamma^+_q\neq\emptyset$ eventually enters $\horn{0}{p'}{p''}\subset(\minvset{CH})^c$.
As in Lemma~\ref{lem:Delta- not empty Gamma is empty bouncing},  $z\in\partial\minvset{CH}$ and therefore  $\Im z<0$, as otherwise $z\in\horn{0}{p'}{p''}$. The points $z(q)$ form a global arc ending at $0$.
\end{proof}

\subsection{Classification of boundary points}\label{sub:classification}

Here, we summarize several results obtained in the previous sections to finalize our classification of boundary points on $\partial\minvset{CH}$.

\begin{proof}[Proof of Theorem~\ref{thm:MAINClassification}]
It follows from Corollary~\ref{cor:inflectionmultiple} that there are at most $4 \deg P +  \deg Q -2 \leq 2d$ singular points in the curve of inflections ($d=3\deg P + \deg Q -1$). Proposition~\ref{prop:tangencybound} proves that the tangency locus $\mathfrak{T}_{R}$ is formed by at most $2d^{2}$ isolated points and $d$ lines.
\par
The classification of points in $\mathfrak{I}_{R}^{\ast}$ is trivial. If $\Delta^{+}$ is nonempty, then a point is of bouncing or switch type depending on whether $\Gamma \cup \Delta^{-}$ is empty or not. If $\Delta^{+}$ is empty, then a point is of $C^{1}$-inflection or $C^{2}$-inflection type depending whether the conjunction of $\Delta^{-} \neq \emptyset$ and $\Gamma = \emptyset$ is satisfied or not.
\par
Finally, for points outside $\mathcal{Z}(PQ) \cup \mathfrak{I}_{R}$, we just have to check that $\Gamma$ and $\Delta$ cannot both be empty. This is proved in Proposition~\ref{prop:localarc}.
\end{proof}

\subsection{Estimates concerning local and global arcs}\label{sub:estimateArc}

We introduce the following notations:
\begin{itemize}
    \item $|\mathcal{L}|$ is the number of local arcs;
    \item $|\mathcal{G}|$ is the number of global arcs;
    \item $|\mathcal{B}|$ is the number of points of bouncing type;
    \item $|\mathcal{E}|$ is the number of points of extruding type;
    \item $|\mathcal{I}_{1}|$ is the number of points of $C^{1}-$inflection type;
    \item $|\mathcal{I}_{2}|$ is the number of points of $C^{2}-$inflection type;
    \item $|\mathcal{S}|$ is the number of points of switch type.
\end{itemize}

We prove that the number of points of switch type provides an estimate for the number of local arcs (up to an error term depending only on $\deg P$ and $\deg Q$).

\begin{lemma}\label{lem:EstimateLocal}
In the boundary $\partial\minvset{CH}$, the ending point of every local arc, except at most $d(2d+1)$ of them, is a point of switch type where $d= 3\deg P + \deg Q -1$. Conversely every point of switch type is the endpoint of some local arc.\newline
In other words, we have $|\mathcal{S}| \leq |\mathcal{L}| \leq |\mathcal{S}|+d(2d+1)$.
\end{lemma}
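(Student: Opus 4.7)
My plan is to prove two inequalities separately. The inequality $|\mathcal{S}|\le|\mathcal{L}|$ follows immediately from Proposition~\ref{prop:SWITCHREGULAR}, which asserts that every switch-type point is the ending point of some local arc; distinct switch points yield distinct local arcs because the ending point of a local arc, when it exists, is unique.

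For the main estimate $|\mathcal{L}|\le|\mathcal{S}|+d(2d+1)$, I will bound the number of local arcs whose ending point is \emph{not} of switch type. By Proposition~\ref{prop:localEND}, every local arc has an ending point in $\mathcal{Z}(PQ)\cup\mathfrak{I}_R$, and using the decomposition $\mathfrak{I}_R=\mathfrak{S}_R\cup\mathfrak{T}_R\cup\mathfrak{I}_R^{\ast}$ together with the descriptions of transverse-locus points (Propositions~\ref{prop:bouncing}, \ref{prop:C1INFLECTION}, \ref{prop:INFLglobal}, \ref{prop:SWITCHREGULAR}), I will verify that in the transverse locus $\mathfrak{I}_R^{\ast}$ only switch points can arise as ending points of local arcs: bouncing points are ending points only of global arcs, and both $C^{1}$- and $C^{2}$-inflection points are the starting points of all their incident arcs. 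Thus every ``exceptional'' ending point must lie in $\mathcal{Z}(PQ)\cup\mathfrak{S}_R\cup\mathfrak{T}_R$.

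The remaining step is to count local arcs ending in this exceptional set. Corollary~\ref{cor:inflectionmultiple} bounds $|\mathfrak{S}_R|$ by at most $2d$ and Proposition~\ref{prop:tangencybound} shows $\mathfrak{T}_R$ consists of at most $2d^{2}$ isolated points together with finitely many straight lines. A key observation I will use is that a line contained in $\mathfrak{T}_R$ is itself an integral curve of $R(z)\partial_{z}$; since a local arc is an integral curve disjoint from $\mathfrak{I}_R$, it cannot coincide with such a line, so it can end at a point on the line only at a singular point of $\mathfrak{I}_R$ or at a zero/pole of $R$ lying on it. By Corollary~\ref{cor:SingInflec} every element of $\mathcal{Z}(PQ)\cap\partial\minvset{CH}$ already lies in $\mathfrak{I}_R$, so I only have to tally local arcs terminating at the isolated tangency points and at the singular points of $\mathfrak{I}_R$. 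At each such endpoint that is not a zero or pole of $R$, uniqueness of integral curves forces at most one local arc to terminate there; at zeros and poles of $R$, the number of incident components of $(\minvset{CH})^{\circ}$, and hence the number of incident local arcs, is controlled by Corollary~\ref{cor:BOUNDINTERIOR}.

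The main obstacle I anticipate is converting these separate qualitative bounds into the exact arithmetic estimate $d(2d+1)=2d^{2}+d$. This will require a careful accounting: absorbing the $O(d)$ contributions from $\mathfrak{S}_R$ and from zeros/poles of $R$ into the leading $2d^{2}$ term coming from isolated tangency points, and using the orientation constraints within each inflection domain (Proposition~\ref{prop:boundaryarc}) to rule out many candidate endpoints and to avoid double-counting across $\mathcal{Z}(PQ)$, $\mathfrak{S}_R$, and the line components of $\mathfrak{T}_R$.
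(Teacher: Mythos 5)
Your strategy is the same as the paper's: the inequality $|\mathcal{S}|\leq|\mathcal{L}|$ from Proposition~\ref{prop:SWITCHREGULAR}, the reduction via Proposition~\ref{prop:localEND} and the classification of transverse-locus points (Propositions~\ref{prop:bouncing}, \ref{prop:INFLglobal}, \ref{prop:C1INFLECTION}, \ref{prop:SWITCHREGULAR}) showing that the only ending points in $\mathfrak{I}_{R}^{\ast}$ are switch points, and the count of exceptional endpoints via Proposition~\ref{prop:tangencybound}, Corollary~\ref{cor:inflectionmultiple} and the local analysis at zeros and poles of $R$. The one place you diverge is the treatment of the line components of $\mathfrak{T}_{R}$: you observe that such a line is itself an integral curve inside $\mathfrak{I}_{R}$, so by uniqueness of trajectories a local arc (which is disjoint from $\mathfrak{I}_{R}$) cannot terminate at a regular, non-singular point of it; the paper instead allows at most one endpoint per line because $\arg R$ is constant along it. Your version is sound and, if anything, slightly sharper.

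The genuine loose end is exactly the one you flag yourself: the arithmetic does not visibly close. Your ingredients give $2d^{2}$ isolated tangency points, up to $2d$ singular points of $\mathfrak{I}_{R}$, and an $O(\deg P+\deg Q)$ contribution from $\mathcal{Z}(PQ)$ (by Corollary~\ref{cor:KLalpha} such an endpoint is a simple pole, carrying at most two local arcs, or a regular point of $R$, carrying at most one), and the sum of these exceeds $d(2d+1)=2d^{2}+d$; ``absorbing'' the linear terms is not automatic and you do not exhibit the overlaps that would justify it. Be aware, though, that this is also the weakest step of the paper's own proof, which bounds the $\mathcal{Z}(PQ)$ contribution by $3\deg P+\deg Q=d+1$ and enumerates the singular points separately, yet states the final bound using only the tangency-locus count $2d^{2}+d$. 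So your proposal faithfully reproduces the published argument, gap included; to make the constant airtight one should either enlarge it to something like $2d^{2}+4d+1$ or prove that the linear-order classes of endpoints are already contained in the sets counted by the quadratic term.
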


\begin{proof}
Proposition~\ref{prop:SWITCHREGULAR} proves that every point of switch type is the endpoint of some local arc. It remains to list all possible endpoints for  local arcs.
\par
Following Proposition~\ref{prop:localEND}, every local arc has an endpoint that either belongs to $\mathcal{Z}(PQ)$ or to $\mathfrak{I}_{R}$. For any point $\alpha$ which is the endpoint of a local arc, $\mathcal{L}_{\alpha}$ contains an interval of length at most $\pi$. It follows from Corollary~\ref{cor:KLalpha} that such a point is either a simple pole of $R(z)$ or a point which is neither a zero or a pole of $R(z)$. Only two local arcs can have the same simple pole as their  endpoint. Any other point is the endpoint of at most one local arc (because only one integral curve passes through such a point). Consequently, at most $3\deg P + \deg Q$ local arcs have an endpoint in $\mathcal{Z}(PQ)$.

\par
It remains to count local arcs one endpoint of which belongs to $\mathfrak{I}_{R} \setminus \mathcal{Z}(PQ)$. Any such point is incident to a unique integral curve which implies that  it can be the endpoint of only one local arc. If such a point belongs to the transverse locus of the curve of inflections, then it is a point of switch type (see Propositions~\ref{prop:bouncing},~\ref{prop:INFLglobal},~\ref{prop:C1INFLECTION} and~\ref{prop:SWITCHREGULAR}). There are $|\mathcal{S}|$ of them. Following Proposition~\ref{prop:tangencybound}, the tangency locus of $\mathfrak{I}_{R}$ is formed by at most $2d^{2}$ points and $d$ straight lines where $d=3\deg P + \deg Q -1$. As $\arg(R(z))$ is constant on each such line, they contain at most one endpoint of a local arc. Thus  $|\mathcal{L}| \leq |\mathcal{S}|+d(2d+1)$.

\end{proof}

Similarly, we prove an estimate on the number of global arcs that do not start at a point of the transverse locus of the curve of inflections.

\begin{lemma}\label{lem:EstimateGlobal}
In the boundary $\partial\minvset{CH}$, the starting point of every global arc, except at most $12d+5d^{2}$ of them, is either a point of $C^{1}$-inflection, $C^{2}$-inflection or of bouncing type.\newline
Besides, we have $2|\mathcal{I}_{1}|+|\mathcal{I}_{2}| \leq |\mathcal{G}| \leq |\mathcal{B}|+2|\mathcal{I}_{1}|+|\mathcal{I}_{2}|+12d+5d^{2}$.
\end{lemma}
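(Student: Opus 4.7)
\textbf{Proof plan for Lemma~\ref{lem:EstimateGlobal}.} The strategy closely parallels the proof of Lemma~\ref{lem:EstimateLocal}: every global arc has a unique starting accumulation (by Lemma~\ref{lem:globalorientation} the orientation is well-defined and monotone in the inclusion of $\mathcal U(\cdot)$), and we shall classify and count these starting points.

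\textbf{Step 1 (Lower bound).} By Proposition~\ref{prop:C1INFLECTION}, each point of $C^{1}$-inflection type is simultaneously the starting point of two distinct global arcs (one in each adjacent inflection domain), and by Proposition~\ref{prop:INFLglobal}, each point of $C^{2}$-inflection type is the starting point of one global arc. Since distinct starting points give rise to distinct global arcs, this yields $|\mathcal{G}|\geq 2|\mathcal{I}_{1}|+|\mathcal{I}_{2}|$.

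\textbf{Step 2 (Locating starting points).} To prove the upper bound, I first argue that a starting point $z$ of a global arc must lie in $\mathcal{Z}(PQ)\cup\mathfrak{I}_{R}$. Indeed, by Proposition~\ref{prop:localarc} and Definition~\ref{defn:ArcClassification}, any point of $\partial\minvset{CH}\setminus(\mathcal{Z}(PQ)\cup\mathfrak{I}_{R})$ is of local, global or extruding type. A local point does not even lie on a global arc; a global point lies in the interior of a unique global arc (Proposition~\ref{prop:globalpoint}); and at an extruding point Proposition~\ref{prop:extruding} asserts that a global arc \emph{ends} while a local arc starts, so no global arc can start there. Hence the starting point is in $\mathcal{Z}(PQ)\cup\mathfrak{I}_{R}$.

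\textbf{Step 3 (Transverse locus).} Using the classification in Theorem~\ref{thm:MAINClassification}, for starting points in the transverse locus $\mathfrak{I}_{R}^{\ast}$ the only possibilities are bouncing, switch, $C^{1}$-inflection or $C^{2}$-inflection types. Switch points are excluded by Proposition~\ref{prop:SWITCHREGULAR}, which states they are endpoints of global arcs. Points of bouncing type contribute at most $|\mathcal{B}|$ starting points by Proposition~\ref{prop:bouncing}. The $C^{1}$- and $C^{2}$-inflection types contribute exactly $2|\mathcal{I}_{1}|+|\mathcal{I}_{2}|$ by Step~1.

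\textbf{Step 4 (Exceptional starting points).} It remains to bound the number of global arcs that start at points of $\mathcal{Z}(PQ)\cup\mathfrak{S}_{R}\cup\mathfrak{T}_{R}$. I will enumerate:
\begin{itemize}
\item \emph{Points of $\mathcal{Z}(PQ)$:} Corollary~\ref{cor:BOUNDINTERIOR} bounds the number of boundary arcs (hence global arcs) meeting a root of $P$ or $Q$. Summing $|1-m_{\alpha}|$ over zeros and poles of $R$, and using the $\deg P$ bound in the simple-zero case combined with the contractibility argument, gives a linear-in-$d$ contribution.
\item \emph{Singular locus $\mathfrak{S}_{R}$:} By Corollary~\ref{cor:inflectionmultiple} there are at most $2d$ such points, each carrying a number of local branches of $\mathfrak{I}_{R}$ controlled by Lemma~\ref{lem:InflectionBranches}; each branch can be incident to at most two global arcs. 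The total is linear in $d$.
\item \emph{Isolated points of $\mathfrak{T}_{R}$:} Proposition~\ref{prop:tangencybound} provides at most $2d^{2}$ such points; at each, at most two global arcs can start (one on each side of the smooth inflection branch transverse to $R\partial_{z}$). This contributes $4d^{2}$.
\item \emph{Tangency lines in $\mathfrak{T}_{R}$:} Proposition~\ref{prop:tangencybound} gives at most $d$ such lines. On each, the argument $\sigma$ is constant, and $\partial\minvset{CH}$ can transition off the line only at intersections with other components of $\mathfrak{I}_{R}$ or with $\mathcal{Z}(PQ)$; by Bézout a linear-in-$d$ number of transitions occur on each such line.
\end{itemize}
Summing these contributions provides the bound $12d+5d^{2}$ on the number of exceptional starting points, completing the upper bound $|\mathcal{G}|\leq |\mathcal{B}|+2|\mathcal{I}_{1}|+|\mathcal{I}_{2}|+12d+5d^{2}$.

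\textbf{Main obstacle.} Step~4 is the technical heart: while the counts of isolated singular/tangency points and of zeros and poles of $R$ are straightforward applications of Bézout's theorem and Corollary~\ref{cor:BOUNDINTERIOR}, controlling what happens along the flat tangency lines $\mathfrak{T}_{R}$ requires care. These lines can coincide with portions of $\partial\minvset{CH}$ (tails, or local arcs lying on the line), and a global arc can start at precisely those points where such a flat piece of boundary transitions to a curved arc. Identifying these transition points with intersections of the line with the rest of $\mathfrak{I}_{R}$ or with $\mathcal{Z}(PQ)$ — and verifying that nothing else produces a starting point — is the most delicate bookkeeping step.
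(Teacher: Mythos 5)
Your overall strategy is the same as the paper's: establish the lower bound from Propositions~\ref{prop:C1INFLECTION} and~\ref{prop:INFLglobal}, exclude extruding and switch points as starting points, and then enumerate the exceptional starting points among $\mathcal{Z}(PQ)$, the singular locus, and the tangency locus. However, there is one genuine gap.

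\textbf{You omit global arcs that start at infinity.} Your Step~2 concludes that the starting point of every global arc lies in $\mathcal{Z}(PQ)\cup\mathfrak{I}_{R}$, but the trichotomy local/global/extruding only classifies \emph{finite} boundary points; when $\deg Q-\deg P\in\{-1,0\}$ the set $\minvset{CH}$ is unbounded and a global arc can be unbounded at its start, with start accumulation on the circle $\mathbb{S}^{1}$ of the extended plane. Such arcs genuinely occur (see, e.g., Corollary~\ref{cor:qp0ONEFINGER}, where one end of $\partial\minvset{CH}$ is an infinite global arc). The paper counts them separately: at most four when $\deg Q-\deg P=-1$ (via Proposition~\ref{prop:NOFINGER}, since the complement has two components), and at most $2\deg P+2\deg Q$ when $\deg Q-\deg P=0$ (at most two per connected component of $\minvset{CH}$, with the number of components bounded by Proposition~\ref{prop:CCrawBOUND}). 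Without this contribution your enumeration of exceptional starting points is incomplete, and the constant $12d+5d^{2}$ cannot be justified.

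Two smaller points of bookkeeping. First, your claimed ``linear-in-$d$'' contribution from $\mathcal{Z}(PQ)$ is too optimistic: Corollary~\ref{cor:BOUNDINTERIOR} gives up to $2\deg P$ arcs at a simple zero of $R$, and summing over all roots yields the paper's worst case $2\deg P(\deg P+2)$, which is quadratic; this must be absorbed into the $5d^{2}$ term, so the final arithmetic needs to be done explicitly. Second, on the $R$-invariant tangency lines the paper does not use a B\'ezout count of ``transitions''; it observes directly that at most four global arcs can have start accumulation on a given line (two sides of the line times two possible ray directions), which also sidesteps the issue that the start accumulation $\omega_{-}(\alpha)$ need not be a single point there.
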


\begin{proof}
Proposition~\ref{prop:INFLglobal} shows that every point of $C^{2}$-inflection is the starting point of a global arc while Proposition~\ref{prop:C1INFLECTION} shows that every point of $C^{1}$-inflection is the starting point of two global arcs. It follows that $2|\mathcal{I}_{1}|+|\mathcal{I}_{2}| \leq |\mathcal{G}|$.
\par
Then, we list every possible starting point for a global arc (Lemma~\ref{lem:globalorientation} proves that global arcs cannot be closed loops).
\par
Since points of extruding type are not starting points of global arcs (see Proposition~\ref{prop:extruding}), every global arc either starts at a point at infinity or  at a point of $\mathcal{Z}(PQ) \cup \mathfrak{I}_{R}$. 
\par
We first count the number of global arcs which can start at infinity. If $\deg Q - \deg P =1$, then by  Theorem~\ref{thm:main} we know  that $\minvset{CH}$ is compact. If $\deg Q - \deg P =-1$, then Proposition~\ref{prop:NOFINGER} proves that $\minvset{CH}$ has only one connected component while its complement has two connected components. Therefore, we have at most four infinite global arcs in this case. If $\deg Q - \deg P =0$, the complement of $\minvset{CH}$ is connected so each connected component has at most two infinite global arcs. Following Proposition~\ref{prop:CCrawBOUND}, $\minvset{CH}$ has at most $\deg P + \deg Q$ connected components.  Therefore the number of global arcs starting at infinity is at most $2\deg P + 2\deg Q$. In the only case where $4 > 2\deg P + 2\deg Q$ while $\deg Q - \deg P =-1$, $Q(z)$ is constant while $\deg P = 1$. In this case, $\minvset{CH}$ is a straight line (see Proposition~\ref{prop:qp-1Convex}).
\par
Let us  consider the points of the transverse locus of $\mathfrak{I}_{R}$. Each point of $C^{1}-$inflection type is the starting point of exactly two global arcs (see Proposition~\ref{prop:C1INFLECTION}). Each point of bouncing or $C^{2}-$inflection type is the starting point of exactly one global arc (see Propositions~\ref{prop:bouncing} and~\ref{prop:INFLglobal}). No global arc starts at a point of switch type (see Proposition~\ref{prop:SWITCHREGULAR}).
\par
Now let us  consider the tangency locus of $\mathfrak{I}_{R}$. It is formed by at most $2d^{2}$ points and $\deg P + \deg Q + 1$ $R$-invariant lines (see Definition~\ref{def:Rinvariant}). For each such line, at most 4 global arcs can have start accumulation $\omega_-(\alpha)$ belonging to it, because each line has two sides and rays have two possible directions. Otherwise, the rays starting from these global arcs intersect the interior of $\minvset{CH}$ near the other global arcs. Using Corollary~\ref{cor:BOUNDINTERIOR}, we conclude that each of the $2d^{2}$ remaining points of the tangent locus is the starting point of at most two global arcs. For the same reasons, each singular point of $\mathfrak{I}_{R}$ that does not belong to $\mathcal{Z}(PQ)$ is the starting point of at most two global arcs. There are $4 \deg P +  \deg Q -2 \leq 2d$ such points (see Corollary~\ref{cor:inflectionmultiple}).
\par
It remains to estimate the number of global arcs that can start at a root $\alpha$ of $P(z)$ or $Q(z)$ in terms of the local degree $m_{\alpha}$ of $R(z)$ in $\alpha$. Corollary~\ref{cor:BOUNDINTERIOR} proves that $\alpha$ is the starting point of at most:
\begin{itemize}
    \item two arcs if $m_{\alpha}=0$;
    \item $2(1-m_{\alpha})$ arcs if $m_{\alpha} \leq -1$;
    \item $2\deg P$ arcs if $m_{\alpha} \geq 1$.
\end{itemize}
Consequently, in the worst case scenario, the roots of $P(z)$ and $Q(z)$ are simple and disjoint so at most $2\deg P (\deg P + 2)$ global arcs can start at these points.
\par
Therefore, the number of global arcs whose starting point does not belong to $\mathfrak{I}_{R}^{\ast}$ is at most $(2 \deg P + 2\deg Q) + 4(\deg P + \deg Q + 1) + 4d^{2} + 4d + 2\deg P (\deg P + 2)$.
\par
If $\deg P =0$, then $\deg Q =1$ (otherwise $\minvset{CH}$ is trivial) and $\minvset{CH}$ is fully irregular (and has therefore no global arcs) so we can replace the obtained bound by the slightly weaker (but more practical) upper bound $12d+5d^{2}$.
\end{proof}

\subsection{Long arcs}\label{sub:longarc}

In order to prove Theorem~\ref{thm:MainBound}, we introduce a new decomposition of the boundary $\partial \minvset{CH}$.

\begin{definition}\label{defn:longarcs}
For any linear differential operator $T$ given by \eqref{eq:1stN}, we define the \textit{long arcs} as the maximal arcs formed by gluing local and global arcs along points of extruding or bouncing type (see Sections~\ref{sub:extruding} and~\ref{sub:bouncing}).
\end{definition}

In particular, a long arc belongs to the closure of a unique inflection domain. Consequently, local and global arcs of a same long arc share the same orientation (see Section~\ref{sub:orientationglobal}). This defines the orientation a long arc.

\subsubsection{Estimates concerning long arcs}

Drawing on the estimates of Section~\ref{sub:estimateArc}, we prove that the number of long arcs corresponds to the number of intersections between $\partial \minvset{CH}$ and the transverse locus of $\mathfrak{I}_{R}$ that are not of bouncing type (in other words, where the boundary of the minimal set crosses the curve of inflections).

\begin{lemma}\label{lem:LongArcs}
Every long arc except at most $28d^{2}+52d$ of them goes from a point of switch type to a point of $C^{1}$-inflection or $C^{2}$-inflection type.
The number $|\mathcal{A}|$ of long arcs satisfies the following inequalities:
$$
2|\mathcal{S}| \leq |\mathcal{A}| \leq 2|\mathcal{S}| + 26d+14d^{2}
;
$$
$$
2|\mathcal{I}_{1}|+2|\mathcal{I}_{2}| \leq |\mathcal{A}| \leq 4|\mathcal{I}_{1}|+2|\mathcal{I}_{2}|+ 26d+14d^{2}
.
$$
\end{lemma}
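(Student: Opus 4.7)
The plan is to combine an arithmetic identity for $|\mathcal{A}|$ with the estimates already established in Lemmas~\ref{lem:EstimateLocal} and~\ref{lem:EstimateGlobal}, and to refine the result via a direct count of long arc endpoints using canonical orientations. I would first prove the identity
\[
|\mathcal{A}| = |\mathcal{L}| + |\mathcal{G}| - |\mathcal{E}| - |\mathcal{B}|.
\]
By Definition~\ref{defn:longarcs}, each long arc is a maximal chain of $k \geq 1$ component local or global arcs glued by $k-1$ internal gluings at extruding or bouncing points; Propositions~\ref{prop:extruding} and~\ref{prop:bouncing} confirm that each extruding (respectively bouncing) point accounts for exactly one such gluing, so summing over all long arcs recovers the identity. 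Substituting the bounds from Lemma~\ref{lem:EstimateLocal} and Lemma~\ref{lem:EstimateGlobal} into this identity already gives preliminary control of $|\mathcal{A}|$ modulo the terms $|\mathcal{B}|$ and $|\mathcal{E}|$, which partially cancel.

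To sharpen these to the claimed bounds, I would then perform a direct endpoint count. Every long arc lies in a single inflection domain, inherits a canonical orientation from its components, and has a well-defined canonical start and end. The propositions of Sections~\ref{sub:C1}, \ref{sub:InflectionTYPE}, \ref{sub:Switch} identify precisely which boundary points can serve as such endpoints: by Proposition~\ref{prop:SWITCHREGULAR} each switch point is the canonical end of exactly two long arcs (one per incident inflection domain); by Proposition~\ref{prop:C1INFLECTION} each $C^{1}$-inflection point is the canonical start of two long arcs; and by Proposition~\ref{prop:INFLglobal} each $C^{2}$-inflection point is the canonical start of two long arcs (one local and one global). All remaining endpoints are \emph{exceptional}: they lie at zeros or poles of $R$, at isolated tangency points (Proposition~\ref{prop:tangencybound}), along tangency lines, or at singular points of the curve of inflections (Corollary~\ref{cor:inflectionmultiple}). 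Using Corollary~\ref{cor:BOUNDINTERIOR} to bound the number of long arc endpoints at each zero or pole of $R$, and the degree bounds for the remaining loci, the total number of exceptional endpoints is at most $14d^{2} + 26d$.

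The first assertion of the lemma then follows: a long arc is \emph{bad} (i.e.\ not joining a switch point to a $C^{1}$- or $C^{2}$-inflection point) precisely when at least one of its two endpoints is exceptional, so at most $28d^{2} + 52d$ long arcs are bad. For the two double inequalities, equate the total count of canonical starts with the total count of canonical ends (both equal to $|\mathcal{A}|$): the count of ends gives $2|\mathcal{S}| \leq |\mathcal{A}| \leq 2|\mathcal{S}| + 26d + 14d^{2}$ and the count of starts gives the analogous estimates in terms of $|\mathcal{I}_{1}|$ and $|\mathcal{I}_{2}|$, with the claimed coefficient arising from the multiplicities at $C^{1}$- and $C^{2}$-inflection points.

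The main obstacle will be the careful bookkeeping at exceptional endpoints, especially along the tangency lines (which are $R$-invariant and may each host several distinct long arc endpoints depending on the side of the line and on the orientation of incident arcs) and at singular inflection points (where several branches of $\mathfrak{I}_{R}$ meet and can support many incident boundary arcs). One must also verify the precise multiplicity at zeros and poles of $R$ via Corollary~\ref{cor:BOUNDINTERIOR} and consistency between the start and end counts, which is where the relation $|\mathcal{S}| \approx 2|\mathcal{I}_{1}| + |\mathcal{I}_{2}|$ implicit in the stated bounds is forced.
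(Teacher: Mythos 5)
Your identity $|\mathcal{A}| = |\mathcal{L}| + |\mathcal{G}| - |\mathcal{E}| - |\mathcal{B}|$ is correct and, after discarding $-|\mathcal{E}|$ and inserting Lemmas~\ref{lem:EstimateLocal} and~\ref{lem:EstimateGlobal}, it reproduces the key intermediate inequality $|\mathcal{A}| \leq |\mathcal{S}| + 2|\mathcal{I}_{1}| + |\mathcal{I}_{2}| + 13d + 7d^{2}$; up to this point your argument is the paper's in different clothing (the paper instead splits $\mathcal{A}$ into long arcs that contain a local arc and those that do not). The genuine gap is in your ``direct endpoint count''. To get $|\mathcal{A}| \leq 2|\mathcal{S}| + 26d + 14d^{2}$ by counting canonical ends, you must bound the number of long arcs whose canonical end is exceptional; since the last component of such a long arc may be a global arc, this requires an estimate on where \emph{global} arcs can end. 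No such estimate is available: Lemma~\ref{lem:EstimateGlobal} controls only the \emph{starting} points of global arcs, and the end accumulation of a global arc is a genuinely different object (it may sit at a zero of $PQ$, at infinity, or accumulate on an $R$-invariant line, cf.\ Definition~\ref{def:globalarc}), and you do not supply one. The paper avoids this entirely: for a long arc containing a local arc, the end of the long arc is the end of that local arc (every gluing point, extruding or bouncing, is the \emph{end} of a global arc, so a local arc is always the terminal component), and Lemma~\ref{lem:EstimateLocal} applies; for long arcs with no local arc it counts \emph{starts} instead; the bound $|\mathcal{A}| \leq 2|\mathcal{S}| + 26d+14d^{2}$ is then extracted algebraically by combining the resulting inequality with the lower bounds, not by a second direct count.

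There are also two bookkeeping problems. First, your passage from ``at most $14d^{2}+26d$ exceptional endpoints'' to ``at most $28d^{2}+52d$ bad long arcs'' tacitly assumes each exceptional point carries at most two long-arc endpoint incidences, but Corollary~\ref{cor:BOUNDINTERIOR} allows up to $2(1-m_{\alpha})$ or $2\deg P$ incidences at zeros and poles of $R$, and singular points of $\mathfrak{I}_{R}$ and the $R$-invariant tangency lines likewise carry higher multiplicities; you must count incidences, not points, exactly as in the proof of Lemma~\ref{lem:EstimateGlobal}. Second, your own stated multiplicities --- each $C^{1}$- and each $C^{2}$-inflection point is the canonical start of exactly two long arcs --- yield only $2|\mathcal{I}_{1}|+2|\mathcal{I}_{2}| \leq |\mathcal{A}|$, not the claimed $4|\mathcal{I}_{1}|+2|\mathcal{I}_{2}| \leq |\mathcal{A}|$; the coefficient $4$ on $|\mathcal{I}_{1}|$ does not come out of the count you describe, so the second chain of inequalities is not established by your argument as written.
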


\begin{proof}
Inequality $2|\mathcal{S}| \leq |\mathcal{A}|$ follows from the fact that every point of switch type is the ending point of two long arcs. Similarly, points of $C^{1}$-inflection or $C^{2}$-inflection type are the starting points of two long arcs so we obtain $2|\mathcal{I}_{1}|+2|\mathcal{I}_{2}| \leq |\mathcal{A}|$.
\par
We denote by $|\mathcal{A}_{L}|$ the number of long arcs containing a local arc. We already know that the endpoint of a local arc cannot be a point of extruding or bouncing type. Therefore, the endpoint of a local arc contained in a long arc is also the endpoint of the long arc. We deduce then from Lemma~\ref{lem:EstimateLocal} that the endpoints of all but at most $d(2d+1)$ these long arcs containing a local arc are points of switch type: $|\mathcal{A}_{L}| \leq |\mathcal{S}|+d(2d+1)$.
\par
Then, denote by $|\mathcal{A}_{G}|$ the number of long arcs that do not contain a local arc. The start accumulations of these long arcs are in particular start accumulations of a global arc and cannot be points of bouncing type. We deduce from the proof of Lemma~\ref{lem:EstimateGlobal} that the start accumulations of these long arcs, except at most $12d+5d^{2}$ of them are in fact starting points and they are points of $C^{1}$-inflection or $C^{2}$-inflection type. In other words, we have $|\mathcal{A}_{G}| \leq 2|\mathcal{I}_{1}|+|\mathcal{I}_{2}|+12d+5d^{2}$.
\par
Finally, we deduce that the total number of long arcs satisfies $|\mathcal{A}| \leq |\mathcal{S}|+2|\mathcal{I}_{1}|+|\mathcal{I}_{2}|+13d+7d^{2}$. Combining this inequality with $2|\mathcal{S}| \leq |\mathcal{A}|$, we obtain that $|\mathcal{S}| \leq 2|\mathcal{I}_{1}|+|\mathcal{I}_{2}|+13d+7d^{2}$ and therefore $|\mathcal{A}| \leq 4|\mathcal{I}_{1}|+2|\mathcal{I}_{2}|+26d+14d^{2}$. We obtain similarly that $|\mathcal{A}| \leq 2|\mathcal{S}| + 26d+14d^{2}$. This also provides a bound on the number of long arcs that do not go from a point of switch type to a point of $C^{1}$-inflection or $C^{2}$-inflection type.
\end{proof}

\subsubsection{Admissible long arcs}

We will refer to a long arc going from a point of switch type to a point of $C^{1}$-inflection or $C^{2}$-inflection type (or the opposite) as an \textit{admissible long arc}. 

\begin{definition}\label{defn:symbol}
We associate to each admissible long arc $\alpha$ a combinatorial symbol $s_{\alpha}$ that contains the following information:
\begin{itemize}
    \item the connected component of the transverse locus $\mathfrak{I}_{R}^{\ast}$ containing the starting point of $\alpha$;
    \item the connected component of $\mathfrak{I}_{R}^{\ast}$ containing the endpoint of $\alpha$;
    \item the sign of the inflection domain $\alpha$ belongs to.
\end{itemize}

Chains of consecutive long arcs define \textit{patterns} formed by the concatenation of the combinatorial symbols of their long arcs. 
\end{definition}

Chains of consecutive long arcs are given the orientation induced by the topological orientation of $\minvset{CH}$. Therefore, the orientation of a chain coincides with the orientation of the long arc inside positively oriented domains of inflection (and does not coincide with the orientation of the long arc inside negatively oriented domains of inflections).

\begin{remark}\label{rem:ChainOrientation}
In particular, at a point $z$ of switch, $C^{1}$-inflection or $C^{2}$-inflection type in a chain, the associated ray $r(z)$ and the orientation of the chain points towards the same domain of inflection.
\end{remark}

\subsubsection{Bounding the number of transverse intersection points between \texorpdfstring{$\partial \minvset{CH}$}{dMCH} and the transverse locus of \texorpdfstring{$\mathfrak{I}_{R}$}{IR}}\label{sub:bound}

Using the fact that an associated ray cannot cross the curve of inflections more than $d$ times (where $d=3\deg P +\deg Q -1$), we prove a bound on the number of chains of admissible long arcs that can realize a given pattern.

\begin{lemma}\label{lem:LONGintersection}
In the boundary $\partial \minvset{CH}$ of the minimal set, there cannot be $2d+2$ disjoint chains of $2d$ admissible long arcs that realize the same pattern.
\end{lemma}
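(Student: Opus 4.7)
I would argue by contradiction using B\'ezout's theorem applied to the curve of inflections $\mathfrak{I}_R$, which is a real plane algebraic curve of degree at most $d=3\deg P+\deg Q-1$; consequently any line meets $\mathfrak{I}_R$ in at most $d$ points. Suppose $2d+2$ disjoint chains $\mathcal{C}_1,\ldots,\mathcal{C}_{2d+2}$, each consisting of $2d$ admissible long arcs, realize a common pattern $\mathcal{P}$. By the pattern, the $i$-th endpoint $p_{j,i}$ of $\mathcal{C}_j$ lies on a fixed component $K_i$ of the transverse locus $\mathfrak{I}_R^{\ast}$, its type (switch, $C^{1}$- or $C^{2}$-inflection) depends only on $i$, and consecutive long arcs within each $\mathcal{C}_j$ alternate between the inflection domains $\mathfrak{I}^{+}$ and $\mathfrak{I}^{-}$.

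The first step is to install a consistent linear order on the chains. Each $K_i$ is a smooth arc, so the $2d+2$ points $\{p_{j,i}\}_{j}$ sit in a linear order along it. Because the $\mathcal{C}_j$ are pairwise disjoint and the long arc of $\mathcal{C}_j$ joining $K_i$ to $K_{i+1}$ lies in the same inflection domain as the corresponding long arc of $\mathcal{C}_{j'}$, any swap of the order between $K_i$ and $K_{i+1}$ would force these two long arcs to intersect; this is a planar Jordan-type argument using the nesting of the sets $\mathcal{U}(\cdot)$ recorded in Lemma~\ref{lem:boundaryUz}. After relabeling I may therefore assume $p_{1,i}\prec p_{2,i}\prec\cdots\prec p_{2d+2,i}$ along every $K_i$.

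Next I would focus on the middle chain $\mathcal{C}_{d+2}$, which by Step~1 has exactly $d+1$ chains on each side. Because types alternate along the chain and $2d+1$ is odd, some endpoint $z:=p_{d+2,i_0}$ is of switch or $C^{1}$-inflection type, so $\Delta(z)\neq\emptyset$; pick $u\in\Delta(z)$ and set $r:=[z,u]\subset r(z)$. The segment $r$ is transverse to $\mathfrak{I}_R$ at $z$ (since $z\in\mathfrak{I}_R^{\ast}$), and by the nesting from Step~1 combined with Lemma~\ref{lem:boundaryUz} the point $u$ lies on the opposite side of $\mathcal{C}_{d+2}$, so that $r$ must cross all $d+1$ chains on one specific side. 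Between any two consecutive such chain-crossings, $r$ must transit from one inflection domain to the other---using the alternation of inflection domains along each chain and an intermediate-value argument---and hence must cross $\mathfrak{I}_R$ transversely in between. Together with the crossing at $z$ itself, this produces at least $d+2$ intersections of the line supporting $r$ with $\mathfrak{I}_R$, contradicting B\'ezout.

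The main obstacle is the last step: a ray crossing the \emph{interior} of a long arc does not a priori yield a crossing with $\mathfrak{I}_R$, so one must carefully bookkeep which inflection domain $r$ inhabits between consecutive intersections with other chains and extract from the within-chain alternation a genuine transverse crossing of $\mathfrak{I}_R$ between them. A secondary obstacle is a pattern consisting entirely of $C^{2}$-inflection symbols (where $\Delta$ can be empty): in that case I would invoke Proposition~\ref{prop:INFLglobal} to attach a germ of a global arc at each $C^{2}$-inflection endpoint and reduce to the case $\Delta\neq\emptyset$ for the attached ray.
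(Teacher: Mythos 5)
Your overall target is the same as the paper's (produce a line meeting the degree-$d$ curve $\mathfrak{I}_R$ in more than $d$ points and invoke B\'ezout, after installing a consistent order on the chains), but the geometric core of your argument has a genuine gap that I do not see how to repair. The crucial unsupported step is the claim that for $z=p_{d+2,i_0}$ the support point $u\in\Delta(z)$ ``lies on the opposite side of $\mathcal{C}_{d+2}$,'' forcing the segment $[z,u]$ to cross the $d+1$ chains on one side. Nothing forces this: $\Delta(z)$ is merely the intersection of the associated ray with $\overline{\minvset{CH}}$, and the support point can sit anywhere on the boundary --- arbitrarily close to $z$ on the same chain, on a piece of $\partial\minvset{CH}$ unrelated to the other chains, or at infinity. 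The chains are $2d+2$ disjoint sub-arcs of the \emph{same} boundary, not nested level curves, and neither the linear ordering along the $K_i$ nor Lemma~\ref{lem:boundaryUz} (which compares $\mathcal{U}(y)$ with $\mathcal{U}(z)$ only for $y\in\partial\mathcal{U}(z)$) places the other chains between $z$ and $u$. Even if the crossings did occur, your second step fails as well: corresponding long arcs of different chains carry the same symbol and hence lie in the \emph{same} inflection domain, so a ray passing from $\mathcal{C}_j$ to $\mathcal{C}_{j+1}$ need not change inflection domain and need not meet $\mathfrak{I}_R$ in between. You correctly flag this as ``the main obstacle,'' but you do not resolve it, and I do not believe it can be resolved along these lines. (There are also smaller slips: with $2d+2$ chains the middle one does not have $d+1$ neighbours on \emph{each} side, and the choice of the endpoint $i_0$ with $\Delta\neq\emptyset$ is not pinned down near the middle, which your count needs.)

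The paper's proof turns the picture around so that the ray meets $\mathfrak{I}_R$ \emph{directly} rather than via other chains. It uses the consistent cyclic order to build, for each consecutive pair of transverse-locus components $\beta_j,\beta_{j+1}$, quadrilaterals bounded by the $j$-th and $(j{+}1)$-st long arcs of two \emph{adjacent} chains; a pigeonhole over the $2d+2$ adjacent pairs yields one index $k$ for which these quadrilaterals concatenate into a single strip $\mathcal{S}$ bounded laterally by $\gamma_k,\gamma_{k+1}$ and running from $\beta_1$ to $\beta_{2d+1}$. A point $z\in\partial\mathcal{S}\cap\beta_{d+1}$ lying in $(\minvset{CH})^c$ has $r(z)$ disjoint from $\minvset{CH}$, hence from both bounding chains, so $r(z)$ is trapped in $\mathcal{S}$ until it exits through an end and must cross at least $d+1$ of the $\beta_j$, each a portion of $\mathfrak{I}_R$ --- the B\'ezout contradiction. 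If you want to keep your framework, you would need to replace your Steps 3--4 by some such trapping mechanism; as written, the proof does not go through.
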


\begin{proof}
We assume by contradiction the existence of $2d+2$ disjoint chains $\gamma_{1},\dots,\gamma_{2d+2}$ of $2d$ admissible long arcs realizing the same pattern. We refer to the admissible long arc of $\gamma_{i}$ corresponding to the $j^{th}$ symbol as $\alpha_{i,j}$.
\par
By definition of the combinatorial symbol, for a given $j$, the arcs $\alpha_{i,j}$ for $1 \leq i \leq 2d+2$ belong to the same inflection domain $\mathcal{D}_{j}$.
\par
We denote by $\beta_{1},\dots,\beta_{2d+1}$ the connected components of the transverse locus $\mathfrak{I}_{R}^{\ast}$ at the endpoints of long arcs ordered according to the orientation of the chains.
\par
We are going to prove the existence of a point $z$ of $\beta_{d+1}$ such that its associated ray $r(z)$ also intersects $d$ arcs among $\beta_{1},\dots,\beta_{2d+1}$, defining a straight line that intersects transversely a real algebraic curve of degree at most $d$ in at least $d+1$ points, obtaining the desired contradiction.
\par
A first observation is that the chains $\gamma_{1},\dots,\gamma_{2d+2}$ cannot cross each other (because the interior of $\minvset{CH}$ is connected near endpoints of admissible arcs). Since the complex plane is simply connected, this fact implies that for a given $j$, the intersection points of the chains $\gamma_{1},\dots,\gamma_{2d+2}$ with $\beta_{j}$ determine a cyclic order that does not depend on $j$. We assume therefore that the indices of $\gamma_{1},\dots,\gamma_{2d+2}$ are elements of $\mathbb{Z}/(2d+2)\mathbb{Z}$ and correspond to the previously defined cyclic ordering.
\par
In a given inflection domain $\mathcal{D}_{j}$, it may happen that the linear orders of these intersection points with $\beta_{j}$ and $\beta_{j+1}$ respectively are different. They may differ by a rotation if $\beta_{j}$ and $\beta_{j+1}$ do not belong to the same connected components of the boundary of $\partial \mathcal{D}_{j}$ (if $\mathcal{D}_{j}$ is not simply connected).
\par
It follows that for any $1 \leq j \leq 2d$, for every $k \in \mathbb{Z}/(2d+2)\mathbb{Z}$ except possibly one, there is a quadrilateral in $\mathbb{C}$ bounded by $\alpha_{j,k}$, $\alpha_{j,k+1}$, one portion of $\beta_{j}$ and one portion of $\beta_{j+1}$. The exception correspond to the case where the linear orderings do not match. Since we have $2d+2$ chains, it follows that we can assemble $2d$ of these strips into a unique long strip $\mathcal{S}$ bounded by some $\gamma_{k}$, $\gamma_{k+1}$ and portions of $\beta_{0}$ and $\beta_{2d+1}$.
\par
The chains $\gamma_{k}$ and $\gamma_{k+1}$ are oriented in such a way that for any point $z \in \partial \mathcal{S} \cap \beta_{0}$, associated ray $r(z)$ points inside $\mathcal{S}$. Since the orientation of $\gamma_{k}$ and $\gamma_{k+1}$ coincides with the topological orientation of $\minvset{CH}$, we can find a point $z$ of $\partial \mathcal{S} \cap \beta_{d+1}$ in the complement of $\minvset{CH}$. Thus, for any such point $z$, associated ray $r(z)$ cannot cross chains $\gamma_{k}$ and $\gamma_{k+1}$ and has to leave $\mathcal{S}$ through $\beta_{0}$ or $\beta_{2d+1}$. Therefore, $r(z)$ has to cross either $\beta_{1},\dots,\beta_{d+1}$ or $\beta_{d+1},\dots,\beta_{2d+1}$. This ends the proof.
\end{proof}

We deduce a bound on the number of long arcs.

\begin{corollary}\label{cor:longarc}
For any linear differential operator $T$ given by \eqref{eq:1stN}, the number $|\mathcal{A}|$ of long arcs in $\partial \minvset{CH}$ satisfies
$$
|\mathcal{A}| \leq 2e^{16d\ln(d)}+92d^{3}
$$
where $d=3\deg P +\deg Q -1$.
\end{corollary}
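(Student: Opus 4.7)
The plan is to bound the number of admissible long arcs by a pigeonhole argument on the space of patterns assigned to chains of $2d$ consecutive admissible long arcs, using Lemma~\ref{lem:LONGintersection} as the key combinatorial constraint. Once this is done, Lemma~\ref{lem:LongArcs} takes care of converting a bound on admissible arcs into a bound on all long arcs.

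First, I would estimate the size of the alphabet of symbols $s_\alpha$ introduced in Definition~\ref{defn:symbol}. Each symbol encodes the connected component of $\mathfrak{I}_R^{\ast}$ containing the starting point, the connected component containing the endpoint, and the sign of the inflection domain traversed by $\alpha$. Corollary~\ref{cor:transverseCOMPONENT} bounds the number of components of $\mathfrak{I}_R^{\ast}$ by $2d^2 + 6d + 2$, so the symbol alphabet has at most $s := 2(2d^2 + 6d + 2)^2$ elements, and hence the number of distinct patterns of length $2d$ is at most $P := s^{2d}$.

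Next, I would bound $N := |\mathcal{A}_{\text{adm}}|$. Orient $\partial \minvset{CH}$ with its canonical topological orientation and associate to each admissible long arc the unique (cyclic) chain of $2d$ consecutive admissible long arcs starting at it; this yields $N$ such chains. If some pattern is shared by $k$ of them, then sorting these $k$ chains by starting index and greedily selecting, skipping at most $2d - 1$ chains after each choice, produces at least $\lceil k/(2d) \rceil$ pairwise disjoint chains with that pattern. By Lemma~\ref{lem:LONGintersection} this quantity cannot exceed $2d+1$, so $k \leq 2d(2d+1)$. Summing over patterns gives
\[
  N \;\leq\; 2d(2d+1) \cdot P \;=\; 2d(2d+1)\,\bigl(2(2d^2+6d+2)^2\bigr)^{2d}.
\]

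Finally, it remains to compare this estimate with the target. Since $d \geq 3$ by Remark~\ref{rem:degree}, one has $2d^2 + 6d + 2 \leq 6 d^2$, and a straightforward logarithmic computation gives
\[
  \ln\!\left(2d(2d+1)\,\bigl(2(2d^2+6d+2)^2\bigr)^{2d}\right) \;\leq\; 16 d \ln d + \ln 2,
\]
so $N \leq 2\, e^{16 d \ln d}$. Combining this with Lemma~\ref{lem:LongArcs} in the form $|\mathcal{A}| \leq N + 28 d^2 + 52 d$ and absorbing the polynomial remainder into $92 d^3$ yields the stated bound. The main obstacle is the calibration of constants in the final arithmetic step, which requires verifying the inequality for the smallest admissible value $d = 3$ where the slack in the exponential is tightest; for larger $d$ the bound becomes comfortable because the alphabet grows only polynomially in $d$ while the target grows like $d^{16d}$.
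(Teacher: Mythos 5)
Your route is essentially the paper's: count the symbol alphabet via Corollary~\ref{cor:transverseCOMPONENT}, bound patterns of length $2d$, use Lemma~\ref{lem:LONGintersection} to cap the number of chains per pattern, and add the non-admissible arcs from Lemma~\ref{lem:LongArcs}. However, there are two concrete problems with the execution. First, the assignment ``to each admissible long arc, the chain of $2d$ consecutive admissible long arcs starting at it'' is not always defined: maximal runs of consecutive admissible arcs are interrupted by non-admissible arcs (and, when $\deg Q-\deg P\in\{0,-1\}$, boundary components are not cycles), so the last $2d-1$ arcs of each run, and all arcs of runs shorter than $2d$, start no such chain and escape your pigeonhole entirely. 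These must be counted separately; since the number of maximal runs is controlled by the number of non-admissible arcs, this contributes an extra term of order $(28d^{2}+52d+1)(2d-1)$, which is exactly what the paper folds into the $92d^{3}$ (and one should check, as the paper implicitly does, that $28d^{2}+52d+(28d^{2}+52d+1)(2d-1)\leq 92d^{3}$ for $d\geq 3$).

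Second, your final logarithmic estimate does not go through as written. With the weakening $2d^{2}+6d+2\leq 6d^{2}$ one needs $\ln\bigl(d(2d+1)\bigr)+2d\ln 2+4d\ln 6\leq 8d\ln d$, and at $d=3$ the left side is about $28.7$ while the right side is $24\ln 3\approx 26.4$; equivalently $2d(2d+1)\cdot 2^{2d}(6d^{2})^{8d}$ exceeds $2d^{16d}$ at $d=3$. The inequality $2d(2d+1)\cdot 2^{2d}(2d^{2}+6d+2)^{4d}\leq 2e^{16d\ln d}$ is in fact true for all $d\geq 3$, but only if you keep the exact value $2d^{2}+6d+2$ (at $d=3$ one gets roughly $2.4\times 10^{22}$ against $1.6\times 10^{23}$), so the ``calibration'' step you flagged is precisely where a crude polynomial majorant loses too much. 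Both issues are repairable within the stated constants, but as written the proof has a counting gap and an arithmetic step that fails at the extremal value $d=3$.
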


\begin{proof}
Since the number of connected components of the transverse locus $\mathfrak{I}_{R}^{\ast}$ is at most $2d^{2}+6d+2$ (see Corollary~\ref{cor:transverseCOMPONENT}), the number of possible combinatorial symbols for an admissible long arc is $2(2d^{2}+6d+2)^{2}$ (see Definition~\ref{defn:symbol}). Therefore, the number of possible patterns for a chain of $2d$ admissible long arcs is at most $2^{2d}(2d^{2}+6d+2)^{4d}$. Since $d \geq 3$ (see Remark~\ref{rem:degree}), we have $2d^{2}+6d+2 \leq \frac{5}{2}d^{2}$ and we obtain the weaker (but simpler) upper bound $(25/2)^{2d}d^{8d}$.
\par
Then, using Lemma~\ref{lem:LONGintersection}, we deduce that the number of disjoint chains of $2d$ admissible long arcs is at most $(25/2)^{2d}(2d+1)d^{8d}$.
\par
The number of non-admissible long arcs is bounded by $28d^{2}+52d$ (Lemma~\ref{lem:LongArcs}). It follows that in the worst case, each non-admissible arc is located between two chains of $2d-1$ admissible long arcs. Consequently, the number of long arcs is bounded by
$$
(25/2)^{2d}(2d)(2d+1)d^{8d}
+ (28d^{2}+52d)
+ (28d^{2}+52d+1)(2d-1)
.
$$
Since $d \geq 3$, this upper bound can be weakened to $2e^{16d\ln(d)}+92d^{3}$.
\end{proof}

Theorem~\ref{thm:MainBound} then follows from the fact that $|\mathcal{S}|$ and $2|\mathcal{I}_{1}|+|\mathcal{I}_{2}|$ are bounded by the number of long arcs (see Lemma~\ref{lem:LongArcs}). Corollary~\ref{cor:MainLocal} then follows from the combination of Theorem~\ref{thm:MainBound} with Lemma~\ref{lem:EstimateLocal}.

\section{Global geometry of minimal sets}\label{sec:GlobalGeometry}

At present we do not know a general recipe how to describe non-trivial $\minvset{CH}$. Nevertheless we can prove some general statements about their global geometry  and provide some illuminating examples.
\par
Recall that $\minvset{CH}$ is nontrivial if and only if $\deg Q-\deg P \in \{-1,0,1\}$.
\par
In some cases, description of the convex hull $Conv(\minvset{CH})$ is easier to obtain. The following has been proved as Corollary~5.16 in \cite{AHN+24}.

\begin{proposition}
Consider a linear differential operator $T$ given by \eqref{eq:1stN}.Then  the intersection of all \textit{convex} Hutchinson invariant set coincides with the convex hull $Conv(\minvset{CH})$ of the minimal set $\minvset{CH}$.
\end{proposition}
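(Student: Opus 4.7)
The plan is to prove the equality $K = \text{Conv}(\minvset{CH})$, where $K := \bigcap_{S} S$ ranges over all closed convex $T_{CH}$-invariant sets $S$, by establishing both inclusions separately.

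The inclusion $\text{Conv}(\minvset{CH}) \subseteq K$ is essentially formal: for every such $S$, minimality of $\minvset{CH}$ gives $\minvset{CH} \subseteq S$, and convexity and closedness of $S$ upgrade this to $\text{Conv}(\minvset{CH}) \subseteq S$; intersecting over all $S$ yields the claim.

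For the reverse inclusion $K \subseteq \text{Conv}(\minvset{CH})$, it suffices to show that $\text{Conv}(\minvset{CH})$ (taking its closure if it is not already closed) is itself $T_{CH}$-invariant, as it will then appear among the $S$'s. I verify invariance via Theorem~\ref{thm:AR}: condition~(1) is inherited from $\minvset{CH}$, so the substance is condition~(2). I argue by contradiction: suppose some $z \notin \text{Conv}(\minvset{CH})$ has $r(z) \cap \text{Conv}(\minvset{CH}) \neq \emptyset$, and let $w$ be the first intersection. Then $w$ is a boundary point of the hull and, by Carath\'eodory in $\mathbb{R}^2$, can be written as $w = \sum_{i=1}^{3} \lambda_i a_i$ with $a_i \in \minvset{CH}$. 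Taking a supporting line $L$ to $\text{Conv}(\minvset{CH})$ at $w$ forces every $a_i$ with $\lambda_i > 0$ to lie on $L$, since otherwise the combination would be strictly inside the hull. If $w \in \minvset{CH}$, then $r(z) \cap \minvset{CH} \neq \emptyset$ immediately contradicts invariance of $\minvset{CH}$; otherwise $w$ lies in the relative interior of a boundary segment $[a,b]$ of $\text{Conv}(\minvset{CH})$ with $a,b \in \minvset{CH}$ on $L$.

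The main obstacle is this latter case: the segment $[a,b]$ need not be contained in $\minvset{CH}$, so $r(z)$ can a priori cross $L$ at the interior point $w$ while avoiding $\minvset{CH}$ altogether. To resolve it I plan to perturb $z$ transversely to $L$: by continuity of $R$, an open family of nearby exterior points $z'$ produces rays $r(z')$ whose intersections $w'$ with $L$ sweep an arc near $w$. Using the tangent-slope formula of Lemma~\ref{lem:RootTrailSlope} to control how $w'$ depends on $z'$, and using that $a,b \in \minvset{CH}$ serve as ``boundary conditions'' at the endpoints of this arc, an intermediate-value / degree argument should force some $z'$ to satisfy $r(z') \cap \minvset{CH} \neq \emptyset$, contradicting invariance of $\minvset{CH}$. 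The delicate special configuration to handle separately is when $L$ is an $R$-invariant line and $[a,b]$ is contained in a tail of $\minvset{CH}$ (Definition~\ref{defn:tail}); here the explicit structural description provided by Theorem~\ref{thm:TAIL} should close the argument directly.
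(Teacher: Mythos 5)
First, note that the paper does not actually prove this proposition: it is quoted as Corollary~5.16 of \cite{AHN+24}, so there is no in-paper argument to compare yours against. Your easy inclusion $\mathrm{Conv}(\minvset{CH})\subseteq K$ is fine, and you have correctly identified that the entire content of the statement is the invariance of the (closed) convex hull, and that the hard case is a ray meeting $\partial\,\mathrm{Conv}(\minvset{CH})$ in the relative interior of a ``bridge'' segment $[a,b]$ with $a,b\in\minvset{CH}$ but $(a,b)\cap\minvset{CH}=\emptyset$. This case is not exotic: for the examples in Section~5 (e.g.\ the operators with $P=Q'$, where $\mathrm{Conv}(\minvset{CH})$ is a polygon), essentially all of $\partial\,\mathrm{Conv}(\minvset{CH})$ consists of such bridges. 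Unfortunately, that is exactly the step you do not prove, and the sketched perturbation does not obviously close it.

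Concretely, two things go wrong. (1) Your Carath\'eodory reduction presupposes that the boundary point $w$ lies in $\mathrm{Conv}(\minvset{CH})$ and that the face $L\cap\overline{\mathrm{Conv}(\minvset{CH})}$ is the convex hull of $L\cap\minvset{CH}$. For $\deg Q-\deg P\in\{0,-1\}$ the minimal set is unbounded, the convex hull need not be closed, and the identity $\overline{\mathrm{Conv}(S)}\cap L=\overline{\mathrm{Conv}(S\cap L)}$ fails for non-compact $S$ (the support line of the strip in Proposition~\ref{prop:qp-1Convex} is approached only asymptotically by $\mathcal{H}_R$); so $w$ need not lie on a segment with endpoints in $\minvset{CH}$ at all. (2) In the main case, the intermediate-value argument is asserted, not executed. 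Perturbing $z$ locally only sweeps the intersection point $w'$ over a small neighbourhood of $w$ in $L$; to force $w'$ to reach $a$ or $b$ you must move $z'$ along a global path on which (i) $z'$ stays outside $\minvset{CH}$ so that invariance of $\minvset{CH}$ applies, (ii) the first-intersection correspondence $z'\mapsto w'$ stays continuous (it breaks at rays tangent to the hull, and there is a sub-case where $r(z)$ crosses the gap \emph{outward} and meets $\overline{\mathrm{Conv}(\minvset{CH})}$ only at the single point $w\notin\minvset{CH}$), and (iii) the swept image actually leaves the gap $(a,b)$. None of these is addressed, and I do not see how to supply them without bringing in genuinely global input — most naturally the connectedness and contractibility of $\overline{\minvset{CH}}$ in $\mathbb{C}\cup\mathbb{S}^1$ (Theorem~\ref{thm:main}) together with a separation argument showing that a ray crossing the bridge would disconnect $a$ from $b$ inside the supporting half-plane, or alternatively the half-plane strategy used in the proof of Proposition~\ref{prop:qp-1Convex} (exhibiting each supporting half-plane complement as an invariant convex set by showing rays issued from it cannot return to the support line). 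As it stands, the argument is a correct reduction to the hard case plus a heuristic for that case, not a proof.
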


The local analysis of boundary points carried on in the previous sections provides interesting partial results towards a characterization of points where $\partial \minvset{CH}$ is locally convex.

\subsection{Local convexity of the boundary}\label{sub:convex}

Local analysis in terms of correspondences $\Gamma$ and $\Delta$ shows that corner points of $\minvset{CH}$ have to satisfy very specific conditions.

\begin{corollary}\label{cor:CONVEXCORNER}
For a linear differential operator $T$ given by \eqref{eq:1stN}, consider a point $\alpha$ which is a corner point of the boundary $\partial \minvset{CH}$. In other words, there is a neighborhood $V$ of $\alpha$ such that $V \cap \minvset{CH}$ is contained in a cone with apex $\alpha$ and with the opening strictly smaller than $\pi$. Then one of the following statements hold:
\begin{itemize}
    \item $\alpha$ is a simple zero of $R(z)$ satisfying $\phi_{\alpha}=0$ (see \ref{notationlocal});
    \item $\alpha$ is a common root of $P(z)$ and $Q(z)$ of the same multiplicity (i.e. $\alpha$ is neither a zero nor a pole of $R(z)$).
\end{itemize}
Besides, if $\alpha$ is a cusp (neighborhoods of $\alpha$ in $\minvset{CH}$ can be included in cones of arbitrarily small opening angle), then one of the following statements holds:
\begin{itemize}
    \item $\alpha$ is a common root of $P(z)$ and $Q(z)$ of the same multiplicity;
    \item $\minvset{CH}$ is fully irregular and contained in a half-line.
\end{itemize}
\end{corollary}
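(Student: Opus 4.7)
The corner hypothesis that $V\cap\minvset{CH}$ lies inside a cone of opening strictly smaller than $\pi$ immediately implies that the complementary cone (of opening strictly greater than $\pi$, with apex $\alpha$) is contained in $V\setminus\minvset{CH}$. Picking any sequence of points inside that complementary cone converging to $\alpha$ shows that $\mathcal{K}_\alpha$ contains a closed interval of length strictly greater than $\pi$. I would then invoke Corollary~\ref{cor:KLalpha} to narrow down $m_\alpha$: the cases $|m_\alpha|\geq 2$ (where $\mathcal{K}_\alpha$ is finite) and $m_\alpha=-1$ (where $\mathcal{K}_\alpha$ is contained in at most two intervals each of length at most $\pi$) are immediately excluded; the case $m_\alpha=1$ yields $\phi_\alpha=0$, which is the first bullet.

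The remaining case is $m_\alpha=0$, where $\alpha$ is neither a zero nor a pole of $R(z)$. The main obstacle will be to show that $\alpha\in\mathcal{Z}(PQ)$, i.e.\ to rule out $\alpha$ being a regular point of $R$. Assuming for contradiction $\alpha\notin\mathcal{Z}(PQ)$, I would invoke Theorem~\ref{thm:MAINClassification} and examine each boundary type. Local points lie on smooth integral curves by Proposition~\ref{prop:localarc}; global points with multiple $\Delta$ and extruding points are non-convexity points by Propositions~\ref{prop:globalmultiple} and \ref{prop:extruding}, while those with a single $\Delta$ produce a $C^1$ boundary via the tangent formula of Lemma~\ref{lem:RootTrailSlope}. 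For points of the transverse locus $\mathfrak{I}_R^{\ast}$, Propositions~\ref{prop:bouncing}, \ref{prop:SWITCHREGULAR}, \ref{prop:C1INFLECTION} and \ref{prop:INFLglobal} classify bouncing as non-convex, switch as half-disk shape, and $C^1$/$C^2$-inflection as $C^1$/$C^2$-smooth. Finally, isolated singular or tangency points of $\mathfrak{I}_R$ not in $\mathcal{Z}(PQ)$ can be treated by direct local analysis using Lemma~\ref{lem:InflectionBranches}. None of these possibilities produces a convex corner of opening less than $\pi$, so $\alpha$ must lie in $\mathcal{Z}(PQ)$; since $m_\alpha=0$ forces the multiplicities of $\alpha$ as a root of $P$ and $Q$ to agree, the second bullet follows.

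For the cusp statement I would refine the preceding argument. The hypothesis forces $\mathcal{K}_\alpha$ to contain $\mathbb{S}^1$ minus at most one point, and since $\mathcal{K}_\alpha$ is closed by Lemma~\ref{lem:KLalpha}, in fact $\mathcal{K}_\alpha=\mathbb{S}^1$. Assuming $\minvset{CH}$ has nonempty interior, Lemma~\ref{lem:KLalpha}(5) gives $\mathcal{L}_\alpha\neq\mathbb{S}^1$, whereas Lemma~\ref{lem:KLalpha}(3) yields $d_\alpha(\mathbb{S}^1)=d_\alpha(\mathcal{K}_\alpha)\subset\mathcal{L}_\alpha$. Since $d_\alpha\colon\theta\mapsto\phi_\alpha+m_\alpha\theta$ is surjective whenever $m_\alpha\neq 0$, this forces $m_\alpha=0$, and combining with the first part of the proof shows that $\alpha$ is a common root of $P$ and $Q$ of equal multiplicity. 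If instead $\minvset{CH}$ has empty interior, it is fully irregular, and Theorem~\ref{thm:irregular} constrains it to lie inside a single $R$-invariant line; the cusp condition at $\alpha$ then forces the one-dimensional set $\minvset{CH}$ to sit inside a half-line emanating from $\alpha$.
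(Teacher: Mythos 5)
Your first step (the complementary cone of opening greater than $\pi$ forces $\mathcal{K}_\alpha$ to contain a long interval, and Corollary~\ref{cor:KLalpha} then eliminates poles and multiple zeros and forces $\phi_\alpha=0$ for a simple zero) is exactly the paper's opening move, and your treatment of the cusp case via surjectivity of $d_\alpha$ for $m_\alpha\neq 0$ is essentially the paper's argument in a slightly more uniform packaging. The genuinely different --- and problematic --- part is the middle step, where you must show that a point $\alpha$ with $m_\alpha=0$ and $\alpha\notin\mathcal{Z}(PQ)$ cannot be a corner. You route this through the case analysis of Theorem~\ref{thm:MAINClassification}, but that theorem gives no local description of $\partial\minvset{CH}$ at points of the tangency locus $\mathfrak{T}_R$ or at singular points of $\mathfrak{I}_{R}$ that are critical points of $R'$ rather than zeros or poles of $R(z)$; it merely bounds their number. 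Your sentence that these points ``can be treated by direct local analysis using Lemma~\ref{lem:InflectionBranches}'' is not an argument: that lemma describes the branches of the inflection curve, not the shape of $\minvset{CH}$ near them, and these are precisely the points at which none of Propositions~\ref{prop:localarc}, \ref{prop:bouncing}, \ref{prop:INFLglobal}, \ref{prop:C1INFLECTION}, \ref{prop:SWITCHREGULAR} apply. That is a genuine gap.

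The paper closes this gap without any classification, by a dichotomy valid at every $\alpha\notin\mathcal{Z}(PQ)$. If $\Gamma(\alpha)\neq\emptyset$, then by Proposition~\ref{prop:IntegralCurve} both the forward and the backward germ of the integral curve through $\alpha$ lie in $\minvset{CH}$, so $\minvset{CH}$ contains a smooth curve passing through $\alpha$ and cannot fit in a cone of opening less than $\pi$ with apex $\alpha$. If $\Delta(\alpha)\neq\emptyset$, say $u\in\Delta(\alpha)$, then a branch of the root trail $\mathfrak{tr}_{u}$ passes through $\alpha$ and is contained in $\minvset{CH}$ (Lemmas~\ref{lem:RootTrailSlope} and~\ref{lem:RTSlopeINFINITY}), with the same conclusion. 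If both correspondences are empty, one takes the cone of angle at least $\pi$ locally disjoint from $\minvset{CH}$: either $r(\alpha)$ points strictly into it, in which case a neighborhood of $\alpha$ can be excised from $\minvset{CH}$ while preserving invariance, contradicting minimality, or else an arc in the complement of $\minvset{CH}$ near $\alpha$ has associated rays sweeping out a domain containing $\alpha$, again a contradiction. To repair your proof you should either supply the missing local analysis on $\mathfrak{T}_R$ and at critical points of $R'$, or, more simply, replace the entire case analysis by this $\Gamma$/$\Delta$ dichotomy, which subsumes all of your cases at once.
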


\begin{proof}
Corollary~\ref{cor:KLalpha} immediately implies that $\alpha$ cannot be a pole or a multiple zero of $R(z)$. Besides, if $\alpha$ is a simple zero, it has to satisfy the condition $\phi_{\alpha}=0$. Now we assume that $\alpha$ is neither a zero nor a pole of $R(z)$. It remains to prove that $\alpha \in \mathcal{Z}(PQ)$.
\par
Assume that $\alpha \notin \mathcal{Z}(PQ)$. In this case if some point of the forward trajectory of $R(z)\partial_{z}$ starting at $\alpha$ belongs to $\minvset{CH}$, then a germ of the integral curve starting at $\alpha$ is contained in $\minvset{CH}$ (see Proposition~\ref{prop:IntegralCurve}) and $\alpha$ cannot be a corner point. We conclude that $\Gamma(\alpha) = \emptyset$. 
\par
If $\Delta(\alpha)$ contains some point $y$, then a branch of the root trail $\mathfrak{tr}_{y}$ containing $\alpha$ belongs to   $\minvset{CH}$ (see Lemmas~\ref{lem:RootTrailSlope} and~\ref{lem:RTSlopeINFINITY}). Thus  in this case $\alpha$ cannot be a corner point and we get $\Delta(\alpha)= \emptyset$.
\par
Now we take a cone $\mathcal{C}$ with apex at $\alpha$,  of angle at least $\pi$ which is locally disjoint from $\minvset{CH}$. If $r(\alpha)$ is contained in $\mathcal{C}$, but is not one of the two limit rays, we can freely remove a neighborhood of $\alpha$ from $\minvset{CH}$ and still get an invariant set. In any other case, we can find an arc contained in a neighborhood of $\alpha$ and the complement of $\minvset{CH}$ whose associated rays sweep out a domain containing $\alpha$. Thus we get a contradiction in this case as well which implies that $\alpha$ has to be in $\mathcal{Z}(PQ)$.
\par
Finally if $\alpha$ is  a simple zero of $R(z)$ and a cusp, then we have $\mathcal{L}_{\alpha}=\mathbb{S}^{1}$ (see Definition~\ref{def:calKL}). It follows that $\minvset{CH}$ has empty interior. All such cases have been completely classified in Section~7 of \cite{AHN+24}. 
\end{proof}

Further local analysis provides necessary conditions under which  boundary points belong to locally convex parts of $\partial \minvset{CH}$.

\begin{proposition}\label{prop:charaCONVEX}
For a linear differential operator $T$ given by \eqref{eq:1stN},  consider a point $\alpha \in \partial \minvset{CH}$ such that there is a neighborhood $V$ of $\alpha$ with the property that $V \cap \minvset{CH}$ is contained in a closed half-plane whose boundary contains $\alpha$.
\par
If $\alpha \in \mathcal{Z}(PQ)$, then one of the following statements holds:
\begin{itemize}
    \item $\alpha$ is a simple pole of $R(z)$;
    \item $\alpha$ is a simple zero of $R(z)$ satisfying $\phi_{\alpha}=0$ (see \ref{notationlocal});
    \item $\alpha$ is a common root of $P(z)$ and $Q(z)$ of the same multiplicity (i.e. $\alpha$ is neither a zero nor a pole of $R(z)$).
\end{itemize}
\par
If $\alpha \in \mathfrak{I}_{R}^{\ast} \setminus \mathcal{Z}(PQ)$, then $\alpha$ is a point of switch type.
\par
If $\alpha \notin \mathfrak{I}_{R} \cup \mathcal{Z}(PQ)$, then one of the following statements holds:
\begin{itemize}
    \item $\alpha$ is a point of local type;
    \item $\alpha$ is a point of global type and for any $u \in \Delta(\alpha)$, either $\Im(f(u,\alpha))=0$ or $\Im(f(u,\alpha))$ and $\Im(R'(\alpha))$ have opposite signs (for $f$ defined as in Proposition~\ref{prop:RTconcave}).
\end{itemize}
\end{proposition}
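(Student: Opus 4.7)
The plan is to use the local classification of boundary points from Theorem~\ref{thm:MAINClassification} and its associated structural results, ruling out each type inconsistent with local half-plane containment. The hypothesis is equivalent to saying $\mathcal{L}_\alpha$ (Definition~\ref{def:calKL}) contains a closed arc in $\mathbb{S}^1$ of length at least $\pi$, since the open half-plane not containing $\minvset{CH}$ gives $\pi$ worth of directions along which $r(\alpha)$ extensions avoid the interior.

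For $\alpha \in \mathcal{Z}(PQ)$, I would invoke Corollary~\ref{cor:KLalpha} directly. When $|m_\alpha| \geq 2$ the set $\mathcal{L}_\alpha$ is finite, contradicting the existence of an arc of length $\pi$, so only $m_\alpha \in \{-1,0,1\}$ can occur. These give respectively a simple pole, a simple zero with $\phi_\alpha = 0$ (forced by the $m_\alpha = 1$ clause of Corollary~\ref{cor:KLalpha}), and a common root of $P$ and $Q$ of matching multiplicity.

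For $\alpha \in \mathfrak{I}_R^{\ast} \setminus \mathcal{Z}(PQ)$, Theorem~\ref{thm:MAINClassification} leaves four subtypes. Bouncing type is excluded by Proposition~\ref{prop:bouncing}, which identifies such points as non-convexity points. For $C^1$-inflection, Proposition~\ref{prop:C1INFLECTION} produces two global arcs meeting at $\alpha$ (one in each incident inflection domain) whose local curvatures have opposite signs via the orientation principle of Proposition~\ref{prop:localconvex} applied in the two inflection domains; the boundary therefore crosses its own tangent at $\alpha$ and $\minvset{CH}$ lies on both sides. For $C^2$-inflection, the explicit cubic asymptotics in Lemma~\ref{lem:INFRatio} give the same obstruction: the boundary expansions $y = \tfrac{b}{3}x^3 + o(x^3)$ (for $x \geq 0$) and $y = \tfrac{5b}{3}x^3 + o(x^3)$ (for $x \leq 0$) show the boundary crosses its tangent line $\mathbb{R}$. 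Only switch type survives, and Proposition~\ref{prop:SWITCHREGULAR} confirms its compatibility (the set even lies in a half-disk near $\alpha$).

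For $\alpha \notin \mathfrak{I}_R \cup \mathcal{Z}(PQ)$, Definition~\ref{defn:ArcClassification} leaves three options. Extruding type is excluded by Proposition~\ref{prop:extruding}; local type is compatible by Proposition~\ref{prop:localconvex}. For global type, Proposition~\ref{prop:globalmultiple} reduces immediately to $|\Delta(\alpha)| = 1$, since $|\Delta(\alpha)| \geq 2$ would make $\alpha$ a non-convexity point. For the unique $u \in \Delta(\alpha)$, the germ of the root trail $\mathfrak{tr}_u \subset \minvset{CH}$ at $\alpha$ lies in the containing half-plane $\overline{H}$; if its tangent line $L$ at $\alpha$ were distinct from $\partial H$, the germ would exit $\overline{H}$ near $\alpha$, so $L = \partial H$. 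Similarly $r(\alpha) \subset \overline{H}$ (the ray starts at $\alpha \in \partial H$ and terminates at $u \in \overline{H}$, and a straight half-line cannot re-enter $\overline{H}$ after leaving). Proposition~\ref{prop:RTconcave} then forces the germ of $\mathfrak{tr}_u$ and $r(\alpha)$ onto the same side of $L$, which is exactly the opposite-signs case for $\Im(f(u,\alpha))$ and $\Im(R'(\alpha))$, unless we are in the degenerate case $\Im(f(u,\alpha)) = 0$ where the trail has an inflection at $\alpha$. The main obstacle is the careful geometric analysis in this final subcase, particularly the identification $L = \partial H$ and the translation of the concavity result in Proposition~\ref{prop:RTconcave} into the exact stated sign conditions; the inflection subtypes in Case 2 also require delicate asymptotic control of the boundary shape near $\alpha$.
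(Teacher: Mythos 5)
Your case decomposition tracks the paper's own proof, and most steps are sound: the $\mathcal{Z}(PQ)$ case via Corollary~\ref{cor:KLalpha}, the exclusion of bouncing and extruding types via their non-convexity statements, and the local-type and switch-type cases all match the paper's argument. (One small repair in the first case: the local half-plane hypothesis directly controls $\mathcal{K}_{\alpha}$ of Definition~\ref{def:calKL}, not $\mathcal{L}_{\alpha}$ --- a half-line leaving $\alpha$ into the complementary open half-plane may still meet $(\minvset{CH})^{\circ}$ outside $V$ --- but Corollary~\ref{cor:KLalpha} identifies the two sets in every case you use, so the conclusion survives.) However, two steps have genuine problems. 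Your exclusion of $C^{1}$-inflection type invokes ``opposite curvature signs via Proposition~\ref{prop:localconvex} applied in the two inflection domains'', but that proposition concerns local arcs (integral curves); at a $C^{1}$-inflection point both incident arcs are global, and no curvature statement for global arcs is available (they need not even be $C^{1}$). The argument the paper actually uses is: for $u \in \Delta^{-}(\alpha)$, Proposition~\ref{prop:RTInflecCONCAVE} places the germ of $\mathfrak{tr}_{u} \subset \minvset{CH}$ on the same side of the line through $r(\alpha)$ as the forward germ $\gamma^{+}_{\alpha}$, while the backward germ $\gamma^{-}_{\alpha} \subset \minvset{CH}$ (Proposition~\ref{prop:IntegralCurve}) lies on the opposite side since the integral curve inflects at $\alpha$; two germs of $\minvset{CH}$ tangent to the same line from opposite sides rule out every half-plane.

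The more serious issue is the final global-type case. Your identification $L=\partial H$ is correct, but the claim $r(\alpha)\subset\overline{H}$ is not: $u$ need not lie in $V$, so nothing places it in $\overline{H}$, and in fact the germ of $r(\alpha)$ must \emph{leave} $\overline{H}$. Indeed $\gamma^{-}_{\alpha}\subset\minvset{CH}\cap V\subset\overline{H}$, and its tangent direction $-R(\alpha)$ is transverse to $L=\partial H$ (for $\alpha\notin\mathfrak{I}_{R}$ the trail is nowhere tangent to the flow), so $-R(\alpha)$ points strictly into $H$ and $R(\alpha)$ strictly out. Hence the trail germ and the ray sit on \emph{opposite} sides of $L$, not the same side as you assert. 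Whether this yields the stated sign condition then hinges on Proposition~\ref{prop:RTconcave}, whose printed statement and whose proof disagree about which sign combination puts the trail and the ray in the same half-plane (the proof derives ``trail in $L_{+}$ iff $\Im f>0$'' and ``ray in $L_{+}$ iff $\Im R'>0$'', i.e.\ same side iff same sign, while the statement asserts same side iff opposite signs). With the corrected geometry, the proof-version gives exactly the claimed conclusion and the statement-version gives its negation; your two errors happen to cancel against the printed statement, but the step needs to be redone with the geometry and the sign convention both pinned down.
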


\begin{proof}
The case $\alpha \in \mathcal{Z}(PQ)$ follows from Corollary~\ref{cor:KLalpha}. If $\alpha \in \mathfrak{I}_{R}^{\ast} \setminus \mathcal{Z}(PQ)$ and $\Delta^{-}(\alpha) \neq \emptyset$, then $\minvset{CH}$ contains both the germ of an integral curve of the field $-R(z)\partial_{z}$ at $\alpha$ and the germ of the root trail $\mathfrak{tr}_{u}$ for some $u \in \Delta^{-}(\alpha)$. Proposition~\ref{prop:RTInflecCONCAVE} implies  that $\minvset{CH}$ cannot be convex at $\alpha$. Besides, if $\Gamma(\alpha) \neq \emptyset$, then $\minvset{CH}$ cannot be convex in $\alpha$ either because a germ of an integral curve having an inflection point at $\alpha$ is contained in $\minvset{CH}$.
In the remaining cases, we have $\Gamma(\alpha) \cup \Delta^{-}(\alpha) =\emptyset$. If $\Delta^{+}(\alpha) \neq \emptyset$, this characterizes points of switch type (see Theorem~\ref{thm:MAINClassification}). If $\Delta^{+}(\alpha) = \emptyset$, then we obtain a point of $C^{2}$-inflection type, $\alpha$ is the starting point of a local arc and $\Gamma(\alpha)$ is therefore nonempty (see Proposition~\ref{prop:INFLECTIONTYPE}).\
\par
Now we consider the case $\alpha \notin \mathfrak{I}_{R} \cup \mathcal{Z}(PQ)$. If $\Gamma(\alpha) \neq \emptyset$, then $\alpha$ is a point of local type ($\alpha$ cannot be a point of extruding type because of Proposition~\ref{prop:extruding}). If $\Gamma(\alpha)=\emptyset$, then it follows from Proposition~\ref{prop:localarc} that $\Delta(\alpha) \neq \emptyset$. Proposition~\ref{prop:RTconcave} then provides the necessary condition.
\end{proof}

\subsection{Case $\deg Q-\deg P=-1$}

We  have a rational vector field $R(z)\partial_z$ satisfying $R(z)=\frac{\lambda}{z}+\frac{\mu}{z^{2}}+o(1/z^{2})$ with $\lambda \in \mathbb{C}^{\ast}$ and $\mu \in \mathbb{C}$.

\subsubsection{Horizontal locus and special line}

We define the following loci.

\begin{definition}
The \textit{horizontal locus} $\mathcal{H}_{R}$ is the closure in $\mathbb{C}$ of the set formed by points $z \notin \mathcal{Z}(PQ)$, for which $\sigma(z) = \frac{\arg(\lambda) \pm \pi}{2}$.
\par
We also denote by $\mathcal{L}_{R}$ the \textit{special line} formed by points $z$ given by the equation $Im(z/\lambda)=Im(\mu/\lambda^{2})$.
\end{definition}

For the sake of simplicity, the vector field $R(z)\partial_{z}$ is normalized by an affine change of variable as $R(z)=-\frac{1}{z}+o(z^{-2})$ ($\lambda=-1$ and $\mu=0$). The line $\mathcal{L}_{R}$ then coincides with the real axis $\mathbb{R}$.
\par

\begin{lemma}\label{lem:qp-1Horizon}
$\mathcal{H}_{R}$ is a real plane algebraic curve of degree at most $\deg P + \deg Q$. It has two asymptotic infinite branches. The line $\mathcal{L}_{R}$ is the asymptotic line for both of them.
\end{lemma}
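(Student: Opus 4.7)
With the stated normalization $\lambda=-1$, $\mu=0$ one has $\arg(\lambda)=\pi$, so $\tfrac{\arg(\lambda)\pm\pi}{2}\in\{0,\pi\}$ and the defining condition $\sigma(z)\in\{0,\pi\}$ simply means $R(z)\in\bR$. Off of $\Z(PQ)$ this becomes
$$
\Im\bigl(Q(z)\overline{P(z)}\bigr)=0.
$$
Writing $z=x+iy$, every monomial $z^j\overline{z}^k$ has total degree $j+k$ in $(x,y)$, so this is a real polynomial in $(x,y)$ of degree at most $\deg P+\deg Q$. Taking the closure in $\bC$ yields that $\mathcal H_R$ is a real plane algebraic curve of degree at most $\deg P+\deg Q$, settling the first claim.

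For the asymptotic statement I would pass to the chart $w=1/z$ at infinity. The normalization yields
$$
R(1/w)=-w+o(|w|^2),\qquad\Im R(1/w)=-v+o(|w|^2)\qquad(w=u+iv).
$$
The linear part has nonzero $v$-coefficient, so the implicit function theorem provides a unique real-analytic arc $v=h(u)$ in a neighborhood of $w=0$ with $h(0)=h'(0)=0$; substituting this back into $-v+o(|w|^2)=0$ upgrades the estimate to $h(u)=o(u^2)$. Any sequence on $\mathcal H_R$ with $|z_n|\to\infty$ corresponds to $w_n=1/z_n\to 0$, and for $n$ large must lie on this unique local arc, so $\mathcal H_R$ has exactly two unbounded branches in $\bC$, obtained by restricting the arc to $u>0$ and to $u<0$.

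On each of these branches,
$$
\Im z=\frac{-v}{u^2+v^2}=-\frac{h(u)}{u^2}\bigl(1+o(1)\bigr)\xrightarrow[u\to 0]{}0,
$$
since $h(u)=o(u^2)$. Hence both branches approach the real axis, which in the normalized frame is precisely $\mathcal L_R=\{z:\Im(z/\lambda)=\Im(\mu/\lambda^2)\}=\bR$. Because $\mathcal H_R$ and $\mathcal L_R$ are intrinsically defined, the statement descends to the original coordinates via the affine change used for normalization. The only delicate step is the estimate $h(u)=o(u^2)$, which is what promotes the tangency $h'(0)=0$ to the genuine asymptotic statement $\Im z\to 0$ rather than merely $\Im z$ bounded; this higher-order vanishing is exactly what the hypothesis $R(z)=-1/z+o(z^{-2})$ after the two-parameter affine normalization delivers.
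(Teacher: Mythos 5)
Your proof is correct and follows essentially the same route as the paper: pass to the chart $w=1/z$, use that $\infty$ is a simple zero of $R$ to obtain a single smooth local arc of $R^{-1}(\mathbb{R})$ through infinity (hence two branches in $\mathbb{C}$), and read the asymptote off the normalized expansion $R(z)=-1/z+o(z^{-2})$. You also supply two details the paper's proof leaves implicit — the degree bound via $\Im\bigl(Q(z)\overline{P(z)}\bigr)$, and the estimate $h(u)=o(u^2)$ that upgrades first-order tangency (which would only fix the asymptotic \emph{direction}) to the identification of the specific line $\mathcal{L}_{R}$ as the asymptote — which is a welcome tightening.
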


\begin{proof}
Curve $\mathcal{H}_{R}$ can be seen as the pull-back of the real axis under the mapping $R(z): \mathbb{CP}^{1} \to \mathbb{CP}^{1}$. We have $R(\infty)=0$ and $\infty$ is a simple root of $R(z)$. Therefore, $\mathcal{H}_{R}$ is smooth near $\infty$.
\par
It remains to show that the tangent line to $\mathcal{H}_{R}$ at infinity coincides with the real axis. Actually the tangent line is the line at which the linearization of $R(z)$ at $\infty$ attains real zeroes. Since this linearization is exactly $-\frac{1}{z}$, the result follows.
\end{proof}

\begin{corollary}\label{cor:qp-1Horizon}
The closure $\overline{\minvset{CH}}$ of the minimal set in the extended plane contains asymptotic directions $0$ and $\pi$. Besides, the curve $\mathcal{H}_{R}$ is contained in the minimal set $\minvset{CH}$.
\end{corollary}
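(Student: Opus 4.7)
The plan is to prove the two assertions in sequence. First I establish that $u_0, u_\pi \in \overline{\minvset{CH}}$ (the points at infinity in asymptotic directions $0$ and $\pi$) using the cone description of $\minvset{CH}^c = X_1 \cup X_2$ from Proposition~\ref{prop:qp-1}. Then I deduce $\mathcal{H}_R \subset \minvset{CH}$ by a continuity argument on root trails, based on the identification $\mathcal{H}_R = \mathfrak{tr}_{u_0} \cup \mathfrak{tr}_{u_\pi}$ (up to the finite set $\mathcal{Z}(PQ) \subset \minvset{CH}$), which follows directly from the definitions: under our normalization $\arg\lambda = \pi$, both loci reduce to the closure of $\{z : R(z) \in \mathbb{R}^\ast\}$.

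For the first step, suppose for contradiction that $u_0 \notin \overline{\minvset{CH}}$. Then some basic neighborhood $I \cup C(z_0, I)$ of $u_0$ with $I = (-\epsilon, \epsilon)$ lies in $\minvset{CH}^c$, and since the infinite cone $C(z_0, I)$ is connected it is contained in one of $X_1, X_2$, say $X_1$. Proposition~\ref{prop:qp-1} furnishes an infinite cone in $X_2$ with direction interval $(\pi+\delta, 2\pi-\delta)$ for any prescribed $\delta > 0$; taking $\delta < \epsilon$, the two direction intervals overlap on the sub-arc $(2\pi-\epsilon, 2\pi-\delta) \subset \mathbb{S}^1$. Any two infinite cones in $\mathbb{C}$ with overlapping direction intervals must meet (along a shared direction $\theta$ the point $z_0 + R e^{i\theta}$ belongs to both cones for $R$ sufficiently large), contradicting $X_1 \cap X_2 = \emptyset$. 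Hence $u_0 \in \overline{\minvset{CH}}$, and the symmetric argument with the roles of the two cones exchanged yields $u_\pi \in \overline{\minvset{CH}}$.

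For the second step, pick a sequence $w_n \in \minvset{CH}$ with $w_n \to u_0$ in the extended plane; then $\mathfrak{tr}_{w_n} \subset \minvset{CH}$ by invariance. Fix a regular point $z_0 \in \mathfrak{tr}_{u_0} \setminus \mathcal{Z}(PQ)$, i.e.\ $R'(z_0) \neq 0$. The defining relation of $\mathfrak{tr}_w$ for finite $w$, namely $\Im\bigl(R(z)\overline{(w-z)}\bigr) = 0$ together with $\Re\bigl(R(z)\overline{(w-z)}\bigr) > 0$, becomes after rescaling by $s = 1/\bar w$ the equation $\Im\bigl(R(z)(1 - s\bar z)\bigr) = 0$, which at $s = 0$ reduces to $\Im R(z) = 0$, the equation defining $\mathfrak{tr}_{u_0}$. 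Since the gradient $\nabla_{(x,y)}\Im R(z_0) = (\Im R'(z_0), \Re R'(z_0))$ is nonzero, the implicit function theorem yields a smooth family $z(s)$ with $z(0) = z_0$; evaluating at $s_n = 1/\bar w_n \to 0$ gives $z_n \in \mathfrak{tr}_{w_n}$ with $z_n \to z_0$, the required positivity being preserved for large $n$ since $R(z_0) \in \mathbb{R}_+$ and $\Re w_n \to +\infty$. Closedness of $\minvset{CH}$ gives $z_0 \in \minvset{CH}$; density of regular points on the real algebraic curve $\mathfrak{tr}_{u_0}$ yields $\mathfrak{tr}_{u_0} \subset \minvset{CH}$, and symmetrically $\mathfrak{tr}_{u_\pi} \subset \minvset{CH}$, hence $\mathcal{H}_R \subset \minvset{CH}$. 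The main obstacle is this passage to the limit of root trails at infinity—tracking the correct (positive) branch through the implicit function theorem—but the rescaling $s = 1/\bar w$ turns it into a standard continuity argument.
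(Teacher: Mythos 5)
Your proof is correct, but it takes a genuinely different route from the paper's (very terse) argument, and in fact supplies rigor where the paper only gestures. For the first assertion, the paper observes that near infinity the backward trajectories of $R(z)\partial_z$ escape along the directions $0$ and $\pi$ (the separatrices of the degenerate singular point at infinity of $-\tfrac1z\partial_z+\dots$), and invokes Proposition~\ref{prop:IntegralCurve} to conclude that $\minvset{CH}$ contains curves tending to infinity in those directions. You instead argue by contradiction from the topology of the complement: a basic neighborhood of $u_0$ in $\mathbb{C}\cup\mathbb{S}^1$ contained in $\minvset{CH}^c$ would yield an infinite cone straddling the direction $0$ inside one of the two components $X_1,X_2$ of Proposition~\ref{prop:qp-1}, whose direction interval overlaps that of a cone in the other component — forcing $X_1\cap X_2\neq\emptyset$. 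This buys independence from the dynamics at infinity at the cost of leaning on the structural Proposition~\ref{prop:qp-1}. For the second assertion the paper writes only that the associated rays of points of $\mathcal{H}_R$ are "asymptotically tangent" to $\minvset{CH}$; your identification $\mathcal{H}_R=\mathfrak{tr}_{u_0}\cup\mathfrak{tr}_{u_\pi}$ together with the limit $\mathfrak{tr}_{w_n}\to\mathfrak{tr}_{u_0}$ for $w_n\in\minvset{CH}$, $w_n\to u_0$, implemented via the implicit function theorem at regular points and closedness of $\minvset{CH}$, is a precise version of exactly that idea.

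One small imprecision worth fixing: your rescaled equation $\Im\bigl(R(z)(1-s\bar z)\bigr)=0$ drops a unimodular factor. Since $R(z)\overline{(w-z)}=\bar w\,R(z)(1-s\bar z)$ with $s=1/\bar w$, the correct normalization is $\Im\bigl(\tfrac{\bar w}{|w|}R(z)(1-s\bar z)\bigr)=0$, and the factor $\tfrac{\bar w}{|w|}=e^{-i\arg w}$ tends to $1$ only because $\arg w_n\to 0$ for a sequence converging to $u_0$ in the extended plane. This does not affect the conclusion — the perturbed defining function still converges in $C^1$ to $\Im R(z)$ and the implicit function theorem applies at any $z_0$ with $R'(z_0)\neq 0$ — but as written the equation is only valid for $w\in\mathbb{R}_{>0}$.
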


\begin{proof}
Looking at separatrices of the vector field $R(z)\partial_{z}$ and using Proposition~\ref{prop:IntegralCurve} we get that the closure $\overline{\minvset{CH}}$ in the extended plane contains asymptotic directions $0$ and $\pi$. The associated rays of points of $\mathcal{H}_{R}$ are thus asymptotically tangent to $\minvset{CH}$ and $\mathcal{H}_{R}$ is contained in the minimal set.
\end{proof}

\begin{proposition}\label{prop:qp-1Convex}
Consider a linear differential operator $T$ given by \eqref{eq:1stN} such that $\deg Q-\deg P=-1$. Then the minimal convex Hutchinson invariant set $Conv(\minvset{CH})$ is a bi-infinite strip (domain bounded by two parallel lines).
\par
More precisely, $Conv(\minvset{CH})$ is the smallest strip containing $\mathcal{H}_{R} \cup \mathcal{Z}(PQ)$.
\end{proposition}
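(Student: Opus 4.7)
The strategy is to identify the candidate strip $S$ as the smallest strip parallel to $\mathcal{L}_R$ containing $\mathcal{H}_R\cup\mathcal{Z}(PQ)$ and then to prove two things: (a) $S$ is $T_{CH}$-invariant, so that $\minvset{CH}\subseteq S$; and (b) the convex hull $\operatorname{Conv}(\minvset{CH})$ fills out all of $S$. After an affine change of variable we may assume $R(z)=-1/z+o(z^{-2})$ and $\mathcal{L}_R=\mathbb{R}$. The expansion $\Im R(z)=\Im z/|z|^{2}+o(|z|^{-2})$ shows that the two infinite branches of $\mathcal{H}_R$ produced by Lemma~\ref{lem:qp-1Horizon} escape to $+\infty$ and $-\infty$ along $\mathbb{R}$ respectively; consequently any strip containing $\mathcal{H}_R$ must be parallel to $\mathbb{R}$, and the minimal such strip $S=\{a\le\Im z\le b\}$ containing $\mathcal{H}_R\cup\mathcal{Z}(PQ)$ is well-defined. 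Corollary~\ref{cor:qp-1Horizon} already yields $\mathcal{H}_R\cup\mathcal{Z}(PQ)\subseteq\minvset{CH}$.

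The central step is the invariance of $S$. The two open half-planes $U^{+}=\{\Im z>b\}$ and $U^{-}=\{\Im z<a\}$ forming $\mathbb{C}\setminus S$ are disjoint from $\mathcal{H}_R$ (since $\mathcal{H}_R\subseteq S$) and from the poles of $R$ (which lie in $\mathcal{Z}(P)\subseteq S$), so $\Im R$ is continuous and non-vanishing on each $U^{\pm}$ and has constant sign on it. Evaluating at $z=iy$ with $y\to+\infty$ gives $\Im R(iy)=1/y+o(y^{-2})>0$, so $\Im R>0$ throughout the connected set $U^{+}$. Hence for $z\in U^{+}$ the imaginary part $\Im(z+tR(z))=\Im z+t\,\Im R(z)$ is strictly increasing in $t\ge 0$, so $r(z)\subseteq U^{+}$ and avoids $S$; the symmetric argument gives $\Im R<0$ on $U^{-}$ and $r(z)\subseteq U^{-}$ there. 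Together with $\mathcal{Z}(PQ)\subseteq S$, Theorem~\ref{thm:AR} yields invariance of $S$, and hence $\minvset{CH}\subseteq S$, so $\operatorname{Conv}(\minvset{CH})\subseteq S$.

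For the reverse inclusion, the minimality of $S$ provides finite points $p^{+}\in(\mathcal{H}_R\cup\mathcal{Z}(PQ))\cap\mathcal{L}^{+}$ and $p^{-}\in(\mathcal{H}_R\cup\mathcal{Z}(PQ))\cap\mathcal{L}^{-}$ on the two boundary lines of $S$, both lying in $\minvset{CH}$ (their existence follows from the closedness of $\mathcal{H}_R\cup\mathcal{Z}(PQ)$ and the fact that its perpendicular height is bounded, so the extremal values are attained). The two infinite branches of $\mathcal{H}_R$ supply sequences in $\minvset{CH}$ tending to infinity along $\mathcal{L}_R$ in both directions. Taking segments from $p^{+}$ to such sequences and passing to the limit shows that $\operatorname{Conv}(\minvset{CH})$ contains both half-lines issuing from $p^{+}$ in the $\pm\mathcal{L}_R$ directions, which together form the entire line $\mathcal{L}^{+}$; the same argument gives $\mathcal{L}^{-}\subseteq\operatorname{Conv}(\minvset{CH})$. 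A convex set containing two parallel lines contains the closed strip between them, so $S\subseteq\operatorname{Conv}(\minvset{CH})$. Combined with the previous paragraph, $\operatorname{Conv}(\minvset{CH})=S$.

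The main subtlety lies in the sign analysis driving the invariance of $S$: it relies both on $\mathcal{H}_R$ sitting entirely inside $S$ (so each $U^{\pm}$ lies in a single connected component of $\mathbb{C}\setminus\mathcal{H}_R$) and on the leading $-1/z$ term of $R$ pinning down the sign of $\Im R$ at infinity. In the degenerate situation where $\mathcal{H}_R\cup\mathcal{Z}(PQ)\subseteq\mathcal{L}_R$ (possible in the totally irregular cases of Theorem~\ref{thm:irregular}), the strip $S$ collapses to the line $\mathcal{L}_R$, but the same argument still yields $\operatorname{Conv}(\minvset{CH})=\mathcal{L}_R$.
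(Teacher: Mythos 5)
Your proof is correct and follows essentially the same route as the paper's: both arguments rest on $\mathcal{H}_{R}\cup\mathcal{Z}(PQ)\subset\minvset{CH}$, the constant sign of $\Im R$ on a connected half-plane avoiding $\mathcal{H}_{R}\cup\mathcal{Z}(PQ)$ (with the sign pinned down by the $-1/z$ asymptotics), and the horizontal asymptotic branches of $\mathcal{H}_{R}$; you merely package the forward inclusion as invariance of the strip via Theorem~\ref{thm:AR} instead of removing half-planes one at a time, and make the reverse inclusion explicit where the paper gets it from the half-plane characterization of the convex hull. The one soft spot is your justification that the extremal points $p^{\pm}$ exist: closedness plus bounded height does not force the supremum of $\Im z$ to be attained on an unbounded set (the tails of $\mathcal{H}_{R}$ could approach the extremal height only asymptotically), so you should either invoke the asymptotics to reduce to a compact piece when the extremal height is nonzero, or, in the borderline case, reuse your limiting-segments argument to place the whole boundary line in the closed convex hull without ever exhibiting a finite extremal point.
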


\begin{proof}
The minimal convex Hutchinson invariant set $Conv(\minvset{CH})$ is the complement of the union of every open half-plane disjoint from $\minvset{CH}$. Since $\mathcal{H}_{R}$ is contained in $\minvset{CH}$ (Corollary~\ref{cor:qp-1Horizon}), these open half-planes have to be disjoint from $\mathcal{H}_{R}$. Conversely, any open half-plane $H$  disjoint from $\mathcal{H}_{R}$ is such that $Im(R(z))$ is either positive or negative for every $z \in H$. Therefore, provided $H$ does not contain any zero or pole of $R(z)$, one can conclude that it can be removed  from any Hutchinson invariant set. In other words, $Conv(\minvset{CH})$ is the complement to  the union of all half-planes disjoint from
$\mathcal{H}_{R} \cup \mathcal{Z}(PQ)$. Since $\mathcal{H}_{R}$ has asymptotically horizontal infinite branches, the boundary line of every half-plane disjoint from $\mathcal{H}_{R}$ has to be horizontal.
\par
It remains to prove that such half-planes exist. It follows from the asymptotic description of $\mathcal{H}_{R}$ in Lemma~\ref{lem:qp-1Horizon} that $|Im(z)|$ is bounded on $\mathcal{H}_{R}$. Therefore  we can find two (disjoint) open half-planes that are also disjoint from $\mathcal{H}_{R}$. These half-planes contain half-planes which, in  addition, are disjoint from $\mathcal{Z}(PQ)$.
\end{proof}

\subsubsection{Asymptotic geometry of the minimal set}

Following Proposition~\ref{prop:qp-1Convex}, $Conv(\minvset{CH})$ is the smallest horizontal strip containing the curve $\mathcal{H}_{R} \cup \mathcal{Z}(PQ)$. The closure of the projection of $Conv(\minvset{CH})$ on the vertical axis is an interval $[y^{-},y^{+}]$ where $y^{-} \leq 0 \leq y^{+}$.

\begin{lemma}\label{lem:qp-1HoriSlope}
For $0<y<y_{0}$,  denote by $M_{t}$ the intersection point between the associated ray $r(t+iy)$ and the horizontal line  $Im(z)=y_{0}$. Then the following statements hold:
\begin{itemize}
    \item for $t \longrightarrow + \infty$, $\Re(M_{t}) \longrightarrow - \infty$ if $y<\frac{y_{0}}{2}$;
    \item for $t \longrightarrow + \infty$, $\Re(M_{t}) \longrightarrow + \infty$ if $\frac{y_{0}}{2}<y<y_{0}$.  
\end{itemize}
Analogous statements hold for $t \longrightarrow -\infty$ or $y_{0}<y<0$.
\end{lemma}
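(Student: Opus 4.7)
The proof I propose proceeds by a direct asymptotic computation using the normalization $R(z)=-\frac{1}{z}+o(z^{-2})$. The plan is the following.

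First, I would fix $0<y<y_0$ and write $z=t+iy$. From the normalized expansion we get
\begin{equation*}
R(t+iy)=-\frac{1}{t+iy}+o(t^{-2})=-\frac{t}{t^2+y^2}+i\frac{y}{t^2+y^2}+o(t^{-2}),
\end{equation*}
so as $|t|\to\infty$,
\begin{equation*}
\Re R(t+iy)=-\frac{1}{t}+o(t^{-2}),\qquad \Im R(t+iy)=\frac{y}{t^{2}}+o(t^{-2}).
\end{equation*}
Note in particular that $\Im R(t+iy)>0$ for $y>0$ and $|t|$ large, so the associated ray $r(t+iy)$ does indeed meet the horizontal line $\Im w=y_0$ for all sufficiently large $|t|$ (this is what makes $M_t$ well-defined).

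Next I would solve the equation $\Im(z+sR(z))=y_0$ for the positive parameter $s=s(t,y)$; this gives
\begin{equation*}
s(t,y)=\frac{y_0-y}{\Im R(t+iy)}=\frac{(y_0-y)\,t^{2}}{y}(1+o(1)).
\end{equation*}
Plugging into $\Re M_t=t+s(t,y)\,\Re R(t+iy)$ and using the expansion of $\Re R$, I obtain
\begin{equation*}
\Re M_t= t+\frac{(y_0-y)t^{2}}{y}\!\left(-\frac{1}{t}+o(t^{-2})\right)=t\cdot\frac{2y-y_0}{y}+o(1).
\end{equation*}
The sign of the leading coefficient $\frac{2y-y_0}{y}$ then gives the four cases of the statement: for $t\to+\infty$ one has $\Re M_t\to -\infty$ when $y<y_0/2$ and $\Re M_t\to +\infty$ when $y_0/2<y<y_0$, and the reverse behaviour for $t\to-\infty$. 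The cases $y_0<0$ and $y_0<y<0$ are identical after changing signs.

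There is no real obstacle here; the only point that requires a line of justification (rather than pure calculation) is the claim that the $o(t^{-2})$ remainders in $R$ can be absorbed into the asymptotics of $s$ and of $\Re M_t$. For this I would use that the remainder in $R(z)=-1/z+O(|z|^{-2-\delta})$ near $\infty$ (which holds since $R$ is rational and $\deg Q-\deg P=-1$ with leading coefficient $\lambda=-1$) yields $s=O(t^{2})$ and hence $s\cdot o(t^{-2})=o(1)$, which is exactly the error control needed.
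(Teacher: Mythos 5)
Your proof is correct and follows essentially the same route as the paper: normalize $R(z)=-\tfrac1z+o(z^{-2})$, expand $\Re R(t+iy)\sim -\tfrac1t$ and $\Im R(t+iy)\sim \tfrac{y}{t^2}$, solve for the ray parameter $s\sim\frac{(y_0-y)t^2}{y}$, and read off $\Re M_t = \frac{(2y-y_0)t}{y}+o(t)$. Your extra remark on absorbing the $o(t^{-2})$ remainder is a welcome (if minor) tightening of the error control that the paper leaves implicit.
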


\begin{proof}
For large values of $t$, we have $\Re(R(z))=-\frac{1}{t} + o(t^{-1})$ and $\Im(R(z))=\frac{y}{t^{2}}+o(t^{-2})$. Provided $t$ is large enough, $\Im(R(z))$ is positive and the associated ray $r(z)$ intersects the line  $Im(z)=y_{0}$. Then the real part of the intersection point equals $t-(y_{0}-y)\frac{t}{y}+o(t)$. After simplification, we obtain $\frac{(2y-y_{0})t}{y}+o(t)$. The sign of the main term is then determined by the sign of $2y-y_{0}$.
\end{proof}

\begin{proposition}\label{prop:NOFINGER}
If $\deg Q- \deg P=-1$, the minimal set $\minvset{CH}$ is connected in $\mathbb{C}$.
\end{proposition}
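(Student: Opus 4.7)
The plan is a proof by contradiction. Assume $\minvset{CH}$ has at least two connected components in $\mathbb{C}$. After the affine normalization so that $R(z)=-1/z+o(z^{-2})$, the first step is to identify a distinguished component $C_{1}$ that already accumulates at both asymptotic directions $\{0,\pi\}\subset\mathbb{S}^{1}$ (the only points of $\overline{\minvset{CH}}\cap\mathbb{S}^{1}$ by Corollary~\ref{cor:qp-1Horizon}). Indeed, since $R$ has a simple zero at $\infty\in\mathbb{CP}^{1}$, the real algebraic curve $\mathcal{H}_{R}=R^{-1}(\mathbb{R})$ is locally a smooth arc through $\infty$, so its component $\mathcal{H}_{R}^{0}$ containing the two infinite branches of Lemma~\ref{lem:qp-1Horizon} is connected in $\mathbb{C}$. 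Since $\mathcal{H}_{R}^{0}\subset\minvset{CH}$ by Corollary~\ref{cor:qp-1Horizon}, the component $C_{1}$ of $\minvset{CH}$ containing $\mathcal{H}_{R}^{0}$ accumulates at both $0$ and $\pi$ in the extended plane.

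For any other component $C_{2}$, the connectedness of $\overline{\minvset{CH}}$ in the extended plane (Theorem~\ref{thm:main}) forces $\overline{C_{1}}\cap\overline{C_{2}}$ to be a nonempty subset of $\{0,\pi\}$. If both $0$ and $\pi$ lie in this intersection, then applying Mayer--Vietoris to the decomposition $\overline{\minvset{CH}}=\overline{C_{1}}\cup\overline{C_{2}}$ with a two-point intersection produces $H_{1}(\overline{\minvset{CH}})\supseteq\mathbb{Z}$, contradicting the contractibility of $\overline{\minvset{CH}}$ from Theorem~\ref{thm:main}. Hence we may assume without loss of generality that $\overline{C_{2}}\cap\mathbb{S}^{1}=\{0\}$; that is, $C_{2}$ is bounded on the left by some finite $M\in\mathbb{R}$ and extends only to $+\infty$.

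The final step rules out this ``finger'' configuration dynamically. Pick $p\in C_{2}$ with $\Re p=M$, which is attained because $C_{2}$ is closed and contained in the horizontal strip furnished by Proposition~\ref{prop:qp-1Convex}. The positive trajectory of $R(z)\partial_{z}$ at $p$ is directed leftward (since $R(p)\approx -1/p$), so it must exit $C_{2}$. By Propositions~\ref{prop:localarc} and~\ref{prop:extruding}, the point $p$ is neither of local nor extruding type, so $\Delta(p)\neq\emptyset$ and $p$ lies on a global arc of $\partial\minvset{CH}$ contained in $C_{2}$ (Proposition~\ref{prop:globalpoint}). Any support point $u\in\Delta(p)$ lies on the associated ray $r(p)=p+R(p)\mathbb{R}^{+}$, which points leftward, so $\Re u<M$ forces $u\in C_{1}$.

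The hard part will be to convert this pair $(u,p)\in C_{1}\times C_{2}$ joined by an associated ray into a contradiction: purely topological obstructions cannot rule it out, since $X_{1}$ could in principle wrap around the left tip of $C_{2}$ while still realizing the asymptotic cones of Proposition~\ref{prop:qp-1}. My plan is to follow the global arc $\alpha\subset C_{2}$ through $p$ in its canonical orientation (Lemma~\ref{lem:globalorientation}) and to track the support curve $t\mapsto \Delta^{\min}(\alpha(t))\subset C_{1}$ using the monotonicity of $\arg R$ along $\alpha$ (Lemma~\ref{lem:globalparameter}) and the nested-region structure of Lemma~\ref{lem:boundaryUz}. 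The endpoint analysis of Propositions~\ref{prop:globalEndNONTRIVIAL},~\ref{prop:INFLglobal},~\ref{prop:C1INFLECTION} would then force the support curve and the global arc $\alpha$ to converge to a common finite limit point, contradicting the disjointness of the closures of $C_{1}$ and $C_{2}$ in $\mathbb{C}$. The main technical hurdle is ensuring that this limit remains in $\mathbb{C}$ rather than escaping to $\{0\}\in\mathbb{S}^{1}$ (where the two closures are already permitted to meet); the quantitative asymptotic control of Lemma~\ref{lem:qp-1HoriSlope}, combined with the bound $\Re p = M$, should bound the support points from escaping along the horizontal direction and close the argument.
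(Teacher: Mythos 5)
Your argument is not complete: it is a reduction plus a plan, and the plan stops exactly where the real difficulty lies. The reduction itself is reasonable in spirit --- using $\mathcal{H}_{R}\subset\minvset{CH}$ to produce a component $C_{1}$ accumulating at both directions $0$ and $\pi$, and using contractibility of $\overline{\minvset{CH}}$ to force any other component $C_{2}$ to attach to $\mathbb{S}^{1}$ at a single direction (though note that Mayer--Vietoris for a union of two arbitrary closed, possibly non-locally-connected continua requires justification, and if $\minvset{CH}$ had more than two components the decomposition $\overline{\minvset{CH}}=\overline{C_{1}}\cup\overline{C_{2}}$ is not even a cover). But the decisive step --- deriving a contradiction from the existence of the one-sided ``finger'' $C_{2}$ --- is never carried out. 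You explicitly flag ``the hard part'' and ``the main technical hurdle'' (preventing the support points and the global arc from escaping to the direction $0\in\mathbb{S}^{1}$, where the closures of $C_{1}$ and $C_{2}$ are allowed to meet) and leave both unresolved. That hurdle is precisely the content of the paper's proof: there, Lemma~\ref{lem:qp-1HoriSlope} is used quantitatively to show that if complement points approach the extra end at height $y_{e}>0$, their associated rays sweep points of arbitrarily large real part on every line of height in $]y_{e},2y_{e}[$, forcing $y_{e}=0$; then the rays accumulate on $\mathbb{R}$, which yields that the whole open upper half-plane is disjoint from $\minvset{CH}$ and finally that the component lying ``above'' the extra end is swept by those rays. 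None of that analysis appears in your proposal, so the proof does not close.

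There is also a local flaw in the step that produces the pair $(u,p)$. At the leftmost point $p$ of $C_{2}$ (with $\Re p=M$ finite), the value $R(p)$ is \emph{not} controlled by the expansion $R(z)=-1/z+o(z^{-2})$, which is only valid near infinity; so you cannot assert that the forward trajectory at $p$ points leftward and must exit $C_{2}$. Nothing prevents $p$ from being a point of local type whose integral curve is tangent to the line $\Re z=M$ and stays in $\{\Re z\geq M\}$, nor from lying on $\mathfrak{I}_{R}$ or in $\mathcal{Z}(PQ)$, where the local/global/extruding trichotomy of Definition~\ref{defn:ArcClassification} does not apply. Even granting $\Delta(p)\neq\emptyset$, a support point $u$ with $\Re u<M$ lies in some other component, but not necessarily in $C_{1}$ unless you have already reduced to exactly two components. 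If you want to pursue your route, the place to invest effort is the asymptotic control near the direction $0$; the paper's Lemma~\ref{lem:qp-1HoriSlope} is the tool designed for exactly that purpose.
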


\begin{proof}
Following Proposition~\ref{prop:qp-1}, the complement $(\minvset{CH})^{c}$ of $\minvset{CH}$ in $\mathbb{C}$ has exactly two connected components and it has been proved in Proposition~\ref{prop:qp-1Convex} that each of them contains a half-plane. We refer to the domain containing an upper half-plane as $\mathcal{D}^{+}$ and to the domain containing a lower half-plane as $\mathcal{D}^{-}$. Since $\minvset{CH}$ contains $\mathcal{H}_R$, we deduce that $\Im(R(z))$ is positive on $\mathcal{D}^{+}$ and negative on $\mathcal{D}^{-}$.
\par
Proving that $\minvset{CH}$ is connected in $\mathbb{C}$ amounts to showing that $\mathcal{D}^{+}$ and $\mathcal{D}^{-}$ have only one topological end. We will prove this statement for $\mathcal{D}^{+}$ (the proof for $\mathcal{D}^{-}$ is identical). We assume by contradiction that $\mathcal{D}^{+}$ has a topological end $\kappa$ distinct from the end of the upper half-plane contained in $\mathcal{D}^{+}$ (we will refer to this end as the \textit{main end} of $\mathcal{D}^{+}$).
\par
For any sequence $\{z_{n}\}$ of points  in $(\minvset{CH})^{c}$ approaching $\kappa$, we have (up to taking a subsequence) the sequence $\{\arg(z_{n})\}$ converging either to $0$ or to $\pi$ (since otherwise, $\kappa$ would not be distinct from the main end). Let's assume without loss of generality that it is $0$. Again, we can assume that $\{Im(z_{n})\}$ converges to some value $y_{e} \in [0,y^{+}]$.
\par
If $y_{e}>0$, then Lemma~\ref{lem:qp-1HoriSlope}, shows that for any horizontal line $L_{f}$ with $y_{f} \in ]y_{e},2y_{e}[$, the associated rays of the points in $\minvset{CH}^{c}$ converging to the end $\kappa$ sweep out points of $L_{f}$ whose real part is arbitrarily close to $+\infty$. Assuming that $y_{e}$ is the maximal possible limit value, we deduce that no infinite component of $\minvset{CH}$ can separate $\kappa$ from the upper main end containing asymptotic directions of $]0,\pi[$.
\par
Hence, for a sequence $\{z_{n}\}$ of points in $(\minvset{CH})^{c}$ approaching $\kappa$, the only accumulation value of $\{Im(z_{n})\}$ is $0$. In this case, the associated rays $r(z_{n})$ accumulate onto the $\mathbb{R}$-axis which is therefore contained in the closure of $\mathcal{D}^{+}$.
\par
Now we prove that the open upper half-plane defined by $\Im(z)>0$ is disjoint from $\minvset{CH}$. We assume by contradiction the existence of a point $z_{0}$ such that $y_{0}=\Im(z_{0})$ is positive and $z_{0} \in \minvset{CH}$. We denote by $L_{y_{0}}$ the horizontal line formed by points satisfying $\Im(z)=y_{0}$. Since there is a family of associated rays accumulating onto the $\mathbb{R}$-axis, there exists a path $(t+if(t))_{t \in \mathbb{R}}$ such that for any $t$, $f(t) \in ]0,\frac{y_{0}}{4}[$ and $t+if(t) \in \mathcal{D}^{+}$. Applying Lemma~\ref{lem:qp-1HoriSlope} to the intersection between $L_{y_{0}}$ and the family of associated rays starting from $t+if(t)$, a continuity argument proves that $z_{0}$ belongs to some associated ray of the family (as $t$ moves from $-\infty$ to $+ \infty$, the intersection of the associated rays with $L_{y_{0}}$ moves from the right end to the left end of this horizontal line). Therefore, $z_{0}$ cannot belong to $\minvset{CH}$ and the open upper half-plane defined by $\Im(z)>0$ is disjoint from $\minvset{CH}$.
\par
Then, there are interior points of connected a component $X$ of $\minvset{CH}$ located above $\kappa$ whose imaginary value is negative. It follows that the associated rays of points of $\mathcal{D}^{+}$ approaching $\kappa$ intersect the interior of $X$ (these associated rays accumulate on the $\mathbb{R}$-axis). Therefore, there is no such end $\kappa$ and $\minvset{CH}$ is connected.
\end{proof}

\begin{proposition}\label{prop:qp-1THICK}
There is a compact set $K$ and a positive constant $B>0$ such that the intersection $\minvset{CH} \cap K^{c}$ is contained in the closure of the domain bounded by the hyperbolas given by \begin{align}
    y=\frac{y^{+}}{2}\left(1+\frac{B}{ x}\right),\quad
    y=\frac{y^{-}}{2}\left(1+\frac{B}{ x}\right):\quad  x>0 \\
    y=\frac{y^{-}}{2}\left(1-\frac{B}{x}\right),\quad 
    y=\frac{y^{+}}{2}\left(1-\frac{B}{x}\right):\quad  x<0. 
\end{align}
\end{proposition}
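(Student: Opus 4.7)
The plan is to construct a closed $T_{CH}$-invariant set $\widetilde S = S_{0} \cup K$, where $S_{0}$ is the closure of the region bounded by the four hyperbolas for $|x|$ sufficiently large and $K$ is an auxiliary closed disk that buffers the origin and absorbs the lower-order terms of the expansion of $R(z)$. By Theorem~\ref{thm:AR} and minimality, $\minvset{CH} \subseteq \widetilde S$, which immediately gives $\minvset{CH}\cap K^{c} \subseteq S_{0}\cap K^{c}$, the desired inclusion. First, by an affine change of variable normalize so that $\lambda = -1$ and $\mu = 0$, whence $R(z) = -1/z + O(|z|^{-2})$ and $\mathcal{L}_{R} = \mathbb{R}$. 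Proposition~\ref{prop:qp-1Convex} then identifies $\mathrm{Conv}(\minvset{CH})$ as the strip $\{y^{-}\leq \Im z \leq y^{+}\}$, and $y^{-}\leq 0 \leq y^{+}$ since $\mathcal{H}_{R}\subseteq \minvset{CH}$ is asymptotic to $\mathbb{R}$.

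The heart of the argument is the following ray estimate. To leading order $R(u) \approx -\bar{u}/|u|^{2}$, so with the parameter $s = t/|u|^{2}$ the associated ray at $u = x_{u}+iy_{u}$ is $(x_{u}(1-s),\, y_{u}(1+s))$ for $s \geq 0$. Fix $B>0$ and suppose $x_{u} > 0$, $y_{u}>0$, with $u$ strictly above the upper right hyperbola,
\[
y_{u} > \tfrac{y^{+}}{2}\Bigl(1 + \tfrac{B}{x_{u}}\Bigr).
\]
The ray exits the strip at $s^{*} = y^{+}/y_{u} - 1$. A direct computation reduces the difference $y_{u}(1+s) - \tfrac{y^{+}}{2}\bigl(1 + B/(x_{u}(1-s))\bigr)$ to a strictly positive quantity exactly when $s < x_{u}/(x_{u}+B)$; combined with $s^{*} < (x_{u}-B)/(x_{u}+B) < x_{u}/(x_{u}+B)$ (which follows from the hypothesis on $y_{u}$), this shows that the ray lies strictly above the upper hyperbola for every $s \in (0, s^{*}]$. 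For $s > s^{*}$ the ray lies in $\{\Im z > y^{+}\}$, automatically outside $S_{0}$. The three symmetric wedges (below the lower hyperbola with $x_{u}>0$, and the two mirrored regions with $x_{u}<0$) are handled by identical computations using the corresponding form of the hyperbola.

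The leading-order analysis is then upgraded to the true expansion $R(z) = -1/z + O(|z|^{-2})$. The perturbation changes the ray direction by a relative $O(|z|^{-1})$, while the strict margin in the invariance inequality is also of order $B/|z|$, so choosing $B$ large enough in terms of the subleading coefficients of the Laurent expansion of $R$ and restricting to $|z|$ bigger than some threshold $R_{0}(B)$ preserves the inequality by a standard continuity argument. Fix now a closed disk $K$ of radius $R_{K} > R_{0}(B)$ large enough to contain $\mathcal{Z}(PQ)$ together with all bounded arcs of the four hyperbolas, and set $\widetilde S = S_{0}\cup K$. For $u\in \widetilde S^{c}$ the ray $r(u)$ then stays in $S_{0}^{c}$ throughout its traversal of the strip, enters $\{|\Im z|> \max(y^{+},|y^{-}|)\}$, and escapes to a direction in the open upper or lower semicircle of $\mathbb{S}^{1}$, all of which lie in the complement of $\widetilde S$ in $\mathbb{C}\cup\mathbb{S}^{1}$. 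Theorem~\ref{thm:AR} identifies $\widetilde S$ as a $T_{CH}$-invariant set containing $\mathcal{Z}(PQ)$, and minimality of $\minvset{CH}$ completes the argument.

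The main technical obstacle is the error-term bookkeeping: one must choose $B$ and $R_{K}$ compatibly so that the invariance inequality survives the $O(|z|^{-2})$ perturbation uniformly across all four angular regions and over the full range of $s$ on every ray, and so that no ray transitioning between the asymptotic hyperbolic region and the auxiliary disk $K$ furtively re-enters $\widetilde S$ across the top or bottom of the strip.
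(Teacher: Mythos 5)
Your proposal is correct and follows essentially the same route as the paper: normalize to $R(z)=-1/z+o(z^{-2})$, show that an associated ray issued from a point strictly above the upper hyperbola stays above it until it leaves the strip $\{\Im z\le y^{+}\}$ (your parametrized inequality along the ray is the same estimate the paper phrases as $\Im R/\Re R$ exceeding the secant slope of $g(t)=\tfrac{y^+}{2}(1+B/|t|)$), absorb the $o(|z|^{-2})$ error by taking $B$ large, and conclude by minimality. The only loose end you flag — the ray's subsequent passage over $|\Re z|\le B$ and across to the opposite hyperbola branches — is exactly what the paper's region $D_{B}$ (the hyperbolic wings glued to the cap $\{\Im z>y^{+},\ |\Re z|\le B\}$, shown to be ray-invariant and hence removable) is designed to dispose of, so your argument closes the same way.
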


\begin{proof}
By Lemma~\ref{lem:qp-1Horizon}  for any $y$ in $J=[y^{-},\frac{y^{-}}{2}[ \cup ]\frac{y^{+}}{2},y^{+}]$, there is a positive constant $A>0$ such that the union of the two semi-infinite horizontal strips characterized by $\Im(z) \in J$ and $|\Re(z)|>A$ is disjoint from $\mathcal{H}_{R}$.
\par
Consider some positive number $B>A$ and introduce the domain $D_{B}$ characterized by the  inequalities:
\begin{itemize}
    \item $\Im(z) > y^{+}$ if $\Re(z) \in [-B,B]$;
    \item $\Im(z) > g(t)$ where $g(t)=\frac{y^{+}}{2}\frac{|t|+B}{|t|}$ if $t = \Re(z) \notin [-B,B]$.
\end{itemize}
For any point $z$ such that $\Im(z) > y^{+}$, the associated ray $r(z)$ remains in $D_{B}$. Now we assume that $z=t+iy$ satisfies the conditions 
$$|t|>B\quad \text{and} \quad \frac{y^{+}}{2}\frac{|t|+B}{|t|}<|y| \leq y^{+}.$$ Without loss of generality, we  assume that $t<-B$. 
\par
In order to prove that the associated ray $r(z)$ remains in $D_{B}$, we  have to show that  for any $t<-B$ and any $s \in [t,-B]$, we have
$$\frac{\Im(R(z))}{\Re(R(z))} > \frac{g(s)-g(t)}{s-t}.$$
\par
Since $\frac{g(s)-g(t)}{s-t} \leq \frac{By^{+}}{2st} \leq -\frac{y^{+}}{2t}$, we just have to prove that
$$\frac{\Im(R(z))}{\Re(R(z))} > -\frac{y^{+}}{2t}.$$
\par
In our case $\Re(R(z))=-\frac{1}{t}+o(t^{-2})$ and $\Im(R(z))=\frac{y}{t^{2}}+o(t^{-3})$ imply that 
$$\frac{\Im(R(z))}{\Re(R(z))}=-\frac{y}{t}+o(t^{-2}).$$
Since $y-\frac{y^{+}}{2}>\frac{y^+B}{2|t|}>0$, the inequality holds provided $B$ is large enough.
\par
By replacing $y^{+}$ by $y^{-}$, we get an analogous result for the lower part of the complement to $\minvset{CH}$.
\end{proof}

\subsubsection{Examples}

Consider a family of operators of the form $T_{\alpha}=Q(z)\frac{d}{dz}+P(z)$ where $Q(z)=(z-\alpha)^{k}$ and $P(z)=z(\alpha-z)^{k}$ with the  common root $\alpha \in \mathbb{C}$ of degree $k \in \mathbb{N}^{\ast}$.
\par
The family $T_{\alpha}$ provides a rich assortment of examples. We have $R(z)=-\frac{1}{z}$. The special line is the real axis $\mathbb{R}$ which coincides with the horizontal locus $\mathcal{H}_{R}$. Besides, the integral curves of $R(z)\partial_{z}$ are hyperbolas (level sets of  $xy$).

\begin{proposition}
If $\alpha \in \mathbb{R}$, then the minimal set $\minvset{CH}$ of operator $T_{\alpha}$ coincides with the real axis $\mathbb{R}$.
\end{proposition}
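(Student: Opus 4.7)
The plan is to verify the two inclusions $\minvset{CH}\subseteq \mathbb{R}$ and $\mathbb{R}\subseteq \minvset{CH}$ separately, the former from Theorem~\ref{thm:AR} and the latter from Corollary~\ref{cor:qp-1Horizon}.

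First I will check that $\mathbb{R}$ itself is $T_{CH}$-invariant by verifying the two conditions of Theorem~\ref{thm:AR}. The roots of $P(z)=z(\alpha-z)^{k}$ and $Q(z)=(z-\alpha)^{k}$ are contained in $\{0,\alpha\}\subset\mathbb{R}$ since $\alpha\in\mathbb{R}$, handling condition~(1). For condition~(2), given $z=x+iy$ with $y\neq 0$, I use $R(z)=-1/z$ to compute
\[
\Im R(z)=\frac{y}{|z|^{2}},
\qquad
\Im\bigl(z+tR(z)\bigr)=y\Bigl(1+\frac{t}{|z|^{2}}\Bigr),
\]
so for every $t\geq 0$ the imaginary part of any point of the associated ray $r(z)$ has the same strict sign as $y$. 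Hence $r(z)\cap\mathbb{R}=\emptyset$, Theorem~\ref{thm:AR} applies, and minimality yields $\minvset{CH}\subseteq \mathbb{R}$.

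For the reverse inclusion I will use that $\mathcal{H}_{R}$ is forced to lie in $\minvset{CH}$ by Corollary~\ref{cor:qp-1Horizon}. Since $\lambda=q_{\infty}/p_{\infty}$ is real in this family, $\mathcal{H}_{R}$ coincides with the closure of the set of nonsingular points at which $R(z)$ takes real values; and $R(z)=-1/z$ is real precisely when $z\in\mathbb{R}$. Thus $\mathcal{H}_{R}=\mathbb{R}$, which gives $\mathbb{R}\subseteq \minvset{CH}$.

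Combining the two inclusions yields $\minvset{CH}=\mathbb{R}$. There is no serious obstacle: both directions follow in essentially one step once the relevant prior results from the paper are invoked, and the only computation is the elementary sign check for $\Im(z+tR(z))$ that forces the associated ray to stay in the open half-plane of $z$. Conceptually, the invariance of $\mathbb{R}$ is a consequence of the fact that $R$ is a real rational function with real critical set, so the real axis is an integral line of the rational vector field $R(z)\partial_{z}$ along which Theorem~\ref{thm:AR} degenerates to the obvious statement.
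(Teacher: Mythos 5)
Your proof is correct, but it reaches the inclusion $\minvset{CH}\subseteq\mathbb{R}$ by a different route than the paper. The paper's proof is a one-line appeal to Proposition~\ref{prop:qp-1Convex} (the convex hull of $\minvset{CH}$ is the smallest horizontal strip containing $\mathcal{H}_{R}\cup\mathcal{Z}(PQ)$, which here degenerates to the line $\mathbb{R}$) together with Proposition~\ref{prop:NOFINGER}; you instead verify directly, via Theorem~\ref{thm:AR} and the explicit computation $\Im\bigl(z+tR(z)\bigr)=y\bigl(1+t/|z|^{2}\bigr)$, that $\mathbb{R}$ is itself a $T_{CH}$-invariant set, so minimality gives the upper bound without invoking any of the asymptotic machinery of Section~6.2. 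For the reverse inclusion both arguments rest on Corollary~\ref{cor:qp-1Horizon}, i.e.\ $\mathcal{H}_{R}\subset\minvset{CH}$. Your version is more elementary and self-contained, and it makes the connectedness statement of Proposition~\ref{prop:NOFINGER} unnecessary, since you obtain both inclusions outright; the paper's version is shorter given the general results already proved. One small imprecision: the identification of $\mathcal{H}_{R}$ with the real locus of $R$ is not a consequence of $\lambda$ being real alone — the defining condition $\sigma(z)=\tfrac{\arg(\lambda)\pm\pi}{2}$ gives the real locus only because $\arg(\lambda)=\pi$ here (i.e.\ $\lambda=-1$ after the paper's normalization); for $\lambda>0$ it would instead give the locus where $R$ is purely imaginary. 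In the present family $\lambda=-1$, so your conclusion $\mathcal{H}_{R}=\mathbb{R}$ stands.
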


\begin{proof}
This follows immediately from Proposition~\ref{prop:qp-1Convex} and Proposition~\ref{prop:NOFINGER}.
\end{proof}

If $\alpha$ does not belong to the real axis, we get different pictures depending on whether or not $\alpha$ belongs to the imaginary axis. Without loss of generality, we will assume that $Im(\alpha)>0$.

\begin{proposition}
If $\alpha$ is of the form $y_{0}i$ with $y_{0}>0$, then the minimal set $\minvset{CH}$  is the union of the segment $[\frac{y_{0}}{2}i,y_{0}i]$ with the horizontal strip formed by points $z$ satisfying $0 \leq Im(z) \leq \frac{y_{0}}{2}$.
\end{proposition}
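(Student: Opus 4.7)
Denote the claimed set by
\[
S := [\tfrac{y_0}{2}i,\, y_0 i] \;\cup\; \{z \in \bC : 0 \leq \Im z \leq \tfrac{y_0}{2}\}.
\]
The plan is to prove two inclusions: $S \subseteq \minvset{CH}$ (so the minimal set is at least this big), and that $S$ is $T_{CH}$-invariant (so by minimality $\minvset{CH} \subseteq S$). Throughout I use $R(z) = -1/z$, noting that $\mathcal{Z}(PQ) = \{0, y_0 i\}$ and that the horizontal locus satisfies $\mathcal{H}_R = \R$. By Corollary~\ref{cor:qp-1Horizon} we already have $\R \subseteq \minvset{CH}$, and by Theorem~\ref{thm:AR} both $0$ and $y_0 i$ lie in $\minvset{CH}$.

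For $S \subseteq \minvset{CH}$ I would use root trails (Section~\ref{sub:roottrail}). Since $z + tR(z) = u$ with $R(z) = -1/z$ gives $t = z(z-u)$, we have $\mathfrak{tr}_u = \{z : z(z-u) \in \R_{\geq 0}\}$. Applied to $u = y_0 i$, the imaginary part of $z(z - y_0 i)$ factors as $x(2y - y_0)$, and a real-part sign check identifies
\[
\mathfrak{tr}_{y_0 i} \;=\; [0, y_0 i] \;\cup\; \{z : \Im z = \tfrac{y_0}{2}\},
\]
which lies in $\minvset{CH}$; in particular the tail $[\tfrac{y_0}{2}i, y_0 i]$ and the whole horizontal line at height $y_0/2$ are captured. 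Next, for each $u = s + i\tfrac{y_0}{2}$ on this line (also in $\minvset{CH}$), $\mathfrak{tr}_u$ is the rational curve $y = \tfrac{y_0}{2} \cdot \tfrac{x}{2x-s}$ restricted by $x(x-s) \geq 0$. Given any target $(x_1, y_1)$ with $x_1 \neq 0$ and $0 < y_1 < y_0/2$, taking $s = x_1(4y_1 - y_0)/(2y_1)$ yields $x_1(x_1 - s) = x_1^2(y_0 - 2y_1)/(2y_1) > 0$, so the target lies in $\mathfrak{tr}_u \subseteq \minvset{CH}$. The imaginary-axis segment below $\tfrac{y_0}{2}i$ is likewise captured by $\mathfrak{tr}_{iy_0/2}$. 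Combining with $\R \subseteq \minvset{CH}$, the full strip $\{0 \leq \Im z \leq y_0/2\}$ lies in $\minvset{CH}$, hence $S \subseteq \minvset{CH}$.

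For the invariance of $S$ I verify the conditions of Theorem~\ref{thm:AR}: $\mathcal{Z}(PQ) \subseteq S$ is clear, so it remains to check $r(z) \cap S = \emptyset$ for $z \notin S$. Since $\Im R(z) = \Im z / |z|^2$, the cases $\Im z < 0$ and $\Im z > y_0$ are immediate, as $r(z)$ then stays strictly below $\R$ or strictly above $\{\Im z = y_0\}$. The substantial case is $y_0/2 < \Im z \leq y_0$ with $\Re z \neq 0$; parametrising
\[
r(z) = \bigl\{(\Re z)(1 - t/|z|^2) + i(\Im z)(1 + t/|z|^2) : t \geq 0\bigr\},
\]
the only intersection with the imaginary axis occurs at $t = |z|^2$, at height $2\Im z > y_0$, which lies strictly above the tail; at intermediate heights in $[y_0/2, y_0]$ the real-part coordinate is nonzero, so $r(z)$ avoids the tail, and $r(z) \cap S = \emptyset$ in all cases. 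Together with $S \subseteq \minvset{CH}$, minimality gives $\minvset{CH} = S$. The main obstacle is the sweeping argument showing that the root trails of points on $\{\Im z = y_0/2\}$ cover the whole strip below; the invariance check and the root-trail computation at $y_0 i$ reduce to routine sign analysis with $R(z) = -1/z$.
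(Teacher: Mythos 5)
Your proof is correct and follows essentially the same strategy as the paper: establish $S\subseteq\minvset{CH}$ by tracing associated rays back to points already known to lie in the minimal set, then verify invariance of $S$ via Theorem~\ref{thm:AR} using the explicit parametrization of $r(z)$ for $R(z)=-1/z$. The only cosmetic difference is that you sweep the strip with root trails of points on the line $\Im z = y_0/2$, whereas the paper directly checks that the ray of each strip point crosses the segment $[0,y_0 i]$ — these are the same fact read in opposite directions.
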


\begin{proof}
From Proposition~\ref{prop:qp-1Convex} it follows immediately  that the convex hull of $\minvset{CH}$ is contained in the strip bounded by $\mathbb{R}$ and the horizontal line  $Im(z)=Im(y_{0})$. For any point of segment $[0,y_{0}i]$, the associated ray contains $\alpha$ so $[0,y_{0}i] \subset \minvset{CH}$.
\par
For any point  of the horizontal strip given by the inequalities $0 \leq Im(z) \leq \frac{y_{0}}{2}$, a simple  computation proves that its associated ray intersects the segment $[0,y_{0}i]$.
\par
Finally, for any point $z$ such that $Im(z)>\frac{y_{0}}{2}$ and $Re(z) \neq 0$, the associated ray is disjoint from the segment $[0,y_{0}i]$. This completely characterizes the minimal set.
\end{proof}

The latter case provides an example of a partially irregular minimal set whose irregularity locus is contained in a $R$-invariant line (the imaginary axis in this case).
\par
In the general case, the boundary of $\minvset{CH}$ is more complicated. Up to conjugation, we can restrict us to the case when $\Re(\alpha),\Im(\alpha)>0$.

\begin{proposition}
If $\alpha$ is of the form $x_{0}+y_{0}i$ with $x_{0},y_{0}>0$, then the minimal set $\minvset{CH}$ of  $T_{\alpha}$ is bounded by the following arcs:
\begin{itemize}
    \item the real $\mathbb{R}$-axis ;
    \item global arc $(t,f_{1}(t))$ where $f_{1}(t)=\frac{y_{0}t}{2t-x_{0}}$ for $t \in [x_{0},+\infty[$;
    \item local arc $(t,f_{2}(t))$ where $f_{2}(t)=\frac{x_{0}y_{0}}{t}$ for $t \in [x_{0},x_{e}]$;
    \item global arc $(t,f_{3}(t)$ where $f_{3}(t)=\frac{x_{0}y_{0}t}{(2\sqrt{x_{0}t}+x_{0})^{2}}$ for $t \in [0,x_{e}]$;
    \item global arc $(t,f_{4}(t))$ where $f_{4}(t)=\frac{y_{0}t}{2t-x_{0}}$ for $t \in ]-\infty,0]$.
\end{itemize}
Here, $(x_{e},y_{e})$ is a point of extruding type. Its coordinates are $x_{e}=(3+2\sqrt{2})x_{0}$ and $y_{e}=\frac{y_{0}}{3+2\sqrt{2}}$.
\end{proposition}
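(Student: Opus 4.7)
The plan is to produce each of the five boundary arcs explicitly from the formula $R(z) = -1/z$, verify via Theorem~\ref{thm:AR} that the closed region they enclose is $T_{CH}$-invariant, and conclude by minimality together with forced inclusions. The integral curves of $R(z)\partial_z$ are the hyperbolae $xy = c$, the curve of inflections is $\mathfrak{I}_R = \{xy = 0\}$, and the associated ray at $z = a + iv$ is the line $x/a + y/v = 2$.

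The arcs $f_1$, $f_4$ and the real axis emerge from root trails. Computing $\mathfrak{tr}_\alpha$ amounts to solving $z(z - \alpha) \in \mathbb{R}_{\geq 0}$: the imaginary-part equation is $y(2x - x_0) = xy_0$ and the real-part sign condition simplifies to $x(x - x_0) \geq 0$, producing exactly $f_1$ on $x \geq x_0$ and $f_4$ on $x \leq 0$ in the upper half-plane. Similarly $\mathfrak{tr}_0 = \mathbb{R}$. Since $\{0, \alpha\} \subseteq \mathcal{Z}(PQ) \subseteq \minvset{CH}$, Theorem~\ref{thm:AR} forces $\mathbb{R} \cup f_1 \cup f_4 \subseteq \minvset{CH}$. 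Proposition~\ref{prop:IntegralCurve} applied at $\alpha$ then adds the backward trajectory of $R(z)\partial_z$ through $\alpha$, which is the branch $x \geq x_0$ of the hyperbola $xy = x_0 y_0$ and contains $f_2$.

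For the global arc $f_3$, I would identify it as the locus of points whose associated rays are tangent to $f_4$: substituting the ray line into $y = xy_0/(2x - x_0)$ yields the quadratic $2vx^2 + (ay_0 - vx_0 - 4av)x + 2avx_0 = 0$, whose discriminant vanishes exactly when $ay_0 = v(\sqrt{x_0} \pm 2\sqrt{a})^2$. The branch with double root $x = -\sqrt{x_0 a}$ on $f_4$ gives $f_3(a) = a x_0 y_0/(2\sqrt{x_0 a} + x_0)^2$; since $f_4 \subseteq \minvset{CH}$, the root trail of the tangent point carries the germ of $f_3$, so $f_3 \subseteq \minvset{CH}$. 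Equating $f_3(a) = f_2(a) = x_0 y_0 / a$ reduces to $\sqrt{a/x_0} = 1 + \sqrt{2}$, placing the extruding endpoint at $(x_e, y_e) = ((3+2\sqrt{2})x_0, y_0/(3+2\sqrt{2}))$, consistent with the characterization of extruding type in Proposition~\ref{prop:extruding}.

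To complete the proof I would assemble the enclosed region $S$ (bounded below by $\mathbb{R}$ and above by the concatenation $f_4 \to f_3 \to f_2 \to f_1$, where $f_2$ is traversed backwards from $(x_e, y_e)$ to $\alpha$ producing a cusp at $\alpha$ as in Corollary~\ref{cor:CONVEXCORNER}) and verify via Theorem~\ref{thm:AR} that $S$ is $T_{CH}$-invariant. Checking $r(z) \cap S = \emptyset$ for $z \notin S$ splits into three zones: the lower half-plane (immediate since $\Im R(z) < 0$ pushes rays down), the outer complement lying above $f_4 \cup f_3 \cup f_1$ (handled by local convexity of these arcs), and the gap $\{f_3(x) < y < f_2(x),\ x_0 < x < x_e\}$. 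The gap is the main obstacle: it is a finger of $S^c$ trapped between the tongue $\{f_2(x) < y < f_1(x)\} \subseteq S$ above and the main body of $S$ below. One must show that rays from the gap exit leftwards near $x = x_0$ into the outer part of $S^c$ without crossing into the tongue; this requires a slope-monotonicity argument exploiting that rays from $f_3$ are tangent to $f_4$ while rays from $f_2$ are tangent to the hyperbola itself, together with the matching-tangent behaviour of $f_1$ and $f_2$ at $\alpha$. Once invariance is established, the forced inclusions from paragraphs one and two give $\partial S \subseteq \minvset{CH}$, so $\minvset{CH} = S$.
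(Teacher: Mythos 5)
Your proposal follows essentially the same route as the paper: you obtain $f_{1}$, $f_{4}$ and $\mathbb{R}$ from the root trails of $\alpha$ and $0$, $f_{2}$ from the backward trajectory through $\alpha$ via Proposition~\ref{prop:IntegralCurve}, $f_{3}$ as the locus of points whose associated rays are tangent to the branch $f_{4}$ (your discriminant computation is equivalent to the paper's slope-matching and yields the same formula and the same extruding point), and you finish by checking invariance of the enclosed region. If anything, your final step is more explicit than the paper's, which dismisses the verification that rays from the complement --- in particular from the gap between $f_{3}$ and $f_{2}$ --- miss the set as ``geometrically clear''.
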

\begin{proof}
The convex hull of $\minvset{CH}$ is contained in the strip bounded by $\mathbb{R}$ and the horizontal line $Im(z)=Im(y_0)$, see Proposition~\ref{prop:qp-1Convex}. The arcs $(t,f_{1}(t))$ and $(t,f_{4}(t))$ are characterized by the fact that the associated rays starting from their points contain $x_{0}+iy_{0}$ (this can be checked by a direct computation). In particular, they belong to two distinct branches of the same hyperbola. Besides, the domain $\mathcal{D}$ between $\mathbb{R}^{-}$ and arc $(t,f_{4}(t))$ is automatically contained in $\minvset{CH}$.
\par
Following Proposition~\ref{prop:IntegralCurve}, the backward trajectory  of the vector field $R(z)\partial_{z}$ starting at $x_{0}+y_{0}i$ is contained in $\minvset{CH}$. The domain between this portion of the integral curve and the arc $(t,f_{1}(t))$ is also contained in $\minvset{CH}$.
\par
We denote by $\mathcal{D}'$ the domain in the open right upper quadrant where the associated ray intersects the domain $\mathcal{D}$. At each point $(t,\gamma(t))$ of the upper boundary of $\mathcal{D}'$, the associated ray is tangent to the branch of hyperbola $(s,f_{4}(s))$ for some $s \leq 0$. Since $R(z)=-\frac{1}{z}$, the argument of $t+i\gamma(t)$ equals the negative of the slope of $(s,f_{4}(s))$ at $s$. Since  $\frac{df_{4}}{ds}(s)=-\frac{x_{0}y_{0}}{(2s-x_{0})^{2}}$, we get 
$$
\frac{\gamma(t)}{t}
=
\frac{x_{0}y_{0}}{(2s-x_{0})^{2}}
.
$$
Since the tangent line has to intersect the imaginary axis at  $2\gamma(t)i$, we obtain the following equation:
$$
\frac{f_{4}(s)-2\gamma(t)}{s}
=
-\frac{x_{0}y_{0}}{(2s-x_{0})^{2}}
.
$$
Replacing $\gamma(t)$ by $\frac{x_{0}y_{0}t}{(2s-z_{0})^{2}}$, we get $t= \frac{s^{2}}{x_{0}}$. 
\par
Since $s$ is the negative square root of $x_{0}t$, we deduce that $\gamma(t)=\frac{x_{0}y_{0}t}{(2\sqrt{x_{0}t}+x_{0})^{2}}$. In particular, for $s=-x_{0}$, we get $t=x_{0}$ and $\gamma(x_{0})=\frac{y_{0}}{9}$.
\par
The arc $\gamma$ and the backward trajectory starting at $x_{0}+iy_{0}$ (which is a branch of hyperbola) intersect each other at some point $x_{e}+iy_{e}$. From a computation, we obtain $x_{e}=(3+2\sqrt{2})x_{0}$ and therefore $y_{e}=\frac{y_{0}}{3+2\sqrt{2}}$.
\par
It is then geometrically clear that for any point $z$ above the curve formed by arcs defined by functions $f_{1},f_{2},f_{3},f_{4}$, the associated ray cannot intersect any of these arcs.
\end{proof}

The latter example provides an illustration of a point of extruding type. Since the boundary arcs are explicit algebraic curves, we can obtain the exact picture shown in Figure~\ref{fig:minimalAlpha}.

\begin{figure}[!ht]
    \centering
    \includegraphics[width=0.9\textwidth]{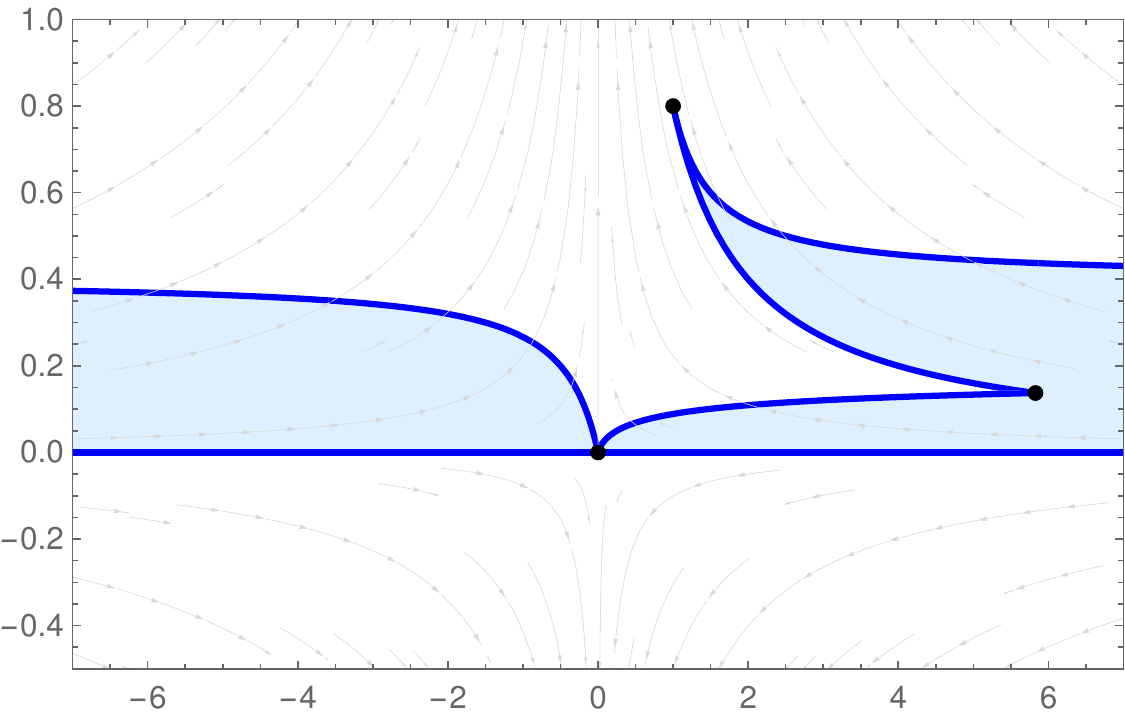}
    \caption{The case when $\alpha=1+0.8i$.}
    \label{fig:minimalAlpha}
\end{figure}

\subsection{Case $\deg Q-\deg P=0$}
For the sake of simplicity, we normalize the vector field $R(z)\partial_{z}$  by an affine change of variable so that  $R(z)=1+\frac{\mu}{z^{\kappa}}+o(z^{-\kappa-1})$ for some $\mu \in \mathbb{C^{\ast}}$ and $\kappa \geq 1$. The case of a constant vector field is already treated in Section~2.3 of \cite{AHN+24}.

Under the assumptions $\Im(\mu) \neq 0$ and $\kappa = 1$, we are going to prove that the minimal set $\minvset{CH}$ is  connected. Firstly we show that $\minvset{CH}$ is regular and disjoint from the curve of inflections $\mathfrak{I}_{R}$ outside a compact set.

\begin{lemma}\label{lem:qp0CONE}
Assuming that $\Im(\mu)(-1)^{\kappa} > 0$, there is a cone $\mathcal{C}$ and a compact set $K$ such that:
\begin{itemize}
    \item for any $z \in \mathcal{C}$, $\Im(R(z))>0$ and $\Im(R'(z))>0$;
    \item $\minvset{CH} \subset \mathcal{C} \cup K$.
\end{itemize}
Besides, $\minvset{CH}$ is a regular subset of $\mathbb{C}$.
\end{lemma}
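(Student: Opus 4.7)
\textit{Plan.} The main idea is to combine the asymptotic localization of $\minvset{CH}$ from Proposition~\ref{prop:qp0} with a direct asymptotic computation of $\Im R$ and $\Im R'$ in a small angular sector around the negative real direction. In the normalized setting $R(z)=1+\mu/z^{\kappa}+o(z^{-\kappa-1})$ we have $\lambda=1$ and $\phi_{\infty}=0$, so Proposition~\ref{prop:qp0} places $\minvset{CH}$ outside a large compact set inside a cone of arbitrarily small opening about direction $\pi$. It therefore suffices to verify that the desired positivity of $\Im R$ and $\Im R'$ holds in such a cone (for $|z|$ large enough), after which the required cone $\mathcal{C}$ and compact set $K$ are obtained by intersection.

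For the asymptotic computation, I would write $z=re^{i\theta}$ and expand
$$\Im R(z)=r^{-\kappa}\Im(\mu e^{-i\kappa\theta})+o(r^{-\kappa-1}),\qquad \Im R'(z)=-\kappa r^{-\kappa-1}\Im(\mu e^{-i(\kappa+1)\theta})+o(r^{-\kappa-2}).$$
Evaluating at $\theta=\pi$ gives $\Im R(z)=(-1)^{\kappa}\Im(\mu)\,r^{-\kappa}+\cdots$ and $\Im R'(z)=(-1)^{\kappa}\kappa\,\Im(\mu)\,r^{-\kappa-1}+\cdots$, both strictly positive by the hypothesis $(-1)^{\kappa}\Im(\mu)>0$. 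Continuity in $\theta$ and the decay rates then give an explicit bound $\delta_0>0$ and $r_0>0$ such that $\Im R$ and $\Im R'$ are both positive on the sector $\{z:|\arg z-\pi|<\delta_0,\ |z|>r_0\}$. Shrinking $\delta_0$ if necessary so that Proposition~\ref{prop:qp0} guarantees $\minvset{CH}\setminus\overline{B(0,r_0)}$ is contained in this sector, we take $\mathcal{C}$ to be the sector and $K=\minvset{CH}\cap\overline{B(0,r_0)}$.

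For regularity, by Theorem~\ref{thm:TAIL} the irregular locus of $\minvset{CH}$ is a finite union of tails contained in $R$-invariant lines (Definition~\ref{def:Rinvariant}), so it suffices to rule out any $R$-invariant line under the hypothesis. Any $R$-invariant line has some direction $e^{i\theta}$, and $R(z)\in e^{i\theta}\R$ must hold along it; since $R(z)\to 1$ at infinity along this line, we must have $1\in e^{i\theta}\R$, forcing $\theta\in\{0,\pi\}$, i.e.\ the line is horizontal. Restricting the expansion $R(z)=1+\mu/z^{\kappa}+o(z^{-\kappa-1})$ to a horizontal line $z=t+ic$ and requiring the leading subcorrection $\mu/(t+ic)^{\kappa}$ to take real values as $t\to\infty$ forces $\Im(\mu)=0$, contradicting $(-1)^{\kappa}\Im(\mu)>0$. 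Thus no $R$-invariant lines exist, no tails can occur, and together with nonemptiness of the interior (Theorem~\ref{thm:irregular}, all irregular cases being excluded by $\Im(\mu)\neq 0$ and $\deg Q=\deg P$), the set $\minvset{CH}$ is regular.

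The only subtlety I foresee is bookkeeping: making sure the single cone $\mathcal{C}$ can be chosen to contain $\minvset{CH}\setminus K$ simultaneously with the positivity condition, which requires shrinking the cone of Proposition~\ref{prop:qp0} but also pushing its apex far enough to the left that the asymptotic expansion dominates the error terms uniformly. This is a routine matching of two smallness parameters rather than a genuine difficulty.
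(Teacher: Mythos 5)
Your proposal is correct and follows essentially the same route as the paper: Proposition~\ref{prop:qp0} to confine $\minvset{CH}$ to a narrow cone about direction $\pi$, the asymptotic expansion of $R$ and $R'$ to get positivity of $\Im R$ and $\Im R'$ there, and the absence of $R$-invariant lines (hence of tails, via Theorem~\ref{thm:TAIL}) to conclude regularity. The only cosmetic differences are that the paper phrases the positivity step via the non-horizontal asymptotic directions of the curves $\Im(R)=0$ and $\Im(R')=0$ rather than your explicit polar-sector estimate, and rules out horizontal $R$-invariant lines by observing they would meet the cone $\mathcal{C}$ where $\Im R>0$, rather than by your direct expansion forcing $\Im(\mu)=0$.
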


\begin{proof}
Computing $R(t)$ and $R'(t)$ for a negative real number $t$, we obtain that $R(t)=1+\frac{\mu}{t^{\kappa}}+o(t^{-\kappa-1})$ and thus $\Im(R(t)) \sim \Im(\mu) t^{-\kappa}$. The sign of the former is thus the sign of $\Im(\mu)(-1)^{\kappa}$. Similarly we obtain that it is also the sign of $\Im(R'(t))$ for $t$ close enough to infinity and negative.
\par
It follows from Proposition~\ref{prop:qp0} that $\minvset{CH}$ is contained in an infinite cone $\mathcal{C}_{0}$ whose asymptotic directions are $]\pi-\epsilon,\pi+\epsilon[$ for some $\epsilon \in ]0,\frac{\pi}{2}[$. Besides, since the asymptotic directions of infinite branches of the algebraic curves defined by equations $\Im(R)=0$ and $\Im(R')=0$ are not horizontal, $\mathcal{C}_{0}$ and thus $\minvset{CH}$ are covered by the union of a cone $\mathcal{C}$ and a compact set $K$ such that for any $z \in \mathcal{C}$, $\Im(R(z))>0$ and $\Im(R'(z))>0$.
\par
Any $R$-invariant line (see Definition~\ref{def:Rinvariant}) has to be horizontal and therefore it intersects the cone $\mathcal{C}$. Thus some points of any $R$-invariant line $\Lambda$ have the associated rays that are not contained in $\Lambda$. Therefore, there are no $R$-invariant lines for such a vector field $R(z)\partial_{z}$. The minimal set $\minvset{CH}$ has hence no tails and Theorem~\ref{thm:main} guarantees that $\minvset{CH}$ is regular.
\end{proof}

\begin{corollary}\label{cor:qp0ASYMPTOLINE}
Assuming that $\Im(\mu)(-1)^{\kappa} > 0$, consider a sequence $(\alpha_{n})_{n \in \mathbb{N}}$ of points of $\partial \minvset{CH}$ such that $|\alpha_{n}| \rightarrow + \infty$ and $\Delta(\alpha_{n}) \neq \emptyset$ for any $n \in \mathbb{N}$.
Then there exist a subsequence $(\alpha_{f(n)})_{n \in \mathbb{N}}$ and a line $\mathcal{L}_{y_{0}}$ given by $\Im(z)=y_{0}$
such that:
\begin{itemize}
    \item the line $\mathcal{L}_{y_{0}}$ is disjoint from the interior of $\minvset{CH}$;
    \item the line $\mathcal{L}_{y_{0}}$ contains a point of $\partial \minvset{CH}$; 
    \item $\Re (\alpha_{f(n)}) \rightarrow - \infty$;
    \item $\Im (\alpha_{f(n)}) \leq y_{0}$ for any $n \in \mathbb{N}$.
\end{itemize}
\end{corollary}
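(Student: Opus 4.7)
My strategy is to use the fact that each $\alpha_n$ comes with an indirect support line (coming from $\Delta(\alpha_n)$), take a horizontal limit of these support lines, and identify the limit with the desired $\mathcal{L}_{y_0}$.

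\textbf{Setup.}  By Lemma~\ref{lem:qp0CONE}, $\minvset{CH}\subset \mathcal{C}\cup K$, where $\mathcal{C}$ is a narrow cone with asymptotic direction $\pi$ and $K$ is compact; moreover $\mathcal{C}\subset\mathfrak{I}^+$. Since $|\alpha_n|\to\infty$, for large $n$ one has $\alpha_n\in \mathcal{C}\subset\mathfrak{E}^+$, and in particular $\Re(\alpha_n)\to -\infty$ automatically along any subsequence. Pick $u_n\in\Delta(\alpha_n)$; by Lemma~\ref{lem:directsupport} $u_n$ is an indirect support point, so the line $\ell_n$ extending the ray $r(\alpha_n)$ is a support line of $\overline{\minvset{CH}}$, with $\overline{\minvset{CH}}$ lying on its ``lower'' side. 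Since the asymptotic direction of $r(\alpha_n)$ is $\arg R(\alpha_n)\to 0$, which does not belong to $\overline{\minvset{CH}}\cap\mathbb{S}^1$ by Proposition~\ref{prop:qp0}, each $u_n$ is actually a finite point of $\partial\minvset{CH}$.

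\textbf{Bounding $\Im(\alpha_n)$ from above.}  Using the expansion $R(z)=1+\mu/z+o(z^{-1})$ one computes that the slope of $\ell_n$ is $s_n\sim \Im(\mu)/\Re(\alpha_n)\to 0$ and the height of $\ell_n$ at $\Re z=0$ is $\Im(\alpha_n)-s_n\Re(\alpha_n)\sim \Im(\alpha_n)-\Im(\mu)$. If $\Im(\alpha_n)$ were unbounded above along a subsequence, the nearly horizontal ray $r(\alpha_n)$ would cross the line $\Re z=0$ at a height exceeding $\sup\{\Im w:w\in K\}$, then continue into $\{\Re z>0\}$, where $\minvset{CH}\subset K$ is compact, without ever re-entering $\overline{\minvset{CH}}$; this would force $\Delta(\alpha_n)=\emptyset$, a contradiction. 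Hence $(\Im\alpha_n)$ is bounded above, and writing $u_n=\alpha_n+t_n R(\alpha_n)$ with $t_n\geq 0$ constrained by $u_n\in\mathcal{C}\cup K$ (so $t_n\lesssim |\Re\alpha_n|$), the same estimate yields that $(\Im u_n)=(\Im\alpha_n+t_n\Im R(\alpha_n))$ is also bounded above.

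\textbf{Taking the horizontal limit and concluding.}  Extract a subsequence (still denoted $\alpha_n$) so that $u_n\to u^*$ in the extended plane $\mathbb{C}\cup\mathbb{S}^1$ and $\Im(u_n)\to y_0\in\mathbb{R}$. Since $s_n\to 0$ and $\Im(u_n)\to y_0$, the lines $\ell_n$ converge to the horizontal line $\mathcal{L}_{y_0}$. Being the limit of support lines with $\overline{\minvset{CH}}$ on the lower side, $\mathcal{L}_{y_0}$ is a horizontal support line with $\overline{\minvset{CH}}\subset\{\Im z\leq y_0\}$; in particular $\mathcal{L}_{y_0}$ is disjoint from the interior of $\minvset{CH}$, and $\Im(\alpha_{f(n)})\leq y_0$ holds automatically since $\alpha_{f(n)}\in\overline{\minvset{CH}}$. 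To see that $\mathcal{L}_{y_0}$ meets $\partial\minvset{CH}$ at a finite point: if $u^*\in\mathbb{C}$ it is itself such a point, otherwise the supremum of $\Im z$ over the compact set $\overline{\minvset{CH}}\subset\mathbb{C}\cup\mathbb{S}^1$ equals $y_0$ and must be attained at a finite point, because every point of $\overline{\minvset{CH}}\cap\mathbb{S}^1$ lies near direction $\pi$ where $\Im z\to\pm\infty$ and cannot supply a finite supremum.

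\textbf{Main obstacle.}  The principal difficulty is the second step, the bound on $\Im(\alpha_n)$ from above: converting the geometric picture ``the ray flies over $\minvset{CH}$ without returning'' into a rigorous argument requires the asymptotic formula $R(z)=1+\mu/z+o(z^{-1})$ together with the cone-plus-compact decomposition $\minvset{CH}\subset\mathcal{C}\cup K$ from Lemma~\ref{lem:qp0CONE}. A secondary subtlety, handled above by compactness of $\overline{\minvset{CH}}$ in the extended plane, is ensuring that $\mathcal{L}_{y_0}$ actually touches $\partial\minvset{CH}$ at a finite point when the chosen support points $u_n$ escape to infinity in direction $\pi$.
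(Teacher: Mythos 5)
There is a genuine gap, and it sits exactly where the paper's one key idea is needed. You establish (modulo some asymptotics) an \emph{upper} bound on $\Im(u_n)$, but you never bound $\Im(u_n)$ from below, so the extraction of a subsequence with $\Im(u_n)\to y_0\in\mathbb{R}$ is unjustified: a priori the support points $u_n\in\Delta(\alpha_n)$ could escape to infinity inside the cone $\mathcal{C}$ of Lemma~\ref{lem:qp0CONE} (which has a genuine opening around the direction $\pi$, so points of $\mathcal{C}$ may have $\Im\to-\infty$, and $\Im(u_n)\ge\Im(\alpha_n)$ gives nothing if $\Im(\alpha_n)\to-\infty$, which the hypotheses do not exclude). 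The same defect undermines your ``fly over $K$'' step: a nearly horizontal ray issued from $\alpha_n$ deep inside $\mathcal{C}$ can perfectly well meet $\overline{\minvset{CH}}$ again at a point that is still inside $\mathcal{C}$, far in the upper-left part of the cone, before it ever reaches $\{\Re z=0\}$; nothing in your argument rules this out. The paper closes both holes with a single observation you do not use: by Lemma~\ref{lem:EchangeInflectionDelta}, since $\alpha_n\in\mathcal{C}\subset\mathfrak{E}^{+}$, every point of $\Delta(\alpha_n)$ lies in $\mathfrak{I}^{-}\cup\mathfrak{I}_{R}\cup\mathcal{Z}(PQ)$, and since $\Im(R')>0$ on all of $\mathcal{C}$, these sets are disjoint from $\mathcal{C}$; hence $\Delta(\alpha_n)\subset K$, a fixed compact set. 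This immediately yields a finite accumulation point $z_0$, the line $\mathcal{L}_{y_0}$ through it, and (since $\Re(\alpha_n)\to-\infty$ while the rays pass through points converging to $z_0$) the accumulation of the rays $r(\alpha_n)$ onto all of $\mathcal{L}_{y_0}$, whence disjointness from $(\minvset{CH})^{\circ}$.

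A secondary but real error: your conclusion ``$\mathcal{L}_{y_0}$ is a support line with $\overline{\minvset{CH}}\subset\{\Im z\le y_0\}$'' overreaches. Being an indirect support point is a \emph{local} statement (Definition of direct/indirect support in Section~\ref{sub:support}); neither $\ell_n$ nor its limit is known to have all of $\overline{\minvset{CH}}$ on one side. Indeed the whole point of the subsequent Corollary~\ref{cor:qp0ONEFINGER} is to rule out components of $\minvset{CH}$ lying \emph{above} such a line, so your global containment would trivialize (and is not available at this stage). The inequality $\Im(\alpha_{f(n)})\le y_0$ should instead be derived as in the paper, from $\Im(R(\alpha_n))>0$ on $\mathcal{C}$: the ray climbs from $\alpha_n$ to its support point, so $\Im(\alpha_n)\le\Im(u_n)$, and one passes to a subsequence for which $\Im(u_n)$ approaches $y_0$ appropriately.
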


\begin{proof}
Up to taking a subsequence, we can also assume that every $a_{n}$ belongs to the cone $\mathcal{C}$ defined in Lemma~\ref{lem:qp0CONE}. Lemma~\ref{lem:EchangeInflectionDelta} implies that for any $n$, points of $\Delta(\alpha_{f(n)})$ belong to $\mathfrak{I}^{-}$, $\mathfrak{I}_{R}$ or $\mathcal{Z}(PQ)$. Therefore, following Lemma~\ref{lem:qp0CONE},  points of $\Delta(\alpha_{n})$ accumulate in a compact set as $n\to \infty$. We denote by $z_{0}$ one of their accumulation points and by $L_{y_{0}}$ the horizontal line containing $z_{0}$ (here $y_{0} = \Im (z_{0})$).
\par
Thus, up to taking a subsequence of $\alpha$, we get a sequence $(y_{n})_{n \in \mathbb{N}}$ such that $y_{n} \rightarrow y_{0}$ and $y_{n} \in \Delta(\alpha_{n})$ for any $n \in \mathbb{N}$. As $\alpha_{n}$ goes to infinity while $\Delta(\alpha_{n})$ remains in a compact set, the associated rays $r(\alpha_{n})$ accumulate on $L_{y_{0}}$. Thus the line $L_{y_{0}}$ is disjoint from the interior of $\minvset{CH}$. Besides, since $\alpha_{n} \in \mathcal{C}$ for any $n \in \mathbb{N}$, we have $Im(R(\alpha_{n})) >0$ and therefore $\Im(\alpha_{n}) \leq y_{0}$.
\end{proof}

\begin{lemma}\label{lem:qp0kappa1noline}
Provided that $\Im(\mu) \neq 0$ and $\kappa=1$, no integral curve has a horizontal asymptotic line at infinity.
\end{lemma}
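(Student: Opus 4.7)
The plan is to argue by contradiction: assuming an integral curve of $R(z)\partial_z$ has a horizontal asymptote $\{\Im z = y_0\}$ at infinity, I will reparametrize it by the real part $x$ and extract a logarithmic divergence of $\Im z(x)$ from the first-order term of $R$ at infinity, which is impossible.

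More precisely, suppose for contradiction that a maximal integral curve $\gamma(t)=x(t)+iy(t)$ satisfies $y(t)\to y_0$ and, say, $x(t)\to -\infty$ along some end (the case $x\to +\infty$ is symmetric). Because $\kappa=1$ the expansion $R(z)=1+\mu/z + O(|z|^{-2})$ gives $\Re R(z)\to 1$ uniformly in any horizontal strip as $|\Re z|\to\infty$. Therefore $\dot x(t)\to 1$, so for $t$ sufficiently negative we may reparametrize $\gamma$ by $x$ on an interval $(-\infty,x_0]$ and write $y=y(x)$ with $y(x)\to y_0$ as $x\to-\infty$. Along $\gamma$ we have
\[
\frac{dy}{dx}=\frac{\Im R(x+iy)}{\Re R(x+iy)}.
\]

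Writing $\mu=a+ib$ with $b=\Im(\mu)\neq 0$, and using $z=x+iy$ with $y$ bounded as $x\to-\infty$, a direct computation yields
\[
\Im R(z)=\Im\!\left(\frac{\mu}{z}\right)+O(|z|^{-2})=\frac{bx-ay}{x^2+y^2}+O(x^{-2})=\frac{b}{x}+O(x^{-2}),
\]
and $\Re R(z)=1+O(x^{-1})$. Consequently $dy/dx = b/x + O(x^{-2})$ on $(-\infty,x_0]$. Integrating from $x$ to $x_0$ gives
\[
y(x_0)-y(x) = b\bigl(\log|x_0|-\log|x|\bigr) + O(1),
\]
so $y(x) = -b\log|x| + O(1)$ as $x\to -\infty$. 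Since $b\neq 0$ this forces $|y(x)|\to\infty$, contradicting $y(x)\to y_0$.

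The only step that requires a little care is justifying the reparametrization (i.e.\ that $\gamma$ actually reaches $x=-\infty$ in the parameter $x$), but this is immediate from $\Re R\to 1$ in any horizontal strip: $x(t)$ is eventually strictly increasing or decreasing with derivative bounded away from $0$, so it realizes every sufficiently negative value of $\Re z$. Everything else is the elementary Taylor expansion of $R$ at infinity and the observation that the leading nontrivial contribution $b/x$ is exactly the one whose primitive diverges logarithmically, ruling out a horizontal asymptote whenever $\Im(\mu)\neq 0$ and $\kappa=1$.
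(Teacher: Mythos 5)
Your argument is correct and is essentially the paper's own proof: both extract the leading term $\Im(\mu)/x$ of $\Im R$ along the curve and conclude that the imaginary part grows logarithmically, which is incompatible with a horizontal asymptote. (A trivial remark: integrating $b/x$ toward $x\to-\infty$ gives $y(x)=b\log|x|+O(1)$ rather than $-b\log|x|+O(1)$, but the sign is immaterial to the contradiction.)
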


\begin{proof}
Since $R(z)=1+\frac{\mu}{z}+o(z^{-2})$, the integral curve $\gamma(t)$ satisfies $\Re(\gamma(t)) \sim t$ as $t \rightarrow \pm \infty$. Then $Im(\gamma'(t))=\frac{Im(\mu)}{t}+o(t^{-1})$. We obtain that $Im(\gamma(t))$ has logarithmic growth as $t \rightarrow \pm \infty$ and therefore the integral curve has  no asymptotic lines at infinity.
\end{proof}

\begin{corollary}\label{cor:qp0ONEFINGER}
Provided that $\Im(\mu) \neq 0$ and $\kappa=1$, the minimal set $\minvset{CH}$ is connected in $\mathbb{C}$. Besides, $\partial \minvset{CH}$ has exactly two infinite arcs: one is a local arc starting at infinity while the other is a global arc  ending at infinity.
\end{corollary}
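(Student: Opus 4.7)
The plan is to first reduce to the case $\Im(\mu)<0$ via the conjugation $z\mapsto\bar z$, which flips the sign of $\Im(\mu)$ without affecting the structural properties of $\minvset{CH}$. Under this assumption, Lemma~\ref{lem:qp0CONE} applies directly with $\kappa=1$: there is a cone $\mathcal{C}$ around direction $\pi$ and a compact $K$ with $\minvset{CH}\subset\mathcal{C}\cup K$, $\mathcal{C}\subset\mathfrak{I}^+$, and $\mathcal{C}$ disjoint from $\mathfrak{I}_{R}\cup\mathcal{Z}(PQ)$ outside $K$. By Proposition~\ref{prop:boundaryarc}, both local and global arcs on $\partial\minvset{CH}\cap\mathcal{C}$ carry the standard topological orientation of $\partial\minvset{CH}$, and since $R(z)\to 1$ in $\mathcal{C}$, local arcs there move in the direction of increasing $\Re z$.

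Next, I would establish the asymptotic dichotomy between the two kinds of infinite arc. Integrating $\dot z=1+\mu/z+o(|z|^{-1})$ along an integral curve of $R(z)\partial_{z}$ in $\mathcal{C}$ yields $z(t)=t+\mu\log|t|+C+o(1)$, so along an infinite local arc $\Im z\sim \Im(\mu)\log|\Re z|\to -\infty$ as $\Re z\to -\infty$ (since $\Im(\mu)<0$). By contrast, Corollary~\ref{cor:qp0ASYMPTOLINE} gives any infinite global arc a horizontal asymptote $\Im z\to y_{0}\in\R$. Hence infinite local and infinite global arcs are sharply distinguishable at infinity.

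For connectedness, I argue by contradiction. Any bounded component of $\minvset{CH}$ is disjoint from $\mathbb{S}^{1}$ in the extended plane, so its closure is separated there from the closure of the unbounded part, contradicting the connectedness of $\overline{\minvset{CH}}$ in Theorem~\ref{thm:main}. Hence every connected component is unbounded. If $\minvset{CH}$ had two unbounded components $C_{1}\prec C_{2}$ (vertically ordered near $-\infty$), then the upper boundary of $C_{1}$ would be an infinite global arc $\beta_{1}$ with $\Im\beta_{1}(x)\to y_{1}\in\R$, while the lower boundary of $C_{2}$ would be an infinite local arc $\alpha_{2}$ with $\Im\alpha_{2}(x)\to -\infty$. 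The ordering demands $\Im\beta_{1}(x)<\Im\alpha_{2}(x)$ for $|x|$ large negative, but the asymptotics give $\Im\alpha_{2}(x)<y_{1}\approx\Im\beta_{1}(x)$, a contradiction. Hence $\minvset{CH}$ is connected.

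Finally, Proposition~\ref{prop:qp0} forces $\overline{\minvset{CH}}\cap\mathbb{S}^{1}=\{\pi\}$, and combined with contractibility from Theorem~\ref{thm:main} this makes $\partial\minvset{CH}\subset\bC$ a single open curve whose two ends both approach $\pi$. The portion of this curve in $\mathcal{C}\setminus K$ has no singular boundary points, hence consists of exactly two infinite arcs: the lower one, whose standard orientation matches increasing $\Re z$ and is therefore a local arc starting at infinity; and the upper one, whose standard orientation opposes the flow of $R(z)\partial_{z}$ and is therefore a global arc ending at infinity. The main obstacle will be making the vertical ordering of unbounded components rigorous as $\Re z\to -\infty$; this rests on $\overline{\minvset{CH}}\cap\mathbb{S}^{1}=\{\pi\}$, planarity of $\bC$, and the uniform asymptotic control of both local and global arcs in $\mathcal{C}$.
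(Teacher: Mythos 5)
There is a genuine gap in your connectedness argument. After correctly observing that every component is unbounded and attached to the single direction $\pi$ at infinity, you assume that between two vertically adjacent components $C_1\prec C_2$ the facing boundary ends are of \emph{different} types: the upper boundary of $C_1$ an infinite global arc (bounded imaginary part) and the lower boundary of $C_2$ an infinite local arc (imaginary part drifting logarithmically to $-\infty$). The orientation constraint from Proposition~\ref{prop:localconvex} and Lemma~\ref{lem:globalorientation} only tells you which ends \emph{cannot} be local arcs; it does not force the inner-facing end of $C_2$ to be local. Both facing ends can perfectly well be global arcs — indeed the paper shows that for every non-extremal component both ends \emph{must} be global — and in that case both have bounded imaginary part near infinity and your asymptotic comparison produces no contradiction. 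This is exactly the hard case, and it is the one the paper's proof is built to handle: Corollary~\ref{cor:qp0ASYMPTOLINE} applied to the inner end of the lower component produces a horizontal line $L$ disjoint from $(\minvset{CH})^{\circ}$ containing a boundary point $z_0$; Lemma~\ref{lem:directsupport} shows $z_0$ is a \emph{direct} support point, so the component containing $z_0$ lies \emph{above} $L$ and is therefore a higher component $X_i$; and then the associated rays of points near the lower end of $X_i$ (which have $\Im(R)>0$) would have to cross the portion of $L$ to the left of $z_0$, which is impossible since those points lie on or above $L$. Your proposal contains no substitute for this step.

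Two secondary points. First, Corollary~\ref{cor:qp0ASYMPTOLINE} only yields $\Im(\alpha_{f(n)})\leq y_0$ along a subsequence together with a line $\mathcal{L}_{y_0}$ disjoint from the interior; it does not assert that $\Im z\to y_0$ along the whole global arc, so the phrase ``horizontal asymptote'' overstates what is available (boundedness of the imaginary part from the relevant side is what you actually need and can extract). Second, in your final paragraph you identify the lower infinite arc as local purely because its standard orientation matches the flow direction; but within a single inflection domain global arcs carry that same orientation, so orientation alone does not decide local versus global. The correct discriminator is again Lemma~\ref{lem:qp0kappa1noline} versus the bounded-imaginary-part property coming from Corollary~\ref{cor:qp0ASYMPTOLINE}, combined with the fact that only the extremal end can carry a local arc.
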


\begin{proof}
Without loss of generality, we can assume that $\Im(\mu)>0$. Proposition~\ref{prop:CCrawBOUND} shows that there are finitely many connected components of $\minvset{CH}$. Moreover they are attached to the point  $\pi \in \mathbb{S}^{1}$ at infinity in some linear order. We refer to these components of $\minvset{CH}$ as $X_{1},\dots,X_{k}$  where $X_{1}$ is the lowest component while $X_{k}$ is the highest component. Besides the boundary $\partial X_{i}$ of any component $X_{i}$ has exactly two topological ends. We call them the lower end $\partial X_{i}^{-}$ and the upper end $\partial X_{i}^{+}$.
\par
Since $\partial \minvset{CH} \cap \mathfrak{I}_{R}$ is contained in a compact set (see Lemma~\ref{lem:qp0CONE}), points of $\partial X$ that are close enough to infinity are either of local, global or of extruding types. Proposition~\ref{prop:localEND} proves that every local arc has an endpoint in $\mathfrak{I}_{R} \cup \mathcal{Z}(PQ)$. Thus the ends of $\partial X$ are represented either by a local arc starting at infinity or by a global arc. Since $\Im(\mu)>0$, these points belong to $\mathfrak{I}^{-}$ so the orientation constraint shows that only the upper end can be represented by a local arc. Otherwise, the local arc would have the point at infinity as its endpoint. Equivalently, any lower end $\partial X_{i}^{-}$ has to be represented by an infinite global arc ending at infinity (see Lemma~\ref{lem:globalorientation}).
\par
For any component $X_{i}$, the lower end $\partial X_{i}^{-}$ of its boundary is approached by a sequence of points of global type. Applying Corollary~\ref{cor:qp0ASYMPTOLINE} to such a sequence we prove the existence of a horizontal line $L_{i}$ lying below the component $X_{i}$ and disjoint from the interior of $\minvset{CH}$. Thus no component of $\minvset{CH}$ lying below the line $L_{i}$ can contain an infinite local arc because the latter has no asymptotic line at infinity (see Lemma~\ref{lem:qp0kappa1noline}). Consequently, among the ends of $\partial \minvset{CH}$, only $\partial X_{k}^{+}$ can be represented by a local arc.
\par
It remains to prove that $\minvset{CH}$ has only one connected component. Assuming that $k>1$, we consider the upper end $\partial X_{1}^{+}$. We already know that it can be approached by points $(\alpha_{n})_{n \in \mathbb{N}}$ for which $\Delta(a_{n}) \neq \emptyset$. Applying Corollary~\ref{cor:qp0ASYMPTOLINE}, we prove the existence of a line $L$ such that:
\begin{itemize}
    \item $L$ is disjoint from the interior of $\minvset{CH}$;
    \item there is some point $z_{0} \in L \cap \partial \minvset{CH}$;
    \item points of $(\alpha_{n})_{n \in \mathbb{N}}$ lie below the line $L$.
\end{itemize}
We deduce from the first and the third bullet points that the interior of component $X_{1}$ lies below line $L$.
\par
Besides, since for each $n \in \mathbb{N}$, $\alpha_{n}$ belongs to $\mathfrak{I}^{-}$, $\Delta(\alpha_{n})$ is a direct support point of $\minvset{CH}$ for the associated ray $r(\alpha_{n})$ (see Lemma~\ref{lem:directsupport}). Then, $z_{0}$ is also a direct support point of $\minvset{CH}$ for (oriented) line $L$. It follows that the interior of the component of $\minvset{CH}$ containing $z_{0}$ lies above line $L$. In other words, $z_{0}$ belongs to some component $X_{i}$ such that $i>1$.
\par
For any point $z$ of $\partial X_{i}$ close enough to $\partial X_{i}^{-}$, the associated ray $r(z)$ has to cross the portion of the line $L$ formed by points whose real part is smaller than $\Re(z_{0})$ (otherwise, the associated ray would have to cross the interior of $X_{i}$). This is impossible since $z$ lies on or above $L$ and $Im(R(z))>0$. This is a contradiction. There is no such component $X_{i}$ and $\minvset{CH}$ is connected. A neighborhood of its upper end is contained in a local arc while a neighborhood of its lower end is contained in a global arc.
\end{proof}

%\begin{comment}
%    \subsection{Number of connected components of $\minvset{CH}$}
%We have the following statement concerning the number of connected components of $\minvset{CH}.$
%\begin{proposition}
%    The number of connected components of $\minvset{CH}$ is either 1 or 2, and both situations occur.
%\end{proposition}
%\begin{proof}
%    If $\deg Q-\deg P=\pm 1$ then $\minvset{CH}$ is connected. In \cite{AHN+} we saw examples of $\minvset{CH}$ for $\deg Q-\deg P$ with two connected components. It remains to prove that these are the only possibilities. Suppose there is $N\geq 3$ connected components of $\minvset{CH}$
%    We may assume that $\deg Q-\deg P=0$ and that 
%    $$R(z)=1+\frac{\mu}{z^m}+o(z^m)$$
%    at infinity, where $\mu$ is non-zero. Take a vertical line $L$ defined by $\Re(z)=C$ such that $L$ intersects all components of $\minvset{CH}$ and such that $\bC\setminus (\minvset{CH}\cup L)$ consists of $N+1$ connected components. There is some $C_0$ such that this holds for all $C<C_0$, which can be realized by the expansion of $R(z)$ close to $\infty$. At least two of these components, say $X_1$ and $X_2$ are such that $\overline{X_i}\cap L\cap \minvset{CH}$ contains two points. By picking $C$ of sufficiently large absolute value (and negative), we may assume that one of the $X_i$ do not intersect the real line, say $X_1$. 
%\end{proof}
%\end{comment}

\subsection{Connected components of minimal sets}\label{sub:CC}

Putting together partial results for the different values of $\deg Q - \deg P$, we are able to state a bound on the number of connected components of $\minvset{CH}$ in $\mathbb{C}$. It is already known that the closure of $\minvset{CH}$ in the extended plane $\mathbb{C} \cup \mathbb{S}^{1}$ is always connected.

\begin{proof}[Proof of Theorem~\ref{thm:CC}]
For any operator $T$ satisfying $|\deg Q - \deg P|>1$, it has been proved in Theorem~1.11 of \cite{AHN+24} that $\minvset{CH}=\mathbb{C}$. Besides, when $\deg Q - \deg P =1$, Section~6.3 and Corollary~5.20 of the same paper proves that $\minvset{CH}$ is connected and contractible. For $\deg Q - \deg P =-1$, it follows from Proposition~\ref{prop:NOFINGER}.
\par
The only case where there could be several connected components is  $\deg Q - \deg P =0$. If $R(z)$ is constant, then there are two situations. If $P,Q$ are both constant, then there is no meaningful notion of minimal set (see Section~2.3.1 in \cite{AHN+24}). Otherwise, $\minvset{CH}$ is formed by parallel half-lines starting at points of $\mathcal{Z}(PQ)$. Since every point of $\mathcal{Z}(PQ)$ is a common root of $P$ and $Q$ (otherwise $R(z)$ would not be constant) we get that there are at most $\frac{1}{2}\deg P + \frac{1}{2}\deg Q$ such half-lines.
\par
If $R(z)$ is not constant, then we have $R(z)=\lambda+\frac{\mu}{z} + o(z^{-1})$ for some $\lambda \in \mathbb{C}^{\ast}$ and $\mu \in \mathbb{C}$. If $\Im(\mu/\lambda) \neq 0$, then Corollary~\ref{cor:qp0ONEFINGER} proves that $\minvset{CH}$ is connected. Otherwise, Proposition~\ref{prop:CCrawBOUND} provides an upper bound $\frac{1}{2}\deg P + \frac{1}{2}\deg Q$.
\end{proof}

\subsection{Case $\deg Q-\deg P=1$}

In \cite {AHN+24} we found that, outside a rather trivial case\footnote{When $\deg P =0$ and $\deg Q =1$, $\minvset{CH}$ coincides with the unique root of $Q(z)$ when $\lambda \notin \mathbb{R}_{<0}$ and coincides with $\mathbb{C}$ otherwise.}, a necessary and sufficient condition for the compactness of $\minvset{CH}$ in case $\deg Q -\deg P=1$ is $\Re (\lambda) \geq 0$. Moreover in case $\Re (\lambda) <0 $, we get $\minvset{CH}=\mathbb{C}$. 
\par
We will describe $\minvset{CH}$ for $\Re (\lambda) = 0$. Unfortunately, in the most interesting situation $\Re (\lambda) > 0$, we do not have a general description of $\minvset{CH}$, but  we provide a number of partial results, observations and examples. 

\subsubsection{$\Re(\lambda)=0$}

In this case a complete characterization of $\partial\minvset{CH}$ can be carried out.

\begin{theorem}\label{thm:periodic}
Consider a linear differential operator $T$ given by \eqref{eq:1stN} such that $\deg Q - \deg P =1$ and $\Re(\lambda)=0$. In this case, the neighborhood of infinity is foliated by a family $\mathcal{C}$ of closed integral curves of the vector field $R(z)\partial_{z}$.
\par
The boundary $\partial \minvset{CH}$ of the minimal set of  $T$ is described as the first closed leaf (according to the natural ordering starting at infinity) of the family $\mathcal{C}$ containing a point of $\mathcal{Z}(PQ) \cup \mathfrak{I}_{R}$.
\par
If the latter leaf $\gamma$ contains a point of the curve of inflections $\mathfrak{I}_{R}$, then the latter point is a tangency point  between $\gamma$ and $\mathfrak{I}_{R}$. Moreover it is the first leaf that is non-strictly convex (the curvature at the tangency point vanishes).
\par
In particular, $\partial \minvset{CH}$ is formed by finitely many local arcs. It is real-analytic and convex (but can fail to be strictly convex). It contains neither zeros nor poles of $R(z)\partial_{z}$.
\end{theorem}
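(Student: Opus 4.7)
The plan centers on the dynamical picture near infinity. With $\lambda=i\beta$, $\beta\in\mathbb{R}^{\ast}$, the field $R(z)\partial_{z}$ is a small perturbation of $i\beta z\,\partial_{z}$, whose orbits are circles. I would first construct a local first integral $G$ near $\infty$ with $G'=1/R$; from $1/R(z)=(\lambda z)^{-1}+O(z^{-2})$ one gets $G(z)=\lambda^{-1}\log z+H(z)$ with $H$ meromorphic at $\infty$, so $\Im G(z)=-\beta^{-1}\log|z|+O(|z|^{-1})$. Since trajectories of $R(z)\partial_{z}$ lie in the level sets of $\Im G$, this yields the announced foliation $\mathcal{C}$ of a punctured neighborhood of $\infty$ by closed real-analytic leaves, naturally ordered by $-\Im G$.

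Next, I would extend $\mathcal{C}$ inward. As long as a leaf $\gamma$ is a closed curve disjoint from $\mathcal{Z}(PQ)\cup\mathfrak{I}_{R}$, continuity of the flow and of $G$ guarantees that nearby leaves also remain closed; and since the curvature of an integral curve is proportional to $\Im R'$ along the curve, such a leaf is strictly convex. Let $\gamma^{\ast}$ denote the first leaf in this ordering whose closure meets $\mathcal{Z}(PQ)\cup\mathfrak{I}_{R}$. Then every leaf strictly outside $\gamma^{\ast}$ is strictly convex, closed, and disjoint from the singular set.

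The identification $\gamma^{\ast}=\partial\minvset{CH}$ is then carried out by a horn argument. For any such outer leaf $\gamma$ and any $p\in\gamma$, Proposition~\ref{prop:small horn exists} produces a small horn $\horn{p}{p'}{p''}$. Starting from leaves close to $\infty$, which lie outside the compact set $\minvset{CH}$ (Theorem~\ref{thm:main}), the complementary cone $\horninf{p''}$ lies in $\minvset{CH}^{c}$, so Proposition~\ref{prop:horn+min} places $p$ outside $\minvset{CH}$; an inductive sweep through the one-parameter family shows every leaf strictly outside $\gamma^{\ast}$ lies in $\minvset{CH}^{c}$. Taking the monotone limit gives $\gamma^{\ast}\subset\overline{\minvset{CH}^{c}}$; however $\gamma^{\ast}$ meets $\minvset{CH}$ (either at a common root of $P$ and $Q$, which is automatically in $\minvset{CH}$, or as a limit point of the closed set $\minvset{CH}$), so $\gamma^{\ast}\subset\partial\minvset{CH}$. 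Connectedness and contractibility of $\minvset{CH}$ (Theorem~\ref{thm:main}) then force $\minvset{CH}$ to be the closed topological disk bounded by $\gamma^{\ast}$.

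Finally, the refinements follow cleanly. A point $q\in\gamma^{\ast}\cap\mathfrak{I}_{R}$ must be a tangency: $\mathfrak{I}_{R}$ is disjoint from every leaf strictly outside $\gamma^{\ast}$, hence from the entire outer region, so it touches $\gamma^{\ast}$ only from the inside, and a transverse crossing would require $\mathfrak{I}_{R}$ to enter the outer region, a contradiction. Equivalently, strict convexity of a closed leaf fails precisely at a tangential meeting with $\mathfrak{I}_{R}$, so $\gamma^{\ast}$ is the first non-strictly-convex closed leaf. Zeros and poles of $R(z)\partial_{z}$ cannot lie on $\gamma^{\ast}$, as $\gamma^{\ast}$ is a single regular integral curve and no trajectory passes through a singular point of the field; any point of $\mathcal{Z}(PQ)$ on $\gamma^{\ast}$ must therefore be a common root of $P$ and $Q$ of equal multiplicity. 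The boundary is a finite union of local arcs cut out by the finitely many points in $\gamma^{\ast}\cap(\mathcal{Z}(PQ)\cup\mathfrak{I}_{R})$, real-analytic and convex, failing to be strictly convex only at the tangencies with $\mathfrak{I}_{R}$. I expect the main obstacle to be the inductive horn argument in the third step, where the sweep along the one-parameter family must be made rigorous and carried to the limit at $\gamma^{\ast}$; the remaining steps are standard normal-form theory combined with the classification results of Section~\ref{sec:Boundaryarcs}.
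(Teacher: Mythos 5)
Your overall strategy is the same as the paper's: foliate a neighborhood of infinity by closed integral curves (you do this more explicitly, via the first integral $G=\int dz/R$ with $\Im G=-\beta^{-1}\log|z|+O(|z|^{-1})$, which is a nice touch), exclude the region outside the first leaf $\gamma^{\ast}$ meeting $\mathcal{Z}(PQ)\cup\mathfrak{I}_{R}$, and read off convexity and tangency. For the exclusion step the paper is more economical: since the associated ray at a point of a strictly convex closed leaf is tangent to that leaf, it stays outside the convex domain the leaf bounds, so the closed domain bounded by $\gamma^{\ast}$ is directly seen to be a $T_{CH}$-invariant set containing $\mathcal{Z}(PQ)$, and minimality gives $\minvset{CH}\subseteq D^{\ast}$ with no horn machinery or inductive sweep. (Note also that Proposition~\ref{prop:horn+min} only yields $p\notin(\minvset{CH})^{\circ}$, not $p\notin\minvset{CH}$, so your sweep as stated produces $\overline{\minvset{CH}^{c}}$, which is enough for this purpose but should be said.)

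The genuine gap is the inner inclusion. You prove $\minvset{CH}\subseteq D^{\ast}$ and $\gamma^{\ast}\subset\overline{\minvset{CH}^{c}}$, and then assert that $\gamma^{\ast}$ meets $\minvset{CH}$, ``either at a common root of $P$ and $Q$ \dots or as a limit point of the closed set $\minvset{CH}$.'' The first alternative is fine (and then Proposition~\ref{prop:IntegralCurve} propagates membership along the whole periodic orbit). But in the case $\gamma^{\ast}\cap\mathcal{Z}(PQ)=\emptyset$, the second alternative is exactly the statement you need to prove: nothing you have established prevents $\minvset{CH}$ from being a much smaller set strictly inside $D^{\ast}$, in which case $\gamma^{\ast}$ contains no point of $\minvset{CH}$ at all and your conclusion $\gamma^{\ast}\subset\partial\minvset{CH}$ fails. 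Connectedness and contractibility from Theorem~\ref{thm:main} do not repair this; they only show that \emph{if} $\gamma^{\ast}\subset\minvset{CH}$ then the whole disk $D^{\ast}$ is contained in $\minvset{CH}$. The paper closes this gap by (i) putting the separatrix skeleton bounding the period annulus inside $\minvset{CH}$ via Proposition~\ref{prop:IntegralCurve}, (ii) observing that $\minvset{CH}$ intersected with the period annulus is saturated by whole closed leaves (backward trajectories again), so that $\partial\minvset{CH}$ is itself a periodic leaf which must moreover be convex, and (iii) invoking minimality: a strictly convex closed leaf avoiding $\mathcal{Z}(PQ)\cup\mathfrak{I}_{R}$ cannot be $\partial\minvset{CH}$, since a slightly smaller leaf would bound a strictly smaller invariant set. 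Some version of (i)--(iii) must be added to your argument before the identification $\partial\minvset{CH}=\gamma^{\ast}$ is complete; the remaining refinements (tangency with $\mathfrak{I}_{R}$, finiteness of the local arcs, absence of zeros and poles on the boundary) then go through as you describe.
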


\begin{proof}
It follows from $\lambda \in \mathbb{C}^{\ast}$ and $\Re(\lambda) = 0$ that $\Im(\lambda) \neq 0$. The curve of inflections $\mathfrak{I}_{R}$ is therefore compact. The neighborhood of infinity is foliated by a family $\mathcal{C}$ of integral curves of vector field $R(z)\partial_{z}$. The orientation of these integral curves depends on the sign of $\Im(\lambda)$. By compactness of $\mathfrak{I}_{R}$, another neighborhood $\mathcal{C}'$ of infinity is foliated by strictly convex integral curves (the curvature of integral curves vanishes precisely on $\mathfrak{I}_{R}$).
\par
We first consider the case when some point $\alpha$ of $\mathcal{Z}(PQ)$ belongs to $\mathcal{C}'$. Denoting by $\gamma$ the periodic leaf $\alpha$ belongs to, we deduce from Proposition~\ref{prop:IntegralCurve} that $\gamma$ belongs to $\minvset{CH}$ and bounds a strictly convex domain $\mathcal{D}$. Provided the complement of $\mathcal{D}$ does not contain any other point of $\mathcal{Z}(PQ)$, we obtain that $\mathcal{D}$ coincides with $\minvset{CH}$. Since $\alpha$ is disjoint from $\mathfrak{I}_{R}$, it follows from Corollary~\ref{cor:SingInflec} that it cannot be a zero or a pole of $R(z)$ ($\alpha$ is a root of both $P$ and $Q$ of the same multiplicity).
\par
In the remaining cases, we can assume that $\mathcal{Z}(PQ)$ is disjoint from $\mathcal{C}'$. The cylinder $\mathcal{C}$ is bounded by a singular curve formed by separatrices (integral curves connecting singularities of $R(z)\partial_{z}$). We denote by $\Sigma$ the union of these separatrices and by $\mathcal{S}$ the smallest simply connected subset containing $\Sigma$. By Proposition~\ref{prop:IntegralCurve},  $\Sigma$ and $\mathcal{S}$ are contained in $\minvset{CH}$. For the same reason, a point $z$ of cylinder $\mathcal{C}$ is contained in $\minvset{CH}$ if and only if the periodic integral curve containing $z$ belongs entirely to $\minvset{CH}$. Therefore, the boundary of $\minvset{CH}$ coincides with some periodic integral curve of the cylinder $\mathcal{C}$.
\par
Since the associated rays cannot cross the interior of  $\minvset{CH}$, its boundary  $\partial\minvset{CH}$ (which is a periodic integral curve) has to be  convex. Therefore, it is contained in the domain of inflection of infinity (or in its boundary). Since the domain $\mathcal{C}'$ does not belong to the interior of $\minvset{CH}$ (its complement is clearly a $T_{CH}$-invariant set), these conditions characterize the boundary $\gamma$ of $\mathcal{C}'$ as the boundary of $\minvset{CH}$.
\par
The curve $\gamma$ cannot cross the curve of inflections because it is convex. If it did not intersect $\mathcal I_R$ there would be a strictly smaller invariant set whose boundary is an integral curve between $\mathcal I_R$ and $\gamma$. Thus $\gamma$ has a tangency point with $\mathcal{I}_{R}$. At this point, the curvature of $\gamma$ vanishes.
\par
The boundary $\partial \minvset{CH}$ is formed by local arcs joining points of $\mathcal{Z}(PQ)$ (with the same multiplicity of $P$ and $Q$) and some points of the tangency locus. By  Proposition~\ref{prop:localarc},  there arcs are strictly convex and real-analytic. 
\end{proof}

\subsubsection{$\Re(\lambda)>0$}

As we mentioned above, we do not have a general description of $\minvset{CH}$, but only a number of interesting  examples. Observe that in this case $\infty$ is a sink of $R(z)\partial_z$).
\par
A qualitative description of the convex hull $Conv(\minvset{CH})$ is  the best that we can obtain with our current knowledge.

\begin{proposition}
Consider a linear differential operator $T$  given by \eqref{eq:1stN} with $\deg Q - \deg P =1$. The boundary $\partial Conv(\minvset{CH})$ of the convex hull $Conv(\minvset{CH})$ of the minimal set is formed by:
\begin{itemize}
    \item finitely many straight segments;
    \item finitely many portions of integral curves of vector field $R(z)\partial_{z}$.
\end{itemize}
In particular, the latter are strictly convex and belong to local arcs of $\partial \minvset{CH}$. In particular,  $\partial Conv(\minvset{CH})$ is piecewise-analytic.
\end{proposition}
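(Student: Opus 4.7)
The plan begins by reducing to the interesting case $\Re(\lambda)\geq 0$: if $\Re(\lambda)<0$ then Theorem~\ref{thm:main} yields $\minvset{CH}=\mathbb{C}$, and in the residual degenerate subcase $\deg P=0$, $\deg Q=1$ the set $\minvset{CH}$ is either $\mathbb{C}$ or a singleton, so the statement is vacuous in all these situations. Under $\Re(\lambda)\geq 0$, Theorem~\ref{thm:main} makes $\minvset{CH}$ compact, so $K:=\operatorname{Conv}(\minvset{CH})$ is a compact convex body in $\mathbb{C}$ with a Jordan boundary. Decompose $\partial K = A\sqcup B$ with $A := \partial K\cap \partial\minvset{CH}$ and $B := \partial K\setminus \minvset{CH}$; by the planar Carath\'eodory theorem every point of $B$ lies in the relative interior of some chord $[u,v]$ with $u,v\in\minvset{CH}$, so $B$ is a disjoint union of open chord segments of $\partial K$ whose endpoints belong to $A$. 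The proposition reduces to showing that $A$ is a finite disjoint union of closed sub-arcs of local arcs of $\partial\minvset{CH}$: finiteness of chord segments in $B$ then follows automatically, and strict convexity and real-analyticity of the integral-curve portions come from Proposition~\ref{prop:localconvex}, giving piecewise analyticity of $\partial K$.

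The core step is the dichotomy that every non-isolated point of $A$ lies on a local arc. Fix $z\in A$ non-isolated and a sequence $z_n\in A\setminus \{z\}$ converging to $z$. After discarding finitely many terms, each $z_n$ avoids the finite set $(\mathcal{Z}(PQ)\cup\mathfrak{S}_R\cup \{\text{isolated tangency points of Proposition~\ref{prop:tangencybound}}\})\cap\partial\minvset{CH}$, so $z_n$ falls under one of the types of Theorem~\ref{thm:MAINClassification}. For $z_n$ of global, extruding, bouncing, switch, $C^1$- or $C^2$-inflection type the condition $\Delta(z_n)\neq \emptyset$ produces $u_n\in\minvset{CH}\cap r(z_n)$ distinct from $z_n$, and the segment $[z_n,u_n]$ lies in $K$. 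Since $\partial K$ is a convex Jordan curve with a unique support line at every smooth point, that support line at $z_n$ must contain $[z_n,u_n]$, which forces $\partial K$ to be locally straight through $z_n$ along the direction of $r(z_n)$; thus $z_n$ lies in the relative interior of a chord of $\partial K$, contradicting $z_n\in A$ except when $[z_n,u_n]\subset\partial\minvset{CH}$, a situation requiring the chord to lie on one of the at most $\deg P+\deg Q+1$ $R$-invariant straight components of $\mathfrak{T}_R$ from Proposition~\ref{prop:tangencybound}. Those lines accommodate only finitely many exceptional $z_n$. Together with Theorem~\ref{thm:MainBound} and the bounds on extruding and bouncing points coming from Corollary~\ref{cor:MainLocal} and Lemma~\ref{lem:EstimateGlobal} (each extruding/bouncing point consumes a local or global arc), we conclude that $z_n$ must be of local type for $n$ large, so by closedness $z$ lies on the closure of a local arc.

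Finally, the transition points of $A$ inside $\partial K$ are either corners of $\partial K$---restricted by Corollary~\ref{cor:CONVEXCORNER} to finitely many common roots of $P,Q$ of equal multiplicity and finitely many simple zeros of $R$ with $\phi_\alpha=0$---or tangential switches at points with $\Delta\neq \emptyset$, which by the previous paragraph belong to a finite class. Hence $A$ has finitely many connected components, each a closed sub-arc of a local arc, and Corollary~\ref{cor:MainLocal} provides the global cap $d^{16d}+d(2d+1)$ on local arcs. The complement $B$ is then a finite disjoint union of open chord segments with endpoints in $A$, establishing all the asserted finiteness, convexity, and piecewise-analyticity claims. The main obstacle will be the support-line argument in the second paragraph: one must carefully handle the degenerate situation where a straight piece of $\partial\minvset{CH}$ simultaneously serves as a chord of $K$, and verify that the non-local points on such a straight piece cannot accumulate in $A$ apart from the finitely many endpoints already accounted for.
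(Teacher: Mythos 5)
Your overall strategy mirrors the paper's: identify the curved part of $\partial Conv(\minvset{CH})$ with local arcs by showing that a boundary point with $\Delta\neq\emptyset$ forces a straight piece of the hull boundary along its associated ray, then control corners via Corollary~\ref{cor:CONVEXCORNER} and count. But the central step of your second paragraph has a genuine gap. From $u_n\in\Delta(z_n)$ you get $[z_n,u_n]\subset K$, and you then assert that the support line of $K$ at $z_n$ ``must contain $[z_n,u_n]$.'' Convexity alone gives no such thing: a segment from a boundary point of a convex body to another point of the body generally points into the interior (take $K$ the unit disk, $z_n=1$, $u_n=0$; the support line at $1$ is vertical and misses $[0,1]$). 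The missing ingredient is Proposition~\ref{prop:IntegralCurve}: the germ at $z_n$ of the backward trajectory $\gamma_{z_n}^{-}$ lies in $\minvset{CH}\subset K$, so the direction $-R(z_n)$ points into the closed support half-plane at $z_n$; since $u_n-z_n$ is a positive multiple of $R(z_n)$ and $u_n\in K$, the direction $+R(z_n)$ does too. A closed half-plane bounded by a line through $z_n$ that contains both $\pm R(z_n)$ forces that support line to be the line carrying $r(z_n)$, and only then does $[z_n,u_n]\subset L\cap K\subset\partial K$ give a genuine straight piece of the hull boundary. This is exactly what the paper's phrase ``the line containing $r(z)$ is a support line of $Conv(\minvset{CH})$ at $z$ and every point of $\Delta(z)$'' is encoding; without it your deduction that $\partial K$ is locally straight at $z_n$ is a non sequitur.

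Two smaller points. First, ``$z_n$ lies in the relative interior of a chord of $\partial K$, contradicting $z_n\in A$'' is not a contradiction: a point of $\partial\minvset{CH}$ can perfectly well sit in the interior of a straight piece of $\partial K$. The dichotomy you want is the paper's: such a $z_n$ fails to be a point of strict convexity of $\partial K$, so the strictly convex locus consists (up to finitely many exceptional points) only of points of local type, and the straight locus is then handled separately by the corner argument. Your worry about the degenerate case $[z_n,u_n]\subset\partial\minvset{CH}$ is in fact moot, since Lemma~\ref{lem:gap} gives $\Delta^{min}(z_n)\neq z_n$ for $z_n\notin\mathcal{Z}(PQ)\cup\mathfrak{I}_{R}$. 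Second, your appeal to Corollary~\ref{cor:MainLocal} does yield finiteness of the local-arc portions, but the paper's lighter route -- the forward endpoint of a hull arc must coincide with the endpoint of its local arc (else the associated rays of that arc would enter $\minvset{CH}$), and Proposition~\ref{prop:localEND} locates such endpoints -- avoids invoking the exponential bound.
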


\begin{proof}
We denote by $\mathcal{S}$ the set of points where the boundary $\partial Conv(\minvset{CH})$ is strictly convex. They also belong to $\partial \minvset{CH}$ (these points belong to the support of the hull). It follows from Theorem~\ref{thm:MAINClassification} that outside finitely many points, $\mathcal{S}$ is formed by either local or global arcs of $\partial \minvset{CH}$. If such a point $z$ belongs to a global arc, then the line containing the associated ray $r(z)$ is a support line of $Conv(\minvset{CH})$ at $z$ and every point of $\Delta(z)$. It follows that $[z,\Delta^{max}(z)]$ is a straight segment contained in $\partial Conv(\minvset{CH})$. Consequently any arc of $\mathcal{S}$ has to be a portion of local arc.
\par
We know that $\partial Conv(\minvset{CH})$ is formed by straight segments and portions of local arcs. It remains to prove that there are finitely many of them. We consider an arc $\alpha$ of $\partial Conv(\minvset{CH})$ contained in a local arc $\gamma$ of $\partial \minvset{CH}$. The endpoint of $\alpha$ (with the orientation defined by $R(z)\partial_{z}$) has at the same time to be  the endpoint of $\gamma$ (since otherwise the associated rays starting at points of $\alpha$ would intersect $\minvset{CH}$). Therefore, the endpoint of every such arc $\alpha$ in $\partial Conv(\minvset{CH})$ belongs to $\mathcal{Z}(PQ) \cup \mathfrak{I}_{R}$ (see Proposition~\ref{prop:localEND}). Since there are finitely many such points in $\mathcal{S}$,  there are finitely many such arcs in $\partial Conv(\minvset{CH})$.
\par
If the boundary of the convex hull is not formed by finitely many straight segments and portions of integral curves, then there are infinitely many corner points of angle smaller than $\pi$ between the pairs of consecutive straight segments of the boundary. It follows from Corollary~\ref{cor:CONVEXCORNER} that these points belong to $\mathcal{Z}(PQ)$. Therefore, we have finitely many corner points and finitely many straight segments.
\end{proof}

In the examples below (including a very interesting family of operators in which $Q(z)$ has simple roots and $P(z)=Q'(z)$), $Conv(\minvset{CH})$ is a polygon.

\begin{proposition}\label{prop:QP1convexRESIDUE}
Consider a linear differential operator $T$ given by \eqref{eq:1stN}, such that every root $\alpha$ of $Q(z)$ is simple and satisfies  $P(\alpha) \neq 0$ and $\phi_{\alpha}=0$.
\par
Then, $Conv(\minvset{CH})$ coincides with the convex hull of $\mathcal{Z}(Q)$.
\end{proposition}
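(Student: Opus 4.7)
The plan is to show that $K := \mathrm{Conv}(\mathcal{Z}(Q))$ is itself a $T_{CH}$-invariant set. Once this is established, minimality forces $\minvset{CH} \subset K$, while the trivial inclusion $\mathcal{Z}(Q) \subset \minvset{CH}$ gives $K \subset \mathrm{Conv}(\minvset{CH})$, yielding the equality $\mathrm{Conv}(\minvset{CH}) = K$. Verifying $T_{CH}$-invariance reduces, via Theorem~\ref{thm:AR}, to checking that $\mathcal{Z}(PQ) \subset K$ and that for every $z \notin K$ the ray $r(z)$ misses $K$.

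The key algebraic step is the partial fraction expansion: since $\deg P = \deg Q - 1$ and the roots $\alpha_1, \dots, \alpha_n$ of $Q$ are simple, one has
\[
\frac{1}{R(z)} \;=\; \frac{P(z)}{Q(z)} \;=\; \sum_{i=1}^{n} \frac{c_i}{z - \alpha_i},\qquad c_i = \frac{P(\alpha_i)}{Q'(\alpha_i)}.
\]
At a simple zero $\alpha_i$ of $R$, the leading Taylor coefficient is $r_{\alpha_i} = R'(\alpha_i) = Q'(\alpha_i)/P(\alpha_i) = 1/c_i$, so the hypothesis $\phi_{\alpha_i} = 0$ is equivalent to $c_i > 0$ for every $i$. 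The inclusion $\mathcal{Z}(P) \subset K$ then follows by a Gauss--Lucas-type computation: if $\beta$ is a root of $P$, then $\sum_i c_i/(\beta - \alpha_i) = 0$, and multiplying by conjugates gives $\beta \cdot \sum_i c_i/|\beta - \alpha_i|^2 = \sum_i c_i \alpha_i / |\beta - \alpha_i|^2$, exhibiting $\beta$ as a convex combination of the $\alpha_i$.

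For the ray condition, given $z \notin K$, I would pick a closed half-plane $H \supset K$ with $z \in H^c$ and outward unit normal $e^{-i\theta}$, so that $\Re(e^{-i\theta}(z - \alpha_i)) > 0$ for every $i$. Then
\[
\Re\!\left(\frac{e^{i\theta}}{R(z)}\right) \;=\; \sum_{i} c_i\, \frac{\Re(e^{-i\theta}(z - \alpha_i))}{|z-\alpha_i|^2} \;>\; 0,
\]
and using $\Re(1/w) = \Re(w)/|w|^2$ this gives $\Re(e^{-i\theta} R(z)) > 0$; that is, $R(z)$ points into $H^c$, so $r(z) \subset H^c$ and in particular $r(z) \cap K = \emptyset$. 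Combined with $\mathcal{Z}(PQ) \subset K$, Theorem~\ref{thm:AR} then shows $K$ is $T_{CH}$-invariant, completing the argument. The only subtlety worth flagging is keeping the signs straight between $R(z)$ and $1/R(z)$ when translating the geometric separation into the sign of a weighted sum; everything else is routine once the residues $c_i$ are identified as positive.
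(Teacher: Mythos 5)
Your argument is correct and follows essentially the same route as the paper's proof: both hinge on identifying the hypothesis $\phi_{\alpha}=0$ with positivity of the residues $c_i=P(\alpha_i)/Q'(\alpha_i)$ and then running the Gauss--Lucas electrostatic computation to show that at any point separated from $\mathrm{Conv}(\mathcal{Z}(Q))$ by a line, the direction $R(z)$ (equivalently, the conjugate of $P/Q$) points away from the hull, so the associated ray misses it. You merely make explicit a few steps the paper leaves implicit (the verification of both conditions of Theorem~\ref{thm:AR} and the trivial reverse inclusion $\mathrm{Conv}(\mathcal{Z}(Q))\subseteq \mathrm{Conv}(\minvset{CH})$), which is fine.
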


\begin{proof} The argument is similar to the one used in the proof of the classical Gauss--Lucas theorem (see \cite{MR0225972}). If the differential form $\frac{P(z)dz}{Q(z)}$ has all positive residues, then the roots of $P(z)$ are contained in the convex hull of $\mathcal{Z}(Q)$.
\par
The proof is based on consideration of the electrostatic force $F$ created by the system of point charges placed at the poles of $\frac{P(z)dz}{Q(z)}$ where the value of each charge equals the residue at the corresponding pole. This electrostatic force $F$ equals the conjugate of $\frac{P(z)dz}{Q(z)}$ and one can show that if we take any line $L$ not intersecting the convex hull of $\mathcal{Z}(Q)$ then at any point $p \in L$, $F$ points inside the half-plane of $\bC \setminus L$ not containing $\mathcal{Z}(Q)$. Now recall that the associated ray has the same direction as the conjugate of $P/Q$. Thus, the associated ray $r(p)$ does not intersect the convex hull of $\mathcal{Z}(Q)$. 
\end{proof}

\subsubsection{The first family of examples}

Consider a family of operators of the form $T_{\lambda}=Q(z)\frac{d}{dz}+P(z)$ where $Q(z)=\lambda(z-1)^{k}z$ and $P(z)=(z-1)^{k}$ for some principal coefficient $\lambda \in \mathbb{C}^{\ast}$ and some degree $k \in \mathbb{N}^{\ast}$.
\par
Integral curves of the vector field $R(z)\partial_{z}$ are logarithmic spirals parametrized by $\gamma(t)=\gamma(0)e^{\lambda t}$. In particular, they are concentric circles for $Re(\lambda)=0$.
\par
Depending on the value of $\lambda$, the shape of the minimal set $\minvset{CH}$ can change drastically. Namely, 
\begin{itemize}
    \item if $Re(\lambda)<0$, then $\minvset{CH}=\mathbb{C}$ (see Theorem~1.11 of \cite{AHN+24});
    \item if $Re(\lambda)=0$, then $\minvset{CH}$ is the closed unit disk (see Theorem~\ref{thm:periodic});
    \item if $Re(\lambda)>0$ and $Im(\lambda)=0$, then $\minvset{CH}$ is the segment $[0,1]$.
\end{itemize}
When $Re(\lambda)>0$ and $Im(\lambda) \neq 0$, $\minvset{CH}$ has a more complicated shape we describe below in terms of local and global arcs. Up to conjugation, we will assume that $\Im(\lambda)>0$. 

\begin{proposition}\label{prop:lambdaMinimal}
If $\lambda$ satisfies $\Re(\lambda),\Im(\lambda)>0$, then the minimal set $\minvset{CH}$ of operator $T_{\lambda}$ is bounded by the following arcs:
\begin{itemize}
\item local arc $\gamma$ where $\gamma(t)=e^{-\lambda t}$ and $t \in ]0,t_{0}[$;
\item global arc $\alpha$ where $\alpha(t)=\frac{1}{1+\lambda t}$ and $t \in ]0,t_{1}[$.
\end{itemize}
These two arcs intersect at $1$ and the point $\gamma(t_{0})=\alpha(t_{1})$ of extruding type characterized as the first intersection point between $\alpha$ and $\gamma$ defined on $\mathbb{R}_{>0}$.
\end{proposition}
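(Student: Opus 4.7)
My plan is to identify the region $D$ bounded by the two described arcs as the candidate for $\minvset{CH}$, show that both arcs lie inside $\minvset{CH}$, and then show that $D$ is itself $T_{CH}$-invariant, so that by minimality $\minvset{CH}=D$. The structural setup is very favorable: $R(z)=\lambda z$, so $R'(z)=\lambda$ has nonzero imaginary part, forcing $\mathfrak{I}_{R}=\emptyset$ (and with it the tangency locus), and $\mathcal{Z}(PQ)=\{0,1\}$. Theorem~\ref{thm:main} (applied with $\Re\lambda>0$) gives that $\minvset{CH}$ is compact and contractible, while Theorem~\ref{thm:AR} places $\{0,1\}$ inside it.

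The first concrete step is to exhibit both arcs inside $\minvset{CH}$. Integral curves of $R(z)\partial_z$ are logarithmic spirals $z(0)e^{\lambda t}$, so the backward trajectory from $1$ is exactly $\gamma(t)=e^{-\lambda t}$, $t\ge 0$; since $\Re\lambda>0$ this trajectory is bounded and accumulates at $0$, and iterated application of Proposition~\ref{prop:IntegralCurve} puts it inside $\minvset{CH}$. The root trail of $1$, computed directly from $z(1+s\lambda)=1$, is exactly $\{\alpha(t)=1/(1+\lambda t):t\ge 0\}$, which lies in $\minvset{CH}$ by the root trail property of Section~\ref{sub:roottrail}. Both curves leave $1$ tangent to the direction $-\lambda$ and spiral into $0$, but $\arg\gamma(t)=-(\Im\lambda)\,t$ winds monotonically to $-\infty$ while $\arg\alpha$ sweeps only the bounded angular range $(-\arg\lambda,0]$. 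Hence $\gamma$ must re-enter the angular sector of $\alpha$ after at least one full revolution and meet it; let $(t_0,t_1)$ be the first pair with $\gamma(t_0)=\alpha(t_1)=:p$. The concatenation $\gamma|_{[0,t_0]}\cup\alpha|_{[0,t_1]}$ is a Jordan curve enclosing a topological disk $D$ with $0\in D^\circ$.

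The main obstacle is verifying that $D$ itself is $T_{CH}$-invariant: by Theorem~\ref{thm:AR}, one must check that for every $z\in\mathbb{C}\setminus D$ the associated ray $r(z)=\{z(1+s\lambda):s\ge 0\}$ remains disjoint from $D$. I would argue this by passing to the logarithmic coordinate $w=\log z$, in which $R(z)\partial_z$ becomes the constant field $\lambda\partial_w$: the arc $\gamma$ lifts to a straight ray in direction $-\lambda$, the arc $\alpha$ lifts to the explicit transcendental curve $\{-\log(1+\lambda t):t\ge 0\}$, and the ray $r(z)$ lifts to $\{w+\log(1+s\lambda):s\ge 0\}$. The local strict convexity of $\gamma$ (Proposition~\ref{prop:localconvex}) rules out re-entry for rays originating near $\gamma$, while the one-sided concavity of the root trail (Proposition~\ref{prop:RTconcave}) rules it out for rays originating near $\alpha$. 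The global piece of the argument uses the monotonicity in $s$ of $\arg(1+s\lambda)$ together with the connectedness of $\mathbb{C}\setminus D$ to exclude that a ray re-enters $D$ after exiting a neighborhood of either boundary arc; this geometric verification is the crux of the proof.

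Once invariance is proved, minimality gives $\minvset{CH}\subseteq D$. Combined with $\partial D\subset\minvset{CH}$ and the compact contractibility of $\minvset{CH}$ from Theorem~\ref{thm:main} (a compact contractible planar set cannot omit the bounded disk enclosed by a Jordan curve it contains), we conclude $\minvset{CH}=D$. The identification of $p=\gamma(t_0)=\alpha(t_1)$ as an extruding-type point then follows from Theorem~\ref{thm:MAINClassification}: $p\notin\mathcal{Z}(PQ)\cup\mathfrak{I}_R$ since $\mathfrak{I}_R=\emptyset$ and $p\ne 0,1$; the $R$-forward trajectory of $p$ runs along $\gamma$ to $1$, which lies in $\minvset{CH}$, so $\Gamma(p)\ne\emptyset$; and the associated ray $r(p)$ passes through $1\in\partial\minvset{CH}$ since $p\in\alpha=\mathfrak{tr}_1$, so $\Delta(p)\ne\emptyset$.
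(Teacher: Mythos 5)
Your proposal is correct and follows essentially the same route as the paper: both place $\gamma$ in $\minvset{CH}$ as the backward trajectory of $1$ via Proposition~\ref{prop:IntegralCurve}, place $\alpha$ there as the root trail $\mathfrak{tr}_1$, derive the intersection from the unbounded winding of the spiral against the bounded angular range of $\alpha$, and then conclude by checking that the enclosed region is itself $T_{CH}$-invariant. You are in fact somewhat more explicit than the paper on that last invariance check (which the paper dismisses as ``geometrically clear'') and on identifying the corner as an extruding point, but the underlying argument is the same.
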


\begin{proof}
The backward trajectory of the vector field $R(z)\partial_{z}$ starting at $1$ is parametrized by $\gamma(t)=e^{-\lambda t}$ and $t \in [0,\infty)$. Proposition~\ref{prop:IntegralCurve} shows that this arc is entirely contained in $\minvset{CH}$.
\par
Points $z$ for which the associated ray contains $1$ are characterized by the condition $\frac{1-z}{\lambda z} \in \mathbb{R}_{>0}$. They form an arc parametrized by $\alpha(t)=\frac{1}{1+\lambda t}$ for $t \in [0,+\infty[$. This arc is also contained in $\minvset{CH}$.
\par
Since $R(z)=\lambda z$, it is geometrically clear that these two arcs bound $\minvset{CH}$. The boundary $\partial \minvset{CH}$ is formed by a portion of each of them with two singular points at $1$ (when $t=0$) and the first intersection point in the parametrization. There are different ways to see that such an intersection occurs. One of them is to note that $\lim_{t\to \infty}\alpha(t)=0$ and $\lim_{t\to \infty}\arg(\alpha'(t))=\lim_{t\to \infty}\arg(\frac{-\lambda }{(1+\lambda t)^2})=\arg(\frac{-1}{\lambda})$ exists. Since we have $\Re(\lambda),\Im(\lambda)>0$, it follows that the asymptotically straight arc $\alpha(t)$ and the logarithmic spiral $\gamma(t)$ intersect infinitely many times. The endpoint distinct from $1$ common to $\alpha$ and $\gamma$ is the first intersection point between the two parametrized arcs defined on $\mathbb{R}_{>0}$.
\end{proof}

\begin{figure}[!ht]
    \centering
    \includegraphics[width=0.75\textwidth]{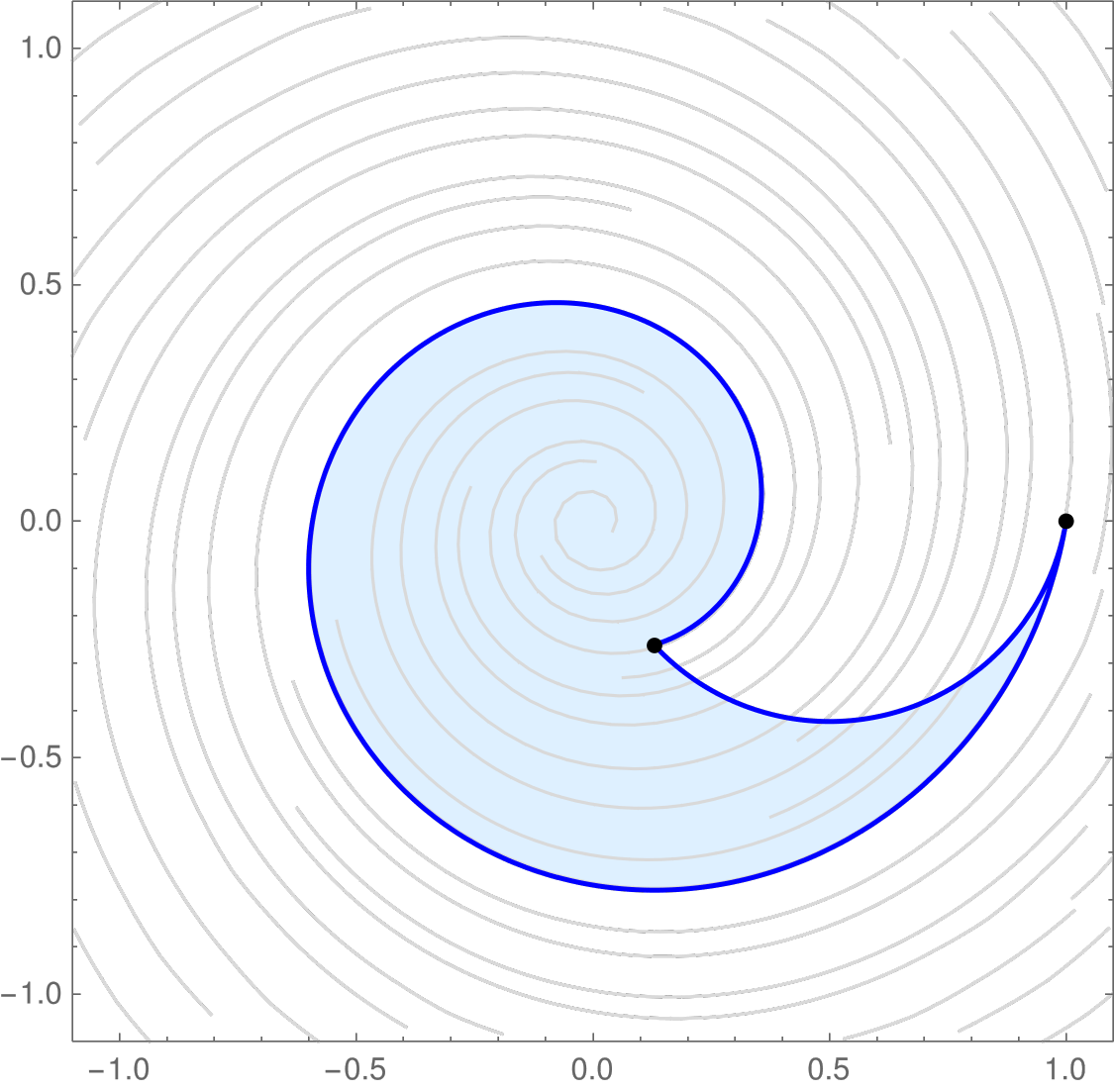}
    \caption{Illustration of the boundary of the minimal set when  $\lambda=1+6i$ in Prop.~\ref{prop:lambdaMinimal}.}
    \label{fig:minimal2}
\end{figure}

\subsubsection{A second family of examples}

Consider the family $T = z(z^{k}-1)\frac{d}{dz} + (z^{k}+1)$,
where $k$ is a positive integer. We are going to prove that for any $k$,  the minimal set $\minvset{CH}$ is the unit disk.

\begin{lemma}\label{lem:unitDiskRays}
Set $f(z) = z + t \frac{z(z^{k}-1)}{z^{k}+1}$, with $t>0$.
Then $|f(z)|>1$ whenever $|z|>1$.
\end{lemma}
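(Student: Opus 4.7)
The plan is to reduce the claim to a direct algebraic inequality. First, I would place the two summands of $f$ over the common denominator $z^{k}+1$ to rewrite
$$f(z)=\frac{(1+t)z^{k+1}+(1-t)z}{z^{k}+1},$$
so that $|f(z)|>1$ is equivalent to
$$|z|^{2}\,\bigl|(1+t)z^{k}+(1-t)\bigr|^{2}>\bigl|z^{k}+1\bigr|^{2}.$$
Introducing $w=z^{k}$, $r=|z|$, and $c=\cos(\arg w)$, and using that $|aw+b|^{2}=a^{2}|w|^{2}+2ab\,\Re(w)+b^{2}$ for real $a,b$, both sides become explicit polynomials in $r$ and $c$.

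The key observation is that the resulting inequality is \emph{affine} in the single variable $c\in[-1,1]$, because $\Re(w)=r^{k}c$ enters linearly. Hence it is enough to verify positivity at the two endpoints $c=\pm1$. At these endpoints the expression in $r^{k}$ becomes a perfect square,
$$(1+t)^{2}r^{2k}\pm2(1-t^{2})r^{k}+(1-t)^{2}=\bigl((1+t)r^{k}\pm(1-t)\bigr)^{2},\qquad r^{2k}\pm2r^{k}+1=(r^{k}\pm1)^{2},$$
so the inequality at $c=\pm1$ collapses to
$$r^{2}\bigl((1+t)r^{k}\pm(1-t)\bigr)^{2}>(r^{k}\pm1)^{2},$$
a difference of squares factoring as a product of two visible linear combinations of $r$, $r^{k}$, and $t$.

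The remaining task is an elementary check that both factors are strictly positive for $r>1$ and $t>0$. A short rearrangement shows that the smaller of the two factors equals $(r-1)(r^{k}+1)+tr(r^{k}-1)$ in the case $c=1$, and $(r-1)(r^{k}-1)+tr(r^{k}+1)$ in the case $c=-1$; in each case a sum of two nonnegative terms that is strictly positive under the hypotheses. The larger factor is a sum of manifestly positive terms in both cases. There is no real obstacle here: the whole argument is a direct computation, with the only mildly clever points being the reduction to the two extreme values of $c$ by affineness, and the pleasant degeneration of the extremes into perfect squares that makes the factorization transparent.
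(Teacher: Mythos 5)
Your proof is correct. It follows the same basic strategy as the paper's — reduce $|f(z)|>1$ to a real inequality in $r=|z|$ and $c=\cos(k\arg z)\in[-1,1]$ — but the execution differs in a way worth noting. The paper keeps the ratio form, computes $\tfrac{|f(z)|^2}{|z|^2}-1$ as $t$ times an explicit fraction, and bounds the numerator from below (dropping the nonnegative term $t\bigl((r-c)^2+(1-c^2)\bigr)$) and the denominator from above (via $(r+c)^2+(1-c^2)\le(r+1)^2$); this yields the slightly stronger conclusion $|f(z)|>|z|$ with a uniform lower bound $1+t\tfrac{r^2-1}{(r+1)^2}$. You instead clear denominators, observe that the resulting polynomial inequality is affine in $c$, reduce to the endpoints $c=\pm1$ where both sides become perfect squares, and factor the difference of squares into visibly positive factors (e.g.\ $(r-1)(r^k+1)+tr(r^k-1)$ and $(r+1)(r^k+1)+tr(r^k-1)$ at $c=1$, and similarly at $c=-1$). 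Your endpoint reduction is arguably the cleaner mechanism and avoids any estimation, at the cost of proving only $|f(z)|>1$ rather than $|f(z)|>|z|$ — which is all the lemma asserts and all that is used. One cosmetic remark: the "larger" factor is not a sum of manifestly positive terms as first written (it contains $(1-t)r$), but it regroups as $(r+1)(r^k\pm1)+tr(r^k\mp1)$, so the claim stands.
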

\begin{proof}
We substitute $z = r^{\frac{1}{k}} e^{i \theta}$ with $r>1$.
After  some algebraic manipulations, we find that
\begin{equation}
\frac{|f(z)|^2}{|z|^2} = 
\frac{ |f( r^{\frac{1}{k}} e^{i \theta})|^2 }{ r^{2/k} } =  
 1
 +
 t\frac{2 r^2-2+r^2 t-2 \cos(\theta k) r t+t}{r^2+ 2\cos(\theta k) r+1}. 
\end{equation}
Setting $c \coloneqq \cos k \theta$ and rewriting further, we get
\begin{equation}\label{eq:absValueSimplified}
\frac{ |f( r^{\frac{1}{k}} e^{i \theta})|^2 }{ r^{2/k} } =  
 1
 +
 t\frac{2 (r^2-1)+ t \left( (r-c)^2 + (1-c^2) \right )}{(r+c)^2 + (1-c^2)}.
\end{equation}
Since $-1 \leq c \leq 1$, it follows that
\[
 \frac{ |f( r^{\frac{1}{k}} e^{i \theta})|^2 }{ r^{2/k} } 
 > 1 + t \frac{r^2-1}{(r+1)^2} >1.
\]
Consequently, $|f(z)| > |z|$ whenever $|z|>1$ and the statement follows.
\end{proof}

\begin{lemma}\label{lem:unitDiskBdd}
The separatrices of the vector field $R(z)\partial_z=\frac{z(z^k-1)}{z^k+1}\partial_z$
are the arcs of the unit circle, connecting roots of $P(z)$ with roots of $Q(z)$.
\end{lemma}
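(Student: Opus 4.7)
The plan is to show that the unit circle is invariant under the flow of $R(z)\partial_{z}$ and that its singular points are precisely the roots of $P$ and $Q$ that lie on $|z|=1$; the $2k$ arcs delimited by these singular points then automatically yield the required separatrices.

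First, I would verify invariance by a direct computation. At a point $z=e^{i\theta}$ on the unit circle, the tangent line is spanned by $iz$, so I want to show $R(z)/(iz)\in\mathbb{R}$. Writing
\[
\frac{R(z)}{iz} = -i\,\frac{z^{k}-1}{z^{k}+1} = -i\,\frac{e^{ik\theta}-1}{e^{ik\theta}+1},
\]
and applying the half-angle identity $\frac{e^{i\varphi}-1}{e^{i\varphi}+1}=i\tan(\varphi/2)$ with $\varphi=k\theta$, one gets $R(z)/(iz)=\tan(k\theta/2)\in\mathbb{R}$. Hence $R(z)$ is a real multiple of $iz$ along the whole unit circle (wherever defined), so $\{|z|=1\}$ is flow-invariant.

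Next, I would locate the singular points of $R(z)\partial_{z}$ on the unit circle. The zeros of $Q(z)=z(z^{k}-1)$ with $|z|=1$ are precisely the $k$-th roots of unity $\zeta_{j}=e^{2\pi ij/k}$; the remaining zero $z=0$ does not lie on the circle. The zeros of $P(z)=z^{k}+1$ are the $2k$-th roots of unity that are not $k$-th roots of unity, $\eta_{j}=e^{i\pi(2j+1)/k}$, all on the unit circle. Since $\tan(k\theta/2)$ vanishes exactly at $\theta=2\pi j/k$ and blows up exactly at $\theta=\pi(2j+1)/k$, these $2k$ points are indeed the only zeros and poles of the vector field on the circle, and they alternate.

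Finally, each of the $2k$ open arcs of $\{|z|=1\}$ cut out by these $2k$ singular points contains no zero or pole of $R$, so by invariance it is a complete integral curve of $R(z)\partial_{z}$. Its two endpoints are adjacent singular points, one a root of $P$ and the other a root of $Q$; hence each arc is a separatrix joining a root of $P$ to a root of $Q$, as claimed. The only real computation in the proof is the first one, and there is no genuine obstacle — the half-angle simplification makes invariance immediate, after which the identification of singular points and separatrices is essentially a matter of bookkeeping.
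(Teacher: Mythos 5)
Your proof is correct. The overall structure is the same as the paper's: verify that the unit circle is invariant under the flow, identify the singular points on it as the roots of $P$ and $Q$, and conclude that the $2k$ arcs between consecutive singular points are separatrices. The only difference is in how invariance is checked: the paper computes the first integral $k^{-1}\log\bigl((1-z^{k})^{2}/z^{k}\bigr)$ of $dz/R(z)$ and observes that its imaginary part is identically $\pi$ on $|z|=1$, so the circle is a level curve; you instead verify pointwise tangency by showing $R(e^{i\theta})/(ie^{i\theta})=\tan(k\theta/2)\in\mathbb{R}$ via the half-angle identity. Your version is slightly more elementary (no antiderivative needed), while the paper's level-set formulation fits the framework it uses elsewhere for integral trajectories; both are equally valid here.
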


\begin{proof}
Assuming that $z$ is not a root of $Q$, we have that 
\[
\int \frac{z^k+1}{z(z^k-1)} dz = k^{-1} \log\left( \frac{(1-z^k)^2}{z^k} \right).
\]
Now for $z = e^{i \theta}$, we find that
\[
\Im \log( (1-z^k)^2/z^k ) = \arg( (1-z^k)^2/z^k ) = \arg(-2+ e^{i k \theta}+ e^{-i k \theta})=\pi.
\]
As the integral trajectories are level curves of $\Im\int \frac{dz}{R(z)}$ away from zeros or poles of $R(z)$, it follows that the unit circle consists of the integral trajectories of $R(z) {\partial_z}$. Since the roots of $P$ lie on the unit circle and the zeros of $Q$ on the unit circle have positive residues, it follows that these integral trajectories must be separatrices that are contained in $\minvset{CH}$.
\end{proof}

\begin{corollary}
For $T = z(z^k-1)\partial_{z} + (z^k+1)$, the minimal set $\minvset{CH}$ coincides with the unit disk.
\end{corollary}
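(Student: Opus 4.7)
The plan is to combine the two preceding lemmas with Theorem~\ref{thm:AR}. The zeros of $P(z)=z^{k}+1$ lie on the unit circle, and those of $Q(z)=z(z^{k}-1)$ consist of $0$ together with points on the unit circle; in particular $\mathcal{Z}(PQ) \subset \overline{\mathbb{D}}$. By Lemma~\ref{lem:unitDiskRays}, for every $z$ with $|z|>1$ and every $t>0$, the point $z+tR(z)$ satisfies $|z+tR(z)|>1$, so the associated ray $r(z)$ is disjoint from $\overline{\mathbb{D}}$. Applying Theorem~\ref{thm:AR} to the candidate set $S=\overline{\mathbb{D}}$, we conclude that $\overline{\mathbb{D}}$ is $T_{CH}$-invariant, and hence $\minvset{CH}\subseteq \overline{\mathbb{D}}$.

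For the reverse inclusion, first note from Lemma~\ref{lem:unitDiskBdd} that the unit circle is a union of separatrices of $R(z)\partial_{z}$ joining consecutive roots of $P$ and $Q$; these separatrices are bounded backward trajectories landing on points of $\mathcal{Z}(PQ)\subset \minvset{CH}$, so by Proposition~\ref{prop:IntegralCurve} they are all contained in $\minvset{CH}$. Thus $\partial\mathbb{D}\subset\minvset{CH}$.

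To finish, suppose for contradiction that some $z$ in the open disk does not belong to $\minvset{CH}$. By Theorem~\ref{thm:AR}, the associated ray $r(z)$ must be disjoint from $\minvset{CH}$, and in particular from the unit circle. But $r(z)$ is an unbounded half-line starting inside the open unit disk, so it must meet $\partial\mathbb{D}$ at some point, giving a contradiction. Hence the whole open disk is contained in $\minvset{CH}$, and combined with the previous step we obtain $\minvset{CH}=\overline{\mathbb{D}}$.

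Essentially no obstacle remains since the two preparatory lemmas do all the analytic work; the only subtlety to be careful about is the last step, where one must verify that a ray starting in the open unit disk really is forced to cross the unit circle (which follows immediately from the ray being unbounded while the disk is bounded).
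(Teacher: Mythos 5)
Your proof is correct and follows essentially the same route as the paper: Lemma~\ref{lem:unitDiskRays} gives the inclusion $\minvset{CH}\subseteq\overline{\mathbb{D}}$, Lemma~\ref{lem:unitDiskBdd} (together with Proposition~\ref{prop:IntegralCurve}) places the unit circle inside $\minvset{CH}$, and the unboundedness of associated rays forces the open disk (minus the root $0$ of $Q$, which lies in $\minvset{CH}$ anyway) into the minimal set. The only cosmetic difference is that you phrase the first inclusion via invariance of $\overline{\mathbb{D}}$ under Theorem~\ref{thm:AR}, which is exactly what the paper's argument amounts to.
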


\begin{proof}
By Lemma~\ref{lem:unitDiskRays}, we have that all the associated rays for points lying  outside the unit disk never intersect the unit disk. Therefore, $\minvset{CH}$ is contained in the unit disk. Since the unit circle consists of separatrices of $-R(z) {\partial_z}$ (see Lemma~\ref{lem:unitDiskBdd}), it follows that $\minvset{CH}$ contains the unit circle. The associated ray of any point (distinct from $0$) of the open unit disk intersects the unit circle so $\minvset{CH}$ coincides with the unit disk.
\end{proof}


\begin{thebibliography}{30}

\bibitem[ABS]{ABS} P.~Alexandersson, P.~Br\"anden, B.~Shapiro, An inverse problem in P\'olya-Schur theory. I. Non-degenerate and degenerate operators, Revista Matemática Iberoamericana, to appear. 

\bibitem[AHN+24]{AHN+24} P.~Alexandersson, N.~Hemmingsson, D.~Novikov, B.~Shapiro, G.~Tahar, Linear first order differential operators and complex dynamics, Journal of Differential Equations, Vol 391, pp 265-320, 2024.

\bibitem[Bea]{Bea} A.~F.~Beardon. Iteration of rational functions, volume 132 of Graduate Texts in Mathematics. Springer-Verlag, New York, 1991. Complex analytic dynamical systems.

\bibitem[BB]{BB} J.~Borcea,  P.~Br\"and\'en, P\'olya-Schur master theorems for circular domains and their boundaries. Ann. of Math. (2) 170 (2009), no. 1, 465--492.

\bibitem[BCh]{BCh} P.~Br\"anden, M.~Chasse, Classification theorems for operators preserving zeros in a strip. J. Anal. Math. 132 (2017), 177--215. 

 \bibitem[CsCr]{CsCr} G.~Csordas, T.~Craven,  Composition theorems, multiplier sequences and complex zero decreasing sequences. in Value distribution theory and related topics, 131--166, Adv. Complex Anal. Appl., 3, Kluwer Acad. Publ., Boston, MA, 2004.

\bibitem[Dia]{Dia} K. Dias, A. Garijo. On the separatrix graph of a rational vector field on the Riemann sphere. Journal of Differential Equations, Vol 282: 541-565, 2021.

\bibitem[Hut]{Hut} J.~E.~Hutchinson. Fractals and self-similarity. Indiana Univ. Math. J., 30(5):713–-747, 1981.

\bibitem[Mor]{MR0225972} M.~Marden. Geometry of polynomials 2nd edition, Mathematical Surveys, No. 3, American Mathematical Society, Providence, R.I. 1966.

\bibitem[Will]{Will} S.~Willard. General Topology, Addison-Wesley Publishing Company, Reading, Massachusetts. 1970.

\end{thebibliography}
\end{document}